\def\young(#1){\ytableaushort{#1}}
\def\yng(#1){\ydiagram{#1}}
\numberwithin{equation}{section}
\theoremstyle{definition}
\newtheorem{thm}{Theorem}[section]
\newtheorem{lem}[thm]{Lemma}
\newtheorem{prop}[thm]{Proposition}
\theoremstyle{definition}
\newtheorem{define}[thm]{Definition}
\theoremstyle{remark}
\newtheorem{rem}[thm]{Remark}
\newtheorem{ex}[thm]{Example}
\newcommand{\Fg}{\mathfrak{g}}
\newcommand{\Fh}{\mathfrak{h}}
\newcommand{\BC}{\mathbb{C}}
\newcommand{\BR}{\mathbb{R}}
\newcommand{\BZ}{\mathbb{Z}}
\newcommand{\U}{\mathbf{U}}
\newcommand{\af}{\mathrm{af}}
\newcommand{\lsi}{\ell^{\frac{\infty}{2}}}
\newcommand{\si}{\frac{\infty}{2}}
\begin{document}

\title{\bf
Tableau models for semi-infinite Bruhat order
and 
level-zero representations of quantum affine algebras}
\author{Motohiro Ishii}
\date{}
\maketitle

\begin{abstract} 
We prove that 
semi-infinite Bruhat order on an affine Weyl group is 
completely determined from those on the quotients 
by affine Weyl subgroups
associated with various maximal (standard) parabolic subgroups of finite type. 
Furthermore, 
for an affine Weyl group of classical type, 
we 
give a complete classification 
of
all cover relations of semi-infinite Bruhat order 
(or equivalently, all edges of the quantum Bruhat graphs)
on the quotients in terms of tableaux. 
Combining these 
we obtain a tableau criterion for semi-infinite Bruhat order
on an affine Weyl group of classical type. 
As an application, 
we give new tableau models for the crystal bases of 
a level-zero fundamental representation
and 
a level-zero extremal weight module
over a quantum affine algebra of classical untwisted type,
which we call 
quantum Kashiwara--Nakashima columns
and 
semi-infinite Kashiwara--Nakashima tableaux. 
We give an explicit description of the crystal isomorphisms 
among 
three different realizations of the crystal basis of 
a level-zero fundamental representation
by
quantum Lakshmibai--Seshadri paths, 
quantum Kashiwara--Nakashima columns, 
and 
(ordinary) Kashiwara--Nakashima columns.
\footnotetext{2020 Mathematics Subject Classification. 17B37, 17B10, 05E10.}
\footnotetext{Key words and phrases. 
Affine Weyl group, 
quantum affine algebra, 
semi-infinite Bruhat order, 
quantum Bruhat graph, 
level-zero fundamental representation, 
level-zero extremal weight module.}
\end{abstract}

\tableofcontents

\section{Introduction}

The aim of this paper is to give an explicit description  
of semi-infinite Bruhat order in terms of tableaux
and its application to level-zero representation theory
of quantum affine algebras.

Semi-infinite Bruhat order 
on an affine Weyl group is a 
variant of 
Bruhat order 
on a Coxeter group, 
and it is also an important tool to study 
representation theory of 
algebraic groups, 
quantum groups, 
and 
affine Kac--Moody Lie algebras, 
quantum and affine Schubert calculi, 
and so forth
(see 
\cite{A,BFP,I20,INS16,KNS,Len12,LNSSS15,LNSSS16,LS,Lus,NS16,P}
and the references given there). 
In fact, 
analogously to (ordinary) Bruhat order, 
each of the following structures 
is closely related (or equivalent)
to semi-infinite Bruhat order:
Lusztig's generic Bruhat order
(\cite[\S 1.5]{Lus})
and 
Peterson's stable Bruhat order
(\cite[Lecture 12]{P})
on an affine Weyl group, 
Littelmann's order on the 
affine Weyl group orbit through a level-zero integral weight 
of an affine Kac--Moody Lie algebra
(\cite[\S 4]{L95}),
the quantum Bruhat graphs
(\cite[Definition 6.1]{BFP} and \cite[\S 4]{LNSSS15}),
homomorphisms among Wakimoto modules over an affine Kac--Moody Lie algebra
(\cite[Proposition 4.10]{A}),
the containment relation among (opposite) Demazure subcrystals 
of the crystal basis of a level-zero extremal weight module 
over a quantum affine algebra
(\cite[Corollary 5.2.5]{NS16}), 
and the containment relation among semi-infinite Schubert varieties 
(\cite[\S 4.2]{KNS}); 
see also (i)--(ii) below. 
Also, it is worth pointing out that 
semi-infinite Bruhat order has the lifting property 
(or ``diamond lemma")
with respect to the semi-infinite length function
(\cite[\S 4.1]{INS16}; see also Lemma \ref{lem:diamond}); 
note that Bruhat order on a Coxeter group is characterized uniquely 
by the lifting property with respect to the length function 
(\cite[Theorem 1.1]{D}; see also \cite[Exercise 14 in \S 2]{BB}).
However there indeed exist some differences 
between semi-infinite and ordinary Bruhat orders. 
For example, 
there are no minimal elements in semi-infinite Bruhat order, 
and this order depends on not only the Coxeter system but also the root data. 
Therefore the standard method to study Bruhat order 
(see for instance \cite{BB})
is not well adapted to the study of 
semi-infinite Bruhat order. 

In \cite{I20}, 
we intended to develop tableau combinatorics 
on semi-infinite Bruhat order. 
We introduced
semi-infinite Young tableaux, 
and showed that these tableaux give a combinatorial realization 
of the crystal basis of a level-zero extremal weight module
(\cite{K94,K02})
over the quantum affine algebra of type $A_{n-1}^{(1)}$. 
Our proof of this result was based on 
a tableau criterion for semi-infinite Bruhat order
and 
standard monomial theory 
for semi-infinite Lakshmibai--Seshadri paths
(\cite[\S 3.1]{INS16}).
Therefore we can think of these tableaux 
as a natural generalization of 
(ordinary) Young tableaux.
However 
we restricted the discussion in
\cite[\S 4]{I20} 
only to the affine Weyl group of type $A_{n-1}^{(1)}$, 
and the tableau criterion in 
\cite[Theorem 4.7]{I20} 
is applicable only to 
semi-infinite Bruhat order 
on the quotient by an affine Weyl subgroup
associated with a maximal parabolic subgroup of finite type. 

In this paper, 
we wish to investigate 
semi-infinite Bruhat order 
on an affine Weyl group of all classical untwisted type
via tableaux, 
and aim to give an application to 
level-zero representation theory 
(see for instance \cite{AK,BN04,K02,K05})
of a quantum affine algebra of classical untwisted type.
The main results of this paper are the following
(see Theorems 
\ref{thm:Deo}, 
\ref{thm:tab-cri-A},
\ref{thm:tab-cri-C},
\ref{thm:tab-cri-B},
\ref{thm:tab-cri-D}, 
\ref{thm:SiKN->B-C}, 
\ref{thm:QKN->QLS-B}, 
\ref{thm:SiKN->B-B}, 
\ref{thm:QKN->QLS-D},
and 
\ref{thm:SiKN->B-D}):
\begin{enumerate}[(I)]
\item
a Deodhar-type criterion 
for semi-infinite Bruhat order 
on an affine Weyl group of 
arbitrary untwisted type, 
\item
a tableau criterion for semi-infinite Bruhat order 
on an affine Weyl group of type
$A_{n-1}^{(1)}$,
$B_n^{(1)}$,
$C_n^{(1)}$, 
and 
$D_n^{(1)}$ 
in full generality, 
\item
a tableau model for the crystal basis of 
a level-zero fundamental representation
(resp. 
a level-zero extremal weight module)
over a quantum affine algebra 
of type $B_n^{(1)}$ and $D_n^{(1)}$
(resp. 
$B_n^{(1)},C_n^{(1)}$, and $D_n^{(1)}$), 
which we call 
quantum Kashiwara--Nakashima columns 
(resp. 
semi-infinite Kashiwara--Nakashima tableaux), 
and 
\item
an explicit description of the isomorphisms among 
three different realizations of the crystal basis of 
a level-zero fundamental representation
by
quantum Lakshmibai--Seshadri paths
(\cite{LNSSS16}), 
quantum Kashiwara--Nakashima columns, 
and 
(ordinary) Kashiwara--Nakashima columns
(\cite{KN}).
\end{enumerate}

Let us give more precise explanation of our results.
Let 
$\U$
be a quantum affine algebra of untwisted type, 
and let 
$\U'$
be its derived subalgebra. 
Let 
$W_{\af} 
= 
\langle r_i \mid i \in I_{\af} \rangle$
be the affine Weyl group associated with a finite Weyl group
$W = \langle r_i \mid i \in I \rangle$, 
where 
$I_{\af} = \{ 0 \} \sqcup I$
and 
$r_i$, $i \in I_{\af}$, 
is a simple reflection. 
For 
$J \subset I$, 
let 
$W_J = \langle r_j \mid j \in J \rangle \subset W$
be a (standard) parabolic subgroup, and let 
$W^J \subset W$
be the set of minimal(-length) coset representatives for $W/W_J$. 
Let 
$(W_J)_{\af} \subset W_{\af}$
be the affine Weyl subgroup associated with $W_J$ (see \eqref{eq:W_J_af}). 
Note that 
$(W_J)_{\af}$
is not a parabolic subgroup of 
$W_{\af}$, 
but it is generated by reflections. 
Therefore there exists the subset 
$(W^J)_{\af} \subset W_{\af}$
of minimal coset representatives for $W_{\af} / (W_J)_{\af}$
(see \eqref{eq:W^J_af}); 
note that if $J = \emptyset$, 
then 
$(W_J)_{\af}$ 
is trivial and 
$(W^J)_{\af} = W_{\af}$. 
Let 
$\Pi^J : W_{\af} \rightarrow (W^J)_{\af}$
be the canonical surjection. 
In this paper, 
following 
\cite[\S 2.4]{INS16}
(see also \cite[Lecture 12]{P}), 
we define semi-infinite Bruhat order 
$\preceq$
on 
$(W^J)_{\af}$
by using the semi-infinite length function 
$\ell^{\si} : W_{\af} \rightarrow \BZ$ (see \eqref{eq:sil}). 
But we mainly use the following two realizations 
of semi-infinite Bruhat order 
(see 
Lemmas \ref{lem:Dem} (3) 
and 
\ref{lem:Q=SiB}
for precise formulation): 
\begin{enumerate}[(i)]
\item
the containment relation among 
the path model 
$\mathbb{B}^{\si}_{\succeq x}(\lambda)$, 
$x \in W_{\af}$,  
of the 
(opposite) 
Demazure subcrystals of the crystal basis of 
the extremal weight $\U$-module 
of level-zero extremal weight $\lambda$
(\cite[\S 5]{NS16}),
\item
the ``affinization" of 
the (parabolic) quantum Bruhat graph 
$\mathrm{QB}^J$
for 
$W^J$
(\cite[Appendix A]{INS16}).
\end{enumerate}

In \S \ref{Section:Deodhar}, 
we prove a Deodhar-type criterion 
for semi-infinite Bruhat order
(see (I) above), 
which states that 
for 
$x,y \in (W^J)_{\af}$
we have 
$x \preceq y$
in 
$(W^J)_{\af}$
if and only if 
\begin{align}
\Pi^{I \setminus \{ i \}}(x)
\preceq 
\Pi^{I \setminus \{ i \}}(y)
\ \text{in} \ 
(W^{I \setminus \{ i \}})_{\af}
\ \text{for all} \ 
i \in I \setminus J
\end{align}
(cf. \cite[Theorem 2.6.1]{BB}; see also \cite[Lemma 3.6]{D}).
We prove this for 
$W_{\af}$
of arbitrary untwisted type
(see Theorem \ref{thm:Deo} and Proposition \ref{prop:Deo}). 
This is shown by investigating the path model
$\mathbb{B}^{\si}_{\succeq x}(\lambda)$, 
$x \in W_{\af}$, 
of Demazure subcrystals.
More precisely, 
let 
$\varpi_i$, $i \in I$, 
be a level-zero fundamental weight, 
and let 
$\lambda = \sum_{i \in I} m_i \varpi_i$, 
$m_i \in \BZ_{\geq 0}$, 
$i \in I$. 
Set 
$J_{\lambda} = \{ i \in I \mid m_i = 0 \}$. 
The path model 
$\mathbb{B}^{\si}_{\succeq x}(\lambda)$, $x \in W_{\af}$, 
is defined as a subset of the $\U$-crystal 
$\mathbb{B}^{\si}(\lambda)$
of semi-infinite Lakshmibai--Seshadri paths of shape $\lambda$.
We prove that any extremal element in the tensor product 
$\bigotimes_{i \in I \setminus J_{\lambda}} \mathbb{B}^{\si}_{\succeq \Pi^{I \setminus \{ i \}}(x)}(m_i \varpi_i)$
(see \eqref{eq:tensor-Dem}) 
is in the image of 
$\mathbb{B}^{\si}_{\succeq x}(\lambda)$
under the isomorphism 
$\psi_{\lambda} : \mathbb{B}^{\si}(\lambda) 
\rightarrow 
\bigotimes_{i \in I \setminus J_{\lambda}} \mathbb{B}^{\si}(m_i \varpi_i)$
of $\U$-crystals 
(see Lemma \ref{lem:Psi_lambda} and Proposition \ref{prop:SMT-Dem} (2)). 
Combining this with the realization (i)
yields the Deodhar-type criterion. 

In \S \ref{Section:Tab-cri}, 
for 
$W_{\af}$
of type
$A_{n-1}^{(1)}$,
$B_n^{(1)}$,
$C_n^{(1)}$, 
and 
$D_n^{(1)}$, 
we give a complete classification 
of the cover relations of semi-infinite Bruhat order on
$(W^{I \setminus \{ i \}})_{\af}$, 
$i \in I$,
in terms of tableaux
(Propositions 
\ref{prop:SiB-C},
\ref{prop:SiB-B},
and 
\ref{prop:SiB-D}; 
see also 
\cite[Proposition 4.11]{I20}). 
Moreover, we prove a tableau criterion 
for semi-infinite Bruhat order on 
$(W^{I \setminus \{ i \}})_{\af}$, 
$i \in I$
(Definitions 
\ref{def:SiB-A},
\ref{def:SiB-C},
\ref{def:SiB-B}, 
and 
\ref{def:SiB-D}, 
and 
Propositions 
\ref{prop:tab-cri-A},
\ref{prop:tab-cri-C},
\ref{prop:tab-cri-B},
and 
\ref{prop:tab-cri-D}; 
see also 
\cite[Theorem 4.7]{I20}). 
Combining this with the Deodhar-type criterion we obtain (II). 
We emphasize that 
(II) 
can be thought of as a generalization of the 
tableau criterion for 
Bruhat order on the symmetric group 
(\cite[Theorem 2.6.3 (Tableau Criterion)]{BB}). 
Our main tool in 
\S \ref{Section:Tab-cri}
is the quantum Bruhat graph. 
In fact, we classify all 
(quantum) edges in the quantum Bruhat graph 
$\mathrm{QB}^{I \setminus \{ i \}}$
for 
$W^{I \setminus \{ i \}}$, 
$i \in I$
(Propositions 
\ref{prop:Q=C}, 
\ref{prop:Q=B}, 
and 
\ref{prop:Q=D}; 
see also
\cite[Lemma 4.15]{I20}). 
Combining this with the realization (ii)
yields the classification results above.
We should remark that, 
for 
$W_{\af}$ 
of type 
$A_{n-1}^{(1)}$
and 
$C_n^{(1)}$, 
Lenart's criterion 
(\cite[Propositions 3.6 and 5.7]{Len12})
for the edges of the quantum Bruhat graph 
$\mathrm{QB}^{\emptyset}$ 
for 
$W$
is a necessary condition 
for our classification results. 
Indeed, 
the existence of an edge in 
$\mathrm{QB}^{I \setminus \{ i \}}$
implies that in 
$\mathrm{QB}^{\emptyset}$
(see Lemma \ref{lem:LS} for a precise statement).

We now briefly sketch the tableau criterion for 
semi-infinite Bruhat order on
$(W^{I \setminus \{ i \}})_{\af}$
when 
$W_{\af}$
is of type 
$A_{n-1}^{(1)}$
(\S \ref{subsection:tab-cri-A} and \cite[\S 4]{I20}; 
see also Example \ref{ex:tab-cri}). 
First, we associate each element of 
$W_{\af}$
with a pair 
$(\mathsf{T},c)$
of a column
$\mathsf{T}$
of length $i$
and an integer 
$c$
(see \eqref{eq:map-Y-A}).
Let 
$\mathsf{T}(u)
\in 
\{ 1,2,\ldots ,n \}$
denote the 
$u$-th entry of 
$\mathsf{T}$. 
Let 
$(\mathsf{T},c),
(\mathsf{T}',c')$
be such pairs for 
$x,y \in W_{\af}$, 
respectively. 
Then we have 
$\Pi^{I \setminus \{ i \}}(x) \preceq \Pi^{I \setminus \{ i \}}(y)$
in 
$(W^{I \setminus \{ i \}})_{\af}$
if and only if 
\begin{align} \label{Eq:tab-A}
\left( c \leq c' \right)
\ \text{and} \ 
\left( 
\mathsf{T}(u) \leq \mathsf{T}'(u+c'-c)
\ \text{for} \ 
1 \leq u \leq i-c'+c
\right) .
\end{align}

In \S \ref{Section:Tab-model}, 
we prove (III)--(IV). 
For this purpose, 
we first investigate the subgraph
$\mathrm{QB}(\varpi_i;1/2)$
(see \S\ref{Subsection:QLS}--\S\ref{Subsection:QBG-Maya})
of the quantum Bruhat graph
$\mathrm{QB}^{I \setminus \{ i \}}$, 
where the vertex sets of these graphs are both
$W^{I \setminus \{ i \}}$.
We see that 
$\mathrm{QB}(\varpi_i;1/2)$
defines a partial order 
$\trianglelefteq$
on
$W^{I \setminus \{ i \}}$.
Moreover, 
by the classification results in \S \ref{Section:Tab-cri}, 
we obtain an explicit description of the order 
$\trianglelefteq$
in terms of Maya diagrams
(see Definitions \ref{def:Maya-1/2-J} and \ref{def:Maya-1/2}).
Then the $\U'$-crystal  
$\mathrm{QLS}(\varpi_i)$
of 
quantum Lakshmibai--Seshadri paths 
(QLS paths for short)
of shape $\varpi_i$
is given by
\begin{align}
\mathrm{QLS}(\varpi_i)
=
\{ (v,w) \mid 
w,v \in W^{I \setminus \{ i \}}, \ 
w \trianglelefteq v \} .
\end{align}
It is well-known that 
$W^{I \setminus \{ i \}}$
can be realized as a set of columns
(see Lemmas \ref{lem:Gr-A}, \ref{lem:Gr-C}, and \ref{lem:Gr-D}).
Thus we can think of each QLS path 
$(v,w) \in \mathrm{QLS}(\varpi_i)$
as a pair of columns. 
We know from 
\cite{LNSSS16}
that 
$\mathrm{QLS}(\varpi_i)$
is isomorphic, 
as a $\U'$-crystal, 
to the crystal basis of the 
level-zero fundamental representation 
$W(\varpi_i)$
(\cite{K02,K05}; 
see also 
\S\ref{Subsection:extremal}
and 
\S\ref{Subsection:QLS}). 

Assume that 
$\U$
is of type 
$B_n^{(1)}$. 
In \S\ref{Subsection:tab-type-B}, 
we introduce 
quantum Kashiwara--Nakashima $B_n$-columns
(QKN $B_n$-columns for short)
and a $\U'$-crystal structure on them. 
In particular, 
we define Kashiwara operators 
$e_j,f_j$, 
$j \in I_{\af}$,
acting on them. 
A QKN $B_n$-column 
$\Tilde{\mathsf{C}}$
of shape $\varpi_i$ 
is consisting of 
an (ordinary) Kashiwara--Nakashima $B_n$-column
(KN $B_n$-column for short)
$\mathsf{C}$
of shape 
$\varpi_{i-2m}$
and a multiset 
$\{ \underbrace{\overline{0},\overline{0},\ldots ,\overline{0}}_{2m \ \text{times}} \}$
for some integer
$0 
\leq 
m 
\leq 
\left\lfloor \frac{i}{2} \right\rfloor
=
\max \{ k \in \BZ \mid k \leq i/2 \}$; 
we write
$\Tilde{\mathsf{C}}
=
\mathsf{C}
\cup
\{ \underbrace{\overline{0},\overline{0},\ldots ,\overline{0}}_{2m \ \text{times}} \}$
for brevity.
Let 
$\mathrm{QKN}_{B_n}(\varpi_i)$
(resp.
$\mathrm{KN}_{B_n}(\varpi_i)$)
denote the set of 
QKN $B_n$-columns 
(resp. 
KN $B_n$-columns)
of shape 
$\varpi_i$.
Let us give an example of 
a QKN $B_n$-column.
We have 
\ytableausetup{mathmode,boxsize=5.5mm}
\begin{align} \label{eq:ex-QKN}
\Tilde{\mathsf{C}}
\ = \ 
\begin{ytableau}
2 \\ 3 \\ 0 \\ \overline{9} \\ \overline{3} \\ \overline{0} \\ \overline{0}
\end{ytableau}
\ = \ 
\mathsf{C}
\cup 
\{ \overline{0},\overline{0} \}
\ 
\in 
\mathrm{QKN}_{B_9}(\varpi_7)
\ \ \ \text{and} \ \ \ 
\mathsf{C}
\ = \ 
\begin{ytableau}
2 \\ 3 \\ 0 \\ \overline{9} \\ \overline{3} 
\end{ytableau}
\ 
\in 
\mathrm{KN}_{B_9}(\varpi_5).
\end{align}
For each QKN $B_n$-column 
$\Tilde{\mathsf{C}}$
of shape 
$\varpi_i$, 
we construct a pair 
$(r\Tilde{\mathsf{C}},l\Tilde{\mathsf{C}})$
of columns, 
and show that it is a QLS path
of shape 
$\varpi_i$. 
The construction of 
$(r\Tilde{\mathsf{C}},l\Tilde{\mathsf{C}})$
was motivated by 
\cite[\S 4]{She}
(see also \cite[\S 3]{Le03}), 
and has previously been used by Briggs
(\cite{B}; see also \cite[Algorithm 4.1]{LeSc}). 
For the QKN $B_n$-column 
$\Tilde{\mathsf{C}}$
in 
\eqref{eq:ex-QKN}, 
we have 
\begin{align}
r\Tilde{\mathsf{C}}
\ = \ 
\begin{ytableau}
2 \\ 3 \\ 4 \\ 5 \\ \overline{9} \\ \overline{8} \\ \overline{1}
\end{ytableau}
\ \ \ \text{and} \ \ \ 
l\Tilde{\mathsf{C}}
\ = \ 
\begin{ytableau}
1 \\ 2 \\ 8 \\ \overline{9} \\ \overline{5} \\ \overline{4} \\ \overline{3}
\end{ytableau} \ ;
\end{align}
by using notation in \S \ref{Subsection:tab-type-B}, 
we have 
\begin{enumerate}[(1)]
\item
$I_{\mathsf{C}}
=
\{ 0 \succeq 3 \}$, 
$J_{\mathsf{C}}
=
\{ 8 > 1 \}$, 
$K_{\Tilde{\mathsf{C}}}
=
\{ 4 < 5 \}$, 
\item
$\mathcal{J}(r\Tilde{\mathsf{C}})
=
\mathcal{J}(l\Tilde{\mathsf{C}})
=
(\{ 1,2,3,4,5 \} < \{ 8,9 \})
\in 
\mathcal{S}_7$, 
and 
\item
$\mathcal{M}(r\Tilde{\mathsf{C}})
=
(\{ 1 \} , \{ 8,9 \}), 
\mathcal{M}(l\Tilde{\mathsf{C}})
=
(\{ 3,4,5 \} , \{ 9 \})
\in 
2^{\{ 1,2,3,4,5 \}}
\times 
2^{\{ 8,9 \}}$. 
\end{enumerate}
Then we prove that the map 
$\mathrm{QKN}_{B_n}(\varpi_i)
\to 
\mathrm{QLS}(\varpi_i)$, 
$\Tilde{\mathsf{C}}
\mapsto
(r\Tilde{\mathsf{C}},l\Tilde{\mathsf{C}})$, 
is bijective. 
The important point to note here is that 
the inverse of this map 
can be described explicitly in terms of Maya diagrams 
(see Theorem \ref{thm:QKN->QLS-B} (2)); 
similar results have been obtained independently by 
Lenart--Schulze
(\cite[\S 4]{LeSc}), 
where they used the quantum alcove model (\cite{LL}). 
Thus we obtain the following 
crystal isomorphisms among 
the sets of 
QLS paths, 
QKN $B_n$-columns, 
and 
KN $B_n$-columns
(cf. \cite[Lemma 2.7]{C}).
\begin{align} \label{eq:QLS<->QKN<->KN}
\begin{split}
\mathrm{QLS}(\varpi_i)
&
\leftrightarrow 
\mathrm{QKN}_{B_n}(\varpi_i)
\leftrightarrow 
\bigsqcup_{m=0}^{\left\lfloor \frac{i}{2} \right\rfloor}
\mathrm{KN}_{B_n}(\varpi_{i-2m}), \\[2mm]
(r\Tilde{\mathsf{C}},l\Tilde{\mathsf{C}})
&
\leftrightarrow 
\Tilde{\mathsf{C}}
=
\mathsf{C}
\cup
\{ \underbrace{\overline{0},\overline{0},\ldots ,\overline{0}}_{2m \ \text{times}} \}
\leftrightarrow 
\mathsf{C} .
\end{split}
\end{align}
Consequently, 
the $\U'$-crystal 
$\mathrm{QKN}_{B_n}(\varpi_i)$
is isomorphic to the crystal basis of the 
level-zero fundamental representation 
$W(\varpi_i)$. 
Similar formulation and results also hold 
for $\U$ of type $D_n^{(1)}$
(see \S\ref{Subsection:tab-type-D}).
We should remark that 
the crystal basis of 
$W(\varpi_i)$
is isomorphic to the Kirillov--Reshetikhin crystal 
$B^{i,1}$, 
and that there is another tableau model, 
called 
Kirillov--Reshetikhin tableaux
(\cite{OSS,SS}), 
more generally for the Kirillov--Reshetikhin crystals
$B^{r,s}$
(see 
\cite[\S 2.3]{HKOTY}), 
$r \in I$, 
$s > 0$, 
of non-exceptional type. 
The advantage of using QKN columns lies in 
the explicit description of the isomorphisms \eqref{eq:QLS<->QKN<->KN}. 
Likewise, the Kirillov--Reshetikhin tableaux model is an important ingredient 
to describe the rigged configuration bijections 
(see for instance \cite{OSSS}).
It would be desirable to relate these two tableau models 
but we will not develop this point here.

Assume that 
$\U$
is of type 
$B_n^{(1)}$, 
$C_n^{(1)}$, 
or 
$D_n^{(1)}$. 
In \S\ref{Subsection:tab-type-C}--\S\ref{Subsection:tab-type-D}, 
we introduce 
semi-infinite Kashiwara--Nakashima tableaux
(semi-infinite KN tableaux for short), 
and show that the set of these tableaux 
is isomorphic, 
as a $\U$-crystal, 
to the crystal basis of 
a level-zero extremal weight $\U$-module. 
This is achieved by applying 
standard monomial theory for semi-infinite Lakshmibai--Seshadri paths 
(\cite[Theorem 3.4]{I20})
to the affinizations of 
$\U'$-crystals of (Q)KN columns.
The definition of semi-infinite KN tableaux 
is based on the tableau criterion for semi-infinite Bruhat order
obtained in 
\S \ref{Section:Tab-cri}. 

This paper is organized as follows. 
In \S \ref{Section:Prel}, 
we set up notation and terminology 
on untwisted affine root data, 
crystals, 
semi-infinite Bruhat order, 
extremal weight modules, 
and 
semi-infinite Lakshmibai--Seshadri paths. 
Also, we have compiled some basic facts on these. 
In \S \ref{Section:Deodhar}, 
we prove a Deodhar-type criterion 
for 
semi-infinite Bruhat order
on 
$W_{\af}$
of arbitrary untwisted type. 
In \S \ref{Section:Tab-cri}, 
we prove a tableau criterion for 
semi-infinite Bruhat order
on 
$W_{\af}$
of type
$A_{n-1}^{(1)}$,
$B_n^{(1)}$,
$C_n^{(1)}$, 
and 
$D_n^{(1)}$, 
by classifying all cover relations of 
semi-infinite Bruhat order on 
$(W^{I \setminus \{ i \}})_{\af}$
in terms of tableaux. 
In \S \ref{Section:Tab-model},
we introduce 
the $\U'$-crystal of QKN columns 
and 
the $\U$-crystal of semi-infinite KN tableaux. 
We show that these tableaux give combinatorial models for 
crystal bases of 
level-zero fundamental representations
and 
level-zero extremal weight modules. 
We give an explicit description of the crystal isomorphisms among 
QLS paths, QKN columns, and KN columns.

\paragraph{Acknowledgements.}
The author was supported by JSPS KAKENHI Grant Numbers 16K17577 and 20K14278.

\paragraph{Notation.} 
Let 
$\BZ_{>0}$
(resp. 
$\BZ_{\geq 0}$)
denote the set of 
positive integers
(resp. 
non-negative integers). 
For 
$k \in \BZ$, 
set 
$[k] = \{ 1,2,\ldots ,k \}$
if 
$k \geq 1$, 
and set
$[k] = \emptyset$
if 
$k \leq 0$. 
For integers 
$k \leq l$, 
set 
$[k,l] = \{ k,k+1,\ldots ,l \}$; 
we understand that 
$[k,l] = \emptyset$
if 
$k > l$. 
The disjoint union of two sets 
$A,B$
will be denoted by 
$A \sqcup B$. 
For a (non-empty) set $A$, 
let
$\mathfrak{S}(A)$
be the permutation group of 
$A$. 
For a finite set 
$A$, 
let 
$\# A$
denote the number of elements in $A$.

\section{Preliminaries} \label{Section:Prel}

\subsection{Untwisted affine root data}

Let 
$\Fg_{\af}$ 
be an untwisted affine Kac--Moody Lie algebra over $\BC$ 
with a Cartan subalgebra $\Fh_{\af}$. 
Let 
$\{ \alpha_i \}_{i \in I_{\af}} \subset \Fh_{\af}^* = \mathrm{Hom}_{\BC}(\Fh_{\af},\BC)$ 
and 
$\{ \alpha_i^{\vee} \}_{i \in I_{\af}} \subset \Fh_{\af}$
be the sets of simple roots and simple coroots, respectively. 
Here $I_{\af}$ denotes the vertex set of the (affine) Dynkin diagram of $\Fg_{\af}$. 
Let 
$\langle \cdot , \cdot \rangle : \Fh_{\af} \times \Fh_{\af}^* \rightarrow \BC$ 
be the canonical pairing. 
We take and fix an integral weight lattice 
$P_{\af} \subset \Fh_{\af}^*$
satisfying the conditions that 
$\alpha_i \in P_{\af}$ 
and 
$\alpha_i^{\vee} \in \mathrm{Hom}_{\BZ}(P_{\af} , \BZ)$ 
for all 
$i \in I_{\af}$, 
and for each $i \in I_{\af}$ 
there exists
$\Lambda_i \in P_{\af}$
such that 
$\langle \alpha_j^{\vee} , \Lambda_i \rangle = \delta_{ij}$ 
for all 
$j \in I_{\af}$.
Similarly, let 
$P_{\af}^{\vee} \subset \Fh_{\af}$
be an integral coweight lattice such that 
$\alpha_i^{\vee} \in P_{\af}^{\vee}$ 
and 
$\alpha_i \in \mathrm{Hom}_{\BZ}(P_{\af}^{\vee} , \BZ)$ 
for all 
$i \in I_{\af}$, 
and for each $i \in I_{\af}$ 
there exists
$\Lambda_i^{\vee} \in P_{\af}^{\vee}$
such that 
$\langle \Lambda_i^{\vee} , \alpha_j \rangle = \delta_{ij}$ 
for all 
$j \in I_{\af}$. 
Let 
$\delta = \sum_{i \in I_{\af}} a_i \alpha_i \in \Fh_{\af}^*$ 
and 
$\mathrm{c} = \sum_{i \in I_{\af}} a_i^{\vee} \alpha_i^{\vee} \in \Fh_{\af}$ 
be the null root and the canonical central element, respectively.
For 
$\lambda \in P_{\af}$, 
the integer 
$\langle \mathrm{c} , \lambda \rangle$
is called the level of $\lambda$. 
We take and fix $0 \in I_{\af}$ such that 
$a_0 = a_0^{\vee} = 1$. 
Set 
$I = I_{\af} \setminus \{ 0 \}$; 
note that the subset $I$ of $I_{\af}$ 
corresponds to the vertex set of the Dynkin diagram of a 
complex finite-dimensional simple Lie subalgebra $\Fg$ of $\Fg_{\af}$. 
Fix a section 
$\iota : P_{\af}/(P_{\af} \cap \BC\delta) \to P_{\af}$
(resp. 
$\iota : P_{\af}^{\vee}/(P_{\af}^{\vee} \cap \BC\mathrm{c}) \to P_{\af}^{\vee}$)
of the canonical surjection
$\mathrm{cl} : P_{\af} \to P_{\af}/(P_{\af} \cap \BC\delta)$
(resp.
$\mathrm{cl} : P_{\af}^{\vee} \to P_{\af}^{\vee}/(P_{\af}^{\vee} \cap \BC\mathrm{c})$)
such that 
$(\iota \circ \mathrm{cl})(\alpha_i)
=
\alpha_i$
(resp.
$(\iota \circ \mathrm{cl})(\alpha_i^{\vee})
=
\alpha_i^{\vee}$)
for 
$i \in I$. 
For each 
$i \in I_{\af}$, 
define
$\varpi_i 
= 
(\iota \circ \mathrm{cl})
(\Lambda_i - \langle \mathrm{c} , \Lambda_i \rangle \Lambda_0)$
and 
$\varpi_i^{\vee} 
= 
(\iota \circ \mathrm{cl})
(\Lambda^{\vee}_i - \langle \Lambda_i^{\vee} , \delta \rangle \Lambda^{\vee}_0)$; 
note that 
$\varpi_0 = 0$, 
$\varpi^{\vee}_0 = 0$,  
$\langle \mathrm{c} , \varpi_i \rangle = 
\langle \varpi^{\vee}_i , \delta \rangle = 0$, 
and 
$\langle \alpha_j^{\vee} , \varpi_i \rangle = 
\langle \varpi^{\vee}_i , \alpha_j \rangle 
= \delta_{ij}$
for all $i,j \in I$. 
We call 
$\varpi_i$ 
the $i$-th level-zero fundamental weight of $\Fg_{\af}$.
Set
\begin{align}
Q
= 
\bigoplus_{i \in I} \BZ \alpha_i, &&
Q^{\vee} 
= 
\bigoplus_{i \in I} \BZ \alpha_i^{\vee}, &&
P
= 
\bigoplus_{i \in I} \BZ \varpi_i, &&
P^+ 
= 
\sum_{i \in I} \BZ_{\geq 0} \varpi_i;
\end{align}
note that 
$Q \subset P$
and 
$Q^{\vee} \subset \bigoplus_{i \in I} \BZ \varpi_i^{\vee}$. 
We think of 
$P$
(resp. 
$Q$)
as a weight lattice 
(resp. 
a root lattice)
of 
$\Fg$.

Let 
$W_{\af} = \langle r_i \mid i \in I_{\af} \rangle$ 
be the (affine) Weyl group of $\Fg_{\af}$, 
where $r_i$ denotes the simple reflection with respect to $\alpha_i$.
The subgroup 
$W = \langle r_i \mid i \in I \rangle \subset W_{\af}$ 
is the (finite) Weyl group of $\Fg$. 
Let $\ell : W_{\af} \rightarrow \BZ_{\geq 0}$ be the length function. 
Let $e \in W_{\af}$ be the unit element. 
The action of $W_{\af}$ on $\Fh_{\af}^*$ (resp. $\Fh_{\af}$) is given by
$r_i (\lambda) 
=
\lambda - \langle \alpha_i^{\vee} , \lambda \rangle \alpha_i$ 
(resp. 
$r_i (h) 
=
h - \langle h , \alpha_i \rangle \alpha_i^{\vee}$)
for $i \in I_{\af}$ and $\lambda \in \Fh_{\af}^*$
(resp. $h \in \Fh_{\af}$).
For $\xi \in Q^{\vee}$, 
we denote by 
$t_{\xi} \in W_{\af}$ 
the translation by $\xi$ (see \cite[\S6.5]{Kac}).
We know from 
\cite[Proposition 6.5]{Kac} 
that 
$\{ t_{\xi} \mid \xi \in Q^{\vee} \}$ 
forms an abelian normal subgroup of $W_{\af}$, 
for which 
$t_{\xi} t_{\zeta} = t_{\xi + \zeta}$, 
$\xi , \zeta \in Q^{\vee}$, 
and 
$W_{\af} = W \ltimes \{ t_{\xi} \mid \xi \in Q^{\vee} \}$. 
For $w \in W$ and $\xi \in Q^{\vee}$, we have 
$wt_{\xi}\lambda = w\lambda - \langle \xi,\lambda \rangle \delta$
if 
$\lambda \in \Fh_{\af}^*$
satisfies
$\langle \mathrm{c},\lambda \rangle = 0$.

Let $\Delta$ (resp. $\Delta^{\vee}$) be the root system (resp. the coroot system) of $\Fg$
with a simple root system 
$\Pi = \{ \alpha_i \mid i \in I \}$
(resp. a simple coroot system 
$\Pi^{\vee} = \{ \alpha_i^{\vee} \mid i \in I \}$). 
Set 
$\Delta^+ = \Delta \cap \sum_{i \in I} \BZ_{\geq 0} \alpha_i$
and 
$\Delta^{\vee,+} = \Delta^{\vee} \cap \sum_{i \in I} \BZ_{\geq 0} \alpha_i^{\vee}$. 
For a subset $J \subset I$, set
\begin{align}
Q_J &= \bigoplus_{j \in J} \BZ \alpha_j, & 
\Delta_J &= \Delta \cap Q_J, &
\Delta_J^+ &= \Delta^+ \cap Q_J, \\
Q_J^{\vee} &= \bigoplus_{j \in J} \BZ \alpha_j^{\vee}, & 
\Delta_J^{\vee} &= \Delta^{\vee} \cap Q_J^{\vee}, &
\Delta_J^{\vee,+} &= \Delta^{\vee,+} \cap Q_J^{\vee}.
\end{align}
Denote by $\Delta_{\af}$ the set of real roots of $\Fg_{\af}$, 
and by $\Delta_{\af}^+$ the set of positive real roots of $\Fg_{\af}$; 
we know from \cite[Proposition 6.3]{Kac} that
\begin{align}
\Delta_{\af} 
= 
\{ \alpha + n\delta \mid \alpha \in \Delta , n \in \BZ \} , &&
\Delta_{\af}^+
= 
\Delta^+ 
\sqcup
\{ \alpha + n\delta \mid \alpha \in \Delta , n \in \BZ_{>0} \} .
\end{align}
Let 
$\beta^{\vee} \in \Fh_{\af}$
denote the coroot of 
$\beta \in \Delta_{\af}$. 
Let 
$r_{\beta} \in W_{\af}$ 
be the reflection with respect to $\beta \in \Delta_{\af}$; 
if $\beta = \alpha + n\delta$, $\alpha \in \Delta$ and $n \in \BZ$, then
$r_{\beta} = r_{\alpha} t_{n\alpha^{\vee}}$.

\subsection{Crystals} \label{Subsection:Crystals}

In this subsection, 
we set up notation and terminology on crystals. 
For a fuller treatment, we refer the reader to 
\cite{K94,K02}. 

Let 
$\U$
be the quantized universal enveloping algebra associated with $\Fg_{\af}$. 
Let 
$\U'$
be the subalgebra of $\U$ 
corresponding to the derived subalgebra
$[\Fg_{\af},\Fg_{\af}]$ of $\Fg_{\af}$
(see for instance \cite[\S 2.2]{BN04}). 

A set
$\mathcal{B}$
together with the maps
$\mathrm{wt} : \mathcal{B} \rightarrow P_{\af}$
(resp. $\mathrm{wt} : \mathcal{B} \rightarrow P_{\af}/(P_{\af} \cap \BC \delta)$), 
$e_i , f_i : \mathcal{B} \rightarrow \mathcal{B} \sqcup \{ \bm{0} \}$, 
and 
$\varepsilon_i , \varphi_i : \mathcal{B} \rightarrow \BZ \sqcup \{ -\infty \}$,
$i \in I_{\af}$, 
is called a $\U$-crystal 
(resp. a $\U'$-crystal) 
if the following conditions are satisfied:
\begin{enumerate}[(C1)]
\item
$\varphi_i(b) = \varepsilon_i(b) + \langle \alpha_i^{\vee} , \mathrm{wt}(b) \rangle$ 
for 
$b \in \mathcal{B}$
and 
$i \in I_{\af}$, 
\item
$\mathrm{wt}(e_i b) = \mathrm{wt}(b) + \alpha_i$ 
if 
$e_i b \in \mathcal{B}$, 
\item
$\mathrm{wt}(f_i b) = \mathrm{wt}(b) - \alpha_i$ 
if 
$f_i b \in \mathcal{B}$, 
\item
$\varepsilon_i (e_i b) = \varepsilon_i(b) - 1$
and 
$\varphi_i (e_i b) = \varphi_i(b) + 1$
if 
$e_i b \in \mathcal{B}$, 
\item
$\varepsilon_i (f_i b) = \varepsilon_i(b) + 1$
and 
$\varphi_i (f_i b) = \varphi_i(b) - 1$
if 
$f_i b \in \mathcal{B}$, 
\item
$f_i b = b'$
if and only if 
$b = e_i b'$
for 
$b,b' \in \mathcal{B}$ 
and 
$i \in I_{\af}$, 
\item
if 
$\varphi_i(b) = -\infty$, 
then
$e_i b = f_i b = \bm{0}$.
\end{enumerate}

A set 
$\mathcal{B}$
together with the maps
$\mathrm{wt} : \mathcal{B} \rightarrow P$
and 
$e_i,f_i,\varepsilon_i,\varphi_i$
for 
$i \in I$
as above 
is called a 
$\Fg$-crystal 
if these maps satisfy 
(C1)--(C7). 
We can think of a 
$\U'$-crystal 
$\mathcal{B}$
such that 
$\mathrm{wt}(\mathcal{B})
\subset 
P/(P \cap \BC \delta)
\cong 
P$
as a 
$\Fg$-crystal 
by forgetting the maps 
$e_0,f_0,\varepsilon_0,\varphi_0$. 

Following 
\cite[\S 4.2]{K02}, 
define the affinization 
$\mathcal{B}_{\af}
=
\mathcal{B} \times \BZ$
of a 
$\U'$-crystal
$\mathcal{B}$
to be the 
$\U$-crystal
such that for 
$b \in \mathcal{B}$, 
$c \in \BZ$, 
and 
$i \in I_{\af}$, 
$\mathrm{wt}(b,c)
=
\iota(\mathrm{wt}(b)) - c\delta \in P_{\af}$, 
$e_i(b,c)
=
(e_ib,c-\delta_{i,0})$, 
$f_i(b,c)
=
(f_ib,c+\delta_{i,0})$, 
$\varepsilon_i(b,c)
=
\varepsilon_i(b)$, 
and 
$\varphi_i(b,c)
=
\varphi_i(b)$; 
we understand that 
$(\bm{0},c) = \bm{0}$.

Let 
$\mathcal{B}_1$
and 
$\mathcal{B}_2$
be $\U$-crystals or $\U'$-crystals.
A morphism 
$\Psi : \mathcal{B}_1 \rightarrow \mathcal{B}_2$
is, by definition, a map
$\mathcal{B}_1 \sqcup \{ \bm{0} \} \rightarrow \mathcal{B}_2 \sqcup \{ \bm{0} \}$
such that 
\begin{enumerate}[(CM1)]
\item
$\Psi (\bm{0}) = \bm{0}$, 
\item
if 
$b \in \mathcal{B}_1$ 
and 
$\Psi(b) \in \mathcal{B}_2$, 
then 
$\mathrm{wt}(\Psi(b)) = \mathrm{wt}(b)$, 
$\varepsilon_i (\Psi(b)) = \varepsilon_i (b)$, and
$\varphi_i(\Psi(b)) = \varphi_i(b)$ 
for all 
$i \in I_{\af}$, 
\item
if
$b , b' \in \mathcal{B}_1$, 
$\Psi(b) , \Psi(b') \in \mathcal{B}_2$ and 
$f_i b = b'$, 
then
$f_i \Psi(b) = \Psi(b')$
for all 
$i \in I_{\af}$. 
\end{enumerate}
A morphism
$\Psi : \mathcal{B}_1 \rightarrow \mathcal{B}_2$
is called strict if 
$\Psi(f_i b) = f_i \Psi (b)$
and 
$\Psi(e_i b) = e_i \Psi (b)$
for all 
$b \in \mathcal{B}_1$ and $i \in I_{\af}$.
A morphism
$\Psi : \mathcal{B}_1 \rightarrow \mathcal{B}_2$
is called a strict embedding if it is a strict morphism and the associated map 
$\mathcal{B}_1 \sqcup \{ \bm{0} \} \rightarrow \mathcal{B}_2 \sqcup \{ \bm{0} \}$
is injective. A morphism
$\Psi : \mathcal{B}_1 \rightarrow \mathcal{B}_2$
is called an isomorphism if the associated map 
$\mathcal{B}_1 \sqcup \{ \bm{0} \} \rightarrow \mathcal{B}_2 \sqcup \{ \bm{0} \}$
is bijective. 
In the same manner we define a morphism of $\Fg$-crystals. 

The tensor product
$\mathcal{B}_1 \otimes \mathcal{B}_2$ 
of crystals
$\mathcal{B}_1$
and 
$\mathcal{B}_2$ 
is defined to be the set 
$\{ b_1 \otimes b_2 \mid b_1 \in \mathcal{B}_1 , \, b_2 \in \mathcal{B}_2 \}$ 
whose crystal structure is as follows: 
\begin{enumerate}[(T1)]
\item
$\mathrm{wt}(b_1 \otimes b_2) = \mathrm{wt}(b_1) + \mathrm{wt}(b_2)$, 
\item
$\varepsilon_i (b_1 \otimes b_2) 
= 
\max \{ \varepsilon_i(b_1) , \varepsilon_i(b_2) - \langle \alpha_i^{\vee} , \mathrm{wt}(b_1) \rangle \}$, 
\item
$\varphi_i (b_1 \otimes b_2) 
= 
\max \{ \varphi_i(b_2) , \varphi_i(b_1) + \langle \alpha_i^{\vee} , \mathrm{wt}(b_2) \rangle \}$, 
\item
$e_i (b_1 \otimes b_2) 
=
\begin{cases}
(e_i b_1) \otimes b_2 
&\text{if} \ \varphi_i(b_1) \geq \varepsilon_i(b_2), \\
b_1 \otimes (e_i b_2) 
&\text{if} \ \varphi_i(b_1) < \varepsilon_i(b_2),
\end{cases}$
\item
$f_i (b_1 \otimes b_2) 
=
\begin{cases}
(f_i b_1) \otimes b_2 
&\text{if} \ \varphi_i(b_1) > \varepsilon_i(b_2), \\
b_1 \otimes (f_i b_2) 
&\text{if} \ \varphi_i(b_1) \leq \varepsilon_i(b_2).
\end{cases}$
\end{enumerate}
Here, we understand that
$b_1 \otimes \bm{0} = \bm{0} \otimes b_2 = \bm{0}$. 

Let 
$\mathcal{B}$ 
be a regular $\U$-crystal in the sense of 
\cite[\S 2.2]{K02}. 
It follows that
\begin{align}
\varphi_i(b) = \max \{ k \in \BZ_{\geq 0} \mid f_i^k b \neq \bm{0} \} , 
\hspace{5mm}
\varepsilon_i(b) = \max \{ k \in \BZ_{\geq 0} \mid e_i^k b \neq \bm{0} \}
\end{align}
for 
$b \in \mathcal{B}$ 
and 
$i \in I_{\af}$. 
Define 
$f_i^{\max} b = f_i^{\varphi_i(b)} b \in \mathcal{B}$
and 
$e_i^{\max} b = e_i^{\varepsilon_i(b)} b \in \mathcal{B}$
for 
$b \in \mathcal{B}$ 
and 
$i \in I_{\af}$. 
By \cite[\S 7]{K94}, 
we have a $W_{\af}$-action 
$S : W_{\af} \rightarrow \mathfrak{S}(\mathcal{B})$, 
$x \mapsto S_x$, 
on (the underlying set) 
$\mathcal{B}$
given by
\begin{align} \label{eq:W-action}
S_{r_i} b
=
\begin{cases}
f_i^{\langle \alpha_i^{\vee} , \mathrm{wt}(b) \rangle} b & \text{if}\ \langle \alpha_i^{\vee} , \mathrm{wt}(b) \rangle \geq 0, \\[1mm]
e_i^{-\langle \alpha_i^{\vee} , \mathrm{wt}(b) \rangle} b & \text{if}\ \langle \alpha_i^{\vee} , \mathrm{wt}(b) \rangle \leq 0
\end{cases}
\end{align}
for 
$b \in \mathcal{B}$ 
and 
$i \in I_{\af}$. 
Note that 
$\mathrm{wt}(S_x b) = x \mathrm{wt}(b)$ 
holds for all
$x \in W_{\af}$ and $b \in \mathcal{B}$. 
An element 
$b \in \mathcal{B}$ 
of weight 
$\mathrm{wt}(b) = \lambda \in P_{\af}$
is called an extremal element if 
there exist
$b_x \in \mathcal{B}$, $x \in W_{\af}$, 
such that 
\begin{enumerate}[(E1)]
\item
$b_e = b$, 
\item
if 
$\langle \alpha_i^{\vee} , x\lambda \rangle \geq 0$, 
then
$e_i b_x = \bm{0}$
and 
$f_i^{\langle \alpha_i^{\vee} , x\lambda \rangle} b_x = f_i^{\max}b_x = b_{r_i x}$, 
\item
if 
$\langle \alpha_i^{\vee} , x\lambda \rangle \leq 0$, 
then
$f_i b_x = \bm{0}$
and 
$e_i^{-\langle \alpha_i^{\vee} , x\lambda \rangle} b_x = e_i^{\max}b_x = b_{r_i x}$. 
\end{enumerate}
Then 
$b_x = S_x b$
holds for all 
$x \in W_{\af}$, 
and each 
$b_x$
is an extremal element of 
weight $x\lambda$.
The proof of the next lemma is straightforward.

\begin{lem} \label{lem:ext-elm}
\begin{enumerate}[(1)]
\item
Let 
$\mathcal{B}$ 
be a regular $\U$-crystal, 
and let 
$b \in \mathcal{B}$ 
be an extremal element. 
If there exist 
$i_1 , i_2 , \ldots , i_N \in I_{\af}$
such that 
\begin{align}
\langle \alpha_{i_n}^{\vee} , r_{i_{n-1}} \cdots r_{i_2} r_{i_1} \mathrm{wt}(b) \rangle \geq 0
\ 
\text{for all}
\ 
n \in [N],
\end{align} 
then 
\begin{align}
f_{i_N}^{\max} \cdots f_{i_2}^{\max} f_{i_1}^{\max} b
=
S_{r_{i_N} \cdots r_{i_2} r_{i_1}} b .
\end{align}
\item
Let 
$\mathcal{B}_1 , \mathcal{B}_2 , \ldots , \mathcal{B}_M$
be regular $\U$-crystals, 
and let 
$b_{\nu} \in \mathcal{B}_{\nu}$, 
$\nu \in [M]$, 
be extremal elements such that, 
for every
$\beta \in \Delta_{\af}$, 
$\langle \beta^{\vee} , \mathrm{wt}(b_{\nu}) \rangle$, 
$\nu \in [M]$, 
are all nonnegative or all nonpositive. 
Then the equalities
\begin{align}
f_i^{\max}(b_1 \otimes b_2 \otimes \cdots \otimes b_M)
&= 
f_i^{\max} b_1 \otimes f_i^{\max} b_2 \otimes \cdots \otimes f_i^{\max} b_M , 
\label{eq:f^max}
\\[2mm]
S_x(b_1 \otimes b_2 \otimes \cdots \otimes b_M)
&= 
S_x b_1 \otimes S_x b_2 \otimes \cdots \otimes S_x b_M
\label{eq:S}
\end{align}
hold and \eqref{eq:f^max}--\eqref{eq:S} are extremal elements 
for all 
$i \in I_{\af}$
and 
$x \in W_{\af}$.
\end{enumerate}
\end{lem}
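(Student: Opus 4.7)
The plan for part (1) is to induct on $N$, using only the defining clauses (E1)--(E3) of extremal element together with the fact (stated immediately before the lemma) that the witnessing family $(b_x)_{x \in W_{\af}}$ coincides with $(S_x b)_{x \in W_{\af}}$. Set $x_0 = e$ and $x_n = r_{i_n} \cdots r_{i_1}$. Assuming inductively that the first $n-1$ applications have produced $S_{x_{n-1}} b = b_{x_{n-1}}$, its weight equals $x_{n-1} \mathrm{wt}(b)$; the hypothesis $\langle \alpha_{i_n}^{\vee}, x_{n-1}\mathrm{wt}(b)\rangle \geq 0$ is exactly what is needed to invoke (E2) with $x = x_{n-1}$ and $i = i_n$, yielding $f_{i_n}^{\max} b_{x_{n-1}} = b_{r_{i_n} x_{n-1}} = S_{x_n} b$. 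Chaining these $N$ steps together gives the claim.

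For the first equality of part (2), I would induct on $M$, writing $b_1 \otimes \cdots \otimes b_M = B \otimes b_M$ with $B = b_1 \otimes \cdots \otimes b_{M-1}$. Specializing the hypothesis to $\beta = \alpha_i$ splits the argument into two cases. When $\langle \alpha_i^{\vee}, \mathrm{wt}(b_\nu)\rangle \geq 0$ for every $\nu$, extremality via (E2) forces $\varepsilon_i(b_\nu) = 0$ and $\varphi_i(b_\nu) = \langle \alpha_i^{\vee}, \mathrm{wt}(b_\nu)\rangle$, and (T2)--(T3) combined with the inductive hypothesis give $\varepsilon_i(B) = 0$ and $\varphi_i(B) = \sum_{\nu < M} \varphi_i(b_\nu)$. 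Because the branching threshold in (T5) is strict, the first $\varphi_i(B)$ applications of $f_i$ all land on the left factor; after this, $\varphi_i(f_i^{\max} B) = 0 = \varepsilon_i(b_M)$ by repeated use of (C5), and the remaining $\varphi_i(b_M)$ applications pass to the right factor, producing $(f_i^{\max} B) \otimes (f_i^{\max} b_M)$. Combined with the inductive equality for $f_i^{\max} B$, this is the desired identity. In the mirror case $\langle \alpha_i^{\vee}, \mathrm{wt}(b_\nu)\rangle \leq 0$ for every $\nu$, condition (E3) gives $\varphi_i(b_\nu) = 0$ throughout and (T3) propagates this to the whole tensor, so both sides collapse to $b_1 \otimes \cdots \otimes b_M$.

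The second equality of part (2) and the extremality assertion will be obtained by induction on $\ell(x)$, writing $x = r_i x'$ reduced. The critical observation is that the hypothesis is stable under the $W_{\af}$-action: replacing each $b_\nu$ by $S_{x'} b_\nu$ changes its weight to $x'\mathrm{wt}(b_\nu)$, and for any $\beta \in \Delta_{\af}$ one has $\langle \beta^{\vee}, x'\mathrm{wt}(b_\nu)\rangle = \langle (x')^{-1}\beta^{\vee}, \mathrm{wt}(b_\nu)\rangle$, which is again a common-sign collection since $(x')^{-1}\beta \in \Delta_{\af}$. Under this preserved hypothesis, the formula \eqref{eq:W-action} for $S_{r_i}$ reduces uniformly to $f_i^{\max}$ or to $e_i^{\max}$ across the tensor factors (because $\varphi_i$ or $\varepsilon_i$ of the whole tensor is the sum of the individual values, as computed in the previous paragraph), so the first equality and its evident $e$-analogue distribute $S_{r_i}$ over the tensor; composing along a reduced expression for $x$ then yields \eqref{eq:S}. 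Extremality is maintained at each step because each $S_x b_\nu$ is again extremal and the sign hypothesis persists, so one may reapply the argument. The main bookkeeping burden is the signature-rule analysis in the first case of part (2); once that is settled, the rest is essentially formal.
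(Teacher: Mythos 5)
Your argument is correct and complete: part (1) is exactly the inductive unwinding of (E2) using $b_x = S_xb$, and in part (2) the signature-rule analysis (with $\varepsilon_i(b_\nu)=0$ in the nonnegative case and $\varphi_i(b_\nu)=0$ in the nonpositive case), the $W_{\af}$-invariance of the common-sign hypothesis via $\langle\beta^{\vee},x'\mathrm{wt}(b_\nu)\rangle=\langle (x'^{-1}\beta)^{\vee},\mathrm{wt}(b_\nu)\rangle$, and the verification of (E1)--(E3) for the tensor product are all sound. The paper declares this lemma's proof straightforward and omits it entirely, so there is nothing to compare against; your write-up supplies the expected argument.
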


\subsection{Semi-infinite Bruhat order} \label{Subsection:SiBO}

In this subsection, 
we collect some basic facts on 
semi-infinite Bruhat order
on an affine Weyl group
(see \cite{INS16,LS,P} for more details). 
Throughout this subsection, we take and fix 
$J \subset I$. 

Let
$W_J = \langle r_j \mid j \in J \rangle$, 
and let 
$W^J$
be the set of minimal(-length) coset representatives for $W/W_J$.
For 
$w \in W$, 
we denote by 
$\lfloor w \rfloor = \lfloor w \rfloor^J \in W^J$ 
the minimal coset representative for the coset 
$wW_J \in W/W_J$.
Define
\begin{align}
(\Delta_J)_{\af} 
&=
\{ \alpha + n\delta \mid \alpha \in \Delta_J , n \in \BZ \} \subset \Delta_{\af}, \label{eq:Delta_J_af} \\[1mm]
(\Delta_J)_{\af}^+ 
&=
(\Delta_J)_{\af} \cap \Delta_{\af}^+ 
=
\Delta_J^+ 
\sqcup
\{ \alpha + n\delta \mid \alpha \in \Delta_J , n \in \BZ_{>0} \}, \\[1mm]
(W_J)_{\af}
&=
W_J \ltimes \{ t_{\xi} \mid \xi \in Q_J^{\vee} \}
=
\langle r_{\beta} \mid \beta \in (\Delta_J)_{\af}^+ \rangle , \label{eq:W_J_af} \\[1mm]
(W^J)_{\af} 
&=
\{ x \in W_{\af} \mid x\beta \in \Delta_{\af}^+ \ \text{for all} \ \beta \in (\Delta_J)_{\af}^+ \} ; \label{eq:W^J_af}
\end{align}
note that 
$(W_{\emptyset})_{\af} = \{ e \}$ 
and 
$(W^{\emptyset})_{\af} = W_{\af}$.

We see from \cite{P} (see also \cite[Lemma 10.5]{LS}) that, 
for each $x \in W_{\af}$, 
there exist a unique $x_1 \in (W^J)_{\af}$
and a unique $x_2 \in (W_J)_{\af}$ such that
$x = x_1 x_2$. 
Define 
$\Pi^J : W_{\af} \rightarrow (W^J)_{\af}$ 
by $\Pi^J(x) = x_1$ 
if $x = x_1 x_2$ with 
$x_1 \in (W^J)_{\af}$
and 
$x_2 \in (W_J)_{\af}$. 
It follows immediately from 
\eqref{eq:Delta_J_af}--\eqref{eq:W^J_af}
that 
$\Pi^J = \Pi^J \circ \Pi^K$
if 
$K \subset J$. 

Set $\rho_J = (1/2)\sum_{\alpha \in \Delta_J^+} \alpha$; 
we abbreviate $\rho_J$ to $\rho$ if $J = I$. 
Define the semi-infinite length function 
$\lsi : W_{\af} \to \BZ$
by
\begin{align} \label{eq:sil}
\lsi(x) = \ell(w) + 2 \langle \xi,\rho \rangle
\end{align}
for $x = wt_{\xi} \in W_{\af}$ with $w \in W$ and $\xi \in Q^{\vee}$.

Define the (parabolic) semi-infinite Bruhat graph $\mathrm{SiB}^J$ 
to be the $\Delta_{\af}^+$-colored directed graph 
with vertex set $(W^J)_{\af}$ and edges of the form 
$x \xrightarrow{\ \beta \ } r_{\beta} x$ 
for 
$x \in (W^J)_{\af}$ and $\beta \in \Delta_{\af}^+$, 
where
$r_{\beta} x \in (W^J)_{\af}$ 
and 
$\lsi(r_{\beta}x) = \lsi(x) + 1$. 
We know from 
\cite[Appendix A]{INS16}
that the existence of the edge 
$x \xrightarrow{\ \beta \ } \Pi^J(r_{\beta}x)$
in 
$\mathrm{SiB}^J$
implies
$r_{\beta}x = \Pi^J (r_{\beta}x) \in (W^J)_{\af}$.
The semi-infinite Bruhat order is a partial order $\succeq$ on $(W^J)_{\af}$ 
defined as follows: 
for $x , y \in (W^J)_{\af}$, write $x \succeq y$ 
if there exists a directed path from $y$ to $x$ in $\mathrm{SiB}^J$. 
Write 
$x \succ y$ 
if 
$x \succeq y$ 
and 
$x \neq y$.

\begin{lem} \label{lem:act-wt}
Let 
$\lambda \in P^+$
be such that 
$J = \{ i \in I \mid \langle \alpha_i^{\vee} , \lambda \rangle = 0 \}$. 
We have 
$x \lambda = \Pi^J(x) \lambda$
for all 
$x \in W_{\af}$.
\end{lem}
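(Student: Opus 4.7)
The plan is to reduce the statement to showing that $(W_J)_{\af}$ fixes $\lambda$, and then to verify this on each factor of the semidirect product decomposition of $(W_J)_{\af}$.

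First, I would use the decomposition $x = \Pi^J(x) \cdot x_2$ with $\Pi^J(x) \in (W^J)_{\af}$ and $x_2 \in (W_J)_{\af}$, which is the defining decomposition from the paragraph following \eqref{eq:W^J_af}. Since the action of $W_{\af}$ on $\Fh_{\af}^*$ is a group action, the desired equality $x\lambda = \Pi^J(x)\lambda$ is equivalent to $x_2\lambda = \lambda$. Hence it suffices to prove that every element of $(W_J)_{\af}$ fixes $\lambda$.

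Next, by \eqref{eq:W_J_af}, I would write $x_2 = w t_\xi$ with $w \in W_J$ and $\xi \in Q_J^\vee$. For the finite Weyl group factor, I would expand $w$ as a product of simple reflections $r_j$ with $j \in J$; since $\lambda \in P^+$ satisfies $\langle \alpha_j^\vee, \lambda \rangle = 0$ for all $j \in J$ by the hypothesis on $J$, each such $r_j$ acts on $\lambda$ as $r_j\lambda = \lambda - \langle \alpha_j^\vee, \lambda \rangle \alpha_j = \lambda$, and hence $w\lambda = \lambda$.

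For the translation factor, I would invoke the formula $wt_\xi \lambda = w\lambda - \langle \xi, \lambda \rangle \delta$ from the subsection on untwisted affine root data, which applies because $\lambda \in P^+ = \sum_{i \in I} \BZ_{\geq 0}\varpi_i$ and $\langle \mathrm{c}, \varpi_i \rangle = 0$ for all $i \in I$ force $\langle \mathrm{c}, \lambda \rangle = 0$. Writing $\xi = \sum_{j \in J} c_j \alpha_j^\vee$ gives $\langle \xi, \lambda \rangle = \sum_{j \in J} c_j \langle \alpha_j^\vee, \lambda \rangle = 0$, so combined with $w\lambda = \lambda$ from the previous step, we obtain $x_2\lambda = \lambda$, which completes the proof. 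There is no real obstacle here: the argument is a direct unpacking of the definitions, with the only subtlety being the need to verify the level-zero condition $\langle \mathrm{c}, \lambda \rangle = 0$ before applying the translation formula.
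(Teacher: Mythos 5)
Your proof is correct and follows essentially the same route as the paper: both reduce to showing that the $(W_J)_{\af}$-factor in the decomposition $x=\Pi^J(x)x_2$ fixes $\lambda$. The paper verifies this in one line by noting that $(W_J)_{\af}$ is generated by the reflections $r_\beta$, $\beta\in(\Delta_J)_{\af}^+$, each of which fixes $\lambda$ since $\langle\beta^\vee,\lambda\rangle=0$; your verification via the semidirect product $W_J\ltimes\{t_\xi\mid\xi\in Q_J^\vee\}$ and the level-zero condition is an equally valid unpacking of the same fact.
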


\begin{proof}
The assertion follows from 
$\langle \beta^{\vee} , \lambda \rangle = 0$
for all
$\beta \in (\Delta_J)_{\af}^+$. 
\end{proof}

\begin{lem}[{\cite{P}; see also \cite[Lemma 10.7 and Proposition 10.10]{LS}}] \label{lem:Pi} \ 
\begin{enumerate}[(1)]
\item
$\Pi^J(w) = \lfloor w \rfloor^J$ 
for $w \in W$.
\item
$\Pi^J(x t_{\xi}) = \Pi^J(x) \Pi^J(t_{\xi})$
for $x \in W_{\af}$ and $\xi \in Q^{\vee}$.
%
\end{enumerate}
\end{lem}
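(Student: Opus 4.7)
The plan for both parts is to exploit the uniqueness of the decomposition $W_{\af} = (W^J)_{\af} \cdot (W_J)_{\af}$ that defines $\Pi^J$. For (1), I would start from the classical decomposition $w = \lfloor w \rfloor^J v$ with $\lfloor w \rfloor^J \in W^J$ and $v \in W_J \subset (W_J)_{\af}$; by uniqueness it suffices to verify $\lfloor w \rfloor^J \in (W^J)_{\af}$. Given $\beta \in (\Delta_J)_{\af}^+$, either $\beta \in \Delta_J^+$, in which case $\lfloor w \rfloor^J \beta \in \Delta^+ \subset \Delta_{\af}^+$ by the defining property of minimal coset representatives, or $\beta = \alpha + n\delta$ with $\alpha \in \Delta_J$ and $n > 0$, in which case $\lfloor w \rfloor^J \beta = \lfloor w \rfloor^J \alpha + n\delta$ has $\delta$-coefficient $n > 0$ and hence lies in $\Delta_{\af}^+$.

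For (2), the first preliminary step is to observe that $t_{\xi}$ normalizes $(W_J)_{\af}$: for any reflection $r_{\beta}$ with $\beta = \alpha + n\delta \in (\Delta_J)_{\af}$, one has $t_{\xi}^{-1} r_{\beta} t_{\xi} = r_{t_{\xi}^{-1}\beta}$, and $t_{\xi}^{-1}\beta = \alpha + (n + \langle \xi, \alpha\rangle)\delta$ still lies in $(\Delta_J)_{\af}$ because $\alpha \in \Delta_J$. Writing $x = \Pi^J(x) u$ and $t_{\xi} = \Pi^J(t_{\xi}) v$ with $u, v \in (W_J)_{\af}$, this normalization yields $u t_{\xi} = t_{\xi} u'$ for some $u' \in (W_J)_{\af}$, so $x t_{\xi} = \Pi^J(x) \Pi^J(t_{\xi}) (v u')$ with $v u' \in (W_J)_{\af}$.

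The main obstacle is then the verification $\Pi^J(x) \Pi^J(t_{\xi}) \in (W^J)_{\af}$, which a naive application of the definition does not immediately give: for $\beta \in (\Delta_J)_{\af}^+$ one knows a priori only that $\Pi^J(t_{\xi})\beta \in \Delta_{\af}^+$, which is too weak to be fed into the analogous property of $\Pi^J(x)$. The fix is to observe that $\Pi^J(t_{\xi}) = t_{\xi} v^{-1}$ preserves the set $(\Delta_J)_{\af}$ setwise, since $t_{\xi}$ only alters the $\delta$-coefficient and $v \in (W_J)_{\af}$ acts by reflections within the subsystem $(\Delta_J)_{\af}$. Consequently $\Pi^J(t_{\xi})\beta \in (\Delta_J)_{\af} \cap \Delta_{\af}^+ = (\Delta_J)_{\af}^+$, after which applying $\Pi^J(x) \in (W^J)_{\af}$ sends it into $\Delta_{\af}^+$. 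Uniqueness of the decomposition then yields $\Pi^J(xt_{\xi}) = \Pi^J(x)\Pi^J(t_{\xi})$.
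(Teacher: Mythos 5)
Your proof is correct. The paper itself gives no argument for this lemma --- it is quoted from Peterson's notes and from Lam--Shimozono (Lemma 10.7 and Proposition 10.10) --- and your reasoning is essentially the standard one underlying those references: part (1) reduces to checking $\lfloor w\rfloor^J \in (W^J)_{\af}$ directly from \eqref{eq:W^J_af}, and part (2) hinges on the observation that $t_{\xi}$ normalizes $(W_J)_{\af}$ and that $\Pi^J(t_{\xi})$ stabilizes $(\Delta_J)_{\af}$ setwise, so that it maps $(\Delta_J)_{\af}^+$ into $(\Delta_J)_{\af}^+$ rather than merely into $\Delta_{\af}^+$. You correctly identified that last point as the only nontrivial step, and your handling of it is sound.
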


For simplicity of notation, we let $T_{\xi}^J$ stand for 
$\Pi^J(t_{\xi}) \in (W^J)_{\af}$ for $\xi \in Q^{\vee}$. 
By Lemma \ref{lem:Pi}, we have
\begin{align} \label{eq:(W^J)_af=wT_xi}
(W^J)_{\af} = \left\{ wT^J_{\xi} \ \vline \ w \in W^J , \ \xi \in Q^{\vee} \right\} .
\end{align}

Let
\begin{align}
[\,\cdot\,]_{I \setminus J} : 
Q^{\vee} = Q^{\vee}_J \oplus Q^{\vee}_{I \setminus J} \rightarrow Q^{\vee}_{I \setminus J}
\end{align}
be the projection. 
For 
$\xi_1 , \xi_2 \in Q^{\vee}$, 
write 
$\xi_1 \succeq \xi_2$
if 
$\xi_1 - \xi_2 \in \sum_{i \in I} \BZ_{\geq 0} \alpha_i^{\vee}$. 

\begin{lem}[{\cite[Lemmas 6.1.1 and 6.2.1]{INS16}}] \label{lem:order} \ 
\begin{enumerate}[(1)]
\item
Let 
$K \subset J$
and 
$x,y \in (W^K)_{\af}$. 
If 
$x \succeq y$
in 
$(W^K)_{\af}$, 
then 
$\Pi^J(x) \succeq \Pi^J(y)$
in 
$(W^J)_{\af}$. 
\item
Let 
$\xi_1 , \xi_2 \in Q^{\vee}$. 
We have 
$\Pi^J(t_{\xi_1}) \succeq \Pi^J(t_{\xi_2})$
if and only if 
$[\xi_1]_{I \setminus J} \succeq [\xi_2]_{I \setminus J}$.
In particular, we have 
$\Pi^J(t_{\xi_1}) \succeq \Pi^J(t_{\xi_2})$
if and only if 
$\Pi^{I \setminus \{ i \}}(t_{\xi_1}) \succeq \Pi^{I \setminus \{ i \}}(t_{\xi_2})$
for all 
$i \in I \setminus J$.
\end{enumerate}
\end{lem}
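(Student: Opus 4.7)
The plan is to establish (1) by tracking a single cover of $\mathrm{SiB}^K$ under the projection $\Pi^J$, then deduce both directions of (2) by combining (1) with a weight-theoretic test based on Lemma \ref{lem:act-wt}.

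For (1), since $\succeq$ is the transitive closure of its covers, it suffices to take a cover $y \xrightarrow{\beta} x = r_\beta y$ in $\mathrm{SiB}^K$ and show $\Pi^J(y) \preceq \Pi^J(x)$ in $(W^J)_{\af}$. The inclusions $(\Delta_K)_{\af}^+ \subset (\Delta_J)_{\af}^+$ and $(W_K)_{\af} \subset (W_J)_{\af}$ (valid because $K \subset J$) yield $\Pi^J = \Pi^J \circ \Pi^K$, so we may freely work with representatives $y, r_\beta y \in W_{\af}$. Two cases arise: if $r_\beta y$ lies in the same left $(W_J)_{\af}$-coset as $y$, then $\Pi^J(r_\beta y) = \Pi^J(y)$ and the conclusion is trivial; otherwise the reflection descends to a genuine cover $\Pi^J(y) \to \Pi^J(r_\beta y)$ in $\mathrm{SiB}^J$. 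I would justify this last step using the ``affinization'' description of $\mathrm{SiB}^J$ in terms of the parabolic quantum Bruhat graph $\mathrm{QB}^J$ (realization (ii) from the introduction, cf.\ \cite[Appendix A]{INS16}): a $\mathrm{QB}^K$-edge projects either to a $\mathrm{QB}^J$-edge or to a loop.

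For the ``if'' direction of (2), decompose $\xi_k = [\xi_k]_{I \setminus J} + [\xi_k]_J$ and use $t_{[\xi_k]_J} \in (W_J)_{\af}$ (since $[\xi_k]_J \in Q_J^\vee$) to obtain $\Pi^J(t_{\xi_k}) = \Pi^J(t_{\eta_k})$, where $\eta_k := [\xi_k]_{I \setminus J}$. The hypothesis becomes $\eta_1 - \eta_2 \in \sum_{i \in I \setminus J} \mathbb{Z}_{\geq 0}\alpha_i^\vee$, and the standard description of $\succeq$ restricted to the translation subgroup of $W_{\af}$ (cf.\ \cite{P} and \cite[\S 10]{LS}) gives $t_{\eta_1} \succeq t_{\eta_2}$ in $W_{\af}$. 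Applying (1) with $K = \emptyset$ then yields $\Pi^J(t_{\xi_1}) \succeq \Pi^J(t_{\xi_2})$.

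For the ``only if'' direction, fix $\lambda \in P^+$ with $\{i \in I \mid \langle \alpha_i^\vee, \lambda\rangle = 0\} = J$, for example $\lambda = \sum_{i \in I \setminus J}\varpi_i$. Lemma \ref{lem:act-wt} yields $\Pi^J(t_{\xi_k})\lambda = t_{\xi_k}\lambda = \lambda - \langle \eta_k, \lambda\rangle\delta$ (using $\langle Q_J^\vee, \lambda\rangle = 0$). The main obstacle is to verify the following monotonicity: whenever $x \succeq y$ in $(W^J)_{\af}$ and $\mu \in P^+$ is a level-zero weight with stabilizer $W_J$, the coefficient of $\delta$ in $x\mu$ is no greater than that in $y\mu$. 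Granting this, taking $\lambda$ to be each of the fundamental weights $\varpi_i$ for $i \in I \setminus J$ forces $\langle \eta_1 - \eta_2, \varpi_i\rangle \geq 0$ for each such $i$, which is exactly $[\xi_1]_{I \setminus J} \succeq [\xi_2]_{I \setminus J}$. The monotonicity itself would be proved by examining the weight change $z\mu - z'\mu = -\langle \gamma^\vee, z'\mu\rangle\gamma$ along a single cover $z' \xrightarrow{\gamma} z$ in $\mathrm{SiB}^J$ and invoking the structural positivity of $\gamma \in \Delta_{\af}^+$ (decomposing $\gamma = \alpha + m\delta$ and using the affinized $\mathrm{QB}^J$-description to control the sign of $m\langle \alpha^\vee, z'\mu\rangle$ along Bruhat versus quantum covers).
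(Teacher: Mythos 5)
The paper offers no proof of this lemma: it is quoted verbatim from \cite[Lemmas 6.1.1 and 6.2.1]{INS16}, so there is no internal argument to compare yours against. Your outline is essentially the standard one and the overall strategy is sound: reduce (1) to a single edge of $\mathrm{SiB}^K$ and project it, then obtain (2) by combining (1) (for the ``if'' direction, after discarding $t_{[\xi_k]_J} \in (W_J)_{\af}$ and using the translation-lattice description of $\succeq$ on $W_{\af}$) with a $\delta$-coefficient monotonicity (for the ``only if'' direction). However, be aware that in part (1) the entire mathematical content sits in the step you defer: the claim that an edge of $\mathrm{QB}^K$ projects to an edge or a loop of $\mathrm{QB}^J$, with matching translation data on the $\mathrm{SiB}$ side. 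This is not a formal consequence of Lemma \ref{lem:Q=SiB}: writing $w = \lfloor w\rfloor^J u$ with $u \in W_J \cap W^K$, the projected reflection is $r_{u\gamma}$ rather than $r_\gamma$, and one must verify the length and coroot conditions for $\lfloor w\rfloor^J \to \lfloor \lfloor w\rfloor^J r_{u\gamma}\rfloor^J$ separately in the Bruhat and quantum cases. This is the edge-projection lemma for parabolic quantum Bruhat graphs (cf.\ \cite[\S 4]{LNSSS15}), so the step is true, but as written your proof of (1) consists of invoking it.

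In the ``only if'' direction of (2) there is also an internal mismatch. You state the monotonicity of the $\delta$-coefficient for $\mu \in P^+$ \emph{with stabilizer $W_J$}, but then apply it with $\mu = \varpi_i$, whose stabilizer is $W_{I\setminus\{i\}} \supsetneq W_J$ in general; and a single $\lambda$ with $J_\lambda = J$ only yields $\langle [\xi_1]_{I\setminus J} - [\xi_2]_{I\setminus J}, \lambda\rangle \geq 0$, which does not separate the individual coefficients. The repair is immediate: along an edge $z' \xrightarrow{\,\beta\,} z$ of $\mathrm{SiB}^J$ with $\beta = w\gamma + \chi\delta$ one has $z\mu = z'\mu - \langle\gamma^{\vee},\mu\rangle(w\gamma + \chi\delta)$, and the computation only uses $\langle\gamma^{\vee},\mu\rangle \geq 0$, so the monotonicity holds for \emph{every} dominant $\mu$; alternatively, first apply part (1) with $K = J$ to pass to $(W^{I\setminus\{i\}})_{\af}$ and then use $\mu = \varpi_i$ there, where $J_{\varpi_i} = I\setminus\{i\}$. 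With either repair your argument goes through.
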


\begin{lem}[{\cite[Remark 4.1.3]{INS16}}] \label{lem:r_ix}
Let 
$x \in (W^J)_{\af}$, 
$i \in I_{\af}$, 
and let 
$\lambda \in P^+$
be such that 
$J = \{ j \in I \mid \langle \alpha_j^{\vee} , \lambda \rangle = 0 \}$.
\begin{enumerate}[(1)]
\item
$r_i x \in (W^J)_{\af}$
if and only if 
$\langle \alpha_i^{\vee} , x\lambda \rangle \neq 0$.
\item
$\Pi^J(r_i x) = x$
if and only if 
$\langle \alpha_i^{\vee} , x\lambda \rangle = 0$.
\item
$r_i x \xleftarrow{\alpha_i} x$
(resp. 
$x \xleftarrow{\alpha_i} r_i x$)
if and only if 
$\langle \alpha_i^{\vee} , x\lambda \rangle > 0$
(resp.
$\langle \alpha_i^{\vee} , x\lambda \rangle < 0$).
\end{enumerate}
\end{lem}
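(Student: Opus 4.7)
The plan is to derive all three parts from the following key algebraic observation: because $\lambda$ is level-zero (so $\langle \mathrm{c},\lambda\rangle = 0$) and $J$ is precisely the stabilizer indices of $\lambda$, a real affine root $\gamma = \alpha + n\delta$ satisfies $\langle \gamma^{\vee}, \lambda\rangle = 0$ if and only if $\gamma \in (\Delta_J)_{\af} \cup -(\Delta_J)_{\af}$. The point is that $\langle \gamma^{\vee}, \lambda\rangle = \langle \alpha^{\vee},\lambda\rangle$ because $\lambda$ kills $\mathrm{c}$, and $\{\alpha \in \Delta \mid \langle \alpha^{\vee},\lambda\rangle = 0\} = \Delta_J$ by the standard description of stabilizers in finite Weyl groups. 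Combined with the identity $\langle \alpha_i^{\vee}, x\lambda\rangle = \langle (x^{-1}\alpha_i)^{\vee}, \lambda\rangle$, this converts the numerical hypothesis into the statement that $x^{-1}\alpha_i \in \pm (\Delta_J)_{\af}$.

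I would prove (2) first. The identity $\Pi^J(r_i x) = x$ is equivalent to $r_i x \in x(W_J)_{\af}$, i.e., $r_{x^{-1}\alpha_i} = x^{-1} r_i x \in (W_J)_{\af}$. By the description \eqref{eq:W_J_af}, the reflection $r_\gamma$ belongs to $(W_J)_{\af}$ if and only if $\gamma \in \pm (\Delta_J)_{\af}$, a standard fact about reflection subgroups generated by roots. Invoking the key observation above then yields (2).

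For (1) I would unravel \eqref{eq:W^J_af}: the condition $r_i x \in (W^J)_{\af}$ says $r_i x \beta \in \Delta_{\af}^+$ for every $\beta \in (\Delta_J)_{\af}^+$. Since $x \in (W^J)_{\af}$ already gives $x\beta \in \Delta_{\af}^+$, and a simple reflection sends a positive real root to a negative one only when that root equals $\alpha_i$, the condition fails precisely when some $\beta \in (\Delta_J)_{\af}^+$ satisfies $x\beta = \alpha_i$, i.e., $x^{-1}\alpha_i \in (\Delta_J)_{\af}^+$. The alternative $x^{-1}\alpha_i \in -(\Delta_J)_{\af}^+$ is ruled out because it would make $x$ send the positive root $-x^{-1}\alpha_i$ to the negative root $-\alpha_i$, contradicting $x \in (W^J)_{\af}$. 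So $r_i x \notin (W^J)_{\af}$ is equivalent to $x^{-1}\alpha_i \in \pm (\Delta_J)_{\af}$, i.e., by the key observation, to $\langle \alpha_i^{\vee}, x\lambda\rangle = 0$; this is the contrapositive of (1).

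For (3), part (1) already supplies the membership condition, so the remaining task is to pin down the sign of $\lsi(r_i x) - \lsi(x)$. Since $r_i$ is a simple reflection this difference lies in $\{\pm 1\}$, and I would split into $i \in I$ versus $i = 0$. Writing $x = w t_\xi$, for $i \in I$ one has $r_i x = (r_i w) t_\xi$, so by \eqref{eq:sil} the difference equals $\ell(r_i w) - \ell(w)$, which is $+1$ exactly when $w^{-1}\alpha_i \in \Delta^+$; on the other hand, using $\langle \alpha_i^{\vee}, \delta\rangle = 0$, one has $\langle \alpha_i^{\vee}, x\lambda\rangle = \langle w^{-1}\alpha_i^{\vee}, \lambda\rangle$, whose sign (given nonvanishing from (1)) matches that of $w^{-1}\alpha_i$ because $\lambda$ is $J$-regular dominant. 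For $i = 0$ one uses $r_0 = r_\theta t_{-\theta^{\vee}}$ with $\theta$ the highest root and tracks both the Weyl part and the translation part through \eqref{eq:sil}. This $i = 0$ computation is the main obstacle: it is elementary but requires careful bookkeeping through the semidirect product decomposition $W_{\af} = W \ltimes \{t_\xi \mid \xi \in Q^{\vee}\}$ to check that the change in $\ell$ and the shift in the translation part combine to a net $\pm 1$ matching the sign of $\langle \alpha_0^{\vee}, x\lambda\rangle$.
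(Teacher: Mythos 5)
The paper does not actually prove this lemma --- it is imported verbatim from \cite[Remark 4.1.3]{INS16} --- so there is no in-paper argument to compare against, and your proposal has to stand on its own. Parts (1) and (2) are correct and complete: the reduction of everything to the single statement ``$\langle\alpha_i^{\vee},x\lambda\rangle=0$ iff $x^{-1}\alpha_i\in(\Delta_J)_{\af}$'' is the right move (note $(\Delta_J)_{\af}$ is already closed under negation, so the $\pm$ is harmless), the identification of $\Pi^J(r_ix)=x$ with $r_{x^{-1}\alpha_i}\in(W_J)_{\af}$ is valid by uniqueness of the factorization $W_{\af}=(W^J)_{\af}(W_J)_{\af}$, and the argument for (1) via ``$r_i$ permutes $\Delta_{\af}^+\setminus\{\alpha_i\}$'' is sound. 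The $i\in I$ half of (3) is also complete.

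The gap is the $i=0$ half of (3), which you explicitly defer as ``careful bookkeeping.'' This is in fact the only non-formal step in the whole lemma, and your blanket assertion at the start of (3) that $\lsi(r_ix)-\lsi(x)\in\{\pm1\}$ is precisely what is at stake there, so it cannot be assumed. Concretely, writing $x=wt_{\xi}$ and $r_0=r_{\theta}t_{-\theta^{\vee}}$ with $\theta$ the highest root, one gets $r_0x=(r_{\theta}w)t_{\xi-w^{-1}\theta^{\vee}}$ and hence $\lsi(r_0x)-\lsi(x)=\ell(r_{\theta}w)-\ell(w)-2\langle w^{-1}\theta^{\vee},\rho\rangle$, while $\langle\alpha_0^{\vee},x\lambda\rangle=-\langle w^{-1}\theta^{\vee},\lambda\rangle$. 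So what you must actually prove is the length identity $\ell(r_{\theta}w)-\ell(w)=2\langle w^{-1}\theta^{\vee},\rho\rangle-1$ when $w^{-1}\theta\in\Delta^+$ and $=2\langle w^{-1}\theta^{\vee},\rho\rangle+1$ when $w^{-1}\theta\in\Delta^-$. This does hold, and can be obtained by combining $\ell(r_{\theta}w)-\ell(w)=\#S-2\,\#\{\gamma\in S\mid w^{-1}\gamma\in\Delta^-\}$ for $S=\{\gamma\in\Delta^+\mid\langle\theta^{\vee},\gamma\rangle>0\}$ (using that $r_{\theta}$ maps $\Delta^+\setminus S$ to itself and $S$ to $-S$), the count $\#S=2\langle\theta^{\vee},\rho\rangle-1$, the formula $\rho-w\rho=\sum_{\gamma\in\Delta^+,\,w^{-1}\gamma\in\Delta^-}\gamma$, and the fact that $\langle\theta^{\vee},\gamma\rangle\in\{0,1\}$ for $\gamma\in\Delta^+\setminus\{\theta\}$; the two cases differ exactly by whether $\theta$ itself contributes (with coefficient $2$) to the last sum. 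Until this computation is written out, part (3) is only established for $i\in I$.
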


The next lemma is a reformulation of 
the ``diamond lemma" for semi-infinite Bruhat order 
obtained in \cite[\S 4.1]{INS16}.

\begin{lem}\label{lem:diamond}
Let 
$x,y \in (W^J)_{\af}$ 
and 
$i \in I_{\af}$
be such that 
$\Pi^J(r_ix) \succeq x$ 
and 
$\Pi^J(r_iy) \succeq y$.
\begin{enumerate}[(1)]
\item
If 
$\Pi^J(r_i x) \succeq y$, 
then
$x \succeq y$
and 
$\Pi^J(r_ix) \succeq \Pi^J(r_iy)$.
\item
$x \succeq y$
if and only if 
$\Pi^J(r_ix) \succeq \Pi^J(r_iy)$.
\end{enumerate}
\end{lem}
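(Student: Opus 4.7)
The plan is to deduce this lemma from the diamond property for semi-infinite Bruhat order established in \cite[\S 4.1]{INS16}, which is the direct analogue of the classical lifting property for Coxeter groups. The first step is to unpack the hypothesis $\Pi^J(r_ix) \succeq x$ using Lemma \ref{lem:r_ix}: choosing $\lambda \in P^+$ with stabilizer $W_J$, either $\langle \alpha_i^\vee , x\lambda \rangle = 0$ and $\Pi^J(r_ix) = x$, or $\langle \alpha_i^\vee , x\lambda \rangle \neq 0$, in which case $r_ix \in (W^J)_{\af}$ and the relation $\Pi^J(r_ix) = r_ix \succeq x$ forces the upward edge $x \xrightarrow{\alpha_i} r_ix$ in $\mathrm{SiB}^J$. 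The identical dichotomy applies to $y$.

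For part (1), I would argue by case analysis on this dichotomy. In the degenerate case $\Pi^J(r_ix) = x$, the assertion $x \succeq y$ is immediate from the hypothesis $\Pi^J(r_ix) \succeq y$, and the remaining assertion $x \succeq \Pi^J(r_iy)$ is either trivial (if $\Pi^J(r_iy) = y$) or follows from a one-sided version of the same diamond argument applied at $y$. In the principal case, where $\Pi^J(r_ix) = r_ix \neq x$ and $\Pi^J(r_iy) = r_iy \neq y$, the conclusion is exactly the diamond lemma in $\mathrm{SiB}^J$: one takes a directed path $y = z_0 \to z_1 \to \cdots \to z_N = r_ix$ in $\mathrm{SiB}^J$ and inductively lifts it using the upward $\alpha_i$-edge at the current vertex, at each step either short-circuiting through $x$ (yielding $x \succeq y$) or producing an $\alpha_i$-lifted edge that ultimately gives a path from $r_iy$ to $r_ix$ (yielding $\Pi^J(r_ix) \succeq \Pi^J(r_iy)$).

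Part (2) then follows formally from (1). For the forward direction, $x \succeq y$ combined with $\Pi^J(r_ix) \succeq x$ gives $\Pi^J(r_ix) \succeq y$, so part (1) yields $\Pi^J(r_ix) \succeq \Pi^J(r_iy)$. For the reverse direction, $\Pi^J(r_ix) \succeq \Pi^J(r_iy) \succeq y$ supplies the hypothesis of part (1), which then yields $x \succeq y$.

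The main technical point, rather than a genuine obstacle, is verifying that the lifting argument of \cite[\S 4.1]{INS16} carries over cleanly to the parabolic setting $(W^J)_{\af}$. This hinges on the compatibility of $\Pi^J$ with edges of $\mathrm{SiB}^J$ recorded after the definition of the semi-infinite Bruhat graph (the target of an $\alpha_i$-edge from $x \in (W^J)_{\af}$ already lies in $(W^J)_{\af}$), together with the control on semi-infinite length encoded in the equality $\lsi(r_ix) = \lsi(x) \pm 1$ whenever $r_ix \in (W^J)_{\af}$. These two facts allow the inductive lifting along a path in $\mathrm{SiB}^J$ to be carried out entirely within $(W^J)_{\af}$, with no detours through representatives in $W_{\af}$ required.
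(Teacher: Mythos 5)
Your proposal is correct and matches the paper's treatment: the paper gives no independent proof of this lemma, stating only that it is a reformulation of the diamond lemma from \cite[\S 4.1]{INS16}, which is exactly the result you invoke and then elaborate via the dichotomy from Lemma \ref{lem:r_ix} and the standard path-lifting induction. Your formal deduction of part (2) from part (1) and transitivity is also the intended reading.
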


%
%

\begin{lem} \label{lem:wt-wt}
Let 
$w,v \in W$ 
and 
$\xi , \zeta \in Q^{\vee}$. 
If
$\Pi^J(wt_{\xi}) \succeq \Pi^J(wt_{\zeta})$, 
then
$\Pi^J(vt_{\xi}) \succeq \Pi^J(vt_{\zeta})$. 
\end{lem}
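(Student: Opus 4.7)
The plan is to reduce to a one-step claim and then iterate. The key observation is that, since $i \in I$ and $\delta$ is the null root, the quantity $\langle \alpha_i^\vee, x\lambda\rangle$ (for $\lambda$ level-zero) does not depend on the translation part of $x$. This will force $r_i$ to act on $\Pi^J(wt_\xi)$ and $\Pi^J(wt_\zeta)$ in the ``same direction'', so the diamond lemma preserves the relation.

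I first fix $\lambda = \sum_{i \in I \setminus J} \varpi_i \in P^+$, so that $J = \{ j \in I \mid \langle \alpha_j^\vee,\lambda\rangle = 0\}$ and $\langle \mathrm{c},\lambda\rangle = 0$. Writing any $x \in W_{\af}$ as $x = x_1 x_2$ with $x_1 = \Pi^J(x)$ and $x_2 \in (W_J)_{\af}$, a direct check (using that $(W_J)_{\af}$ is a subgroup) gives $\Pi^J(r_i x) = \Pi^J(r_i \Pi^J(x))$ for every $i \in I_{\af}$. Thus all the action happens at the level of the projections $x' := \Pi^J(wt_\xi)$ and $y' := \Pi^J(wt_\zeta)$. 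By Lemma~\ref{lem:act-wt} together with the translation identity $wt_\xi\lambda = w\lambda - \langle \xi,\lambda\rangle \delta$ (valid because $\langle \mathrm{c},\lambda\rangle = 0$), and since $\langle \alpha_i^\vee,\delta\rangle = 0$ for $i \in I_{\af}$, I obtain
\begin{align*}
\langle \alpha_i^\vee, x'\lambda\rangle \ = \ \langle \alpha_i^\vee, w\lambda\rangle \ = \ \langle \alpha_i^\vee, y'\lambda\rangle
\end{align*}
for every $i \in I$. In particular, the common sign of these pairings is the same at $x'$ and $y'$.

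I now establish the one-step claim: \emph{if $x' \succeq y'$ and $i \in I$, then $\Pi^J(r_i x') \succeq \Pi^J(r_i y')$.} By Lemma~\ref{lem:r_ix} applied with the above $\lambda$, there are three parallel cases. If $\langle \alpha_i^\vee, w\lambda\rangle = 0$, then $\Pi^J(r_i x') = x'$ and $\Pi^J(r_i y') = y'$, and there is nothing to prove. If $\langle \alpha_i^\vee, w\lambda\rangle > 0$, then both edges $x' \to \Pi^J(r_i x')$ and $y' \to \Pi^J(r_i y')$ go ``up'' in $\mathrm{SiB}^J$, so Lemma~\ref{lem:diamond}~(2) yields $\Pi^J(r_i x') \succeq \Pi^J(r_i y')$ directly. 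If $\langle \alpha_i^\vee, w\lambda\rangle < 0$, both edges go ``down''; I then set $a := \Pi^J(r_i x')$ and $b := \Pi^J(r_i y')$, observe that $\langle \alpha_i^\vee, a\lambda\rangle = -\langle \alpha_i^\vee, x'\lambda\rangle > 0$ and likewise for $b$, so $\Pi^J(r_i a) = x'$ and $\Pi^J(r_i b) = y'$ are both upward edges, and apply Lemma~\ref{lem:diamond}~(2) in reverse to deduce $a \succeq b$ from $x' \succeq y'$.

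To finish, I write $vw^{-1} = r_{i_k} \cdots r_{i_1}$ as a (not necessarily reduced) product of simple reflections with $i_1,\dots,i_k \in I$ and iterate the one-step claim $k$ times, starting from the hypothesis $\Pi^J(wt_\xi) \succeq \Pi^J(wt_\zeta)$ and using that at each stage the intermediate element lies in $W$, so the claim is reapplicable. The main (and essentially only) conceptual obstacle is recognizing that the restriction $i \in I$ is precisely what makes $\langle \alpha_i^\vee, \Pi^J(w t_\xi)\lambda\rangle$ independent of $\xi$; without this ``null-root vanishing'', $r_i$ could act in opposite directions on $\Pi^J(wt_\xi)$ and $\Pi^J(wt_\zeta)$, and the diamond lemma would not apply uniformly. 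Everything else is a routine invocation of Lemmas~\ref{lem:act-wt}, \ref{lem:r_ix}, and \ref{lem:diamond}.
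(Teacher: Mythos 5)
Your proof is correct and rests on exactly the same mechanism as the paper's: the pairing $\langle \alpha_i^{\vee}, \Pi^J(wt_{\xi})\lambda\rangle$ depends only on the finite part $w$ (Lemma~\ref{lem:act-wt} plus null-root vanishing), so Lemma~\ref{lem:r_ix} and the diamond lemma (Lemma~\ref{lem:diamond}~(2)) transfer the order relation uniformly across each simple reflection. The only organizational difference is that the paper inducts on $\ell(w)$ choosing a descent at each step (so only the nonpositive-pairing case of the diamond lemma is ever needed, and the statement is reduced to comparing $\Pi^J(t_{\xi})$ with $\Pi^J(t_{\zeta})$), whereas you walk directly from $w$ to $v$ along an arbitrary word for $vw^{-1}$ and handle both signs; both variants are valid.
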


\begin{proof}
It suffices to prove that 
$\Pi^J(wt_{\xi}) \succeq \Pi^J(wt_{\zeta})$
if and only if 
$\Pi^J(t_{\xi}) \succeq \Pi^J(t_{\zeta})$. 
The proof is by induction on $\ell(w)$. 
If $\ell(w) = 0$, then the assertion is obvious. 
Assume that $\ell(w) > 0$.
Let 
$i \in I$ 
be such that 
$\ell(r_i w) < \ell(w)$. 
By induction hypothesis, 
$\Pi^J(r_iwt_{\xi}) \succeq \Pi^J(r_iwt_{\zeta})$
if and only if 
$\Pi^J(t_{\xi}) \succeq \Pi^J(t_{\zeta})$. 
The proof is completed by showing that 
$\Pi^J(r_iwt_{\xi}) \succeq \Pi^J(r_iwt_{\zeta})$
if and only if 
$\Pi^J(wt_{\xi}) \succeq \Pi^J(wt_{\zeta})$.
Let 
$\lambda \in P^+$
be such that 
$J = \{ j \in I \mid \langle \alpha_j^{\vee} , \lambda \rangle = 0 \}$; 
note that 
$0 \geq \langle \alpha_i^{\vee} , w \lambda \rangle 
= \langle \alpha_i^{\vee} , \Pi^J(wt_{\xi}) \lambda \rangle
= \langle \alpha_i^{\vee} , \Pi^J(wt_{\zeta}) \lambda \rangle$. 
We see from Lemma \ref{lem:r_ix} (2)--(3) that 
$\Pi^J(wt_{\xi}) 
\succeq 
\Pi^J(r_iwt_{\xi})$
and 
$\Pi^J(wt_{\zeta}) 
\succeq 
\Pi^J(r_iwt_{\zeta})$.
By Lemma \ref{lem:diamond} (2), 
$\Pi^J(r_iwt_{\xi}) \succeq \Pi^J(r_iwt_{\zeta})$
if and only if 
$\Pi^J(wt_{\xi}) \succeq \Pi^J(wt_{\zeta})$.
\end{proof}

\subsection{Extremal weight modules and their crystal bases} \label{Subsection:extremal}

In this subsection, 
following \cite{BN04,K94,K02}, 
we review some of the standard facts on 
extremal weight modules and their crystal bases.

For 
$\lambda \in P^+$, 
let 
$V(\lambda)$ 
be the extremal weight $\U$-module generated by an extremal weight vector 
$u_{\lambda}$ 
of extremal weight 
$\lambda$, 
and let 
$\mathcal{B}(\lambda)$ 
be the crystal basis of 
$V(\lambda)$
(\cite[Proposition 8.2.2]{K94}; see also \cite[\S 3.2]{K02}). 
Note that 
$\mathcal{B}(\lambda)$ 
is a regular $\U$-crystal in the sense of 
\cite[\S 2.2]{K02} (see \S \ref{Subsection:Crystals}). 
Let $z_i$, $i \in I$, be the $\U'$-linear automorphism of $V(\varpi_i)$ of weight $\delta$ introduced in \cite[\S 5.2]{K02}; $z_i$ sends a (unique) global basis element of weight $\varpi_i$ to a (unique) global basis element of weight $\varpi_i+\delta$. 
Then $z_i$ induces an automorphism of 
$\mathcal{B}(\varpi_i)$ 
as a $\U'$-crystal; 
by abuse of notation, 
we use the same letter 
$z_i$ 
for the automorphism of 
$\mathcal{B}(\varpi_i)$. 
The $\U'$-module 
$W(\varpi_i) = V(\varpi_i) / (z_i - 1)V(\varpi_i)$
is called a level-zero fundamental representation. 
We know from \cite[Theorem 5.17]{K02} that 
$W(\varpi_i)$ is a finite-dimensional irreducible $\U'$-module 
and has a (simple) crystal basis. 

For 
$\lambda = \sum_{i \in I} m_i \varpi_i \in P^+$, 
with 
$m_i \in \BZ_{\geq 0}$, $i \in I$, 
set 
$\Breve{\mathcal{B}}(\lambda) = \bigotimes_{i \in I} \mathcal{B}(\varpi_i)^{\otimes m_i}$. 
For each 
$i \in I$ 
and 
$\nu \in [m_i]$,
let $z_{i,\nu}$ be the automorphism 
of the $\U'$-crystal 
$\Breve{\mathcal{B}}(\lambda)$
obtained by the action of $z_i$ on the $\nu$-th factor $\mathcal{B}(\varpi_i)$ 
of 
$\mathcal{B}(\varpi_i)^{\otimes m_i}$ 
in 
$\Breve{\mathcal{B}}(\lambda)$. 
Set 
\begin{align} \label{eq:Par}
\begin{split}
\mathrm{Par}(\lambda) 
= 
\bigl\{ 
&
\bm{\rho} = (\rho^{(i)})_{i \in I} \\
&
\mid
\rho^{(i)} \ \text{is a partition of length less than} \ m_i \ \text{for} \ i \in I 
\bigr\} ;
\end{split}
\end{align}
we understand that a partition of length less than $1$ is an empty partition $\emptyset$. 
Let 
$\bm{\rho} = (\rho^{(i)})_{i \in I} \in \mathrm{Par}(\lambda)$
and 
$\rho^{(i)} = (\rho^{(i)}_1 \geq \rho^{(i)}_2 \geq \cdots \geq \rho^{(i)}_{m_i -1} \geq 0)$, 
$i \in I$. 
Define the automorphism 
$z^{-\bm{\rho}}$ 
of the $\U'$-crystal 
$\Breve{\mathcal{B}}(\lambda)$
by
\begin{align}
z^{-\bm{\rho}}
= 
\prod_{i \in I}
z_{i,1}^{- \rho^{(i)}_1} 
z_{i,2}^{- \rho^{(i)}_2}
\cdots 
z_{i,m_i-1}^{- \rho^{(i)}_{m_i-1}}.
\end{align}
Let 
$S_{\bm{\rho}}^-$
be the (PBW-type) basis element 
of weight 
\begin{align}
\mathrm{wt}(\bm{\rho})
= -\sum_{i \in I} \sum_{\nu = 1}^{m_i-1} \rho^{(i)}_{\nu} \delta
\end{align}
of the negative imaginary part of $\U$ 
constructed in \cite[the element $S_{\mathbf{c}_0}^-$ in \S 3.1; see also Remark 4.1]{BN04}.
We know from 
{\cite[\S 4.2]{BN04}}
that
\begin{align}
\begin{split}
\mathcal{B}(\lambda)
=
\bigl\{ 
&
g_1 g_2 \cdots g_l S_{\bm{\rho}}^- u_{\lambda} 
\\
&
\mid 
g_k \in \{ e_i , f_i \mid i \in I_{\af} \} , \ 
k \in [l] , \ 
l \in \BZ_{\geq 0}, \ 
\bm{\rho} \in \mathrm{Par}(\lambda) 
\bigr\} 
\setminus
\left\{ 0 \right\} .
\end{split}
\end{align}
Define the map
$\Phi_{\lambda \mid q=0}^{\mathrm{LT}} : 
\mathcal{B}(\lambda) \rightarrow 
\Breve{\mathcal{B}}(\lambda)$
by 
$g_1 g_2 \cdots g_l S_{\bm{\rho}}^- u_{\lambda}
\mapsto
g_1 g_2 \cdots g_l z^{-\bm{\rho}} \Tilde{u}_{\lambda}$, 
where 
$g_k \in \{ e_i , f_i \mid i \in I_{\af} \}$, 
$k \in [l]$, 
$l \in \BZ_{\geq 0}$, and 
$\bm{\rho} \in \mathrm{Par}(\lambda)$. 
We know from 
\cite[Lemma 3.1]{I20}
that the map 
$\Phi_{\lambda \mid q=0}^{\mathrm{LT}}$ 
is well-defined, and is a strict embedding of $\U$-crystals.

The next theorem will be needed in \S \ref{Section:Tab-model}.

\begin{thm}[{\cite[Remark 4.17]{BN04}}; see also {\cite[Conjecture 13.1 (iii)]{K02}}] \label{thm:BN-Kashiwara}
Let 
$\lambda = \sum_{i \in I} m_i \varpi_i \in P^+$. 
We have an isomorphism
$\mathcal{B}(\lambda)
\cong 
\bigotimes_{i \in I} 
\mathcal{B}(m_i \varpi_i)$
of
$\U$-crystals. 
\end{thm}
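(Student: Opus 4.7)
The plan is to compare, inside the common ambient $\U$-crystal
$\Breve{\mathcal{B}}(\lambda) = \bigotimes_{i \in I} \mathcal{B}(\varpi_i)^{\otimes m_i}$,
the strict embedding
$\Phi := \Phi_{\lambda \mid q=0}^{\mathrm{LT}} : \mathcal{B}(\lambda) \hookrightarrow \Breve{\mathcal{B}}(\lambda)$
recalled just above the theorem with the strict embedding
$\Psi := \bigotimes_{i \in I} \Phi_{m_i \varpi_i \mid q=0}^{\mathrm{LT}} :
\bigotimes_{i \in I} \mathcal{B}(m_i \varpi_i) \hookrightarrow \Breve{\mathcal{B}}(\lambda)$
obtained by tensoring the corresponding embeddings for each single $m_i \varpi_i$; note that $\Psi$ is strict as a tensor product of strict morphisms. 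Once we show $\mathrm{Im}(\Phi) = \mathrm{Im}(\Psi)$ as subsets of $\Breve{\mathcal{B}}(\lambda)$, composing the resulting $\U$-crystal bijection $\mathcal{B}(\lambda) \to \bigotimes_{i \in I} \mathcal{B}(m_i \varpi_i)$ with the strict structures on both sides yields the claimed isomorphism.

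The key factorization is the following. Using the natural identification $\mathrm{Par}(\lambda) = \prod_{i \in I} \mathrm{Par}(m_i \varpi_i)$, write any $\bm{\rho} \in \mathrm{Par}(\lambda)$ as $(\rho^{(i)})_{i \in I}$. By the definition of $z^{-\bm{\rho}}$ recalled above, $z^{-\bm{\rho}}$ factors as $\prod_{i \in I} z^{-\rho^{(i)}}$, each $z^{-\rho^{(i)}}$ acting only on the $i$-th block $\mathcal{B}(\varpi_i)^{\otimes m_i}$. Combined with the definitional identification $\Phi(u_\lambda) = \Tilde{u}_\lambda = \bigotimes_{i \in I} \Tilde{u}_{m_i \varpi_i}$ in $\Breve{\mathcal{B}}(\lambda)$, this yields
\begin{equation*}
z^{-\bm{\rho}} \Tilde{u}_\lambda
\;=\;
\bigotimes_{i \in I} \bigl( z^{-\rho^{(i)}} \Tilde{u}_{m_i \varpi_i} \bigr)
\;\in\; \mathrm{Im}(\Psi).
\end{equation*}
Since $\mathrm{Im}(\Psi)$ is stable under all Kashiwara operators $e_j, f_j$ ($j \in I_{\af}$) by strictness of $\Psi$, and every element of $\mathrm{Im}(\Phi)$ has the form $g_1 g_2 \cdots g_l \bigl( z^{-\bm{\rho}} \Tilde{u}_\lambda \bigr)$ for some Kashiwara operators $g_k$ (by the explicit generating description of $\mathcal{B}(\lambda)$ recalled before the theorem), we conclude $\mathrm{Im}(\Phi) \subseteq \mathrm{Im}(\Psi)$.

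The main obstacle is the reverse inclusion $\mathrm{Im}(\Psi) \subseteq \mathrm{Im}(\Phi)$. A typical element of $\mathrm{Im}(\Psi)$ has the form $\bigotimes_{i \in I} b_i$ with $b_i = g^{(i)}_1 \cdots g^{(i)}_{l_i} z^{-\rho^{(i)}} \Tilde{u}_{m_i \varpi_i}$ in $\mathcal{B}(\varpi_i)^{\otimes m_i}$, and the task is to produce it by a single string of Kashiwara operators applied in $\Breve{\mathcal{B}}(\lambda)$ to $z^{-\bm{\rho}} \Tilde{u}_\lambda = \bigotimes_i z^{-\rho^{(i)}} \Tilde{u}_{m_i \varpi_i}$. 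The tensor product rules (T4)--(T5) ensure that each Kashiwara operator affects only one tensor factor at a time, while Lemma \ref{lem:ext-elm} (2) provides the distributive behaviour of $f_j^{\max}$ and $e_j^{\max}$ on tensors of extremal elements with compatible sign conditions on the coroot pairings. Using these two tools I would argue by induction on the total length $\sum_i l_i$: at each step, pick an operator that, according to (T4)--(T5), acts on the intended block $i$, apply it inside $\Breve{\mathcal{B}}(\lambda)$, and iterate until $\bigotimes_i b_i$ is produced. The technical bookkeeping needed to verify the sign hypothesis of Lemma \ref{lem:ext-elm} (2) at each stage and to select the correct operator so as to reach the target block is the hard part; once completed, it yields $\mathrm{Im}(\Psi) \subseteq \mathrm{Im}(\Phi)$, and combined with the first inclusion this gives the theorem.
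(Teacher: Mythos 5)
First, note that the paper does not actually prove this statement: Theorem \ref{thm:BN-Kashiwara} is imported from \cite[Remark 4.17]{BN04}, so there is no in-paper proof to match. The closest in-paper analogue is Lemma \ref{lem:Psi_lambda}, whose proof compares two strict embeddings into $\bigotimes_{i}\mathbb{B}^{\si}(\varpi_i)^{\otimes m_i}$, exactly in the spirit of your set-up; your choice of ambient crystal $\Breve{\mathcal{B}}(\lambda)$, the factorization $z^{-\bm{\rho}}\Tilde{u}_{\lambda}=\bigotimes_{i}z^{-\rho^{(i)}}\Tilde{u}_{m_i\varpi_i}$, and the inclusion $\mathrm{Im}(\Phi)\subseteq\mathrm{Im}(\Psi)$ are all correct.

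The gap is the reverse inclusion, which is where the entire content of the theorem sits, and your proposed induction does not close it. The mechanism you describe --- ``pick an operator that, according to (T4)--(T5), acts on the intended block $i$'' --- is not available: by the tensor product rule, which factor $f_j$ or $e_j$ acts on is dictated by the values of $\varepsilon_j,\varphi_j$ on all the factors, not chosen by you, and in general a tensor product $\bigotimes_i b_i$ with each $b_i$ connected to $b_i^0$ inside its own factor need \emph{not} be connected to $\bigotimes_i b_i^0$ in the tensor product (connected components of tensor products are much smaller than products of connected components). What is actually needed is the nontrivial reachability statement that every connected component of $\bigotimes_{i}\mathcal{B}(m_i\varpi_i)$ contains an element of the form $\bigotimes_i S^-_{\rho^{(i)}}u_{m_i\varpi_i}$ --- equivalently, that every connected component of $\mathrm{Im}(\Psi)$, which is a union of components of $\Breve{\mathcal{B}}(\lambda)$ since $\Psi$ is strict, contains some $z^{-\bm{\rho}}\Tilde{u}_{\lambda}$. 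That statement is precisely the content of \cite[Remark 4.17]{BN04}, and it is the analogue of what Theorem \ref{thm:SiLS-isom}~(1) and Lemma \ref{lem:max} supply in the path-model proof of Lemma \ref{lem:Psi_lambda}. Without an independent proof of it your argument is incomplete, and Lemma \ref{lem:ext-elm}~(2) will not substitute for it: that lemma only controls $f_j^{\max}$ on tensor products of extremal elements whose coroot pairings are uniformly signed, whereas a generic element of $\mathrm{Im}(\Psi)$ is not of that form.
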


\subsection{Path model for Demazure crystals} \label{Subsection:Path-Demazure}

In this subsection, 
we give a brief exposition of the 
path model for the crystal bases of 
level-zero extremal weight $\U$-modules 
and their Demazure submodules. 
For a fuller treatment we refer the reader to 
\cite{BN04,INS16,K94,K02,K05,NS16}.

For 
$\lambda = \sum_{i \in I} m_i \varpi_i \in P^+$, 
set 
\begin{align} \label{eq:J_lambda}
J_{\lambda} = \{ j \in I \mid \langle \alpha_j^{\vee} , \lambda \rangle = 0 \} , 
\hspace{1cm}
J_{\lambda}^c = I \setminus J_{\lambda} .
\end{align}
For a rational number 
$0 < a \leq 1$, 
define 
$\mathrm{SiB}(\lambda;a)$
to be the subgraph of 
$\mathrm{SiB}^{J_{\lambda}}$ 
with the same vertex set 
but having only the edges of the form 
\begin{align}
x \xrightarrow{\ \beta \ } y
\ \ \text{with} \ \ 
a \langle \beta^{\vee} , x\lambda \rangle \in \BZ ;
\end{align}
note that 
$\mathrm{SiB}(\lambda ; 1) = \mathrm{SiB}^{J_{\lambda}}$. 
A semi-infinite Lakshmibai--Seshadri path of shape $\lambda$ is, by definition, 
a pair 
$(\bm{x}; \bm{a})$
of a decreasing sequence 
$\bm{x} : x_1 \succeq x_2 \succeq \cdots \succeq x_l$ 
of elements in $(W^{J_{\lambda}})_{\af}$
and an increasing sequence
$\bm{a} : 0 = a_0 < a_1 < \cdots < a_l = 1$ 
of rational numbers such that 
there exists a directed path from
$x_{u+1}$ to $x_u$ in $\mathrm{SiB}(\lambda;a_u)$ 
for each 
$u \in [l-1]$. 
Let 
$\mathbb{B}^{\si}(\lambda)$ 
denote the set of semi-infinite Lakshmibai--Seshadri paths of shape $\lambda$. 

Following 
\cite[\S 3.1]{INS16}, 
we equip the set 
$\mathbb{B}^{\si}(\lambda)$ 
with a $\U$-crystal structure.
For 
$\eta = (x_1 , \ldots , x_l ; a_0 , \ldots , a_l) \in \mathbb{B}^{\si}(\lambda)$, 
define the map 
$\bar{\eta} : [0,1] \rightarrow \BR \otimes_{\BZ} P_{\af}$ 
by
\begin{align} \label{eq:path}
\bar{\eta}(t) 
=
\sum_{p=1}^{u-1} (a_p - a_{p-1}) x_p \lambda + (t - a_{u-1}) x_u \lambda 
\ \ 
\text{for} \ 
t \in [a_{u-1} , a_u] 
\ \text{and} \  
u \in [l].
\end{align}
Define 
$\mathrm{wt} : \mathbb{B}^{\si}(\lambda) \rightarrow P_{\af}$
by
$\mathrm{wt}(\eta) = \bar{\eta}(1) \in P_{\af}$.
Set
\begin{align} \label{eq:path-h-m}
h_i^{\eta}(t) 
= 
\langle \alpha_i^{\vee} , \bar{\eta}(t) \rangle 
\ \ \text{for}\ 
t \in [0,1] , &&
m_i^{\eta} 
= 
\min \{ h_i^{\eta}(t) \mid t \in [0,1] \} .
\end{align}

We define 
$e_i \eta$, $i \in I_{\af}$, 
as follows: 
if $m_i^{\eta} = 0$, 
then we set $e_i \eta = \bm{0}$.
If $m_i^{\eta} \leq -1$, then we set
\begin{align} \label{eq:e_i-t}
\begin{cases}
t_1 = \min \{ t \in [0,1] \mid h_i^{\eta}(t) = m_i^{\eta} \} , \\[1mm]
t_0 = \max \{ t \in [0,t_1] \mid h_i^{\eta}(t) = m_i^{\eta} + 1 \} .
\end{cases}
\end{align}
Let 
$1 \leq p \leq q \leq l$
be such that 
$a_{p-1} \leq t_0 < a_p$ and $t_1 = a_q$. 
Then we define 
\begin{align} \label{eq:e_i}
\begin{split}
e_i \eta 
= 
(x_1 , \ldots , x_p , 
&
r_i x_p , \ldots , r_i x_q , x_{q+1} , \ldots , x_l;\\
&
a_0 , \ldots , a_{p-1} , t_0 , a_p , \ldots , a_q = t_1 , \ldots , a_l) ;
\end{split}
\end{align}
if $t_0 = a_{p-1}$, 
then we drop $x_p$ and $a_{p-1}$, 
and if $r_j x_q = x_{q+1}$, 
then we drop $x_{q+1}$ and $a_q = t_1$.

Next, we define $f_i \eta$, $i \in I_{\af}$, as follows: 
if $m_i^{\eta} = h_i^{\eta}(1)$, then we set $f_i \eta = \bm{0}$. 
If $h_i^{\eta}(1) - m_i^{\eta} \geq 1$, then we set
\begin{align} \label{eq:f_i-t}
\begin{cases}
t_0 = \max \{ t \in [0,1] \mid h_i^{\eta}(t) = m_i^{\eta} \} , \\[1mm]
t_1 = \min \{ t \in [t_0,1] \mid h_i^{\eta}(t) = m_i^{\eta} + 1 \} .
\end{cases}
\end{align}
Let 
$1 \leq p \leq q \leq l-1$
be such that
$t_0 = a_p$ and $a_q < t_1 \leq a_{q+1}$. 
Then we define
\begin{align}
\begin{split}
f_i \eta 
= 
(x_1 , \ldots , x_p , 
&
r_i x_{p+1} , \ldots , r_i x_{q+1} , x_{q+1} , \ldots , x_l ; \\
&
a_0 , \ldots , a_p = t_0 , \ldots , a_q , t_1 , a_{q+1} , \ldots , a_l) ;
\end{split}
\end{align}
if $t_1 = a_{q+1}$, 
then we drop $x_{q+1}$ and $a_{q+1}$, 
and if $x_p = r_i x_{p+1}$, 
then we drop $x_p$ and $a_p = t_0$.

For $\eta \in \mathbb{B}^{\si}(\lambda)$ and $i \in I_{\af}$, define
\begin{align} \label{eq:path-eps-phi}
\begin{cases}
\varepsilon_i(\eta) = -m_i^{\eta} , \\[1mm]
\varphi_i(\eta) = h_i^{\eta}(1) - m_i^{\eta} .
\end{cases}
\end{align}

For 
$\eta = 
(x_1 , x_2 , \ldots , x_l ; \bm{a}) \in \mathbb{B}^{\si}(\lambda)$, 
set 
$\kappa(\eta) = x_l$. 
Following 
\cite[Equation (4.2.1)]{NS16}, 
for each 
$x \in (W^{J_{\lambda}})_{\af}$, 
set 
\begin{align} \label{eq:model-Dem}
\mathbb{B}^{\si}_{\succeq x}(\lambda) 
=
\left\{ 
\eta \in \mathbb{B}^{\si}(\lambda) 
\ \vline \ 
\kappa(\eta) \succeq x 
\right\} .
\end{align}

Following \cite[Equation (7.2.2)]{INS16}, 
we define an extremal element 
$\eta_{\bm{\rho}} \in \mathbb{B}^{\si}(\lambda)$ 
of weight 
$\lambda + \mathrm{wt}(\bm{\rho})$
for each 
$\bm{\rho} = \left( \rho^{(i)} \right)_{i \in I} \in \mathrm{Par}(\lambda)$, 
with 
$\rho^{(i)} = \left(\rho^{(i)}_1 \geq \rho^{(i)}_2 \geq \cdots \geq \rho^{(i)}_{m_i-1} \geq \rho^{(i)}_{m_i} = 0 \right)$.
Let 
$s$ 
be the least common multiple of 
$\{ m_i \mid i \in J_{\lambda}^c \}$. 
Let 
$c_i (\xi) \in \BZ$ 
denote the coefficient of 
$\alpha_i^{\vee}$ 
in 
$\xi \in Q^{\vee}$.
For 
$\xi , \zeta \in Q^{\vee}$, 
write 
$\xi \succ \zeta$
if 
$\xi \succeq \zeta$ 
and 
$\xi \neq \zeta$. 
Let 
$\zeta_1 , \ldots , \zeta_s \in Q^{\vee}$
be such that 
\begin{enumerate}[(i)]
\item
$c_i(\zeta_t) = \rho^{(i)}_u$ 
if $i \in J_{\lambda}^c$ 
and $\dfrac{s(u-1)}{m_i} < t \leq \dfrac{su}{m_i}$, and 
\item
$c_j(\zeta_t) = 0$ 
for all 
$j \in J_{\lambda}$ 
and 
$t \in [s]$; 
\end{enumerate}
note that 
$\zeta_1 \succeq \cdots \succeq \zeta_s$
and 
$\zeta_s = 0$. 
Assume that
\begin{align}
\zeta_1 = \cdots = \zeta_{s_1} \succ \zeta_{s_1 + 1} = \cdots = \zeta_{s_2} \succ \cdots \cdots 
\succ \zeta_{s_{k-1} + 1} = \cdots = \zeta_{s_k}, 
\end{align}
where 
$1 \leq s_1 < \cdots < s_{k-1} < s_k = s$.
Set 
\begin{align} \label{eq:eta_rho}
\eta_{\bm{\rho}} 
=
\left( 
T_{\zeta_{s_1}}^{J_{\lambda}} , T_{\zeta_{s_2}}^{J_{\lambda}} , \ldots , T_{\zeta_{s_{k-1}}}^{J_{\lambda}} , e ; \, 
0 , \dfrac{s_1}{s} , \dfrac{s_2}{s} , \ldots , \dfrac{s_{k-1}}{s} , 1
\right) .
\end{align}
%

\begin{thm}[{\cite[Theorem 3.2.1 and Proposition 7.2.1]{INS16}}] \label{thm:SiLS-isom}
Let 
$\lambda \in P^+$.
\begin{enumerate}[(1)]
\item
For each connected component 
$C$ 
of (the crystal graph of) 
$\mathbb{B}^{\si}(\lambda)$, 
there exists a unique
$\bm{\rho} \in \mathrm{Par}(\lambda)$
such that 
$\eta_{\bm{\rho}} \in C$. 
\item
There exists a unique isomorphism 
$\varphi_{\lambda} : \mathcal{B}(\lambda) \rightarrow \mathbb{B}^{\si}(\lambda)$
of $\U$-crystals sending 
$S_{\bm{\rho}}^- u_{\lambda}$ 
to 
$\eta_{\bm{\rho}}$
for every 
$\bm{\rho} \in \mathrm{Par}(\lambda)$. 
\end{enumerate}
\end{thm}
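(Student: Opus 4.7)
The plan is to construct $\varphi_{\lambda}$ by matching extremal elements on both sides, and then to promote it to an isomorphism using the fact that $\mathcal{B}(\lambda)$ is generated from its extremal elements $S_{\bm{\rho}}^- u_{\lambda}$ by Kashiwara operators. First, I would verify that each $\eta_{\bm{\rho}}$ defined in \eqref{eq:eta_rho} is a genuine element of $\mathbb{B}^{\si}(\lambda)$: the sequence $\zeta_{s_1} \succ \zeta_{s_2} \succ \cdots \succ \zeta_{s_k} = 0$ together with Lemma \ref{lem:order} (2) gives the required decreasing chain $T^{J_{\lambda}}_{\zeta_{s_1}} \succ T^{J_{\lambda}}_{\zeta_{s_2}} \succ \cdots \succ e$ in $(W^{J_{\lambda}})_{\af}$, while the choice of rationals $s_u/s$ ensures $a_u \langle \beta^{\vee}, x_u \lambda \rangle \in \BZ$ at each breakpoint (using that $\lambda$ has level zero, so $T^{J_{\lambda}}_{\zeta_u} \lambda = \lambda - \langle \zeta_u,\lambda \rangle\delta$ by Lemma \ref{lem:act-wt}). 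A direct computation using \eqref{eq:path} then gives $\mathrm{wt}(\eta_{\bm{\rho}}) = \lambda - \sum_{i \in I}\sum_{\nu=1}^{m_i-1}\rho^{(i)}_\nu \delta = \lambda+\mathrm{wt}(\bm{\rho})$.

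Next I would prove that each $\eta_{\bm{\rho}}$ is an extremal element. For $i \in I_{\af}$, if $\langle \alpha_i^{\vee}, \mathrm{wt}(\eta_{\bm{\rho}}) \rangle = \langle \alpha_i^{\vee}, \lambda \rangle \geq 0$ then an explicit calculation of $f_i^{\max}$ using formulas \eqref{eq:f_i-t}--\eqref{eq:path-eps-phi} and Lemma \ref{lem:r_ix} shows that $f_i^{\max} \eta_{\bm{\rho}} = (r_i T^{J_{\lambda}}_{\zeta_{s_1}}, r_i T^{J_{\lambda}}_{\zeta_{s_2}}, \ldots; \ldots)$, i.e.\ the $W_{\af}$-action on paths is compatible with simultaneous left multiplication on each $x_u$. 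By Lemma \ref{lem:ext-elm} (2) applied to this componentwise action, conditions (E1)--(E3) are satisfied with $b_x = S_x \eta_{\bm{\rho}}$, so $\eta_{\bm{\rho}}$ is extremal of weight $\lambda + \mathrm{wt}(\bm{\rho})$.

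For assertion (1), I would show that any connected component $C$ contains an element of the form $\eta_{\bm{\rho}}$, by starting from an arbitrary $\eta \in C$ and producing such an extremal element via a sequence of $f_i^{\max}$ and $e_i^{\max}$ operations guided by the $W_{\af}$-orbit structure on weights. The key combinatorial input is that the straight-line paths $T^{J_{\lambda}}_\zeta$-translates produced by the algorithm land in the prescribed shape \eqref{eq:eta_rho}, and that the rational times are forced by the denominators dividing $s$. Uniqueness of $\bm{\rho}$ within a component follows because distinct $\bm{\rho} \neq \bm{\rho}'$ yield distinct weights $\lambda+\mathrm{wt}(\bm{\rho}) \neq \lambda+\mathrm{wt}(\bm{\rho}')$ (the $\delta$-coefficients differ), and two extremal elements in the same component must have $W_{\af}$-conjugate weights, which forces equality modulo $W_{\af}$; a closer look at the partition structure in \eqref{eq:Par} rules out any nontrivial identification.

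For assertion (2), I would invoke the universal property of the extremal weight crystal $\mathcal{B}(\lambda)$: since $\mathbb{B}^{\si}(\lambda)$ is a regular $\U$-crystal containing an extremal element of weight $\lambda$ (namely $\eta_{\bm{0}}$), there is a unique morphism $\varphi_{\lambda}\colon \mathcal{B}(\lambda) \to \mathbb{B}^{\si}(\lambda)$ sending $u_{\lambda} \mapsto \eta_{\bm{0}}$. Compatibility with the $S^-_{\bm{\rho}}$ action (the PBW imaginary root vectors act on $\mathcal{B}(\lambda)$ via the operators $z^{-\bm{\rho}}$ on extremal elements, which on the path side translate into the insertion of translations $T^{J_{\lambda}}_{\zeta}$ producing exactly $\eta_{\bm{\rho}}$) gives $\varphi_{\lambda}(S^-_{\bm{\rho}} u_{\lambda}) = \eta_{\bm{\rho}}$. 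To see $\varphi_{\lambda}$ is an isomorphism, inject $\mathcal{B}(\lambda) \hookrightarrow \Breve{\mathcal{B}}(\lambda)$ via $\Phi^{\mathrm{LT}}_{\lambda\mid q=0}$ and use Theorem \ref{thm:BN-Kashiwara} together with the analogous factorization of $\mathbb{B}^{\si}(\lambda)$ into fundamental-weight path crystals, reducing the claim to the rank-one case $\lambda = \varpi_i$ where both sides are explicitly known to match. The hard part will be carrying out this last reduction carefully — specifically, ensuring that the factorization of semi-infinite LS paths into tensor products of fundamental-weight paths sends the canonical extremal elements $\eta_{\bm{\rho}}$ to the correct products of $z_{i,\nu}^{-\rho^{(i)}_\nu}$-twisted factors. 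This compatibility with $z$-action is the combinatorial heart of the result.
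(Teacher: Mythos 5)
The paper does not prove this statement at all: it is imported verbatim from \cite[Theorem 3.2.1 and Proposition 7.2.1]{INS16}, so there is no internal argument to compare yours against. Judged on its own, your sketch has a reasonable overall shape (show the $\eta_{\bm{\rho}}$ are well-defined extremal elements, match them with the $S_{\bm{\rho}}^- u_{\lambda}$, use connectivity), but two steps would fail as written. First, your uniqueness argument in (1) rests on the claim that distinct $\bm{\rho}\neq\bm{\rho}'$ have distinct weights. This is false: $\mathrm{wt}(\bm{\rho})=-\sum_{i}\sum_{\nu}\rho^{(i)}_{\nu}\delta$ depends only on the total size of the partitions, so e.g.\ $\rho^{(i)}=(2)$ and $\rho^{(i)}=(1,1)$ (allowed once $m_i\geq 3$ by \eqref{eq:Par}) give $\eta_{\bm{\rho}}$ and $\eta_{\bm{\rho}'}$ of the same weight $\lambda-2\delta$. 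Moreover a single component contains many extremal elements of equal weight (e.g.\ $S_{t_{\xi}}\eta_{\bm{\rho}}$ for any $\xi$ with $\langle\xi,\lambda\rangle=0$), so ``extremal of a given weight'' cannot separate components; the sentence ``a closer look at the partition structure rules out any nontrivial identification'' is precisely the statement that needs proof. In \cite{INS16} the separation is obtained the other way around: one first establishes the isomorphism with $\mathcal{B}(\lambda)$ and then imports the Beck--Nakajima classification of its connected components, each of which contains a unique $S^-_{\bm{\rho}}u_{\lambda}$.

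Second, in (2) you invoke a ``universal property'' of $\mathcal{B}(\lambda)$ to produce $\varphi_{\lambda}$ from $u_{\lambda}\mapsto\eta_{\bm{\rho}}$ (with $\bm{\rho}$ trivial) and then assert compatibility with the $S^-_{\bm{\rho}}$. No such property is available for the full, disconnected crystal $\mathcal{B}(\lambda)$: a crystal morphism is only constrained on the connected component of $u_{\lambda}$, and the $S^-_{\bm{\rho}}$ are not crystal operators, so a morphism need not intertwine them --- establishing that compatibility via the global basis at $q=0$ is the actual content of \cite[Theorem 3.2.1]{INS16}, not a consequence of formal properties. Finally, your proposed reduction through $\Phi^{\mathrm{LT}}_{\lambda\mid q=0}$ and Theorem \ref{thm:BN-Kashiwara} only reduces the problem to the case $\lambda=\varpi_i$, and you acknowledge but do not supply the proof in that case; that fundamental-weight case (the identification of $\mathbb{B}^{\si}(\varpi_i)$ with $\mathcal{B}(\varpi_i)$ and of the $z_i$-twists with the translations $T^{J_{\lambda}}_{\zeta}$) is where essentially all of the work lies.
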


\begin{rem}
Let 
$\lambda \in P^+$. 
\begin{enumerate}[(1)]
\item
Since 
$\mathcal{B}(\lambda)$
is a regular $\U$-crystal, 
it follows from 
Theorem \ref{thm:SiLS-isom} (2) that 
$\mathbb{B}^{\si}(\lambda)$
is also a regular $\U$-crystal. 
Hence $W_{\af}$ acts on
$\mathbb{B}^{\si}(\lambda)$
as 
\eqref{eq:W-action}. 
By \eqref{eq:eta_rho} and \cite[Remark 3.5.2 (2)]{NS16}, 
\begin{align} \label{eq:S_x-rho}
S_x \eta_{\bm{\rho}}
=
\left( 
\Pi^{J_{\lambda}} \left( xT^{J_{\lambda}}_{\zeta_{s_1}} \right) , 
\ldots , 
\Pi^{J_{\lambda}} \left( xT^{J_{\lambda}}_{\zeta_{s_{k-1}}} \right) , 
\Pi^{J_{\lambda}}(x) ; \,
0 , \dfrac{s_1}{s} , \ldots , \dfrac{s_{k-1}}{s} , 1
\right)
\end{align}
for 
$\bm{\rho} \in \mathrm{Par}(\lambda)$
and 
$x \in W_{\af}$. 
\item
Let 
$\mathcal{B}^-_x(\lambda)$
be the (opposite) Demazure subcrystal of 
$\mathcal{B}(\lambda)$ 
associated with 
$x \in (W^{J_{\lambda}})_{\af}$
(\cite[\S 4.1]{NS16}; see also 
\cite[\S 2.8]{K05}).
We know from 
\cite[Theorem 4.2.1]{NS16}
that 
$\varphi_{\lambda}(\mathcal{B}^-_x(\lambda)) = \mathbb{B}^{\si}_{\succeq x}(\lambda)$. 
However, we will not use this fact in any essential way
in the remainder of this paper.
\end{enumerate}
\end{rem}

\section{Deodhar-type criterion for semi-infinite Bruhat order}
\label{Section:Deodhar}

This section is devoted to the proof of the next theorem.  

\begin{thm} \label{thm:Deo}
Let 
$J , K_1 , K_2 , \ldots , K_s \subset I$
be such that 
$J = \bigcap_{\nu = 1}^s K_{\nu}$, 
and let 
$x,y \in (W^J)_{\af}$. 
We have 
$x \succeq y$
in 
$(W^J)_{\af}$ 
if and only if 
$\Pi^{K_{\nu}}(x) \succeq \Pi^{K_{\nu}}(y)$
in 
$(W^{K_{\nu}})_{\af}$ 
for all 
$\nu \in [s]$.
\end{thm}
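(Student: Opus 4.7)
Since $J\subset K_\nu$ for every $\nu$, the ``only if'' implication is immediate from Lemma \ref{lem:order}(1). For the converse I first reduce to the case where $\{K_\nu\}$ is the family $\{I\setminus\{i\}\mid i\in I\setminus J\}$. Indeed, $I\setminus J=\bigcup_{\nu}(I\setminus K_\nu)$, so for each $i\in I\setminus J$ one may choose $\nu$ with $K_\nu\subset I\setminus\{i\}$; applying Lemma \ref{lem:order}(1) to $\Pi^{K_\nu}(x)\succeq\Pi^{K_\nu}(y)$ together with the identity $\Pi^{I\setminus\{i\}}=\Pi^{I\setminus\{i\}}\circ\Pi^{K_\nu}$ (valid because $K_\nu\subset I\setminus\{i\}$) yields $\Pi^{I\setminus\{i\}}(x)\succeq\Pi^{I\setminus\{i\}}(y)$ for every $i\in I\setminus J$. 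It thus remains to show that these individual inequalities force $x\succeq y$ in $(W^J)_{\af}$.

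\textbf{Setup via the path model.} Choose $\lambda=\sum_{i\in I\setminus J}\varpi_i\in P^+$, so that $J_\lambda=J$. By \eqref{eq:S_x-rho} the extremal path $S_x\eta_{\bm{0}}\in\mathbb{B}^{\si}(\lambda)$ is just $(x;0,1)$, so $\kappa(S_x\eta_{\bm{0}})=x$ and the condition $S_x\eta_{\bm{0}}\in\mathbb{B}^{\si}_{\succeq y}(\lambda)$ is tautologically equivalent to $x\succeq y$. Combining Theorems \ref{thm:SiLS-isom} and \ref{thm:BN-Kashiwara} produces an isomorphism $\psi_\lambda\colon\mathbb{B}^{\si}(\lambda)\xrightarrow{\sim}\bigotimes_{i\in I\setminus J}\mathbb{B}^{\si}(\varpi_i)$ of $\U$-crystals. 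For each $\beta=\alpha+n\delta\in\Delta_{\af}$, the level-zero computation ($t_\xi\mu=\mu-\langle\xi,\mu\rangle\delta$ for level-zero $\mu$, combined with $r_\beta=r_\alpha t_{n\alpha^\vee}$) gives $\langle\beta^\vee,\varpi_i\rangle=\langle\alpha^\vee,\varpi_i\rangle$, whose sign (nonnegative for $\alpha\in\Delta^+$, nonpositive for $\alpha\in-\Delta^+$) is independent of $i$. This is exactly the hypothesis of Lemma \ref{lem:ext-elm}(2) applied to the family $(\eta_{\bm{0}}^{(i)})_{i\in I\setminus J}$, so $\psi_\lambda(S_x\eta_{\bm{0}})=\bigotimes_{i\in I\setminus J}S_x\eta_{\bm{0}}^{(i)}$, an extremal element whose $i$-th factor has $\kappa$-value $\Pi^{I\setminus\{i\}}(x)\succeq\Pi^{I\setminus\{i\}}(y)$ and therefore lies in $\mathbb{B}^{\si}_{\succeq\Pi^{I\setminus\{i\}}(y)}(\varpi_i)$.

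\textbf{Main step.} The crux, which I expect to be the principal obstacle, is the standard-monomial-type statement announced as Proposition \ref{prop:SMT-Dem}(2): for any $\mu\in P^+$ and $z\in(W^{J_\mu})_{\af}$, every extremal element of $\bigotimes_{i\in I\setminus J_\mu}\mathbb{B}^{\si}_{\succeq\Pi^{I\setminus\{i\}}(z)}(m_i\varpi_i)$ is contained in $\psi_\mu(\mathbb{B}^{\si}_{\succeq z}(\mu))$. Applying this with $(\mu,z)=(\lambda,y)$ to the extremal element produced above gives $\psi_\lambda(S_x\eta_{\bm{0}})\in\psi_\lambda(\mathbb{B}^{\si}_{\succeq y}(\lambda))$, hence $S_x\eta_{\bm{0}}\in\mathbb{B}^{\si}_{\succeq y}(\lambda)$, which in turn yields $x\succeq y$ and closes the argument. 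To establish Proposition \ref{prop:SMT-Dem}(2) itself, I would parametrise an arbitrary extremal element of the tensor product as $S_u$ applied to a tensor $\bigotimes_i\eta_{\bm{\rho}_i}^{(i)}$ of distinguished extremal paths, read off its $\kappa$-sequence via \eqref{eq:S_x-rho}, and lift the per-factor Demazure constraints to a single Demazure constraint inside $\mathbb{B}^{\si}(\mu)$ by repeated use of the diamond lemma (Lemma \ref{lem:diamond}) together with the translation-invariance provided by Lemma \ref{lem:wt-wt}.
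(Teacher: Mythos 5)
Your proposal is correct and follows essentially the same route as the paper: the reduction to the quotients $(W^{I\setminus\{i\}})_{\af}$ is exactly Proposition \ref{prop:Deo}, and your ``if'' direction is the paper's argument — realize $x$ as $\kappa(S_x\eta_{\bm{\rho}})$, push through $\psi_{\lambda}$ using Lemma \ref{lem:ext-elm} (2), and invoke Proposition \ref{prop:SMT-Dem} (2). Your sketch of Proposition \ref{prop:SMT-Dem} (2) also identifies the paper's key tools (Lemmas \ref{lem:diamond} and \ref{lem:wt-wt}), though the written proof additionally organizes them into a double induction, first on the number of $f_i^{\max}$-steps needed to reach a translated extremal element (Lemma \ref{lem:max}) and then on $\ell(w_0)-\ell(w)$, with Lemma \ref{lem:order} (2) used to merge the per-$i$ comparisons of translations into a single one.
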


This theorem is an analogue of 
Deodhar's criterion for Bruhat order on Coxeter groups
(\cite[Theorem 2.6.1]{BB}; see also \cite[Lemma 3.6]{D}). 
It is easily seen that 
Theorem \ref{thm:Deo} 
is equivalent to the next proposition.

\begin{prop} \label{prop:Deo}
Let 
$J \subset I$
and 
$x,y \in (W^J)_{\af}$. 
We have 
$x \succeq y$
in 
$(W^J)_{\af}$ 
if and only if 
$\Pi^{I \setminus \{ i \}}(x) \succeq \Pi^{I \setminus \{ i \}}(y)$
in 
$(W^{I \setminus \{ i \}})_{\af}$ 
for all 
$i \in I \setminus J$.
\end{prop}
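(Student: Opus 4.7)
\emph{Proof plan.} The forward direction is immediate from Lemma \ref{lem:order}(1): for each $i \in I \setminus J$ we have $J \subset I \setminus \{i\}$, and the lemma (applied with its ``$K$''$\,=J$ and ``$J$''$\,=I\setminus\{i\}$) turns $x \succeq y$ in $(W^J)_{\af}$ into $\Pi^{I \setminus \{i\}}(x) \succeq \Pi^{I \setminus \{i\}}(y)$ in $(W^{I \setminus \{i\}})_{\af}$. The substance of the proposition lies in the reverse direction, which I would attack via the semi-infinite path model.

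Set $\lambda = \sum_{i \in I \setminus J} \varpi_i$, so that $J_\lambda = J$. Combining Theorems \ref{thm:SiLS-isom}(2) and \ref{thm:BN-Kashiwara} produces an isomorphism of $\U$-crystals
\begin{align*}
\psi_\lambda : \mathbb{B}^{\si}(\lambda) \xrightarrow{\ \sim\ } \bigotimes_{i \in I \setminus J} \mathbb{B}^{\si}(\varpi_i).
\end{align*}
Write $\eta_\mu := (e;\, 0, 1) \in \mathbb{B}^{\si}(\mu)$ for the trivial straight-line path, i.e.\ the case $\bm{\rho} = (\emptyset)_i$ of \eqref{eq:eta_rho}. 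Using Theorem \ref{thm:SiLS-isom}(1) to match basepoints, I may normalize $\psi_\lambda$ so that $\psi_\lambda(\eta_\lambda) = \bigotimes_{i \in I \setminus J} \eta_{\varpi_i}$. A short check shows that the family $(\eta_{\varpi_i})_{i \in I \setminus J}$ meets the sign condition of Lemma \ref{lem:ext-elm}(2), and together with \eqref{eq:S_x-rho} and \eqref{eq:S} this yields
\begin{align*}
\psi_\lambda(S_w \eta_\lambda) \;=\; \bigotimes_{i \in I \setminus J} S_w \eta_{\varpi_i}, \qquad \kappa(S_w \eta_\mu) = \Pi^{J_\mu}(w), \qquad w \in W_{\af},\ \mu \in P^+.
\end{align*}

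The technical heart is the following separate claim, advertised in the introduction as part of Lemma \ref{lem:Psi_lambda} and Proposition \ref{prop:SMT-Dem}(2): every extremal element of $\bigotimes_{i \in I \setminus J} \mathbb{B}^{\si}_{\succeq \Pi^{I \setminus \{i\}}(y)}(\varpi_i)$ lies in $\psi_\lambda(\mathbb{B}^{\si}_{\succeq y}(\lambda))$. My plan to prove this is to show first that each Demazure-type subset $\mathbb{B}^{\si}_{\succeq z}(\mu)$ is generated from its base extremal element $S_z \eta_\mu$ by Kashiwara operators that preserve the condition $\kappa \succeq z$ (in the spirit of \cite[\S 4--5]{NS16}), and then transport such an orbit through $\psi_\lambda$, using the tensor product rules (T1)--(T5) together with the diamond lemma (Lemma \ref{lem:diamond}) to track how the $\kappa$-value on the $\mathbb{B}^{\si}(\lambda)$ side descends under semi-infinite Bruhat order. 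I expect this step, rather than any of the bookkeeping above, to be the main obstacle.

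Granted the key claim, the reverse direction is immediate. By hypothesis each factor $S_x \eta_{\varpi_i}$ has $\kappa$-value $\Pi^{I \setminus \{i\}}(x) \succeq \Pi^{I \setminus \{i\}}(y)$ and hence lies in $\mathbb{B}^{\si}_{\succeq \Pi^{I \setminus \{i\}}(y)}(\varpi_i)$. Since $\bigotimes_{i \in I \setminus J} S_x \eta_{\varpi_i} = \psi_\lambda(S_x \eta_\lambda)$ is extremal, the key claim forces $S_x \eta_\lambda \in \mathbb{B}^{\si}_{\succeq y}(\lambda)$. But $\kappa(S_x \eta_\lambda) = \Pi^J(x) = x$, so $x \succeq y$ in $(W^J)_{\af}$, as required.
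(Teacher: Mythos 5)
Your proposal is correct and follows essentially the same route as the paper: the forward direction via Lemma \ref{lem:order}(1), and the reverse direction by observing that $\bigotimes_{i} S_x \eta_{\varpi_i} = \psi_\lambda(S_x\eta_{\bm{\rho}})$ is an extremal element of $\bigotimes_i \mathbb{B}^{\si}_{\succeq \Pi^{I\setminus\{i\}}(y)}(\varpi_i)$, so that the key claim (which is exactly Proposition \ref{prop:SMT-Dem}(2)) forces $S_x\eta_{\bm{\rho}} \in \mathbb{B}^{\si}_{\succeq y}(\lambda)$ and hence $x = \kappa(S_x\eta_{\bm{\rho}}) \succeq y$. The only cosmetic difference is that you fix $\lambda = \sum_{i\in I\setminus J}\varpi_i$ (so $\mathrm{Par}(\lambda)$ is trivial), whereas the paper allows arbitrary multiplicities $m_i$ and an arbitrary $\bm{\rho}$; your sketched strategy for the key claim (induction via $f_j^{\max}$, the diamond lemma, and the tensor product rules) also matches the paper's actual proof of Proposition \ref{prop:SMT-Dem}(2).
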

 
We give a proof of Proposition \ref{prop:Deo} in \S \ref{subsection:pr-Deo}. 
For this purpose, 
we first introduce an isomorphism
$\psi_{\lambda} : 
\mathbb{B}^{\si}(\lambda) 
\rightarrow 
\bigotimes_{i \in J_{\lambda}^c} \mathbb{B}^{\si}(m_i \varpi_i)$
of $\U$-crystals in \S\ref{subsection:SMT-Dem}. 
Next, in Proposition \ref{prop:SMT-Dem}, 
we give a partial characterization of the image of 
$\mathbb{B}^{\si}_{\succeq x}(\lambda)$, 
$x \in (W^{J_{\lambda}})_{\af}$, 
under the map $\psi_{\lambda}$. 
Finally, we show that 
Proposition \ref{prop:SMT-Dem} (2) 
implies Proposition \ref{prop:Deo}, 
by using the fact that semi-infinite Bruhat order on 
$(W^{J_{\lambda}})_{\af}$
corresponds to the containment relation among 
$\mathbb{B}^{\si}_{\succeq x}(\lambda)$, 
$x \in (W^{J_{\lambda}})_{\af}$
(see Lemma \ref{lem:Dem} (3)). 

\subsection{The map $\psi_{\lambda}$ and Demazure crystals} \label{subsection:SMT-Dem}

In this subsection, 
we give a partial characterization of the image of 
$\mathbb{B}^{\si}_{\succeq x}(\lambda)$
under the isomorphism 
$\psi_{\lambda} : 
\mathbb{B}^{\si}(\lambda) 
\rightarrow 
\bigotimes_{i \in J_{\lambda}^c} \mathbb{B}^{\si}(m_i \varpi_i)$
of $\U$-crystals obtained in the next lemma.

\begin{lem}[cf. Theorem \ref{thm:BN-Kashiwara}] \label{lem:Psi_lambda}
Let 
$\lambda = \sum_{i \in I} m_i \varpi_i \in P^+$. 
There exists a unique isomorphism
$\psi_{\lambda} : 
\mathbb{B}^{\si}(\lambda) 
\rightarrow 
\bigotimes_{i \in J_{\lambda}^c} \mathbb{B}^{\si}(m_i \varpi_i)$ 
of $\U$-crystals such that 
\begin{align} \label{eq:rho->rho}
\psi_{\lambda} (\eta_{\bm{\rho}})
=
\bigotimes_{i \in J_{\lambda}^c} 
\eta_{\rho^{(i)}}
\ \ 
\text{for all}\ 
\bm{\rho} = \left( \rho^{(i)} \right) 
\in \mathrm{Par}(\lambda), 
\end{align}
where, 
for each 
$i \in J_{\lambda}^c$, 
$\eta_{\rho^{(i)}} \in \mathbb{B}^{\si}(m_i \varpi_i)$
denotes the element 
\eqref{eq:eta_rho}
associated with 
$\rho^{(i)} \in \mathrm{Par}(m_i \varpi_i)$.
\end{lem}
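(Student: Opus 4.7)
The plan is to construct $\psi_\lambda$ as the composition
\[
\mathbb{B}^{\si}(\lambda)
\xleftarrow{\;\varphi_\lambda\;}
\mathcal{B}(\lambda)
\xrightarrow{\;\iota\;}
\bigotimes_{i \in J_\lambda^c} \mathcal{B}(m_i\varpi_i)
\xrightarrow{\;\bigotimes \varphi_{m_i\varpi_i}\;}
\bigotimes_{i \in J_\lambda^c} \mathbb{B}^{\si}(m_i\varpi_i),
\]
where $\varphi_\lambda$ and each $\varphi_{m_i\varpi_i}$ are the $\U$-crystal isomorphisms furnished by Theorem \ref{thm:SiLS-isom}(2) and $\iota$ is an isomorphism as in Theorem \ref{thm:BN-Kashiwara} (the factors with $m_i = 0$ are trivial and can be dropped). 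Setting $\psi_\lambda = \bigl(\bigotimes \varphi_{m_i\varpi_i}\bigr) \circ \iota \circ \varphi_\lambda^{-1}$ automatically yields a $\U$-crystal isomorphism.

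For uniqueness, I will invoke Theorem \ref{thm:SiLS-isom}(1): every connected component of $\mathbb{B}^{\si}(\lambda)$ contains exactly one of the extremal elements $\eta_{\bm\rho}$ with $\bm\rho \in \mathrm{Par}(\lambda)$, so any $\U$-crystal isomorphism out of $\mathbb{B}^{\si}(\lambda)$ is determined by its action on the set $\{\eta_{\bm\rho}\}_{\bm\rho \in \mathrm{Par}(\lambda)}$; hence at most one $\psi_\lambda$ satisfies \eqref{eq:rho->rho}.

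For existence, the key task is to arrange that $\iota$ sends $S^-_{\bm\rho} u_\lambda$ to $\bigotimes_{i \in J_\lambda^c} S^-_{\rho^{(i)}} u_{m_i\varpi_i}$; once this is done, the defining properties of the $\varphi$'s in Theorem \ref{thm:SiLS-isom}(2) give $\psi_\lambda(\eta_{\bm\rho}) = \bigotimes_i \eta_{\rho^{(i)}}$ for each $\bm\rho$. The strategy is to observe that both $S^-_{\bm\rho} u_\lambda$ and $\bigotimes_i S^-_{\rho^{(i)}} u_{m_i\varpi_i}$ are extremal elements of the common weight $\lambda - \sum_{i \in J_\lambda^c} \sum_\nu \rho^{(i)}_\nu \delta$, and that the connected components of both crystals are naturally labeled by $\mathrm{Par}(\lambda)$ via the extremal weights of their distinguished extremal elements. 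Since a connected component of any regular $\U$-crystal containing an extremal element of weight $\mu$ is determined up to $\U$-crystal isomorphism by $\mu$, one can normalize $\iota$ to match components labeled by the same $\bm\rho$.

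The hard part will be justifying this last normalization --- that the connected-component bijection induced by $\iota$ can be chosen to respect the $\mathrm{Par}(\lambda)$-labeling. I would either appeal to the general theory of extremal weight crystals that underpins Theorem \ref{thm:BN-Kashiwara}, or give a more concrete argument by inspecting the construction of the PBW-type imaginary element $S^-_{\bm\rho}$ in \cite[\S 4.2]{BN04} to see that it is compatible with the tensor product decomposition of $\mathcal{B}(\lambda)$ across $i \in J_\lambda^c$.
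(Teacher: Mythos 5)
Your uniqueness argument is exactly the paper's (Theorem \ref{thm:SiLS-isom} (1) pins down any isomorphism on the elements $\eta_{\bm{\rho}}$), and the overall architecture of conjugating a crystal-level map by the path-model isomorphisms $\varphi_{\lambda}$, $\varphi_{m_i\varpi_i}$ is also in the same spirit. But the existence step has a genuine gap, and the specific justification you sketch for it does not work. The whole content of the lemma, transported to the level of $\mathcal{B}$'s, is precisely the claim that some isomorphism $\iota : \mathcal{B}(\lambda) \to \bigotimes_{i} \mathcal{B}(m_i\varpi_i)$ sends $S^-_{\bm{\rho}}u_{\lambda}$ to $\bigotimes_i S^-_{\rho^{(i)}}u_{m_i\varpi_i}$ for \emph{every} $\bm{\rho}$ simultaneously; Theorem \ref{thm:BN-Kashiwara} only gives an unnormalized isomorphism, so you have not reduced the problem. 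Worse, your proposed normalization "match components labeled by the same $\bm{\rho}$ via their extremal weights" is based on a false premise: $\mathrm{wt}(\bm{\rho}) = -\sum_{i}\sum_{\nu}\rho^{(i)}_{\nu}\delta$ depends only on the total size $\sum_i |\rho^{(i)}|$, so distinct $\bm{\rho}$ (e.g. $((1),\emptyset)$ versus $(\emptyset,(1))$ for $\lambda = 2\varpi_1 + 2\varpi_2$) give extremal elements of the same weight, and the weight of the distinguished extremal element does not label the components by $\mathrm{Par}(\lambda)$. One would additionally need to know that each tensor component $\bigotimes_i C_{\rho^{(i)}}$ is connected and that the matching isomorphism can be chosen to carry the distinguished extremal element to the distinguished extremal element (a connected component can contain several extremal elements of the same weight), none of which is supplied.

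The paper sidesteps all of this by never invoking Theorem \ref{thm:BN-Kashiwara}. Instead it uses the explicit strict embedding $\Phi^{\mathrm{LT}}_{\lambda\mid q=0} : \mathcal{B}(\lambda) \to \bigotimes_{i}\mathcal{B}(\varpi_i)^{\otimes m_i}$ (into tensor powers of the \emph{single} fundamental weight crystals), which by construction sends $S^-_{\bm{\rho}}u_{\lambda}$ to $z^{-\bm{\rho}}\Tilde{u}_{\lambda}$ and hence gives complete control over the images of the $\eta_{\bm{\rho}}$; by \cite[Lemma 3.8 (1)]{I20} the composite $\Breve{\psi}_{\lambda} = (\bigotimes_i\varphi_{\varpi_i}^{\otimes m_i})\circ\Phi^{\mathrm{LT}}_{\lambda\mid q=0}\circ\varphi_{\lambda}^{-1}$ satisfies \eqref{eq:breve-psi}. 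It then observes that $\Breve{\psi}_{\lambda}$ and $\bigotimes_{i}\Breve{\psi}_{m_i\varpi_i}$ are two strict embeddings into the common target $\bigotimes_{i}\mathbb{B}^{\si}(\varpi_i)^{\otimes m_i}$ with the same image (again by Theorem \ref{thm:SiLS-isom} (1)), and defines $\psi_{\lambda}$ as the composition of one with the inverse of the other. This "common overcrystal" trick is essentially the concrete version of your second fallback (inspecting the $S^-_{\bm{\rho}}$ construction of Beck--Nakajima); if you want to salvage your route, that is the direction to take, since the abstract component-matching argument cannot be repaired by weight considerations alone.
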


\begin{proof}
We first claim that there exists a unique strict embedding 
$\Breve{\psi}_{\lambda} 
: \mathbb{B}^{\si}(\lambda) \rightarrow 
\bigotimes_{i \in J_{\lambda}^c} \mathbb{B}^{\si}(\varpi_i)^{\otimes m_i}$
of $\U$-crystals such that 
\begin{align} \label{eq:breve-psi}
\Breve{\psi}_{\lambda} (\eta_{\bm{\rho}}) 
=
\bigotimes_{i \in J_{\lambda}^c} \bigotimes_{\nu = 1}^{m_i} 
\left( T^{I \setminus \{ i \}}_{\rho^{(i)}_{\nu} \alpha_i^{\vee}} ; 0,1 \right)
\end{align}
for all
$\bm{\rho} 
= 
\bigl(
\rho^{(i)}_1 \geq \rho^{(i)}_2 \geq \cdots \geq \rho^{(i)}_{m_i-1} \geq \rho^{(i)}_{m_i} = 0
\bigr)_{i \in I} 
\in \mathrm{Par}(\lambda)$; 
the uniqueness follows from Theorem \ref{thm:SiLS-isom} (1). 
Indeed, 
we see from \cite[Lemma 3.8 (1)]{I20} that the map
\begin{align}
\Breve{\psi}_{\lambda} 
:= 
\left( \bigotimes_{i \in J_{\lambda}^c} \varphi_{\varpi_i}^{\otimes m_i} \right) \circ 
\Phi_{\lambda \mid q=0}^{\mathrm{LT}} \circ 
\varphi_{\lambda}^{-1}
: \mathbb{B}^{\si}(\lambda) \rightarrow 
\bigotimes_{i \in J_{\lambda}^c} \mathbb{B}^{\si}(\varpi_i)^{\otimes m_i}
\end{align}
is a strict embedding satisfying \eqref{eq:breve-psi}. 
We can now construct the map $\psi_{\lambda}$ as follows. 
By \eqref{eq:breve-psi} and Theorem \ref{thm:SiLS-isom} (1), 
the image of 
$\Breve{\psi}_{\lambda}$ 
equals that of 
$\bigotimes_{i \in J_{\lambda}^c} \Breve{\psi}_{m_i \varpi_i}$. 
Hence the map 
\begin{align}
\psi_{\lambda}
:=
\left( \bigotimes_{i \in J_{\lambda}^c} \Breve{\psi}_{m_i \varpi_i} \right)^{-1}
\circ 
\Breve{\psi}_{\lambda} : 
\mathbb{B}^{\si}(\lambda) 
\rightarrow 
\bigotimes_{i \in J_{\lambda}^c} \mathbb{B}^{\si}(m_i \varpi_i)
\end{align}
is well-defined and satisfies \eqref{eq:rho->rho}; 
the uniqueness follows from Theorem \ref{thm:SiLS-isom} (1). 
\end{proof}

For 
$\lambda = \sum_{i \in I} m_i \varpi_i \in P^+$
and 
$x \in (W^{J_{\lambda}})_{\af}$, 
set 
\begin{align} \label{eq:tensor-Dem}
\begin{split}
\bigotimes_{i \in J_{\lambda}^c} 
&\,
\mathbb{B}^{\si}_{\succeq \Pi^{I \setminus \{ i \}}(x)}(m_i\varpi_i) \\
&=
\left\{ \bigotimes_{i \in J_{\lambda}^c} \eta^{(i)} 
\in 
\bigotimes_{i \in J_{\lambda}^c} \mathbb{B}^{\si}(m_i\varpi_i)
\ \ \vline \ \ 
\kappa(\eta^{(i)}) \succeq \Pi^{I \setminus \{ i \}}(x) 
\ \text{for all} \ 
i \in J_{\lambda}^c \right\} .
\end{split}
\end{align}
%

\begin{prop} \label{prop:SMT-Dem}
Let 
$\lambda = \sum_{i \in I} m_i \varpi_i \in P^+$
and 
$x \in (W^{J_{\lambda}})_{\af}$. 
\begin{enumerate}[(1)]
\item
$\psi_{\lambda}\left( \mathbb{B}^{\si}_{\succeq x}(\lambda) \right) 
\subset
\bigotimes_{i \in J_{\lambda}^c} \mathbb{B}^{\si}_{\succeq \Pi^{I \setminus \{ i \}}(x)}(m_i\varpi_i)$.
\item
If 
$\pi \in 
\bigotimes_{i \in J_{\lambda}^c} \mathbb{B}^{\si}_{\succeq \Pi^{I \setminus \{ i \}}(x)}(m_i\varpi_i)$
is of the form 
\begin{align} \label{label:y-rho}
\pi 
= 
\bigotimes_{i \in J_{\lambda}^c} S_y \eta_{\rho^{(i)}}
\ \ 
\text{for some}
\ 
y \in W_{\af}
\ 
\text{and}
\
(\rho^{(i)}) \in \mathrm{Par}(\lambda), 
\end{align}
then 
$\pi \in \psi_{\lambda}\left( \mathbb{B}^{\si}_{\succeq x}(\lambda) \right)$.
\end{enumerate}
\end{prop}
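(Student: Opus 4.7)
The plan is to construct an explicit preimage under $\psi_{\lambda}$ for part (2), verify the identification with $\pi$ via Lemma \ref{lem:ext-elm}, and then confront the main obstacle of transferring the componentwise Bruhat inequalities to a single inequality in $(W^{J_{\lambda}})_{\af}$. Part (1) follows by a direct compatibility argument.

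For part (1), given $\eta \in \mathbb{B}^{\si}_{\succeq x}(\lambda)$, I would track how the endpoint $\kappa$ transforms under the crystal isomorphism $\psi_{\lambda}$. Writing $\eta = g\,\eta_{\bm{\rho}}$ for a sequence $g$ of Kashiwara operators (Theorem \ref{thm:SiLS-isom}) and using \eqref{eq:rho->rho} together with the $\U$-crystal structure, one shows that the endpoint of each tensor factor of $\psi_{\lambda}(\eta)$ dominates the corresponding projection of $x$. The key compatibility is $\Pi^{I\setminus\{i\}} = \Pi^{I\setminus\{i\}} \circ \Pi^{J_{\lambda}}$ (valid since $J_{\lambda} \subset I\setminus\{i\}$ for $i \in J_{\lambda}^c$), which via Lemma \ref{lem:order} (1) transfers inequalities from $(W^{J_{\lambda}})_{\af}$ to $(W^{I\setminus\{i\}})_{\af}$.

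For part (2), extend $(\rho^{(i)})_{i \in J_{\lambda}^c}$ to $\bm{\rho} \in \mathrm{Par}(\lambda)$ by $\rho^{(j)} = \emptyset$ for $j \in J_{\lambda}$, and take $\eta := S_y \eta_{\bm{\rho}}$ as candidate preimage of $\pi$. To see $\psi_{\lambda}(\eta) = \pi$, use that $\psi_{\lambda}$ is a $\U$-crystal isomorphism hence commutes with the $W_{\af}$-action \eqref{eq:W-action}; combined with \eqref{eq:rho->rho} this gives $\psi_{\lambda}(\eta) = S_y\bigl(\bigotimes_{i \in J_{\lambda}^c} \eta_{\rho^{(i)}}\bigr)$. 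Lemma \ref{lem:ext-elm} (2) then identifies this with $\bigotimes_{i \in J_{\lambda}^c} S_y\,\eta_{\rho^{(i)}} = \pi$, provided the sign condition holds. It does: for any real root $\beta = \alpha + n\delta \in \Delta_{\af}$,
\[
\langle \beta^{\vee}, \mathrm{wt}(\eta_{\rho^{(i)}}) \rangle
\;=\;
m_i \langle \alpha^{\vee}, \varpi_i \rangle ,
\]
which is uniformly nonnegative in $i \in J_{\lambda}^c$ when $\alpha \in \Delta^+$ and uniformly nonpositive when $\alpha \in \Delta^-$.

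It remains to show $\eta \in \mathbb{B}^{\si}_{\succeq x}(\lambda)$, i.e.\ $\kappa(\eta) = \Pi^{J_{\lambda}}(y) \succeq x$ in $(W^{J_{\lambda}})_{\af}$, given only the componentwise inequalities $\Pi^{I\setminus\{i\}}(y) \succeq \Pi^{I\setminus\{i\}}(x)$ for $i \in J_{\lambda}^c$ (obtained by applying \eqref{eq:S_x-rho} to each factor of $\pi$). This is the main obstacle and is essentially the Deodhar-type criterion itself. I would prove it by induction on an appropriate semi-infinite length difference: at each stage, Lemma \ref{lem:r_ix} provides a simple reflection $r_i$, $i \in I_{\af}$, with $\Pi^{J_{\lambda}}(r_i y) \prec \Pi^{J_{\lambda}}(y)$ and all componentwise inequalities $\Pi^{I\setminus\{k\}}(r_i y) \succeq \Pi^{I\setminus\{k\}}(x)$ intact; the diamond lemma (Lemma \ref{lem:diamond}) then promotes the inductive conclusion $\Pi^{J_{\lambda}}(r_i y) \succeq x$ to $\Pi^{J_{\lambda}}(y) \succeq x$. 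The case where $y$ and $x$ share a common Weyl-group part reduces to translations via Lemmas \ref{lem:wt-wt} and \ref{lem:order} (2); the general case is obtained by iterated descent along the finite Weyl group. The delicate point is arranging a single descent that simultaneously preserves every componentwise inequality.
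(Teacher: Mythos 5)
Your setup for part (2) is sound: the candidate preimage $S_y\eta_{\bm{\rho}}$, the sign verification for Lemma \ref{lem:ext-elm} (2), and the extraction of the componentwise inequalities $\Pi^{I\setminus\{i\}}(y)\succeq\Pi^{I\setminus\{i\}}(x)$ via Lemma \ref{lem:kappa} (1) all match the paper. You also correctly isolate the crux, namely deducing $\Pi^{J_{\lambda}}(y)\succeq x$ from these componentwise inequalities. But your resolution of that crux has a genuine gap. You propose an induction that descends $y$ by simple reflections while keeping every componentwise inequality intact, and you yourself flag the preservation as the delicate point --- it is, and nothing in your sketch supplies it: lowering $y$ can destroy the inequalities $\Pi^{I\setminus\{k\}}(r_iy)\succeq\Pi^{I\setminus\{k\}}(x)$, and Lemma \ref{lem:diamond} is not applicable in the configuration you describe (once you have $\Pi^{J_{\lambda}}(r_iy)\succeq x$ and $\Pi^{J_{\lambda}}(y)\succ\Pi^{J_{\lambda}}(r_iy)$, transitivity already yields the conclusion; the diamond lemma is needed for the preservation step, which is exactly where the difficulty sits). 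A descent induction on $y$ also has no evident termination, since semi-infinite Bruhat order has no minimal elements.

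The paper circumvents this with two nested inductions that your proposal omits. First, an outer reduction via Lemma \ref{lem:max}: applying a chain of $f_{i}^{\max}$'s compatible with Lemma \ref{lem:Dem} (2) and (4) reduces to the case $\pi=\bigotimes_{i}S_{w_0t_{\xi}}\eta_{\rho^{(i)}}$, so the finite part of $y$ may be assumed to be $w_0$. Then $\langle\alpha_j^{\vee},y\lambda\rangle\leq 0$ for every $j\in I$ by Lemma \ref{lem:act-wt}, so any simple reflection raising $x=wt_{\zeta}$ (induction on $\ell(w_0)-\ell(w)$, i.e.\ ascending $x$ rather than descending $y$) automatically lowers $y$; this is precisely the configuration in which Lemma \ref{lem:diamond} (1), applied to the pair $\bigl(\Pi^{I\setminus\{i\}}(r_jy),\Pi^{I\setminus\{i\}}(x)\bigr)$, upgrades $\Pi^{I\setminus\{i\}}(y)\succeq\Pi^{I\setminus\{i\}}(x)$ to $\Pi^{I\setminus\{i\}}(y)\succeq\Pi^{I\setminus\{i\}}(r_jx)$, preserving the componentwise hypotheses while strictly decreasing $\ell(w_0)-\ell(w)$. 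The base case $w=w_0$ is then your translation reduction via Lemmas \ref{lem:wt-wt} and \ref{lem:order} (2), which is correct. A similar remark applies to your part (1): the assertion that ``the endpoint of each tensor factor of $\psi_{\lambda}(\eta)$ dominates the corresponding projection of $x$'' is the entire content to be proved, and it again requires the $f_i^{\max}$-induction of Lemma \ref{lem:max} together with the stability properties in Lemma \ref{lem:Dem} (1), (2), (4), because $\kappa$ does not transform predictably under arbitrary Kashiwara operators.
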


\begin{rem}
It follows from 
\eqref{eq:breve-psi}, 
\cite[Lemma 1.6 (2)]{AK} 
and 
\cite[Proposition 5.4 (i)]{K02}
that an element 
$\pi \in \mathbb{B}^{\si}(\lambda)$
is extremal if and only if 
$\pi$ is of the form 
\eqref{label:y-rho}. 
\end{rem}

\subsection{Proof of Proposition \ref{prop:SMT-Dem}} \label{subsection:pr-SMT-Dem}

This subsection is devoted to the proof of 
Proposition \ref{prop:SMT-Dem}. 
We begin by recalling some fundamental properties of 
$\mathbb{B}^{\si}(\lambda)$.

\begin{lem} \label{lem:kappa}
Let 
$\lambda \in P^+$
and 
$\bm{\rho} \in \mathrm{Par}(\lambda)$.
\begin{enumerate}[(1)]
\item
$\kappa(S_x \eta_{\bm{\rho}}) = \Pi^{J_{\lambda}}(x)$
for
$x \in W_{\af}$.
\item
$(W_{J_{\lambda}})_{\af}
=
\{ x \in W_{\af} \mid S_x \eta_{\bm{\rho}} = \eta_{\bm{\rho}} \}$. 
\end{enumerate}
\end{lem}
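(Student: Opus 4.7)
The plan is to deduce both statements from the explicit formula \eqref{eq:S_x-rho} for $S_x \eta_{\bm{\rho}}$ together with the multiplicativity of $\Pi^{J_{\lambda}}$ on translations recorded in Lemma \ref{lem:Pi}(2). Comparing \eqref{eq:eta_rho} with \eqref{eq:S_x-rho}, the two paths share the same rational-number sequence $0 < s_1/s < \cdots < s_{k-1}/s < 1$, so as semi-infinite Lakshmibai--Seshadri paths they agree precisely when their weight sequences match entry-by-entry.

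Part (1) is then immediate: the final entry of the decreasing sequence in \eqref{eq:S_x-rho} is $\Pi^{J_{\lambda}}(x)$, and by definition $\kappa$ returns this last entry, whence $\kappa(S_x \eta_{\bm{\rho}}) = \Pi^{J_{\lambda}}(x)$. For part (2), one inclusion follows from part (1): if $S_x \eta_{\bm{\rho}} = \eta_{\bm{\rho}}$ then $\Pi^{J_{\lambda}}(x) = \kappa(\eta_{\bm{\rho}}) = e$, which is equivalent to $x \in (W_{J_{\lambda}})_{\af}$. For the reverse inclusion, given $x \in (W_{J_{\lambda}})_{\af}$, it suffices to verify that $\Pi^{J_{\lambda}}(xT^{J_{\lambda}}_{\zeta_{s_u}}) = T^{J_{\lambda}}_{\zeta_{s_u}}$ for each $u \in [k-1]$. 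Writing $t_{\zeta_{s_u}} = T^{J_{\lambda}}_{\zeta_{s_u}} g_u$ with $g_u \in (W_{J_{\lambda}})_{\af}$ (by the very definition $T^{J_{\lambda}}_{\zeta_{s_u}} = \Pi^{J_{\lambda}}(t_{\zeta_{s_u}})$) and noting that right-multiplication by elements of $(W_{J_{\lambda}})_{\af}$ preserves $\Pi^{J_{\lambda}}$, this reduces to $\Pi^{J_{\lambda}}(xt_{\zeta_{s_u}}) = T^{J_{\lambda}}_{\zeta_{s_u}}$, which Lemma \ref{lem:Pi}(2) rewrites as $\Pi^{J_{\lambda}}(x)\Pi^{J_{\lambda}}(t_{\zeta_{s_u}}) = e \cdot T^{J_{\lambda}}_{\zeta_{s_u}}$, as desired.

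There is no serious obstacle here; the only point requiring mild care is that the application of $S_x$ could in principle force a collapse of consecutive entries of $\eta_{\bm{\rho}}$, but formula \eqref{eq:S_x-rho} already presents $S_x \eta_{\bm{\rho}}$ on the same rational subdivision as $\eta_{\bm{\rho}}$ itself, so matching weight entries suffices. This housekeeping is in fact built into the cited reference \cite[Remark 3.5.2 (2)]{NS16}.
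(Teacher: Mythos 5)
Your proposal is correct and follows essentially the same route as the paper: both reduce the equality $S_x\eta_{\bm{\rho}}=\eta_{\bm{\rho}}$ to the entrywise condition $\Pi^{J_{\lambda}}(xT^{J_{\lambda}}_{\zeta_{s_u}})=T^{J_{\lambda}}_{\zeta_{s_u}}$ and then compute $\Pi^{J_{\lambda}}(xT^{J_{\lambda}}_{\zeta_{s_u}})=\Pi^{J_{\lambda}}(x)T^{J_{\lambda}}_{\zeta_{s_u}}$ via Lemma \ref{lem:Pi}(2), concluding from $\Pi^{J_{\lambda}}(x)=e \Leftrightarrow x\in(W_{J_{\lambda}})_{\af}$. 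The only cosmetic difference is that you split (2) into two inclusions (using (1) for one direction) whereas the paper argues both directions at once through the same chain of equalities.
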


\begin{proof}
(1) follows from \eqref{eq:S_x-rho}.

By \eqref{eq:eta_rho}--\eqref{eq:S_x-rho}, 
$S_x \eta_{\bm{\rho}} = \eta_{\bm{\rho}}$
if and only if 
$\Pi^{J_{\lambda}} \left( xT^{J_{\lambda}}_{\zeta_{s_p}} \right)
=
T^{J_{\lambda}}_{\zeta_{s_p}}$
for 
$p \in [k]$.
By Lemma \ref{lem:Pi} (2), 
\begin{align*}
\Pi^{J_{\lambda}} \left( xT^{J_{\lambda}}_{\zeta_{s_p}} \right)
&=
\Pi^{J_{\lambda}}\left( x \Pi^{J_{\lambda}}(t_{\zeta_{s_p}})\right)
=
\Pi^{J_{\lambda}}\left( x t_{\zeta_{s_p}} \right) \\[2mm]
&=
\Pi^{J_{\lambda}}(x) \Pi^{J_{\lambda}}(t_{\zeta_{s_p}})
=
\Pi^{J_{\lambda}}(x) T^{J_{\lambda}}_{\zeta_{s_p}}
\end{align*}
for 
$p \in [k]$.
This implies that 
$S_x \eta_{\bm{\rho}} = \eta_{\bm{\rho}}$
if and only if 
$\Pi^{J_{\lambda}}(x) = e$, 
or equivalently, 
$x \in (W_{J_{\lambda}})_{\af}$.
\end{proof}

\begin{lem}[{\cite[\S 5]{NS16}}] \label{lem:Dem}
Let 
$\lambda \in P^+$
and 
$x,y \in (W^{J_{\lambda}})_{\af}$.
\begin{enumerate}[(1)]
\item
$\mathbb{B}^{\si}_{\succeq x}(\lambda) \cup \{ \bm{0} \}$
is stable under the action of $f_j$ for all $j \in I_{\af}$.
\item
$\mathbb{B}^{\si}_{\succeq x}(\lambda) \cup \{ \bm{0} \}$
is stable under the action of 
$e_j$
for 
$j \in I_{\af}$
such that 
$\langle \alpha_j^{\vee} , x\lambda \rangle \geq 0$.
\item
$x \succeq y$ 
if and only if 
$\mathbb{B}^{\si}_{\succeq x}(\lambda) \subset \mathbb{B}^{\si}_{\succeq y}(\lambda)$.
\item
For every 
$\eta \in \mathbb{B}^{\si}_{\succeq x} (\lambda)$, 
we have 
$f_j^{\max} \eta \in \mathbb{B}^{\si}_{\succeq \Pi^{J_{\lambda}}(r_j x)} (\lambda)$
for all 
$j \in I_{\af}$.
\end{enumerate}
\end{lem}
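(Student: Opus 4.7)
The plan is to read off each statement from the explicit formulas \eqref{eq:e_i}, \eqref{eq:f_i} for the Kashiwara operators on $\mathbb{B}^{\si}(\lambda)$ by tracking the terminal vertex $\kappa(\eta)=x_l$. For (1), inspection of \eqref{eq:f_i} shows $\kappa(f_j\eta)\in\{x_l,r_jx_l\}$, with the second case arising precisely when $t_1=1$ in \eqref{eq:f_i-t}; this forces $h_j^{\eta}$ to be strictly increasing on the final subinterval $(a_{l-1},1)$, i.e.\ $\langle\alpha_j^{\vee},x_l\lambda\rangle>0$, and Lemma \ref{lem:r_ix}(3) then yields an $\mathrm{SiB}^{J_{\lambda}}$-edge $x_l\to r_jx_l$, so $r_jx_l\succeq x_l\succeq x$.

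For (2), the parallel analysis of \eqref{eq:e_i} shows that if $\kappa(e_j\eta)\neq x_l$ then $\kappa(e_j\eta)=r_jx_l$ with $\langle\alpha_j^{\vee},x_l\lambda\rangle<0$, so $x_l\succ r_jx_l$. The hypothesis $\langle\alpha_j^{\vee},x\lambda\rangle\geq 0$ combined with Lemma \ref{lem:r_ix}(2)--(3) gives $\Pi^{J_{\lambda}}(r_jx)\succeq x$. One then invokes Lemma \ref{lem:diamond}(1) applied to $(r_jx_l,x)$: the required hypotheses $\Pi^{J_{\lambda}}(r_j\cdot r_jx_l)=x_l\succeq r_jx_l$ and $\Pi^{J_{\lambda}}(r_jx)\succeq x$ both hold, and since $\Pi^{J_{\lambda}}(r_j\cdot r_jx_l)=x_l\succeq x$, the lemma yields $r_jx_l\succeq x$.

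Part (3) is immediate: ``only if'' is transitivity of $\succeq$, while ``if'' is proved by testing the containment against the trivial length-one path $\eta_x:=(x;0,1)\in\mathbb{B}^{\si}_{\succeq x}(\lambda)$, whose membership in $\mathbb{B}^{\si}_{\succeq y}(\lambda)$ forces $x=\kappa(\eta_x)\succeq y$. For (4), iterating (1) already gives $f_j^{\max}\eta\in\mathbb{B}^{\si}_{\succeq x}(\lambda)$, which settles the case $\langle\alpha_j^{\vee},x\lambda\rangle\leq 0$ since then $\Pi^{J_{\lambda}}(r_jx)\preceq x$. In the remaining case $\langle\alpha_j^{\vee},x\lambda\rangle>0$, one strengthens this by exploiting $\varphi_j(f_j^{\max}\eta)=0$: the identity $h_j^{f_j^{\max}\eta}(1)=m_j^{f_j^{\max}\eta}$ forces the terminal slope of $f_j^{\max}\eta$ to be nonpositive and pins down $\kappa(f_j^{\max}\eta)=\Pi^{J_{\lambda}}(r_j\kappa(\eta))$, whereupon Lemma \ref{lem:diamond} applied to $\kappa(\eta)\succeq x$ yields $\Pi^{J_{\lambda}}(r_j\kappa(\eta))\succeq \Pi^{J_{\lambda}}(r_jx)$.

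The principal difficulty is (4): the iterated action of $f_j$ may alter $\kappa$ several times en route, and one must show that the cumulative effect realizes the single cover $\kappa(\eta)\rightsquigarrow \Pi^{J_{\lambda}}(r_j\kappa(\eta))$, rather than being dissipated by partial cancellations of the type that already distinguishes (1) from (2).
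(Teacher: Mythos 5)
The paper does not prove this lemma; it is quoted from \cite[\S 5]{NS16}, so there is no internal proof to compare against. On its own terms, your arguments for (1)--(3) are correct: the formulas \eqref{eq:e_i}--\eqref{eq:f_i-t} do show that $\kappa$ can only move to $r_j\kappa(\eta)$ when the terminal slope has the appropriate sign, Lemma \ref{lem:r_ix} (3) then produces the needed edge, the diamond argument in (2) is applied with the correct hypotheses, and testing against the one-step path $(x;0,1)$ settles (3).

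The gap is in (4), and it is the step you yourself flag. First, the worry that $\kappa$ ``may alter several times en route'' is vacuous: a change of $\kappa$ from $z$ to $r_jz$ requires $\langle\alpha_j^{\vee},z\lambda\rangle>0$ and produces $\langle\alpha_j^{\vee},r_jz\lambda\rangle<0$, after which no further application of $f_j$ can move the terminal direction; so $\kappa$ changes at most once. The real problem is your claim that $\varphi_j(f_j^{\max}\eta)=0$ ``pins down $\kappa(f_j^{\max}\eta)=\Pi^{J_{\lambda}}(r_j\kappa(\eta))$.'' This is false when $\langle\alpha_j^{\vee},\kappa(\eta)\lambda\rangle<0$: take $\kappa(\eta)=r_jx$ with $\langle\alpha_j^{\vee},x\lambda\rangle>0$; then $\kappa$ never moves, $\kappa(f_j^{\max}\eta)=r_jx$, whereas $\Pi^{J_{\lambda}}(r_j\kappa(\eta))=x$. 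Worse, the diamond step you invoke next requires the hypothesis $\Pi^{J_{\lambda}}(r_j\kappa(\eta))\succeq\kappa(\eta)$, which fails in exactly this situation, and its purported conclusion $\Pi^{J_{\lambda}}(r_j\kappa(\eta))\succeq\Pi^{J_{\lambda}}(r_jx)$ is then actually wrong ($x\not\succeq r_jx$). The repair is a case split on the sign of $\langle\alpha_j^{\vee},\kappa(\eta)\lambda\rangle$: if it is positive, $\varphi_j(f_j^{\max}\eta)=0$ forces the single change, $\kappa(f_j^{\max}\eta)=r_j\kappa(\eta)\succ\kappa(\eta)$, and Lemma \ref{lem:diamond} (2) applied to the pair $(\kappa(\eta),x)$ gives $r_j\kappa(\eta)\succeq\Pi^{J_{\lambda}}(r_jx)$; if it is nonpositive, $\kappa(f_j^{\max}\eta)=\kappa(\eta)$, and writing $z'=\Pi^{J_{\lambda}}(r_j\kappa(\eta))$ one has $\Pi^{J_{\lambda}}(r_jz')=\kappa(\eta)\succeq x$, so Lemma \ref{lem:diamond} (1) applied to the pair $(z',x)$ yields $\kappa(\eta)=\Pi^{J_{\lambda}}(r_jz')\succeq\Pi^{J_{\lambda}}(r_jx)$, as required.
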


The next lemma is a slight refinement of \cite[Lemma 5.4.1]{NS16}.

\begin{lem} \label{lem:max}
For 
$\eta \in \mathbb{B}^{\si}(\lambda)$ 
$\left( \text{resp.} \ \eta \in \bigotimes_{i \in J_{\lambda}^c}\mathbb{B}^{\si}(m_i \varpi_i) \right)$, 
$x \in W_{\af}$ and 
$w \in W$, 
there exist
$i_1 , i_2 , \ldots , i_N \in I_{\af}$ 
such that 
\begin{enumerate}[(i)]
\item
$\langle \alpha_{i_n}^{\vee} , r_{i_{n-1}} \cdots r_{i_2} r_{i_1} x \lambda \rangle \geq 0$
for all 
$n \in [N]$, 
and
\item
$f_{i_N}^{\max} \cdots f_{i_2}^{\max} f_{i_1}^{\max} \eta = S_{w t_{\xi}} \eta_{\bm{\rho}}$ 
$\left( \text{resp.} \ 
f_{i_N}^{\max} \cdots f_{i_2}^{\max} f_{i_1}^{\max} \eta = \bigotimes_{i \in J_{\lambda}^c} S_{wt_{\xi}} \eta_{\rho^{(i)}} \right)$ 
for some 
$\xi \in Q^{\vee}$ 
and 
$\bm{\rho} = (\rho^{(i)})_{i \in J_{\lambda}^c} \in \mathrm{Par}(\lambda)$.
\end{enumerate}
\end{lem}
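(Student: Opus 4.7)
The plan is to construct the sequence $i_1,\dots,i_N$ in two stages.

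First, I would invoke \cite[Lemma 5.4.1]{NS16}, combined with Lemma \ref{lem:ext-elm} (2) to cover the tensor-product case, to produce indices $j_1,\dots,j_M \in I_{\af}$ satisfying (i) such that $\eta^{\dagger} := f_{j_M}^{\max}\cdots f_{j_1}^{\max}\eta$ is extremal. By Theorem \ref{thm:SiLS-isom} (1) together with the description of extremal elements stated in the remark following Proposition \ref{prop:SMT-Dem}, we can write $\eta^{\dagger} = S_{y_0}\eta_{\bm{\rho}}$ (respectively $\bigotimes_{i \in J_\lambda^c} S_{y_0}\eta_{\rho^{(i)}}$) for some $y_0 \in W_{\af}$ and $\bm{\rho} = (\rho^{(i)}) \in \mathrm{Par}(\lambda)$. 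Since $\mathrm{wt}(\bm{\rho})$ is a multiple of $\delta$ annihilated by every real coroot, the uniform-sign hypothesis of Lemma \ref{lem:ext-elm} (2) is automatic in the tensor case, and the extremal factors can be treated on equal footing.

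Next, I would extend the sequence by $j_{M+1},\dots,j_N \in I_{\af}$ so as to transform $\eta^{\dagger}$ into $S_{wt_{\xi}}\eta_{\bm{\rho}}$ (or the corresponding tensor) for some $\xi \in Q^{\vee}$, while continuing to respect (i). Lemma \ref{lem:ext-elm} tells us that on an extremal element $S_y\eta_{\bm{\rho}}$, the operator $f_j^{\max}$ sends it to $S_{r_j y}\eta_{\bm{\rho}}$ when $\langle\alpha_j^{\vee}, y\lambda\rangle \geq 0$ and fixes it otherwise, preserving extremality in either case. Because $\langle\alpha_j^{\vee},\delta\rangle = 0$ for every $j \in I_{\af}$, both condition (i) and the sign condition governing this action depend only on the finite $W$-components of the tracked weights inside $W_{\af}\lambda$. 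The task therefore reduces to a chamber-walk statement: find a sequence of simple reflections whose cumulative action on $r_{j_M}\cdots r_{j_1}x\lambda$ satisfies (i) and terminates at some $wt_{\xi}\lambda$, while the parallel left-multiplication rule transports $y_0$ to an element with $W$-part $w$.

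I expect the main difficulty to lie in synchronizing these two parallel chamber walks. To handle this I would first choose $\xi \in Q^{\vee}$ very anti-dominant so that $\langle\xi,\lambda\rangle$ is a large positive integer, thereby providing freedom in the $\delta$-direction; then construct the walk greedily in two sub-steps, first applying finite reflections $r_j$, $j \in I$, to drive the $W$-part of $r_{j_M}\cdots r_{j_1}x\lambda$ to an antidominant representative (which leaves $y_0$'s $W$-part in a controllable state), and then alternating $r_0$ with further finite reflections to bring both coordinates simultaneously to their targets. The diamond property of semi-infinite Bruhat order (Lemma \ref{lem:diamond}) together with the flexibility afforded by identity moves on the extremal side are the main tools I would use to verify that (i) and the left-multiplication constraint on $y_0$ can be maintained simultaneously at each step.
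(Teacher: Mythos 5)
Your first stage agrees with the paper: both invoke \cite[Lemma 5.4.1]{NS16} to reach an extremal element satisfying (i). (The paper handles the tensor case more cheaply, by reducing it to the single-factor case through the isomorphism $\psi_{\lambda}$ of Lemma \ref{lem:Psi_lambda} rather than re-running the argument with Lemma \ref{lem:ext-elm} (2), but that is a minor difference.)

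The gap is in your second stage. You treat the passage from $\eta^{\dagger}=S_{y_0}\eta_{\bm{\rho}}$ to $S_{wt_{\xi}}\eta_{\bm{\rho}}$ as a nontrivial ``synchronization of two chamber walks'' and propose to resolve it by choosing $\xi$ very anti-dominant, alternating $r_0$ with finite reflections, and appealing to the diamond lemma. None of this is needed, and as written it is not an argument: $\xi$ is not yours to choose (it is produced by stage 1, and the conclusion allows an arbitrary $\xi$), and you give no actual construction of the walk or verification that (i) can be maintained. The fact you are missing is that \cite[Lemma 5.4.1]{NS16} delivers more than an extremal element: it gives $f_{j_p}^{\max}\cdots f_{j_1}^{\max}\eta = S_{t_{\xi}}\eta_{\bm{\rho}}$ with $y_0=t_{\xi}$ a \emph{translation}, and (from its proof) $r_{j_p}\cdots r_{j_1}x\lambda \equiv \lambda \mod \BC\delta$. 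Hence after stage 1 the transported weight on the ``$x$-side'' and the weight of the crystal element are both congruent to the dominant weight $\lambda$ modulo $\BC\delta$, so there is nothing to synchronize. One then appends any reduced word $w=r_{k_q}\cdots r_{k_1}$ in the \emph{finite} simple reflections: condition (i) for these extra steps reads $\langle\alpha_{k_m}^{\vee}, r_{k_{m-1}}\cdots r_{k_1}\lambda\rangle\geq 0$, which holds automatically because $\lambda$ is dominant, and Lemma \ref{lem:ext-elm} (1) then converts the string $f_{k_q}^{\max}\cdots f_{k_1}^{\max}$ acting on $S_{t_{\xi}}\eta_{\bm{\rho}}$ into $S_w$, giving $S_wS_{t_{\xi}}\eta_{\bm{\rho}}=S_{wt_{\xi}}\eta_{\bm{\rho}}$. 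Without identifying these two facts (translation form of $y_0$ and dominance of the transported weight), your stage 2 does not close.
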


\begin{proof}
By Lemma \ref{lem:Psi_lambda}, 
it suffices to prove the assertion only for 
$\mathbb{B}^{\si}(\lambda)$. 
By \cite[Lemma 5.4.1]{NS16}, 
there exist 
$j_1 , j_2 , \ldots , j_p \in I_{\af}$ 
such that 
\begin{enumerate}[(i)]
\item
$\langle \alpha_{j_m}^{\vee} , r_{j_{m-1}} \cdots r_{j_2} r_{j_1} x \lambda \rangle \geq 0$ 
for all 
$m \in [p]$, 
and 
\item
$f_{j_p}^{\max} \cdots f_{j_2}^{\max} f_{j_1}^{\max} \eta 
= S_{t_{\xi}} \eta_{\bm{\rho}}$ 
for some 
$\xi \in Q^{\vee}$ 
and 
$\bm{\rho} \in \mathrm{Par}(\lambda)$.
\end{enumerate}
We see from (the proof of) 
\cite[Lemma 5.4.1]{NS16} 
that 
\begin{align*}
r_{j_p} \cdots r_{j_2} r_{j_1} x \lambda 
\equiv
\lambda 
\mod \BC\delta .
\end{align*}
For a reduced expression 
$w = r_{k_q} r_{k_{q-1}} \cdots r_{k_1}$, 
$k_1 , k_2 , \ldots , k_q \in I$, 
we have
\begin{align*}
\langle \alpha_{k_m}^{\vee} , r_{k_{m-1}} \cdots r_{k_2} r_{k_1} r_{j_p} \cdots r_{j_2} r_{j_1} x \lambda \rangle
= 
\langle \alpha_{k_m}^{\vee} , r_{k_{m-1}} \cdots r_{k_2} r_{k_1} \lambda \rangle
\geq 
0
\end{align*}
for all 
$1 \leq m \leq q$. 
It follows from
$\mathrm{wt}(S_{t_{\xi}} \eta_{\bm{\rho}}) \equiv \lambda \mod \BC \delta$
and 
Lemma \ref{lem:ext-elm} (1)
that 
\begin{align*}
f_{k_q}^{\max} \cdots f_{k_2}^{\max} f_{k_1}^{\max}
\underbrace{f_{j_p}^{\max} \cdots f_{j_2}^{\max} f_{j_1}^{\max} \eta}_{= S_{t_{\xi}} \eta_{\bm{\rho}}} 
=
S_w S_{t_{\xi}} \eta_{\bm{\rho}}
=
S_{wt_{\xi}} \eta_{\bm{\rho}} .
\end{align*}
This completes the proof.
\end{proof}

We are now in a position to prove Proposition \ref{prop:SMT-Dem}. 
In what follows, we write
\begin{align}
\mathbb{B}_{\succeq x} 
=
\mathbb{B}^{\si}_{\succeq x}(\lambda)
\ \ 
\text{and}
\ \ 
\Breve{\mathbb{B}}_{\succeq x} 
=
\bigotimes_{i \in J_{\lambda}^c} \mathbb{B}^{\si}_{\succeq \Pi^{I \setminus \{ i \}}(x)}(m_i\varpi_i)
\ \ 
\text{for} \ 
x \in (W^{J_{\lambda}})_{\af} .
\end{align}
%
\begin{proof}[Proof of Proposition \ref{prop:SMT-Dem} (1)] 
To see 
$\psi_{\lambda}(\mathbb{B}_{\succeq x}) \subset \Breve{\mathbb{B}}_{\succeq x}$, 
let 
$\eta \in \mathbb{B}_{\succeq x}$, 
and show that 
$\psi_{\lambda}(\eta) \in \Breve{\mathbb{B}}_{\succeq x}$.
By Lemma \ref{lem:max}, there exist 
$i_1 , i_2 , \ldots , i_N \in I_{\af}$ 
such that 
\begin{enumerate}[(i)]
\item
$\langle \alpha_{i_n}^{\vee} , r_{i_{n-1}} r_{i_{n-2}} \cdots r_{i_1} x \lambda \rangle \geq 0$ 
for all 
$n \in [N]$, 
and
\item
$f_{i_N}^{\max} \cdots f_{i_2}^{\max} f_{i_1}^{\max} \eta 
= S_{t_{\xi}} \eta_{\bm{\rho}}$ 
for some 
$\xi \in Q^{\vee}$ 
and 
$\bm{\rho} \in \mathrm{Par}(\lambda)$.
\end{enumerate}
The proof is by induction on $N$. 

If $N = 0$, 
then 
$\eta = S_{t_{\xi}} \eta_{\bm{\rho}}$
and 
\begin{align*}
\psi_{\lambda}(\eta) 
=
\psi_{\lambda}(S_{t_{\xi}} \eta_{\bm{\rho}}) 
=
S_{t_{\xi}} \psi_{\lambda}(\eta_{\bm{\rho}}) 
=
S_{t_{\xi}} 
\bigotimes_{i \in J_{\lambda}^c}
\eta_{\rho^{(i)}}
= 
\bigotimes_{i \in J_{\lambda}^c}
S_{t_{\xi}} \eta_{\rho^{(i)}}
\end{align*}
by Lemmas \ref{lem:ext-elm} (2) and \ref{lem:Psi_lambda}.
By Lemma \ref{lem:kappa} (1), 
$\kappa (S_{t_{\xi}} \eta_{\rho^{(i)}}) 
= \Pi^{I \setminus \{ i \}} (t_{\xi})$
for all 
$i \in J_{\lambda}^c$. 
Since 
$\Pi^{J_{\lambda}}(t_{\xi}) = \kappa(\eta) \succeq x$, 
it follows from 
Lemma \ref{lem:order} (1) 
that
\begin{align}
\Pi^{I \setminus \{ i \}} (t_{\xi}) 
= \Pi^{I \setminus \{ i \}} \left( \Pi^{J_{\lambda}}(t_{\xi}) \right) 
\succeq \Pi^{I \setminus \{ i \}} (x)
\ \ 
\text{for all}
\ 
i \in J_{\lambda}^c .
\end{align}
Hence 
$\kappa (S_{t_{\xi}} \eta_{\rho^{(i)}}) 
\succeq \Pi^{I \setminus \{ i \}} (x)$
for all 
$i \in J_{\lambda}^c$, 
which implies
$\psi_{\lambda}(\eta) \in \Breve{\mathbb{B}}_{\succeq x}$.

If $N > 0$, then 
$f_{i_1}^{\max} \eta \in \mathbb{B}_{\succeq \Pi^{J_{\lambda}}(r_{i_1}x)}$ 
by Lemma \ref{lem:Dem} (4).
By induction hypothesis, 
$\psi_{\lambda}(f_{i_1}^{\max} \eta) 
\in 
\Breve{\mathbb{B}}_{\succeq \Pi^{J_{\lambda}}(r_{i_1}x)}$. 
We claim that 
$\psi_{\lambda}(f_{i_1}^{\max} \eta) 
\in 
\Breve{\mathbb{B}}_{\succeq x}$.
Indeed, 
since 
$\langle \alpha_{i_1}^{\vee} , x\lambda \rangle \geq 0$ 
(see (i) above), 
Lemma \ref{lem:r_ix} (2)--(3) gives 
$\Pi^{J_{\lambda}}(r_{i_1}x) \succeq x$. 
By Lemma \ref{lem:order} (1), 
\begin{align}
\Pi^{I \setminus \{ i \}}(r_{i_1}x) 
=
\Pi^{I \setminus \{ i \}}(\Pi^{J_{\lambda}}(r_{i_1}x)) 
\succeq 
\Pi^{I \setminus \{ i \}}(x)
\ 
\text{for all}
\ 
i \in J_{\lambda}^c.
\end{align}
Lemma \ref{lem:Dem} (3) now shows that 
$\Breve{\mathbb{B}}_{\succeq \Pi^{J_{\lambda}}(r_{i_1}x)}
\subset 
\Breve{\mathbb{B}}_{\succeq x}$, 
and hence 
$\psi_{\lambda}(f_{i_1}^{\max} \eta) 
\in 
\Breve{\mathbb{B}}_{\succeq x}$. 
Since 
$\psi_{\lambda}$ 
is an isomorphism of $\U$-crystals, 
there exists 
$k \in \mathbb{Z}_{\geq 0}$ 
such that 
$\psi_{\lambda}(\eta) = e_{i_1}^k \psi_{\lambda}(f_{i_1}^{\max} \eta)$. 
We see from 
$\Pi^{I \setminus \{ i \}}(r_{i_1}x) 
\succeq 
\Pi^{I \setminus \{ i \}}(x)$
and Lemma \ref{lem:r_ix} (2)--(3) 
that 
\begin{align}
\langle \alpha_{i_1}^{\vee} , \Pi^{I \setminus \{ i \}}(x) m_i\varpi_i \rangle
=
m_i \langle \alpha_{i_1}^{\vee} , \Pi^{I \setminus \{ i \}}(x) \varpi_i \rangle
\geq 0
\ 
\text{for all}
\  
i \in J_{\lambda}^c .
\end{align}
By Lemma \ref{lem:Dem} (2)
and tensor product rule, 
we conclude that 
$\psi_{\lambda}(\eta) 
=
e_{i_1}^k \psi_{\lambda}(f_{i_1}^{\max} \eta)
\in \Breve{\mathbb{B}}_{\succeq x}$.
\end{proof}

\begin{proof}[Proof of Proposition \ref{prop:SMT-Dem} (2)]
Assume that 
$\pi = 
\bigotimes_{i \in J_{\lambda}^c} S_y \eta_{\rho^{(i)}} 
\in \Breve{\mathbb{B}}_{\succeq x}$, 
$y \in W_{\af}$
and 
$\bm{\rho} = (\rho^{(i)}) \in \mathrm{Par}(\lambda)$.  
By Lemmas \ref{lem:ext-elm} (2) and \ref{lem:Psi_lambda}, 
\begin{align} \label{eq:pi=y-rho}
\pi 
= 
\bigotimes_{i \in J_{\lambda}^c} S_y \eta_{\rho^{(i)}}
=
S_y \bigotimes_{i \in J_{\lambda}^c} \eta_{\rho^{(i)}}
=
S_y \psi_{\lambda} (\eta_{\bm{\rho}})
=
\psi_{\lambda} (S_y \eta_{\bm{\rho}}) .
\end{align}
Let 
$w_0 \in W$ 
be the longest element.
By Lemma \ref{lem:max}, 
there exist
$i_1 , i_2 , \ldots , i_N \in I_{\af}$ 
such that 
\begin{enumerate}[(i)]
\item
$\langle \alpha_{i_n}^{\vee} , r_{i_{n-1}} r_{i_{n-2}} \cdots r_{i_1} x \lambda \rangle \geq 0$ 
for all 
$n \in [N]$, 
and
\item
$f_{i_N}^{\max} \cdots f_{i_2}^{\max} f_{i_1}^{\max} \pi 
= \bigotimes_{i \in J_{\lambda}^c} S_{w_0 t_{\xi}} \eta_{\rho^{(i)}}$ 
for some 
$\xi \in Q^{\vee}$.
\end{enumerate}
We show by induction on $N$ that 
$\pi \in \psi_{\lambda}(\mathbb{B}_{\succeq x})$, 
or equivalently, 
$\kappa(S_y \eta_{\bm{\rho}}) \succeq x$; 
note that 
$\kappa(S_y \eta_{\bm{\rho}}) = \Pi^{J_{\lambda}}(y)$
by Lemma \ref{lem:kappa} (1).

We first assume that 
$N = 0$. 
In the same manner as in \eqref{eq:pi=y-rho} we see that  
$\pi = \psi_{\lambda}(S_{w_0 t_{\xi}} \eta_{\bm{\rho}})$. 
Since 
$\psi_{\lambda}$ 
is an isomorphism of $\U$-crystals, 
we have 
$S_y \eta_{\bm{\rho}} = S_{w_0 t_{\xi}} \eta_{\bm{\rho}}$, 
and hence 
$\Pi^{J_{\lambda}}(y) = \Pi^{J_{\lambda}}(w_0 t_{\xi})$
by Lemma \ref{lem:kappa} (2). 
Write 
$x = wt_{\zeta}$, 
$w \in W$, $\zeta \in Q^{\vee}$. 
We proceed by induction on 
$\ell(w_0) - \ell(w)$.

If $\ell(w_0) - \ell(w) = 0$, 
then 
$w = w_0$. 
By Lemma \ref{lem:kappa} (1), 
\begin{align}
\Pi^{I \setminus \{ i \}}(w_0 t_{\xi}) 
=
\Pi^{I \setminus \{ i \}}(y) 
=
\kappa(S_y \eta_{\rho^{(i)}})
\ 
\text{for all}
\ 
i \in J_{\lambda}^c . 
\end{align}
Since 
$\pi \in \Breve{\mathbb{B}}_{\succeq x}$, 
we have 
\begin{align}
\kappa(S_y \eta_{\rho^{(i)}})
\succeq 
\Pi^{I \setminus \{ i \}}(x)
=
\Pi^{I \setminus \{ i \}}(w t_{\zeta})
=
\Pi^{I \setminus \{ i \}}(w_0 t_{\zeta})
\ 
\text{for all}
\ 
i \in J_{\lambda}^c.
\end{align}
Hence 
$\Pi^{I \setminus \{ i \}}(w_0 t_{\xi}) 
\succeq 
\Pi^{I \setminus \{ i \}}(w_0 t_{\zeta})$
for all 
$i \in J_{\lambda}^c$. 
By Lemma \ref{lem:wt-wt}, 
we have 
$\Pi^{I \setminus \{ i \}}(t_{\xi}) \succeq \Pi^{I \setminus \{ i \}}(t_{\zeta})$ 
for all 
$i \in J_{\lambda}^c$, 
which implies 
$\Pi^{J_{\lambda}}(t_{\xi}) \succeq \Pi^{J_{\lambda}}(t_{\zeta})$, 
by Lemma \ref{lem:order} (2). 
Again, 
by Lemma \ref{lem:wt-wt}, 
we have 
$\Pi^{J_{\lambda}}(w_0 t_{\xi}) 
\succeq 
\Pi^{J_{\lambda}}(w_0 t_{\zeta})$. 
Since 
$\kappa (S_y \eta_{\bm{\rho}}) 
=
\Pi^{J_{\lambda}}(y) 
=
\Pi^{J_{\lambda}}(w_0 t_{\xi})$
and 
$x = \Pi^{J_{\lambda}}(w_0 t_{\zeta})$, 
we conclude that 
$\kappa (S_y \eta_{\bm{\rho}}) 
\succeq 
x$.

If 
$\ell(w_0) - \ell(w) > 0$, 
then 
$w \neq w_0$. 
There exists 
$j \in I$ 
such that 
$r_j x \succeq x$; 
note that 
$x = r_jw t_{\zeta}$, 
$r_jw \in W$, 
and 
$\ell(w_0) - \ell(r_j w) < \ell(w_0) - \ell(w)$.
Since 
$\langle \alpha_j^{\vee} , y\lambda \rangle 
=
\langle \alpha_j^{\vee} , w_0 t_{\xi}\lambda \rangle
\leq 0$
by Lemma \ref{lem:act-wt}, 
we have 
$y \succeq r_j y$
by Lemma \ref{lem:r_ix} (3).
By Lemma \ref{lem:order} (1), 
\begin{align}
\Pi^{I \setminus \{ i \}}(r_j x) \succeq \Pi^{I \setminus \{ i \}}(x)
\ 
\text{and}
\ 
\Pi^{I \setminus \{ i \}}(y) \succeq \Pi^{I \setminus \{ i \}}(r_j y)
\ 
\text{for all}
\ 
i \in J_{\lambda}^c .
\end{align}
Since 
$\pi \in \Breve{\mathbb{B}}_{\succeq x}$, 
we have 
$\Pi^{I \setminus \{ i \}}(y)
=
\kappa (S_y \eta_{\rho^{(i)}}) 
\succeq 
\Pi^{I \setminus \{ i \}}(x)$
for all 
$i \in J_{\lambda}^c$. 
It follows from Lemma \ref{lem:diamond} (1) that 
$\Pi^{I \setminus \{ i \}}(y) \succeq \Pi^{I \setminus \{ i \}}(r_j x)$ 
for all 
$i \in J_{\lambda}^c$. 
By induction hypothesis, we have 
$\Pi^{J_{\lambda}}(y) \succeq \Pi^{J_{\lambda}}(r_j x)$. 
We see from 
$\Pi^{I \setminus \{ i \}}(r_j x) \succeq \Pi^{I \setminus \{ i \}}(x)$, 
$i \in J_{\lambda}^c$, 
Lemmas \ref{lem:act-wt} and \ref{lem:r_ix} (2)--(3) 
that 
$\langle \alpha_j^{\vee} , x\varpi_i \rangle
=
\langle \alpha_j^{\vee} , \Pi^{I \setminus \{ i \}}(x) \varpi_i \rangle \geq 0$
for all 
$i \in J_{\lambda}^c$, 
which gives 
$\langle \alpha_j^{\vee} , x\lambda \rangle
= 
\sum_{i \in J_{\lambda}^c} 
m_i \langle \alpha_j^{\vee} , x\varpi_i \rangle
\geq 0$.
Again, 
by Lemma \ref{lem:r_ix} (2)--(3), 
we have 
$\Pi^{J_{\lambda}}(r_j x) \succeq x$. 
Thus 
$\kappa(S_y \eta_{\bm{\rho}}) = \Pi^{J_{\lambda}}(y) \succeq x$.

We next assume that 
$N > 0$. 
It follows from Lemmas \ref{lem:ext-elm} and \ref{lem:Dem} (4) that 
\begin{align}
f_{i_1}^{\max} \pi 
= 
f_{i_1}^{\max} \bigotimes_{i \in J_{\lambda}^c} S_{y} \eta_{\rho^{(i)}} 
= 
\bigotimes_{i \in J_{\lambda}^c} 
\underbrace{f_{i_1}^{\max} S_{y} \eta_{\rho^{(i)}}}_{= S_{r_{i_1}y} \eta_{\rho^{(i)}}}
\in 
\Breve{\mathbb{B}}_{\succeq \Pi^{J_{\lambda}}(r_{i_1}x)}
\end{align}
is of the form 
\eqref{label:y-rho}.
By induction hypothesis, 
there exists 
$\eta \in \mathbb{B}_{\succeq \Pi^{J_{\lambda}}(r_{i_1}x)}$
such that 
$f_{i_1}^{\max} \pi = \psi_{\lambda}(\eta)$. 
Since 
$\psi_{\lambda}$ 
is an isomorphism of $\U$-crystals, 
there exists 
$k \in \mathbb{Z}_{\geq 0}$ 
such that 
$\pi = \psi_{\lambda}(e_{i_1}^k \eta)$.  
We see from 
$\langle \alpha_{i_1}^{\vee} , x\lambda \rangle \geq 0$ 
(see (i) above)
and 
Lemma \ref{lem:r_ix} (2)--(3)
that 
$\Pi^{J_{\lambda}}(r_{i_1}x) \succeq x$, 
which gives
$\mathbb{B}_{\succeq \Pi^{J_{\lambda}}(r_{i_1}x)}
\subset 
\mathbb{B}_{\succeq x}$, 
by Lemma \ref{lem:Dem} (3). 
It follows from 
$\langle \alpha_{i_1}^{\vee} , x\lambda \rangle \geq 0$
(see (i) above)
and 
Lemma \ref{lem:Dem} (2) that
\begin{align}
\begin{split}
e_{i_1}^k \eta 
\ 
&\in
\ 
\left\{ e_{i_1}^k \eta' 
\ \vline \ 
\eta' \in \mathbb{B}_{\succeq \Pi^{J_{\lambda}}(r_{i_1}x)} \right\} 
\setminus \{ \bm{0} \} \\[2mm]
&\subset \ 
\left\{ e_{i_1}^k \eta' 
\ \vline \ 
\eta' \in \mathbb{B}_{\succeq x} \right\} 
\setminus \{ \bm{0} \} 
\ \subset \ 
\mathbb{B}_{\succeq x} .
\end{split}
\end{align}
Thus 
$\pi = \psi_{\lambda}(e_{i_1}^k \eta) \in \psi_{\lambda}(\mathbb{B}_{\succeq x})$. 
\end{proof}

\subsection{Proof of Proposition \ref{prop:Deo}} \label{subsection:pr-Deo}

\begin{proof}[Proof of Proposition \ref{prop:Deo}]
The ``only if" part follows immediately from Lemma \ref{lem:order} (1).
Let us show the ``if" part.
Let  
$\lambda = \sum_{i \in I} m_i \varpi_i \in P^+$ 
be such that 
$J_{\lambda} = J$. 
Let
$\bm{\rho} = (\rho^{(i)}) \in \mathrm{Par}(\lambda)$. 
Since 
$\kappa(S_x \eta_{\rho^{(i)}}) 
= 
\Pi^{I \setminus \{ i \}}(x)$, 
by Lemma \ref{lem:kappa} (1), 
and 
$\Pi^{I \setminus \{ i \}}(x)
\succeq 
\Pi^{I \setminus \{ i \}}(y)$
for all 
$i \in I \setminus J = J_{\lambda}^c$, 
it follows that 
$\bigotimes_{i \in J_{\lambda}^c} S_x \eta_{\rho^{(i)}} \in 
\bigotimes_{i \in J_{\lambda}^c} \mathbb{B}^{\si}_{\succeq \Pi^{I \setminus \{ i \}}(y)}(m_i\varpi_i)$. 
Similarly to 
\eqref{eq:pi=y-rho}, 
we have 
$\bigotimes_{i \in J_{\lambda}^c} S_x \eta_{\rho^{(i)}} = \psi_{\lambda}(S_x \eta_{\bm{\rho}})$. 
By Proposition \ref{prop:SMT-Dem} (2), 
$S_x \eta_{\bm{\rho}} \in \mathbb{B}^{\si}_{\succeq y}(\lambda)$, 
which implies 
$\kappa(S_x \eta_{\bm{\rho}}) \succeq y$. 
Since 
$x = \kappa(S_x \eta_{\bm{\rho}})$, 
by Lemma \ref{lem:kappa} (1), 
we conclude that 
$x \succeq y$.
\end{proof}

\section{Tableau criterion for semi-infinite Bruhat order} \label{Section:Tab-cri}

Proposition \ref{prop:Deo} shows that 
the study of semi-infinite Bruhat order on $W_{\af}$ 
is reduced to those on the sets 
$(W^{I \setminus \{ i \}})_{\af}$, 
$i \in I$, 
of 
``semi-infinite Grassmannian elements." 
In this section, we proceed with the study of 
semi-infinite Bruhat order on 
$(W^{I \setminus \{ i \}})_{\af}$, 
$i \in I$, 
for 
$W_{\af}$ 
of type
$A_{n-1}^{(1)}$, 
$B_n^{(1)}$, 
$C_n^{(1)}$, 
and
$D_n^{(1)}$.
The main results of this section are 
Theorems 
\ref{thm:tab-cri-A}, 
\ref{thm:tab-cri-C}, 
\ref{thm:tab-cri-B}, 
and 
\ref{thm:tab-cri-D}
(see also 
Definitions 
\ref{def:SiB-A},
\ref{def:SiB-C},
\ref{def:SiB-B},
and 
\ref{def:SiB-D}), 
which give combinatorial criteria 
for semi-infinite Bruhat order 
in terms of tableaux. 
In order to get these results, 
we give a complete classification of the edges in 
the quantum Bruhat graph
$\mathrm{QB}^{I \setminus \{ i \}}$, 
$i \in I$
(see 
\S \ref{Subsection:QBG}
and 
Propositions 
\ref{prop:Q=C}, 
\ref{prop:Q=B}, 
and 
\ref{prop:Q=D}). 
For combinatorial descriptions of Bruhat order 
on finite Weyl groups of classical type, 
we refer the reader to 
\cite[\S 8]{BB}.

\subsection{Explicit description of $(W^J)_{\af}$}

In this subsection, 
following \cite[\S 3]{LNSSS15}, 
we give an explicit description of 
$(W^J)_{\af}$ 
for later use. 

We take and fix $J = \bigsqcup_{m=1}^k I_m \subset I$, 
where $I_1 , I_2 , \ldots , I_k$ are the sets of vertices  
of the connected components of the Dynkin diagram of $\Delta_J$; 
note that 
$\Delta_J = \bigsqcup_{m=1}^k \Delta_{I_m}$. 
Set 
$(I_m)_{\af} = \{ 0 \} \sqcup I_m \subset I_{\af}$, 
$m \in [k]$.
Set 
\begin{align}
Q^{\vee,J} 
= 
\left\{ 
\xi \in Q^{\vee} 
\ \vline \ 
\langle \xi , \alpha \rangle \in \{ -1,0\} \ \text{for all} \ \alpha \in \Delta_J^+ 
\right\} .
\end{align}
%

\begin{lem}[{\cite[Equation (3.6)]{LNSSS15}}] \label{lem:J-ad}
For each $\xi \in Q^{\vee}$ there exist a unique
$\phi_J(\xi) \in Q_J^{\vee}$ 
and a unique 
$(j_1, j_2 , \ldots , j_k) \in \prod_{m=1}^k (I_m)_{\af}$ 
such that 
\begin{align} \label{eq:J-ad}
\xi + \phi_J(\xi) + \sum_{m=1}^k \varpi_{j_m}^{\vee} \in \bigoplus_{i \in I \setminus J} \BZ \varpi_i^{\vee} .
\end{align}
In particular, $\xi + \phi_J(\xi) \in Q^{\vee,J}$ for any $\xi \in Q^{\vee}$, 
and hence $Q^{\vee,J}$ is a complete system of coset representatives for $Q^{\vee}/Q^{\vee}_J$. 
\end{lem}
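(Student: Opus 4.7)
The plan is to split the assertion into existence and uniqueness of the decomposition \eqref{eq:J-ad}, and derive the ``In particular'' part as a corollary. As a first step, I would reduce to the case where $J$ is irreducible, i.e., $k = 1$, since every quantity involved in \eqref{eq:J-ad} respects the decomposition $J = \bigsqcup_{m=1}^k I_m$: we have $Q_J^{\vee} = \bigoplus_m Q_{I_m}^{\vee}$ and $\Delta_J^+ = \bigsqcup_m \Delta_{I_m}^+$, and the sum $\sum_m \varpi_{j_m}^{\vee}$ splits into pieces with pairwise disjoint supports.

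Using that the Cartan integers $\langle \alpha_i^{\vee}, \alpha_j \rangle$ are integers, I have the inclusion $Q^{\vee} \hookrightarrow P^{\vee} := \bigoplus_{i \in I} \BZ \varpi_i^{\vee}$, and I can expand $\xi = \sum_i c_i \varpi_i^{\vee}$ with $c_i \in \BZ$. Projecting \eqref{eq:J-ad} onto $\bigoplus_{j \in J} \BZ \varpi_j^{\vee}$ (in the irreducible case $J = I_1$), the condition rewrites as
\[
\phi_J(\xi) = - \sum_{j \in J} c_j \varpi_j^{\vee} - \varpi_{j_1}^{\vee} \in Q_J^{\vee},
\]
with the convention $\varpi_0^{\vee} = 0$. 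So the problem becomes: does the class $\bigl[ -\sum_{j \in J} c_j \varpi_j^{\vee} \bigr]$ in $P_J^{\vee}/Q_J^{\vee}$ admit a representative of the form $[\varpi_{j_1}^{\vee}]$ for a unique $j_1 \in (I_1)_{\af}$?

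For \emph{existence}, I would invoke the standard fact that the elements $\{[\varpi_j^{\vee}] : j \in (I_1)_{\af}\}$ (with $[\varpi_0^{\vee}] = 0$) surject onto the fundamental group $P_J^{\vee}/Q_J^{\vee}$, since the fundamental coweights generate $P_J^{\vee}$ over $Q_J^{\vee}$. For \emph{uniqueness}, the crucial ingredient is the companion condition $\xi + \phi_J(\xi) \in Q^{\vee, J}$ asserted in the last sentence: for any $\alpha = \sum_{j \in J} d_j \alpha_j \in \Delta_J^+$, a direct calculation using the displayed equation gives $\langle \xi + \phi_J(\xi), \alpha \rangle = -d_{j_1}$ (with the convention $d_0 := 0$), so requiring this pairing to lie in $\{-1, 0\}$ is equivalent to $d_{j_1} \in \{0, 1\}$ for every $\alpha \in \Delta_J^+$, which means $j_1$ is either $0$ or a cominuscule vertex of $I_1$. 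By the standard classification of affine Dynkin data, the cominuscule vertices together with the affine node are in bijection with $P_J^{\vee}/Q_J^{\vee}$, which pins down $j_1$ uniquely from the coset of $\xi$ modulo $Q_J^{\vee}$; then $\phi_J(\xi)$ is determined by the displayed equation. The ``In particular'' part follows immediately: existence and uniqueness of $\phi_J(\xi)$ with $\xi + \phi_J(\xi) \in Q^{\vee,J}$ show that $Q^{\vee, J}$ meets every coset of $Q^{\vee}/Q_J^{\vee}$ exactly once.

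The \textbf{main obstacle} is verifying, across the classical types $A, B, C, D$ relevant to the paper, that the set of cominuscule vertices together with the affine node is a complete and irredundant system of representatives for the fundamental group; this is a case-by-case check with the standard affine Dynkin data, but once set up, the rest is a routine bookkeeping exercise.
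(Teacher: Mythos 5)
First, a point of comparison: the paper does not prove this lemma at all --- it is quoted verbatim from \cite{LNSSS15}, so there is no in-paper argument to measure your proposal against. Your overall strategy (decouple the connected components of $J$, transport the problem to the fundamental group $P_{I_1}^{\vee}/Q_{I_1}^{\vee}$ of the subsystem, and invoke the classification of cominuscule nodes) is the standard one behind that reference. But as written your existence and uniqueness halves do not connect, and that is a genuine gap. The existence step produces \emph{some} $j_1 \in (I_1)_{\af}$ whose coweight class matches $-\bigl[\sum_{j \in J} c_j \varpi_j^{\vee}\bigr]$, justified only by ``the fundamental coweights generate $P_J^{\vee}$ over $Q_J^{\vee}$''; generation of a group by a set does not mean every element \emph{is} one of the generators, and nothing in your argument forces the $j_1$ you find to be $0$ or cominuscule. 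The uniqueness step then uses the condition $\xi + \phi_J(\xi) \in Q^{\vee,J}$ --- but in the lemma that is the \emph{conclusion} (``In particular''), not a hypothesis, and the pair your existence step produces need not satisfy it. (A smaller slip: $Q_J^{\vee} \not\subset \bigoplus_{j \in J} \BZ \varpi_j^{\vee}$ in general, since $\langle \alpha_j^{\vee}, \alpha_i \rangle$ can be nonzero for $i \notin J$; your displayed identity must be read for the image of $\phi_J(\xi)$ under $\eta \mapsto \sum_{j \in J} \langle \eta, \alpha_j \rangle \varpi_j^{\vee}$, not for $\phi_J(\xi)$ itself.)

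That said, your instinct that the companion condition is what drives uniqueness is exactly right, because uniqueness subject to \eqref{eq:J-ad} alone genuinely fails outside the simply-laced/minuscule situation. Take $\Delta$ of type $B_3$ with $\alpha_3 = \varepsilon_3$ short, $J = I_1 = \{2,3\}$, and $\xi = 0$: both $(\phi, j_1) = (0, 0)$ and $(\phi, j_1) = (-\alpha_2^{\vee} - \alpha_3^{\vee},\, 3)$ satisfy \eqref{eq:J-ad}, since $-\alpha_2^{\vee} - \alpha_3^{\vee} + \varpi_3^{\vee} = -(\varepsilon_2 + \varepsilon_3) + (\varepsilon_1 + \varepsilon_2 + \varepsilon_3) = \varpi_1^{\vee}$; here $3$ is not cominuscule in $\Delta_J$ and indeed $-\alpha_2^{\vee} - \alpha_3^{\vee} \notin Q^{\vee,J}$ because it pairs to $-2$ with $\alpha_2 + 2\alpha_3$. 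So the statement must be read as singling out the pair for which moreover $\xi + \phi_J(\xi) \in Q^{\vee,J}$, equivalently for which each $j_m$ is $0$ or a cominuscule node of $I_m$ --- this is the normalization actually needed for $z_{\xi}$ and Lemma \ref{lem:(W^J)_af} to be well defined. To repair the proof: establish that $j \mapsto [\varpi_j^{\vee}]$ restricted to $\{0\} \cup \{\text{cominuscule nodes of } I_m\}$ is a \emph{bijection} onto $P_{I_m}^{\vee}/Q_{I_m}^{\vee}$ (the standard minuscule-coweight fact; your planned case-check should also cover exceptional components, since the surrounding results are stated for arbitrary untwisted type), use surjectivity for existence of a pair that automatically lands in $Q^{\vee,J}$, and use injectivity plus nondegeneracy of the Cartan matrix of $\Delta_J$ for uniqueness among such pairs.
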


For a subset 
$K \subset I$, 
let 
$w_0^K$ 
be the longest element of 
$W_K$.
For $j_m \in (I_m)_{\af}$, set 
\begin{align}
v_{j_m}^{I_m} = w_0^{I_m} w_0^{I_m \setminus \{ j_m \}} \in W_{I_m} \subset W_J; 
\end{align}
note that $v^{I_m}_0 = e$.
For $\xi \in Q^{\vee}$, define
\begin{align} \label{eq:z}
z_{\xi} = z_{\xi}^J = v_{j_1}^{I_1} v_{j_2}^{I_2} \cdots v_{j_k}^{I_k} \in W_J,
\end{align}
where 
$(j_1, j_2 , \ldots , j_k) \in \prod_{m=1}^k (I_m)_{\af}$, 
satisfying \eqref{eq:J-ad} for $\xi$, 
is determined uniquely by Lemma \ref{lem:J-ad}; 
note that 
$z_{\xi} = z_{\zeta}$ 
if 
$\xi \equiv \zeta \mod Q^{\vee}_J$.

\begin{lem}[{\cite[Lemma 3.7]{LNSSS15}}] \label{lem:(W^J)_af}
We have 
$T_{\xi} = \Pi^J(t_{\xi}) = z_{\xi} t_{\xi + \phi_J(\xi)}$
for every $\xi \in Q^{\vee}$. 
Therefore, by Lemma \ref{lem:Pi}, 
$\Pi^J(wt_{\xi}) = \lfloor w \rfloor z_{\xi} t_{\xi + \phi_J(\xi)}$
for every $w \in W$ and $\xi \in Q$, and we have a bijection
$W^J \times Q^{\vee,J} \rightarrow (W^J)_{\af}$, 
$(w,\xi) \mapsto wT_{\xi}$. 
In particular, 
\begin{align}
(W^J)_{\af} 
= 
\left\{ 
wT_{\xi} = w z_{\xi}t_{\xi} 
\ \vline \ 
w \in W^J , \ \xi \in Q^{\vee,J} 
\right\} .
\end{align}
\end{lem}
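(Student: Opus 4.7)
The plan is to verify the key identity $T_\xi = z_\xi t_{\xi+\phi_J(\xi)}$ directly from the characterization of $\Pi^J$, and then deduce all remaining assertions by formal manipulations using Lemma~\ref{lem:Pi}.

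Since $T_\xi = \Pi^J(t_\xi)$ is the unique $x_1 \in (W^J)_{\af}$ with $x_1^{-1} t_\xi \in (W_J)_{\af}$, it suffices to verify: (a) $z_\xi t_{\xi+\phi_J(\xi)} \in (W^J)_{\af}$, and (b) $(z_\xi t_{\xi+\phi_J(\xi)})^{-1} t_\xi \in (W_J)_{\af}$. Condition (b) is a direct computation: using $w t_\zeta = t_{w\zeta} w$, one obtains $(z_\xi t_{\xi+\phi_J(\xi)})^{-1} t_\xi = t_{z_\xi^{-1}\xi - \xi - \phi_J(\xi)} z_\xi^{-1}$. Since $z_\xi \in W_J$ preserves cosets modulo $Q_J^\vee$ (because $r_j \zeta - \zeta = -\langle \zeta,\alpha_j\rangle \alpha_j^\vee \in Q_J^\vee$ for $j \in J$), we have $z_\xi^{-1}\xi - \xi \in Q_J^\vee$; combined with $\phi_J(\xi) \in Q_J^\vee$, the translation part lies in $Q_J^\vee$, so the element belongs to $(W_J)_{\af}$.

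Condition (a) is the main obstacle. One computes, for $\beta = \alpha + n\delta \in (\Delta_J)_{\af}^+$,
\begin{align*}
(z_\xi t_{\xi+\phi_J(\xi)})(\alpha + n\delta) = z_\xi \alpha + \bigl(n - \langle \xi+\phi_J(\xi),\alpha\rangle\bigr)\delta,
\end{align*}
and must show this lies in $\Delta_{\af}^+$. Since $\xi+\phi_J(\xi) \in Q^{\vee,J}$ by Lemma~\ref{lem:J-ad}, the pairing $\langle \xi+\phi_J(\xi),\alpha\rangle$ takes values only in $\{-1,0\}$ for $\alpha \in \Delta_J^+$, which handles positivity of the $\delta$-coefficient in the generic cases. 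The delicate case is $n=0$, $\alpha \in \Delta_J^+$ with $z_\xi \alpha \in \Delta^-$: here one must show $\langle \xi+\phi_J(\xi),\alpha\rangle = -1$. This requires decomposing $\alpha$ into its components in each $\Delta_{I_m}^+$ and analyzing $z_\xi = v_{j_1}^{I_1}\cdots v_{j_k}^{I_k}$: the factor $v_{j_m}^{I_m} = w_0^{I_m}w_0^{I_m\setminus\{j_m\}}$ sends a positive root of $\Delta_{I_m}$ to a negative root exactly when the $\alpha_{j_m}$-coefficient is strictly positive, and in that situation the specific choice of $(j_1,\ldots,j_k)$ and $\phi_J(\xi)$ characterizing \eqref{eq:J-ad} forces the required pairing $-1$.

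Granted $T_\xi = z_\xi t_{\xi+\phi_J(\xi)}$, the formula $\Pi^J(wt_\xi) = \lfloor w\rfloor z_\xi t_{\xi+\phi_J(\xi)}$ is immediate from Lemma~\ref{lem:Pi}(1)--(2). For the bijection $W^J \times Q^{\vee,J} \to (W^J)_{\af}$, surjectivity uses \eqref{eq:(W^J)_af=wT_xi}: any $x = wT_\zeta \in (W^J)_{\af}$ with $w \in W^J$, $\zeta \in Q^\vee$ equals $wz_\zeta t_{\zeta+\phi_J(\zeta)}$; setting $\xi := \zeta+\phi_J(\zeta) \in Q^{\vee,J}$, the uniqueness in Lemma~\ref{lem:J-ad} gives $\phi_J(\xi)=0$ and $z_\xi = z_\zeta$, so $x = wT_\xi$. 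Injectivity follows by comparing the translation components of $wT_\xi = wz_\xi t_\xi$ and $w'T_{\xi'} = w'z_{\xi'} t_{\xi'}$ in $W \ltimes Q^\vee$: the translations force $\xi = \xi'$ (since $\xi,\xi' \in Q^{\vee,J}$ are distinct coset representatives), and then $wz_\xi = w'z_\xi$ gives $w=w'$. The final displayed equality $wT_\xi = wz_\xi t_\xi$ for $\xi \in Q^{\vee,J}$ is just $\phi_J(\xi)=0$.
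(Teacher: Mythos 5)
Your proposal is correct in outline, and the first thing to say is that the paper offers nothing to compare it against: the lemma is simply quoted from \cite[Lemma 3.7]{LNSSS15} with no proof, so you are supplying an argument where the author gives only a citation. Your strategy---verify that $z_{\xi}t_{\xi+\phi_J(\xi)}\in(W^J)_{\af}$ and that $(z_{\xi}t_{\xi+\phi_J(\xi)})^{-1}t_{\xi}\in(W_J)_{\af}$, then invoke uniqueness of the factorization defining $\Pi^J$---is the natural one, and your condition (b), the surjectivity and injectivity of $(w,\xi)\mapsto wT_{\xi}$, and the final display are all handled correctly. The one place the write-up undersells the work is condition (a): the case analysis actually requires the full equivalence, for $\alpha\in\Delta_J^{+}$, that $z_{\xi}\alpha\in\Delta^{-}$ if and only if $\langle\xi+\phi_J(\xi),\alpha\rangle=-1$. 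You explicitly flag only the forward implication (needed for $\beta=\alpha\in\Delta_J^{+}$), but the converse is equally essential: for $\beta=-\alpha+\delta$ with $\langle\xi+\phi_J(\xi),\alpha\rangle=-1$ the $\delta$-coefficient of $t_{\xi+\phi_J(\xi)}\beta$ drops to $0$, and one must then know $z_{\xi}(-\alpha)\in\Delta^{+}$, i.e.\ $z_{\xi}\alpha\in\Delta^{-}$. Fortunately the mechanism you describe yields both directions at once: pairing the relation \eqref{eq:J-ad} with $\alpha\in\Delta_{I_m}^{+}$ gives $\langle\xi+\phi_J(\xi),\alpha\rangle=-\langle\varpi_{j_m}^{\vee},\alpha\rangle$, i.e.\ minus the $\alpha_{j_m}$-coefficient of $\alpha$, while $v_{j_m}^{I_m}=w_0^{I_m}w_0^{I_m\setminus\{j_m\}}$ inverts exactly the positive roots of $\Delta_{I_m}$ with positive $\alpha_{j_m}$-coefficient, and the remaining factors of $z_{\xi}$ act trivially on $\Delta_{I_m}$ since the $I_{m'}$ are orthogonal components. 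Making that equivalence and the residual list of cases ($\beta=\alpha$ versus $\beta=-\alpha+\delta$, $\alpha\in\Delta_J^{+}$) explicit would turn your sketch into a complete proof.
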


\subsection{Quantum Bruhat graphs}
\label{Subsection:QBG}

Following
\cite[\S 4]{LNSSS15}
(see also 
\cite{BFP}), 
define the (parabolic) quantum Bruhat graph 
$\mathrm{QB}^J$
to be the $(\Delta^+ \setminus \Delta_J^+)$-colored directed graph 
with vertex set 
$W^J$ 
and edges of the form 
$w \xrightarrow{\ \gamma \ } \lfloor wr_{\gamma} \rfloor$
for 
$w \in W^J$
and 
$\gamma \in \Delta^+ \setminus \Delta_J^+$, 
where 
$\ell(\lfloor wr_{\gamma} \rfloor) - \ell(w) = 1 -2\chi \langle \gamma^{\vee} , \rho - \rho_J \rangle$
and
$\chi \in \{ 0,1 \}$. 
We say that an edge
$w \xrightarrow{\ \gamma \ } \lfloor wr_{\gamma} \rfloor$
in 
$\mathrm{QB}^J$ 
is Bruhat (resp. quantum) if 
$\ell(\lfloor wr_{\gamma} \rfloor) - \ell(w) = 1$
(resp. 
$\ell(\lfloor wr_{\gamma} \rfloor) - \ell(w) = 1 -2\langle \gamma^{\vee} , \rho - \rho_J \rangle$). 
We see that if there exists a Bruhat edge 
$w \xrightarrow{\ \gamma \ } \lfloor wr_{\gamma} \rfloor$
in 
$\mathrm{QB}^J$, 
then 
$wr_{\gamma} = \lfloor wr_{\gamma} \rfloor \in W^J$. 
Note that $\mathrm{QB}^J$ does not define a partial order on $W^J$. 

\begin{lem}[{\cite[Proposition A.1.2]{INS16}}] \label{lem:Q=SiB}
Let $w \in W^J$, $\xi \in Q^{\vee}$ and $\beta \in \Delta_{\af}^+$. 
Write 
$\beta = w\gamma + \chi\delta$ 
with 
$\gamma \in \Delta$ 
and 
$\chi \in \BZ_{\geq 0}$.
Then 
$r_{\beta}wT_{\xi} \in (W^J)_{\af}$
and there exists an edge 
$wT_{\xi} \xrightarrow{\ \beta \ } r_{\beta}wT_{\xi}$ in $\mathrm{SiB}^J$ 
if and only if $\gamma \in \Delta^+ \setminus \Delta_J^+$ 
and one of the following conditions holds:
\begin{enumerate}[(1)]
\item
$\chi = 0$
and 
$w \xrightarrow{\ \gamma \ } wr_{\gamma}$
is a Bruhat edge in 
$\mathrm{QB}^J$,
\item
$\chi = 1$
and 
$w \xrightarrow{\ \gamma \ } \lfloor wr_{\gamma} \rfloor$
is a quantum edge in 
$\mathrm{QB}^J$; 
\end{enumerate}
in these cases, we have 
$r_{\beta}wT_{\xi} 
= 
\lfloor w r_{\gamma} \rfloor T_{\xi + \chi\gamma^{\vee}}
=
\lfloor w r_{\gamma} \rfloor z^J_{\xi + \chi\gamma^{\vee}} t_{\xi + \chi\gamma^{\vee} + \phi_J(\xi + \chi\gamma^{\vee})}$
and 
$c_i(\xi + \chi\gamma^{\vee} + \phi_J(\xi + \chi\gamma^{\vee}))
=
c_i(\xi) + \chi c_i(\gamma^{\vee})$
for all 
$i \in I \setminus J$. 
\end{lem}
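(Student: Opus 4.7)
The plan is to prove this lemma by direct computation: make $r_\beta w T_\xi$ explicit, apply $\Pi^J$ using Lemmas \ref{lem:Pi} and \ref{lem:(W^J)_af}, and then compare the semi-infinite length increment with the Bruhat/quantum edge formulas of $\mathrm{QB}^J$.

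First, I would use $r_{\alpha + n\delta} = r_\alpha t_{n\alpha^\vee}$ to expand $r_\beta = r_{w\gamma + \chi\delta} = wr_\gamma w^{-1}t_{\chi w\gamma^\vee}$, and combine this with the commutation $t_\mu v = vt_{v^{-1}\mu}$ for $v \in W$ to obtain
\[
r_\beta wT_\xi \;=\; wr_\gamma\, t_{\chi\gamma^\vee}\,T_\xi.
\]
Since $T_\xi = \Pi^J(t_\xi)$, we have $t_{\chi\gamma^\vee}T_\xi \equiv t_{\xi+\chi\gamma^\vee}\pmod{(W_J)_{\af}}$; applying $\Pi^J$ and using Lemma \ref{lem:Pi} then gives $\Pi^J(r_\beta w T_\xi) = \lfloor wr_\gamma\rfloor T_{\xi+\chi\gamma^\vee}$, and the coefficient identity for $c_i$, $i \in I\setminus J$, is immediate from $\phi_J \in Q^\vee_J$. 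An edge $wT_\xi \xrightarrow{\ \beta\ } r_\beta wT_\xi$ in $\mathrm{SiB}^J$ requires both that $r_\beta w T_\xi \in (W^J)_{\af}$ (so that it coincides with its own $\Pi^J$-image, hence equals $\lfloor wr_\gamma\rfloor T_{\xi+\chi\gamma^\vee}$) and that $\lsi(r_\beta wT_\xi) = \lsi(wT_\xi)+1$. Writing both sides in the canonical form $w'z^J_{\xi'}t_{\xi'+\phi_J(\xi')}$ via Lemma \ref{lem:(W^J)_af} and using \eqref{eq:sil}, the length increment becomes
\[
\Delta \;=\; \bigl[\ell(\lfloor wr_\gamma\rfloor z^J_{\xi+\chi\gamma^\vee}) - \ell(wz^J_\xi)\bigr] + 2\bigl\langle \chi\gamma^\vee + \phi_J(\xi+\chi\gamma^\vee) - \phi_J(\xi),\, \rho\bigr\rangle.
\]

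I would then split by $\chi$. For $\chi = 0$, both $\xi$ and $z^J_\xi$ are unchanged, so $\Delta = \ell(\lfloor wr_\gamma\rfloor) - \ell(w)$; membership of $r_\beta w T_\xi = wr_\gamma T_\xi$ in $(W^J)_{\af}$ forces $\gamma \in \Delta^+\setminus\Delta_J^+$ and $wr_\gamma = \lfloor wr_\gamma\rfloor \in W^J$, and the condition $\Delta = 1$ is exactly the Bruhat edge condition in $\mathrm{QB}^J$. For $\chi \geq 1$, I would expand $\ell(z^J_{\xi+\chi\gamma^\vee}) - \ell(z^J_\xi)$ together with the $\phi_J$-difference to show that the translation contribution collapses to $2\chi\langle\gamma^\vee,\,\rho-\rho_J\rangle$, so $\Delta = \ell(\lfloor wr_\gamma\rfloor) - \ell(w) + 2\chi\langle\gamma^\vee,\,\rho-\rho_J\rangle$; requiring $\Delta = 1$ then forces $\chi = 1$ together with $\ell(\lfloor wr_\gamma\rfloor) - \ell(w) = 1 - 2\langle\gamma^\vee,\rho-\rho_J\rangle$, which is precisely the quantum edge condition. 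The converse direction is obtained by reading the same chain of equalities in reverse: given a Bruhat or quantum edge in $\mathrm{QB}^J$ with $\gamma \in \Delta^+\setminus\Delta_J^+$, the same calculation yields $\Delta = 1$ and no $\Pi^J$-reduction occurs, so the corresponding SiB edge exists.

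The main obstacle will be the length bookkeeping for the $z^J$-factor, because the assignment $\xi \mapsto z^J_\xi$ from Lemmas \ref{lem:J-ad} and \ref{lem:(W^J)_af} is not linear in $\xi$. Controlling $\ell(z^J_{\xi+\chi\gamma^\vee}) - \ell(z^J_\xi)$ together with $\langle\phi_J(\xi+\chi\gamma^\vee) - \phi_J(\xi),\rho\rangle$ requires a reflection-length identity of the shape $\ell(z^J_\xi) + 2\langle\phi_J(\xi),\rho\rangle = -2\langle\phi_J(\xi),\rho_J\rangle$, which is derived from the $v^{I_m}_{j_m} = w_0^{I_m} w_0^{I_m\setminus\{j_m\}}$ decomposition and standard facts about parabolic longest elements. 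Once this identity is in hand, the combined translation and $z^J$ contributions collapse to the advertised $2\chi\langle\gamma^\vee,\rho-\rho_J\rangle$, and matching with the Bruhat/quantum edge definitions becomes routine.
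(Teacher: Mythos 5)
The paper itself gives no proof of this lemma: it is imported verbatim from \cite[Proposition A.1.2]{INS16}, so there is no internal argument to compare yours against. Your strategy --- rewrite $r_\beta wT_\xi$ as $\lfloor wr_\gamma\rfloor T_{\xi+\chi\gamma^\vee}$ modulo $(W_J)_{\af}$ via Lemmas \ref{lem:Pi} and \ref{lem:(W^J)_af}, then match the $\lsi$-increment against the edge conditions of $\mathrm{QB}^J$ --- is exactly the computation carried out in that reference, and the target formula $\Delta=\ell(\lfloor wr_\gamma\rfloor)-\ell(w)+2\chi\langle\gamma^\vee,\rho-\rho_J\rangle$ is the correct one.

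However, the identity on which you rest the length bookkeeping is false as stated, and you have singled it out as the crux. Since $\phi_J(\xi)\in Q^\vee_J$ and $\langle\alpha_j^\vee,\rho\rangle=\langle\alpha_j^\vee,\rho_J\rangle=1$ for $j\in J$, your equation $\ell(z^J_\xi)+2\langle\phi_J(\xi),\rho\rangle=-2\langle\phi_J(\xi),\rho_J\rangle$ is equivalent to $\ell(z^J_\xi)=-4\langle\phi_J(\xi),\rho_J\rangle$, which already fails for $J=\{i\}$ and $\xi=\alpha_i^\vee$: there $\phi_J(\xi)=-\alpha_i^\vee$ and $z^J_\xi=e$, so the right-hand side is $4$ while the left-hand side is $0$. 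The identity you actually need is $\ell(z^J_\xi)=-2\langle\xi+\phi_J(\xi),\rho_J\rangle$, equivalently $\lsi(T^J_\xi)=2\langle\xi,\rho-\rho_J\rangle$; only with the $\xi$-term present is the quantity $\ell(z^J_{\xi'})+2\langle\xi'+\phi_J(\xi'),\rho_J\rangle$ independent of $\xi'$, which is what makes the translation contribution collapse to $2\chi\langle\gamma^\vee,\rho-\rho_J\rangle$. (One can check that your version does not even imply your displayed formula for $\Delta$.) A second gap: from $\Delta=1$ you conclude that $\chi\in\{0,1\}$, but excluding $\chi\geq 2$ requires $\langle\gamma^\vee,\rho-\rho_J\rangle\geq 1$ together with the Brenti--Fomin--Postnikov-type lower bound $\ell(\lfloor wr_\gamma\rfloor)-\ell(w)\geq 1-2\langle\gamma^\vee,\rho-\rho_J\rangle$ for $\gamma\in\Delta^+\setminus\Delta_J^+$; the definition of $\mathrm{QB}^J$ records only which length differences give edges, not that intermediate drops are impossible, so this inequality must be quoted or proved before the ``only if'' direction closes.
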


\begin{lem}[{\cite[Proof of Theorem 10.16]{LS}}] \label{lem:LS}
Let 
$w \in W^J$
and 
$\gamma \in \Delta^+ \setminus \Delta_J^+$. 
There exists a quantum edge
$w \xrightarrow{\ \gamma \ } \lfloor wr_{\gamma} \rfloor$
in 
$\mathrm{QB}^J$
if and only if 
$\ell(wr_{\gamma}) - \ell(w) = 1 - 2\langle \gamma^{\vee} , \rho \rangle$
and 
$wr_{\gamma}t_{\gamma^{\vee}} \in (W^J)_{\af}$; 
note that 
$wr_{\gamma}t_{\gamma^{\vee}} \in (W^J)_{\af}$
and 
Lemma \ref{lem:(W^J)_af}
imply
$\gamma^{\vee} \in Q^{\vee,J} \cap (\Delta^{\vee,+} \setminus \Delta_J^{\vee,+})$
and 
$\lfloor wr_{\gamma} \rfloor = wr_{\gamma} z_{\gamma^{\vee}}^{-1}$.
\end{lem}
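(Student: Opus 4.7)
The plan is to reduce the lemma to a direct application of Lemma \ref{lem:Q=SiB}(2) specialized to the case $\xi = 0$, $\chi = 1$. Setting $\beta = w\gamma + \delta \in \Delta_{\af}^+$, I would first compute $r_\beta w$ explicitly as an element of $W_{\af}$: using $r_{\alpha + n\delta} = r_\alpha t_{n\alpha^\vee}$ together with the commutation $t_\eta v = v t_{v^{-1}\eta}$ for $v \in W$ and $\eta \in Q^\vee$, one obtains $r_\beta w = r_{w\gamma} t_{w\gamma^\vee} w = w r_\gamma t_{\gamma^\vee}$.

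Next, Lemma \ref{lem:Q=SiB}(2), applied with $\xi = 0$ and $\chi = 1$, says that the existence of a quantum edge $w \xrightarrow{\gamma} \lfloor wr_\gamma \rfloor$ in $\mathrm{QB}^J$ is equivalent to the existence of the edge $w \xrightarrow{\beta} r_\beta w$ in $\mathrm{SiB}^J$. By the definition of $\mathrm{SiB}^J$ from \S \ref{Subsection:SiBO}, this latter edge exists exactly when (i) $r_\beta w = w r_\gamma t_{\gamma^\vee}$ belongs to $(W^J)_{\af}$, and (ii) $\lsi(w r_\gamma t_{\gamma^\vee}) = \lsi(w) + 1$. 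Applying the formula \eqref{eq:sil} to both sides of (ii) unpacks to $\ell(wr_\gamma) + 2\langle \gamma^\vee, \rho \rangle = \ell(w) + 1$, which is precisely the stated length condition.

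For the concluding parenthetical remark, suppose that $wr_\gamma t_{\gamma^\vee} \in (W^J)_{\af}$. By Lemma \ref{lem:(W^J)_af} it admits a unique expression as $v\, z^J_\zeta\, t_{\zeta + \phi_J(\zeta)}$ with $v \in W^J$ and $\zeta \in Q^{\vee,J}$. Since $\zeta \in Q^{\vee,J}$, the uniqueness clause in Lemma \ref{lem:J-ad} forces $\phi_J(\zeta) = 0$; comparing translation parts then gives $\zeta = \gamma^\vee$, so in particular $\gamma^\vee \in Q^{\vee,J}$, and comparing the Weyl-group parts yields $w r_\gamma = v\, z^J_{\gamma^\vee}$, i.e., $v = wr_\gamma (z^J_{\gamma^\vee})^{-1}$, which (since $v \in W^J$ and $z^J_{\gamma^\vee} \in W_J$) must equal $\lfloor wr_\gamma \rfloor$. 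The inclusion $\gamma^\vee \in \Delta^{\vee,+} \setminus \Delta_J^{\vee,+}$ is automatic from the hypothesis $\gamma \in \Delta^+ \setminus \Delta_J^+$.

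The argument is essentially bookkeeping, and I do not anticipate any substantive obstacle. The only points requiring care are the commutation $t_{w\gamma^\vee} w = w t_{\gamma^\vee}$ in the computation of $r_\beta w$, and the extraction of $\phi_J(\gamma^\vee) = 0$ from the uniqueness in Lemma \ref{lem:J-ad} so as to identify $\lfloor wr_\gamma \rfloor$ with $wr_\gamma z^{-1}_{\gamma^\vee}$.
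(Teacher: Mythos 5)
The paper does not actually prove this lemma --- it is stated as a citation to \cite[Proof of Theorem 10.16]{LS} with no argument supplied --- so there is no in-paper proof to match yours against. Your derivation is correct and is a reasonable way to see why the statement holds given the other results the paper quotes. Specializing Lemma \ref{lem:Q=SiB} to $\xi=0$, $\chi=1$ does reduce the claim to unwinding the definition of an edge of $\mathrm{SiB}^J$: the computation $r_{\beta}w = r_{w\gamma}t_{w\gamma^{\vee}}w = wr_{\gamma}t_{\gamma^{\vee}}$ is right, $\beta = w\gamma+\delta$ lies in $\Delta_{\af}^{+}$ because $\chi=1>0$ regardless of the sign of $w\gamma$, and $\lsi(wr_{\gamma}t_{\gamma^{\vee}})-\lsi(w) = \ell(wr_{\gamma})-\ell(w)+2\langle\gamma^{\vee},\rho\rangle$ gives exactly the stated length condition (note the lemma deliberately involves $\ell(wr_{\gamma})$ and the full $\rho$, not $\ell(\lfloor wr_{\gamma}\rfloor)$ and $\rho-\rho_J$ as in the definition of a quantum edge; your route through $\lsi$ handles that translation automatically). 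The parenthetical remark also comes out correctly from the uniqueness in Lemmas \ref{lem:J-ad} and \ref{lem:(W^J)_af}: $\phi_J(\zeta)=0$ for $\zeta\in Q^{\vee,J}$, whence $\zeta=\gamma^{\vee}$ and $\lfloor wr_{\gamma}\rfloor = wr_{\gamma}z_{\gamma^{\vee}}^{-1}$. The one caveat worth stating if you write this up is that Lemma \ref{lem:Q=SiB} is itself imported from \cite[Appendix A]{INS16}, whose proof rests on the same Lam--Shimozono analysis being cited here; so what you have is a clean reformulation of one quoted result in terms of another rather than an independent proof --- which is all the lemma is used for in this paper, but the dependency should not be presented as circular-free from first principles.
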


\begin{lem} \label{lem:<g,r>}
Let 
$i \in I$
and 
$\gamma^{\vee} \in Q^{\vee,I\setminus\{i\}} \cap (\Delta^{\vee,+} \setminus \Delta_{I\setminus\{i\}}^{\vee,+})$. 
We have 
$\langle \gamma^{\vee} , \rho - \rho_{I \setminus \{ i \}} \rangle
=
c_i(\gamma^{\vee}) \langle \alpha_i^{\vee} , \rho - \rho_{I \setminus \{ i \}} \rangle
=
c_i(\gamma^{\vee})(1-\langle \alpha_i^{\vee}, \rho_{I \setminus \{ i \}} \rangle)$.
\end{lem}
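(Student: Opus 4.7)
The plan is to observe that this identity reduces to a very classical fact about $\rho$ and $\rho_J$: for $J = I \setminus \{i\}$, one has $\langle \alpha_j^{\vee}, \rho - \rho_J \rangle = 0$ for every $j \in J$. Once this is in hand, expanding $\gamma^{\vee} = \sum_{j \in I} c_j(\gamma^{\vee}) \alpha_j^{\vee}$ in the simple coroot basis immediately collapses the sum to its $j = i$ term, giving the first equality.

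First, I would verify that $\langle \alpha_j^{\vee}, \rho \rangle = 1$ for all $j \in I$ and $\langle \alpha_j^{\vee}, \rho_J \rangle = 1$ for all $j \in J$. Both follow from the standard permutation argument: the simple reflection $r_j$ permutes $\Delta^+ \setminus \{\alpha_j\}$ (resp.\ $\Delta_J^+ \setminus \{\alpha_j\}$) and negates $\alpha_j$, hence $r_j \rho = \rho - \alpha_j$ and, when $j \in J$, $r_j \rho_J = \rho_J - \alpha_j$. Subtracting gives $\langle \alpha_j^{\vee}, \rho - \rho_J \rangle = 0$ for $j \in J$, while $\langle \alpha_i^{\vee}, \rho \rangle = 1$ yields $\langle \alpha_i^{\vee}, \rho - \rho_J \rangle = 1 - \langle \alpha_i^{\vee}, \rho_J \rangle$, which is the second equality in the statement.

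For the first equality, write $\gamma^{\vee} = \sum_{j \in I} c_j(\gamma^{\vee}) \alpha_j^{\vee}$ and pair with $\rho - \rho_J$:
\begin{align*}
\langle \gamma^{\vee}, \rho - \rho_{I \setminus \{i\}} \rangle
= \sum_{j \in I} c_j(\gamma^{\vee}) \langle \alpha_j^{\vee}, \rho - \rho_{I \setminus \{i\}} \rangle
= c_i(\gamma^{\vee}) \langle \alpha_i^{\vee}, \rho - \rho_{I \setminus \{i\}} \rangle,
\end{align*}
using the vanishing established above to discard all terms with $j \neq i$.

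There is no real obstacle here: the hypothesis $\gamma^{\vee} \in Q^{\vee, I\setminus\{i\}} \cap (\Delta^{\vee,+} \setminus \Delta_{I\setminus\{i\}}^{\vee,+})$ is not used in the identity itself (the computation goes through for any $\gamma^{\vee} \in Q^{\vee}$); it merely records the setting in which the lemma will be invoked (via Lemmas \ref{lem:Q=SiB} and \ref{lem:LS}). The only point requiring a tiny bit of care is verifying $\langle \alpha_j^{\vee}, \rho_J \rangle = 1$ for $j \in J$, but this is immediate from the reflection argument applied inside the root subsystem $\Delta_J$.
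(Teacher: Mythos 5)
Your proof is correct and follows essentially the same route as the paper: the paper's one-line proof likewise rests on $\langle \xi , \rho - \rho_{I \setminus \{ i \}} \rangle = 0$ for $\xi \in Q_{I \setminus \{ i \}}^{\vee}$, the congruence $\gamma^{\vee} \equiv c_i(\gamma^{\vee})\alpha_i^{\vee} \bmod Q_{I \setminus \{ i \}}^{\vee}$, and $\langle \alpha_i^{\vee} , \rho \rangle = 1$. You merely make explicit the standard reflection argument behind the vanishing, which the paper leaves implicit.
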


\begin{proof}
The assertion follows from 
$\langle \xi , \rho - \rho_{I \setminus \{ i \}} \rangle = 0$
for 
$\xi \in Q_{I \setminus \{ i \}}^{\vee}$, 
$\gamma^{\vee} \equiv c_i(\gamma^{\vee})\alpha_i^{\vee} \mod Q_{I \setminus \{ i \}}^{\vee}$, 
and 
$\langle \alpha_i^{\vee} , \rho \rangle = 1$.
\end{proof}

\subsection{Type $A_{n-1}^{(1)}$} \label{subsection:tab-cri-A}

Fix an integer $n \geq 2$. 
Set 
$I = [n-1]$. 
We assume that the labeling of 
the vertices of the Dynkin diagram of type
$A_{n-1}$
is as follows. 
\begin{center}
\ 
\xygraph{
\bullet ([]!{+(0,-.3)} {1}) - [r]
\bullet ([]!{+(0,-.3)} {2}) - [r] \cdots - [r]
\bullet ([]!{+(0,-.3)} {n - 2}) - [r]
\bullet ([]!{+(0,-.3)} {n - 1})}
\end{center}
Let 
$\varepsilon_1,\varepsilon_2,\ldots ,\varepsilon_n$
be an orthonormal basis of an 
$n$-dimensional Euclidean space $\BR^n$. 
Let 
$\Delta = \{ \pm(\varepsilon_s - \varepsilon_t) \mid s,t \in [n], \ s < t \}$
be a root system of type $A_{n-1}$, 
and let
$\Pi 
=
\{ \alpha_s = \varepsilon_s - \varepsilon_{s+1} \mid s \in I \}$
be a simple root system of $\Delta$. 

Let 
$W$
be the Weyl group of 
$\Delta$; 
note that 
$W$
can be described by 
$W = \mathfrak{S}([n])$
as the permutation group of 
$\{ \varepsilon_s \mid s \in [n] \} \subset \BR^n$. 
The longest element of $W$ is given by 
$u \mapsto n-u+1$, 
$u \in [n]$. 
Let 
$\mathrm{CST}_{A_{n-1}}(\varpi_i)$
be the family of $i$-element subsets of $[n]$. 
We identify 
$\mathsf{T} = \{ \mathsf{T}(1) < \mathsf{T}(2) < \cdots < \mathsf{T}(i) \} \in \mathrm{CST}_{A_{n-1}}(\varpi_i)$
with the column-strict tableau
\ytableausetup{mathmode,boxsize=8mm}
\begin{align} \label{eq:column-A}
\begin{ytableau}
\mathsf{T}(1) \\ \mathsf{T}(2) \\ \vdots \\ \mathsf{T}(i)
\end{ytableau}\ .
\end{align}
For 
$w \in W$, 
let 
$\mathsf{T}_w^{(i)} 
\in 
\mathrm{CST}_{A_{n-1}}(\varpi_i)$
be such that 
\begin{align}
\mathsf{T}_w^{(i)} 
=
\left\{ 
\mathsf{T}_w^{(i)}(1) < \mathsf{T}_w^{(i)}(2) < \cdots < \mathsf{T}_w^{(i)}(i) 
\right\} 
=
\{ w(1),w(2),\ldots ,w(i) \}.
\end{align}
The proof of the next lemma is standard (cf. \cite[\S 2.4]{BB}).

\begin{lem} \label{lem:Gr-A}
Let 
$i \in I$. 
We have 
\begin{align*}
W^{I \setminus \{ i \}}
=
\{ w \in W \mid \ 
&w(1) < w(2) < \cdots < w(i), \ \text{and} \\
&w(i+1) < w(i+2) < \cdots < w(n) \}.
\end{align*}
The map 
$W^{I \setminus \{ i \}} \to \mathrm{CST}_{A_{n-1}}(\varpi_i)$, 
$w \mapsto \mathsf{T}_w^{(i)}$, 
is bijective. 
\end{lem}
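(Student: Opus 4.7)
The plan is to prove the characterization of $W^{I\setminus\{i\}}$ by using the standard ascent/descent description of minimal coset representatives, and then to establish the bijection by exhibiting an obvious inverse.

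First I would recall the general fact that for a Coxeter system $(W,\{r_j\}_{j\in I})$ and a subset $K\subset I$, the set $W^K$ of minimal-length coset representatives for $W/W_K$ is characterized by
\begin{align*}
W^K = \{ w \in W \mid \ell(wr_j) > \ell(w) \text{ for all } j \in K \}
\end{align*}
(see \cite[Proposition 2.4.4]{BB}). Specializing to $K = I \setminus \{i\}$ in type $A_{n-1}$ and using the standard realization $W = \mathfrak{S}([n])$, the simple reflection $r_j$ acts by swapping $j$ and $j+1$, and the inequality $\ell(wr_j) > \ell(w)$ is equivalent to $w(j) < w(j+1)$. Applying this for each $j \in \{1,\ldots,i-1\} \cup \{i+1,\ldots,n-1\}$ yields exactly the two increasing conditions $w(1) < \cdots < w(i)$ and $w(i+1) < \cdots < w(n)$, giving the first assertion.

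For the bijective statement, I would observe that the map $w \mapsto \mathsf{T}_w^{(i)} = \{w(1),\ldots,w(i)\}$ is well-defined into $\mathrm{CST}_{A_{n-1}}(\varpi_i)$ because of the first increasing condition. To produce an inverse, given any $\mathsf{T} = \{\mathsf{T}(1) < \cdots < \mathsf{T}(i)\} \in \mathrm{CST}_{A_{n-1}}(\varpi_i)$, let $\mathsf{T}(i+1) < \cdots < \mathsf{T}(n)$ be the increasing enumeration of $[n] \setminus \mathsf{T}$ and define $w \in \mathfrak{S}([n])$ by $w(u) = \mathsf{T}(u)$ for $u \in [n]$. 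Then $w$ lies in $W^{I\setminus\{i\}}$ by the characterization just proved, and $\mathsf{T}_w^{(i)} = \mathsf{T}$. These two constructions are mutually inverse, so the map is bijective.

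There is no real obstacle here: the whole argument is a direct application of the standard Coxeter-theoretic description of minimal coset representatives together with the translation $\ell(wr_j) > \ell(w) \Leftrightarrow w(j) < w(j+1)$ for the symmetric group. The only thing to be careful about is organizing the two increasing blocks correctly, which is immediate from the fact that $I \setminus \{i\}$ omits precisely the index $i$ separating positions $i$ and $i+1$.
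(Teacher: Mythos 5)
Your proof is correct and follows exactly the standard route the paper itself points to: the paper gives no proof, only the remark that the lemma is standard with a reference to \cite[\S 2.4]{BB}, which is precisely the descent characterization $W^K=\{w\mid \ell(wr_j)>\ell(w)\ \text{for all}\ j\in K\}$ you invoke. Your translation to the two increasing blocks and the explicit inverse for the bijection are both sound.
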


We see from Lemmas \ref{lem:J-ad}--\ref{lem:(W^J)_af} and \ref{lem:Gr-A}
that the map
\begin{align} \label{eq:map-Y-A}
\mathcal{Y}_i^{A_{n-1}} : 
W_{\af} 
\to 
\mathrm{CST}_{A_{n-1}}(\varpi_i) \times \BZ, \ 
wt_{\xi} \mapsto \left( \mathsf{T}_w^{(i)},c_i(\xi) \right),
\end{align} 
induces a bijection from the subset 
$(W^{I \setminus \{ i \}})_{\af} \subset W_{\af}$
to 
$\mathrm{CST}_{A_{n-1}}(\varpi_i) \times \BZ$.

%
%
%

\begin{define}[{\cite[Definition 4.2 (1)]{I20}}]
\label{def:SiB-A}
Define a partial order 
$\preceq$
on 
$\mathrm{CST}_{A_{n-1}}(\varpi_i) \times \BZ$
as follows: 
for 
$(\mathsf{T},c), (\mathsf{T}',c') \in \mathrm{CST}_{A_{n-1}}(\varpi_i) \times \BZ$, 
set 
$(\mathsf{T},c) \preceq (\mathsf{T}',c')$
if 
\begin{align} \label{eq:tab-A}
\left( c \leq c' \right)
\ \text{and} \ 
\left( 
\mathsf{T}(u) \leq \mathsf{T}'(u+c'-c)
\ \text{for} \ 
u \in [i-c'+c]
\right) .
\end{align}
\end{define}

\begin{prop} \label{prop:tab-cri-A}
Let 
$i \in I$. 
\begin{enumerate}[(1)]
\item
$\mathcal{Y}_i^{A_{n-1}} \circ \Pi^{I \setminus \{ i \}} = \mathcal{Y}_i^{A_{n-1}}$. 
\item 
For 
$x,y \in W_{\af}$, 
we have 
$\Pi^{I \setminus \{ i \}}(x) \preceq \Pi^{I \setminus \{ i \}}(y)$ 
in 
$(W^{I \setminus \{ i \}})_{\af}$
if and only if 
$\mathcal{Y}_i^{A_{n-1}}(x) \preceq \mathcal{Y}_i^{A_{n-1}}(y)$
in 
$\mathrm{CST}_{A_{n-1}}(\varpi_i) \times \BZ$.
\item
Let 
$(\mathsf{T},c), (\mathsf{T}',c') \in \mathrm{CST}_{A_{n-1}}(\varpi_i) \times \BZ$. 
If 
$c' - c \geq \min \{ i, n-i \}$, 
then 
$(\mathsf{T},c) \preceq (\mathsf{T}',c')$. 
\end{enumerate}
\end{prop}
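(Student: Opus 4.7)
The plan is to treat the three parts in order. Parts (1) and (3) reduce to direct computations, while Part (2) is essentially a repackaging of \cite[Theorem 4.7]{I20} through the bijection established in Part (1).

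For Part (1), I would apply Lemma \ref{lem:(W^J)_af} with $J = I \setminus \{i\}$: writing $x = wt_{\xi}$ with $w \in W$ and $\xi \in Q^{\vee}$, we have $\Pi^J(x) = \lfloor w \rfloor^J z_{\xi} t_{\xi + \phi_J(\xi)}$. Because $W_J \cong \mathfrak{S}([i]) \times \mathfrak{S}([i+1,n])$ stabilizes $\{1, \ldots, i\}$ under the usual action on $[n]$, right multiplication by any element of $W_J$ leaves the set $\{w(1), \ldots, w(i)\}$ invariant, so $\mathsf{T}^{(i)}_{\lfloor w \rfloor^J z_{\xi}} = \mathsf{T}^{(i)}_w$. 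Simultaneously, $\phi_J(\xi) \in Q^{\vee}_J = \bigoplus_{j \neq i} \BZ \alpha_j^{\vee}$ contributes nothing to the coefficient of $\alpha_i^{\vee}$, so $c_i(\xi + \phi_J(\xi)) = c_i(\xi)$. Both coordinates of $\mathcal{Y}_i^{A_{n-1}}$ are thus preserved under $\Pi^{I \setminus \{i\}}$.

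For Part (2), by Part (1) both sides of the equivalence depend only on $\Pi^{I \setminus \{i\}}(x)$ and $\Pi^{I \setminus \{i\}}(y)$, so one may assume $x, y \in (W^{I \setminus \{i\}})_{\af}$, on which $\mathcal{Y}_i^{A_{n-1}}$ restricts to a bijection onto $\mathrm{CST}_{A_{n-1}}(\varpi_i) \times \BZ$ by Lemmas \ref{lem:(W^J)_af} and \ref{lem:Gr-A}. The equivalence of $x \preceq y$ and condition \eqref{eq:tab-A} is then exactly \cite[Theorem 4.7]{I20}. If a self-contained proof were wanted, the strategy would be to classify cover relations of $\preceq$ on $(W^{I \setminus \{i\}})_{\af}$ via Lemma \ref{lem:Q=SiB}: Bruhat edges come from transpositions $(a,b)$ with $a \in \mathsf{T}$ and $b \notin \mathsf{T}$ and leave $c$ unchanged, while quantum edges come from ``wraparound'' transpositions $r_{\varepsilon_b - \varepsilon_a}$ that decrement $c$ by one and permute entries accordingly; one then verifies that each cover strictly increases $(\mathsf{T}, c)$ in the order $\preceq$ and, conversely, constructs an explicit chain of covers between any pair satisfying \eqref{eq:tab-A} by a greedy sorting procedure. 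The principal obstacle is interleaving Bruhat and quantum edges in a way compatible with the shifted comparison in \eqref{eq:tab-A}; this is precisely the work already done in \cite[Theorem 4.7]{I20}.

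For Part (3), I would split on whether $c' - c \geq i$. If so, the index set $[i - c' + c]$ is empty and \eqref{eq:tab-A} holds trivially from $c \leq c'$. Otherwise $c' - c < i$, which together with $c' - c \geq \min\{i, n-i\}$ forces $\min\{i, n-i\} = n - i$ and hence $c' - c \geq n - i$. Then column-strictness with entries in $[n]$ yields $\mathsf{T}(u) \leq n - i + u$ and $\mathsf{T}'(u + c' - c) \geq u + c' - c$ for each $u \in [i - c' + c]$, so
\begin{align*}
\mathsf{T}(u) \;\leq\; n - i + u \;\leq\; (c' - c) + u \;\leq\; \mathsf{T}'(u + c' - c),
\end{align*}
verifying \eqref{eq:tab-A}.
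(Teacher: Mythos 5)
Your proposal is correct and follows essentially the same route as the paper: part (1) rests on the stability of $\{1,\dots,i\}$ under $W_{I\setminus\{i\}}$ together with $c_i(\phi_{I\setminus\{i\}}(\xi))=0$ (the paper phrases this via Lemma \ref{lem:Pi} rather than the explicit formula of Lemma \ref{lem:(W^J)_af}, but the content is identical), part (2) is likewise deferred to \cite[Theorem 4.7]{I20}, and part (3) uses the same case split and the same bounds $\mathsf{T}(u)\le u+n-i$ and $\mathsf{T}'(v)\ge v$.
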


\begin{proof}
(1): 
Let 
$x = wt_{\xi} \in W_{\af}$
and 
$y = \Pi^{I \setminus \{ i \}}(x) = vt_{\zeta}$, 
where 
$w,v \in W$
and 
$\xi , \zeta \in Q^{\vee}$.
We see from Lemma \ref{lem:Pi} that 
$\lfloor w \rfloor^{I \setminus \{ i \}} = \lfloor v \rfloor^{I \setminus \{ i \}}$
and 
$c_i(\xi) = c_i(\zeta)$. 
Since 
$\{ 1,2,\ldots , i \}$
is stable under the action of 
$W_{I \setminus \{ i \}}$, 
$\lfloor w \rfloor^{I \setminus \{ i \}} = \lfloor v \rfloor^{I \setminus \{ i \}}$
implies 
$\mathsf{T}^{(i)}_w = \mathsf{T}^{(i)}_v$.
Thus 
$\mathcal{Y}_i^{A_{n-1}}(x) = \mathcal{Y}_i^{A_{n-1}}(y)$. 

(2): 
The assertion follows from (1) and 
\cite[Theorem 4.7 and Equation (67)]{I20}.

(3): 
We first assume that 
$c'-c \geq i$. 
Obviously, 
$c \leq c'$
holds. 
Since 
$i-c'+c \leq 0$, 
the latter condition in 
\eqref{eq:tab-A}
is trivial. 
Thus
$(\mathsf{T},c) \preceq (\mathsf{T}',c')$. 

We next assume that 
$i > c'-c \geq n-i$. 
Obviously, 
$c \leq c'$
holds. 
Since 
$\mathsf{T}(u) \in [u,u+n-i]$
holds for all 
$u \in [i]$, 
we have
\begin{align}
\mathsf{T}(u) 
\leq 
u+n-i
\leq 
u+c'-c
\leq
\mathsf{T}'(u+c'-c)
\ \text{for} \  
u \in [i - c + c'].
\end{align}
This implies 
$(\mathsf{T},c) \preceq (\mathsf{T}',c')$. 
\end{proof}

By combining Propositions \ref{prop:Deo} and \ref{prop:tab-cri-A} (2), 
we obtain the following tableau criterion for the 
semi-infinite Bruhat order 
on $W_{\af}$ of type $A_{n-1}^{(1)}$.

\begin{thm} \label{thm:tab-cri-A}
Let 
$J \subset I$
and 
$x,y \in (W^J)_{\af}$. 
We have 
$x \preceq y$ 
in $(W^J)_{\af}$ 
if and only if 
$\mathcal{Y}_i^{A_{n-1}}(x) \preceq \mathcal{Y}_i^{A_{n-1}}(y)$
in 
$\mathrm{CST}_{A_{n-1}}(\varpi_i) \times \BZ$ 
for all 
$i \in I \setminus J$.
\end{thm}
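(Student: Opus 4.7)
The plan is to derive Theorem \ref{thm:tab-cri-A} by combining the Deodhar-type reduction of Proposition \ref{prop:Deo} with the single-index tableau criterion of Proposition \ref{prop:tab-cri-A}(2). Since both are already established in the earlier sections, the argument becomes essentially a transitivity chain across the two equivalences, so the proof will be short, with the main conceptual content being the careful bookkeeping about which projections are invoked.

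First I would invoke Proposition \ref{prop:Deo} with this specific $J$: for $x,y \in (W^J)_{\af}$, the relation $x \succeq y$ in $(W^J)_{\af}$ holds if and only if $\Pi^{I \setminus \{ i \}}(x) \succeq \Pi^{I \setminus \{ i \}}(y)$ in $(W^{I \setminus \{ i \}})_{\af}$ for every $i \in I \setminus J$. This reduces the general tableau criterion to a disjunction of the maximal-parabolic cases, one for each $i \in I \setminus J$. Note that for $x \in (W^J)_{\af}$ and $i \in I \setminus J$ we have $\Pi^{I \setminus \{ i \}}(x) \in (W^{I \setminus \{ i \}})_{\af}$, so the invocation of Proposition \ref{prop:tab-cri-A}(2) is legitimate.

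Next, for each individual $i \in I \setminus J$, I would apply Proposition \ref{prop:tab-cri-A}(2), which asserts that $\Pi^{I \setminus \{ i \}}(x) \preceq \Pi^{I \setminus \{ i \}}(y)$ in $(W^{I \setminus \{ i \}})_{\af}$ is equivalent to $\mathcal{Y}_i^{A_{n-1}}(x) \preceq \mathcal{Y}_i^{A_{n-1}}(y)$ in $\mathrm{CST}_{A_{n-1}}(\varpi_i) \times \BZ$. Here it is important that the map $\mathcal{Y}_i^{A_{n-1}}$ defined on all of $W_{\af}$ agrees with its composition with $\Pi^{I \setminus \{ i \}}$, which is precisely the content of Proposition \ref{prop:tab-cri-A}(1); this justifies writing the right-hand side of the equivalence in terms of $\mathcal{Y}_i^{A_{n-1}}(x)$ and $\mathcal{Y}_i^{A_{n-1}}(y)$ directly rather than in terms of the projected elements.

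Conjoining these equivalences over all $i \in I \setminus J$ yields the claimed biconditional, completing the proof. There is no serious obstacle here, since all the combinatorial effort has been absorbed into Propositions \ref{prop:Deo} and \ref{prop:tab-cri-A}; the only subtle point worth double-checking is that the partial order $\preceq$ on $\mathrm{CST}_{A_{n-1}}(\varpi_i) \times \BZ$ from Definition \ref{def:SiB-A} is exactly the order characterizing semi-infinite Bruhat order on $(W^{I \setminus \{ i \}})_{\af}$, which is guaranteed by Proposition \ref{prop:tab-cri-A}(2).
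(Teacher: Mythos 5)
Your proposal is correct and matches the paper's argument exactly: the paper obtains Theorem \ref{thm:tab-cri-A} precisely by combining Proposition \ref{prop:Deo} with Proposition \ref{prop:tab-cri-A} (2), and your additional remark about Proposition \ref{prop:tab-cri-A} (1) ensuring $\mathcal{Y}_i^{A_{n-1}} = \mathcal{Y}_i^{A_{n-1}} \circ \Pi^{I \setminus \{ i \}}$ is the right bookkeeping point. Nothing further is needed.
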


For 
$w \in W$, 
the notation
$w = i_1 i_2 \cdots i_n$
means that 
$w(u) = i_u$
for 
$1 \leq u \leq n$.
A column-strict tableau (of skew shape) is called semi-standard 
if its entries are weakly increasing from right to left in each row.

\ytableausetup{mathmode,boxsize=4mm}
\begin{ex} \label{ex:tab-cri}
Assume that 
$n=6$. 
Let 
$w = 564213,\, 
v = 412635 \in W$, 
and 
\begin{align*}
\xi = \alpha_1^{\vee} - \alpha_3^{\vee} + \alpha_4^{\vee} + 2\alpha_5^{\vee}, \ 
\zeta = 2\alpha_1^{\vee} + 3\alpha_2^{\vee} + \alpha_3^{\vee} + 2\alpha_4^{\vee} + 5\alpha_5^{\vee}
\in Q^{\vee} .
\end{align*}
Let us compare
$x = wt_{\xi}$
and 
$y = vt_{\zeta}$
in semi-infinite Bruhat order on $W_{\af}$.
We have
%
\begin{align*}
\mathsf{T}^{(1)}_w \mathsf{T}^{(2)}_w \mathsf{T}^{(3)}_w \mathsf{T}^{(4)}_w \mathsf{T}^{(5)}_w
=
\begin{ytableau}
5 & 5 & 4 & 2 & 1\\
\none & 6 & 5 & 4 & 2\\
\none & \none & 6 & 5 & 4\\
\none & \none & \none & 6 & 5 \\
\none & \none & \none & \none & 6 \\
\end{ytableau}
\end{align*}
and
\begin{align*}
\mathsf{T}^{(1)}_v \mathsf{T}^{(2)}_v \mathsf{T}^{(3)}_v \mathsf{T}^{(4)}_v \mathsf{T}^{(5)}_v
=
\begin{ytableau}
4 & 1 & 1 & 1 & 1\\
\none & 4 & 2 & 2 & 2\\
\none & \none & 4 & 4 & 3\\
\none & \none & \none & 6 & 4 \\
\none & \none & \none & \none & 6 \\
\end{ytableau} \, .
\end{align*}
\begin{enumerate}[(1)]
\item
$\mathcal{Y}_1^{A_{n-1}}(x) \preceq \mathcal{Y}_1^{A_{n-1}}(y)$
since
$c_1(\zeta) - c_1(\xi) = 1 = \min \{ 1,6-1 \}$
(see Proposition \ref{prop:tab-cri-A} (3)).
\item
$\mathcal{Y}_2^{A_{n-1}}(x) \preceq \mathcal{Y}_2^{A_{n-1}}(y)$
since
$c_2(\zeta) - c_2(\xi) = 3 \geq 2 = \min \{ 2,6-2 \}$
(see Proposition \ref{prop:tab-cri-A} (3)).
\item
$\mathcal{Y}_3^{A_{n-1}}(x) \preceq \mathcal{Y}_3^{A_{n-1}}(y)$
since
$c_3(\zeta) - c_3(\xi) = 2$
and 
$\begin{ytableau}
1 & \none \\
2 & \none \\
4 & 4 \\
\none & 5 \\
\none & 6
\end{ytableau}$
is semi-standard
(see \eqref{eq:tab-A}), 
where 
$\mathsf{T}_w^{(3)}
=
\begin{ytableau}
4 \\ 5 \\ 6
\end{ytableau}$
and 
$\mathsf{T}_v^{(3)}
=
\begin{ytableau}
1 \\ 2 \\ 4
\end{ytableau}$.
\item
$\mathcal{Y}_4^{A_{n-1}}(x) \preceq \mathcal{Y}_4^{A_{n-1}}(y)$
since
$c_4(\zeta) - c_4(\xi) = 1$
and 
$\begin{ytableau}
1 & \none \\
2 & 2 \\
4 & 4 \\
6 & 5 \\
\none & 6
\end{ytableau}$
is semi-standard
(see \eqref{eq:tab-A}),
where 
$\mathsf{T}_w^{(4)}
=
\begin{ytableau}
2 \\ 4 \\ 5 \\ 6
\end{ytableau}$
and 
$\mathsf{T}_v^{(4)}
=
\begin{ytableau}
1 \\ 2 \\ 4 \\ 6
\end{ytableau}$.
\item
$\mathcal{Y}_5^{A_{n-1}}(x) \preceq \mathcal{Y}_5^{A_{n-1}}(y)$
since
$c_5(\zeta) - c_5(\xi) = 3 \geq 1 = \min \{ 5,6-5 \}$
(see Proposition \ref{prop:tab-cri-A} (3)).
\end{enumerate}
By Theorem \ref{thm:tab-cri-A}, 
we conclude that 
$x \preceq y$.
\end{ex}

\subsection{Type $C_n^{(1)}$} \label{subsection:tab-cri-C}

Fix an integer $n \geq 2$. 
Set 
$I = [n]$. 
We assume that the labeling of the vertices of the Dynkin diagram of type $C_n$ is as follows.
\begin{center}\ 
\xygraph{!~:{@{=}|@{<}}
\bullet ([]!{+(0,-.3)} {1}) - [r]
\bullet ([]!{+(0,-.3)} {2}) - [r] \cdots - [r]
\bullet ([]!{+(0,-.3)} {n - 1}) : [r]
\bullet ([]!{+(0,-.3)} {n})}
\end{center}
Let 
$\varepsilon_1,\varepsilon_2,\ldots ,\varepsilon_n$
be an orthonormal basis of an 
$n$-dimensional Euclidean space $\BR^n$. 
Let 
$\Delta = \{ \pm(\varepsilon_s \pm \varepsilon_t) \mid s,t \in [n], \ s < t \} \sqcup \{ \pm 2\varepsilon_s \mid s \in [n] \}$
be a root system of type $C_n$, 
and let 
$\Pi 
=
\{ \alpha_s = \varepsilon_s - \varepsilon_{s+1} \mid s \in [n-1] \} 
\sqcup 
\{ \alpha_n = 2\varepsilon_n \}$
be a simple root system of 
$\Delta$. 

Let 
$W$
be the Weyl group of 
$\Delta$. 
Note that 
$W$ 
acts faithfully on 
$\{ \pm\varepsilon_s \mid s \in [n] \} \subset \BR^n$. 
Define a totally ordered set 
$\mathcal{C}_n$
by 
\begin{align}
\mathcal{C}_n 
=
\{ 1 \prec 2 \prec \cdots \prec n-1 \prec n \prec \overline{n} \prec \overline{n-1} \prec \cdots \prec \overline{2} \prec \overline{1} \}.\end{align}
Let 
$\sigma : \mathcal{C}_n \to \mathcal{C}_n$
be the bijection defined by 
$s \leftrightarrow \overline{s}$
for 
$s \in [n]$. 
If we identify 
$\mathcal{C}_n$
with 
$\{ \pm\varepsilon_s \mid s \in [n] \}$
by 
$s = \varepsilon_s$
and 
$\overline{s} = -\varepsilon_s$
for 
$s \in [n]$, 
then $W$ can be described as follows:
\begin{align}
W 
=
\{ w \in \mathfrak{S}(\mathcal{C}_n) \mid 
w(\sigma(s)) = \sigma(w(s))
\ \text{for} \ 
s \in [n] \} .
\end{align}
Let 
$(s_1 \ s_2 \ \cdots \ s_l) \in \mathfrak{S}(\mathcal{C}_n)$
denote the cyclic permutation 
$s_1 \mapsto s_2 \mapsto \cdots \mapsto s_l \mapsto s_1$, 
where 
$l \geq 1$
and 
$s_1, s_2, \ldots , s_l \in \mathcal{C}_n$
are all distinct. 
For 
$s,t \in [n]$, $s<t$, 
we have 
$r_{\pm(\varepsilon_s - \varepsilon_t)}
= 
(s \ t)(\overline{s} \ \overline{t})$, 
$r_{\pm2\varepsilon_s}
=
(s \ \overline{s})$, 
and 
$r_{\pm(\varepsilon_s + \varepsilon_t)} 
=
(s \ \overline{t})(\overline{s} \ t)$
in 
$\mathfrak{S}(\mathcal{C}_n)$. 

For 
$w \in \mathfrak{S}(\mathcal{C}_n)$
and 
$s \in [n]$, 
set
\begin{align}
\mathsf{A}_s(w)
&=
\{ t \in [s+1,n] \mid w(s) \succ w(t) \ \text{in} \ \mathcal{C}_n \}, \ \ 
\mathsf{a}_s(w) = \# \mathsf{A}_s(w), \\
\mathsf{B}_s(w)
&=
\{ t \in [s+1,n] \mid w(s) \succ \sigma(w(t)) \ \text{in} \ \mathcal{C}_n \}, \ \ 
\mathsf{b}_s(w) = \# \mathsf{B}_s(w), \\
\mathsf{e}_s(w) 
&=
\begin{cases}
0 & \text{if} \ w(s) \preceq n,  \\
1 & \text{if} \ w(s) \succeq \overline{n};
\end{cases}
\end{align}
note that 
$\mathsf{a}_n(w) = \mathsf{b}_n(w) = 0$
for 
$w \in \mathfrak{S}(\mathcal{C}_n)$. 
The length function
$\ell : W \to \BZ_{\geq 0}$
is given by 
$\ell(w)
=
\sum_{s = 1}^n (\mathsf{a}_s(w) + \mathsf{b}_s(w) + \mathsf{e}_s(w))$
for 
$w \in W$. 
The longest element of $W$ is given by 
$u \mapsto \overline{u}$, 
$u \in [n]$.

Let 
$\| \cdot \| : \mathcal{C}_n \to [n]$
be the map defined by 
$\|s\| = s$
and 
$\|\overline{s}\| = s$
for 
$s \in [n]$.
We identify an $i$-element subset 
$\mathsf{T} = \{ \mathsf{T}(1) \prec \mathsf{T}(2) \prec \cdots \prec \mathsf{T}(i) \} 
\subset 
\mathcal{C}_n$
with the column-strict tableau 
of the form 
\eqref{eq:column-A}.
Set
\begin{align}
\mathrm{CST}_{C_n}(\varpi_i)
=
\{ \mathsf{T} \mid 
\mathsf{T} \subset \mathcal{C}_n, \ 
\# \mathsf{T} = i, \ 
\text{and} \ 
\|\mathsf{T}(u)\|, \ u \in [i], \ 
\text{are all distinct} \} .
\end{align}
For 
$w \in W$, 
let 
$\mathsf{T}_w^{(i)} 
\in 
\mathrm{CST}_{C_n}(\varpi_i)$
be such that 
\begin{align}
\mathsf{T}_w^{(i)}
=
\left\{ \mathsf{T}_w^{(i)}(1) \prec \mathsf{T}_w^{(i)}(2) \prec \cdots \prec \mathsf{T}_w^{(i)}(i) \right\}
=
\{ w(1), w(2), \ldots , w(i) \}.
\end{align}
The proof of the next lemma is standard 
(cf. \cite[\S 8.1]{BB}).

\begin{lem} \label{lem:Gr-C}
Let 
$i \in I$. 
We have 
\begin{align*}
W^{I \setminus \{ i \}}
=
\{ w \in W \mid \ 
&w(1) \prec w(2) \prec \cdots \prec w(i), \ \text{and} \\
&w(i+1) \prec w(i+2) \prec \cdots \prec w(n) \preceq n \}.
\end{align*} 
If 
$w \in W^{I \setminus \{ i \}}$, 
then
$\ell(w) = \sum_{s=1}^i(\mathsf{a}_s(w) + \mathsf{b}_s(w) + \mathsf{e}_s(w))$
and 
$\mathsf{A}_s(w) \subset [i+1,n]$
for 
$s \in [i]$. 
The map 
$W^{I \setminus \{ i \}} \to \mathrm{CST}_{C_n}(\varpi_i)$, 
$w \mapsto \mathsf{T}_w^{(i)}$, 
is bijective. 
\end{lem}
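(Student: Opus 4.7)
The plan is to adapt the standard argument from type $A$ to type $C_n$, using the fact that the Weyl group acts on the signed alphabet $\mathcal{C}_n$ rather than on $[n]$, and invoking the general characterization of minimal coset representatives: $w \in W^{I \setminus \{ i \}}$ if and only if $w(\alpha_j) \in \Delta^+$ for every $j \in I \setminus \{ i \}$.

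First I would translate the sign condition $w(\alpha_j) \in \Delta^+$ into a combinatorial condition on the values $w(j), w(j+1) \in \mathcal{C}_n$. For $j \in [n-1]$, a short case analysis on the signs of $w(\varepsilon_j) = \pm\varepsilon_a$ and $w(\varepsilon_{j+1}) = \pm\varepsilon_b$ shows that $w(\alpha_j) = w(\varepsilon_j) - w(\varepsilon_{j+1})$ is a positive root if and only if $w(j) \prec w(j+1)$ under the identifications $s \leftrightarrow \varepsilon_s$, $\overline{s} \leftrightarrow -\varepsilon_s$. For $j = n$, positivity of $w(\alpha_n) = 2w(\varepsilon_n)$ is equivalent to $w(\varepsilon_n)$ having positive coefficient, i.e., $w(n) \preceq n$ in $\mathcal{C}_n$. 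Combining these across $j \in I \setminus \{ i \}$ yields exactly the two ascending chains displayed in the statement, plus the bound $w(n) \preceq n$ whenever $i \neq n$.

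Second, for the length formula I would use the known global expression $\ell(w) = \sum_{s=1}^n (\mathsf{a}_s(w) + \mathsf{b}_s(w) + \mathsf{e}_s(w))$ and show that for $s \in [i+1,n]$ all three summands vanish: the second chain gives $w(s) \prec w(t)$ for all $t \in [s+1,n]$, hence $\mathsf{a}_s(w) = 0$; the bound $w(s) \preceq n$ gives $\mathsf{e}_s(w) = 0$; and since $w(t) \preceq n$ forces $\sigma(w(t)) \succeq \overline{n} \succ n \succeq w(s)$, we also have $\mathsf{B}_s(w) = \emptyset$. The containment $\mathsf{A}_s(w) \subset [i+1,n]$ for $s \in [i]$ follows directly from the first chain $w(1) \prec \cdots \prec w(i)$.

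Finally, for the bijection, injectivity is immediate: the first chain means $w(1), \ldots, w(i)$ are the elements of $\mathsf{T}_w^{(i)}$ listed in $\mathcal{C}_n$-order, and then the values $w(i+1), \ldots, w(n)$ are forced to be the elements of $[n] \setminus \{ \|w(u)\| : u \in [i] \}$ listed in increasing order (the second chain together with $w(n) \preceq n$ forces every $w(s)$ to lie in $[n]$ for $s > i$). Conversely, given any $\mathsf{T} \in \mathrm{CST}_{C_n}(\varpi_i)$ the distinctness of the $\|\mathsf{T}(u)\|$ is exactly what permits the reverse recipe to define a valid signed permutation (extended by $w(\overline{s}) = \sigma(w(s))$), and both chain conditions are visibly satisfied, so the resulting $w$ lies in $W^{I \setminus \{ i \}}$ and maps to $\mathsf{T}$. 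The main obstacle is really only the bookkeeping in the first case analysis, where one must keep the sign conventions between $\Delta^+$ and the ordering on $\mathcal{C}_n$ straight; all subsequent steps are then routine.
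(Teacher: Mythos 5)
Your proof is correct and is essentially the standard argument the paper alludes to (it gives no proof, citing only \cite[\S 8.1]{BB}): the characterization of $W^{I\setminus\{i\}}$ via $w(\alpha_j)\in\Delta^+$, the sign case analysis on $\mathcal{C}_n$, the vanishing of $\mathsf{a}_s,\mathsf{b}_s,\mathsf{e}_s$ for $s\in[i+1,n]$, and the explicit reconstruction of $w$ from $\mathsf{T}_w^{(i)}$ are all exactly what is needed. No gaps.
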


We see from Lemmas \ref{lem:J-ad}--\ref{lem:(W^J)_af} and \ref{lem:Gr-C}
that the map
\begin{align} 
\mathcal{Y}_i^{C_n} : 
W_{\af}
\to 
\mathrm{CST}_{C_n}(\varpi_i) \times \BZ, \ 
wt_{\xi} \mapsto \left( \mathsf{T}_w^{(i)}, c_i(\xi) \right),
\end{align} 
induces a bijection from the subset 
$(W^{I \setminus \{ i \}})_{\af} \subset W_{\af}$
to 
$\mathrm{CST}_{C_n}(\varpi_i) \times \BZ$.

\begin{define} \label{def:SiB-C}
Define a partial order 
$\preceq$
on 
$\mathrm{CST}_{C_n}(\varpi_i) \times \BZ$
as follows: 
for 
$(\mathsf{T},c) , (\mathsf{T}',c')
\in 
\mathrm{CST}_{C_n}(\varpi_i) \times \BZ$, 
set
$(\mathsf{T},c) \preceq (\mathsf{T}',c')$
if 
\begin{align}
(c \leq c')
\ \text{and} \ 
\left(
\mathsf{T}(u) \preceq \mathsf{T}'(u+c'-c)
\ \text{in} \ 
\mathcal{C}_n
\ \text{for} \ 
u \in [i-c'+c]
\right).
\end{align}
\end{define}

\begin{prop} \label{prop:tab-cri-C}
Let 
$i \in I$. 
\begin{enumerate}[(1)]
\item
$\mathcal{Y}_i^{C_n} \circ \Pi^{I \setminus \{ i \}} = \mathcal{Y}_i^{C_n}$.
\item
For 
$x,y \in W_{\af}$, 
we have 
$\Pi^{I \setminus \{ i \}}(x) \preceq \Pi^{I \setminus \{ i \}}(y)$
in 
$(W^{I \setminus \{ i \}})_{\af}$
if and only if 
$\mathcal{Y}_i^{C_n}(x) \preceq \mathcal{Y}_i^{C_n}(y)$
in
$\mathrm{CST}_{C_n}(\varpi_i) \times \BZ$. 
\item
Let 
$(\mathsf{T},c), (\mathsf{T}',c') \in \mathrm{CST}_{C_n}(\varpi_i) \times \BZ$. 
If 
$c' - c \geq i$, 
then 
$(\mathsf{T},c) \preceq (\mathsf{T}',c')$. 
\end{enumerate}
\end{prop}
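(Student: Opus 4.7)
The plan is to mirror the structure of the type $A$ case in Proposition \ref{prop:tab-cri-A}, leveraging the explicit classifications of edges in $\mathrm{QB}^{I \setminus \{i\}}$ and of cover relations in semi-infinite Bruhat order on $(W^{I \setminus \{i\}})_{\af}$ from Propositions \ref{prop:Q=C} and \ref{prop:SiB-C} that precede this statement. For Part (1), I would write $x = wt_\xi \in W_{\af}$ and $\Pi^{I \setminus \{i\}}(x) = vt_\zeta$; by Lemma \ref{lem:Pi} we have $c_i(\xi) = c_i(\zeta)$ and $\lfloor w \rfloor^{I \setminus \{i\}} = \lfloor v \rfloor^{I \setminus \{i\}}$. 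The key observation is that $W_{I \setminus \{i\}}$ stabilizes the set $\{w(1),\ldots,w(i)\} \subset \mathcal{C}_n$: the simple reflections $r_j$ with $j \in [i-1]$ permute the positions $[i]$ among themselves, while those with $j \in [i+1,n]$ act only on positions $[i+1,n]$ (including the sign flip at position $n$ via $r_n$). Hence $\mathsf{T}_w^{(i)} = \mathsf{T}_v^{(i)}$ and (1) follows. Part (3) is immediate from Definition \ref{def:SiB-C}: when $c' - c \geq i$ the index set $[i - c' + c]$ is empty, so the entry-wise comparison holds vacuously while $c \leq c'$ obviously holds.

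The bulk of the work is Part (2). For the ``only if'' direction I would reduce via (1) to $x, y \in (W^{I \setminus \{i\}})_{\af}$ and induct on the number of covers in a chain $x = x_0 \prec x_1 \prec \cdots \prec x_N = y$. It suffices to check that a single cover $x_k \prec x_{k+1}$ produces $\mathcal{Y}_i^{C_n}(x_k) \preceq \mathcal{Y}_i^{C_n}(x_{k+1})$ in the tableau order. By Proposition \ref{prop:SiB-C} and Lemma \ref{lem:Q=SiB}, such a cover corresponds to a Bruhat or quantum edge $w \xrightarrow{\gamma} \lfloor wr_\gamma \rfloor$ in $\mathrm{QB}^{I \setminus \{i\}}$ with $\chi \in \{0,1\}$, and the pair $(\mathsf{T}_w^{(i)}, c)$ passes to $(\mathsf{T}_{\lfloor wr_\gamma \rfloor}^{(i)}, c + \chi \cdot c_i(\gamma^\vee))$. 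A case analysis on the shape of $\gamma \in \Delta^+ \setminus \Delta_{I \setminus \{i\}}^+$ (namely $\varepsilon_s - \varepsilon_t$, $\varepsilon_s + \varepsilon_t$, or $2\varepsilon_s$) shows that the new pair is tableau-larger: Bruhat edges ($\chi = 0$) make one or two entries strictly $\preceq$-larger in $\mathcal{C}_n$ without changing $c$, while quantum edges ($\chi = 1$) rotate top entries through the barred part of $\mathcal{C}_n$ and raise $c$ by $c_i(\gamma^\vee) \in \{1,2\}$, exactly matching the shift $u + c' - c$ built into Definition \ref{def:SiB-C}.

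For the ``if'' direction I would construct an explicit chain from $(\mathsf{T}, c)$ to $(\mathsf{T}', c')$ by covers, splitting into two sub-cases. When $c = c'$ the condition reduces to $\mathsf{T}(u) \preceq \mathsf{T}'(u)$ in $\mathcal{C}_n$ for all $u \in [i]$; here I would induct on the total $\mathcal{C}_n$-discrepancy $\sum_{u=1}^i \#\{ s \in \mathcal{C}_n \mid \mathsf{T}(u) \prec s \preceq \mathsf{T}'(u) \}$, picking an index $u$ with $\mathsf{T}(u) \prec \mathsf{T}'(u)$ and applying a Bruhat cover from Proposition \ref{prop:SiB-C} that advances $\mathsf{T}(u)$ by one $\mathcal{C}_n$-step while preserving column-strictness. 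When $c < c'$ apply a quantum cover to rotate the largest entries of the column through the barred part of $\mathcal{C}_n$, raising $c$; iterate until an intermediate pair $(\mathsf{T}'', c')$ is reached, then finish by the previous sub-case.

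The main obstacle will be the ``if'' direction when $c < c'$: selecting the correct sequence of quantum covers is delicate because in type $C$ one can have $c_i(\gamma^\vee) = 2$ (e.g.\ for $\gamma = 2\varepsilon_s$ with $s \in [i]$), so each quantum step can jump $c$ by $2$ and must remain compatible with the shifted comparison $\mathsf{T}(u) \preceq \mathsf{T}'(u + c' - c)$ at every intermediate stage. To control this, Lenart's quantum-edge criterion (\cite[Propositions 3.6 and 5.7]{Len12}), combined with Lemma \ref{lem:LS}, will be used to identify the admissible quantum covers and to certify that a greedy construction neither overshoots the target nor leaves gaps. A uniform length bound on the chain, essentially $c' - c + (\ell(\lfloor w' \rfloor) - \ell(\lfloor w \rfloor))$ for the associated Weyl group elements $w, w' \in W^{I \setminus \{i\}}$, then guarantees termination.
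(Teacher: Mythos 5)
Your overall architecture — prove (1) by the stability of $\{1,\ldots,i\}$ under $W_{I\setminus\{i\}}$, get (3) vacuously from Definition \ref{def:SiB-C}, reduce (2) to single covers via Proposition \ref{prop:SiB-C} for the ``only if'' direction and the $c=c'$ case, and then interleave Bruhat and quantum covers when $c<c'$ — is the same as the paper's. However, the step you yourself flag as the main obstacle is exactly where your proposal has a genuine gap, and it also rests on a factual error. First, the error: in type $C_n$ the quantum edges of the \emph{parabolic} graph $\mathrm{QB}^{I\setminus\{i\}}$ never have $c_i(\gamma^{\vee})=2$. By Lemma \ref{lem:LS} a quantum edge forces $\gamma^{\vee}\in Q^{\vee,I\setminus\{i\}}$, so by Lemma \ref{lem:J-ad-C} either $\gamma^{\vee}=\varepsilon_i$ (with $c_i=1$) or $\gamma^{\vee}=\varepsilon_{i-1}+\varepsilon_i$ (with $c_i=2$), and Step 2 of the proof of Proposition \ref{prop:Q=C} shows the latter is impossible for a quantum edge. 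So every cover that changes $c$ raises it by exactly $1$ (this is ($\si$-C5)), and your worry about ``jumps by $2$'' is moot — but so is your proposed remedy via Lenart's criterion, which classifies edges of $\mathrm{QB}^{\emptyset}$ and says nothing about whether a greedy chain stays below the target.

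Second, the real difficulty is unaddressed: the unique quantum cover ($\si$-C5) requires the source column to end in $\overline{1}$ and replaces it by prepending $1$, so before each quantum step you must first raise the column by Bruhat covers into that special form, and you must then verify that the result still satisfies the \emph{shifted} comparison $\mathsf{T}(u)\preceq\mathsf{T}''(u+d-1)$ against the original $\mathsf{T}$. A forward greedy choice from $\mathsf{T}$ runs into trouble (e.g.\ you cannot simply set the bottom entry to $\overline{1}$ if $1$ already occurs as some $\|\mathsf{T}(u)\|$), and ``neither overshoots nor leaves gaps'' is precisely the claim that needs proof. The paper instead inducts downward on $d=c'-c$ and works \emph{backwards from $\mathsf{T}'$}: it locates the maximal tail of $\mathsf{T}'$ already of the form $\overline{i-u+1}$, builds an explicit $\mathsf{T}_1\preceq\mathsf{T}'$ at level $c'$ whose bottom entries are forced to be $\overline{1},\overline{2},\ldots$, drops by one quantum cover to $\mathsf{T}_2$ at level $c'-1$, and checks entrywise that $\mathsf{T}(u)\preceq\mathsf{T}_2(u+d-1)$ so the induction hypothesis applies. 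Supplying such an explicit intermediate column (or an equivalent verification) is the missing content of your proof.
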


By combining Propositions \ref{prop:Deo} and \ref{prop:tab-cri-C} (2), 
we obtain the following tableau criterion for the 
semi-infinite Bruhat order
on $W_{\af}$ of type $C_n^{(1)}$.

\begin{thm} \label{thm:tab-cri-C}
Let 
$J \subset I$. 
For 
$x,y \in (W^J)_{\af}$, 
we have 
$x \preceq y$
in 
$(W^J)_{\af}$
if and only if 
$\mathcal{Y}_i^{C_n}(x) \preceq \mathcal{Y}_i^{C_n}(y)$
in 
$\mathrm{CST}_{C_n}(\varpi_i) \times \BZ$
for all 
$i \in I \setminus J$. 
\end{thm}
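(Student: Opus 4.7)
The plan is to derive Theorem \ref{thm:tab-cri-C} as a direct concatenation of the two preceding results, exactly as the prose just above it suggests. First I would invoke the Deodhar-type criterion (Proposition \ref{prop:Deo}): since $x,y \in (W^J)_{\af}$, the relation $x \preceq y$ in $(W^J)_{\af}$ holds if and only if $\Pi^{I \setminus \{i\}}(x) \preceq \Pi^{I \setminus \{i\}}(y)$ in $(W^{I \setminus \{i\}})_{\af}$ for every $i \in I \setminus J$. This step requires no input beyond Proposition \ref{prop:Deo} itself, which is stated for arbitrary $J \subset I$ in the affine Weyl group of any untwisted type.

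Next, for each individual $i \in I \setminus J$, I would apply Proposition \ref{prop:tab-cri-C} (2), which translates the inequality $\Pi^{I \setminus \{i\}}(x) \preceq \Pi^{I \setminus \{i\}}(y)$ in $(W^{I \setminus \{i\}})_{\af}$ into the tableau inequality $\mathcal{Y}_i^{C_n}(x) \preceq \mathcal{Y}_i^{C_n}(y)$ in $\mathrm{CST}_{C_n}(\varpi_i) \times \BZ$. Taking the conjunction of these equivalences over all $i \in I \setminus J$ produces the statement of the theorem.

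The only mild bookkeeping point to address is that the map $\mathcal{Y}_i^{C_n}$ is defined on all of $W_{\af}$ rather than on $(W^{I \setminus \{i\}})_{\af}$, but Proposition \ref{prop:tab-cri-C} (1) guarantees that $\mathcal{Y}_i^{C_n}$ factors through $\Pi^{I \setminus \{i\}}$, so applying $\mathcal{Y}_i^{C_n}$ to $x,y \in (W^J)_{\af}$ yields the same tableau-and-integer pair as applying it to $\Pi^{I \setminus \{i\}}(x), \Pi^{I \setminus \{i\}}(y)$. Honestly there is no real obstacle: all the technical work has already been done in the Deodhar-type reduction of \S \ref{Section:Deodhar} and in the classification of edges of $\mathrm{QB}^{I \setminus \{i\}}$ underlying Proposition \ref{prop:tab-cri-C}. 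The present theorem is therefore a one-line assembly of those two pieces, and the proof can be written in a few lines with no additional computation.
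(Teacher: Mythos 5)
Your proposal is correct and is exactly the argument the paper uses: Theorem \ref{thm:tab-cri-C} is obtained by combining the Deodhar-type criterion (Proposition \ref{prop:Deo}) with Proposition \ref{prop:tab-cri-C} (2), with part (1) handling the compatibility of $\mathcal{Y}_i^{C_n}$ with $\Pi^{I \setminus \{ i \}}$. Nothing further is needed.
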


The remainder of this subsection is devoted to 
the proof of Proposition \ref{prop:tab-cri-C}. 

The proofs of 
Lemmas \ref{lem:J-ad-C}--\ref{lem:<g,r>-C} below 
are straightforward.

\begin{lem} \label{lem:J-ad-C}
Let 
$i \in I$
and 
$\gamma \in \Delta^+ \setminus \Delta_{I \setminus \{ i \}}^+$. 
We have 
$\gamma^{\vee} \in Q^{\vee,I \setminus \{ i \}}$
if and only if one of the following conditions holds:
\begin{enumerate}[(1)]
\item
$\gamma^{\vee}
=
\varepsilon_i 
= 
\alpha_i^{\vee} + \alpha_{i+1}^{\vee} + \cdots + \alpha_n^{\vee}$.
\item
$\gamma^{\vee}
=
\varepsilon_{i-1} + \varepsilon_i 
= 
\alpha_{i-1}^{\vee} + 2\alpha_i^{\vee} + \cdots + 2\alpha_n^{\vee}$.
\end{enumerate}
\end{lem}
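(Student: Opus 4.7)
\textbf{Proof plan for Lemma \ref{lem:J-ad-C}.}

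The plan is a direct case-by-case check using the defining condition
$$Q^{\vee,I\setminus\{i\}} = \{ \xi \in Q^\vee \mid \langle \xi,\alpha\rangle \in \{-1,0\} \text{ for all } \alpha \in \Delta_{I\setminus\{i\}}^+ \}.$$
First I would enumerate the roots $\gamma \in \Delta^+ \setminus \Delta_{I\setminus\{i\}}^+$; these are exactly the positive roots whose simple-root expansion involves $\alpha_i$, namely the three families
\begin{enumerate}[(A)]
\item $\gamma = \varepsilon_s - \varepsilon_t$ with $1 \leq s \leq i < t \leq n$;
\item $\gamma = \varepsilon_s + \varepsilon_t$ with $1 \leq s \leq i$ and $s < t \leq n$;
\item $\gamma = 2\varepsilon_s$ with $1 \leq s \leq i$.
\end{enumerate}
Using $\alpha_s^\vee = \varepsilon_s - \varepsilon_{s+1}$ ($s<n$) and $\alpha_n^\vee = \varepsilon_n$, one gets $\varepsilon_s = \alpha_s^\vee + \alpha_{s+1}^\vee + \cdots + \alpha_n^\vee$, so the coroots are $(\varepsilon_s - \varepsilon_t)^\vee = \varepsilon_s - \varepsilon_t$, $(\varepsilon_s + \varepsilon_t)^\vee = \varepsilon_s + \varepsilon_t$, and $(2\varepsilon_s)^\vee = \varepsilon_s$, which exhibits the two stated expansions.

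Next I would test each candidate $\gamma^\vee$ against $\Delta_{I\setminus\{i\}}^+$. Removing vertex $i$ from the Dynkin diagram splits it into a type $A_{i-1}$ component on $\{\varepsilon_1,\ldots,\varepsilon_i\}$ and a type $C_{n-i}$ component on $\{\varepsilon_{i+1},\ldots,\varepsilon_n\}$, so $\Delta_{I\setminus\{i\}}^+$ consists of $\varepsilon_u - \varepsilon_v$ for $1 \leq u < v \leq i$, plus $\varepsilon_u \pm \varepsilon_v$ for $i+1 \leq u < v \leq n$, plus $2\varepsilon_u$ for $i+1 \leq u \leq n$. The decisive observation is that pairing $\gamma^\vee$ with a \emph{long} root $2\varepsilon_u$ produces values in $\{0,\pm 2\}$: for Case (A), $\langle \varepsilon_s - \varepsilon_t, 2\varepsilon_t\rangle = -2$ excludes the family whenever $t \in [i+1,n]$, which is forced by $s \leq i < t$; so (A) contributes nothing. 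For Case (B), $\langle \varepsilon_s + \varepsilon_t, 2\varepsilon_t\rangle = 2$ forces $t \leq i$, so in fact both $s, t \in [1,i]$.

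Having reduced (B) to $1 \leq s < t \leq i$ and with (C) still open, I would then test against the type $A_{i-1}$ roots $\varepsilon_u - \varepsilon_v$ with $u < v \leq i$. In (C), $\langle \varepsilon_s, \varepsilon_s - \varepsilon_v\rangle = 1$ for any $v \in [s+1,i]$ forces $s = i$, giving $\gamma = 2\varepsilon_i$. In (B), taking $u = s$ with $v \in [s+1,i] \setminus \{t\}$ gives $\langle \gamma^\vee, \alpha\rangle = 1$, and similarly taking $u = t$ with $v \in [t+1,i]$ gives $1$; these two constraints together force $t = i$ and $s = i-1$, yielding $\gamma = \varepsilon_{i-1} + \varepsilon_i$ (which requires $i \geq 2$; for $i = 1$ the family (B) contributes nothing, consistent with the lemma). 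Finally I would confirm the two surviving candidates by checking every remaining pairing (against the roots of $A_{i-1}$ and against $\varepsilon_u \pm \varepsilon_v$ in $C_{n-i}$), all of which vanish or equal $-1$.

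No step is genuinely difficult; the only place where one must be careful is the bookkeeping in the $A_{i-1}$-block for case (B), since the subsystem acts on $\mathrm{span}(\varepsilon_1,\ldots,\varepsilon_i)$ (not $\ldots, \varepsilon_{i-1}$), so the root $\varepsilon_{i-1} - \varepsilon_i = \alpha_{i-1}$ does lie in $\Delta_{I\setminus\{i\}}^+$ and must be tested. Getting this indexing right is what cleanly pins down $t = i$ rather than $t = i-1$ and thereby yields exactly the two coroots listed.
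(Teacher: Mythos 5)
Your proof is correct and is exactly the direct verification the paper has in mind: the paper omits the argument entirely, stating only that the proof of Lemma \ref{lem:J-ad-C} is straightforward, and your case analysis (enumerating the roots containing $\alpha_i$, eliminating families (A) and most of (B)--(C) by pairing against the long roots $2\varepsilon_u$ of the $C_{n-i}$-block and the roots of the $A_{i-1}$-block, and confirming the two survivors) fills that in correctly. No gaps.
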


\begin{lem} \label{lem:<g,r>-C}
Let $i \in I$. 
We have
$2\langle \alpha_i^{\vee} , \rho - \rho_{I \setminus \{ i \}} \rangle = 2n-i+1$.
\end{lem}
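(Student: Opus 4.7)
The plan is to unwind the definition $\rho - \rho_{I\setminus\{i\}} = \tfrac{1}{2}\sum_{\alpha \in \Delta^+ \setminus \Delta_{I\setminus\{i\}}^+} \alpha$, which reduces the identity to the combinatorial claim
\begin{align*}
\sum_{\alpha \in \Delta^+ \setminus \Delta_{I\setminus\{i\}}^+} \langle \alpha_i^\vee, \alpha\rangle = 2n-i+1,
\end{align*}
and then to verify this by enumerating the relevant roots in the $\varepsilon$-basis realization of type $C_n$ from \S \ref{subsection:tab-cri-C}.

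First I would handle the generic case $1 \leq i < n$. Removing the node $i$ disconnects the Dynkin diagram into an $A_{i-1}$ subdiagram on $\{\alpha_1,\ldots,\alpha_{i-1}\}$ and a $C_{n-i}$ subdiagram on $\{\alpha_{i+1},\ldots,\alpha_n\}$, so $\Delta^+ \setminus \Delta_{I\setminus\{i\}}^+$ splits into three families:
\begin{align*}
\{\varepsilon_s-\varepsilon_t \mid s \leq i < t \leq n\},\quad
\{\varepsilon_s+\varepsilon_t \mid s\leq i,\ s<t\leq n\},\quad
\{2\varepsilon_s \mid s\leq i\}.
\end{align*}
With $\alpha_i^\vee = \varepsilon_i - \varepsilon_{i+1}$ paired through the standard inner product, I would then sum $\langle \alpha_i^\vee, \alpha\rangle$ over each family. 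A direct count gives $n$ for the first family (the contributions come from the slice $s=i$, of size $n-i$, plus the slice $t=i+1$, of size $i$), $n-i-1$ for the second (only $s=i$ with $t\geq i+2$ contributes, since the $s<i$ contributions cancel in pairs at $t=i$ and $t=i+1$), and $2$ for the third (only $s=i$, via $\langle \alpha_i^\vee, 2\varepsilon_i\rangle = 2$). Adding these yields $n + (n-i-1) + 2 = 2n-i+1$.

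For the boundary case $i=n$, removing $\alpha_n$ leaves pure type $A_{n-1}$, so the complement is $\{\varepsilon_s+\varepsilon_t \mid s<t\} \sqcup \{2\varepsilon_s \mid s\in[n]\}$. Using $\alpha_n^\vee = \varepsilon_n$, the first family contributes $n-1$ (one for each pair with $t=n$) and the second contributes $2$ (only $s=n$), for a total of $n+1 = 2n-n+1$, as required. The only subtlety in the whole argument is keeping the short/long root distinction straight, since it determines which $2\varepsilon_s$ lie in the parabolic subsystem and fixes the normalization of $\alpha_n^\vee$; once this is settled, the verification is pure bookkeeping.
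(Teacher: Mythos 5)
Your computation is correct and is exactly the direct root-enumeration that the paper declares ``straightforward'' and omits: all three family sums ($n$, $n-i-1$, and $2$ for $i\in[n-1]$, and $n-1$ plus $2$ for $i=n$) check out, and the coroot normalizations $\alpha_i^{\vee}=\varepsilon_i-\varepsilon_{i+1}$, $\alpha_n^{\vee}=\varepsilon_n$ are handled properly. Nothing to correct.
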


\begin{prop}[cf. {\cite[\S 8.1]{BB}}] \label{prop:Bruhat-C}
Let 
$i \in I$, 
$w \in W^{I \setminus \{ i \}}$, 
and 
$\gamma \in \Delta^+$. 
There exists a Bruhat edge 
$w \xrightarrow{\ \gamma \ } \lfloor wr_{\gamma} \rfloor = wr_{\gamma}$
in 
$\mathrm{QB}^{I \setminus \{ i \}}$
if and only if 
$\gamma \in \Delta^+ \setminus \Delta_{I \setminus \{ i \}}^+$
and one of the following statements holds.
\begin{enumerate}[(b-C1)]
\item
$i \in [n-1]$, 
$c_i(\gamma^{\vee}) = 1$, 
and there exists 
$s \in [i]$
such that  
$wr_{\gamma}(u) = w(u)$
for 
$u \in [i] \setminus \{ s \}$, 
$1 \preceq w(s) \prec n$, 
and 
$wr_{\gamma}(s) = 
\min([w(s)+1,n] \setminus \{ \|w(u)\| \mid u \in [i], \ w(u) \succeq \overline{n} \})$;
in this case, 
we have 
$\gamma^{\vee} 
= 
\varepsilon_s - \varepsilon_t
=
\alpha_s^{\vee} + \alpha_{s+1}^{\vee} + \cdots + \alpha_t^{\vee}$
for some 
$t \in [i+1,n]$.
\item
$i \in [n-1]$, 
$c_i(\gamma^{\vee}) = 1$, 
and there exists
$s \in [i]$
such that  
$wr_{\gamma}(u) = w(u)$
for 
$u \in [i] \setminus \{ s \}$, 
$\overline{n} \preceq w(s) \prec \overline{1}$, 
and 
$wr_{\gamma}(s)
=
\sigma 
\bigl(
\max ([1,\|w(s)\|-1] \setminus \{ w(u) \mid u \in [i], \ w(u) \preceq n \})
\bigr)$;
in this case, 
we have 
$\gamma^{\vee} 
= 
\varepsilon_s - \varepsilon_t
=
\alpha_s^{\vee} + \alpha_{s+1}^{\vee} + \cdots + \alpha_t^{\vee}$
for some 
$t \in [i+1,n]$.
\item
$i \in [2,n]$, 
$c_i(\gamma^{\vee}) = 2$, 
and there exist 
$s,t \in [i]$
such that 
$s < t$, 
$wr_{\gamma}(u) = w(u)$
for 
$u \in [i] \setminus \{ s,t \}$, 
and 
$wr_{\gamma}(s) = w(s) + 1 = \sigma(w(t)) = \sigma(wr_{\gamma}(t)) + 1 \preceq n$;
in this case, 
we have 
$\gamma^{\vee} 
= 
\varepsilon_s + \varepsilon_t
=
\alpha_s^{\vee} + \cdots + \alpha_{t-1}^{\vee}
+ 2\alpha_t^{\vee} + \cdots + 2\alpha_n^{\vee}$.
\item
$c_i(\gamma^{\vee}) = 1$, 
and there exists
$s \in [i]$
such that 
$wr_{\gamma}(u) = w(u)$
for 
$u \in [i] \setminus \{ s \}$
and 
$\sigma(wr_{\gamma}(s)) = w(s) = n$;
in this case, 
we have 
$\gamma^{\vee} 
= 
\varepsilon_s
=
\alpha_s^{\vee} + \alpha_{s+1}^{\vee} + \cdots + \alpha_n^{\vee}$.
\end{enumerate}
Moreover, for 
$w,v \in W^{I \setminus \{ i \}}$, 
we have 
$w \preceq v$
if and only if 
$w(u) \preceq v(u)$
in 
$\mathcal{C}_n$
for 
$u \in [i]$. 
\end{prop}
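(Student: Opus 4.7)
The plan is to prove the two assertions of Proposition \ref{prop:Bruhat-C} separately: first the classification of Bruhat edges in $\mathrm{QB}^{I\setminus\{i\}}$, and then the tableau characterization of Bruhat order on $W^{I\setminus\{i\}}$. For the edge classification, I would fix $w \in W^{I\setminus\{i\}}$ and systematically go through each family of positive roots $\gamma \in \Delta^+ \setminus \Delta_{I\setminus\{i\}}^+$: the short roots $\varepsilon_s - \varepsilon_t$ (which must satisfy $s \leq i < t$), the short roots $\varepsilon_s + \varepsilon_t$ (either $s < t \leq i$ or $s \leq i < t$), and the long roots $2\varepsilon_s$ (with $s \leq i$). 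Since the reflections in each family have explicit descriptions as products of transpositions in $\mathfrak{S}(\mathcal{C}_n)$, the one-line notation of $wr_\gamma$ is immediate.

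The two conditions to be satisfied are $wr_\gamma \in W^{I\setminus\{i\}}$ and $\ell(wr_\gamma) = \ell(w) + 1$. For $wr_\gamma \in W^{I\setminus\{i\}}$, I would impose the one-line characterization of Lemma \ref{lem:Gr-C}, namely that the two blocks $wr_\gamma(1) \prec \cdots \prec wr_\gamma(i)$ and $wr_\gamma(i+1) \prec \cdots \prec wr_\gamma(n) \preceq n$ are respected. For the length condition, I would invoke the combinatorial formula $\ell(w) = \sum_{s=1}^i (\mathsf{a}_s(w) + \mathsf{b}_s(w) + \mathsf{e}_s(w))$ and compare $\ell(w)$ and $\ell(wr_\gamma)$ directly. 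This comparison, carried out case by case, forces $wr_\gamma(s)$ to be the minimum (respectively, maximum) of an explicit candidate set, reproducing exactly the descriptions in (b-C1)--(b-C4). The same bookkeeping also rules out Bruhat edges for $\gamma = \varepsilon_s + \varepsilon_t$ with $s \leq i < t$: in that case, either $wr_\gamma$ fails to lie in $W^{I\setminus\{i\}}$, or the length jump exceeds one.

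For the tableau characterization of Bruhat order, the forward direction proceeds by induction on the length of a directed path from $w$ to $v$ via Bruhat edges: using the classification just established, one verifies directly that for each cover type the affected entries strictly increase in $\mathcal{C}_n$ (e.g., $a \prec a+1$ in (b-C1); $\bar{a} \prec \bar{b}$ with $b < a$ in (b-C2); $a \prec a+1$ and $\overline{a+1} \prec \bar{a}$ in (b-C3); $n \prec \bar{n}$ in (b-C4)), while the unaffected entries are unchanged. For the reverse direction, given entry-wise $w(u) \preceq v(u)$ in $\mathcal{C}_n$ with $w \neq v$, I would algorithmically construct a single cover $w \to w'$ with $w(u) \preceq w'(u) \preceq v(u)$ for all $u \in [i]$, by locating the smallest $s$ at which $w(s) \prec v(s)$ and selecting the appropriate cover type according to whether $w(s)$ is a positive entry below $n$, a negative entry, equal to $n$, or whether the only legal move is a paired one (forcing case (b-C3)). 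Iterating yields a chain of Bruhat covers from $w$ to $v$.

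The main obstacle I expect is the analysis of covers in case (b-C3) with $\gamma = \varepsilon_s + \varepsilon_t$, $s < t \leq i$: verifying that $wr_\gamma \in W^{I\setminus\{i\}}$ requires the column structure between positions $s$ and $t$ to be compatible with replacing $(w(s),w(t)) = (a,\overline{a+1})$ by $(a+1,\bar{a})$, and the length-one count is delicate because the reflection $r_\gamma = (s\ \bar{t})(\bar{s}\ t)$ contributes two simultaneous transpositions whose inversion counts must cancel to yield exactly $+1$. The matching difficulty in the $\Leftarrow$ direction of the tableau criterion is to certify that when $w(s)$ is positive but $v(s)$ is negative, the algorithmic choice between a sequence of single-entry covers and a paired (b-C3) move is well-defined and strictly decreases a suitable complexity measure; once this is handled, the rest of the construction is routine.
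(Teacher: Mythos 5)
The paper does not actually prove this proposition: it is stated with the citation ``cf.\ \cite[\S 8.1]{BB}'' and treated as known, so there is no in-paper argument to compare against. Your plan for the edge classification is, however, precisely the method the paper uses for the companion quantum-edge result (Proposition \ref{prop:Q=C}) and its type-$B$/$D$ analogues: write $r_\gamma$ as explicit transpositions in $\mathfrak{S}(\mathcal{C}_n)$, read off $wr_\gamma$ in one-line form, impose the description of $W^{I\setminus\{i\}}$ from Lemma \ref{lem:Gr-C}, and compare lengths via the statistics $\mathsf{a}_s,\mathsf{b}_s,\mathsf{e}_s$. That part of your proposal, including the exclusion of $\gamma=\varepsilon_s+\varepsilon_t$ with $s\le i<t$ and the two-transposition bookkeeping in (b-C3), is sound (if lengthy), and the forward direction of the ``Moreover'' part follows from the classification exactly as you describe.

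The genuine gap is in the reverse direction of the tableau criterion. Your algorithm — locate the \emph{smallest} $s$ with $w(s)\prec v(s)$ and apply a cover there — can fail outright, because that position need not support any Bruhat edge at all. Take $n=4$, $i=2$, $w$ with column $\{2,3\}$ and $v$ with column $\{3,4\}$: then $w(u)\preceq v(u)$ for $u\in[2]$ and $s=1$ is the first discrepancy, but the candidate value $\min([3,4]\setminus\emptyset)=3$ collides with the first-block entry $w(2)=3$, none of (b-C2)--(b-C4) applies at $s=1$, and in fact the unique cover of $w$ in $W^{I\setminus\{2\}}$ is $\{2,4\}$, obtained by moving position $2$ first. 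So the well-definedness you defer to the end is not a routine check: the greedy choice of the smallest discrepant position is simply the wrong one. The standard repair is to work from the other end — increase the \emph{largest} discrepant position by the minimal available amount, falling back on the paired move (b-C3) when the single-entry move there is blocked by its own mirror entry — and to verify at each step that the new column remains entrywise $\preceq v$; with that modification (or by quoting \cite[\S 8.1]{BB} outright, as the paper does) the chain construction goes through.
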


\begin{prop} \label{prop:Q=C}
Let 
$i \in I$, 
$w \in W^{I \setminus \{ i \}}$
and 
$\gamma \in \Delta^+$. 
There exists a quantum edge 
$w \xrightarrow{\ \gamma \ } \lfloor wr_{\gamma} \rfloor$
in 
$\mathrm{QB}^{I \setminus \{ i \}}$
if and only if 
$\gamma \in \Delta^+ \setminus \Delta_{I \setminus \{ i \}}^+$
and the following statement holds:
\begin{enumerate}
\item[(q-C)]
$c_i(\gamma^{\vee}) = 1$, 
$\lfloor wr_{\gamma} \rfloor (1) = 1$, 
$\lfloor wr_{\gamma} \rfloor (u) = w(u-1)$
for 
$u \in [2,i]$, 
and 
$w(i) = \overline{1}$;
in this case, 
we have 
$\gamma^{\vee} = \varepsilon_i =  \alpha_i^{\vee} + \alpha_{i+1}^{\vee} + \cdots + \alpha_n^{\vee}$.
\end{enumerate}
\end{prop}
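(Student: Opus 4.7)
My plan is to apply Lemma~\ref{lem:LS}, which reduces the existence of a quantum edge $w \xrightarrow{\gamma} \lfloor wr_\gamma \rfloor$ in $\mathrm{QB}^{I\setminus\{i\}}$ to the two conditions (c) $\ell(wr_\gamma)-\ell(w) = 1 - 2\langle \gamma^\vee,\rho \rangle$ and (d) $wr_\gamma t_{\gamma^\vee} \in (W^{I\setminus\{i\}})_{\af}$. Condition~(d) forces $\gamma^\vee \in Q^{\vee,I\setminus\{i\}} \cap (\Delta^{\vee,+}\setminus\Delta^{\vee,+}_{I\setminus\{i\}})$, and Lemma~\ref{lem:J-ad-C} enumerates this intersection as exactly two candidates: $\gamma^\vee = \varepsilon_i$ (so $\gamma = 2\varepsilon_i$ and $r_\gamma = (i\ \overline{i})$ in $\mathfrak{S}(\mathcal{C}_n)$), or, when $i \geq 2$, $\gamma^\vee = \varepsilon_{i-1}+\varepsilon_i$ (so $r_\gamma = (i-1\ \overline{i})(\overline{i-1}\ i)$). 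The proof proceeds by a case analysis.

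In the case $\gamma^\vee = \varepsilon_i$, I identify $z_{\varepsilon_i}$ explicitly via Lemma~\ref{lem:J-ad}: writing $\varepsilon_i = \varpi_i^\vee - \varpi_{i-1}^\vee$ for $i \geq 2$ yields the unique decomposition $\phi_{I\setminus\{i\}}(\varepsilon_i) = 0$, $(j_1,j_2) = (i-1, 0)$, so that $z_{\varepsilon_i} = v_{i-1}^{[i-1]} = w_0^{[i-1]}w_0^{[i-2]}$ is the cyclic permutation $1\mapsto 2 \mapsto \cdots \mapsto i \mapsto 1$ in $W_{[i-1]}$, of length $i-1$ (while trivially $z_{\varepsilon_1}=e$). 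By Lemma~\ref{lem:LS}, $\lfloor wr_{2\varepsilon_i}\rfloor = wr_{2\varepsilon_i}\, z_{\varepsilon_i}^{-1}$ has values $1 \mapsto \sigma(w(i))$ and $u \mapsto w(u-1)$ for $u \in [2,i]$, matching the form in (q-C) precisely when $\sigma(w(i)) = 1$, i.e.\ $w(i) = \overline{1}$. The length-additivity $\ell(wr_{2\varepsilon_i}) = \ell(\lfloor wr_{2\varepsilon_i}\rfloor) + \ell(z_{\varepsilon_i})$ then reformulates condition~(c) as $\ell(wr_{2\varepsilon_i}) - \ell(w) = -(2n-2i+1)$, which I verify directly from Lemma~\ref{lem:Gr-C}'s formula $\ell(w) = \sum_{s=1}^i(\mathsf{a}_s(w)+\mathsf{b}_s(w)+\mathsf{e}_s(w))$: the swap $(i\ \overline{i})$ alters only the $t=i$ contributions to $\mathsf{a}_s, \mathsf{b}_s$ for $s<i$ together with the $s=i$ summands, and a term-by-term tally yields total change $-(2n-2i+1)$ exactly when $w(i) = \overline{1}$.

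In the case $\gamma^\vee = \varepsilon_{i-1}+\varepsilon_i$, the required length change in $\mathrm{QB}^{I\setminus\{i\}}$ is $-(4n-2i+1)$ (using $c_i(\gamma^\vee) = 2$ together with Lemmas~\ref{lem:<g,r>} and~\ref{lem:<g,r>-C}). Since the double reflection $r_\gamma = (i-1\ \overline{i})(\overline{i-1}\ i)$ affects only the four positions $\pm(i-1), \pm i$, a parallel inversion-count analysis subject to the constraints $w(1) \prec \cdots \prec w(i)$ on $w \in W^{I\setminus\{i\}}$ shows that this precise drop is never achieved for any such $w$; hence case~(2) yields no quantum edge. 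The backward implication (that (q-C) produces a quantum edge) follows by reversing the case~(1) computation. The expected main obstacle is case~(2): case~(1) is essentially bookkeeping once $z_{\varepsilon_i}$ is identified, whereas case~(2) requires a careful uniform analysis of the inversion changes under a double reflection on four positions, in order to rule out the quantum edge for all $n$ and $i \geq 2$.
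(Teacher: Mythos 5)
Your proposal is correct and follows essentially the same route as the paper: both reduce via Lemma~\ref{lem:LS} and Lemma~\ref{lem:J-ad-C} to the two candidates $\gamma^{\vee} = \varepsilon_i$ and $\gamma^{\vee} = \varepsilon_{i-1}+\varepsilon_i$, compute $z_{\gamma^{\vee}}^{I\setminus\{i\}}$ and $\lfloor wr_{\gamma}\rfloor$ explicitly, and settle each case by an inversion count that in the first case pins down $w(i)=\overline{1}$ and in the second case shows the required length drop $2i-4n-1$ is never attained (the paper's tally gives a minimum of $2i-4n+1$). The only cosmetic difference is that you track $\ell(wr_{\gamma})-\ell(w)$ via the additivity $\ell(wr_{\gamma})=\ell(\lfloor wr_{\gamma}\rfloor)+\ell(z_{\gamma^{\vee}})$, which requires the full length formula $\sum_{s=1}^{n}(\mathsf{a}_s+\mathsf{b}_s+\mathsf{e}_s)$ since $wr_{\gamma}\notin W^{I\setminus\{i\}}$, whereas the paper works directly with $\ell(\lfloor wr_{\gamma}\rfloor)-\ell(w)$ so that the truncated sum $\sum_{s=1}^{i}$ of Lemma~\ref{lem:Gr-C} applies to both terms.
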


Before starting the proof of Proposition \ref{prop:Q=C}, 
we mention a consequence of 
Lemma \ref{lem:Q=SiB} and Propositions \ref{prop:Bruhat-C}--\ref{prop:Q=C}.

\begin{prop} \label{prop:SiB-C}
Let 
$i \in I$, 
$x,y \in (W^{I \setminus \{ i \}})_{\af}$, 
$\mathcal{Y}_i^{C_n}(x) = (\mathsf{T},c)$, 
and 
$\mathcal{Y}_i^{C_n}(y) = (\mathsf{T}',c')$. 
There exists an edge 
$x \xrightarrow{\ \beta \ } y$
in 
$\mathrm{SiB}^{I \setminus \{ i \}}$
for some 
$\beta \in \Delta_{\af}^+$
if and only if one of the following conditions holds:
\begin{enumerate}[($\si$-C1)]
\item
$i \in [n-1]$, 
$c' = c$, 
and there exists 
$s \in [i]$
such that 
$\mathsf{T}'(u) = \mathsf{T}(u)$
for 
$u \in [i] \setminus \{ s \}$, 
$1 \preceq \mathsf{T}(s) \prec n$, 
and 
$\mathsf{T}'(s) = 
\min([\mathsf{T}(s)+1,n] \setminus \{ \|\mathsf{T}(u)\| \mid u \in [i], \ \mathsf{T}(u) \succeq \overline{n} \})$. 
\item
$i \in [n-1]$, 
$c' = c$, 
and there exists 
$s \in [i]$
such that 
$\mathsf{T}'(u) = \mathsf{T}(u)$
for 
$u \in [i] \setminus \{ s \}$,  
$\overline{n} \preceq \mathsf{T}(s) \prec \overline{1}$, 
and 
$\mathsf{T}'(s)
=
\sigma 
\bigl(
\max ([1,\|\mathsf{T}(s)\|-1] \setminus \{ \mathsf{T}(u) \mid u \in [i], \ \mathsf{T}(u) \preceq n \})
\bigr)$. 
\item
$i \in [2,n]$, 
$c' = c$, 
and there exist 
$s,t \in [i]$
such that 
$s < t$, 
$\mathsf{T}'(u) = \mathsf{T}(u)$
for 
$u \in [i] \setminus \{ s,t \}$,  
and 
$\mathsf{T}'(s) 
= 
\mathsf{T}(s)+1
=
\sigma(\mathsf{T}(t))
=
\sigma(\mathsf{T}'(t))+1
\preceq 
n$.
\item
$c' = c$, 
and there exists 
$s \in [i]$
such that 
$\mathsf{T}'(u) = \mathsf{T}(u)$
for 
$u \in [i] \setminus \{ s \}$
and 
$\sigma(\mathsf{T}'(s)) = \mathsf{T}(s) = n$.
\item
$c' = c+1$, 
$\mathsf{T}'(1) = 1$, 
$\mathsf{T}'(u) = \mathsf{T}(u-1)$
for 
$u \in [2,i]$, 
and 
$\mathsf{T}(i) = \overline{1}$.
\end{enumerate}
\end{prop}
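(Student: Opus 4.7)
The plan is to deduce Proposition \ref{prop:SiB-C} directly from Lemma \ref{lem:Q=SiB} combined with the classification of edges in $\mathrm{QB}^{I \setminus \{ i \}}$ given by Propositions \ref{prop:Bruhat-C} and \ref{prop:Q=C}, and then to translate from the Weyl group description (in terms of $w \in W^{I \setminus \{ i \}}$) to the tableau description (in terms of $\mathsf{T}_w^{(i)} \in \mathrm{CST}_{C_n}(\varpi_i)$) by means of Lemma \ref{lem:Gr-C}. Write $x = wT_\xi$ and $y = vT_\zeta$ with $w,v \in W^{I \setminus \{ i \}}$ and $\xi,\zeta \in Q^{\vee, I \setminus \{ i \}}$, so that $\mathcal{Y}_i^{C_n}(x) = (\mathsf{T}_w^{(i)}, c_i(\xi))$ and $\mathcal{Y}_i^{C_n}(y) = (\mathsf{T}_v^{(i)}, c_i(\zeta))$. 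By Lemma \ref{lem:Gr-C}, for $w \in W^{I \setminus \{i\}}$ we have $\mathsf{T}_w^{(i)}(u) = w(u)$ for $u \in [i]$, so every condition on entries of the tableau translates verbatim into a condition on values of $w$.

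By Lemma \ref{lem:Q=SiB}, an edge $x \xrightarrow{\beta} y$ in $\mathrm{SiB}^{I \setminus \{ i \}}$ exists if and only if $\beta = w\gamma + \chi\delta$ with $\gamma \in \Delta^+ \setminus \Delta_{I \setminus \{ i \}}^+$, $\chi \in \{0,1\}$, and either $w \xrightarrow{\gamma} wr_\gamma$ is a Bruhat edge ($\chi = 0$) or $w \xrightarrow{\gamma} \lfloor wr_\gamma \rfloor$ is a quantum edge ($\chi = 1$) in $\mathrm{QB}^{I \setminus \{ i \}}$; moreover $v = \lfloor wr_\gamma \rfloor$ and $c_i(\zeta) = c_i(\xi) + \chi\, c_i(\gamma^\vee)$. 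For the Bruhat case ($\chi = 0$), Proposition \ref{prop:Bruhat-C} exhausts the four possibilities (b-C1)--(b-C4), and in each of them $c_i(\zeta) = c_i(\xi)$, hence $c' = c$. One then reads off the tableau relations ($\si$-C1)--($\si$-C4) by substituting $\mathsf{T}(u) = w(u)$ and $\mathsf{T}'(u) = wr_\gamma(u) = v(u)$: since only $w(s)$ (and, in (b-C3), also $w(t)$) changes, and since after the modification $wr_\gamma$ still lies in $W^{I \setminus \{i\}}$ (so its first $i$ values remain $\prec$-increasing), no re-sorting of indices is needed.

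For the quantum case ($\chi = 1$), Proposition \ref{prop:Q=C} forces $\gamma^\vee = \varepsilon_i$, hence $c_i(\gamma^\vee) = 1$ and $c' = c + 1$. The description of $\lfloor wr_\gamma \rfloor$ in (q-C) reads, after translating to tableaux, exactly as ($\si$-C5): namely $\mathsf{T}'(1) = 1$, $\mathsf{T}'(u) = \mathsf{T}(u-1)$ for $u \in [2,i]$, and the necessary condition $w(i) = \overline{1}$ becomes $\mathsf{T}(i) = \overline{1}$. Conversely, each of ($\si$-C1)--($\si$-C5) produces a pair $(w, v)$ together with a positive coroot $\gamma^\vee$ (namely $\varepsilon_s - \varepsilon_t$, $\varepsilon_s + \varepsilon_t$, $\varepsilon_s$, or $\varepsilon_i$ as appropriate) that matches one of the scenarios of Proposition \ref{prop:Bruhat-C} or \ref{prop:Q=C}; Lemma \ref{lem:Q=SiB} then supplies the desired edge in $\mathrm{SiB}^{I \setminus \{ i \}}$.

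The main technical point, and the only genuine obstacle, is ensuring the bookkeeping is correct in cases ($\si$-C1)--($\si$-C2): one must verify that replacing $w(s) = \mathsf{T}(s)$ by the prescribed ``next available" element of $\mathcal{C}_n$ preserves membership in $W^{I \setminus \{ i \}}$, i.e.\ preserves strict $\prec$-increase of the first $i$ entries without colliding with any $\|w(u)\|$ already present. This is precisely what the ``$\setminus \{ \|w(u)\| \mid u \in [i],\, w(u) \succeq \overline{n}\}$" (resp.\ $\setminus \{ w(u) \mid u \in [i],\, w(u) \preceq n \}$) clause in (b-C1) (resp.\ (b-C2)) enforces, so the translation to the tableau language is immediate once Proposition \ref{prop:Bruhat-C} is in hand.
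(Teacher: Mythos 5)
Your proposal is correct and follows exactly the route the paper takes: the paper presents Proposition \ref{prop:SiB-C} as an immediate consequence of Lemma \ref{lem:Q=SiB} together with Propositions \ref{prop:Bruhat-C} and \ref{prop:Q=C}, with the translation to tableaux via Lemma \ref{lem:Gr-C} and the bijection $\mathcal{Y}_i^{C_n}$ left implicit. Your bookkeeping of the $c$-coordinate ($c'=c$ for Bruhat edges since $\chi=0$, and $c'=c+1$ for the quantum edge since $\chi=1$ and $c_i(\varepsilon_i)=1$) matches the formula $c_i(\xi+\chi\gamma^{\vee}+\phi_J(\xi+\chi\gamma^{\vee}))=c_i(\xi)+\chi c_i(\gamma^{\vee})$ in Lemma \ref{lem:Q=SiB}.
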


For 
$i \in I$, 
$w \in W^{I \setminus \{ i \}}$
and 
$\gamma \in \Delta^+ \setminus \Delta_{I \setminus \{ i \}}^+$, 
let 
$\mathrm{Q}(i,w,\gamma)$
denote the following statement.
\begin{description}
\item[$\mathrm{Q}(i,w,\gamma):$]
There exists a quantum edge 
$w \xrightarrow{\ \gamma \ } \lfloor wr_{\gamma} \rfloor$
in 
$\mathrm{QB}^{I \setminus \{ i \}}$.
\end{description}

\begin{proof}[Proof of Proposition \ref{prop:Q=C}]
By Lemmas \ref{lem:LS} and \ref{lem:J-ad-C}, 
we may assume that 
$\gamma^{\vee} \in Q^{\vee,I \setminus \{ i \}}$, 
$\lfloor wr_{\gamma} \rfloor = wr_{\gamma}(z_{\gamma^{\vee}}^{I \setminus \{ i \}})^{-1}$
and 
$c_i(\gamma^{\vee}) \in \{ 1,2 \}$. 
Let 
$I \setminus \{ i \}
=
I_1 \sqcup I_2$, 
where 
$I_1 = [i-1]$
is of type $A_{i-1}$
and 
$I_2 = [i+1,n]$
is of type $C_{n-i}$.
The proof will be divided into three steps. 

\begin{proof}[Step 1.] 
We show that 
$c_i(\gamma^{\vee}) = 1$
and 
$\mathrm{Q}(i,w,\gamma)$ 
imply 
(q-C). 
It follows from Lemmas \ref{lem:<g,r>} and \ref{lem:<g,r>-C} that 
$c_i(\gamma^{\vee}) = 1$
and 
$\mathrm{Q}(i,w,\gamma)$ 
are equivalent to 
$\ell(\lfloor wr_{\gamma} \rfloor) - \ell(w)
=
i-2n$. 
By Lemma \ref{lem:J-ad-C}, 
$\gamma^{\vee} = \varepsilon_i$
and 
$r_{\gamma} = (i \ \overline{i})$. 
We see that 
$(i-1,0) \in (I_1)_{\af} \times (I_2)_{\af}$
satisfies the condition for 
$\gamma^{\vee} \in Q^{\vee}$
in 
Lemma \ref{lem:J-ad}; 
note that 
$I_1 \setminus \{ i-1 \} = [i-2]$
is of type 
$A_{i-2}$. 
Hence 
$z_{\gamma^{\vee}}^{I \setminus \{ i \}} 
=
w_0^{I_1}w_0^{I_1 \setminus \{ i-1 \}}
=
(1 \ 2 \ \cdots \ i)(\overline{1} \ \overline{2} \ \cdots \ \overline{i})$
and 
$\lfloor wr_{\gamma} \rfloor = w(i \ \overline{i})(i \ \cdots \ 2 \ 1)(\overline{i} \ \cdots \ \overline{2} \ \overline{1})$. 
We have
$\lfloor wr_{\gamma} \rfloor (1) = w(\overline{i})$, 
$\lfloor wr_{\gamma} \rfloor (u) = w(u-1)$
for 
$u \in [2,i]$, 
and 
$\lfloor wr_{\gamma} \rfloor (u) = w(u)$
for 
$u \in [i+1,n]$.
It follows from Lemma \ref{lem:Gr-C} that 
\begin{align} \label{eq:seq-C1}
\underbrace{w(\overline{i})}_{\preceq n} \prec w(1) \prec w(2) \prec \cdots \prec w(i-1) \prec \underbrace{w(i)}_{\succeq \overline{n}}.
\end{align}
It remains to prove that 
$w(i) = \overline{1}$. 
If we prove that 
\begin{enumerate}[(1)]
\item
$\mathsf{a}_1(\lfloor wr_{\gamma} \rfloor) 
=
\mathsf{b}_1(\lfloor wr_{\gamma} \rfloor)
=
\mathsf{e}_1(\lfloor wr_{\gamma} \rfloor) 
= 
0$,
\item
$\mathsf{a}_i(w) 
=
n-i$, 
$\mathsf{b}_i(w)
=n-i-(w(\overline{i})-1)$, 
$\mathsf{e}_i(w)
=
1$,
\item
$\mathsf{e}_s(\lfloor wr_{\gamma} \rfloor) 
=
\mathsf{e}_{s-1}(w)$
for 
$s \in [2,i]$, 
\item
$\mathsf{a}_s(\lfloor wr_{\gamma} \rfloor) 
=
\mathsf{a}_{s-1}(w)$
and 
$\mathsf{b}_s(\lfloor wr_{\gamma} \rfloor) 
=
\mathsf{b}_{s-1}(w)-1$
for 
$s \in [2,i]$, 
\end{enumerate}
then the assertion follows. 
Indeed, 
by Lemma \ref{lem:Gr-C}, 
we have
\begin{align*}
\ell
(\lfloor wr_{\gamma} \rfloor) - \ell(w) 
\ =& \ 
\underbrace{
\mathsf{a}_1(\lfloor wr_{\gamma} \rfloor) 
+
\mathsf{b}_1(\lfloor wr_{\gamma} \rfloor)
+
\mathsf{e}_1(\lfloor wr_{\gamma} \rfloor)}_{=0} \\
&-
\underbrace{
\left(
\mathsf{a}_i(w)
+
\mathsf{b}_i(w)
+
\mathsf{e}_i(w)
\right)}_{= 1+2(n-i)-(w(\overline{i})-1)}
\\
&+
\sum_{s=2}^i
\bigl(
\underbrace{
\mathsf{a}_s(\lfloor wr_{\gamma} \rfloor) - \mathsf{a}_{s-1}(w)}_{=0}
\bigr) \\
&+
\sum_{s=2}^i
\bigl(
\underbrace{
\mathsf{b}_s(\lfloor wr_{\gamma} \rfloor) - \mathsf{b}_{s-1}(w)}_{=-1}
\bigr) \\
&+
\sum_{s=2}^i
\bigl(
\underbrace{
\mathsf{e}_s(\lfloor wr_{\gamma} \rfloor) - \mathsf{e}_{s-1}(w)}_{=0}
\bigr) \\
=& \ 
i - 2n + (w(\overline{i}) - 1).
\end{align*}
Since 
$\ell(\lfloor wr_{\gamma} \rfloor) - \ell(w)
=
i - 2n$, 
we get 
$w(\overline{i}) = 1$
and 
$w(i) = \overline{1}$.

We prove (1)--(4) as follows. 

(1) follows from 
$\lfloor wr_{\gamma} \rfloor(1) = 1$. 

(2): 
Since 
$\overline{n} \preceq w(i)$, 
we have 
$\mathsf{e}_i(w) = 1$, 
$\mathsf{A}_i(w) = [i+1,n]$, 
and hence
$\mathsf{a}_i(w) = n-i$. 
We see from 
$w(\overline{i}) \prec w(1)$
that 
$\mathsf{B}_i(w) = [i+w(\overline{i}),n]$
and 
$\mathsf{b}_i(w) = n-i-(w(\overline{i})-1)$. 

(3) follows from 
$\lfloor wr_{\gamma} \rfloor(s) = w(s-1)$
for 
$s \in [2,i]$. 

(4): 
Let
$s \in [2,i]$. 
We see from Lemma \ref{lem:Gr-C}
that 
\begin{align*}
\mathsf{A}_s(\lfloor wr_{\gamma} \rfloor)
=
\{ t \in [i+1,n] \mid 
\underbrace{\lfloor wr_{\gamma} \rfloor(s)}_{=w(s-1)} 
\succ 
\underbrace{\lfloor wr_{\gamma} \rfloor(t)}_{=w(t)} \}
=
\mathsf{A}_{s-1}(w), 
\end{align*}
which implies 
$\mathsf{a}_s(\lfloor wr_{\gamma} \rfloor)
=
\mathsf{a}_{s-1}(w)$
for 
$s \in [2,i]$. 
Similarly, we deduce from 
$i \in \mathsf{B}_{s-1}(w)$
that the map
\begin{align*}
\mathsf{B}_s(\lfloor wr_{\gamma} \rfloor) \to \mathsf{B}_{s-1}(w) \setminus \{ i \}, \ 
t \mapsto 
\begin{cases}
t-1 & \text{if} \ t \in [s+1,i], \\
t & \text{if} \ t \in [i+1,n],
\end{cases}
\end{align*}
is bijective, 
which implies 
$\mathsf{b}_s(\lfloor wr_{\gamma} \rfloor)
=
\mathsf{b}_{s-1}(w) - 1$
for 
$s \in [2,i]$. 
\end{proof}

\begin{proof}[Step 2.]
We show that 
$c_i(\gamma^{\vee}) = 2$ 
and
$\mathrm{Q}(i,w,\gamma)$ 
lead to a contradiction; 
the computation below will be used again in the proofs of 
Lemmas \ref{lem:Q->B3}--\ref{lem:B3->Q}
in 
\S \ref{subsection:tab-cri-B}. 
It follows from Lemmas \ref{lem:<g,r>} and \ref{lem:<g,r>-C} that 
$c_i(\gamma^{\vee}) = 2$ 
and 
$\mathrm{Q}(i,w,\gamma)$ 
are equivalent to 
$\ell(\lfloor wr_{\gamma} \rfloor) - \ell(w)
=
2i-4n-1$. 
By Lemma \ref{lem:J-ad-C}, 
$\gamma^{\vee} = \varepsilon_{i-1} + \varepsilon_i$
and 
$r_{\gamma} = (i-1 \ \overline{i})(\overline{i-1} \ i)$. 
We see that 
$(i-2,0) \in (I_1)_{\af} \times (I_2)_{\af}$
satisfies the condition for 
$\gamma^{\vee} \in Q^{\vee}$
in 
Lemma \ref{lem:J-ad}; 
note that 
$I_1 \setminus \{ i-2 \}
=[i-3] \sqcup \{ i-1 \}$
is of type
$A_{i-3} \times A_1$. 
Hence
$z_{\gamma^{\vee}}^{I \setminus \{ i \}} 
=
w_0^{I_1}w_0^{I_1 \setminus \{ i-2 \}}$
is given by 
$u \mapsto u+2$
for 
$u \in [i-2]$, 
$i-1 \mapsto 1$, 
and 
$i \mapsto 2$.
We have 
$\lfloor wr_{\gamma} \rfloor(1) = w(\overline{i})$, 
$\lfloor wr_{\gamma} \rfloor(2) = w(\overline{i-1})$, 
$\lfloor wr_{\gamma} \rfloor(u) = w(u-2)$
for 
$u \in [3,i]$, 
and 
$\lfloor wr_{\gamma} \rfloor(u) = w(u)$
for 
$u \in [i+1,n]$. 
It follows from Lemma \ref{lem:Gr-C} that 
\begin{align} \label{eq:seq-C2}
\underbrace{w(\overline{i}) \prec w(\overline{i-1})}_{\preceq n} 
\prec w(1) \prec w(2) \prec \cdots \prec w(i-2) \prec 
\underbrace{w(i-1) \prec w(i)}_{\succeq \overline{n}}; 
\end{align}
note that 
$w(\overline{i})-1 \geq 0$
and 
$w(\overline{i-1})-2 \geq 0$.
If we prove that 
\begin{enumerate}[(1)]
\item
$\mathsf{e}_1(\lfloor wr_{\gamma} \rfloor)
=
\mathsf{e}_2(\lfloor wr_{\gamma} \rfloor)
=
0$, 
$\mathsf{e}_s(\lfloor wr_{\gamma} \rfloor)
=
\mathsf{e}_{s-2}(w)$
for 
$s \in [3,i]$, 
$\mathsf{e}_{i-1}(w)
=
\mathsf{e}_i(w)
=
1$,
\item
$\mathsf{a}_1(\lfloor wr_{\gamma} \rfloor)
=
w(\overline{i})-1$, 
$\mathsf{a}_2(\lfloor wr_{\gamma} \rfloor)
=
w(\overline{i-1})-2$, 
$\mathsf{a}_{i-1}(w)
=
\mathsf{a}_i(w)
=
n-i$, 
\item
$\mathsf{b}_1(\lfloor wr_{\gamma} \rfloor)
=
\mathsf{b}_2(\lfloor wr_{\gamma} \rfloor)
=
0$, 
$\mathsf{b}_{i-1}(w)
=
n-i-(w(\overline{i-1})-2)+1$, 
$\mathsf{b}_i(w)
=
n-i-(w(\overline{i})-1)$, 
\item
$\mathsf{a}_s(\lfloor wr_{\gamma} \rfloor)
=
\mathsf{a}_{s-2}(w)$
and 
$\mathsf{b}_s(\lfloor wr_{\gamma} \rfloor)
=
\mathsf{b}_{s-2}(w) - 2$
for 
$s \in [3,i]$,
\end{enumerate}
then the assertion follows. 
Indeed, (1)--(4) and Lemma \ref{lem:Gr-C} imply
\begin{align*}
\ell(\lfloor wr_{\gamma} \rfloor) - \ell(w)
&=
2i - 4n + 1 + 
2\underbrace{(w(\overline{i})-1)}_{\geq 0} 
+ 
2\underbrace{(w(\overline{i-1})-2)}_{\geq 0} \\
&>
2i - 4n - 1,
\end{align*}
contrary to 
$\ell(\lfloor wr_{\gamma} \rfloor) - \ell(w)
=
2i-4n-1$. 

We prove (1)--(4) as follows. 

(1) follows from \eqref{eq:seq-C2}. 

(2): 
We see from Lemma \ref{lem:Gr-C} and \eqref{eq:seq-C2} that 
\begin{align*}
\mathsf{A}_1(\lfloor wr_{\gamma} \rfloor)
=
\{ t \in [i+1,n] \mid 
\underbrace{\lfloor wr_{\gamma} \rfloor(1)}_{= w(\overline{i})} 
\succ 
\underbrace{\lfloor wr_{\gamma} \rfloor(t)}_{= w(t)} \}
=
[i+1,i+w(\overline{i})-1],
\end{align*}
which implies 
$\mathsf{a}_1(\lfloor wr_{\gamma} \rfloor) = w(\overline{i}) - 1$. 
Similarly, we have 
\begin{align*}
\mathsf{A}_2(\lfloor wr_{\gamma} \rfloor)
&=
[i+1,i+w(\overline{i-1})-2], \\[2mm]
\mathsf{A}_{i-1}(\lfloor wr_{\gamma} \rfloor)
&=
\mathsf{A}_i(\lfloor wr_{\gamma} \rfloor)
=
[i+1,n], 
\end{align*}
which imply
$\mathsf{a}_2(\lfloor wr_{\gamma} \rfloor) = w(\overline{i-1}) - 2$
and 
$\mathsf{a}_{i-1}(\lfloor wr_{\gamma} \rfloor) 
=
\mathsf{a}_i(\lfloor wr_{\gamma} \rfloor) 
=
n-i$. 

(3): 
We claim that 
$\mathsf{B}_1(\lfloor wr_{\gamma} \rfloor) = \emptyset$. 
Suppose that 
$2 \in \mathsf{B}_1(\lfloor wr_{\gamma} \rfloor)$. 
Then 
$w(\overline{i}) = \lfloor wr_{\gamma} \rfloor(1) \succ \sigma(\lfloor wr_{\gamma} \rfloor(2)) = w(i-1)$, 
contrary to 
\eqref{eq:seq-C2}. 
Suppose that 
$t \in \mathsf{B}_1(\lfloor wr_{\gamma} \rfloor) \cap [3,i]$. 
Then 
$w(\overline{i}) = \lfloor wr_{\gamma} \rfloor(1) \succ \sigma(\lfloor wr_{\gamma} \rfloor(t)) = \sigma(w(t-2))$ 
and hence 
$w(i) \prec w(t-2)$,  
contrary to 
\eqref{eq:seq-C2}. 
Suppose that 
$t \in \mathsf{B}_1(\lfloor wr_{\gamma} \rfloor) \cap [i+1,n]$. 
Then 
$w(\overline{i}) = \lfloor wr_{\gamma} \rfloor(1) \succ \sigma(\lfloor wr_{\gamma} \rfloor(t)) = \sigma(w(t))$
and hence
$\overline{n} \preceq w(i) \prec w(t)$,   
contrary to 
Lemma \ref{lem:Gr-C}. 
Consequently, we have
$\mathsf{B}_1(\lfloor wr_{\gamma} \rfloor) = \emptyset$
and 
$\mathsf{b}_1(\lfloor wr_{\gamma} \rfloor) = 0$
as claimed. 
Similarly, 
we have 
$\mathsf{B}_2(\lfloor wr_{\gamma} \rfloor) = \emptyset$
and 
$\mathsf{b}_2(\lfloor wr_{\gamma} \rfloor) = 0$. 
We next claim that 
$\mathsf{B}_i(w) = [i+1,n] \setminus \mathsf{A}_1(\lfloor wr_{\gamma} \rfloor)$. 
Indeed, 
\begin{align*}
\mathsf{B}_i(w) 
&
= 
\{ t \in [i+1,n] \mid w(i) \succ \sigma(w(t)) \} \\
&=
\{ t \in [i+1,n] \mid w(\overline{i}) \prec w(t) \} \\
&=
[i+1,n] \setminus \mathsf{A}_1(\lfloor wr_{\gamma} \rfloor).
\end{align*}
This implies 
$\mathsf{b}_i(w) = n-i-(w(\overline{i})-1)$. 
Similarly, we have 
$\mathsf{B}_{i-1}(w) = [i,n] \setminus \mathsf{A}_2(\lfloor wr_{\gamma} \rfloor)$
and 
$\mathsf{b}_{i-1}(w) = n-i-(w(\overline{i-1})-2)+1$. 

(4): 
Let
$s \in [3,i]$. 
We see from Lemma \ref{lem:Gr-C} that 
\begin{align*}
\mathsf{A}_s(\lfloor wr_{\gamma} \rfloor)
=
\{ t \in [i+1,n] \mid 
\underbrace{\lfloor wr_{\gamma} \rfloor(s)}_{=w(s-2)} 
\succ 
\underbrace{\lfloor wr_{\gamma} \rfloor(t)}_{=w(t)} \}
=
\mathsf{A}_{s-2}(w), 
\end{align*}
which implies 
$\mathsf{a}_s(\lfloor wr_{\gamma} \rfloor)
=
\mathsf{a}_{s-2}(w)$
for 
$s \in [3,i]$. 
Similarly, we deduce from 
$i-1,i \in \mathsf{B}_{s-2}(w)$
that the map
\begin{align*}
\mathsf{B}_s(\lfloor wr_{\gamma} \rfloor) 
\to 
\mathsf{B}_{s-2}(w) \setminus \{ i-1,i \}, \ 
t \mapsto 
\begin{cases}
t-2 & \text{if} \ t \in [s+1,i], \\
t & \text{if} \ t \in [i+1,n],
\end{cases}
\end{align*}
is bijective, 
which implies 
$\mathsf{b}_s(\lfloor wr_{\gamma} \rfloor)
=
\mathsf{b}_{s-2}(w) - 2$
for 
$s \in [3,i]$. 
\end{proof}

\begin{proof}[Step 3.]
We show that 
(q-C) 
implies 
$\mathrm{Q}(i,w,\gamma)$. 
Assume that 
(q-C) 
is true. 
Since $c_i(\gamma^{\vee}) = 1$, 
$\mathrm{Q}(i,w,\gamma)$ 
is equivalent to 
$\ell(\lfloor wr_{\gamma} \rfloor) - \ell(w)
=
i - 2n$, 
by Lemmas \ref{lem:<g,r>} and \ref{lem:<g,r>-C}. 
We see from 
(q-C)
that $w$ and $\lfloor wr_{\gamma} \rfloor$ satisty 
(1)--(4) in Step 1.
As in Step 1, this gives 
$\ell(\lfloor wr_{\gamma} \rfloor) - \ell(w)
=
i - 2n + (w(\overline{i}) - 1)$. 
Since 
$w(\overline{i}) = 1$
by 
(q-C), 
we conclude that 
$\ell(\lfloor wr_{\gamma} \rfloor) - \ell(w)
=
i - 2n$.
\end{proof}

The proof of Proposition \ref{prop:Q=C} is complete.
\end{proof}

\begin{proof}[Proof of Proposition \ref{prop:tab-cri-C}]
(1) and (3) follow by the same method as in the proof of Proposition \ref{prop:tab-cri-A}.

We prove (2). 
Let 
$x,y \in (W^{I \setminus \{ i \}})_{\af}$, 
$\mathcal{Y}_i^{C_n}(x) = (\mathsf{T},c)$, 
and 
$\mathcal{Y}_i^{C_n}(y) = (\mathsf{T}',c')$.
By Proposition \ref{prop:SiB-C}, 
we may assume that 
$d := c'-c \geq 0$. 
The proof is by induction on $d$. 

If $d = 0$, 
the assertion follows from 
Propositions \ref{prop:Bruhat-C} and \ref{prop:SiB-C}. 

Assume that $d > 0$. 
It follows immediately from 
Proposition \ref{prop:SiB-C} that 
$x \preceq y$
implies 
$(\mathsf{T},c) \preceq (\mathsf{T}',c')$.
Conversely, we prove that 
$(\mathsf{T},c) \preceq (\mathsf{T}',c')$
implies 
$x \preceq y$; 
assume that 
$\mathsf{T}(u) \preceq \mathsf{T}'(u+d)$
for 
$u \in [i-d]$. 
To this end, 
we construct 
$x_1,x_2 \in (W^{I \setminus \{ i \}})_{\af}$
and 
$\mathsf{T}_1,\mathsf{T}_2 \in \mathrm{CST}_{C_n}(\varpi_i)$
such that 
$\mathcal{Y}_i^{C_n}(x_1) = (\mathsf{T}_1,c')$, 
$\mathcal{Y}_i^{C_n}(x_2) = (\mathsf{T}_2,c'-1)$, 
and 
$x \preceq x_2 \prec x_1 \preceq y$ 
as follows.
Let 
$s \in [0,i]$
be such that 
$\mathsf{T}'(u) = \overline{i-u+1}$
for 
$u \in [s+1,i]$, 
and 
$\mathsf{T}'(s) \prec \overline{i-s+1}$
if 
$s \neq 0$. 
Define 
$\mathsf{T}_1, \mathsf{T}_2 \in \mathrm{CST}_{C_n}(\varpi_i)$
by 
\begin{align*}
\mathsf{T}_1(u)
&=
\begin{cases}
1 & \text{if} \ u = 1, \\
\mathsf{T}'(u) & \text{if} \ u \in [2,s], \\
\overline{i-u+2} & \text{if} \ u \in [\max\{2,s+1\},i],
\end{cases} \\
\mathsf{T}_2(u)
&=
\begin{cases}
\mathsf{T}_1(u+1) & \text{if} \ u \in [i-1], \\
\overline{1} & \text{if} \ u=i .
\end{cases}
\end{align*}
To see that 
$\mathsf{T}_1$
is well-defined, 
it suffices to show that 
$\|\mathsf{T}_1(u)\|$, 
$u \in [i]$, 
are all distinct. 
This follows from 
$\|\mathsf{T}_1(1)\| = 1 
< 
\|\mathsf{T}_1(u)\| 
\leq 
i-s+1 
< 
\|\mathsf{T}_1(v)\|$
for 
$u \in [\max\{2,s+1\},i]$
and 
$v \in [2,s]$. 
Let 
$x_1,x_2 \in (W^{I \setminus \{ i \}})_{\af}$
be such that 
$\mathcal{Y}_i^{C_n}(x_1) = (\mathsf{T}_1,c')$
and 
$\mathcal{Y}_i^{C_n}(x_2) = (\mathsf{T}_2,c'-1)$. 
Since 
$\mathsf{T}_1(u) \preceq \mathsf{T}'(u)$
for 
$u \in [i]$, 
we see from the assertion for $d = 0$ that 
$x_1 \preceq y$. 
By Proposition \ref{prop:SiB-C} ($\si$-C5), 
we have 
$x_2 \prec x_1$. 
By induction hypothesis, 
it remains to prove that 
$\mathsf{T}(u) \preceq \mathsf{T}_2(u+d-1)$
for 
$u \in [i-d+1]$. 
Let 
$u \in [i-d+1]$. 
If 
$u \in [i-1] \cap [2,s]$, 
then 
$\mathsf{T}_2(u+d-1) 
= 
\mathsf{T}_1(u+d) 
= 
\mathsf{T}'(u+d)
\succeq 
\mathsf{T}(u)$. 
If 
$u \in [i-1] \cap [\max\{2,s+1\},i]$, 
then 
$\mathsf{T}_2(u+d-1) 
= 
\mathsf{T}_1(u+d) 
= 
\overline{i-(u+d)+1}
\succeq 
\mathsf{T}(u+d)
\succ 
\mathsf{T}(u)$. 
We have 
$\mathsf{T}_2(i)
=
\overline{1}
\succeq 
\mathsf{T}(i-d+1)$.
\end{proof}

\subsection{Type $B_n^{(1)}$} \label{subsection:tab-cri-B}

Fix an integer $n \geq 3$. 
Set 
$I = [n]$. 
We assume that the labeling of the vertices of the Dynkin diagram of type $B_n$ is as follows.
\begin{center}\ 
\xygraph{!~:{@{=}|@{>}}
\bullet ([]!{+(0,-.3)} {1}) - [r]
\bullet ([]!{+(0,-.3)} {2}) - [r] \cdots - [r]
\bullet ([]!{+(0,-.3)} {n - 1}) : [r]
\bullet ([]!{+(0,-.3)} {n})}
\end{center}
Let 
$\varepsilon_1,\varepsilon_2,\ldots ,\varepsilon_n$
be an orthonormal basis of an 
$n$-dimensional Euclidean space $\BR^n$. 
Let 
$\Delta = \{ \pm(\varepsilon_s \pm \varepsilon_t) \mid s,t \in [n], \ s < t \} \sqcup \{ \pm \varepsilon_s \mid s \in [n] \}$
be a root system of type $B_n$, 
and let 
$\Pi 
=
\{ \alpha_s = \varepsilon_s - \varepsilon_{s+1} \mid s \in [n-1] \} 
\sqcup 
\{ \alpha_n = \varepsilon_n \}$
be a simple root system of 
$\Delta$. 

Let 
$W$
be the Weyl group of 
$\Delta$; 
we see from 
\S \ref{subsection:tab-cri-C}
that 
\begin{align}
W 
=
\{ w \in \mathfrak{S}(\mathcal{C}_n) \mid 
w(\sigma(s)) = \sigma(w(s))
\ \text{for} \ 
s \in [n] \} .
\end{align}
Note that 
$r_{\pm(\varepsilon_s - \varepsilon_t)}
= 
(s \ t)(\overline{s} \ \overline{t})$, 
$r_{\pm\varepsilon_s}
=
(s \ \overline{s})$, 
and 
$r_{\pm(\varepsilon_s + \varepsilon_t)} 
=
(s \ \overline{t})(\overline{s} \ t)$
for 
$s,t \in [n]$, $s<t$. 
Write 
$\mathrm{CST}_{B_n}(\varpi_i) = \mathrm{CST}_{C_n}(\varpi_i)$. 
We know from 
Lemmas \ref{lem:J-ad}--\ref{lem:(W^J)_af}
and 
\ref{lem:Gr-C}
that the map
\begin{align} 
\mathcal{Y}_i^{B_n} : 
W_{\af}
\to 
\mathrm{CST}_{B_n}(\varpi_i) \times \BZ, \ 
wt_{\xi} \mapsto \left( \mathsf{T}_w^{(i)}, c_i(\xi) \right),
\end{align} 
induces a bijection from the subset 
$(W^{I \setminus \{ i \}})_{\af} \subset W_{\af}$
to 
$\mathrm{CST}_{B_n}(\varpi_i) \times \BZ$.

\begin{define} \label{def:SiB-B}
Let 
$i \in I$, 
$(\mathsf{T},c) , (\mathsf{T}',c')
\in 
\mathrm{CST}_{B_n}(\varpi_i) \times \BZ$, 
and 
$d := c'-c$. 
Define a partial order 
$\preceq$
on 
$\mathrm{CST}_{B_n}(\varpi_i) \times \BZ$
as follows.
\begin{enumerate}[(1)]
\item
Assume that $i = 1$. 
Set 
$(\mathsf{T},c) \preceq (\mathsf{T}',c')$
if either of the following holds:
\begin{align}
\begin{split}
&(d \geq 2), \ 
(d = 1, \ 
\mathsf{T}(1) = \overline{1}, \ 
\text{and} \ 
\mathsf{T}'(1) \neq 1), \\
&(d = 1 \ 
\text{and} \ 
\mathsf{T}(1) \neq \overline{1}), \ 
\text{or} \ 
(d=0 \ 
\text{and} \ 
\mathsf{T}(1) \preceq \mathsf{T}'(1) \ 
\text{in} \ 
\mathcal{C}_n).
\end{split}
\end{align} 
\item
Assume that 
$i \in [2,n-1]$. 
Set
$(\mathsf{T},c) \preceq (\mathsf{T}',c')$
if 
\begin{align}
(d \geq 0), \ 
(\mathsf{T}(u) \preceq \mathsf{T}'(u+d) \ 
\text{in} \ 
\mathcal{C}_n \ 
\text{for} \ 
u \in [i-d]),
\end{align} 
and one of the following conditions holds: 
\begin{enumerate}[(i)]
\item
$d$ is even.
\item
$d$
is odd. 
If 
$d \in [i-1]$, 
then 
$\mathsf{T}(i-d) \preceq n$. 
If 
$d \in [i]$, 
$\mathsf{T}(i-d+1) \succeq \overline{n}$, 
$\{ a_1 < a_2 < \cdots < a_{n-i+d} \}
=
[n] \setminus \{ \mathsf{T}(u) \mid u \in [i-d] \}$, 
and 
$\mathsf{T}(i-d+1) = \overline{a_d}$, 
then 
$1 \prec a_d \prec \mathsf{T}'(a_d)$; 
note that 
$a_d \in [d,i]$
since 
$a_u \in [u,u+i-d]$
for 
$u \in [n-i+d]$. 
\end{enumerate}
\item
Assume that 
$i = n$.
Set
$(\mathsf{T},c) \preceq (\mathsf{T}',c')$
if 
\begin{align}
&
(d \geq 0)
\ \text{and} \ 
\left(
\mathsf{T}(u) \preceq \mathsf{T}'(u+2d)
\ \text{in} \ 
\mathcal{C}_n
\ \text{for} \ 
u \in [n-2d]
\right).
\end{align}
\end{enumerate}
\end{define}

\begin{prop} \label{prop:tab-cri-B}
Let 
$i \in I$. 
\begin{enumerate}[(1)]
\item
$\mathcal{Y}_i^{B_n} \circ \Pi^{I \setminus \{ i \}} = \mathcal{Y}_i^{B_n}$.
\item
For 
$x,y \in W_{\af}$, 
we have 
$\Pi^{I \setminus \{ i \}}(x) \preceq \Pi^{I \setminus \{ i \}}(y)$
in 
$(W^{I \setminus \{ i \}})_{\af}$
if and only if 
$\mathcal{Y}_i^{B_n}(x) \preceq \mathcal{Y}_i^{B_n}(y)$
in
$\mathrm{CST}_{B_n}(\varpi_i) \times \BZ$. 
\item
Let 
$i \in [n-1]$
and 
$(\mathsf{T},c) , (\mathsf{T}',c')
\in 
\mathrm{CST}_{B_n}(\varpi_i) \times \BZ$. 
If 
$c' - c > i$, 
then
$(\mathsf{T},c) \preceq (\mathsf{T}',c')$.
\item
Let 
$i = n$
and 
$(\mathsf{T},c) , (\mathsf{T}',c')
\in 
\mathrm{CST}_{B_n}(\varpi_n) \times \BZ$. 
If 
$2(c' - c) \geq i$, 
then
$(\mathsf{T},c) \preceq (\mathsf{T}',c')$.
\end{enumerate}
\end{prop}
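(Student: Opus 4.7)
The plan is to follow the blueprint of the proofs of Propositions \ref{prop:tab-cri-A} and \ref{prop:tab-cri-C}, adapting the combinatorics to type $B_n$ and paying special attention to the parity phenomenon in Definition \ref{def:SiB-B}(2)(ii). For Part (1), I would argue exactly as in the proof of Proposition \ref{prop:tab-cri-A}(1): writing $x = w t_\xi$ and $\Pi^{I \setminus \{i\}}(x) = v t_\zeta$, Lemma \ref{lem:Pi} gives $\lfloor w \rfloor^{I \setminus \{i\}} = \lfloor v \rfloor^{I \setminus \{i\}}$ and $c_i(\xi) = c_i(\zeta)$, and since $\{1,2,\ldots,i\} \subset \mathcal{C}_n$ is stable setwise under $W_{I\setminus\{i\}}$, the tableau $\mathsf{T}_w^{(i)}$ depends only on the coset $wW_{I\setminus\{i\}}$.

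Parts (3) and (4) are routine observations about when the inequalities of Definition \ref{def:SiB-B} become vacuous. For Part (3), $d := c' - c > i$ forces $[i-d] = \emptyset$, so the componentwise tableau comparison is empty, and since $d \notin [i-1] \cup [i]$, neither hypothesis in clause (ii) applies; thus $(\mathsf{T},c)\preceq(\mathsf{T}',c')$ automatically. For Part (4), $2d \geq n$ empties $[n-2d]$, so the entire comparison of Definition \ref{def:SiB-B}(3) is vacuous.

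The heart of the statement is Part (2), which I would prove along the same lines as Proposition \ref{prop:tab-cri-C}(2). First I would establish a complete classification of the cover relations of $\mathrm{SiB}^{I\setminus\{i\}}$ in type $B_n$ (Proposition \ref{prop:SiB-B} from the outline), by combining the analogue of Proposition \ref{prop:Bruhat-C} for Bruhat edges in $\mathrm{QB}^{I\setminus\{i\}}$, the classification of quantum edges given in Proposition \ref{prop:Q=B}, and Lemma \ref{lem:Q=SiB}. With this in hand, I would induct on $d = c'-c \geq 0$. In the base case $d=0$ the claim reduces to the componentwise tableau description of Bruhat order on $W^{I \setminus \{i\}}$. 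For the inductive step I would produce, in the spirit of the construction at the end of the proof of Proposition \ref{prop:tab-cri-C}(2), an intermediate $(\mathsf{T}_1, c')$ with $\mathsf{T}_1(u) \preceq \mathsf{T}'(u)$ componentwise (hence $x_1 \preceq y$ by the $d=0$ case), and a $(\mathsf{T}_2, c'-1)$ (respectively $(\mathsf{T}_2, c'-2)$ in the case $i=n$) obtained from $\mathsf{T}_1$ by a single quantum cover supplied by Proposition \ref{prop:SiB-B}, and finally verify that $(\mathsf{T},c) \preceq (\mathsf{T}_2, c'-1)$ so that the induction hypothesis yields $x \preceq x_2 \prec x_1 \preceq y$.

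The main obstacle will be the parity-dependent clause in Definition \ref{def:SiB-B}(2)(ii), which has no counterpart in types $A_{n-1}^{(1)}$ or $C_n^{(1)}$. It is forced by the following root-system fact specific to $B_n$: among the coroots $\gamma^\vee \in Q^{\vee, I \setminus \{i\}} \cap (\Delta^{\vee,+} \setminus \Delta^{\vee,+}_{I \setminus \{i\}})$, those with $c_i(\gamma^\vee)$ equal to $1$ come from the long root $\varepsilon_i$, while those with $c_i(\gamma^\vee)$ equal to $2$ come from the long root $\varepsilon_{i-1}+\varepsilon_i$, but the short root $\varepsilon_i$ has a distinguished coroot, and only certain tableau configurations admit a quantum cover by it. When $d$ is odd, the induction must include an odd number of ``unit quantum covers'', each of which imposes a positional constraint of the type $\mathsf{T}(i) = \overline{1}$ encountered in Proposition \ref{prop:Q=C}(q-C); these constraints accumulate into the condition on the auxiliary index $a_d$ in Definition \ref{def:SiB-B}(2)(ii). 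The technical core of Part (2) will therefore be a careful choice of $\mathsf{T}_1$ in the inductive step so that a quantum cover of the correct type is available at the current rung — this is where the enumerating data $\{a_1 < \cdots < a_{n-i+d}\}$ enters. By contrast, the case $i = n$ is cleaner: every relevant coroot has $c_n(\gamma^\vee) \in 2\BZ$, so quantum covers shift $c$ by $2$ uniformly, the induction proceeds in steps of $2$, and the $2d$ offset in Definition \ref{def:SiB-B}(3) is exactly what is needed with no parity obstruction.
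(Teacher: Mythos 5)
Your treatment of parts (1), (3), and (4) matches the paper's (which simply invokes the method of Proposition \ref{prop:tab-cri-A}), and your overall architecture for part (2) --- classify the covers of $\mathrm{SiB}^{I\setminus\{i\}}$ via Propositions \ref{prop:Bruhat-B}, \ref{prop:Q=B} and Lemma \ref{lem:Q=SiB}, then induct on $d=c'-c$ with base case $d=0$ --- is also the paper's. But there is a genuine gap in part (2), in two places. First, the ``only if'' direction is not addressed: unlike in types $A$ and $C$, the implication that $x\preceq y$ forces $\mathcal{Y}_i^{B_n}(x)\preceq\mathcal{Y}_i^{B_n}(y)$ is \emph{not} immediate from the cover classification, because clause (ii) of Definition \ref{def:SiB-B} (2) imposes conditions ($\mathsf{T}(i-d)\preceq n$ when $d\in[i-1]$ is odd, and $1\prec a_d\prec\mathsf{T}'(a_d)$) that must be shown to be forced by \emph{every} directed path realizing an odd shift. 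The paper devotes two separate steps (Steps 2 and 3 of its proof) to exactly this, arguing that such a path must contain an edge of type ($\si$-B6) and extracting the constraints from that edge; your plan has no counterpart for this necessity argument.

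Second, your inductive scheme for sufficiency --- always descend by one unit of $c$ via ``a single quantum cover'' --- does not work as stated. The unit cover ($\si$-B6) exists only when $\mathsf{T}_1(i)\in\{\overline{2},\dots,\overline{i+1}\}$ and the top of $\mathsf{T}_1$ has a prescribed staircase form (not $\mathsf{T}(i)=\overline{1}$, which is the type-$C$ condition you cite), and after one such descent the remaining difference $d-1$ has the opposite parity, so the induction hypothesis would again trigger clause (ii) and the argument does not close. The paper instead descends by two via ($\si$-B7) for the even part (Step 1), isolates $d=1$ as a separate case with its own three-way case analysis (Step 4), and then runs a second induction in steps of two, carefully tracking how $e=\#\{u\in[i]\mid\mathsf{T}(u)\succeq\overline{n}\}$ and the index $a_d$ transform under the intermediate constructions (Steps 5--7). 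Two further factual slips in your root-theoretic discussion: the quantum coroot with $c_i(\gamma^\vee)=1$ is $\alpha_i^\vee=\varepsilon_i-\varepsilon_{i+1}$ (Lemma \ref{lem:J-ad-B}), not $\varepsilon_i$ (whose coroot $2\alpha_i^\vee+\cdots+\alpha_n^\vee$ has $c_i=2$ and yields the Bruhat edge (b-B5)); and for $i=n$ the quantum cover ($\si$-B8) shifts $c$ by $1$ while shifting the tableau by two rows, not $c$ by $2$.
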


By combining Propositions \ref{prop:Deo} and \ref{prop:tab-cri-B} (2), 
we obtain the following tableau criterion for the 
semi-infinite Bruhat order
on $W_{\af}$ of type $B_n^{(1)}$.

\begin{thm} \label{thm:tab-cri-B}
Let 
$J \subset I$. 
For 
$x,y \in (W^J)_{\af}$, 
we have 
$x \preceq y$
in 
$(W^J)_{\af}$
if and only if 
$\mathcal{Y}_i^{B_n}(x) \preceq \mathcal{Y}_i^{B_n}(y)$
in 
$\mathrm{CST}_{B_n}(\varpi_i) \times \BZ$
for all 
$i \in I \setminus J$. 
\end{thm}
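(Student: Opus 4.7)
The plan is to derive Theorem \ref{thm:tab-cri-B} as an immediate consequence of the Deodhar-type reduction (Proposition \ref{prop:Deo}) together with Proposition \ref{prop:tab-cri-B}(2). Indeed, given $x,y \in (W^J)_{\af}$, Proposition \ref{prop:Deo} reduces $x \preceq y$ in $(W^J)_{\af}$ to the family of inequalities $\Pi^{I\setminus\{i\}}(x) \preceq \Pi^{I\setminus\{i\}}(y)$ in $(W^{I\setminus\{i\}})_{\af}$ for all $i \in I \setminus J$, and Proposition \ref{prop:tab-cri-B}(2) translates each of these into the tableau inequality $\mathcal{Y}_i^{B_n}(x) \preceq \mathcal{Y}_i^{B_n}(y)$. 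Since $\mathcal{Y}_i^{B_n}$ factors through $\Pi^{I\setminus\{i\}}$ by Proposition \ref{prop:tab-cri-B}(1), the two sides of the claimed equivalence are literally the same condition in each coordinate. So the real work lies in Proposition \ref{prop:tab-cri-B}; what follows is the plan for that proposition, from which the theorem is one line.

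For part (1), I would argue exactly as in the proof of Proposition \ref{prop:tab-cri-A}(1): writing $x = wt_\xi$ and $\Pi^{I\setminus\{i\}}(x) = vt_\zeta$, Lemma \ref{lem:Pi} gives $\lfloor w\rfloor^{I\setminus\{i\}} = \lfloor v\rfloor^{I\setminus\{i\}}$ and $c_i(\xi) = c_i(\zeta)$; since $W_{I\setminus\{i\}}$ stabilizes $\{w(1),\dots,w(i)\}$ as a set, the tableau $\mathsf{T}_w^{(i)}$ depends only on the coset, so $\mathcal{Y}_i^{B_n}(x) = \mathcal{Y}_i^{B_n}(\Pi^{I\setminus\{i\}}(x))$. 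Parts (3) and (4) are then routine verifications: once $d := c'-c$ exceeds $i$ (resp.\ $i/2$ for the spin node), the chain $\mathsf{T}(u) \preceq \mathsf{T}'(u+d)$ (resp.\ $\mathsf{T}(u) \preceq \mathsf{T}'(u+2d)$) is either vacuous or forced by $\mathsf{T}(u) \preceq \overline{1}$ and $1 \preceq \mathsf{T}'(v)$, and the parity clause in Definition \ref{def:SiB-B}(2)(ii) is similarly trivial.

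The heart of the argument is part (2), which I propose to prove by inducting on $d = c'-c$, in parallel with the type $C_n^{(1)}$ proof of Proposition \ref{prop:tab-cri-C}(2). The base case $d=0$ (and, for $i=n$, also the case where $d$ hits the first spin jump) reduces via Lemma \ref{lem:Q=SiB} to classifying all edges $wT_\xi \xrightarrow{\beta} r_\beta wT_\xi$ in $\mathrm{SiB}^{I\setminus\{i\}}$ in terms of Bruhat and quantum edges in $\mathrm{QB}^{I\setminus\{i\}}$. For Bruhat edges the combinatorics is the standard one for type $B_n$ (analogous to Proposition \ref{prop:Bruhat-C}, with the additional short-root move $\pm\varepsilon_s$); for quantum edges I would prove the type-$B$ analogue of Proposition \ref{prop:Q=C} by the same three-step strategy (identify the admissible coroots in $Q^{\vee,I\setminus\{i\}}$ using an analogue of Lemma \ref{lem:J-ad-C}, compute $z_{\gamma^\vee}^{I\setminus\{i\}}$ via Lemma \ref{lem:(W^J)_af}, and match the length drop $1 - 2\langle\gamma^\vee,\rho-\rho_{I\setminus\{i\}}\rangle$ by a bookkeeping of $\mathsf{a}_s, \mathsf{b}_s, \mathsf{e}_s$ exactly as in Lemma \ref{lem:Gr-C} Steps~1--3). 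Packaging these into a tableau description gives the $B_n$ analogue of Proposition \ref{prop:SiB-C} (namely Proposition \ref{prop:SiB-B}), and this is the base of the induction. For the inductive step, given $(\mathsf{T},c) \preceq (\mathsf{T}',c')$ with $d > 0$, I would construct explicit intermediate tableaux $\mathsf{T}_1, \mathsf{T}_2$ (producing elements $x_1, x_2$ with $\mathcal{Y}_i^{B_n}$-values $(\mathsf{T}_1,c')$ and $(\mathsf{T}_2,c'-1)$) such that $x \preceq x_2 \prec x_1 \preceq y$, precisely as in the last paragraph of the proof of Proposition \ref{prop:tab-cri-C}(2), applying the inductive hypothesis to the pair $(\mathsf{T},c) \preceq (\mathsf{T}_2,c'-1)$.

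The main obstacle, which has no type-$C$ counterpart, is the parity condition in Definition \ref{def:SiB-B}(2)(ii) and the factor of $2$ in Definition \ref{def:SiB-B}(3). These arise because $\varpi_i$ for $i \in [2,n-1]$ in type $B_n$ is not minuscule: the short coroot $2\varepsilon_i^\vee$ has $c_i = 2$, so certain quantum moves shift the $c$-coordinate by $2$ rather than $1$, forcing a parity constraint on when the ``$a_d$'' auxiliary index can be reached. For the spin case $i=n$, the fundamental coweight $\varpi_n^\vee$ is half-integral against $\delta$, which is what produces the $2d$ shift in the inequality $\mathsf{T}(u) \preceq \mathsf{T}'(u+2d)$. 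The delicate step is therefore to confirm that the classification of quantum edges in $\mathrm{QB}^{I\setminus\{i\}}$ yields exactly the condition labelled (ii) and no stronger one, and, in the inductive construction, to choose $\mathsf{T}_1,\mathsf{T}_2$ compatibly with the parity of $d$ so that the intermediate step $x_2 \prec x_1$ is realized by a single cover in $\mathrm{SiB}^{I\setminus\{i\}}$ (via the analogue of ($\si$-C5)) rather than requiring a length-$2$ quantum jump; expect a small case split on whether the ``spin tail'' of $\mathsf{T}'$ has even or odd length.
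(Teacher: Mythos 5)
Your proposal matches the paper's proof: Theorem \ref{thm:tab-cri-B} is obtained in one line by combining Proposition \ref{prop:Deo} with Proposition \ref{prop:tab-cri-B}(2), and your outline of the latter --- reduction via Lemma \ref{lem:Q=SiB} to the classification of Bruhat and quantum edges of $\mathrm{QB}^{I \setminus \{ i \}}$ (Propositions \ref{prop:Bruhat-B}, \ref{prop:Q=B}, \ref{prop:SiB-B}), then induction on $d = c'-c$ with explicit interpolating tableaux as in the type-$C$ case --- is exactly the route the paper takes. The only caveat is that the odd-$d$ parity analysis behind Definition \ref{def:SiB-B}(2)(ii), which you describe as ``a small case split,'' occupies seven separate steps in the paper (both necessity and sufficiency of the auxiliary condition on $a_d$), but this does not change the strategy.
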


The remainder of this subsection is devoted to 
the proof of Proposition \ref{prop:tab-cri-B}. 

The proofs of 
Lemmas \ref{lem:J-ad-B}--\ref{lem:<g,r>-B} below 
are straightforward.

\begin{lem} \label{lem:J-ad-B}
Let 
$i \in I$
and 
$\gamma \in \Delta^+ \setminus \Delta_{I \setminus \{ i \}}^+$. 
We have 
$\gamma^{\vee} \in Q^{\vee,I \setminus \{ i \}}$
if and only if one of the following conditions holds:
\begin{enumerate}[(1)]
\item
$i = 1$
and 
$\gamma^{\vee} 
= 
\varepsilon_1 - \varepsilon_2 
= 
\alpha_1^{\vee}$.
\item
$i \in [2,n-1]$
and 
$\gamma^{\vee}
=
\varepsilon_i - \varepsilon_{i+1} 
=
\alpha_i^{\vee}$.
\item
$i \in [2,n-1]$
and
$\gamma^{\vee}
=
\varepsilon_{i-1} + \varepsilon_i 
= 
\alpha_{i-1}^{\vee} + 2\alpha_i^{\vee} + \cdots + 2\alpha_{n-1}^{\vee} + \alpha_n^{\vee}$.
\item
$i = n$
and
$\gamma^{\vee}
=
\varepsilon_{n-1} + \varepsilon_n
= 
\alpha_{n-1}^{\vee} + \alpha_n^{\vee}$.
\end{enumerate}
\end{lem}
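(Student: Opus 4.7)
The plan is to prove Lemma \ref{lem:J-ad-B} by a direct case-by-case verification on the three families of positive roots of type $B_n$, namely $\gamma = \varepsilon_s - \varepsilon_t$ and $\gamma = \varepsilon_s + \varepsilon_t$ for $1 \leq s < t \leq n$, together with the short roots $\gamma = \varepsilon_s$ for $s \in [n]$. For each $\gamma$ I would first expand $\gamma^{\vee}$ in the basis of simple coroots using $\alpha_s^{\vee} = \varepsilon_s - \varepsilon_{s+1}$ for $s \in [n-1]$ and $\alpha_n^{\vee} = 2\varepsilon_n$; this both pins down when the coefficient $c_i(\gamma^{\vee})$ is positive (equivalently, when $\gamma \in \Delta^+ \setminus \Delta^+_{I \setminus \{i\}}$) and shows which of the conditions (1)--(4) the coroot matches when the pairing test succeeds.

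The key structural input I would use is the decomposition $\Delta^+_{I \setminus \{i\}} = \Delta^+_{[i-1]} \sqcup \Delta^+_{[i+1,n]}$, of types $A_{i-1}$ and $B_{n-i}$ respectively, which splits the condition $\langle \gamma^{\vee}, \alpha \rangle \in \{-1,0\}$ into two independent subproblems. For $\gamma = \varepsilon_s - \varepsilon_t$, pairing $\gamma^{\vee}$ against the simple roots $\varepsilon_a - \varepsilon_{a+1}$ inside the $A_{i-1}$-piece forces $s = i$ in order to avoid the value $+1$, and then pairing against the short root $\varepsilon_a$ and the differences $\varepsilon_a - \varepsilon_b$ in the $B_{n-i}$-piece forces $t = i+1$. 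This produces case (1) when $i = 1$ and case (2) when $i \in [2, n-1]$, with no solution when $i = n$.

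For $\gamma = \varepsilon_s + \varepsilon_t$ the coroot reads $\gamma^{\vee} = \alpha_s^{\vee} + \cdots + \alpha_{t-1}^{\vee} + 2\alpha_t^{\vee} + \cdots + 2\alpha_{n-1}^{\vee} + \alpha_n^{\vee}$, and the analogous pairing analysis, now having to rule out the values $+1$ and $\pm 2$ coming from both subsystems (in particular the value $2$ against $\varepsilon_{i+1} - \varepsilon_{i+2}$ unless $t = i$, and the value $1$ against $\varepsilon_{i-1} - \varepsilon_i$ unless $s = i-1$), forces $s = i-1$ and $t = i$, yielding case (3) for $i \in [2, n-1]$ and case (4) for $i = n$. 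Finally, for $\gamma = \varepsilon_s$ the coroot $\gamma^{\vee} = 2\varepsilon_s$ pairs with every element of $Q$ to an even integer, so the condition collapses to $\langle \gamma^{\vee}, \alpha \rangle = 0$ for all $\alpha \in \Delta^+_{I \setminus \{i\}}$; a direct inspection shows no admissible $s$ in the relevant range, so this family contributes nothing new.

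The main obstacle is the careful bookkeeping for the $\varepsilon_s + \varepsilon_t$ case, since the coefficient pattern of $\gamma^{\vee}$ jumps from $1$ to $2$ at index $t$ and then drops back to $1$ at index $n$; the pairings against the short root $\varepsilon_a$ and against $\varepsilon_a + \varepsilon_b$ in the $B_{n-i}$-piece need to be checked separately and tightly, as these are precisely where the value $+1$ or $+2$ can creep in and destroy membership in $Q^{\vee, I \setminus \{i\}}$.
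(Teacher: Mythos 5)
Your overall strategy --- expand $\gamma^{\vee}$ in the simple coroots and test the condition $\langle \gamma^{\vee},\alpha\rangle \in \{-1,0\}$ separately against the two pieces $\Delta^+_{[i-1]}$ (type $A_{i-1}$) and $\Delta^+_{[i+1,n]}$ (type $B_{n-i}$) --- is the natural one, and your treatment of the two families $\varepsilon_s \pm \varepsilon_t$ is correct and produces exactly cases (1)--(4). The gap is in your last step, where you dismiss the short roots. Your reduction is fine: since $\varepsilon_s^{\vee} = 2\alpha_s^{\vee}+\cdots+2\alpha_{n-1}^{\vee}+\alpha_n^{\vee}$ pairs with every element of $Q$ to an even integer, membership in $Q^{\vee,I\setminus\{i\}}$ forces $\langle \varepsilon_s^{\vee},\alpha\rangle = 0$ for all $\alpha \in \Delta^+_{I\setminus\{i\}}$. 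But the assertion that ``a direct inspection shows no admissible $s$'' is false. For $i=1$ and $s=1$, every root in $\Delta^+_{[2,n]}$ is supported on $\varepsilon_2,\ldots,\varepsilon_n$, so it pairs to $0$ with $\varepsilon_1^{\vee}$ (equivalently, $\langle\varepsilon_1^{\vee},\alpha_j\rangle = 0$ for all $j \in [2,n]$; e.g. $\langle\varepsilon_1^{\vee},\alpha_n\rangle = 2\langle\alpha_{n-1}^{\vee},\alpha_n\rangle+\langle\alpha_n^{\vee},\alpha_n\rangle = -2+2 = 0$). Since $c_1(\varepsilon_1^{\vee}) = 2 \neq 0$ we also have $\varepsilon_1 \in \Delta^+\setminus\Delta^+_{I\setminus\{1\}}$, so the inspection, carried out honestly, yields a fifth admissible pair $(i,\gamma) = (1,\varepsilon_1)$ not covered by (1)--(4). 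For all other $(i,s)$ your claim does hold --- for $i\geq 2$ and $s\leq i$ some $\varepsilon_a-\varepsilon_b$ with $s\in\{a,b\}$ and $a<b\leq i$ pairs to $\pm 2$, and for $s>i$ the root $\varepsilon_s$ lies in $\Delta^+_{[i+1,n]}$ --- but the exceptional case must be confronted, not waved away.

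Note that this is really a defect of the statement itself: the ``only if'' direction of Lemma \ref{lem:J-ad-B} fails at $i=1$, $\gamma = \varepsilon_1$. The omission is harmless for the paper's application, because Lemma \ref{lem:J-ad-B} is only invoked through Lemma \ref{lem:LS} to list candidates for quantum edges, and the candidate $\gamma = \varepsilon_1$ would require $\ell(\lfloor wr_{\gamma}\rfloor)-\ell(w) = 1-2\langle\varepsilon_1^{\vee},\rho-\rho_{I\setminus\{1\}}\rangle = 3-4n$, which is impossible since lengths of elements of $W^{I\setminus\{1\}}$ differ by at most $2n-1$; so Proposition \ref{prop:Q=B} is unaffected. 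Still, for your proof to be correct you must either add this case to the list (and, if the lemma is to be kept as stated, supply the separate length argument that excludes it where the lemma is used), or at minimum record the computation showing why every other short root fails, which is the part your proposal skips.
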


\begin{lem} \label{lem:<g,r>-B}
Let $i \in I$. 
We have
\begin{align}
2\langle \alpha_i^{\vee} , \rho - \rho_{I \setminus \{ i \}} \rangle 
= 
\begin{cases}
2n-i & \text{if} \ 
i \in [n-1], \\
2n & \text{if} \ 
i = n.
\end{cases}
\end{align}
\end{lem}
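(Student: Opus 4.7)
The plan is to compute $\langle \alpha_i^{\vee}, 2(\rho - \rho_{I \setminus \{ i \}}) \rangle$ by splitting it as $\langle \alpha_i^{\vee}, 2\rho \rangle - \langle \alpha_i^{\vee}, 2\rho_{I \setminus \{ i \}} \rangle$. The first term is immediate: $\langle \alpha_i^{\vee}, 2\rho \rangle = 2$, since $\rho$ is the sum of the fundamental weights of $\mathfrak{g}$ dual to $\{\alpha_j^{\vee}\}_{j \in I}$. So it suffices to show that $\langle \alpha_i^{\vee}, 2\rho_{I \setminus \{ i \}} \rangle$ equals $i - 2n + 2$ when $i \in [n-1]$, and equals $2 - 2n$ when $i = n$.

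To carry this out, I would describe $\Delta_{I \setminus \{ i \}}^+$ according to the Dynkin subdiagram obtained from $I \setminus \{i\}$. For $i \in [n-1]$ the subsystem decomposes as $\Delta_{[i-1]}^+ \sqcup \Delta_{[i+1,n]}^+$, of type $A_{i-1}$ and $B_{n-i}$ respectively; these consist, in the $\varepsilon$-basis, of the roots $\varepsilon_s - \varepsilon_t$ with $1 \leq s < t \leq i$, together with the roots $\varepsilon_s - \varepsilon_t$, $\varepsilon_s + \varepsilon_t$ ($i+1 \leq s < t \leq n$) and $\varepsilon_s$ ($i+1 \leq s \leq n$). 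For $i = n$ one just has $\Delta_{[n-1]}^+$ of type $A_{n-1}$, namely $\{\varepsilon_s - \varepsilon_t \mid 1 \leq s < t \leq n\}$. Summing coefficients of $\varepsilon_s$ in each case gives
\[
2\rho_{I \setminus \{ i \}} = \sum_{s=1}^{i}(i-2s+1)\varepsilon_s + \sum_{s=i+1}^{n}(2(n-s)+1)\varepsilon_s \quad (i \in [n-1]),
\]
and $2\rho_{I \setminus \{ n \}} = \sum_{s=1}^{n}(n-2s+1)\varepsilon_s$.

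Finally, pairing with $\alpha_i^{\vee}$: for $i \in [n-1]$ we have $\alpha_i^{\vee} = \varepsilon_i - \varepsilon_{i+1}$, giving $\langle \alpha_i^{\vee}, 2\rho_{I \setminus \{i\}} \rangle = (i - 2i + 1) - (2(n-i-1)+1) = i - 2n + 2$, so $\langle \alpha_i^{\vee}, 2(\rho - \rho_{I \setminus \{i\}}) \rangle = 2n - i$. For $i = n$ we have $\alpha_n^{\vee} = 2\varepsilon_n$, giving $\langle \alpha_n^{\vee}, 2\rho_{I \setminus \{n\}} \rangle = 2(n - 2n + 1) = 2 - 2n$, whence $\langle \alpha_n^{\vee}, 2(\rho - \rho_{I \setminus \{n\}}) \rangle = 2n$. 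This is a routine calculation with no real obstacle; the only care needed is the normalization $\alpha_n^{\vee} = 2\varepsilon_n$ for the short simple root, which is precisely the source of the discrepancy between the two cases in the statement.
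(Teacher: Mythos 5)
Your computation is correct, and the paper itself offers no proof here — it simply declares Lemmas \ref{lem:J-ad-B}--\ref{lem:<g,r>-B} straightforward — so your direct evaluation of $2\rho_{I\setminus\{i\}}$ in the $\varepsilon$-basis and pairing with $\alpha_i^{\vee}$ (with the normalization $\alpha_n^{\vee}=2\varepsilon_n$ for the short simple root) is exactly the intended routine verification. Nothing to add.
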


\begin{prop}[cf. {\cite[\S 8.1]{BB}}] \label{prop:Bruhat-B}
Let 
$i \in I$, 
$w \in W^{I \setminus \{ i \}}$, 
and 
$\gamma \in \Delta^+$. 
There exists a Bruhat edge 
$w \xrightarrow{\ \gamma \ } \lfloor wr_{\gamma} \rfloor = wr_{\gamma}$
in 
$\mathrm{QB}^{I \setminus \{ i \}}$
if and only if 
$\gamma \in \Delta^+ \setminus \Delta_{I \setminus \{ i \}}^+$
and one of the following statements holds.
\begin{enumerate}[(b-B1)]
\item
$i \in [n-1]$, 
$c_i(\gamma^{\vee}) = 1$, 
and there exists 
$s \in [i]$
such that 
$wr_{\gamma}(u) = w(u)$
for 
$u \in [i] \setminus \{ s \}$, 
$1 \preceq w(s) \prec n$, 
and 
$wr_{\gamma}(s) = 
\min([w(s)+1,n] \setminus \{ \|w(u)\| \mid u \in [i], \ w(u) \succeq \overline{n} \})$;
in this case, 
we have 
$\gamma^{\vee} 
= 
\varepsilon_s - \varepsilon_t
=
\alpha_s^{\vee} + \alpha_{s+1}^{\vee} + \cdots + \alpha_t^{\vee}$
for some 
$t \in [i+1,n]$.
\item
$i \in [n-1]$, 
$c_i(\gamma^{\vee}) = 1$, 
and there exists 
$s \in [i]$
such that 
$wr_{\gamma}(u) = w(u)$
for 
$u \in [i] \setminus \{ s \}$, 
$\overline{n} \preceq w(s) \prec \overline{1}$, 
and 
$wr_{\gamma}(s)
=
\sigma 
\bigl(
\max ([1,\|w(s)\|-1] \setminus \{ w(u) \mid u \in [i], \ w(u) \preceq n \})
\bigr)$;
in this case, 
we have 
$\gamma^{\vee} 
= 
\varepsilon_s - \varepsilon_t
=
\alpha_s^{\vee} + \alpha_{s+1}^{\vee} + \cdots + \alpha_t^{\vee}$
for some 
$t \in [i+1,n]$.
\item
$i \in [2,n-1]$,
$c_i(\gamma^{\vee}) = 2$, 
and there exist 
$s,t \in [i]$
such that 
$s < t$, 
$wr_{\gamma}(u) = w(u)$
for 
$[i] \setminus \{ s,t \}$
and 
$wr_{\gamma}(s) = w(s) + 1 = \sigma(w(t)) = \sigma(wr_{\gamma}(t)) + 1 \preceq n$;
in this case, 
we have  
$\gamma^{\vee} 
= 
\varepsilon_s + \varepsilon_t
=
\alpha_s^{\vee} + \cdots + \alpha_{t-1}^{\vee}
+ 2\alpha_t^{\vee} + \cdots + 2\alpha_{n-1}^{\vee} + \alpha_n^{\vee}$.
\item
$i = n$, 
$c_n(\gamma^{\vee}) = 1$, 
and there exist 
$s,t \in [i]$
such that 
$s < t$, 
$wr_{\gamma}(u) = w(u)$
for 
$[i] \setminus \{ s,t \}$
and 
$wr_{\gamma}(s) = w(s) + 1 = \sigma(w(t)) = \sigma(wr_{\gamma}(t)) + 1 \preceq n$;
in this case, 
we have  
$\gamma^{\vee} 
= 
\varepsilon_s + \varepsilon_t
=
\alpha_s^{\vee} + \cdots + \alpha_{t-1}^{\vee}
+ 2\alpha_t^{\vee} + \cdots + 2\alpha_{n-1}^{\vee} + \alpha_n^{\vee}$.
\item
$i \in [n-1]$, 
$c_i(\gamma^{\vee}) = 2$, 
and there exists 
$s \in [i]$
such that 
$wr_{\gamma}(u) = w(u)$
for 
$[i] \setminus \{ s \}$
and 
$\sigma(wr_{\gamma}(s)) = w(s) = n$;
in this case, 
we have 
$\gamma^{\vee} 
= 
\varepsilon_s
=
2\alpha_s^{\vee} + 2\alpha_{s+1}^{\vee} + \cdots + 2\alpha_{n-1}^{\vee} + \alpha_n^{\vee}$.
\item
$i = n$, 
$c_n(\gamma^{\vee}) = 1$, 
and there exists 
$s \in [i]$
such that 
$wr_{\gamma}(u) = w(u)$
for 
$[i] \setminus \{ s \}$
and 
$\sigma(wr_{\gamma}(s)) = w(s) = n$;
in this case, 
we have 
$\gamma^{\vee} 
= 
\varepsilon_s
=
2\alpha_s^{\vee} + 2\alpha_{s+1}^{\vee} + \cdots + 2\alpha_{n-1}^{\vee} + \alpha_n^{\vee}$.
\end{enumerate}
Moreover, for 
$w,v \in W^{I \setminus \{ i \}}$, 
we have 
$w \preceq v$
if and only if 
$w(u) \preceq v(u)$
for 
$u \in [i]$. 
\end{prop}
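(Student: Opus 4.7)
The proposition combines a classification of Bruhat edges in $\mathrm{QB}^{I \setminus \{i\}}$ via the six cases (b-B1)--(b-B6) with a Grassmannian-style tableau criterion for the induced Bruhat order on $W^{I \setminus \{i\}}$. The strategy is to mimic the proof of Proposition \ref{prop:Bruhat-C} for type $C_n$, modifying it to accommodate the short roots $\varepsilon_s$ of $B_n$. The positive roots of $B_n$ split into $\varepsilon_s - \varepsilon_t$, $\varepsilon_s + \varepsilon_t$ (with $s < t$), and $\varepsilon_s$; for each, the explicit description of $r_\gamma \in \mathfrak{S}(\mathcal{C}_n)$ from \S \ref{subsection:tab-cri-B} will be used throughout.

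For the edge classification, I would proceed by case analysis on the type of $\gamma$ and on which positions of $w$ are moved by $r_\gamma$. The constraints that $w, wr_\gamma \in W^{I \setminus \{i\}}$ and, in particular, that the tail $wr_\gamma(i+1) \prec \cdots \prec wr_\gamma(n) \preceq n$ remains valid (Lemma \ref{lem:Gr-C}) severely restrict the admissible configurations, naturally yielding exactly the six cases (b-B1)--(b-B6). The coefficients $c_i(\gamma^\vee)$ recorded in each case follow from the simple-coroot expansions $(\varepsilon_s - \varepsilon_t)^\vee = \alpha_s^\vee + \cdots + \alpha_{t-1}^\vee$, $(\varepsilon_s + \varepsilon_t)^\vee = \alpha_s^\vee + \cdots + \alpha_{t-1}^\vee + 2\alpha_t^\vee + \cdots + 2\alpha_{n-1}^\vee + \alpha_n^\vee$, and $\varepsilon_s^\vee = 2\alpha_s^\vee + \cdots + 2\alpha_{n-1}^\vee + \alpha_n^\vee$, which is the only real arithmetic difference from the $C_n$ computation. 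To confirm that each candidate is a Bruhat edge rather than a quantum one, I would check $\ell(wr_\gamma) = \ell(w) + 1$ using the formula $\ell(w) = \sum_{s=1}^{i}(\mathsf{a}_s(w) + \mathsf{b}_s(w) + \mathsf{e}_s(w))$ from Lemma \ref{lem:Gr-C}, tracking the changes of $\mathsf{a}_s, \mathsf{b}_s, \mathsf{e}_s$ only at the finitely many positions touched by $r_\gamma$, exactly as in Step 1 of the proof of Proposition \ref{prop:Q=C}.

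The final tableau-style Bruhat criterion, $w \preceq v \Leftrightarrow w(u) \preceq v(u)$ in $\mathcal{C}_n$ for all $u \in [i]$, is then proved by induction on $\ell(v) - \ell(w)$: sufficiency is immediate because each cover (b-B1)--(b-B6) strictly increases exactly one entry $w(s)$, $s \in [i]$, in the order $\preceq$ of $\mathcal{C}_n$; for necessity, given $w \neq v$ with $w(u) \preceq v(u)$ for all $u$, one selects the smallest index $s \in [i]$ with $w(s) \neq v(s)$ and constructs a cover $w \lessdot w'$ of type (b-B1), (b-B2), or (b-B5) according as $w(s) \prec n$, $w(s) \succeq \overline{n}$, or $w(s) = n$, still satisfying $w'(u) \preceq v(u)$ for all $u$, and invokes induction. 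The main obstacle will be the bookkeeping in the short-root cases (b-B4)--(b-B6), where $\gamma = \varepsilon_s$ or $\gamma = \varepsilon_s + \varepsilon_t$ interacts with the boundary between $[i]$ and $[i+1,n]$; one must carefully tally the sets $\mathsf{A}_s, \mathsf{B}_s$ and the indicator $\mathsf{e}_s$ to see that the length changes by exactly $1$ rather than by some larger odd integer, and this is the calculation that genuinely differs from the $C_n$ argument.
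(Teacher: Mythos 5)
You should first be aware that the paper gives no proof of this proposition at all: it is stated with the citation ``cf.\ \cite[\S 8.1]{BB}'' and treated as the standard description of Bruhat covers on a maximal-parabolic quotient of the hyperoctahedral group, so your proposal is not competing with an argument in the text. Your plan for the edge classification --- running over the three root types $\varepsilon_s-\varepsilon_t$, $\varepsilon_s+\varepsilon_t$, $\varepsilon_s$, computing $z_{\gamma^\vee}^{I\setminus\{i\}}$ and then tracking $\mathsf{a}_s,\mathsf{b}_s,\mathsf{e}_s$ to decide when the length increases by exactly $1$ --- is precisely the style of computation the paper does carry out in detail for the quantum edges in Proposition \ref{prop:Q=B}, and your coroot expansions and the resulting values of $c_i(\gamma^\vee)$ are correct. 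As a sketch this part is sound; the one thing to be careful about is that you must control $\ell(\lfloor wr_\gamma\rfloor)$ rather than $\ell(wr_\gamma)$, since a priori $wr_\gamma$ need not lie in $W^{I\setminus\{i\}}$.

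The genuine gap is in your induction for the final ``Moreover'' statement. Restricting the chain to single-entry covers of types (b-B1), (b-B2), (b-B5) chosen at the smallest differing index does not work, for two reasons. First, the forced new entry in (b-B1), namely $\min([w(s)+1,n]\setminus\{\ldots\})$, may coincide with an unbarred entry $w(u')$, $u'\in[i]\setminus\{s\}$, in which case no Bruhat edge moves position $s$ at all: for $n=5$, $i=2$, $\mathsf{T}_w^{(2)}=\{3,4\}$ and $\mathsf{T}_v^{(2)}=\{4,5\}$ the only admissible chain is $\{3,4\}\lessdot\{3,5\}\lessdot\{4,5\}$, which starts at the other position. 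Second, and more seriously, the two-entry covers (b-B3)/(b-B4) are sometimes indispensable: for $w$ with $\mathsf{T}_w^{(2)}=\{1,\overline{2}\}$ and $v$ with $\mathsf{T}_v^{(2)}=\{2,\overline{1}\}$ one has $w(u)\preceq v(u)$ for $u\in[2]$ and $\ell(v)=\ell(w)+1$, but the unique cover of $w$ lying entrywise below $v$ is the (b-B3) cover itself; every single-entry cover of $w$ overshoots $v$ in some coordinate. (Incidentally, these covers increase two entries of the column, not ``exactly one'' as you assert, though that does not affect the easy direction.) So the induction step needs both a different selection rule and the full list of cover types; this is exactly where the content of the entrywise criterion lies, and it is the part you would in practice import from \cite[\S 8.1]{BB} rather than reprove.
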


For 
$i \in I$, 
$w \in W^{I \setminus \{ i \}}$
and 
$\gamma \in \Delta^+ \setminus \Delta_{I \setminus \{ i \}}^+$, 
let 
$\mathrm{Q}(i,w,\gamma)$
denote the following statement.
\begin{description}
\item[$\mathrm{Q}(i,w,\gamma):$]
There exists a quantum edge 
$w \xrightarrow{\ \gamma \ } \lfloor wr_{\gamma} \rfloor$
in 
$\mathrm{QB}^{I \setminus \{ i \}}$.
\end{description}

\begin{prop} \label{prop:Q=B}
Let 
$i \in I$, 
$w \in W^{I \setminus \{ i \}}$, 
and 
$\gamma \in \Delta^+ \setminus \Delta_{I \setminus \{ i \}}^+$. 
Then 
$\mathrm{Q}(i,w,\gamma)$
is true if and only if one of the following statements holds.
\begin{enumerate}[(q-B1)]
\item
$i = 1$, 
$c_1(\gamma^{\vee}) = 1$, 
and 
$(w(1),\lfloor wr_{\gamma} \rfloor (1))
\in 
\{ (\overline{1},2), (\overline{2},1) \}$;
in this case, 
we have 
$\gamma^{\vee} = \varepsilon_1 - \varepsilon_2 = \alpha_1^{\vee}$.
\item
$i \in [2,n-1]$, 
$c_i(\gamma^{\vee}) = 1$, 
and 
$1 \prec w(\overline{i}) \preceq i+1$. 
If we set 
$k = w(\overline{i})$, 
then 
$w(u) = u+1$
for 
$u \in [1,k-2]$, 
$k \prec w(u) \preceq n$
for 
$u \in [k-1,i-1]$,
$\lfloor wr_{\gamma} \rfloor (1) = 1$, 
and 
$\lfloor wr_{\gamma} \rfloor (u) = w(u-1)$
for 
$u \in [2,i]$;
in this case, 
we have  
$\gamma^{\vee}
=
\varepsilon_i - \varepsilon_{i+1} 
=
\alpha_i^{\vee}$.
\item
$i \in [2,n-1]$, 
$c_i(\gamma^{\vee}) = 2$, 
$\lfloor wr_{\gamma} \rfloor (1) = w(\overline{i}) = 1$, 
$\lfloor wr_{\gamma} \rfloor (2) = w(\overline{i-1}) = 2$, 
and 
$\lfloor wr_{\gamma} \rfloor (u) = w(u-2)$
for 
$u \in [3,i]$;
in this case, 
we have 
$\gamma^{\vee}
=
\varepsilon_{i-1} + \varepsilon_i 
= 
\alpha_{i-1}^{\vee} + 2\alpha_i^{\vee} + \cdots + 2\alpha_{n-1}^{\vee} + \alpha_n^{\vee}$.
\item
$i = n$, 
$c_n(\gamma^{\vee}) = 1$, 
$\lfloor wr_{\gamma} \rfloor (1) = w(\overline{n}) = 1$, 
$\lfloor wr_{\gamma} \rfloor (2) = w(\overline{n-1}) = 2$, 
and 
$\lfloor wr_{\gamma} \rfloor (u) = w(u-2)$
for 
$u \in [3,n]$;
in this case, 
we have 
$\gamma^{\vee}
=
\varepsilon_{n-1} + \varepsilon_n
= 
\alpha_{n-1}^{\vee} + \alpha_n^{\vee}$.
\end{enumerate}
\end{prop}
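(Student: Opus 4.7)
The plan is to adapt the three-step framework from the proof of Proposition~\ref{prop:Q=C} to type $B_n$, handling the four admissible shapes of $\gamma^{\vee}$ listed in Lemma~\ref{lem:J-ad-B} one at a time. By Lemma~\ref{lem:LS}, a quantum edge exists if and only if $\gamma^{\vee} \in Q^{\vee, I \setminus \{i\}}$ and $\ell(\lfloor wr_{\gamma}\rfloor) - \ell(w) = 1 - 2\langle \gamma^{\vee}, \rho\rangle$. Lemma~\ref{lem:J-ad-B} restricts $\gamma^{\vee}$ to one of the four explicit coroots there, and combining Lemmas~\ref{lem:<g,r>} and \ref{lem:<g,r>-B} converts the length identity into a concrete integer depending only on $i$ and $c_i(\gamma^{\vee})$. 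For each case I would then compute $z^{I \setminus \{i\}}_{\gamma^{\vee}}$ via~\eqref{eq:z} and Lemma~\ref{lem:J-ad}, identify $\lfloor wr_{\gamma}\rfloor = wr_{\gamma}(z^{I \setminus \{i\}}_{\gamma^{\vee}})^{-1}$ entry-by-entry on $[i]$, and unpack the length using the formula from Lemma~\ref{lem:Gr-C} in terms of the counts $\mathsf{a}_s, \mathsf{b}_s, \mathsf{e}_s$.

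For cases (q-B3) and (q-B4), where $\gamma^{\vee} = \varepsilon_{i-1} + \varepsilon_i$, the bookkeeping is essentially identical to Step 2 of the proof of Proposition~\ref{prop:Q=C} (as flagged by the authors), giving
\[
\ell(\lfloor wr_{\gamma}\rfloor) - \ell(w) = 2i - 4n - 1 + 2(w(\overline{i}) - 1) + 2(w(\overline{i-1}) - 2)
\]
up to a rank-dependent adjustment coming from the different short-root contribution encoded in Lemma~\ref{lem:<g,r>-B} versus Lemma~\ref{lem:<g,r>-C}. Since $w(\overline{i}) - 1$ and $w(\overline{i-1}) - 2$ are both nonnegative, matching the prescribed length drop forces $w(\overline{i}) = 1$ and $w(\overline{i-1}) = 2$, together with the shifted-shape relations asserted in (q-B3) and (q-B4). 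Case (q-B1) is handled by a direct computation in the rank-one standard parabolic: $z^{I \setminus \{1\}}_{\alpha_1^{\vee}}$ is an explicit short cycle, the identity $\lfloor wr_{\gamma} \rfloor = w r_{\alpha_1^{\vee}} (z^{I \setminus \{1\}}_{\alpha_1^{\vee}})^{-1}$ is immediate, and enumerating solutions of the corresponding length equation gives exactly the two admissible pairs $(w(1), \lfloor wr_{\gamma}\rfloor(1)) \in \{(\overline{1},2), (\overline{2},1)\}$.

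The main obstacle is case (q-B2), where $i \in [2,n-1]$ and $\gamma^{\vee} = \alpha_i^{\vee}$ is a simple coroot. Unlike the superficially analogous (q-C) case (where $\gamma^{\vee} = \varepsilon_i$ forced the rigid condition $w(i) = \overline{1}$), a single simple coroot admits a richer family of admissible $w$, parametrized by $k = w(\overline{i}) \in [2, i+1]$. One would first use Lemma~\ref{lem:J-ad} to check that $(j_1,j_2) = (i-1,0) \in (I_1)_{\af} \times (I_2)_{\af}$ realizes~\eqref{eq:J-ad} for $\alpha_i^{\vee}$, making $z^{I \setminus \{i\}}_{\alpha_i^{\vee}}$ a cyclic permutation inside $W_{[i-1]}$, whence $\lfloor wr_{\gamma}\rfloor(1) = w(\overline{i})$ and $\lfloor wr_{\gamma}\rfloor(u) = w(u-1)$ for $u \in [2,i]$. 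A careful comparison of $\mathsf{a}_s, \mathsf{b}_s, \mathsf{e}_s$ between $w$ and $\lfloor wr_{\gamma}\rfloor$ under this cyclic shift then translates the required length drop $i - 2n$ into precisely the combinatorial constraints on $w$ stated in (q-B2). The hardest step is tracking how the short-root contribution $\mathsf{e}_s$ redistributes: whereas the type $C_n$ analysis in the proof of Proposition~\ref{prop:Q=C} relied entirely on the long-root count, here the short-root coroot $\alpha_n^{\vee}$ contributes asymmetrically and forces the piecewise shape $w(u) = u+1$ on $[1,k-2]$ and $k \prec w(u) \preceq n$ on $[k-1, i-1]$. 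Verifying both necessity and sufficiency in this range is where the computation becomes delicate, and it is where I would expect to spend most of the effort.
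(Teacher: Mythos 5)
Your overall route is the paper's: reduce to the four coroots of Lemma \ref{lem:J-ad-B} via Lemma \ref{lem:LS}, compute $z_{\gamma^{\vee}}^{I\setminus\{i\}}$, identify $\lfloor wr_{\gamma}\rfloor$ entrywise, convert the quantum-edge condition into a length equation via Lemmas \ref{lem:<g,r>} and \ref{lem:<g,r>-B}, and settle it by $\mathsf{a}_s,\mathsf{b}_s,\mathsf{e}_s$ bookkeeping. Your treatment of (q-B1), (q-B3) and (q-B4) matches the paper; for the latter two the paper indeed recycles Step 2 of the proof of Proposition \ref{prop:Q=C}, the only change being that the target value is $2i-4n+1$ rather than $2i-4n-1$, so the identity $\ell(\lfloor wr_{\gamma}\rfloor)-\ell(w)=2i-4n+1+2(w(\overline{i})-1)+2(w(\overline{i-1})-2)$ now has the unique solution $w(\overline{i})=1$, $w(\overline{i-1})=2$ instead of yielding a contradiction.

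However, your setup of the crucial case (q-B2) contains a concrete error. For $\gamma^{\vee}=\alpha_i^{\vee}=\varepsilon_i-\varepsilon_{i+1}$ one has $\langle\alpha_i^{\vee},\alpha_{i+1}\rangle=-1$, so the tuple realizing \eqref{eq:J-ad} is $(i-1,i+1)\in(I_1)_{\af}\times(I_2)_{\af}$, not $(i-1,0)$; hence $z_{\gamma^{\vee}}^{I\setminus\{i\}}=(1\ 2\ \cdots\ i)(\overline{1}\ \overline{2}\ \cdots\ \overline{i})(i+1\ \overline{i+1})$ carries a factor outside $W_{I_1}$. Consequently $\lfloor wr_{\gamma}\rfloor(1)=w(i+1)$ and $\lfloor wr_{\gamma}\rfloor(i+1)=w(\overline{i})$, not $\lfloor wr_{\gamma}\rfloor(1)=w(\overline{i})$ as you claim. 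This is not a cosmetic slip: the content of (q-B2) is precisely that $\lfloor wr_{\gamma}\rfloor(1)=1$ while $k=w(\overline{i})$ may be any element of $[2,i+1]$, and the altered entry at position $i+1$ enters the sets $\mathsf{A}_s(\lfloor wr_{\gamma}\rfloor)$ and $\mathsf{B}_s(\lfloor wr_{\gamma}\rfloor)$ for $s\in[i]$ --- this is exactly where the constraints $w(u)=u+1$ for $u\in[k-2]$ and $k\prec w(u)\preceq n$ for $u\in[k-1,i-1]$ come from (the latter by forcing the index $l$ with $w(l-1)\preceq n\prec w(l)$ to equal $i$ in the length count, and the case $k=1$ being excluded by a separate contradiction argument). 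With your $z_{\gamma^{\vee}}$ the element $wr_{\gamma}(z_{\gamma^{\vee}})^{-1}$ is not the minimal coset representative, so the length comparison that follows would be carried out on the wrong element. Fixing the tuple and redoing the entrywise identification is necessary before the delicate bookkeeping you describe can even begin.
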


Before starting the proof of Proposition \ref{prop:Q=B}, 
we mention a consequence of 
Lemma \ref{lem:Q=SiB} and Propositions \ref{prop:Bruhat-B}--\ref{prop:Q=B}.

\begin{prop} \label{prop:SiB-B}
Let 
$i \in I$, 
$x,y \in (W^{I \setminus \{ i \}})_{\af}$, 
$\mathcal{Y}_i^{B_n}(x) = (\mathsf{T},c)$, 
and 
$\mathcal{Y}_i^{B_n}(y) = (\mathsf{T}',c')$. 
There exists an edge 
$x \xrightarrow{\ \beta \ } y$
in 
$\mathrm{SiB}^{I \setminus \{ i \}}$
for some 
$\beta \in \Delta_{\af}^+$
if and only if one of the following conditions holds:
\begin{enumerate}[($\si$-B1)]
\item
$i \in [n-1]$, 
$c' = c$, 
and 
there exists
$s \in [i]$
such that 
$\mathsf{T}'(u) = \mathsf{T}(u)$
for 
$u \in [i] \setminus \{ s \}$, 
$1 \preceq \mathsf{T}(s) \prec n$, 
and 
$\mathsf{T}'(s) = 
\min([\mathsf{T}(s)+1,n] \setminus \{ \|\mathsf{T}(u)\| \mid u \in [i], \ \mathsf{T}(u) \succeq \overline{n} \})$.
\item
$i \in [n-1]$, 
$c' = c$, 
and there exists 
$s \in [i]$
such that 
$\mathsf{T}'(u) = \mathsf{T}(u)$
for 
$u \in [i] \setminus \{ s \}$, 
$\overline{n} \preceq \mathsf{T}(s) \prec \overline{1}$, 
and 
$\mathsf{T}'(s)
=
\sigma 
\bigl(
\max ([1,\|\mathsf{T}(s)\|-1] \setminus \{ \mathsf{T}(u) \mid u \in [i], \ \mathsf{T}(u) \preceq n \})
\bigr)$.
\item
$i \in [2,n]$, 
$c' = c$, 
and there exist 
$s,t \in [i]$
such that 
$s < t$, 
$\mathsf{T}'(u) = \mathsf{T}(u)$
for 
$[i] \setminus \{ s,t \}$, 
and 
$\mathsf{T}'(s) 
= 
\mathsf{T}(s) + 1 
= 
\sigma(\mathsf{T}(t)) 
= 
\sigma(\mathsf{T}'(t)) + 1 
\preceq 
n$.
\item
$c' = c$, 
and there exists 
$s \in [i]$
such that 
$\mathsf{T}'(u) = \mathsf{T}(u)$
for 
$[i] \setminus \{ s \}$
and 
$\sigma(\mathsf{T}'(s)) = \mathsf{T}(s) = n$.
\item
$i = 1$, 
$c' = c + 1$, 
and 
$(\mathsf{T}(1),\mathsf{T}'(1))
\in 
\{ (\overline{1},2), (\overline{2},1) \}$. 
\item
$i \in [2,n-1]$, 
$c' = c + 1$, 
and 
$1 \prec \sigma(\mathsf{T}(i)) \preceq i+1$. 
If we set 
$k = \sigma(\mathsf{T}(i))$, 
then 
$\mathsf{T}(u) = u+1$
for 
$u \in [k-2]$, 
$k \prec \mathsf{T}(u) \preceq n$
for 
$u \in [k-1,i-1]$,
$\mathsf{T}'(1) = 1$, 
and 
$\mathsf{T}'(u) = \mathsf{T}(u-1)$
for 
$u \in [2,i]$.
\item
$i \in [2,n-1]$, 
$c' = c + 2$, 
$\mathsf{T}'(1) = \sigma(\mathsf{T}(i)) = 1$, 
$\mathsf{T}'(2) = \sigma(\mathsf{T}(i-1)) = 2$, 
and 
$\mathsf{T}'(u) = \mathsf{T}(u-2)$
for 
$u \in [3,i]$.
\item
$i = n$, 
$c' = c + 1$, 
$\mathsf{T}'(1) = \sigma(\mathsf{T}(n)) = 1$, 
$\mathsf{T}'(2) = \sigma(\mathsf{T}(n-1)) = 2$, 
and 
$\mathsf{T}'(u) = \mathsf{T}(u-2)$
for 
$u \in [3,n]$.
\end{enumerate}
\end{prop}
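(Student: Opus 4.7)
The plan is to deduce Proposition \ref{prop:SiB-B} as a direct translation of the union of Propositions \ref{prop:Bruhat-B} and \ref{prop:Q=B} via Lemma \ref{lem:Q=SiB}, together with the definition of the bijection $\mathcal{Y}_i^{B_n}$. First I would recall from Lemma \ref{lem:Q=SiB} that, writing $x = wT_\xi \in (W^{I \setminus \{i\}})_{\af}$ with $w \in W^{I \setminus \{i\}}$ and $\xi \in Q^{\vee, I \setminus \{i\}}$, an edge $x \xrightarrow{\beta} y$ in $\mathrm{SiB}^{I \setminus \{i\}}$ with $\beta = w\gamma + \chi\delta$ arises exactly when $\gamma \in \Delta^+ \setminus \Delta_{I \setminus \{i\}}^+$ and either (a) $\chi = 0$ and $w \xrightarrow{\gamma} wr_\gamma$ is a Bruhat edge of $\mathrm{QB}^{I \setminus \{i\}}$, giving $y = wr_\gamma T_\xi$ and $c' = c$, or (b) $\chi = 1$ and $w \xrightarrow{\gamma} \lfloor wr_\gamma \rfloor$ is a quantum edge, giving $y = \lfloor wr_\gamma \rfloor T_{\xi + \gamma^\vee}$ and $c' = c + c_i(\gamma^\vee)$, where by Lemma \ref{lem:J-ad-B} the shift $c_i(\gamma^\vee) \in \{1,2\}$ is explicitly determined by the case of $\gamma^\vee$.

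Next, for the Bruhat case I would match the four tableau conditions with ($c' = c$) to the corresponding cases of Proposition \ref{prop:Bruhat-B}: (b-B1) $\leftrightarrow$ ($\si$-B1), (b-B2) $\leftrightarrow$ ($\si$-B2), (b-B3) together with (b-B4) $\leftrightarrow$ ($\si$-B3), and (b-B5) together with (b-B6) $\leftrightarrow$ ($\si$-B4). Each translation is immediate once we substitute $\mathsf{T}(u) = w(u)$ and $\mathsf{T}'(u) = wr_\gamma(u)$ for $u \in [i]$ using Lemma \ref{lem:Gr-C}, because the reflections $r_\gamma$ in Proposition \ref{prop:Bruhat-B} act on at most two positions of $\{w(1), \ldots, w(i)\}$ in exactly the way described in ($\si$-B1)--($\si$-B4).

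For the quantum case I would handle the four subcases of Proposition \ref{prop:Q=B} separately. In each of (q-B1)--(q-B4), the explicit form of $\lfloor wr_\gamma \rfloor$, computed from Lemma \ref{lem:LS} as $wr_\gamma (z^{I \setminus \{i\}}_{\gamma^\vee})^{-1}$, describes a cyclic shift: either the entry in position $i$ (resp.\ positions $i-1$ and $i$) is moved out of $\{w(1), \ldots, w(i)\}$ and $1$ (resp.\ $1$ and $2$) is inserted at the top while the remaining entries slide down by one (resp.\ two) positions. Reading off $\mathsf{T}_{\lfloor wr_\gamma \rfloor}^{(i)}$ and comparing with $\mathsf{T}_w^{(i)}$ gives exactly the relations between $\mathsf{T}$, $\mathsf{T}'$ in ($\si$-B5), ($\si$-B7), ($\si$-B8), while the corresponding shifts in $c$ are $1, 2, 1$ as dictated by $c_i(\gamma^\vee)$.

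The main obstacle is the translation of (q-B2) into ($\si$-B6), which is the only quantum case where the source element $w$ itself is subject to a nontrivial structural constraint. Here I would first use $\sigma(\mathsf{T}(i)) = \sigma(w(i)) = w(\overline{i}) = k$ to identify the integer $k$, then rewrite the conditions ``$w(u) = u+1$ for $u \in [1, k-2]$'' and ``$k \prec w(u) \preceq n$ for $u \in [k-1, i-1]$'' from (q-B2) verbatim as the entry conditions on $\mathsf{T}$ in ($\si$-B6) via $\mathsf{T}(u) = w(u)$; the change in the tableau, $\mathsf{T}'(1) = 1$ and $\mathsf{T}'(u) = \mathsf{T}(u-1)$ for $u \in [2,i]$, is then immediate from the stated formula for $\lfloor wr_\gamma \rfloor$. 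Conversely, assuming ($\si$-B6), the same dictionary shows that $w$ satisfies the hypotheses of (q-B2), so Lemma \ref{lem:Q=SiB} produces the desired edge. The remaining direction (no other edge can occur) follows because any $\beta \in \Delta_{\af}^+$ with $r_\beta x \in (W^{I \setminus \{i\}})_{\af}$ and $\ell^{\frac{\infty}{2}}(r_\beta x) = \ell^{\frac{\infty}{2}}(x) + 1$ must arise from one of the listed cases of Propositions \ref{prop:Bruhat-B} and \ref{prop:Q=B} by Lemma \ref{lem:Q=SiB}, so no tableau transitions outside ($\si$-B1)--($\si$-B8) are possible.
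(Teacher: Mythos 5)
Your proposal is correct and follows exactly the route the paper intends: the paper states Proposition \ref{prop:SiB-B} as an immediate consequence of Lemma \ref{lem:Q=SiB} combined with Propositions \ref{prop:Bruhat-B} and \ref{prop:Q=B}, and your case-by-case matching (including the pairing of (b-B3)/(b-B4) and (b-B5)/(b-B6), and the shifts $c' - c = \chi\, c_i(\gamma^{\vee})$) is precisely the translation being invoked.
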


We have divided the proof of 
Proposition \ref{prop:Q=B} 
into a sequence of lemmas.

\begin{lem} \label{lem:Q->B1}
$\mathrm{Q}(1,w,\gamma)$
implies 
(q-B1).
\end{lem}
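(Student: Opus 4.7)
The plan is to combine Lemma \ref{lem:LS}, Lemma \ref{lem:J-ad-B}\,(1), and an explicit computation of $z_{\gamma^{\vee}}^{I \setminus \{1\}}$ to reduce $\mathrm{Q}(1, w, \gamma)$ to a short case analysis on $w(1) \in \mathcal{C}_n$. First, Lemma \ref{lem:LS} forces $\gamma^{\vee} \in Q^{\vee, I \setminus \{1\}} \cap (\Delta^{\vee, +} \setminus \Delta_{I \setminus \{1\}}^{\vee, +})$, and then Lemma \ref{lem:J-ad-B}\,(1) pins down $\gamma^{\vee} = \alpha_1^{\vee}$ as the unique possibility; hence $c_1(\gamma^{\vee}) = 1$ and $r_{\gamma} = (1 \ 2)(\overline{1} \ \overline{2})$.

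Second, I would compute $z_{\gamma^{\vee}} = z_{\alpha_1^{\vee}}^{I \setminus \{1\}}$ explicitly. Since $I \setminus \{1\} = [2, n]$ is a single connected component of type $B_{n-1}$ and $\alpha_1^{\vee}$ already lies in $Q^{\vee, I \setminus \{1\}}$, applying Lemma \ref{lem:J-ad} with $\phi = 0$ reduces to finding the unique $j_1 \in \{0\} \sqcup [2, n]$ with $\alpha_1^{\vee} + \varpi_{j_1}^{\vee} \in \BZ \varpi_1^{\vee} = \BZ \varepsilon_1$. Using $\varpi_s^{\vee} = \varepsilon_1 + \cdots + \varepsilon_s$ for $s \in [n]$, cancellation in each $\varepsilon_s$-coordinate for $s \geq 2$ forces $j_1 = 2$. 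Hence by \eqref{eq:z}, $z_{\gamma^{\vee}} = v_2^{[2, n]} = w_0^{[2, n]} w_0^{[3, n]}$, which acts on $\mathcal{C}_n$ as the transposition $(2 \ \overline{2})$, fixing $1, \overline{1}$ and all $\pm \varepsilon_s$ with $s \geq 3$.

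Third, by Lemma \ref{lem:LS}, $\lfloor wr_{\gamma} \rfloor = wr_{\gamma} z_{\gamma^{\vee}}^{-1}$. A direct calculation shows that $r_{\gamma} z_{\gamma^{\vee}}^{-1}$ acts on $\mathcal{C}_n$ as the 4-cycle $(1 \ 2 \ \overline{1} \ \overline{2})$, so
\begin{align*}
\lfloor wr_{\gamma} \rfloor(1) = w(2), \quad \lfloor wr_{\gamma} \rfloor(2) = \sigma(w(1)), \quad \lfloor wr_{\gamma} \rfloor(u) = w(u) \text{ for } u \in [3, n].
\end{align*}
The requirement $\lfloor wr_{\gamma} \rfloor \in W^{I \setminus \{1\}}$ (supplied by Lemma \ref{lem:LS} via Lemma \ref{lem:Gr-C}) then demands $\sigma(w(1)) \prec w(3) \prec \cdots \prec w(n) \preceq n$; the tail is automatic from $w \in W^{I \setminus \{1\}}$, so the content reduces to $\sigma(w(1)) \preceq n$ together with $\sigma(w(1)) \prec w(3)$.

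Finally, the first constraint forces $w(1) = \overline{k}$ for some $k \in [n]$, so that $w(2), \ldots, w(n)$ enumerate $[n] \setminus \{k\}$ in increasing order. A direct check gives $w(3) = 3$ when $k \in \{1, 2\}$ and $w(3) = 2$ when $k \geq 3$; substituting $\sigma(w(1)) = k$ into $k \prec w(3)$ rules out $k \geq 3$. The remaining cases $w(1) = \overline{1}$ and $w(1) = \overline{2}$ give $w(2) = 2$ and $w(2) = 1$ respectively, yielding $(w(1), \lfloor wr_{\gamma} \rfloor(1)) \in \{(\overline{1}, 2), (\overline{2}, 1)\}$, which is (q-B1). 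The main obstacle is the explicit identification of $z_{\gamma^{\vee}}$ as the single transposition $(2 \ \overline{2})$; once this is secured, the length condition coming from Lemmas \ref{lem:<g,r>} and \ref{lem:<g,r>-B} turns out to be automatic from $W^{I \setminus \{1\}}$-membership, and only the elementary enumeration above remains.
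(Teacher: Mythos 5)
Your proposal is correct and follows essentially the same route as the paper's proof of Lemma \ref{lem:Q->B1}: both identify $\gamma^{\vee}=\alpha_1^{\vee}$ via Lemmas \ref{lem:LS} and \ref{lem:J-ad-B}, compute $z_{\gamma^{\vee}}^{I\setminus\{1\}}=(2\ \overline{2})$ so that $\lfloor wr_{\gamma}\rfloor = w(1\ 2\ \overline{1}\ \overline{2})$, and then read off the two admissible cases from the chain $\max\{w(\overline{1}),w(2)\}\prec w(3)\prec\cdots\prec w(n)\preceq n$ supplied by Lemma \ref{lem:Gr-C}. Your closing observation that the length condition is not needed in this direction also matches the paper, which likewise verifies $\ell(\lfloor wr_{\gamma}\rfloor)-\ell(w)=2-2n$ only in the converse Lemma \ref{lem:B1->Q}.
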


\begin{proof}
Assume that 
$\mathrm{Q}(1,w,\gamma)$
is true. 
By Lemmas \ref{lem:LS} and \ref{lem:J-ad-B}, 
we have 
$\gamma^{\vee} = \varepsilon_1 - \varepsilon_2 \in Q^{\vee,I\setminus \{ 1 \}}$, 
$r_{\gamma} = (1 \ 2)(\overline{1} \ \overline{2})$, 
and 
$\lfloor wr_{\gamma} \rfloor = wr_{\gamma}(z_{\gamma^{\vee}}^{I \setminus \{ 1 \}})^{-1}$. 
Set 
$J = I \setminus \{ 1 \} = [2,n]$; 
note that 
$J$
is of type $B_{n-1}$. 
We see that 
$2 \in J_{\af}$
satisfies the condition for 
$\gamma^{\vee} \in Q^{\vee}$
in Lemma \ref{lem:J-ad}; 
note that 
$J \setminus \{ 2 \} = [3,n]$ 
is of type $B_{n-2}$.
Hence 
$z_{\gamma^{\vee}}^J = w_0^Jw_0^{J \setminus \{ 2 \}} = (2 \ \overline{2})$
and 
$\lfloor wr_{\gamma} \rfloor 
=
w(1 \ 2)(\overline{1} \ \overline{2})(2 \ \overline{2})
=
w(1 \ 2 \ \overline{1} \ \overline{2})$.
From this we obtain
$\lfloor wr_{\gamma} \rfloor (1) = w(2)$, 
$\lfloor wr_{\gamma} \rfloor (2) = w(\overline{1})$, 
and 
$\lfloor wr_{\gamma} \rfloor (u) = w(u)$
for 
$u \in [3,n]$. 
It follows from 
Lemma \ref{lem:Gr-C}
that 
\begin{align} \label{eq:seq-B1}
\max \{ w(\overline{1}) , w(2) \}
\prec
w(3)
\prec
w(4)
\prec
\cdots 
\prec
w(n)
\preceq
n.
\end{align}
Hence 
$\lfloor wr_{\gamma} \rfloor (u) = w(u) = u$
for 
$u \in [3,n]$. 
If 
$w(\overline{1}) \prec w(2)$, 
then
$(w(1),\lfloor wr_{\gamma} \rfloor (1))
=
(\overline{1},2)$.
If 
$w(2) \prec w(\overline{1})$, 
then 
$(w(1),\lfloor wr_{\gamma} \rfloor (1))
=
(\overline{2},1)$.
\end{proof}

\begin{lem} \label{lem:B1->Q}
(q-B1)
implies
$\mathrm{Q}(1,w,\gamma)$.
\end{lem}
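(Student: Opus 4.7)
The plan is to invoke Lemma \ref{lem:LS}, which tells us that $\mathrm{Q}(1,w,\gamma)$ is equivalent to the conjunction of the length identity $\ell(\lfloor wr_\gamma \rfloor) - \ell(w) = 1 - 2\langle \gamma^\vee, \rho \rangle$ and the affine condition $wr_\gamma t_{\gamma^\vee} \in (W^J)_{\af}$, where $J = I \setminus \{1\}$. Under the hypothesis (q-B1) we have $c_1(\gamma^\vee) = 1$ and $\gamma^\vee = \alpha_1^\vee = \varepsilon_1 - \varepsilon_2$, so Lemmas \ref{lem:<g,r>} and \ref{lem:<g,r>-B} convert the length target into the explicit value $\ell(\lfloor wr_\gamma \rfloor) - \ell(w) = 2 - 2n$, and the entire task reduces to verifying this identity together with the affine condition in each of the two subcases.

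The key observation is that everything computed in the proof of Lemma \ref{lem:Q->B1} carries over verbatim: independently of the quantum hypothesis used there, the identities $z_{\gamma^\vee}^J = (2\ \overline{2})$ and $w r_\gamma (z_{\gamma^\vee}^J)^{-1} = w(1\ 2\ \overline{1}\ \overline{2})$ remain valid here. Using the description of $W^J$ from Lemma \ref{lem:Gr-C}, the single value $w(1) \in \{\overline{1}, \overline{2}\}$ together with the constraint $w(2) \prec \cdots \prec w(n) \preceq n$ pins down $w$ uniquely in each subcase: in the first, $w = (1\ \overline{1})$ with $w(u) = u$ for $u \in [2,n]$, and in the second, $w = (1\ \overline{2}\ \overline{1}\ 2)$ with $w(2) = 1$ and $w(u) = u$ for $u \in [3,n]$. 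Applying the cycle $(1\ 2\ \overline{1}\ \overline{2})$ on the right then yields $\lfloor wr_\gamma \rfloor = (1\ 2)$ and $\lfloor wr_\gamma \rfloor = e$ respectively; both representatives manifestly lie in $W^J$, which by Lemma \ref{lem:(W^J)_af} supplies the affine condition $wr_\gamma t_{\gamma^\vee} \in (W^J)_{\af}$ at once.

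It then remains only to apply the length formula $\ell(v) = \mathsf{a}_1(v) + \mathsf{b}_1(v) + \mathsf{e}_1(v)$ from Lemma \ref{lem:Gr-C} to each of these four explicit elements: the first case gives $\ell(w) = (n-1) + (n-1) + 1 = 2n-1$ and $\ell(\lfloor wr_\gamma \rfloor) = 1$, while the second gives $\ell(w) = (n-1) + (n-2) + 1 = 2n-2$ and $\ell(\lfloor wr_\gamma \rfloor) = 0$, so the target difference $2-2n$ is realised in both cases. The one real trap is keeping the total order $1 \prec 2 \prec \cdots \prec n \prec \overline{n} \prec \cdots \prec \overline{1}$ straight during these counts — since $\overline{1}$ is the \emph{largest} letter of $\mathcal{C}_n$, it is easy to accidentally flip the roles of $\mathsf{a}_1$ and $\mathsf{b}_1$ — but aside from this caveat, the verification is routine bookkeeping, so I do not expect any substantive obstacle.
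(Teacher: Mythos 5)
Your proof is correct and follows essentially the same route as the paper's: (q-B1) together with Lemma \ref{lem:Gr-C} pins down the two explicit elements $w$ and $\lfloor wr_{\gamma}\rfloor$, and the length counts $\ell(w)=2n-1,\ \ell(\lfloor wr_{\gamma}\rfloor)=1$ (resp. $2n-2$ and $0$) give $\ell(\lfloor wr_{\gamma}\rfloor)-\ell(w)=2-2n$, which is exactly the defining condition for a quantum edge. The detour through Lemma \ref{lem:LS} and the affine condition is redundant for this direction (and note the length identity in that lemma reads $\ell(wr_{\gamma})-\ell(w)=1-2\langle\gamma^{\vee},\rho\rangle$, not the $\lfloor\,\cdot\,\rfloor$ version with $\rho-\rho_J$ that you wrote), but this does not affect the argument since the quantity you actually verify is the definitional one.
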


\begin{proof}
Assume that 
(q-B1)
is true. 
We see from 
Lemmas \ref{lem:<g,r>} and \ref{lem:J-ad-B}--\ref{lem:<g,r>-B} that 
$\mathrm{Q}(1,w,\gamma)$ 
is equivalent to 
$\ell(\lfloor wr_{\gamma} \rfloor) - \ell(w)
=2-2n$. 
Note that 
(q-B1)
and 
Lemma \ref{lem:Gr-C}
yield \eqref{eq:seq-B1}. 

If 
$(w(1),\lfloor wr_{\gamma} \rfloor (1))
=
(\overline{1},2)$, 
then
$w = (1 \ \overline{1})$
and 
$\lfloor wr_{\gamma} \rfloor = (1 \ 2)(\overline{1} \ \overline{2}) = r_1$, 
by Lemma \ref{lem:Gr-C}. 
Since 
$\ell(\lfloor wr_{\gamma} \rfloor) = 1$
and
$\ell(w) 
= 
\mathsf{a}_1(w) + \mathsf{b}_1(w) + \mathsf{e}_1(w)
=
(n-1) + (n-1) + 1
=
2n-1$, 
we have
$\ell(\lfloor wr_{\gamma} \rfloor) - \ell(w)
=
2-2n$. 

If 
$(w(1),\lfloor wr_{\gamma} \rfloor (1))
=
(\overline{2},1)$, 
then
$w = (2 \ \overline{2} \ \overline{1} \ 2)$
and 
$\lfloor wr_{\gamma} \rfloor = e$, 
by Lemma \ref{lem:Gr-C}. 
Since 
$\ell(\lfloor wr_{\gamma} \rfloor) = 0$
and
$\ell(w) 
= 
\mathsf{a}_1(w) + \mathsf{b}_1(w) + \mathsf{e}_1(w)
=
(n-1) + (n-2) + 1
=
2n-2$, 
we have
$\ell(\lfloor wr_{\gamma} \rfloor) - \ell(w)
=
2-2n$.  
\end{proof}

\begin{lem} \label{lem:Q->B2}
$i \in [2,n-1]$, 
$c_i(\gamma^{\vee}) = 1$, 
and 
$\mathrm{Q}(i,w,\gamma)$
imply
(q-B2).
\end{lem}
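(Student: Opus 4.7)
The plan is to mimic Step~1 in the proof of Proposition~\ref{prop:Q=C}, now in type $B$. First, under the hypotheses $i \in [2, n-1]$ and $c_i(\gamma^{\vee}) = 1$, Lemmas~\ref{lem:LS} and \ref{lem:J-ad-B} force
$\gamma^{\vee} = \varepsilon_i - \varepsilon_{i+1} = \alpha_i^{\vee}$,
so
$r_{\gamma} = (i \ i+1)(\overline{i} \ \overline{i+1})$
and
$\lfloor wr_{\gamma} \rfloor = w r_{\gamma} (z_{\gamma^{\vee}}^{I \setminus \{i\}})^{-1}$.
Combining Lemmas~\ref{lem:<g,r>} and \ref{lem:<g,r>-B} then rephrases $\mathrm{Q}(i,w,\gamma)$ as the length equation
\begin{align*}
\ell(\lfloor wr_{\gamma} \rfloor) - \ell(w) = 1 - (2n - i) = i - 2n + 1.
\end{align*}

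Next, I would compute $z_{\gamma^{\vee}}^{I \setminus \{i\}}$ via Lemma~\ref{lem:J-ad}. Decomposing $I \setminus \{i\} = I_1 \sqcup I_2$ with $I_1 = [i-1]$ of type $A_{i-1}$ and $I_2 = [i+1, n]$ of type $B_{n-i}$, the pair $(j_1, j_2) = (i-1, i+1) \in (I_1)_{\af} \times (I_2)_{\af}$ satisfies the condition of Lemma~\ref{lem:J-ad}, because
\begin{align*}
\alpha_i^{\vee} + \varpi_{i-1}^{\vee} + \varpi_{i+1}^{\vee}
= (\varepsilon_i - \varepsilon_{i+1}) + (\varepsilon_1 + \cdots + \varepsilon_{i-1}) + (\varepsilon_1 + \cdots + \varepsilon_{i+1})
= 2\varpi_i^{\vee}
\end{align*}
lies in $\BZ \varpi_i^{\vee}$. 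Hence $\phi_{I \setminus \{i\}}(\alpha_i^{\vee}) = 0$ and $z_{\gamma^{\vee}}^{I \setminus \{i\}} = v_{i-1}^{I_1} v_{i+1}^{I_2}$, where direct computation gives $v_{i-1}^{I_1} = (1 \ 2 \ \cdots \ i)(\overline{1} \ \overline{2} \ \cdots \ \overline{i})$ and $v_{i+1}^{I_2} = (i+1 \ \overline{i+1})$. Multiplying out, $\tau := r_{\gamma}(z_{\gamma^{\vee}}^{I \setminus \{i\}})^{-1}$ acts by $\tau(1) = i+1$, $\tau(u) = u-1$ for $u \in [2, i]$, $\tau(i+1) = \overline{i}$, $\tau(u) = u$ for $u \in [i+2, n]$, and the corresponding bar version, so that
\begin{align*}
\lfloor wr_{\gamma} \rfloor(1) = w(i+1), \ \ \lfloor wr_{\gamma} \rfloor(u) = w(u-1) \ \text{for} \ u \in [2, i], \ \ \lfloor wr_{\gamma} \rfloor(i+1) = w(\overline{i}).
\end{align*}

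Finally, I would perform the length bookkeeping analogous to items~(1)--(4) of Step~1 in Proposition~\ref{prop:Q=C}. Writing $k := w(\overline{i})$, the condition $\lfloor wr_{\gamma} \rfloor \in W^{I \setminus \{i\}}$ together with Lemma~\ref{lem:Gr-C} forces $w(\overline{i}) \preceq n$ (so $k \in [1, n]$) and $w(i+1) \prec w(1)$. Computing $\mathsf{a}_s, \mathsf{b}_s, \mathsf{e}_s$ of $\lfloor wr_{\gamma} \rfloor$ in terms of those of $w$ --- separately at $s = 1$, $s \in [2, i]$, and $s = i+1$ --- and summing, one obtains a lower bound
$\ell(\lfloor wr_{\gamma} \rfloor) - \ell(w) \geq i - 2n + 1$,
with equality saturated precisely when $w(i+1) = 1$ and the initial segment $\{w(1), \ldots, w(k-2)\} = [2, k-1]$ (which, by the strict increase from Lemma~\ref{lem:Gr-C}, forces $w(u) = u+1$ for $u \in [1, k-2]$ and $k \prec w(u) \preceq n$ for $u \in [k-1, i-1]$; the range $k \in [2, i+1]$ follows from $k \neq w(i+1) = 1$ and a cardinality count on $[2, k-1]$).

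The main technical obstacle is the combinatorial accounting of $\mathsf{a}_s$ and $\mathsf{b}_s$ under the twist by $\tau$: unlike in type $C$, where $\tau$ stays within the ``unbarred half,'' here $\tau$ swaps $w(i+1)$ and $w(\overline{i})$ between positions $1$ and $i+1$, so the counts track crossings between $\{w(1), \ldots, w(i-1)\}$ and these two migrating entries, and one must verify that the resulting lower bound is attained exactly on the (q-B2) locus rather than on a strictly larger combinatorial family.
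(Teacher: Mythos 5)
Your setup coincides with the paper's: the identification $\gamma^{\vee}=\alpha_i^{\vee}$ via Lemmas \ref{lem:LS} and \ref{lem:J-ad-B}, the computation of $z_{\gamma^{\vee}}^{I\setminus\{i\}}=(1\ 2\ \cdots\ i)(\overline{1}\ \overline{2}\ \cdots\ \overline{i})(i+1\ \overline{i+1})$ from the decomposition $I_1=[i-1]$, $I_2=[i+1,n]$, the resulting action of $\lfloor wr_{\gamma}\rfloor$, and the reformulation of $\mathrm{Q}(i,w,\gamma)$ as $\ell(\lfloor wr_{\gamma}\rfloor)-\ell(w)=i-2n+1$ are all exactly Steps 0--1 of the paper's argument. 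The gap is in the saturation analysis at the end, and it is not a bookkeeping detail. Your parenthetical asserts that ``$w(i+1)=1$ and $\{w(1),\ldots,w(k-2)\}=[2,k-1]$'' force, via the strict increase of Lemma \ref{lem:Gr-C}, the condition $k\prec w(u)\preceq n$ for $u\in[k-1,i-1]$. This implication is false: monotonicity together with $w(i)=\overline{k}$ only gives $k-1=w(k-2)\prec w(u)\prec\overline{k}$, i.e.\ $\|w(u)\|>k$, and does not force $w(u)$ to be unbarred. Concretely, in type $B_4$ with $i=3$, the element $w\in W^{I\setminus\{3\}}$ with $w(1)=3$, $w(2)=\overline{4}$, $w(3)=\overline{2}$, $w(4)=1$ satisfies $w(i+1)=1$, $k=w(\overline{3})=2$, and $[k-2]=\emptyset$, yet $w(2)=\overline{4}\not\preceq n$. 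The unbarredness of $w(u)$ on $[k-1,i-1]$ is precisely the hard content of (q-B2); in the paper it is not deduced from monotonicity but extracted from the length equation itself, by introducing the index $l$ with $w(l-1)\preceq n\prec w(l)$, tracking how the position $i$ enters $\mathsf{B}_{s-1}(w)$ (it does so exactly when $w(s-1)\succ\sigma(w(i))=k$, which happens automatically for barred $w(s-1)$), and forcing $l=i$. You correctly flag this as ``the main technical obstacle,'' but the mechanism you propose for resolving it does not work, so your argument would certify elements outside the (q-B2) locus.

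A second, related gap: your uniform lower-bound computation tacitly assumes $\lfloor wr_{\gamma}\rfloor(1)=1$, which is equivalent to $k=w(\overline{i})\succ 1$. When $k=1$ one instead has $\lfloor wr_{\gamma}\rfloor(i+1)=1$ and $\lfloor wr_{\gamma}\rfloor(1)=w(i+1)\neq 1$, so $\mathsf{a}_1(\lfloor wr_{\gamma}\rfloor)$ and $\mathsf{b}_1(\lfloor wr_{\gamma}\rfloor)$ are no longer zero and the entire accounting changes; the paper needs a separate computation (its Steps 3--4) to show that $k=1$ contradicts the length equation. Deriving $k\in[2,i+1]$ from ``$k\neq w(i+1)=1$'' is circular at this point, since $w(i+1)=1$ is available only after $k=1$ has been excluded. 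To repair the proposal you must (a) split off and kill the case $w(\overline{i})=1$ by a separate length count, and (b) in the case $w(\overline{i})\succ 1$, carry the parameter recording where the entries $w(1)\prec\cdots\prec w(i-1)$ cross from the unbarred to the barred half of $\mathcal{C}_n$ through the $\mathsf{b}_s$-bookkeeping, rather than appealing to monotonicity.
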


\begin{proof}
Assume that 
$i \in [2,n-1]$
and 
$c_i(\gamma^{\vee}) = 1$, 
and that 
$\mathrm{Q}(i,w,\gamma)$
is true. 
By Lemmas \ref{lem:LS} and \ref{lem:J-ad-B}, 
we have
$\gamma^{\vee} = \varepsilon_i - \varepsilon_{i+1} \in Q^{\vee,I\setminus \{ i \}}$, 
$r_{\gamma} = (i \ i+1)(\overline{i} \ \overline{i+1})$, 
and 
$\lfloor wr_{\gamma} \rfloor 
= 
wr_{\gamma}(z_{\gamma^{\vee}}^{I \setminus \{ i \}})^{-1}$. 
We see from Lemmas \ref{lem:<g,r>} and \ref{lem:<g,r>-B} that 
$\mathrm{Q}(i,w,\gamma)$ 
is equivalent to 
$\ell(\lfloor wr_{\gamma} \rfloor) - \ell(w)
=i - 2n + 1$. 
Let 
$I \setminus \{ i \}
=
I_1 \sqcup I_2$, 
where
$I_1 = [i-1]$
is of type 
$A_{i-1}$
and 
$I_2 = [i+1,n]$
is of type 
$B_{n-i}$.
We see that 
$(i-1,i+1) \in (I_1)_{\af} \times (I_2)_{\af}$
satisfies the condition for 
$\gamma^{\vee} \in Q^{\vee}$
in 
Lemma \ref{lem:J-ad}; 
note that 
$I_1 \setminus \{ i-1 \} = [i-2]$
is of type 
$A_{i-2}$
and 
$I_2 \setminus \{ i+1 \} = [i+2,n]$
is of type 
$B_{n-i-1}$. 
Hence 
$z_{\gamma^{\vee}}^{I \setminus \{ i \}} 
=
w_0^{I_1}w_0^{I_1 \setminus \{ i-1 \}}
w_0^{I_2}w_0^{I_2 \setminus \{ i+1 \}}
=
(1 \ 2 \ \cdots \ i)(\overline{1} \ \overline{2} \ \cdots \ \overline{i})
(i+1 \ \overline{i+1})$
and 
$\lfloor wr_{\gamma} \rfloor = w(i \ i+1)(\overline{i} \ \overline{i+1})(i+1 \ \overline{i+1})
(i \ \cdots \ 2 \ 1)(\overline{i} \ \cdots \ \overline{2} \ \overline{1})
=
w(i \ i+1 \ \overline{i} \ \overline{i+1})
(i \ \cdots \ 2 \ 1)(\overline{i} \ \cdots \ \overline{2} \ \overline{1})$. 
We have 
$\lfloor wr_{\gamma} \rfloor (1) = w(i+1)$, 
$\lfloor wr_{\gamma} \rfloor (u) = w(u-1)$
for 
$u \in [2,i]$, 
$\lfloor wr_{\gamma} \rfloor (i+1) = w(\overline{i})$,
and 
$\lfloor wr_{\gamma} \rfloor (u) = w(u)$
for 
$u \in [i+2,n]$.
It follows from Lemma \ref{lem:Gr-C} that 
\begin{align} \label{eq:seq-B3}
\begin{split}
&w(i+1) \prec w(1) \prec w(2) \prec \cdots \prec w(i-1) \prec w(i), 
\\
&\max \{ w(\overline{i}),w(i+1) \} \prec w(i+2) \prec w(i+3) \prec \cdots \prec w(n) \preceq n \prec w(i).
\end{split}
\end{align}
Hence  
$w(\overline{i}) \preceq i+1$.
Set 
$k = w(\overline{i}) = \lfloor wr_{\gamma} \rfloor(i+1)$.
The rest of the proof will be divided into four steps.

\begin{proof}[Step 1]
We claim that 
$k \succ 1$
and 
(1)--(5)
below imply 
(q-B2).
Let 
$l \in [k-1,i]$
be such that 
$w(l-1) \preceq n \prec w(l)$. 
\begin{enumerate}[(1)]
\item
$\mathsf{a}_1(\lfloor wr_{\gamma} \rfloor)
=
\mathsf{b}_1(\lfloor wr_{\gamma} \rfloor)
=
\mathsf{e}_1(\lfloor wr_{\gamma} \rfloor)=0$, 
\item
$\mathsf{a}_s(\lfloor wr_{\gamma} \rfloor)
=
\mathsf{a}_{s-1}(w)-1$
for 
$s \in [2,k-1]$, 
$\mathsf{a}_s(\lfloor wr_{\gamma} \rfloor)
=
\mathsf{a}_{s-1}(w)$
for 
$s \in [k,i]$, 
\item
$\mathsf{b}_s(\lfloor wr_{\gamma} \rfloor)
=
\mathsf{b}_{s-1}(w)$
for 
$s \in [2,k-1] \cup [l+1,i]$,
$\mathsf{b}_s(\lfloor wr_{\gamma} \rfloor)
=
\mathsf{b}_{s-1}(w)-1$
for 
$s \in [k,l]$, 
\item
$\mathsf{e}_s(\lfloor wr_{\gamma} \rfloor)
=
\mathsf{e}_{s-1}(w)$
for 
$s \in [2,i]$, 
\item
$\mathsf{a}_i(w)
=
n-i$,
$\mathsf{b}_i(w)= n-i-1$, 
$\mathsf{e}_i(w)=1$.
\end{enumerate}
Indeed, 
if 
$k \succ 1$, 
then 
$\lfloor wr_{\gamma} \rfloor(u) = u$
for 
$u \in [k-1]$, 
by 
\eqref{eq:seq-B3}, 
Lemma \ref{lem:Gr-C}, 
and 
$\lfloor wr_{\gamma} \rfloor(i+1) = k$.
Therefore 
$w(u) = \lfloor wr_{\gamma} \rfloor(u+1) = u+1$
for 
$u \in [k-2]$. 
It follows from 
$w(1) = 2$
and 
\eqref{eq:seq-B3} that 
$\lfloor wr_{\gamma} \rfloor(1)
=
w(i+1)
=
1$. 
Also, 
by
\eqref{eq:seq-B3},  
we have 
$k-1 = w(k-2) \prec w(u) \prec w(i) = \overline{k}$
for 
$u \in [k-1,i-1]$, 
which implies 
$\|w(u)\| > k$
for 
$u \in [k-1,i-1]$. 
It remains to prove that 
$w(u) \preceq n$
for 
$u \in [k-1,i-1]$; 
we only need to show that 
$l = i$. 
It follows from 
Lemma \ref{lem:Gr-C} and (1)--(5) above
that
$\ell(\lfloor wr_{\gamma} \rfloor) - \ell(w)
=
i-2n+1+(i-l)$.
Since 
$\ell(\lfloor wr_{\gamma} \rfloor) - \ell(w)
=
1-2n+1$, 
we get 
$l = i$. 
\end{proof}

\begin{proof}[Step 2]
We prove (1)--(5) in Step 1 under the assumption that $k \succ 1$. 

(1) follows from
$\lfloor wr_{\gamma} \rfloor (1) = 1$. 

(2): 
If
$s \in [2,k-1]$, 
then 
$\mathsf{A}_s(\lfloor wr_{\gamma} \rfloor) = \emptyset$
and 
$\mathsf{A}_{s-1}(w) = \{ i+1 \}$, 
which implies 
$\mathsf{a}_s(\lfloor wr_{\gamma} \rfloor)
=
\mathsf{a}_{s-1}(w)-1$.
If 
$s \in [k,i]$, 
then
$\mathsf{A}_s(\lfloor wr_{\gamma} \rfloor)
=
\mathsf{A}_s(w)$, 
which implies 
$\mathsf{a}_s(\lfloor wr_{\gamma} \rfloor)
=
\mathsf{a}_{s-1}(w)$. 

(3): 
If 
$s \in [2,k-1]$, 
then 
$\mathsf{B}_s(\lfloor wr_{\gamma} \rfloor)
=
\mathsf{B}_{s-1}(w)
=
\emptyset$, 
which implies 
$\mathsf{b}_s(\lfloor wr_{\gamma} \rfloor)
= 
\mathsf{b}_{s-1}(w)$.
If 
$s \in [k,l]$, 
then 
$k \prec \lfloor wr_{\gamma} \rfloor (s) = w(s-1) \preceq n$, 
$i \in \mathsf{B}_{s-1}(w)$, 
and the map 
$\mathsf{B}_s(\lfloor wr_{\gamma} \rfloor)
\to 
\mathsf{B}_{s-1}(w) \setminus \{ i \}$, 
$t \mapsto t-1$, 
is bijective.
This implies 
$\mathsf{b}_s(\lfloor wr_{\gamma} \rfloor)
=
\mathsf{b}_{s-1}(w)-1$. 
If 
$s \in [l+1,i]$, 
then 
$\overline{n} \preceq \lfloor wr_{\gamma} \rfloor (s) = w(s-1)$
and 
$\mathsf{B}_s(\lfloor wr_{\gamma} \rfloor)
= 
\mathsf{B}_{s-1}(w)$, 
which implies 
$\mathsf{b}_s(\lfloor wr_{\gamma} \rfloor)
= 
\mathsf{b}_{s-1}(w)$. 

(4) follows from 
$\lfloor wr_{\gamma} \rfloor (s) = w(s-1)$
for 
$s \in [2,i]$. 

(5): 
Lemma \ref{lem:Gr-C}
and 
$w(i) = \overline{k} \succeq \overline{n}$
show that
$\mathsf{a}_i(w)
=
n-i$
and 
$\mathsf{e}_i(w)=1$. 
We see from 
$\|w(s)\| > k = \|w(i)\|$
for 
$s \in [k-1,i-1]$
that 
$\mathsf{b}_i(w)
= 
n-i-1$. 
\end{proof}

\begin{proof}[Step 3]
It remains to prove that 
$k \succ 1$.
On the contrary, 
suppose that 
$k = 1$. 
Then 
$w(\overline{i}) = k \prec w(i+1) \preceq n$. 
If we prove that
\begin{enumerate}[(1)]
\setcounter{enumi}{5}
\item
$\mathsf{e}_1(\lfloor wr_{\gamma} \rfloor) = 0$, 
$\mathsf{e}_s(\lfloor wr_{\gamma} \rfloor) 
=
\mathsf{e}_{s-1}(w)$
for 
$s \in [2,i]$, 
and 
$\mathsf{e}_i(w) = 1$, 
\item
$\mathsf{a}_i(w) = n-i$, 
\item
$\mathsf{a}_1(\lfloor wr_{\gamma} \rfloor) = 1$, 
$\mathsf{b}_1(\lfloor wr_{\gamma} \rfloor) = w(i+1) - 2$, 
\item
$\mathsf{a}_s(\lfloor wr_{\gamma} \rfloor)
=
\mathsf{a}_{s-1}(w)$
for 
$s \in [2,i]$, 
\item 
$\mathsf{b}_s(\lfloor wr_{\gamma} \rfloor)
=
\mathsf{b}_{s-1}(w)-1$  
for 
$s \in [2,i]$, 
\item
$\mathsf{b}_i(w) = n-i$, 
\end{enumerate}
then 
$\ell(\lfloor wr_{\gamma} \rfloor) - \ell(w)
=
i-2n+1
+
(w(i+1)+2i-4)$. 
Since 
$\ell(\lfloor wr_{\gamma} \rfloor) - \ell(w)
=
i-2n+1$
and 
$i \geq 2$, 
we have 
$w(i+1) = 4-2i \leq 0$, 
a contradiction. 
\end{proof}

\begin{proof}[Step 4]
We prove (6)--(11) in Step 3 under the assumption that $k=1$; 
in fact, 
we do not use 
$k = 1$
to prove 
(6)--(7).

(6) follows from 
\eqref{eq:seq-B3}
and 
$\lfloor wr_{\gamma} \rfloor (u) = w(u-1)$
for 
$u \in [2,i]$. 

(7): 
We deduce from 
\eqref{eq:seq-B3}
that 
$\mathsf{A}_i(w) = [i+1,n]$, 
which gives
$\mathsf{a}_i(w) = n-i$.

(8): 
By Lemma \ref{lem:Gr-C}, 
$\mathsf{A}_1(\lfloor wr_{\gamma} \rfloor)
\subset 
[i+1,n]$. 
Since 
$\lfloor wr_{\gamma} \rfloor(1)
=
w(i+1)
\succ 
w(\overline{i})
=
\lfloor wr_{\gamma} \rfloor(i+1)$, 
we have 
$i+1 \in \mathsf{A}_1(\lfloor wr_{\gamma} \rfloor)$. 
Since 
$\lfloor wr_{\gamma} \rfloor(t) = w(t)$
for 
$t \in [i+2,n]$, 
we have 
$\mathsf{A}_1(\lfloor wr_{\gamma} \rfloor) \cap [i+2,n] = \emptyset$.
Hence 
$\mathsf{A}_1(\lfloor wr_{\gamma} \rfloor)
=
\{ i+1 \}$
and 
$\mathsf{a}_1(\lfloor wr_{\gamma} \rfloor)=1$
as claimed.
Since 
$\lfloor wr_{\gamma} \rfloor(1) 
=
w(i+1)
\prec
\overline{1}
=
w(i)
=
\sigma(\lfloor wr_{\gamma} \rfloor(i+1))$, 
we have 
$i+1 
\notin 
\mathsf{B}_1(\lfloor wr_{\gamma} \rfloor)$. 
Therefore
\begin{align*}
\mathsf{B}_1(\lfloor wr_{\gamma} \rfloor)
=
& \ 
\{ t \in [2,i] \mid 
\underbrace{\lfloor wr_{\gamma} \rfloor(1)}_{=w(i+1)}
\succ
\underbrace{\sigma(\lfloor wr_{\gamma} \rfloor(t))}_{=\sigma(w(t-1))}
\} \\
&\sqcup 
\{ t \in [i+2,n] \mid 
\underbrace{\lfloor wr_{\gamma} \rfloor(1)}_{=w(i+1)}
\succ
\underbrace{\sigma(\lfloor wr_{\gamma} \rfloor(t))}_{=\sigma(w(t))}
\} .
\end{align*}
It follows that the map 
\begin{align*}
[2,n]
\to
[n], \ 
t \mapsto 
\begin{cases}
t-1 & \text{if} \ t \in [2,i], \\
t & \text{if} \ t \in [i+1,n],
\end{cases}
\end{align*}
induces a bijection from 
$\mathsf{B}_1(\lfloor wr_{\gamma} \rfloor)$
to
$\{ t \in [n] \mid w(i+1) \succ \sigma(w(t)) \} \setminus \{ i \}$.
Since 
$w(i+1) = \min \{ w(u) \mid u \in [n] \}$, 
the latter set equals 
\begin{align*}
\{ w^{-1}(\overline{2}),
w^{-1}(\overline{3}), 
\ldots , 
w^{-1}(\overline{w(i+1)-1}) \} .
\end{align*} 
This proves 
$\mathsf{b}_1(\lfloor wr_{\gamma} \rfloor) = w(i+1)-2$. 

(9): 
Let 
$s \in [2,i]$. 
Since 
$\lfloor wr_{\gamma} \rfloor(i+1)
=
1$
and 
$w(i+1) = \min \{ w(u) \mid u \in [n] \}$, 
we have 
$i+1 \in \mathsf{A}_s(\lfloor wr_{\gamma} \rfloor)$
and 
$i+1 \in \mathsf{A}_{s-1}(w)$. 
It follows from Lemma \ref{lem:Gr-C} that 
\begin{align*}
\mathsf{A}_s(\lfloor wr_{\gamma} \rfloor)
=
\{ i+1 \} 
\sqcup 
\{ t \in [i+2,n] 
\mid 
\underbrace{\lfloor wr_{\gamma} \rfloor(s)}_{=w(s-1)}
\succ 
\underbrace{\sigma(\lfloor wr_{\gamma} \rfloor(t))}_{=w(t)}\}
=
\mathsf{A}_{s-1}(w),
\end{align*}
which implies 
$\mathsf{a}_s(\lfloor wr_{\gamma} \rfloor)
=
\mathsf{a}_{s-1}(w)$.

(10): 
Let 
$s \in [2,i]$. 
Since 
$\lfloor wr_{\gamma} \rfloor(i+1)
=
w(\overline{i})
=
\overline{1}$, 
we have
$\sigma(\lfloor wr_{\gamma} \rfloor(i+1))
\succ
\lfloor wr_{\gamma} \rfloor(s)$. 
Therefore
$i+1 \notin \mathsf{B}_s(\lfloor wr_{\gamma} \rfloor)$
and 
\begin{align*}
\mathsf{B}_s(\lfloor wr_{\gamma} \rfloor)
&=
\left(
\mathsf{B}_s(\lfloor wr_{\gamma} \rfloor)
\cap 
[s+1,i]
\right)
\sqcup 
\left(
\mathsf{B}_s(\lfloor wr_{\gamma} \rfloor)
\cap 
[i+2,n]
\right) \\
&=
\{ t \in [s+1,i] \mid 
\underbrace{\lfloor wr_{\gamma} \rfloor(s)}_{=w(s-1)} 
\prec 
\underbrace{\sigma(\lfloor wr_{\gamma} \rfloor(t))}_{=\sigma(w(t-1))} \} \\
&\hspace{5mm}
\sqcup 
\{ t \in [i+2,n] \mid 
\underbrace{\lfloor wr_{\gamma} \rfloor(s)}_{=w(s-1)}
\prec 
\underbrace{\sigma(\lfloor wr_{\gamma} \rfloor(t))}_{=\sigma(w(t))} \} .
\end{align*}
Also, 
$w(\overline{1}) = 1$
implies 
$i \in \mathsf{B}_{s-1}(w)$. 
It follows that the map
\begin{align*}
[s+1,n] \to [s,n], \ 
t 
\mapsto 
\begin{cases}
t-1 & \text{if} \ t \in [s+1,i], \\
t & \text{if} \ t \in [i+1,n], 
\end{cases}
\end{align*}
induces a bijection from 
$\mathsf{B}_s(\lfloor wr_{\gamma} \rfloor)$
to 
$\mathsf{B}_{s-1}(w) \setminus \{ i \}$, 
which implies 
$\mathsf{b}_s(\lfloor wr_{\gamma} \rfloor)
=
\mathsf{b}_{s-1}(w)-1$.

(11): 
Since 
$w(i) = \overline{1}$, 
we have
$\mathsf{B}_i(w)
=
\{ t \in [i+1,n] \mid w(i) \succ \sigma(w(t)) \}
=
[i+1,n]$.
This proves 
$\mathsf{b}_i(w) = n-i$
as claimed. 
\end{proof}

The proof of Lemma \ref{lem:Q->B2} is complete.
\end{proof}

\begin{lem} \label{lem:B2->Q}
(q-B2)
implies 
$\mathrm{Q}(i,w,\gamma)$. 
\end{lem}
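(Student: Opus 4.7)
The plan is to mirror the argument of Lemma \ref{lem:B1->Q}. By Lemmas \ref{lem:<g,r>} and \ref{lem:<g,r>-B}, since $c_i(\gamma^{\vee}) = 1$, the statement $\mathrm{Q}(i,w,\gamma)$ is equivalent to the length identity $\ell(\lfloor wr_{\gamma} \rfloor) - \ell(w) = i-2n+1$, so it suffices to verify this identity under the hypotheses of (q-B2). A further preliminary step is to note that, by Lemma \ref{lem:J-ad-B}, (q-B2) forces $\gamma^{\vee} = \varepsilon_i - \varepsilon_{i+1} \in Q^{\vee, I \setminus \{i\}}$, so the setup and computation of $z_{\gamma^{\vee}}^{I \setminus \{i\}}$ and $\lfloor wr_{\gamma} \rfloor$ performed at the beginning of the proof of Lemma \ref{lem:Q->B2} apply verbatim; in particular, \eqref{eq:seq-B3} holds.

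First I would observe that (q-B2) forces $k := w(\overline{i}) \succ 1$, and that the conditions $k \prec w(u) \preceq n$ for all $u \in [k-1,i-1]$, together with $w(i) = \sigma(k) = \overline{k} \succeq \overline{n}$, pin down the integer $l$ from Step 1 of the proof of Lemma \ref{lem:Q->B2} (defined by $w(l-1) \preceq n \prec w(l)$) to be exactly $i$. Next I would verify that the statistical identities (1)--(5) listed in Step 1 of that proof continue to hold; since those identities depend only on the explicit description of $w$ and $\lfloor wr_{\gamma} \rfloor$ supplied by (q-B2) (namely $\lfloor wr_{\gamma} \rfloor(1)=1$, $w(u) = u+1$ on $[k-2]$, $k \prec w(u) \preceq n$ on $[k-1,i-1]$, $w(i)=\overline{k}$, and $\lfloor wr_{\gamma} \rfloor(u)=w(u-1)$ on $[2,i]$), the calculations of Step 2 of the proof of Lemma \ref{lem:Q->B2} go through without modification.

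With (1)--(5) in hand, the length count from Step 1 of the proof of Lemma \ref{lem:Q->B2} yields
\[
\ell(\lfloor wr_{\gamma} \rfloor) - \ell(w) = i - 2n + 1 + (i - l),
\]
and substituting $l = i$ gives the required value $i - 2n + 1$.

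The argument is essentially a bookkeeping exercise; no new combinatorial content is needed beyond what was already developed in Step 2 of the proof of Lemma \ref{lem:Q->B2}. The only point deserving mild care is the identification $l = i$, but this is immediate from the explicit form of $w$ dictated by (q-B2), and in particular we never need to invoke the reductio ad absurdum of Steps 3--4 in the proof of Lemma \ref{lem:Q->B2}, since those were used only to rule out the case $k = 1$, which is excluded by the hypothesis $1 \prec w(\overline{i})$ in (q-B2).
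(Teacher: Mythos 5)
Your proposal is correct and follows essentially the same route as the paper: both reduce $\mathrm{Q}(i,w,\gamma)$ to the length identity $\ell(\lfloor wr_{\gamma}\rfloor)-\ell(w)=i-2n+1$ via Lemmas \ref{lem:<g,r>} and \ref{lem:<g,r>-B} and then verify it by bookkeeping with the statistics $\mathsf{a}_s,\mathsf{b}_s,\mathsf{e}_s$, using that (q-B2) supplies exactly the structural data ($\lfloor wr_{\gamma}\rfloor(1)=1$, $w(u)=u+1$ on $[k-2]$, $w(i)=\overline{k}$, and $l=i$) on which those computations rest. The paper's own proof of the converse writes out its own list of identities (with a case split on $w(s-1)$ versus $w(\overline{i})$) rather than literally recycling (1)--(5) from Step 1 of Lemma \ref{lem:Q->B2}, but this is only a difference in presentation, and your observation that Steps 3--4 (ruling out $k=1$) are not needed here is accurate.
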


\begin{proof}
Assume that 
(q-B2)
is true; 
note that 
$c_i(\gamma^{\vee}) = 1$
and 
\eqref{eq:seq-B3}
in the proof of Lemma \ref{lem:Q->B2}
holds. 
We see from Lemmas \ref{lem:<g,r>} and \ref{lem:<g,r>-B} that 
$\mathrm{Q}(i,w,\gamma)$ 
is equivalent to 
$\ell(\lfloor wr_{\gamma} \rfloor) - \ell(w)
=i - 2n + 1$. 
If we prove that 
\begin{enumerate}[(1)]
\item
$\mathsf{e}_1(\lfloor wr_{\gamma} \rfloor) = 0$, 
$\mathsf{e}_s(\lfloor wr_{\gamma} \rfloor) 
=
\mathsf{e}_{s-1}(w)$
for 
$s \in [2,i]$, 
and 
$\mathsf{e}_i(w) = 1$, 
\item
$\mathsf{a}_i(w) = n-i$, 
\item
$\mathsf{a}_1(\lfloor wr_{\gamma} \rfloor) = 
\mathsf{b}_1(\lfloor wr_{\gamma} \rfloor) = 0$, 
\item 
$\mathsf{b}_s(\lfloor wr_{\gamma} \rfloor)
=
\begin{cases}
\mathsf{b}_{s-1}(w) & \text{if} \ w(s-1) \prec w(\overline{i}), \\
\mathsf{b}_{s-1}(w)-1 & \text{if} \ w(s-1) \succ w(\overline{i})
\end{cases}$
\ \ 
for 
$s \in [2,i]$, 
\item 
$\mathsf{a}_s(\lfloor wr_{\gamma} \rfloor)
=
\begin{cases}
\mathsf{a}_{s-1}(w)-1 & \text{if} \ w(s-1) \prec w(\overline{i}), \\
\mathsf{a}_{s-1}(w) & \text{if} \ w(s-1) \succ w(\overline{i})
\end{cases}$
\ \ 
for 
$s \in [2,i]$, 
\item
$\mathsf{b}_i(w) = n-i+1$, 
\end{enumerate}
then
$\ell(\lfloor wr_{\gamma} \rfloor) - \ell(w)
=
i - 2n + 1$
by 
Lemma \ref{lem:Gr-C}, 
which is our assertion. 
We prove (1)--(6) as follows. 

(1)--(2) follow by the same method as in 
Step 4 of the proof of Lemma \ref{lem:Q->B2}.

(3): 
Since 
$\lfloor wr_{\gamma} \rfloor(1) = 1$, 
we have 
$\mathsf{A}_s(\lfloor wr_{\gamma} \rfloor)
=
\mathsf{B}_s(\lfloor wr_{\gamma} \rfloor)
=
\emptyset$
and 
$\mathsf{a}_s(\lfloor wr_{\gamma} \rfloor)
=
\mathsf{b}_s(\lfloor wr_{\gamma} \rfloor)
=
0$. 

(4): 
Let 
$s \in [2,i]$. 
We first claim that 
$i+1 \notin \mathsf{B}_s(\lfloor wr_{\gamma} \rfloor)$. 
Indeed, we see from 
(q-B2)
that 
$[k-1] \subset \{ \lfloor wr_{\gamma} \rfloor(u) \mid u \in [i] \}$
and 
$k \notin \{ \lfloor wr_{\gamma} \rfloor(u) \mid u \in [i] \}$. 
By Lemma \ref{lem:Gr-C}, we have 
\begin{align*}
\lfloor wr_{\gamma} \rfloor(i+1)
=
\min \left( [n] \setminus \{ \lfloor wr_{\gamma} \rfloor(u) \mid u \in [i] \} \right)
=
k
=
w(\overline{i}).
\end{align*}
Since 
$\lfloor wr_{\gamma} \rfloor(s) = w(s-1) \preceq n \prec \overline{k} = w(i) = \sigma(\lfloor wr_{\gamma} \rfloor(i+1))$, 
we have 
$i+1 \notin \mathsf{B}_s(\lfloor wr_{\gamma} \rfloor)$
as claimed. 
It follows that 
\begin{align*}
\mathsf{B}_s(\lfloor wr_{\gamma} \rfloor)
=& 
\{ t \in [s+1,i] \mid 
\underbrace{\lfloor wr_{\gamma} \rfloor(s)}_{=w(s-1)}
\succ 
\underbrace{\sigma(\lfloor wr_{\gamma} \rfloor(t)}_{=\sigma(w(t-1))} \} \\
&
\sqcup
\{ t \in [i+2,n] \mid 
\underbrace{\lfloor wr_{\gamma} \rfloor(s)}_{=w(s-1)}
\succ 
\underbrace{\lfloor wr_{\gamma} \rfloor(t)}_{=\sigma(w(t))} \} .
\end{align*}
We next claim that 
$i+1 \notin \mathsf{B}_{s-1}(w)$.
We see from Lemma \ref{lem:Gr-C} that 
$w(i+1) = 1$. 
Hence 
$\sigma(w(i+1)) = \overline{1} \succ w(s-1)$, 
which implies 
$i+1 \notin \mathsf{B}_{s-1}(w)$
as claimed.
Note that 
$i \in \mathsf{B}_{s-1}(w)$
if and only if 
$w(s-1) \succ w(\overline{i})$. 
It follows that the map
\begin{align*}
[s+1,n] \to [s,n], \ 
t \mapsto 
\begin{cases}
t-1 & \text{if} \ t \in [s+1,i], \\
t & \text{if} \ t \in [i+1,n],
\end{cases}
\end{align*}
induces a bijection 
from 
$\mathsf{B}_s(\lfloor wr_{\gamma} \rfloor)$
to
$\mathsf{B}_{s-1}(w)$
(resp. 
from 
$\mathsf{B}_s(\lfloor wr_{\gamma} \rfloor)$
to 
$\mathsf{B}_{s-1}(w) \setminus \{ i \}$)
if 
$w(s-1) \prec w(\overline{i})$
(resp. 
if
$w(s-1) \succ w(\overline{i})$). 
This proves (4). 

(5): 
Let $s \in [2,i]$. 
Since 
$\lfloor wr_{\gamma} \rfloor(i+1) = w(\overline{i})$, 
we have 
$i+1 \in \mathsf{A}_s(\lfloor wr_{\gamma} \rfloor)$
if and only if 
$w(s-1) \succ w(\overline{i})$. 
It follows from Lemma \ref{lem:Gr-C} and 
$\lfloor wr_{\gamma} \rfloor(t) = w(t)$
for 
$t \in [i+2,n]$
that 
$\mathsf{A}_s(\lfloor wr_{\gamma} \rfloor)
=
\mathsf{A}_{s-1}(w) \setminus \{ i-1 \}$
(resp. 
$\mathsf{A}_s(\lfloor wr_{\gamma} \rfloor)
=
\mathsf{A}_{s-1}(w)$)
if 
$w(s-1) \prec w(\overline{i})$
(resp. 
if
$w(s-1) \succ w(\overline{i})$). 
This prove (5). 

(6): 
By Lemma \ref{lem:Gr-C} and \eqref{eq:seq-B3}, 
we have 
$w(i+1) = 1$. 
Hence 
$w(i) \prec \overline{1} = \sigma(w(i+1))$
and 
$i+1 \notin \mathsf{B}_s(w)$.
Also, we see from \eqref{eq:seq-B3} that 
$w(i) \succ \sigma(w(t))$
for 
$t \in [i+2,n]$, 
and consequently
$\mathsf{B}_s(w) = [i+2,n]$. 
This proves (6). 
\end{proof}

\begin{lem} \label{lem:Q->B3}
$i \in [2,n-1]$, 
$c_i(\gamma^{\vee}) = 2$, 
and 
$\mathrm{Q}(i,w,\gamma)$
imply
(q-B3).
\end{lem}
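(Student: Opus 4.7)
The plan is to mimic Step 2 of the proof of Proposition \ref{prop:Q=C}, reusing the explicit length computation verbatim and then comparing with the length-change condition dictated by type $B_n$ rather than by type $C_n$. Concretely, assume $i \in [2,n-1]$, $c_i(\gamma^{\vee}) = 2$, and $\mathrm{Q}(i,w,\gamma)$. By Lemmas \ref{lem:LS} and \ref{lem:J-ad-B}, the only possibility is $\gamma^{\vee} = \varepsilon_{i-1} + \varepsilon_i$, with $r_{\gamma} = (i-1\ \overline{i})(\overline{i-1}\ i)$ and $\lfloor wr_{\gamma}\rfloor = wr_{\gamma}(z^{I\setminus\{i\}}_{\gamma^{\vee}})^{-1}$. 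Applying Lemma \ref{lem:J-ad} with $(i-2,i+1) \in (I_1)_{\af} \times (I_2)_{\af}$, where $I_1 = [i-1]$ is of type $A_{i-1}$ and $I_2 = [i+1,n]$ is of type $B_{n-i}$, one sees that $z^{I\setminus\{i\}}_{\gamma^{\vee}}$ acts on the $A$-part exactly as in the type $C_n$ computation (the $B_{n-i}$-factor contributes $w_0^{I_2} w_0^{I_2 \setminus \{i+1\}} = e$ because $\gamma^{\vee}$ has no $\alpha_n^{\vee}$-coefficient once reduced modulo $Q^{\vee}_{I_2}$, whereas in type $C_n$ the $I_2$-factor was also $e$ for the analogous reason). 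Consequently the formulas
\begin{align*}
\lfloor wr_{\gamma}\rfloor(1) &= w(\overline{i}), \quad \lfloor wr_{\gamma}\rfloor(2) = w(\overline{i-1}), \\
\lfloor wr_{\gamma}\rfloor(u) &= w(u-2) \ \text{for}\ u \in [3,i], \quad \lfloor wr_{\gamma}\rfloor(u) = w(u) \ \text{for}\ u \in [i+1,n]
\end{align*}
hold, just as in Step 2 of the proof of Proposition \ref{prop:Q=C}, and the sequence \eqref{eq:seq-C2} remains valid.

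Next, since in types $B_n$ and $C_n$ the underlying Weyl group and its length function (expressed via $\mathsf{a}_s$, $\mathsf{b}_s$, $\mathsf{e}_s$ in Lemma \ref{lem:Gr-C}) are identical, the counts (1)--(4) of Step 2 of Proposition \ref{prop:Q=C} carry over verbatim. Summing them yields
\begin{align*}
\ell(\lfloor wr_{\gamma}\rfloor) - \ell(w) = 2i - 4n + 1 + 2(w(\overline{i}) - 1) + 2(w(\overline{i-1}) - 2).
\end{align*}
Now by Lemmas \ref{lem:<g,r>} and \ref{lem:<g,r>-B}, together with $c_i(\gamma^{\vee}) = 2$, the statement $\mathrm{Q}(i,w,\gamma)$ is equivalent in the type $B_n$ setting to $\ell(\lfloor wr_{\gamma}\rfloor) - \ell(w) = 1 - 2 \cdot c_i(\gamma^{\vee}) \cdot (2n-i)/2 = 2i - 4n + 1$. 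This is the critical sign-shift by $+2$ relative to the $C_n$ identity $2i - 4n - 1$ — and it is precisely what rescues the case from being contradictory.

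Combining the two expressions for $\ell(\lfloor wr_{\gamma}\rfloor) - \ell(w)$ gives $2(w(\overline{i}) - 1) + 2(w(\overline{i-1}) - 2) = 0$. Because $w(\overline{i}) \geq 1$ and $w(\overline{i-1}) \geq 2$ from \eqref{eq:seq-C2}, both summands are nonnegative, so we conclude $w(\overline{i}) = 1$ and $w(\overline{i-1}) = 2$. Substituting back into the formulas for $\lfloor wr_{\gamma}\rfloor(1), \lfloor wr_{\gamma}\rfloor(2)$ and $\lfloor wr_{\gamma}\rfloor(u)$, $u \in [3,i]$, recovers exactly the data in (q-B3), completing the proof. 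The main obstacle — if any — is merely bookkeeping: one must verify that the $I_2$-contribution to $z^{I\setminus\{i\}}_{\gamma^{\vee}}$ is trivial in type $B_n$ (so that the whole projection formula agrees with the type $C_n$ case) and that the length function $\mathsf{a}_s + \mathsf{b}_s + \mathsf{e}_s$ is identical across the two types; once these are in hand, everything else is purely computational and follows from the type $C_n$ template.
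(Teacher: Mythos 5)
Your proposal is correct and follows essentially the same route as the paper: the paper's proof of this lemma simply invokes the computation from Step 2 of the proof of Proposition \ref{prop:Q=C} to get $\ell(\lfloor wr_{\gamma}\rfloor)-\ell(w)=2i-4n+1+2(w(\overline{i})-1)+2(w(\overline{i-1})-2)$, compares this with the type-$B_n$ quantum-edge condition $\ell(\lfloor wr_{\gamma}\rfloor)-\ell(w)=2i-4n+1$ obtained from Lemmas \ref{lem:<g,r>} and \ref{lem:<g,r>-B}, and concludes $w(\overline{i})=1$, $w(\overline{i-1})=2$, exactly as you do. One small bookkeeping slip: the pair satisfying the condition of Lemma \ref{lem:J-ad} for $\gamma^{\vee}=\varepsilon_{i-1}+\varepsilon_i$ is $(i-2,0)\in (I_1)_{\af}\times(I_2)_{\af}$, not $(i-2,i+1)$ (one checks $\langle\gamma^{\vee},\alpha_j\rangle=0$ for all $j\in I_2$), so the $I_2$-factor of $z^{I\setminus\{i\}}_{\gamma^{\vee}}$ is $v^{I_2}_0=e$ by definition; your claim that $w_0^{I_2}w_0^{I_2\setminus\{i+1\}}=e$ is false as stated, though the conclusion that the $I_2$-contribution is trivial — and hence all subsequent formulas — is correct.
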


\begin{proof}
Assume that 
$i \in [2,n-1]$
and 
$c_i(\gamma^{\vee}) = 2$, 
and that 
$\mathrm{Q}(i,w,\gamma)$
is true.
We see from Lemmas \ref{lem:<g,r>} and \ref{lem:<g,r>-B} that 
$\mathrm{Q}(i,w,\gamma)$ 
is equivalent to 
$\ell(\lfloor wr_{\gamma} \rfloor) - \ell(w)
=2i - 4n + 1$. 
By the same argument as in Step 2
of the proof of Proposition \ref{prop:Q=C} 
in \S \ref{subsection:tab-cri-C}, 
we have 
$\lfloor wr_{\gamma} \rfloor(1) = w(\overline{i})$, 
$\lfloor wr_{\gamma} \rfloor(2) = w(\overline{i-1})$, 
$\lfloor wr_{\gamma} \rfloor(u) = w(u-2)$
for 
$u \in [3,i]$, 
$\gamma^{\vee} = \varepsilon_{i-1} + \varepsilon_i$, 
and 
$\ell(\lfloor wr_{\gamma} \rfloor) - \ell(w)
=
2i - 4n + 1 + 
2(w(\overline{i})-1)
+ 
2(w(\overline{i-1})-2)$.
Hence 
$w(\overline{i}) = 1$
and 
$w(\overline{i-1}) = 2$. 
This implies
(q-B3).
\end{proof}

\begin{lem} \label{lem:B3->Q}
(q-B3)
implies
$\mathrm{Q}(i,w,\gamma)$. 
\end{lem}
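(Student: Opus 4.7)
The plan is to reduce $\mathrm{Q}(i,w,\gamma)$ to a length identity via Lemmas \ref{lem:<g,r>} and \ref{lem:<g,r>-B}, and then to evaluate that length difference by reusing the detailed combinatorial calculation carried out in Step 2 of the proof of Proposition \ref{prop:Q=C}, which the author explicitly arranged to be reusable here.

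First, since $c_i(\gamma^{\vee}) = 2$ and $i \in [2,n-1]$, Lemmas \ref{lem:<g,r>} and \ref{lem:<g,r>-B} tell us that $\mathrm{Q}(i,w,\gamma)$ is equivalent to the length identity $\ell(\lfloor wr_{\gamma} \rfloor) - \ell(w) = 2i - 4n + 1$. Assuming (q-B3), the explicit prescription of $\lfloor wr_{\gamma} \rfloor$ together with Lemma \ref{lem:Gr-C} (which is stated uniformly for $W^{I \setminus \{i\}}$ and applies in both types $B$ and $C$) gives the same chain of inequalities as \eqref{eq:seq-C2}. Moreover, since $\gamma^{\vee} = \varepsilon_{i-1} + \varepsilon_i$, the element $z^{I \setminus \{i\}}_{\gamma^{\vee}}$ obtained from Lemma \ref{lem:J-ad} matches that computed in Step 2 of Proposition \ref{prop:Q=C}, because the relevant parabolic component $I_1 = [i-1]$ is of type $A_{i-1}$ in both type $B$ and type $C$, and $v^{I_2}_0 = e$ whether $I_2 = [i+1,n]$ is of type $B_{n-i}$ or $C_{n-i}$. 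Consequently $\lfloor wr_{\gamma} \rfloor$ has precisely the form described there, namely $\lfloor wr_{\gamma} \rfloor(1) = w(\overline{i})$, $\lfloor wr_{\gamma} \rfloor(2) = w(\overline{i-1})$, $\lfloor wr_{\gamma} \rfloor(u) = w(u-2)$ for $u \in [3,i]$, and $\lfloor wr_{\gamma} \rfloor(u) = w(u)$ for $u \in [i+1,n]$.

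Now the computation of the counts $\mathsf{a}_s$, $\mathsf{b}_s$, $\mathsf{e}_s$ carried out in items (1)--(4) of Step 2 of the proof of Proposition \ref{prop:Q=C} depends only on the signed permutation realization of the Weyl group, the sequence \eqref{eq:seq-C2}, and the length formula in Lemma \ref{lem:Gr-C}. All of these are intrinsic to the common Weyl-group structure and independent of whether the ambient root system is of type $B$ or $C$. Applying those counts in our setting yields
\begin{equation*}
\ell(\lfloor wr_{\gamma} \rfloor) - \ell(w) = 2i - 4n + 1 + 2(w(\overline{i}) - 1) + 2(w(\overline{i-1}) - 2).
\end{equation*}
Under the hypotheses $w(\overline{i}) = 1$ and $w(\overline{i-1}) = 2$ of (q-B3), the two correction terms vanish and we obtain exactly the desired identity $\ell(\lfloor wr_{\gamma} \rfloor) - \ell(w) = 2i - 4n + 1$, establishing $\mathrm{Q}(i,w,\gamma)$.

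There is little obstacle here: the hard combinatorial work was already done in Proposition \ref{prop:Q=C}, and the present argument amounts to observing that the same length computation, which in type $C_n$ contradicted the quantum-edge length condition $2i - 4n - 1$, in type $B_n$ precisely matches the quantum-edge length condition $2i - 4n + 1$ at its minimum value, and that this minimum is achieved exactly at the configuration specified by (q-B3).
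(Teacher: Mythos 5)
Your proof is correct and follows essentially the same route as the paper: reduce $\mathrm{Q}(i,w,\gamma)$ to the length identity $\ell(\lfloor wr_{\gamma} \rfloor) - \ell(w) = 2i-4n+1$ via Lemmas \ref{lem:<g,r>} and \ref{lem:<g,r>-B}, reuse the computation from Step 2 of the proof of Proposition \ref{prop:Q=C} to get $\ell(\lfloor wr_{\gamma} \rfloor) - \ell(w) = 2i-4n+1+2(w(\overline{i})-1)+2(w(\overline{i-1})-2)$, and conclude from $w(\overline{i})=1$, $w(\overline{i-1})=2$. Your extra remarks justifying why the type-$C$ computation transfers (identical $z_{\gamma^{\vee}}^{I\setminus\{i\}}$, common signed-permutation length formula) are a welcome elaboration of the paper's terser "by the same argument."
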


\begin{proof}
Assume that 
(q-B3)
is true; 
note that 
$c_i(\gamma^{\vee}) = 2$. 
We see from Lemmas \ref{lem:<g,r>} and \ref{lem:<g,r>-B} that 
$\mathrm{Q}(i,w,\gamma)$ 
is equivalent to 
$\ell(\lfloor wr_{\gamma} \rfloor) - \ell(w)
=2i - 4n + 1$. 
By the same argument as in Step 2
of the proof of Proposition \ref{prop:Q=C}
in \S \ref{subsection:tab-cri-C}, 
we have 
$\ell(\lfloor wr_{\gamma} \rfloor) - \ell(w)
=
2i - 4n + 1 + 
2(w(\overline{i})-1)
+ 
2(w(\overline{i-1})-2)$.
Since 
$w(\overline{i}) = 1$
and 
$w(\overline{i-1}) = 2$
by 
(q-B3), 
we conclude that 
$\ell(\lfloor wr_{\gamma} \rfloor) - \ell(w)
=
2i - 4n + 1$. 
\end{proof}

\begin{lem} \label{lem:Q->B4}
$\mathrm{Q}(n,w,\gamma)$
is equivalent to 
(q-B4).
\end{lem}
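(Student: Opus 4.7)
The plan is to reduce Lemma~\ref{lem:Q->B4} to the length-count already carried out in Step 2 of the proof of Proposition~\ref{prop:Q=C}, in exact analogy with the argument in Lemmas~\ref{lem:Q->B3}--\ref{lem:B3->Q}. By Lemma~\ref{lem:LS} together with Lemma~\ref{lem:J-ad-B}(4), we may assume $\gamma^{\vee} = \varepsilon_{n-1} + \varepsilon_n \in Q^{\vee, I \setminus \{n\}}$, so that $r_{\gamma} = (n-1 \ \overline{n})(\overline{n-1} \ n)$ and $c_n(\gamma^{\vee}) = 1$. Lemmas~\ref{lem:<g,r>} and \ref{lem:<g,r>-B} then show that $\mathrm{Q}(n,w,\gamma)$ is equivalent to the single length identity
\[
\ell(\lfloor wr_{\gamma} \rfloor) - \ell(w) = 1 - 2n.
\]

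First, I would compute $z_{\gamma^{\vee}}^{I \setminus \{n\}}$ explicitly. Since $I \setminus \{n\} = [n-1]$ is of type $A_{n-1}$, one verifies via Lemma~\ref{lem:J-ad} that the pair $(j, \phi_{I\setminus\{n\}}(\gamma^{\vee})) = (n-2, 0)$ satisfies the required condition, using the identity $\varpi_{n-2}^{\vee} + \varepsilon_{n-1} + \varepsilon_n = \varpi_n^{\vee}$ in the $B_n$ coweight lattice. Hence $z_{\gamma^{\vee}}^{I \setminus \{n\}} = w_0^{[n-1]} w_0^{[n-1] \setminus \{n-2\}}$, and a direct cycle-computation using Lemma~\ref{lem:LS} yields
\[
\lfloor wr_{\gamma} \rfloor(1) = w(\overline{n}), \quad \lfloor wr_{\gamma} \rfloor(2) = w(\overline{n-1}), \quad \lfloor wr_{\gamma} \rfloor(u) = w(u-2) \ \text{for} \ u \in [3,n].
\]

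The crucial observation is that this one-line notation agrees verbatim with the one derived in Step 2 of the proof of Proposition~\ref{prop:Q=C}, with the index $i$ specialized to $n$. Because the statistics $\mathsf{a}_s(w), \mathsf{b}_s(w), \mathsf{e}_s(w)$ depend solely on the one-line notation of $w \in W$, and the length formula $\ell(w) = \sum_s (\mathsf{a}_s(w) + \mathsf{b}_s(w) + \mathsf{e}_s(w))$ is common to types $B_n$ and $C_n$ (the Weyl groups and their Coxeter generators coincide as permutations of $\mathcal{C}_n$), the bookkeeping of items (1)--(4) in that Step 2 transports without alteration. This produces
\[
\ell(\lfloor wr_{\gamma} \rfloor) - \ell(w) = 1 - 2n + 2\bigl(w(\overline{n})-1\bigr) + 2\bigl(w(\overline{n-1})-2\bigr).
\]
Because $w \in W^{I \setminus \{n\}}$ forces $w(\overline{n}) \geq 1$ and $w(\overline{n-1}) \geq 2$ (cf.\ \eqref{eq:seq-C2} specialized to $i = n$), this equals $1-2n$ if and only if $w(\overline{n}) = 1$ and $w(\overline{n-1}) = 2$. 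Combined with the explicit description of $\lfloor wr_{\gamma} \rfloor$, this is exactly (q-B4), and conversely, proving both directions simultaneously.

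The main obstacle is verifying $(j, \phi_{I\setminus\{n\}}(\gamma^{\vee})) = (n-2, 0)$: although this pair is identical to the one appearing in Step 2 of Proposition~\ref{prop:Q=C}, the check has to be redone because the $B_n$ fundamental coweight $\varpi_n^{\vee}$ differs from its $C_n$ counterpart. Once this bookkeeping is in place, the remainder of the proof is essentially a citation of the length identity from Step 2 of Proposition~\ref{prop:Q=C} with $i = n$, and no type-$B$-specific recomputation of $\mathsf{a}_s$, $\mathsf{b}_s$, $\mathsf{e}_s$ is needed.
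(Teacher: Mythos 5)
Your argument is correct, and both the reduction via Lemmas \ref{lem:LS}, \ref{lem:J-ad-B}, \ref{lem:<g,r>}, and \ref{lem:<g,r>-B} and the computation of $z_{\gamma^{\vee}}^{I\setminus\{n\}}$ match the paper; where you diverge is in how the two implications are closed. The paper proves the forward implication with no length count at all: once $\lfloor wr_{\gamma}\rfloor = wr_{\gamma}(z_{\gamma^{\vee}}^{I\setminus\{n\}})^{-1}$ is known, the chain \eqref{eq:seq-B2} follows from $w,\lfloor wr_{\gamma}\rfloor\in W^{I\setminus\{n\}}$, and counting norms (the $n-2$ letters $w(u)$, $u\in[n-2]$, must exhaust the norms in $[3,n]$) already forces $w(\overline{n})=1$ and $w(\overline{n-1})=2$; for the converse it recomputes the statistics from scratch, which at $i=n$ is lighter than the general Step~2 because every $\mathsf{a}_s$ vanishes. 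Your wholesale transport of the Step~2 identity $\ell(\lfloor wr_{\gamma}\rfloor)-\ell(w)=2i-4n+1+2(w(\overline{i})-1)+2(w(\overline{i-1})-2)$ to $i=n$ is legitimate --- the length function, the statistics $\mathsf{a}_s,\mathsf{b}_s,\mathsf{e}_s$, Lemma \ref{lem:Gr-C}, and the one-line description of $\lfloor wr_{\gamma}\rfloor$ are identical in types $B_n$ and $C_n$, and the paper itself performs exactly this transport in Lemmas \ref{lem:Q->B3}--\ref{lem:B3->Q} for $i\in[2,n-1]$ --- but you should flag the degeneration at $i=n$: since $[i+1,n]=\emptyset$, the intermediate identities of Step~2 such as $\mathsf{a}_1(\lfloor wr_{\gamma}\rfloor)=w(\overline{i})-1$ and $\mathsf{b}_i(w)=n-i-(w(\overline{i})-1)$ specialize to $0=w(\overline{n})-1$ and $0=1-w(\overline{n})$, so their derivations already establish $w(\overline{n})=1$ and $w(\overline{n-1})=2$ under the standing hypotheses (this is the paper's norm-counting argument in disguise), and your closing ``equals $1-2n$ if and only if $w(\overline{n})=1$ and $w(\overline{n-1})=2$'' is true but its ``only if'' half carries no information. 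The logic still closes in both directions, so this is a presentational caveat rather than a gap; your version buys economy by reusing Step~2, while the paper's buys a forward direction independent of any length computation and a visibly simpler converse.
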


\begin{proof}
By Lemmas \ref{lem:LS} and \ref{lem:J-ad-B}, 
we may assume that 
$\gamma^{\vee} = \varepsilon_{n-1} + \varepsilon_n \in Q^{\vee,I\setminus \{ n \}}$
and 
$\lfloor wr_{\gamma} \rfloor 
= 
wr_{\gamma}(z_{\gamma^{\vee}}^{I \setminus \{ n \}})^{-1}$. 
We see from Lemmas \ref{lem:<g,r>} and \ref{lem:<g,r>-B} that 
$\mathrm{Q}(n,w,\gamma)$ 
is equivalent to 
$\ell(\lfloor wr_{\gamma} \rfloor) - \ell(w)
=1-2n$. 

We first show that 
$\mathrm{Q}(n,w,\gamma)$
implies
(q-B4). 
Set 
$J = I \setminus \{ n \}$; 
note that 
$J$
is of type $A_{n-1}$. 
We see that 
$n-2 \in J_{\af}$
satisfies the condition for 
$\gamma^{\vee} \in Q^{\vee}$
in Lemma \ref{lem:J-ad}; 
note that 
$J \setminus \{ n-2 \} 
=
[n-3] \sqcup \{ n-1 \}$
is of type 
$A_{n-3} \times A_1$. 
Hence 
$z_{\gamma^{\vee}}^J
=
w_0^J w_0^{J\setminus \{n-2\}}
=
w_0^J w_0^{[n-3]} w_0^{\{n-1\}}$
is given by 
$u \mapsto u+2$
for 
$u \in [n-2]$, 
$n-1 \mapsto 1$, 
and 
$n \mapsto 2$. 
Then 
$\lfloor wr_{\gamma} \rfloor$
is given by 
$1 \mapsto w(\overline{n})$, 
$2 \mapsto w(\overline{n-1})$, 
and 
$u \mapsto w(u-2)$
for 
$u \in [3,n]$. 
It follows from Lemma \ref{lem:Gr-C} that
\begin{align} \label{eq:seq-B2}
\underbrace{w(\overline{n}) \prec w(\overline{n-1})}_{\preceq n} 
\prec w(1) \prec w(2) \prec \cdots \prec \ w(n-2) \prec 
\underbrace{w(n-1) \prec w(n)}_{\succeq \overline{n}}.
\end{align}
Hence 
$\|w(\overline{n})\| < \|w(\overline{n-1})\| < \|w(u)\|$
for 
$u \in [n-2]$, 
which implies 
$\lfloor wr_{\gamma} \rfloor (1) = w(\overline{n}) = 1$
and 
$\lfloor wr_{\gamma} \rfloor (2) = w(\overline{n-1}) = 2$.
This proves 
(q-B4). 

We next show that 
(q-B4)
implies 
$\mathrm{Q}(n,w,\gamma)$.
We see that 
(q-B4)
yields \eqref{eq:seq-B2}. 
If we prove that
\begin{enumerate}[(1)]
\item
$\mathsf{a}_s(\lfloor wr_{\gamma} \rfloor) = \mathsf{a}_s(w) = 0$
for 
$s \in [n]$, 
$\mathsf{b}_1(\lfloor wr_{\gamma} \rfloor) 
=
\mathsf{b}_2(\lfloor wr_{\gamma} \rfloor)
=
\mathsf{b}_n(w)
=0$,
$\mathsf{b}_{n-1}(w) = 1$, 
\item
$\mathsf{e}_1(\lfloor wr_{\gamma} \rfloor)
=
\mathsf{e}_2(\lfloor wr_{\gamma} \rfloor)
=
0$, 
$\mathsf{e}_{n-1}(w)
=
\mathsf{e}_n(w)
=
1$, 
$\mathsf{e}_s(\lfloor wr_{\gamma} \rfloor)
=
\mathsf{e}_{s-2}(w)$
for 
$s \in [3,n]$, 
\item
$\mathsf{b}_s(\lfloor wr_{\gamma} \rfloor)
=
\mathsf{b}_{s-2}(w) - 2$
for 
$s \in [3,n]$,
\end{enumerate}
then 
$\ell(\lfloor wr_{\gamma} \rfloor) - \ell(w) = 1-2n$, 
which implies 
$\mathrm{Q}(n,w,\gamma)$. 
We prove (1)--(3) as follows. 
(1)--(2) follows from \eqref{eq:seq-B2}. 
We deduce from 
$n-1,n \in \mathsf{B}_{s-2}(w)$
that the map 
\begin{align*}
\mathsf{B}_s(\lfloor wr_{\gamma} \rfloor)
&=
\{ 
t \in [s+1,n] \mid 
\underbrace{\lfloor wr_{\gamma} \rfloor(s)}_{=w(s-2)}
\succ 
\underbrace{\sigma(\lfloor wr_{\gamma} \rfloor(t))}_{=\sigma(w(t-2))}
\} \\
&\to 
\mathsf{B}_{s-2}(w) \setminus \{ n-1,n \}, \ 
t \mapsto t-2
\end{align*}
is bijective, 
which implies 
$\mathsf{b}_s(\lfloor wr_{\gamma} \rfloor) = \mathsf{b}_{s-2}(w) - 2$
for 
$s \in [3,n]$. 
This proves (3). 
\end{proof}

\begin{proof}[Proof of Proposition \ref{prop:tab-cri-B}]
(1) and (3)--(4) follow by the same method as in the proof 
of Proposition \ref{prop:tab-cri-A}. 

We prove (2). 
The assertion for 
$i = 1$
follows immediately from 
Proposition \ref{prop:SiB-B}. 
Also, 
we can prove the assertion for 
$i = n$
by a similar argument to the proof of 
Proposition \ref{prop:tab-cri-C}. 
Assume that
$i \in [2,n-1]$.
Let 
$x,y \in (W^{I \setminus \{ i \}})_{\af}$, 
$\mathcal{Y}_i^{B_n}(x) = (\mathsf{T},c)$, 
and 
$\mathcal{Y}_i^{B_n}(y) = (\mathsf{T}',c')$.
It follows immediately from 
Propositions \ref{prop:Bruhat-B}--\ref{prop:SiB-B} that 
$x \preceq y$
implies 
$c \leq c'$
and 
$\mathsf{T}(u) \preceq \mathsf{T}'(u+c'-c)$
for 
$u \in [i-c'+c]$. 
Hence we may assume that 
$d := c' - c \geq 0$
and 
$\mathsf{T}(u) \preceq \mathsf{T}'(u+d)$
for 
$u \in [i-d]$.
The proof is by induction on $d$. 
The assertion for $d = 0$ follows immediately from 
Propositions \ref{prop:Bruhat-B} and \ref{prop:SiB-B}. 
Assume that $d \geq 1$. 
In what follows, we write 
$e = 
\#\{ u \in [i] \mid \mathsf{T}(u) \succeq \overline{n} \}$
and 
$\{ a_1 < a_2 < \cdots < a_{n-i+e} \}
= [n] 
\setminus 
\{ \mathsf{T}(u) \mid u \in [i] \}$. 
We have 
$\overline{n} \preceq \mathsf{T}(i-u+1) \preceq \overline{a_u}$
for 
$u \in [e]$. 
Note that 
$\mathsf{T}(i-d) \preceq n$
is equivalent to 
$e \leq d$. 
We have divided the proof into seven steps.

\begin{proof}[Step 1]
We prove that if 
$d \geq 2$ is even, 
then 
$x \prec y$. 
Let 
$s \in [i+1]$
be such that 
$\mathsf{T}(s) \succeq s+2$
and 
$\mathsf{T}(u) \preceq u+1$
for 
$u \in [s-1]$. 
Define 
$\mathsf{T}_1, \mathsf{T}_2 \in \mathrm{CST}_{B_n}(\varpi_i)$
by 
\begin{align*}
\mathsf{T}_1(u)
&=
\begin{cases}
u+2 & \text{if} \ u \in [\min\{s-1,i-2\}], \\
\mathsf{T}(u) & \text{if} \ u \in [s,i-2], \\
\overline{2} & \text{if} \ u = i-1, \\
\overline{1} & \text{if} \ u = i, 
\end{cases} \\
\mathsf{T}_2(u)
&=
\begin{cases}
1 & \text{if} \ u=1, \\
2 & \text{if} \ u=2, \\
\mathsf{T}_1(u-2) & \text{if} \ u \in [3,i].
\end{cases}
\end{align*}
Let 
$x_1,x_2 \in (W^{I \setminus \{ i \}})_{\af}$
be such that 
$\mathcal{Y}_i^{B_n}(x_1) = (\mathsf{T}_1,c)$
and 
$\mathcal{Y}_i^{B_n}(x_2) = (\mathsf{T}_2,c+2)$. 
We have 
$\mathsf{T}(u) \preceq \mathsf{T}_1(u)$
for 
$u \in [i]$. 
Hence 
$x \preceq x_1$
by the assertion for $d = 0$. 
We have 
$x_1 \prec x_2$
by Proposition \ref{prop:SiB-B} ($\si$-B7). 
It remains to prove that 
$x_2 \preceq y$. 
By induction hypothesis, 
it suffices to show that 
$\mathsf{T}_2(u) \preceq \mathsf{T}'(u+d-2)$
for 
$u \in [i-d+2]$; 
note that 
$c'-(c+2) = d-2 \geq 0$
is even. 
We have 
$\mathsf{T}_2(1)
=
1
\preceq 
\mathsf{T}'(1+d-2)$
and 
$\mathsf{T}_2(2)
=
2
\preceq 
\mathsf{T}'(2+d-2)$. 
Let 
$u \in [i-d+2] \cap [3,i]$. 
If 
$u-2 \in [\min\{s-1,i-2\}]$, 
then 
$\mathsf{T}_2(u)
=
u
\preceq 
\mathsf{T}'(u)
\prec
\mathsf{T}'(u+d-2)$. 
If 
$u-2 \in [s,i-2]$, 
then 
$\mathsf{T}_2(u)
=
\mathsf{T}(u-2)
\preceq
\mathsf{T}'(u+d-2)$. 
\end{proof}

\begin{proof}[Step 2]
We prove that if 
$x \prec y$, 
$d$ is odd, 
and 
$d \in [i-1]$, 
then 
$\mathsf{T}(i-d) \preceq n$, 
or equivalently, 
$e \leq d$.
Since $d$ is odd, 
we see from Proposition \ref{prop:SiB-B} that 
there exists an edge 
$x_1 \xrightarrow{\ \beta \ } x_2$, 
$x_1,x_2 \in (W^{I \setminus \{ i \}})_{\af}$, 
$\beta \in \Delta_{\af}^+$, 
in 
$\mathrm{SiB}^{I \setminus \{ i \}}$
of type ($\si$-B6) 
such that 
$x \preceq x_1$
and 
$x_2 \preceq y$; 
we may assume that there is no edges 
of type ($\si$-B6) 
in a directed path 
$\mathbf{p}$
from $x$ to $x_1$
in $\mathrm{SiB}^{I \setminus \{ i \}}$. 
Write 
$\mathcal{Y}_i^{B_n}(x_1) = (\mathsf{T}_1,c_1)$
and 
$\mathcal{Y}_i^{B_n}(x_2) = (\mathsf{T}_2,c_2)$; 
note that 
$c_1 - c$
is even.
Set 
$e_1 = \# \{ u \in [i] \mid \mathsf{T}_1(u) \succeq \overline{n} \}$; 
we have 
$e_1 = 1$
by ($\si$-B6). 
It follows from 
Proposition \ref{prop:SiB-B}
that there exist 
$(c_1 - c)/2$ 
edges of type ($\si$-B7) 
in the directed path 
$\mathbf{p}$, 
and hence 
$1 = e_1 \geq e - (c_1 - c)$.
Since 
$c' - c_2 \geq 0$
and 
$c_2 - c_1 = 1$, 
we have 
$d 
= 
c' - c 
= 
(c' - c_2) + (c_2 - c_1) + (c_1 - c) 
\geq 
1 + (c_1 - c) 
\geq 
e$. 
\end{proof}

\begin{proof}[Step 3]
We prove that if 
$x \prec y$, 
$d$ is odd, 
$d \in [i]$, 
and 
$\mathsf{T}(i-d+1) = \overline{a_d}$, 
then 
$1 \prec a_d \prec \mathsf{T}'(a_d)$; 
we see from 
Step 2 and 
$\mathsf{T}(i-d+1) = \overline{a_d}$
that 
$e = d$
and 
$a_d \in [d,i]$. 
We proceed by induction on $d$. 

Assume that 
$d = 1$. 
We first prove that 
$1 \prec a_1$. 
It follows from 
$d = 1$, 
$x \prec y$, 
and 
Proposition \ref{prop:SiB-B}
that there exists an edge
$x_1 \xrightarrow{\ \beta \ } x_2$, 
$x_1,x_2 \in (W^{I \setminus \{ i \}})_{\af}$, 
$\beta \in \Delta_{\af}^+$, 
in 
$\mathrm{SiB}^{I \setminus \{ i \}}$
of type ($\si$-B6) 
such that 
$x \preceq x_1$
and 
$x_2 \preceq y$. 
If we write 
$\mathcal{Y}_i^{B_n}(x_1) = (\mathsf{T}'',c'')$, 
then 
$c''=c$, 
$\mathsf{T}(u) \preceq \mathsf{T}''(u)$
for 
$u \in [i]$, 
and 
$\mathsf{T}''(i) \prec \overline{1}$
(see ($\si$-B6)). 
Hence 
$\overline{a_1}
=
\mathsf{T}(i)
\preceq 
\mathsf{T}''(i)
\prec
\overline{1}$, 
which gives 
$1 \prec a_1$. 

Assume that 
$d = 1$. 
We next prove that 
$a_1 \prec \mathsf{T}'(a_1)$. 
Suppose, contrary to our claim, that 
$\mathsf{T}'(a_1) \preceq a_1$.
By a similar argument above, 
we see from
Proposition \ref{prop:SiB-B} 
(see ($\si$-B6))
that 
there exist
$l \in [2,a_1]$
and 
$\mathsf{T}_1,\mathsf{T}_2 \in \mathrm{CST}_{B_n}(\varpi_i)$
such that 
$\mathsf{T}(u) \preceq \mathsf{T}_1(u)$
for 
$u \in [i]$, 
$\mathsf{T}_1(u) = u+1$
for 
$u \in [l-2]$, 
$\mathsf{T}_1(i) = \overline{l}$, 
$\mathsf{T}_2(1) = 1$, 
$\mathsf{T}_2(u) = \mathsf{T}_1(u-1)$
for 
$u \in [2,i]$, 
and 
$\mathsf{T}_2(u) \preceq \mathsf{T}'(u)$
for 
$u \in [i]$; 
it follows from Proposition \ref{lem:Gr-C} that 
$\mathsf{T}_1(u) \succeq u+2$
for 
$u \in [l-1,i-1]$. 
Since $a_1-1 \in [l-1,i-1]$, 
we have 
$\mathsf{T}_2(a_1) 
= 
\mathsf{T}_1(a_1-1) 
\succeq 
a_1+1
\succ 
a_1
\succeq 
\mathsf{T}'(a_1)$, 
contrary to 
$\mathsf{T}_2(a_1) \preceq \mathsf{T}'(a_1)$. 

Assume that 
$d \geq 3$. 
Since 
$a_d \in [d,i]$, 
we have 
$1 \prec a_d$. 
It remains to prove that 
$a_d \prec \mathsf{T}'(a_d)$.
Define 
$\mathsf{T}_3,\mathsf{T}_4 \in \mathrm{CST}_{B_n}(\varpi_i)$
by 
\begin{align*}
\mathsf{T}_3(u)
&=
\begin{cases}
u+2 & \text{if} \ u \in [a_2-2], \\
\mathsf{T}(u) & \text{if} \ u \in [a_2-1,i-2], \\
\overline{2} & \text{if} \ u = i-1, \\
\overline{1} & \text{if} \ u = i,
\end{cases} \\
\mathsf{T}_4(u)
&=
\begin{cases}
1 & \text{if} \ u = 1, \\
2 & \text{if} \ u = 2, \\
\mathsf{T}_3(u-2) & \text{if} \ u \in [3,i]; 
\end{cases}
\end{align*}
note that if 
$u \in [i-2]$
and 
$\mathsf{T}(u) \succ a_2$, 
then 
$u \in [a_2-1,i-2]$
and 
$\mathsf{T}_3(u) = \mathsf{T}(u)$. 
Let 
$x_4 \in (W^{I \setminus \{ i \}})_{\af}$
be such that 
$\mathcal{Y}_i^{B_n}(x_4) = (\mathsf{T}_4,c+2)$. 
We see that 
$c' - (c+2) = d-2$
is odd, 
$d-2 \in [i-2]$, 
$\mathsf{T}_4(i-(d-2))
\preceq 
n$
(by Step 2), 
and 
$\mathsf{T}_4(i-(d-2)+1)
=
\mathsf{T}_3(i-d+1)
=
\mathsf{T}(i-d+1)
=
\overline{a_d}$; 
note that 
$i-d+1 \in [i-2]$
and 
$[n] \setminus \{ \mathsf{T}_4(u) \mid u \in [i-d+2]\}
=
\{ a_3 < \cdots < a_{n-i+1} \}$. 
Therefore, 
if we prove that 
$x_4 \preceq y$, 
then 
$a_d \prec \mathsf{T}'(a_d)$
by induction hypothesis. 

Let us prove that 
$x_4 \preceq y$. 
It suffices to show that  
$\mathsf{T}'' \in \mathrm{CST}_{B_n}(\varpi_i)$
and 
$(\mathsf{T},c) \prec (\mathsf{T}'',c+2)$
imply
$(\mathsf{T}_4,c+2) \preceq (\mathsf{T}'',c+2)$. 
Indeed, we see from 
Proposition \ref{prop:SiB-B}
that there exists 
$x'' \in (W^{I \setminus \{ i \}})_{\af}$
such that 
$x \prec x'' \preceq y$
and 
$\mathcal{Y}_i^{B_n}(x'')
=
(\mathsf{T}'',c+2)$
for some 
$\mathsf{T}'' \in \mathrm{CST}_{B_n}(\varpi_i)$. 
Since 
$(c+2)-c = 2$
is even, 
$x \prec x''$
is equivalent to 
$(\mathsf{T},c) \prec (\mathsf{T}'',c+2)$
by Step 1. 
Also, 
$x_4 \preceq x''$
is equivalent to 
$(\mathsf{T}_4,c+2) \preceq (\mathsf{T}'',c+2)$. 
Therefore, 
$(\mathsf{T}_4,c+2) \preceq (\mathsf{T}'',c+2)$
(and 
$x'' \preceq y$)
yield
$x_4 \preceq y$. 

Assume that 
$\mathsf{T}'' \in \mathrm{CST}_{B_n}(\varpi_i)$
and 
$(\mathsf{T},c) \prec (\mathsf{T}'',c+2)$. 
We have 
$\mathsf{T}_4(u)
=
u
\preceq 
\mathsf{T}''(u)$
for 
$u \in [a_2]$, 
and 
$\mathsf{T}_4(u)
=
\mathsf{T}(u-2)
\preceq 
\mathsf{T}''(u)$
for 
$u \in [a_2+1,i]$.
Hence 
$(\mathsf{T}_4,c+2) \preceq (\mathsf{T}'',c+2)$. 
\end{proof}

\begin{proof}[Step 4]
We prove that 
$d = 1$
and 
(ii) in Definition \ref{def:SiB-B} (2)
for 
$(\mathsf{T},c)$ 
and 
$(\mathsf{T}',c')$
imply
$x \prec y$. 

We first assume that 
($i \neq n-1$
and 
$\mathsf{T}(i) \preceq n$)
or 
($\overline{n} \preceq \mathsf{T}(i) \prec \overline{a_1}$). 
Then 
$n-i+e \geq 2$
and 
$\mathsf{T}(i) \preceq \overline{a_2}$. 
Define
$\mathsf{T}_1, \mathsf{T}_2 \in \mathrm{CST}_{B_n}(\varpi_i)$
by
\begin{align*}
\mathsf{T}_1(u)
&=
\begin{cases}
u+1 & \text{if} \ u \in [a_2-2], \\
\mathsf{T}(u) & \text{if} \ u \in [a_2-1,i-1], \\
\overline{a_2} & \text{if} \ u = i,
\end{cases} \\
\mathsf{T}_2(u)
&=
\begin{cases}
1 & \text{if} \ u = 1, \\
\mathsf{T}_1(u-1) & \text{if} \ u \in [2,i].
\end{cases}
\end{align*}
Let 
$x_1,x_2 \in (W^{I \setminus \{ i \}})_{\af}$
be such that 
$\mathcal{Y}_i^{B_n}(x_1) = (\mathsf{T}_1,c)$
and 
$\mathcal{Y}_i^{B_n}(x_2) 
= (\mathsf{T}_2,c')
= (\mathsf{T}_2,c+1)$. 
We have 
$\mathsf{T}(u)
=
u 
\prec
u+1
= 
\mathsf{T}_1(u)$
for 
$u \in [a_1-1]$, 
$\mathsf{T}(u)
= 
\mathsf{T}_1(u)$
for 
$u \in [a_1,i-1]$, 
and 
$\mathsf{T}(i) 
\preceq 
\overline{a_2} 
= 
\mathsf{T}_1(i)$. 
Hence 
$x \preceq x_1$
by the assertion for $d = 0$. 
We have 
$x_1 \preceq x_2$
by Proposition \ref{prop:SiB-B} ($\si$-B6). 
We have 
$\mathsf{T}_2(u) 
= 
u 
\preceq 
\mathsf{T}'(u)$
for 
$u \in [a_2-1]$, 
and 
$\mathsf{T}_2(u) 
=
\mathsf{T}(u-1) 
\preceq 
\mathsf{T}'(u)$
for 
$u \in [a_2,i]$. 
Hence 
$x_2 \preceq y$
by the assertion for $d = 0$. 
Consequently, 
we have 
$x \prec y$.

We next assume that 
$i = n-1$
and 
$\mathsf{T}(n-1) \preceq n$; 
note that 
$n-i+e = 1$. 
Define
$\mathsf{T}_1,\mathsf{T}_2 \in \mathrm{CST}_{B_n}(\varpi_{n-1})$
to be such that 
$\mathsf{T}_1(u)
=
\mathsf{T}(u)$
for 
$u \in [n-2]$,  
$\mathsf{T}_1(n-1)
=
\overline{a_1}$, 
$\mathsf{T}_2(1)
=
1$, 
and 
$\mathsf{T}_2(u)
=
\mathsf{T}_1(u-1)$
for 
$u \in [2,n-1]$. 
Let 
$x_1,x_2 \in (W^{I \setminus \{ i \}})_{\af}$
be such that 
$\mathcal{Y}_i^{B_n}(x_1) = (\mathsf{T}_1,c)$
and 
$\mathcal{Y}_i^{B_n}(x_2) 
= (\mathsf{T}_2,c')
= (\mathsf{T}_2,c+1)$. 
By a similar argument above, 
we can see that 
$x \preceq x_1 \prec x_2 \preceq y$.

We next assume that 
$\mathsf{T}(i) = \overline{a_1}$
and 
$1 \prec a_1 \prec \mathsf{T}'(a_1)$. 
Then 
$e = d$, 
$n-i+e \geq 2$, 
$a_2 \in [3,i+1]$, 
and 
$u+1 \preceq \mathsf{T}'(u)$
for 
$u \in [a_1,i]$
(see
(ii) in Definition \ref{def:SiB-B} (2)). 
Define
$\mathsf{T}_3, \mathsf{T}_4 \in \mathrm{CST}_{B_n}(\varpi_i)$
by
\begin{align*}
\mathsf{T}_3(u)
&=
\begin{cases}
u+1 & \text{if} \ u \in [a_1-2], \\
u+2 & \text{if} \ u \in [a_1-1,a_2-2], \\
\mathsf{T}(u) & \text{if} \ u \in [a_2-1,i], 
\end{cases} \\
\mathsf{T}_4(u)
&=
\begin{cases}
1 & \text{if} \ u = 1, \\
\mathsf{T}_3(u-1) & \text{if} \ u \in [2,i].
\end{cases}
\end{align*}
Let 
$x_3,x_4 \in (W^{I \setminus \{ i \}})_{\af}$
be such that 
$\mathcal{Y}_i^{B_n}(x_3) = (\mathsf{T}_3,c)$
and 
$\mathcal{Y}_i^{B_n}(x_4) 
= (\mathsf{T}_4,c')
= (\mathsf{T}_4,c+1)$. 
We have 
$\mathsf{T}(u)
=
u
\prec 
u+1
= 
\mathsf{T}_3(u)$
for 
$u \in [a_1-2]$, 
$\mathsf{T}(a_1-1)
=
a_1-1
\prec 
a_1+1
= 
\mathsf{T}_3(a_1-1)$, 
$\mathsf{T}(u)
=
u+1
\prec 
u+2
= 
\mathsf{T}_3(u)$
for 
$u \in [a_1,a_2-2]$,
and 
$\mathsf{T}(u)
=
\mathsf{T}_3(u)$
for 
$u \in [a_2-1,i]$. 
Hence 
$x \preceq x_3$
by the assertion for $d = 0$. 
We have 
$x_3 \preceq x_4$
by Proposition \ref{prop:SiB-B} ($\si$-B6). 
We have 
$\mathsf{T}_4(u)
= 
u
\preceq 
\mathsf{T}'(u)$
for 
$u \in [a_1-1]$, 
$\mathsf{T}_4(u) 
= 
\mathsf{T}_3(u-1)
=
u+1
\preceq 
\mathsf{T}'(u)$
for 
$u \in [a_1,a_2-1]$, 
and 
$\mathsf{T}_4(u) 
= 
\mathsf{T}(u-1)
\preceq 
\mathsf{T}'(u)$
for 
$u \in [a_2,i]$.
Hence 
$x_4 \preceq y$
by the assertion for $d = 0$. 
Consequently, 
we have 
$x \prec y$.
\end{proof}

\begin{proof}[Step 5]
We prove that if 
$d$ is odd, 
$d = e \in [3,i]$,
$\mathsf{T}(i-d+1) = \overline{a_d}$, 
and 
$1 \prec a_d \prec \mathsf{T}'(a_d)$, 
then 
$x \prec y$. 
Let 
$\mathsf{T}_3,\mathsf{T}_4 \in \mathrm{CST}_{B_n}(\varpi_i)$
be as in Step 3, 
and let
$x_3,x_4 \in (W^{I \setminus \{ i \}})_{\af}$
be such that  
$\mathcal{Y}_i^{B_n}(x_3)
=
(\mathsf{T}_3, c)$
and 
$\mathcal{Y}_i^{B_n}(x_4)
=
(\mathsf{T}_4, c+2)$. 
We have 
$\mathsf{T}(u)
\preceq 
\mathsf{T}_3(u)$
for 
$u \in [i]$, 
and hence 
$x \preceq x_3$
by the assertion for 
$d = 0$. 
We have 
$x_3 \prec x_4$
by Proposition \ref{prop:SiB-B} ($\si$-B7). 
Hence 
$x \prec x_4$. 
By the same method as in Step 3, 
we see that 
$c' - (c+2)
= 
d-2$
is odd, 
$d-2 \in [i-2]$, 
$\mathsf{T}_4(i-(d-2))
\preceq 
n$, 
$\mathsf{T}_4(i-(d-2)+1)
=
\overline{a_d}$, 
and 
$\{ a_3 < \cdots < a_{n-i+d} \}
=
[n] \setminus \{ \mathsf{T}_4(u) \mid u \in [i]\}$. 
Also, we have 
$\mathsf{T}_4(u)
\preceq 
\mathsf{T}'(u+d-2)$
for 
$u \in [i-d+2]$, 
because
$\mathsf{T}_4(u)
=
u
\preceq
\mathsf{T}'(u)
\preceq
\mathsf{T}'(u+d-2)$
for 
$u \in [\min \{ a_2,i-d+2 \}]$, 
and
$\mathsf{T}_4(u)
=
\mathsf{T}(u-2)
\preceq
\mathsf{T}'(u+d-2)$
for 
$u \in [a_2+1,i-d+2]$. 
Consequently, 
$(\mathsf{T}_4,c+2)
\preceq 
(\mathsf{T}',c')$
(see
(ii) in Definition \ref{def:SiB-B} (2)). 
By induction hypothesis (and Step 4), 
we have 
$x_4 \preceq y$. 
Hence 
$x \prec y$. 
\end{proof}

\begin{proof}[Step 6]
We prove that if 
$d$ is odd, 
$d \in [3,i]$, 
and 
$\mathsf{T}(i-d+1) \preceq n$, 
then 
$x \prec y$. 
We proceed by induction on $e$; 
note that 
$e \in [0,d-1]$. 

Assume that 
$e = 0$. 
By the same method as in Step 4, 
we can find 
$x_1,x_2 \in (W^{I \setminus \{ i \}})_{\af}$
and 
$\mathsf{T}_1,\mathsf{T}_2 \in \mathrm{CST}_{B_n}(\varpi_i)$
such that 
$\mathcal{Y}_i^{B_n}(x_1) = (\mathsf{T}_1,c)$, 
$\mathcal{Y}_i^{B_n}(x_2) = (\mathsf{T}_2,c+1)$, 
$x \preceq x_1 \prec x_2$, 
and 
$\mathsf{T}_2(u) \preceq \mathsf{T}'(u+d-1)$
for 
$u \in [u-d+1]$. 
Since 
$c'-(c+1) = d-1$
is even, 
we have 
$x_2 \preceq y$
by Step 1. 
Hence 
$x \prec y$. 

Assume that 
$e \in [d-1]$. 
Let
$\mathsf{T}_4 \in \mathrm{CST}_{B_n}(\varpi_i)$
and 
$x_4 \in (W^{I \setminus \{ i \}})_{\af}$
be as in Step 3; 
since 
$n-i+e \geq 2$, 
$\mathsf{T}_4$
is well-defined. 
By the same method as in Step 5, 
we see that 
$x \prec x_4$, 
$c' - (c+2)
= 
d-2$
is odd, 
and 
$\mathsf{T}_4(u)
\preceq
\mathsf{T}'(u+d-2)$
for 
$u \in [i-d+2]$. 
It remains to prove that 
$x_4 \preceq y$.
We have 
$\mathsf{T}_4(i-d+3) 
\preceq 
\max\{ i-d+3,\mathsf{T}(i-d+1)\}
\preceq n$
and 
$\#\{ u \in [i] \mid \mathsf{T}_4(u) \succeq \overline{n} \} 
=
\max\{ 0,e-2\} < e$. 
If $d \geq 5$, 
then 
$c' - (c+2) \in [3,i]$, 
and hence 
$x_4 \preceq y$
by induction hypothesis. 
If 
$d = 3$, 
then 
$e \in [2]$, 
$c' - (c+2) = 1$, 
$\mathsf{T}_4(i) \preceq n$, 
and hence 
$x_4 \preceq y$
by Step 4.
Thus 
$x \prec y$. 
\end{proof}

\begin{proof}[Step 7]
We prove that if 
$d$ is odd
and 
$d > i$, 
then 
$x \prec y$. 
Note that 
$d \geq 3$
and 
$n-i+e \geq 1$. 

We first assume that 
$n-i+e = 1$. 
Then 
$i=n-1$
and 
$e=0$.
In a way similar to the case of $e=0$ in Step 6, 
we can see that 
$x \prec y$.

We next assume that 
$n-i+e \geq 2$. 
We proceed by induction on $d$. 
Let 
$\mathsf{T}_4 \in \mathrm{CST}_{B_n}(\varpi_i)$
and 
$x_4 \in (W^{I \setminus \{ i \}})_{\af}$
be as in Step 3; 
since 
$n-i+e \geq 2$, 
$\mathsf{T}_4$
is well-defined. 
By a similar argument to Step 5, 
we have 
$x \prec x_4$
and 
$\mathsf{T}_4(u) \preceq \mathsf{T}'(u+d-2)$
for 
$u \in [i-d+2]$. 
Note that 
$c' - (c+2) = d-2$
is odd. 
If 
$d = 3 = i+1$, 
then 
$d-2 = 1$, 
$\mathsf{T}_4(i) = 2 \preceq n$, 
and hence 
$x_4 \preceq y$
by Step 4.
If 
$d \geq 5$
and 
$d \in \{ i+1,i+2 \}$, 
then 
$d-2 \in [3,i]$, 
$\mathsf{T}_4(i-d+3)
\preceq 
\mathsf{T}_4(2)
= 2 \preceq n$, 
and hence 
$x_4 \preceq y$
by Step 6. 
If 
$d > i+2$, 
then 
$d - 2 > i$, 
and hence 
$x_4 \preceq y$
by induction hypothesis. 
Thus 
$x \prec y$. 
\end{proof}
The proof of Proposition \ref{prop:tab-cri-B} is complete. 
\end{proof}

\subsection{Type $D_n^{(1)}$} \label{subsection:tab-cri-D}

Fix an integer $n \geq 4$. 
Set 
$I = [n]$. 
We assume that the labeling of the vertices 
of the Dynkin diagram of type $D_n$ is as follows.
\begin{center}\ 
\xygraph{
\bullet ([]!{+(0,-.3)} {1}) - [r]
\bullet ([]!{+(0,-.3)} {2}) - [r] \cdots - [r]
\bullet ([]!{+(0,-.3)} {n - 2}) (
- []!{+(1,.5)} \bullet ([]!{+(0,-.3)} {n - 1}),
- []!{+(1,-.5)} \bullet ([]!{+(0,-.3)} {n}))}
\end{center}
Let 
$\varepsilon_1,\varepsilon_2,\ldots ,\varepsilon_n$
be an orthonormal basis of an 
$n$-dimensional Euclidean space $\BR^n$. 
Let 
$\Delta 
= 
\{ \pm(\varepsilon_s \pm \varepsilon_t) \mid s,t \in [n],\ s < t \}$
be a root system of type $D_n$, 
and let 
$\Pi 
=
\{ \alpha_s = \varepsilon_s - \varepsilon_{s+1} \mid s \in [n-1] \}
\sqcup
\{ \alpha_n = \varepsilon_{n-1} + \varepsilon_n \}$
be a simple root system of $\Delta$. 

Let 
$W$
be the Weyl group of 
$\Delta$. 
Note that 
$W$ 
acts faithfully on 
$\{ \pm\varepsilon_s \mid s \in [n] \} \subset \BR^n$. 
Define a partially ordered set 
$\mathcal{D}_n$
by 
\begin{align}
\mathcal{D}_n 
=
\left\{ 
1 \prec 2 \prec \cdots \prec n-1 \prec \begin{array}{c} n \\ \overline{n} \end{array} \prec \overline{n-1} \prec \cdots \prec \overline{2} \prec \overline{1}
\right\}.
\end{align}
Let 
$\sigma : \mathcal{D}_n \to \mathcal{D}_n$
be the bijection defined by 
$s \leftrightarrow \overline{s}$
for 
$s \in [n]$. 
If we identify 
$\mathcal{D}_n$
with 
$\{ \pm\varepsilon_s \mid s \in [n] \}$
by 
$s = \varepsilon_s$
and 
$\overline{s} = -\varepsilon_s$
for 
$s \in [n]$, 
then $W$ can be described as follows:
\begin{align} \label{eq:Weyl-grp-D}
\begin{split}
W 
=
\{ w \in \mathfrak{S}(\mathcal{D}_n) \mid \ 
& 
w(\sigma(s)) = \sigma(w(s))
\ \text{for} \ 
s \in [n], \ \text{and} \\ 
&\#\{ s \in [n] \mid w(s) \succeq \overline{n} \} \
\text{is even} \} .
\end{split}
\end{align}
Note that 
$r_{\pm(\varepsilon_s - \varepsilon_t)}
= 
(s \ t)(\overline{s} \ \overline{t})$
and 
$r_{\pm(\varepsilon_s + \varepsilon_t)} 
=
(s \ \overline{t})(\overline{s} \ t)$
for 
$s,t \in [n]$, $s<t$. 

For 
$w \in \mathfrak{S}(\mathcal{D}_n)$
and 
$s \in [n]$, 
set
\begin{align}
\mathsf{A}_s(w)
&=
\{ t \in [s+1,n] \mid w(s) \succ w(t) \ \text{in} \ \mathcal{D}_n \}, \ \ 
\mathsf{a}_s(w) = \# \mathsf{A}_s(w), \\
\mathsf{B}_s(w)
&=
\{ t \in [s+1,n] \mid w(s) \succ \sigma(w(t)) \ \text{in} \ \mathcal{D}_n \}, \ \ 
\mathsf{b}_s(w) = \# \mathsf{B}_s(w); 
\end{align}
note that 
$\mathsf{a}_n(w) = \mathsf{b}_n(w) = 0$. 
The length function
$\ell : W \to \BZ_{\geq 0}$
is given by 
$\ell(w)
=
\sum_{s = 1}^n (\mathsf{a}_s(w) + \mathsf{b}_s(w))$
for 
$w \in W$. 
The longest element of $W$ is given by
$u \mapsto \overline{u}$, 
$u \in [n]$, 
if
$n$
is even, 
and 
$u \mapsto \overline{u}$, 
$u \in [n-1]$, 
if
$n$
is odd. 

Let 
$\| \cdot \| : \mathcal{D}_n \to [n]$
be the map defined by 
$\|s\| = s$
and 
$\|\overline{s}\| = s$
for 
$s \in [n]$.
We identify a totally ordered $i$-element subset 
$\mathsf{T} = \{ \mathsf{T}(1) \prec \mathsf{T}(2) \prec \cdots \prec \mathsf{T}(i) \} 
\subset 
\mathcal{D}_n$
with the column-strict tableau of the form
\eqref{eq:column-A}. 
For 
$i \in I \setminus \{ n-1 \}$
and 
$w \in W$, 
let 
$\mathsf{T}_w^{(i)} 
\subset
\mathcal{D}_n$
be the totally ordered $i$-element subset such that 
\begin{align}
\mathsf{T}_w^{(i)}
=
\left\{ \mathsf{T}_w^{(i)}(1) \prec \mathsf{T}_w^{(i)}(2) \prec \cdots \prec \mathsf{T}_w^{(i)}(i) \right\}
=
\{ w(1), w(2), \ldots , w(i) \}.
\end{align}
For 
$w \in W$, 
let 
$\mathsf{T}_w^{(n-1)} 
\subset
\mathcal{D}_n$
be the totally ordered $n$-element subset such that 
\begin{align}
\begin{split}
\mathsf{T}_w^{(n-1)}
&=
\left\{ \mathsf{T}_w^{(n-1)}(1) \prec \mathsf{T}_w^{(n-1)}(2) \prec \cdots \prec \mathsf{T}_w^{(n-1)}(n) \right\} \\
&=
\{ w(1), w(2), \ldots , w(n-1) , w(\overline{n}) \}.
\end{split}
\end{align}
For $i \in [n-2]$, 
let 
$\mathrm{CST}_{D_n}(\varpi_i)$
be the family of 
totally ordered $i$-element subsets
$\mathsf{T}$ 
of
$\mathcal{D}_n$
such that 
$\|\mathsf{T}(u)\|$, 
$u \in [i]$, 
are all distinct. 
Let 
$\mathrm{CST}_{D_n}(\varpi_{n-1})$
(resp. $\mathrm{CST}_{D_n}(\varpi_n)$)
be the family of totally ordered 
$n$-element subsets
$\mathsf{T}$
of 
$\mathcal{D}_n$
such that 
$\|\mathsf{T}(u)\|$, 
$u \in [n]$, 
are all distinct, 
and 
$\#\{ u \in [n] \mid \mathsf{T}(u) \succeq \overline{n} \}$
is odd 
(resp. even). 
The proof of the next lemma is standard 
(cf. \cite[\S 8.1]{BB}).

\begin{lem} \label{lem:Gr-D}
Let 
$i \in I$. 
We have
\begin{align*}
&W^{I \setminus \{ i \}}
= 
\\
&
\begin{cases}
\{ w \in W \mid 
w(1) \prec \cdots \prec w(i), \ \text{and} \\
\hspace{1.7cm}
w(i+1) \prec \cdots \prec w(n-1) \prec w(n) \prec w(\overline{n-1}) \} 
\ \text{if} \ i \in [n-2], \\[1mm]
\{ w \in W \mid 
w(1) \prec \cdots \prec w(n-1) \prec w(\overline{n}) \} 
\ \text{if} \ i = n-1, \\[1mm]
\{ w \in W 
\mid 
w(1) \prec \cdots \prec w(n-1) \prec w(n) \} 
\ \text{if} \ i = n.
\end{cases}
\end{align*}
If 
$w \in W^{I \setminus \{ i \}}$, 
then 
$\ell(w) = \sum_{s=1}^i(\mathsf{a}_s(w) + \mathsf{b}_s(w))$
and 
$\mathsf{A}_s(w) \subset [i+1,n]$
for 
$s \in [i]$.
The map 
$W^{I \setminus \{ i \}} \to \mathrm{CST}_{D_n}(\varpi_i)$, 
$w \mapsto \mathsf{T}_w^{(i)}$, 
is bijective. 
\end{lem}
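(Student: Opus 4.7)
The plan is to follow the standard approach for minimal coset representatives in classical Weyl groups, carried out for example in \cite[\S 8.1]{BB}, in three steps.

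First, to identify $W^{I \setminus \{i\}}$, I invoke the general fact that $w \in W^J$ exactly when $\ell(wr_j) > \ell(w)$ for every $j \in J$. Under the signed-permutation model \eqref{eq:Weyl-grp-D}, each simple reflection $r_s$, $s \in [n-1]$, corresponds to the transposition $(s\ s+1)(\overline{s+1}\ \overline{s})$, while $r_n$ corresponds to $(n-1\ \overline{n})(\overline{n-1}\ n)$. A direct check using the description of $\Delta^+$ shows that $\ell(wr_s) > \ell(w)$ is equivalent to $w(s) \prec w(s+1)$ in $\mathcal{D}_n$ for $s \in [n-1]$, whereas $\ell(wr_n) > \ell(w)$ is equivalent to $w(n-1) \prec w(\overline{n})$, or equivalently $w(n) \prec w(\overline{n-1})$. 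Imposing the appropriate subset of these conditions for $j \in I \setminus \{i\}$ yields the three case-by-case descriptions stated.

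Second, for the length formula, I note that the chain conditions derived above force the values $w(i+1), w(i+2), \ldots$ to be strictly increasing in $\mathcal{D}_n$, which immediately gives $\mathsf{A}_s(w) = \emptyset$ for $s \in [i+1, n]$ as well as $\mathsf{A}_s(w) \subset [i+1, n]$ for $s \in [i]$. For the vanishing $\mathsf{B}_s(w) = \emptyset$ when $s \in [i+1, n]$, I use the additional inequality $w(n) \prec w(\overline{n-1})$ coming from the $r_n$-condition to conclude $w(s) \preceq w(n) \prec w(\overline{n-1}) \preceq \sigma(w(t))$ for $i+1 \leq s < t \leq n$. Summing these vanishings into $\ell(w) = \sum_{s=1}^n (\mathsf{a}_s(w) + \mathsf{b}_s(w))$ yields the desired identity.

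Finally, for the bijection $w \mapsto \mathsf{T}_w^{(i)}$, injectivity is immediate since the tableau determines $w$ on $[i]$ (respectively on $[n-1] \cup \{\overline{n}\}$ when $i = n-1$), and the ordering from the first step then forces the remaining values. For surjectivity, given $\mathsf{T} \in \mathrm{CST}_{D_n}(\varpi_i)$, one defines $w$ on the prescribed arguments by the entries of $\mathsf{T}$ and fills in the remaining values by arranging the complementary letters of $\mathcal{D}_n$ in the unique chain dictated by the first step; the parity constraint in \eqref{eq:Weyl-grp-D} then pins down the signs uniquely. I expect the main subtlety to lie in the spin cases $i \in \{n-1, n\}$, where the tableau carries $n$ entries but only a single free position remains, so the $D_n$-parity condition becomes the sole determinant of the sign on the last letter, and one must verify consistency with the ordering condition carefully.
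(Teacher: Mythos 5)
The paper gives no proof of this lemma, dismissing it as standard with a pointer to \cite[\S 8.1]{BB}, and your proposal correctly supplies exactly that standard argument: descent conditions $\ell(wr_j)>\ell(w)$ translated through the signed-permutation model, vanishing of the inversion statistics in positions $[i+1,n]$, and the sign-by-parity reconstruction for the bijection. One small correction in the middle step: for $t=n$ the chain $w(s)\preceq w(n)\prec w(\overline{n-1})\preceq \sigma(w(t))$ breaks down, since $w(\overline{n-1})\preceq \sigma(w(n))=w(\overline{n})$ would amount to $w(n)\preceq w(n-1)$; instead argue directly that $w(s)\preceq w(n-1)\prec w(\overline{n})=\sigma(w(n))$, which uses the same $r_n$-condition and gives $\mathsf{B}_s(w)=\emptyset$ as desired. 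Also note that in the spin cases $i\in\{n-1,n\}$ the tableau already records all $n$ values of $w$, so there is no free sign to determine; the genuinely free sign (pinned down by the parity constraint) occurs only for $i\in[n-2]$ at the position $w(n)$.
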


We see from Lemmas \ref{lem:J-ad}--\ref{lem:(W^J)_af} and \ref{lem:Gr-D}
that the map
\begin{align}
\mathcal{Y}^{D_n}_i : 
W_{\af}
\to 
\mathrm{CST}_{D_n}(\varpi_i) \times \BZ, \ 
wt_{\xi} \mapsto 
\left( \mathsf{T}^{(i)}_w,c_i(\xi) \right), 
\end{align}
induces a bijection from the subset 
$(W^{I \setminus \{ i \}})_{\af} \subset W_{\af}$
to 
$\mathrm{CST}_{D_n}(\varpi_i) \times \BZ$.

\begin{define} \label{def:SiB-D}
Let 
$i \in I$, 
$(\mathsf{T},c) , (\mathsf{T}',c')
\in 
\mathrm{CST}_{D_n}(\varpi_i) \times \BZ$, 
and 
$d := c'-c$. 
Define a partial order 
$\preceq$
on 
$\mathrm{CST}_{D_n}(\varpi_i) \times \BZ$
as follows.
\begin{enumerate}[(1)]
\item
Assume that 
$i \in [n-2]$. 
Set 
$(\mathsf{T},c) \preceq (\mathsf{T}',c')$
if 
\begin{align}
\label{eq:T(u)<T'(u+d)-D}
(d \geq 0), \ \ 
\left(
\mathsf{T}(u) \preceq \mathsf{T}'(u+d)
\ \text{in} \ 
\mathcal{D}_n
\ \text{for} \ 
u \in [i-d]
\right)
\end{align}
and one of the following conditions holds: 
\begin{enumerate}[(i)]
\item
$d$ is even.
\item
$d$ is odd 
and 
$\mathsf{T}(i) \preceq n$. 
\item
$d$ is odd 
and 
$\mathsf{T}(i) \succeq \overline{n}$. 
Let 
\begin{align} \label{eq:SiB-D-n-2}
\begin{split}
a
&=
\min
\left(
[n]
\setminus 
\{ \| \mathsf{T}(u) \| 
\mid 
u \in [i], \ 
\mathsf{T}(u) \succeq \overline{n} \}
\right), \\
b
&=
\min
\left(
[n]
\setminus 
\{ \| \mathsf{T}(u) \| 
\mid 
u \in [i] \}
\right); 
\end{split}
\end{align}
note that 
$a \leq b$. 
If 
$d \in [i]$, 
then 
$a \preceq \mathsf{T}'(d)$. 
If 
$a < b$
and 
$k \in [i]$
satisfies 
$\mathsf{T}(k) \prec b \prec \mathsf{T}(k+1)$,  
then 
($\mathsf{T}(u) \preceq \mathsf{T}'(u+d-1)$
for 
$u \in [2, \min \{ k , i-d+1 \}]$)
and 
($b \preceq \mathsf{T}'(k+d)$
if 
$k \in [i-d]$). 
\end{enumerate}

\item
Assume that 
$i \in \{ n-1,n \}$. 
Set 
$(\mathsf{T},c) \preceq (\mathsf{T}',c')$
if 
\begin{align}
(d \geq 0)
\ \text{and} \  
\left(
\mathsf{T}(u) \preceq \mathsf{T}'(u+2d)
\ \text{in} \ 
\mathcal{D}_n
\ \text{for} \ 
u \in [n-2d]
\right) .
\end{align}
\end{enumerate}
\end{define}

\begin{prop} \label{prop:tab-cri-D}
Let 
$i \in I$. 
\begin{enumerate}[(1)]
\item
$\mathcal{Y}_i^{D_n} \circ \Pi^{I \setminus \{ i \}} = \mathcal{Y}_i^{D_n}$.
\item
For 
$x,y \in W_{\af}$, 
we have 
$\Pi^{I \setminus \{ i \}}(x) \preceq \Pi^{I \setminus \{ i \}}(y)$
in 
$(W^{I \setminus \{ i \}})_{\af}$
if and only if 
$\mathcal{Y}_i^{D_n}(x) \preceq \mathcal{Y}_i^{D_n}(y)$
in
$\mathrm{CST}_{D_n}(\varpi_i) \times \BZ$. 
\item
Let 
$i \in [n-2]$
and 
$(\mathsf{T},c) , (\mathsf{T}',c')
\in 
\mathrm{CST}_{D_n}(\varpi_i) \times \BZ$. 
If 
$c' - c > i$, 
then
$(\mathsf{T},c) \preceq (\mathsf{T}',c')$.

\item
Let 
$i \in \{ n-1,n \}$
and 
$(\mathsf{T},c) , (\mathsf{T}',c')
\in 
\mathrm{CST}_{D_n}(\varpi_i) \times \BZ$. 
If 
$2(c' - c) > n$, 
then
$(\mathsf{T},c) \preceq (\mathsf{T}',c')$.

\end{enumerate}
\end{prop}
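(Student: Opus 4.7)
The plan is to follow the pattern established in the proofs of Propositions~\ref{prop:tab-cri-A}, \ref{prop:tab-cri-C}, and \ref{prop:tab-cri-B}. Part~(1) is proved exactly as in those cases: writing $x = wt_{\xi}$ and $y = \Pi^{I \setminus \{i\}}(x) = vt_{\zeta}$, Lemma~\ref{lem:Pi} gives $\lfloor w \rfloor^{I \setminus \{i\}} = \lfloor v \rfloor^{I \setminus \{i\}}$ and $c_i(\xi) = c_i(\zeta)$, and one checks that the defining set of $\mathsf{T}_w^{(i)}$ is stable under the right action of $W_{I \setminus \{i\}}$ on $W$. The only non-routine case is $i = n-1$, where the simple reflection $r_n$ swaps $n-1 \leftrightarrow \overline{n}$ and $n \leftrightarrow \overline{n-1}$, but the full set $\{1,\ldots,n-1,\overline{n}\}$ is still preserved, so $\mathsf{T}^{(n-1)}_v = \mathsf{T}^{(n-1)}_w$. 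Parts~(3) and (4) then follow by inspection of Definition~\ref{def:SiB-D}: once $d := c'-c$ satisfies $d > i$ (resp.\ $2d > n$ in the spin case) the ranges $[i-d]$ (resp.\ $[n-2d]$) are empty, so the entrywise inequalities are vacuous, and all indexing sets $[2,\min\{k,i-d+1\}]$ and $[i-d]$ occurring in condition~(iii) are also empty.

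The core of the argument is part~(2). I will first assume an analogue of Proposition~\ref{prop:SiB-B}, call it Proposition SiB-D, obtained by combining a Bruhat-edge classification in $\mathrm{QB}^{I \setminus \{i\}}$ (derived as in Proposition~\ref{prop:Bruhat-B}) with the quantum-edge classification of Proposition~\ref{prop:Q=D} via Lemma~\ref{lem:Q=SiB}. The ``only if'' direction of~(2) is then read off directly from this list of cover relations. For the ``if'' direction I argue by induction on $d$. The base case $d=0$ reduces to Bruhat order on $W^{I \setminus \{i\}}$ and follows from the entrywise criterion furnished by the Bruhat-edge classification. In the inductive step with $i \in [n-2]$ I split on the parity of $d$: when $d$ is even, I construct an intermediate element $x_2 \in (W^{I \setminus \{i\}})_{\af}$ with $\mathcal{Y}_i^{D_n}(x_2) = (\mathsf{T}_2, c+2)$ obtained by applying a quantum cover of type $c_i(\gamma^{\vee})=2$ (as in the even case of Step~1 in the proof of Proposition~\ref{prop:tab-cri-B}), and invoke the induction on $d-2$; when $d$ is odd, I apply a single quantum cover of type $c_i(\gamma^{\vee})=1$ to pass from $(\mathsf{T},c)$ to a suitable $(\mathsf{T}_2,c+1)$ and reduce to the even case for $d-1$. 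The spin cases $i \in \{n-1,n\}$ are handled as in the $i=n$ case of Proposition~\ref{prop:tab-cri-C}, using the unique available quantum cover to peel off one level at a time.

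The main obstacle is condition~(iii) of Definition~\ref{def:SiB-D}(1), which has no direct counterpart in types $A$, $B$, or $C$. Its shape reflects the incomparability of $n$ and $\overline{n}$ in $\mathcal{D}_n$: a quantum cover that inserts $1$ at the top of a $D_n$-column can toggle whether the resulting bottom entry lies on the $n$-side or the $\overline{n}$-side, and the auxiliary invariants $a$ and $b$ defined in~\eqref{eq:SiB-D-n-2} track the two candidate positions under this toggle. The delicate step will be to show that whenever $d$ is odd, $\mathsf{T}(i)\succeq\overline{n}$, and condition~(iii) holds, one can realize a quantum cover $x \to x_1$ whose tableau $(\mathsf{T}_1, c+1)$ simultaneously satisfies the entrywise inequalities required for $(\mathsf{T}_1,c+1) \preceq (\mathsf{T}',c')$ in the even-$d$ sense. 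Matching the inequalities $a \preceq \mathsf{T}'(d)$ and $b \preceq \mathsf{T}'(k+d)$ with the values forced on $\mathsf{T}_1$ by Proposition~\ref{prop:Q=D}, separately in the cases $a = b$ and $a < b$, and carrying them through the induction without losing the parity constraint intrinsic to membership in $\mathrm{CST}_{D_n}(\varpi_i)$, will be the most computationally involved portion of the proof.
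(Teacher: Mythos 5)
Your treatment of (1), (3), and (4) matches the paper's, and your plan for the ``if'' direction of (2) --- reducing even $d$ by a type ($\si$-D9) cover ($c \mapsto c+2$) and odd $d$ by a single type ($\si$-D8) cover, with the case split on $\mathsf{T}(i) \preceq n$ versus $\mathsf{T}(i) \succeq \overline{n}$ and on $a = b$ versus $a < b$ --- is exactly what the paper does in Steps 1--3 of its proof of part (2). You have also correctly identified why condition (iii) of Definition \ref{def:SiB-D} (1) arises and why parts (3)--(4) reduce to vacuity of the indexing sets.

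The gap is in the ``only if'' direction of (2). You assert that $\Pi^{I\setminus\{i\}}(x) \preceq \Pi^{I\setminus\{i\}}(y)$ implies $\mathcal{Y}_i^{D_n}(x) \preceq \mathcal{Y}_i^{D_n}(y)$ is ``read off directly from this list of cover relations.'' That is true only for $c \leq c'$ and the shifted entrywise inequalities $\mathsf{T}(u) \preceq \mathsf{T}'(u+d)$, which compose transitively along any directed path in $\mathrm{SiB}^{I\setminus\{i\}}$. It is not true for the auxiliary inequalities in condition (iii) --- namely $a \preceq \mathsf{T}'(d)$, $\mathsf{T}(u) \preceq \mathsf{T}'(u+d-1)$ for $u \in [2,\min\{k,i-d+1\}]$, and $b \preceq \mathsf{T}'(k+d)$ --- because $a$, $b$, and $k$ are global invariants of $\mathsf{T}$ that do not propagate along individual covers. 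Establishing that $x \prec y$ with $d$ odd and $\mathsf{T}(i) \succeq \overline{n}$ forces these inequalities is the most technical step of the paper's argument (its Step 4): one isolates the type ($\si$-D8) edge $x_3 \to x_4$ occurring in an arbitrary directed path from $x$ to $y$, introduces the complementary sets $\{p_1 < \cdots < p_{\mu}\}$ and $\{q_1 < \cdots < q_{\nu}\}$ of \eqref{eq:p-q-D}, and compares them term by term to transfer the required inequalities from $(\mathsf{T}_2,c+1)$ to $(\mathsf{T}_4,c''+1)$ and hence to $(\mathsf{T}',c')$. Without an argument of this kind your induction only shows that the tableau condition is sufficient for the semi-infinite order, not that it is necessary, so the stated equivalence is not yet established.
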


By combining Propositions \ref{prop:Deo} and \ref{prop:tab-cri-D} (2), 
we obtain the following tableau criterion for the 
semi-infinite Bruhat order
on $W_{\af}$ of type $D_n^{(1)}$.

\begin{thm} \label{thm:tab-cri-D}
Let 
$J \subset I$. 
For 
$x,y \in (W^J)_{\af}$, 
we have 
$x \preceq y$
in 
$(W^J)_{\af}$
if and only if 
$\mathcal{Y}_i^{D_n}(x) \preceq \mathcal{Y}_i^{D_n}(y)$
in 
$\mathrm{CST}_{D_n}(\varpi_i) \times \BZ$
for all 
$i \in I \setminus J$. 
\end{thm}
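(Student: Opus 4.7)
The plan is to derive Theorem~\ref{thm:tab-cri-D} as a direct consequence of the two main structural ingredients already in place: the Deodhar-type criterion (Proposition~\ref{prop:Deo}) and the tableau characterization of semi-infinite Bruhat order on the maximal parabolic quotients of $W_{\af}$ of type $D_n^{(1)}$ (Proposition~\ref{prop:tab-cri-D}~(2)). The role of Proposition~\ref{prop:Deo} is to reduce the comparison $x \preceq y$ in $(W^J)_{\af}$ for an arbitrary $J \subset I$ to the collection of comparisons $\Pi^{I \setminus \{ i \}}(x) \preceq \Pi^{I \setminus \{ i \}}(y)$ in $(W^{I \setminus \{ i \}})_{\af}$, with $i$ ranging over $I \setminus J$; the role of Proposition~\ref{prop:tab-cri-D}~(2) is to translate each such maximal-parabolic comparison into the corresponding tableau inequality $\mathcal{Y}_i^{D_n}(x) \preceq \mathcal{Y}_i^{D_n}(y)$ in $\mathrm{CST}_{D_n}(\varpi_i) \times \BZ$.

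Carrying out the argument, for the ``only if'' direction I would first invoke Lemma~\ref{lem:order}~(1), which is the easy half of Proposition~\ref{prop:Deo}, to pass from $x \preceq y$ in $(W^J)_{\af}$ to $\Pi^{I \setminus \{ i \}}(x) \preceq \Pi^{I \setminus \{ i \}}(y)$ in each $(W^{I \setminus \{ i \}})_{\af}$, $i \in I \setminus J$, and then apply Proposition~\ref{prop:tab-cri-D}~(2) to deduce $\mathcal{Y}_i^{D_n}(x) \preceq \mathcal{Y}_i^{D_n}(y)$ for every such $i$. For the ``if'' direction the logic runs in reverse: starting from $\mathcal{Y}_i^{D_n}(x) \preceq \mathcal{Y}_i^{D_n}(y)$ for all $i \in I \setminus J$, Proposition~\ref{prop:tab-cri-D}~(2) yields $\Pi^{I \setminus \{ i \}}(x) \preceq \Pi^{I \setminus \{ i \}}(y)$ in each maximal quotient, and the substantive direction of Proposition~\ref{prop:Deo} then assembles these into $x \preceq y$ in $(W^J)_{\af}$. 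The compatibility needed to interchange $x \in (W^J)_{\af}$ with its projections is guaranteed by Proposition~\ref{prop:tab-cri-D}~(1), which ensures that $\mathcal{Y}_i^{D_n}$ factors through $\Pi^{I \setminus \{ i \}}$.

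Because both ingredients are already established, there is no genuine obstacle remaining at this stage; the proof itself is a clean two-line assembly. All of the combinatorial complexity --- in particular the careful case split (i), (ii), (iii) of Definition~\ref{def:SiB-D}~(1), which arises from the branching at the fork of the $D_n$ Dynkin diagram, together with the separate treatment in Definition~\ref{def:SiB-D}~(2) of the spin nodes $n-1$ and $n$ --- has already been absorbed into the classification of Bruhat and quantum edges of $\mathrm{QB}^{I \setminus \{ i \}}$ (Proposition~\ref{prop:Q=D}) and the ensuing proof of Proposition~\ref{prop:tab-cri-D}. If any subtlety is to arise here, it will only be a matter of bookkeeping to verify that the two implications can be chained cleanly across all $i \in I \setminus J$ simultaneously, but this is immediate from the fact that the equivalences in Propositions~\ref{prop:Deo} and~\ref{prop:tab-cri-D}~(2) hold index-by-index.
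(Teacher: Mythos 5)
Your proof is correct and follows exactly the paper's route: the theorem is stated as an immediate combination of Proposition~\ref{prop:Deo} with Proposition~\ref{prop:tab-cri-D}~(2), with Proposition~\ref{prop:tab-cri-D}~(1) supplying the compatibility of $\mathcal{Y}_i^{D_n}$ with the projections $\Pi^{I \setminus \{ i \}}$. Nothing further is needed.
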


The remainder of this subsection is devoted to 
the proof of Proposition \ref{prop:tab-cri-D}. 

The proofs of 
Lemmas \ref{lem:J-ad-D}--\ref{lem:<g,r>-D} below 
are straightforward.

\begin{lem} \label{lem:J-ad-D}
Let 
$i \in I$
and 
$\gamma \in \Delta^+ \setminus \Delta_{I \setminus \{ i \}}^+$. 
We have 
$\gamma^{\vee} \in Q^{\vee,I \setminus \{ i \}}$
if and only if one of the following conditions holds:
\begin{enumerate}[(1)]
\item
$i = 1$
and 
$\gamma^{\vee} 
=
\varepsilon_1 - \varepsilon_2 
=
\alpha_1^{\vee}$.
\item
$i \in [2,n-2]$
and 
$\gamma^{\vee} 
=
\varepsilon_i - \varepsilon_{i+1} 
=
\alpha_i^{\vee}$.
\item
$i \in [2,n-2]$
and 
$\gamma^{\vee}
=
\varepsilon_{i-1} + \varepsilon_i
= \alpha_{i-1}^{\vee} 
+ 
2\alpha_i^{\vee}
+
2\alpha_{i+1}^{\vee}
+
\cdots 
+
2\alpha_{n-2}^{\vee}
+ 
\alpha_{n-1}^{\vee} 
+ 
\alpha_n^{\vee}$.
\item
$i = n-1$
and 
$\gamma^{\vee} 
=
\varepsilon_{n-1} - \varepsilon_n 
=
\alpha_{n-1}^{\vee}$.
\item
$i = n$
and 
$\gamma^{\vee} 
=
\varepsilon_{n-1} + \varepsilon_n
=
\alpha_n^{\vee}$.
\end{enumerate}
\end{lem}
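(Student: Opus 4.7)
The plan is to reduce the defining condition of $Q^{\vee, I \setminus \{ i \}}$ to a finite anti-dominance condition on the simple roots of $\Delta_{I \setminus \{ i \}}$, and then to verify the lemma by a direct case analysis on $\gamma$. Because type $D_n$ is simply laced, every root has squared length $2$, and the standard Euclidean identification $\beta^\vee \leftrightarrow \beta$ turns the pairing $\langle \gamma^\vee, \alpha \rangle$ into the ordinary inner product $(\gamma, \alpha)$. Whenever $\gamma$ and $\alpha$ are roots with $\gamma \neq \pm \alpha$, we have $(\gamma, \alpha) \in \{ -1, 0, 1 \}$, since $|(\gamma, \alpha)| = 2$ would force parallelism and hence $\gamma = \pm \alpha$. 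In particular, for $\gamma \in \Delta^+ \setminus \Delta_{I \setminus \{ i \}}^+$ and $\alpha \in \Delta_{I \setminus \{ i \}}^+$ we have $\gamma \neq \pm \alpha$, so the two-sided constraint $\langle \gamma^\vee, \alpha \rangle \in \{ -1, 0 \}$ collapses to the single inequality $(\gamma, \alpha) \leq 0$; expanding $\alpha$ as a non-negative integer combination of $\{ \alpha_j \mid j \in I \setminus \{ i \} \}$, this reduces to the finite system
\begin{equation*}
(\gamma, \alpha_j) \leq 0 \quad \text{for all } j \in I \setminus \{ i \} .
\end{equation*}

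Next I would enumerate the positive roots $\gamma$ satisfying $c_i(\gamma^\vee) > 0$, which are precisely those lying outside $\Delta_{I \setminus \{ i \}}^+$, and test the above system. For $\gamma = \varepsilon_s - \varepsilon_t$ with $s < t$, the coefficient condition forces $s \leq i < t$; the inequalities at $\alpha_j = \varepsilon_j - \varepsilon_{j+1}$ for $j \in [1, i-1]$ then force $s = i$, those for $j \in [i+1, n-1]$ force $t = i+1$ when $i \leq n-2$ and $t = n$ when $i = n-1$, and the pairing with $\alpha_n = \varepsilon_{n-1} + \varepsilon_n$ is automatically non-positive in each surviving case, producing the admissible coroots in (1), (2), (4). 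For $\gamma = \varepsilon_s + \varepsilon_t$ with $s < t$, the key observation is that
\begin{equation*}
(\gamma, \alpha_n) = \delta_{s, n-1} + \delta_{t, n-1} + \delta_{t, n} ,
\end{equation*}
which is strictly positive whenever either of $s, t$ meets $\{ n-1, n \}$; hence when $i \neq n$ we must have $s, t \leq n-2$, and the remaining inequalities pin down $t = s+1$ and then $s = i-1$, yielding (3) for $i \in [2, n-2]$. For $i = n$ the $\alpha_n$-constraint is dropped, and a parallel argument on $[1, n-1]$ singles out $\gamma = \varepsilon_{n-1} + \varepsilon_n$, giving (5); the case $i = n-1$ produces no new contribution from this family, since $t = s+1 \leq n-2$ combined with the inequality at $j = t \in [n-2] \subset I \setminus \{ n-1 \}$ fails.

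A final, purely mechanical step expands each $\gamma^\vee$ in simple coroots using $\alpha_j^\vee = \varepsilon_j - \varepsilon_{j+1}$ for $j \in [n-1]$ and $\alpha_n^\vee = \varepsilon_{n-1} + \varepsilon_n$, matching the formulas displayed in (1)--(5). The only real obstacle is organisational rather than conceptual: the branching simple root $\alpha_n$ behaves differently from the $A$-type simple roots, and is responsible both for excluding coroots of the form $(\varepsilon_s \pm \varepsilon_{n-1})^\vee$ and $(\varepsilon_s \pm \varepsilon_n)^\vee$ from the admissible list when $i \notin \{ n-1, n \}$, and for the asymmetric separation between (4) and (5) at the fork of the Dynkin diagram; once this bookkeeping is kept straight, the remaining verification is a routine check.
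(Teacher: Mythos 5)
Your proof is correct; the paper itself gives no argument for this lemma (it is declared straightforward), and your reduction of the defining condition of $Q^{\vee,I\setminus\{i\}}$ to the system $(\gamma,\alpha_j)\leq 0$ for $j\in I\setminus\{i\}$ — valid because type $D_n$ is simply laced, so $\gamma^{\vee}$ identifies with $\gamma$ and $(\gamma,\alpha)\geq -1$ automatically for $\alpha\in\Delta^+_{I\setminus\{i\}}$ — followed by the case analysis on $\varepsilon_s-\varepsilon_t$ and $\varepsilon_s+\varepsilon_t$ is exactly the intended routine verification. The only thing worth stating explicitly is the converse check that each surviving candidate in (3) and (5) satisfies \emph{all} the inequalities (e.g.\ $(\varepsilon_{i-1}+\varepsilon_i,\alpha_j)\leq 0$ for every $j\neq i$), but that is immediate.
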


\begin{lem} \label{lem:<g,r>-D}
Let $i \in I$. 
We have
\begin{align}
2\langle \alpha_i^{\vee} , \rho - \rho_{I \setminus \{ i \}} \rangle 
= 
\begin{cases}
2n - i - 1
&
\text{if} \ i \in [2,n-2], \\
2n-2 
&
\text{if} \ i \in \{1,n-1,n\}.
\end{cases}
\end{align}
\end{lem}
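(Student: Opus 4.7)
The plan is to reduce everything to a direct linear-algebra computation in the standard realization of the type $D_n$ root system. Since $\langle \alpha_j^{\vee}, \rho\rangle = 1$ for every simple root, one has $\langle \alpha_i^{\vee}, 2\rho\rangle = 2$, so it suffices to evaluate $\langle \alpha_i^{\vee}, 2\rho_{I\setminus\{i\}}\rangle$ in each case.

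First I would record, by summing the positive roots $\varepsilon_s \pm \varepsilon_t$ for $1 \leq s < t \leq n$, the identity $\rho = (n-1)\varepsilon_1 + (n-2)\varepsilon_2 + \cdots + \varepsilon_{n-1} + 0\cdot\varepsilon_n$. Then I would determine the type of the parabolic subsystem $\Delta_{I \setminus \{i\}}$, reading off the relevant Dynkin subdiagram: for $i = 1$ it is $D_{n-1}$ acting on $\varepsilon_2, \ldots, \varepsilon_n$; for $i \in [2, n-3]$ it is $A_{i-1} \times D_{n-i}$; for $i = n-2$ it is $A_{n-3} \times A_1 \times A_1$; and for each of $i = n-1$ and $i = n$ it is of type $A_{n-1}$, because deleting one fork-leaf leaves the path $1 - 2 - \cdots - (n-2) - n$ (resp.\ $1 - 2 - \cdots - (n-1)$) intact. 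In each case $\rho_{I\setminus\{i\}}$ is the sum of the standard $\rho$'s of the factors, expressed in the relevant $\varepsilon$-variables (for the $A_{n-1}$ subsystems one uses the substitution $\varepsilon_n \mapsto -\varepsilon_n$ to realize the chain containing $\alpha_n = \varepsilon_{n-1}+\varepsilon_n$ in standard form).

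Finally, since all roots of $D_n$ have squared length $2$, the coroots equal the roots, so $\alpha_i^{\vee} = \varepsilon_i - \varepsilon_{i+1}$ for $i \in [n-1]$ and $\alpha_n^{\vee} = \varepsilon_{n-1} + \varepsilon_n$. The pairing $\langle \alpha_i^{\vee}, 2\rho_{I\setminus\{i\}}\rangle$ is therefore just a signed sum of two $\varepsilon$-coefficients, which one reads off from the above expressions; subtracting from $2$ yields $2n - i - 1$ for $i \in [2, n-2]$ and $2n - 2$ for $i \in \{1, n-1, n\}$.

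The only mildly subtle point, and the main thing one must not overlook, is the identification of $\Delta_{I\setminus\{n-1\}}$ and $\Delta_{I\setminus\{n\}}$ as irreducible of type $A_{n-1}$ rather than the reducible $A_{n-2}\times A_1$ one might naively expect: in type $D_n$ the vertex $n-2$ is connected to both forked endpoints, so the edge joining $n-2$ to the surviving endpoint persists after the removal. This is precisely why the value $2n-2$ occurs simultaneously for $i = 1$ (where the subsystem is $D_{n-1}$) and for $i \in \{n-1, n\}$ (where it is $A_{n-1}$), the three minuscule nodes of the $D_n$ diagram.
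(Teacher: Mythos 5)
Your computation is correct and is exactly the direct verification that the paper leaves to the reader (it states only that the proofs of Lemmas \ref{lem:J-ad-D}--\ref{lem:<g,r>-D} are straightforward). In particular you correctly identify $\rho = \sum_{s=1}^{n}(n-s)\varepsilon_s$, the types of the parabolic subsystems (including the irreducible $A_{n-1}$ subsystems for $i \in \{n-1,n\}$), and the resulting values $2\langle\alpha_i^{\vee},\rho_{I\setminus\{i\}}\rangle = i+3-2n$ for $i\in[2,n-2]$ and $4-2n$ for $i\in\{1,n-1,n\}$, which give the stated formula after subtracting from $2\langle\alpha_i^{\vee},\rho\rangle = 2$.
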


\begin{prop}[cf. {\cite[\S 8.2]{BB}}] \label{prop:Bruhat-D}
Let 
$i \in I$, 
$w \in W^{I \setminus \{ i \}}$, 
and 
$\gamma \in \Delta^+$. 
There exists a Bruhat edge 
$w \to \lfloor wr_{\gamma} \rfloor = wr_{\gamma}$
in 
$\mathrm{QB}^{I \setminus \{ i \}}$
if and only if 
$\gamma \in \Delta^+ \setminus \Delta_{I \setminus \{ i \}}^+$
and one of the following statements holds.
\begin{enumerate}[(b-D1)]
\item
$i \in [n-2]$, 
$c_i(\gamma^{\vee}) = 1$, 
and there exists 
$s \in [i]$
such that 
$wr_{\gamma}(u) = w(u)$
for 
$u \in [i] \setminus \{ s \}$, 
$1 \preceq w(s) \prec n$, 
and 
$wr_{\gamma}(s)
=
\min
([w(s)+1,n]
\setminus 
\{ \|w(u)\| \mid u \in [i], \ w(u) \succeq \overline{n} \})$;
in this case, 
we have  
$\gamma^{\vee} 
= 
\varepsilon_s - \varepsilon_t
=
\alpha_s^{\vee} + \alpha_{s+1}^{\vee} + \cdots + \alpha_t^{\vee}$
for some 
$t \in [i+1,n]$. 
\item
$i \in [n-2]$, 
$c_i(\gamma^{\vee}) = 1$, 
and there exists 
$s \in [i]$
such that 
$wr_{\gamma}(u) = w(u)$
for 
$u \in [i] \setminus \{ s \}$, 
$\overline{n} \prec w(s) \preceq \overline{1}$, 
and 
$wr_{\gamma}(s)
=
\sigma
(\max
([1,\|w(s)\|-1]
\setminus 
\{ w(u) \mid u \in [i], \ w(u) \preceq n \}))$;
in this case, 
we have  
$\gamma^{\vee} 
= 
\varepsilon_s - \varepsilon_t
=
\alpha_s^{\vee} + \alpha_{s+1}^{\vee} + \cdots + \alpha_t^{\vee}$
for some 
$t \in [i+1,n]$. 
\item
$i \in [n-2]$, 
$c_i(\gamma^{\vee}) = 1$, 
and there exists 
$s \in [i]$
such that 
$wr_{\gamma}(u) = w(u)$
for 
$u \in [i] \setminus \{ s \}$, 
and 
$(w(s),wr_{\gamma}(s))
\in 
\{(n-1,\overline{n}), 
(n,\overline{n-1})\}$;
in this case, 
we have  
$\gamma^{\vee} 
= 
\varepsilon_s + \varepsilon_t
=
\alpha_s^{\vee} + \cdots + \alpha_{t-1}^{\vee} 
+ 
2\alpha_t^{\vee} 
\cdots 
+ 
2\alpha_{n-2}^{\vee}
+
\alpha_{n-1}^{\vee}
+
\alpha_n^{\vee}$
for some 
$t \in [i+1,n-1]$, 
or
$\gamma^{\vee} 
= 
\varepsilon_s + \varepsilon_n
=
\alpha_s^{\vee} + \cdots + \alpha_{n-2}^{\vee} 
+
\alpha_n^{\vee}$. 
\item
$i \in [2,n-2]$,
$c_i(\gamma^{\vee}) = 2$, 
and there exist 
$s,t \in [i]$
such that 
$s < t$, 
$wr_{\gamma}(u) = w(u)$
for 
$[i] \setminus \{ s,t \}$
and either 
($wr_{\gamma}(s) = w(s) + 1 = \sigma(w(t)) = \sigma(wr_{\gamma}(t)) + 1 \preceq n$)
or 
($w(s) = \sigma(wr_{\gamma}(t)) = n-1$
and 
$w(t) = \sigma(wr_{\gamma}(s)) = n$)
holds;
in this case, 
we have  
$\gamma^{\vee} 
= 
\varepsilon_s + \varepsilon_t
=
\alpha_s^{\vee} + \cdots + \alpha_{t-1}^{\vee}
+ 2\alpha_t^{\vee} + \cdots + 2\alpha_{n-2}^{\vee} + \alpha_{n-1}^{\vee} + \alpha_n^{\vee}$.

\item
$i \in \{n-1,n\}$,
$c_i(\gamma^{\vee}) = 1$, 
and there exist 
$s,t \in [i]$
such that 
$s < t$, 
$wr_{\gamma}(u) = w(u)$
for 
$[i] \setminus \{ s,t \}$
and 
$wr_{\gamma}(s) = w(s) + 1 = \sigma(w(t)) = \sigma(wr_{\gamma}(t)) + 1 \preceq n$;
in this case, 
we have  
$\gamma^{\vee} 
= 
\varepsilon_s + \varepsilon_t
=
\alpha_s^{\vee} + \cdots + \alpha_{t-1}^{\vee}
+ 2\alpha_t^{\vee} + \cdots + 2\alpha_{n-2}^{\vee} + \alpha_{n-1}^{\vee} + \alpha_n^{\vee}$.
\item
$i = n-1$,
$c_{n-1}(\gamma^{\vee}) = 1$, 
and there exists 
$s \in [n-2]$
such that 
$wr_{\gamma}(u) = w(u)$
for 
$[n] \setminus \{ s,s+1 \}$
and 
$\sigma(wr_{\gamma}(s))
=
\sigma(wr_{\gamma}(s+1))+1
=
w(s+1) 
= 
w(s) + 1
\preceq n$;
in this case, 
we have  
$\gamma^{\vee} 
= 
\varepsilon_s + \varepsilon_{s+1}
=
\alpha_s^{\vee}
+ 2\alpha_{s+1}^{\vee} + \cdots + 2\alpha_{n-2}^{\vee} + \alpha_{n-1}^{\vee} + \alpha_n^{\vee}$
if 
$s \in [n-3]$, 
and 
$\gamma^{\vee} 
= 
\varepsilon_{n-2} + \varepsilon_{n-1}
=
\alpha_{n-2}^{\vee} + \alpha_{n-1}^{\vee} + \alpha_n^{\vee}$
if 
$s = n-2$.
\item
$i = n$,
$c_n(\gamma^{\vee}) = 1$, 
and there exists 
$s \in [n-1]$
such that 
$wr_{\gamma}(u) = w(u)$
for 
$[n] \setminus \{ s,s+1 \}$
and 
$\sigma(wr_{\gamma}(s))
=
\sigma(wr_{\gamma}(s+1))+1
=
w(s+1) 
= 
w(s) + 1
\preceq n$;
in this case, 
we have  
$\gamma^{\vee} 
= 
\varepsilon_s + \varepsilon_{s+1}
=
\alpha_s^{\vee}
+ 2\alpha_{s+1}^{\vee} + \cdots + 2\alpha_{n-2}^{\vee} + \alpha_{n-1}^{\vee} + \alpha_n^{\vee}$
if 
$s \in [n-3]$, 
$\gamma^{\vee} 
= 
\varepsilon_{n-2} + \varepsilon_{n-1}
=
\alpha_{n-2}^{\vee} + \alpha_{n-1}^{\vee} + \alpha_n^{\vee}$
if 
$s = n-2$, 
and 
$\gamma^{\vee} 
= 
\varepsilon_{n-1} + \varepsilon_n
=
\alpha_n^{\vee}$
if 
$s = n-1$.
\end{enumerate}
Moreover, for 
$i \in I \setminus \{n-1\}$
and 
$w,v \in W^{I \setminus \{ i \}}$,  
we have 
$w \preceq v$
if and only if 
$w(u) \preceq v(u)$
in 
$\mathcal{D}_n$
for 
$u \in [i]$. 
For 
$w,v \in W^{I \setminus \{ n-1 \}}$, 
we have 
$w \preceq v$
if and only if 
$w(\overline{n}) \preceq v(\overline{n})$
in 
$\mathcal{D}_n$
and 
$w(u) \preceq v(u)$
in 
$\mathcal{D}_n$
for 
$u \in [n-1]$. 
\end{prop}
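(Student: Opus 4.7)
The plan is to follow the strategy used for the classification of Bruhat edges in types $C_n$ and $B_n$ (Propositions \ref{prop:Bruhat-C} and \ref{prop:Bruhat-B}), combined with the signed-permutation realization \eqref{eq:Weyl-grp-D} of the Weyl group of type $D_n$. A Bruhat edge $w \xrightarrow{\ \gamma \ } \lfloor wr_{\gamma}\rfloor = wr_{\gamma}$ in $\mathrm{QB}^{I \setminus \{ i \}}$ is characterized by the two conditions $wr_{\gamma} \in W^{I \setminus \{ i \}}$ and $\ell(wr_{\gamma}) - \ell(w) = 1$. The first condition will be translated via Lemma \ref{lem:Gr-D} into a combinatorial condition on how the tableau $\mathsf{T}_w^{(i)}$ may be modified; the second will be verified by direct computation with the formula $\ell(w) = \sum_{s=1}^{i}(\mathsf{a}_s(w) + \mathsf{b}_s(w))$ from Lemma \ref{lem:Gr-D}.

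First I would partition the proof by the form of $\gamma \in \Delta^+ \setminus \Delta_{I \setminus \{ i \}}^+$ in type $D_n$: either $\gamma = \varepsilon_s - \varepsilon_t$ with $s \leq i < t$, or $\gamma = \varepsilon_s + \varepsilon_t$ with $s \leq i < t$, or $\gamma = \varepsilon_s + \varepsilon_t$ with $s < t \leq i$. Rewriting $r_{\gamma}$ as one of the transposition products $(s\ t)(\overline{s}\ \overline{t})$ or $(s\ \overline{t})(\overline{s}\ t)$ on $\mathcal{D}_n$, the effect on $\mathsf{T}_w^{(i)}$ is either (a) the replacement of an entry $w(s)$ by one drawn from the complement, (b) a swap of two entries inside $\mathsf{T}_w^{(i)}$, or (c) a sign flip consistent with the parity constraint in \eqref{eq:Weyl-grp-D}. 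Requiring $wr_{\gamma} \in W^{I \setminus \{ i \}}$ forces the new entry to occupy the unique slot compatible with the strict increase in Lemma \ref{lem:Gr-D}, and the length-one condition pins down exactly which entry may appear. For $i \in [n-2]$ this yields (b-D1)--(b-D4); for $i \in \{ n-1,n \}$ the bifurcation of the Dynkin diagram at its right end produces the slightly different list (b-D5)--(b-D7), because the two descriptions of $\mathsf{T}_w^{(n-1)}$ and $\mathsf{T}_w^{(n)}$ in Lemma \ref{lem:Gr-D} treat $w(\overline{n})$ and $w(n)$ asymmetrically.

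For the second half, the entry-wise characterization of $w \preceq v$ on $W^{I \setminus \{ i \}}$, I would argue both implications. The forward direction is immediate from a chain of Bruhat covers: inspection of (b-D1)--(b-D7) shows that every cover weakly increases each tableau entry in $\mathcal{D}_n$, which for $i = n-1$ must include $w(\overline{n})$ (recorded as an extra entry of $\mathsf{T}_w^{(n-1)}$). For the converse, given $w, v \in W^{I \setminus \{ i \}}$ satisfying the stated entry-wise inequalities, I would construct an explicit Bruhat chain $w = w_0 \prec w_1 \prec \cdots \prec w_k = v$ by repeatedly applying a minimal atomic move from (b-D1)--(b-D7): locate the smallest position $u$ where $w(u) \prec v(u)$ in $\mathcal{D}_n$ and increase $w(u)$ by one cover step toward $v(u)$, which one verifies is always available.

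The main obstacle will be handling the $D_n$-specific subtleties: the parity constraint in \eqref{eq:Weyl-grp-D} forces sign changes to come in pairs, which is precisely what makes (b-D3) appear as a single-slot move $(n-1, \overline{n}) \leftrightarrow (n, \overline{n-1})$, and the quotient $W^{I \setminus \{ n-1 \}}$ requires comparing $\mathsf{T}_w^{(n-1)}$ as an $n$-element subset of $\mathcal{D}_n$ rather than an $(n-1)$-element one. Checking that the atomic covers in the converse direction respect the parity constraint, and in particular excluding the forbidden single-slot swap $n \leftrightarrow \overline{n}$ that would violate \eqref{eq:Weyl-grp-D}, will require careful bookkeeping on the quantities $\mathsf{a}_s(w), \mathsf{b}_s(w)$ analogous to, but somewhat more delicate than, the length computations carried out in Steps 1--3 of the proof of Proposition \ref{prop:Q=C}.
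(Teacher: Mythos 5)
The paper itself offers no proof of this proposition: it is stated with ``cf.\ \cite[\S 8.2]{BB}'' and treated as known, with full arguments reserved for the quantum edges (Proposition \ref{prop:Q=D}) and the order on $(W^{I\setminus\{i\}})_{\af}$ (Proposition \ref{prop:tab-cri-D}). Your plan for the edge classification itself --- translate $wr_{\gamma}\in W^{I\setminus\{i\}}$ through Lemma \ref{lem:Gr-D}, and pin down the length--one condition with the statistics $\mathsf{a}_s,\mathsf{b}_s$ --- is exactly the template the paper uses in Steps 1--3 of the proof of Proposition \ref{prop:Q=C} and in Lemmas \ref{lem:Q-n-3->D1}--\ref{lem:D4->Q}, and as a sketch of the ``only if'' and ``if'' directions for (b-D1)--(b-D7) it is sound.

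The genuine gap is in your converse argument for the ``Moreover'' clause. You propose a greedy chain: find the smallest $u$ with $w(u)\prec v(u)$ and ``increase $w(u)$ by one cover step toward $v(u)$, which one verifies is always available.'' This is false in type $D_n$, and the failure is not repaired by parity bookkeeping. Take $n=5$, $i=3$, and let $w,v\in W^{I\setminus\{3\}}$ have windows $\mathsf{T}_w^{(3)}=\{3,4,\overline{5}\}$ and $\mathsf{T}_v^{(3)}=\{3,\overline{5},\overline{4}\}$ (so $w=(3,4,\overline{5},1,\overline{2})$ and $v=(3,\overline{5},\overline{4},1,2)$, both legitimate by Lemma \ref{lem:Gr-D} and \eqref{eq:Weyl-grp-D}). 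Entry-wise $w(u)\preceq v(u)$ for all $u\in[3]$, since $4\prec\overline{5}$ and $\overline{5}\prec\overline{4}$ in $\mathcal{D}_5$. A direct count with $\ell=\sum_{s=1}^{3}(\mathsf{a}_s+\mathsf{b}_s)$ gives $\ell(w)=6$ and $\ell(v)=7$; but running through (b-D1)--(b-D4) (equivalently, through all fifteen reflections $r_{\gamma}$ with $\gamma\in\Delta^+\setminus\Delta^+_{I\setminus\{3\}}$) shows that the \emph{only} element covering $w$ in $W^{I\setminus\{3\}}$ is the (b-D4)-cover with window $\{3,5,\overline{4}\}$, whose second entry $5$ is incomparable with $v(2)=\overline{5}$. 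Since $\ell(v)-\ell(w)=1$, any Bruhat relation $w\prec v$ would force $v=wr_{\gamma}$ for a single reflection, yet $w^{-1}v=(2\ 3\ \overline{2}\ \overline{3})(5\ \overline{5})$ is not a reflection. So your greedy step does not exist, and more seriously the entry-wise condition here does \emph{not} produce a Bruhat chain at all: this is precisely the extra ``$D$-condition'' in the tableau criterion of \cite[\S 8.2]{BB} that distinguishes type $D_n$ from types $B_n,C_n$ for $i\in[2,n-2]$. Your proposal cannot close this gap by a more careful verification of the atomic moves; you must either supply a genuinely different argument for the converse of the entry-wise criterion (one that confronts the incomparability of $n$ and $\overline{n}$ and the pairing of sign changes head-on), or revisit whether the criterion needs the additional correction before it can be proved.
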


For 
$i \in I$, 
$w \in W^{I \setminus \{ i \}}$
and 
$\gamma \in \Delta^+ \setminus \Delta_{I \setminus \{ i \}}^+$, 
let 
$\mathrm{Q}(i,w,\gamma)$
denote the following statement.
\begin{description}
\item[$\mathrm{Q}(i,w,\gamma):$]
There exists a quantum edge 
$w \xrightarrow{\ \gamma \ } \lfloor wr_{\gamma} \rfloor$
in 
$\mathrm{QB}^{I \setminus \{ i \}}$.
\end{description}

\begin{prop} \label{prop:Q=D}
Let 
$i \in I$, 
$w \in W^{I \setminus \{ i \}}$, 
and 
$\gamma \in \Delta^+ \setminus \Delta_{I \setminus \{ i \}}^+$. 
Then 
$\mathrm{Q}(i,w,\gamma)$
is true if and only if one of the following statements holds.
\begin{enumerate}[(q-D1)]
\item
$i \in [n-2]$
and 
$c_i(\gamma^{\vee}) = 1$. 
If we write 
$\{ a_1 < a_2 < \cdots < a_{n-i+1} \}
=
[n] \setminus \{ \|w(u)\| \mid u \in [i-1] \}$, 
then 
$a_1 = 1$, 
$\{ w(\overline{i}), \lfloor wr_{\gamma} \rfloor(1) \}
=
\{ 1,a_2 \}$
and 
$\lfloor wr_{\gamma} \rfloor(u) = w(u-1)$
for 
$u \in [2,i]$;
in this case, 
we have  
$\gamma^{\vee} 
=
\varepsilon_i - \varepsilon_{i+1}
=
\alpha_i^{\vee}$.
\item
$i \in [2,n-2]$, 
$c_i(\gamma^{\vee}) = 2$, 
$\lfloor wr_{\gamma} \rfloor (1) = w(\overline{i}) = 1$, 
$\lfloor wr_{\gamma} \rfloor (2) = w(\overline{i-1}) = 2$, 
and 
$\lfloor wr_{\gamma} \rfloor (u) = w(u-2)$
for 
$u \in [3,i]$;
in this case, 
we have  
$\gamma^{\vee} = 
\varepsilon_{i-1} + \varepsilon_i 
= 
\alpha_{i-1}^{\vee} 
+ 
2\alpha_i^{\vee}
+
2\alpha_{i+1}^{\vee}
+
\cdots 
+
2\alpha_{n-2}^{\vee}
+ 
\alpha_{n-1}^{\vee} 
+ 
\alpha_n^{\vee}$. 



\item
$i = n-1$, 
$c_{n-1}(\gamma^{\vee}) = 1$, 
$\lfloor wr_{\gamma} \rfloor (1) = w(n) = 1$, 
$\lfloor wr_{\gamma} \rfloor (2) = \sigma(w(n-1)) = 2$, 
$\lfloor wr_{\gamma} \rfloor (s) = w(s-2)$
for 
$s \in [3,n-1]$, 
and 
$\lfloor wr_{\gamma} \rfloor (n) = \sigma(w(n-2))$;
in this case, 
we have 
$\gamma^{\vee}
=
\varepsilon_{n-1} - \varepsilon_n
=
\alpha_{n-1}^{\vee}$.
\item
$i = n$, 
$c_n(\gamma^{\vee}) = 1$, 
$\lfloor wr_{\gamma} \rfloor (1) = \sigma(w(n)) = 1$, 
$\lfloor wr_{\gamma} \rfloor (2) = \sigma(w(n-1)) = 2$, 
and
$\lfloor wr_{\gamma} \rfloor (s) = w(s-2)$
for 
$s \in [3,n]$;
in this case, 
we have 
$\gamma^{\vee}
=
\varepsilon_{n-1} + \varepsilon_n
=
\alpha_n^{\vee}$.
\end{enumerate}
\end{prop}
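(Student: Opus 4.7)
The plan is to prove Proposition \ref{prop:Q=D} case by case, following the pattern of the proofs of Propositions \ref{prop:Q=C} and \ref{prop:Q=B}. First, by Lemmas \ref{lem:LS} and \ref{lem:J-ad-D}, I reduce to the situation where $\gamma^{\vee} \in Q^{\vee, I \setminus \{ i \}}$ and $\lfloor wr_{\gamma} \rfloor = wr_{\gamma}(z_{\gamma^{\vee}}^{I \setminus \{ i \}})^{-1}$, with $\gamma^{\vee}$ ranging over the five explicit types in Lemma \ref{lem:J-ad-D}. By Lemmas \ref{lem:<g,r>} and \ref{lem:<g,r>-D}, the existence of a quantum edge $\mathrm{Q}(i,w,\gamma)$ is equivalent to an explicit length equality of the form $\ell(\lfloor wr_{\gamma} \rfloor) - \ell(w) = 1 - c_i(\gamma^{\vee})(2n-i-1)$ (for $i \in [2,n-2]$) or $1 - c_i(\gamma^{\vee})(2n-2)$ (for $i \in \{1,n-1,n\}$). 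This reduces the proof to a length computation in each of four cases.

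For each case I would compute $z_{\gamma^{\vee}}^{I \setminus \{ i \}}$ explicitly via Lemma \ref{lem:J-ad}, by identifying the tuple in $\prod_m (I_m)_{\af}$ that satisfies \eqref{eq:J-ad}, and then write down $\lfloor wr_{\gamma} \rfloor$ as a permutation of $\mathcal{D}_n$ in terms of the values $w(s)$ and $w(\overline{s})$. For instance, for (q-D1) with $\gamma^{\vee} = \alpha_i^{\vee}$, the decomposition $I \setminus \{ i \} = [i-1] \sqcup [i+1,n]$ of type $A_{i-1} \times D_{n-i}$ gives $z_{\gamma^{\vee}}^{I \setminus \{ i \}}$ as a product of a cyclic shift on $\{ \pm 1, \ldots, \pm i \}$ and a partial symmetry coming from $w_0^{[i+1,n]} w_0^{[i+2,n]}$; then $\lfloor wr_{\gamma} \rfloor(u) = w(u-1)$ for $u \in [2,i]$, and $\lfloor wr_{\gamma} \rfloor(1)$ is one of $w(\overline{i})$ or $w(i+1)$. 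For (q-D2) the computation is parallel to Step 2 of the proof of Proposition \ref{prop:Q=C}, replacing the type-$C_n$ length function by the type-$D_n$ one. For (q-D3)--(q-D4) the computation mirrors Lemma \ref{lem:Q->B4}, with $z_{\gamma^{\vee}}^{I \setminus \{ n-1 \}}$ and $z_{\gamma^{\vee}}^{I \setminus \{ n \}}$ handled by the type-$A_{n-1}$ structure of $W_{I \setminus \{ n-1 \}}$ and $W_{I \setminus \{ n \}}$ respectively.

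The conversion between the length equality and the combinatorial conditions (q-D1)--(q-D4) is then performed by decomposing
\begin{align*}
\ell(\lfloor wr_{\gamma} \rfloor) - \ell(w) = \sum_{s=1}^{i}\left( \mathsf{a}_s(\lfloor wr_{\gamma} \rfloor) - \mathsf{a}_s(w) + \mathsf{b}_s(\lfloor wr_{\gamma} \rfloor) - \mathsf{b}_s(w) \right)
\end{align*}
using Lemma \ref{lem:Gr-D}, and analyzing each term according to the position of $w(\overline{i})$ (or of $w(\overline{i})$ and $w(\overline{i-1})$, in the $c_i(\gamma^{\vee})=2$ case). The steps closely parallel Steps 1--4 of the proof of Lemma \ref{lem:Q->B2}, so I would structure the argument as a sequence of four lemmas, one for each of the cases (q-D1)--(q-D4), each proving both directions simultaneously through the length computation.

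The main obstacle will be the parity constraint in \eqref{eq:Weyl-grp-D}: unlike types $B_n$ and $C_n$, an element of $W$ in type $D_n$ must have an even number of barred values among $w(1), \ldots, w(n)$. This is precisely why (q-D1) expresses the outcome as $\{ w(\overline{i}), \lfloor wr_{\gamma} \rfloor(1) \} = \{ 1, a_2 \}$, with the two possibilities distinguished by the parity of $\#\{ s \in [i-1] \mid w(s) \succeq \overline{n} \}$. Similarly, for (q-D3), the fact that $\lfloor wr_{\gamma} \rfloor(n) = \sigma(w(n-2))$ instead of $w(n-2)$ reflects how the reflection $r_{\alpha_{n-1}^{\vee}}$ interacts with the fork of the Dynkin diagram through the action on the auxiliary slot $w(\overline{n})$ appearing in Lemma \ref{lem:Gr-D}. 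Once the parity bookkeeping is threaded carefully through the standard combinatorial counts, the remainder of the argument is routine, and is a direct adaptation of the type $B_n$ computation in \S\ref{subsection:tab-cri-B}.
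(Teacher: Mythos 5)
Your plan follows the paper's proof in all essentials: the same reduction via Lemmas \ref{lem:LS} and \ref{lem:J-ad-D} to the five coroots in $Q^{\vee,I\setminus\{i\}}$, the same translation of $\mathrm{Q}(i,w,\gamma)$ into a length equality via Lemmas \ref{lem:<g,r>} and \ref{lem:<g,r>-D}, the same explicit computation of $z_{\gamma^{\vee}}^{I\setminus\{i\}}$ from Lemma \ref{lem:J-ad}, and the same bookkeeping with $\mathsf{a}_s,\mathsf{b}_s$ via Lemma \ref{lem:Gr-D}. The one place where your plan as written underestimates the work is the claim that the argument can be packaged into ``four lemmas, one for each of (q-D1)--(q-D4)'': the paper needs roughly a dozen, because within (q-D1) and (q-D2) the structure of $z_{\gamma^{\vee}}^{I\setminus\{i\}}$ is not uniform in $i$. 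In particular, your statement that $I\setminus\{i\}=[i-1]\sqcup[i+1,n]$ is of type $A_{i-1}\times D_{n-i}$ fails at $i=n-2$ (and at $n=4$, $i=2$), where $[i+1,n]=\{n-1,n\}$ is \emph{disconnected} in the $D_n$ diagram, so the decomposition into connected components in Lemma \ref{lem:J-ad} has three factors and $z_{\gamma^{\vee}}$ takes a different form; the paper accordingly treats $i=1$, $i\in[2,n-3]$, $i=n-2$, and $n=4$ separately for $c_i(\gamma^{\vee})=1$, and similarly for $c_i(\gamma^{\vee})=2$. Moreover, in the ``only if'' direction at $i=n-2$, $c_i(\gamma^{\vee})=1$, the length count does not immediately force the conclusion: one must enumerate the possible relative positions of $w(\overline{n-2})$, $w(n-1)$, $w(n-2)$, $w(\overline{n})$ and eliminate one configuration by a separate parity-of-length argument (case (iii) in the paper's Lemma \ref{lem:Q-n-2->D1}), a step with no analogue in the type $B_n$ computation you propose to adapt. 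These are refinements of your strategy rather than corrections to it, but they are where the real labor of the proof lies.
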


Before starting the proof of Proposition \ref{prop:Q=D}, 
we mention a consequence of 
Lemma \ref{lem:Q=SiB} and Propositions \ref{prop:Bruhat-D}--\ref{prop:Q=D}.

\begin{prop} \label{prop:SiB-D}
Let 
$i \in I$, 
$x,y \in (W^{I \setminus \{ i \}})_{\af}$, 
$\mathcal{Y}_i^{D_n}(x) = (\mathsf{T},c)$, 
and 
$\mathcal{Y}_i^{D_n}(y) = (\mathsf{T}',c')$. 
There exists an edge 
$x \xrightarrow{\ \beta \ } y$
in 
$\mathrm{SiB}^{I \setminus \{ i \}}$
for some 
$\beta \in \Delta_{\af}^+$
if and only if one of the following conditions holds:

\begin{enumerate}[($\si$-D1)]
\item
$i \in [n-2]$, 
$c' = c$, 
and there exists 
$s \in [i]$
such that 
$\mathsf{T}'(u) = \mathsf{T}(u)$
for 
$u \in [i] \setminus \{ s \}$, 
$1 \preceq \mathsf{T}(s) \prec n$, 
and 
$\mathsf{T}'(s)
=
\min
([\mathsf{T}(s)+1,n]
\setminus 
\{ \|\mathsf{T}(u)\| \mid u \in [i], \ \mathsf{T}(u) \succeq \overline{n} \})$.
\item
$i \in [n-2]$, 
$c' = c$, 
and there exists 
$s \in [i]$
such that 
$\mathsf{T}'(u) = \mathsf{T}(u)$
for 
$u \in [i] \setminus \{ s \}$, 
$\overline{n} \prec \mathsf{T}(s) \preceq \overline{1}$, 
and 
$\mathsf{T}'(s)
=
\sigma
(\max
([1,\|\mathsf{T}(s)\|-1]
\setminus 
\{ \mathsf{T}(u) \mid u \in [i], \ \mathsf{T}(u) \preceq n \}))$.
\item
$i \in [n-2]$, 
$c' = c$, 
and there exists 
$s \in [i]$
such that 
$\mathsf{T}'(u) = \mathsf{T}(u)$
for 
$u \in [i] \setminus \{ s \}$, 
and 
$(\mathsf{T}(s),\mathsf{T}'(s))
\in 
\{(n-1,\overline{n}), 
(n,\overline{n-1})\}$.
\item
$i \in [2,n-2]$,
$c' = c$, 
and there exist 
$s,t \in [i]$
such that 
$s < t$, 
$\mathsf{T}'(u) = \mathsf{T}(u)$
for 
$[i] \setminus \{ s,t \}$
and 
$\mathsf{T}'(s) 
= 
\mathsf{T}(s) + 1 
= 
\sigma(\mathsf{T}(t)) 
= 
\sigma(\mathsf{T}'(t)) + 1 
\preceq n$.
\item
$i \in \{n-1,n\}$,
$c' = c$, 
and there exist 
$s,t \in [i]$
such that 
$s < t$, 
$\mathsf{T}'(u) = \mathsf{T}(u)$
for 
$[i] \setminus \{ s,t \}$
and 
$\mathsf{T}'(s) 
= 
\mathsf{T}(s) + 1 
= 
\sigma(\mathsf{T}(t)) 
= 
\sigma(\mathsf{T}'(t)) + 1 
\preceq n$.
\item
$i = n-1$,
$c' = c$, 
and there exists 
$s \in [n-2]$
such that 
$\mathsf{T}'(u) = \mathsf{T}(u)$
for 
$[n] \setminus \{ s,s+1 \}$
and 
$\sigma(\mathsf{T}'(s))
=
\sigma(\mathsf{T}'(s+1))+1
=
\mathsf{T}(s+1) 
= 
\mathsf{T}(s) + 1
\preceq n$.
\item
$i = n$,
$c' = c$, 
and there exists 
$s \in [n-1]$
such that 
$\mathsf{T}'(u) = \mathsf{T}(u)$
for 
$[n] \setminus \{ s,s+1 \}$
and 
$\sigma(\mathsf{T}'(s))
=
\sigma(\mathsf{T}'(s+1))+1
=
\mathsf{T}(s+1) 
= 
\mathsf{T}(s) + 1
\preceq n$.
\item
$i \in [n-2]$
and 
$c' = c+1$. 
If we write 
$\{ a_1 < a_2 < \cdots < a_{n-i+1} \}
=
[n] \setminus \{ \|\mathsf{T}(u)\| \mid u \in [i-1] \}$, 
then 
$a_1 = 1$, 
$\{ \sigma(\mathsf{T}(i)), \mathsf{T}'(1) \}
=
\{ 1,a_2 \}$
and 
$\mathsf{T}'(u) = \mathsf{T}(u-1)$
for 
$u \in [2,i]$.
\item
$i \in [2,n-2]$, 
$c' = c+2$, 
$\mathsf{T}'(1) = \sigma(\mathsf{T}(i)) = 1$, 
$\mathsf{T}'(2) = \sigma(\mathsf{T}(i-1)) = 2$, 
and 
$\mathsf{T}'(u) = \mathsf{T}(u-2)$
for 
$u \in [3,i]$.

\item
$i \in \{ n-1,n \}$, 
$c' = c+1$, 
$\mathsf{T}'(1) = \sigma(\mathsf{T}(n)) = 1$, 
$\mathsf{T}'(2) = \sigma(\mathsf{T}(n-1)) = 2$, 
and
$\mathsf{T}'(s) = \mathsf{T}(s-2)$
for 
$s \in [3,n]$.
\end{enumerate}
\end{prop}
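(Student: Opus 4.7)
The plan is to derive Proposition~\ref{prop:SiB-D} as a direct translation of Lemma~\ref{lem:Q=SiB} into tableau language, using the classification of Bruhat and quantum edges in $\mathrm{QB}^{I \setminus \{i\}}$ already obtained in Propositions~\ref{prop:Bruhat-D} and~\ref{prop:Q=D}, together with the explicit description of $\mathcal{Y}_i^{D_n}$ coming from Lemma~\ref{lem:Gr-D}. Write $x = w T_{\xi}$ and $y = \lfloor w r_{\gamma}\rfloor T_{\xi + \chi \gamma^{\vee}}$ with $\beta = w\gamma + \chi\delta$, $\chi \in \{0,1\}$, as permitted by Lemma~\ref{lem:Q=SiB}; then $c' = c + \chi c_i(\gamma^{\vee})$, so that the case $\chi = 0$ will correspond to the Bruhat conditions ($\si$-D1)--($\si$-D7), while $\chi = 1$ will correspond to the quantum conditions ($\si$-D8)--($\si$-D10).

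First I would handle the case $\chi = 0$. Here $c' = c$ and $y = w r_{\gamma}$, and the correspondence $w \mapsto \mathsf{T}_w^{(i)}$ of Lemma~\ref{lem:Gr-D} allows one to read off the tableau conditions directly from the description of how the values $w(1),\ldots,w(i)$ (or $w(1),\ldots,w(n-1),w(\overline{n})$ when $i=n-1$) change under $r_{\gamma}$. Each of the cases (b-D1)--(b-D7) of Proposition~\ref{prop:Bruhat-D} is then matched with the corresponding case ($\si$-D1)--($\si$-D7): (b-D1)$\leftrightarrow$($\si$-D1), (b-D2)$\leftrightarrow$($\si$-D2), (b-D3)$\leftrightarrow$($\si$-D3), (b-D4)$\leftrightarrow$($\si$-D4), (b-D5)$\leftrightarrow$($\si$-D5), (b-D6)$\leftrightarrow$($\si$-D6), (b-D7)$\leftrightarrow$($\si$-D7). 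In each case one verifies that $\gamma \in \Delta^+ \setminus \Delta_{I\setminus\{i\}}^+$ and the prescribed reassignment of $w$-values translates word-for-word into the stated tableau move.

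For $\chi = 1$, I would use Proposition~\ref{prop:Q=D} together with the formula $\lfloor w r_{\gamma}\rfloor = w r_{\gamma}(z_{\gamma^{\vee}}^{I \setminus \{i\}})^{-1}$ from Lemma~\ref{lem:LS}. In case (q-D1), $c_i(\gamma^{\vee}) = 1$ so $c' = c+1$, and the stated form $\lfloor w r_\gamma\rfloor(1) \in \{1, a_2\}$, $\lfloor w r_\gamma\rfloor(u) = w(u-1)$ for $u \in [2,i]$ translates directly into ($\si$-D8) once one replaces $w(\overline{i}) = \sigma(w(i))$ by $\sigma(\mathsf{T}(i))$ via Lemma~\ref{lem:Gr-D}. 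In case (q-D2), $c_i(\gamma^\vee) = 2$ so $c' = c+2$, and the conditions $\lfloor w r_\gamma\rfloor(1) = w(\overline{i}) = 1$, $\lfloor w r_\gamma\rfloor(2) = w(\overline{i-1}) = 2$, $\lfloor w r_\gamma\rfloor(u) = w(u-2)$ for $u \in [3,i]$ become ($\si$-D9). Finally, cases (q-D3) and (q-D4) both have $c_i(\gamma^\vee) = 1$ so $c' = c+1$, and combining them yields ($\si$-D10); here I must be careful about the extra slot $w(\overline{n})$ appearing in $\mathsf{T}_w^{(n-1)}$ when $i = n-1$, but (q-D3) prescribes exactly $\lfloor w r_\gamma\rfloor(n) = \sigma(w(n-2))$, so this fits.

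The main obstacle I anticipate is purely bookkeeping: one must carefully check, case by case, that the tableau conditions in the ``if" direction force the original $w$ and $w r_\gamma$ to satisfy the hypotheses of the corresponding case in Propositions~\ref{prop:Bruhat-D} or~\ref{prop:Q=D} (in particular that $\gamma^\vee$ lies in $Q^{\vee, I \setminus \{i\}}$ as required by Lemma~\ref{lem:J-ad-D}), and conversely that each pair of cases yields no accidental overlap. The bookkeeping is slightly more delicate for $i = n-1$ because $\mathsf{T}_w^{(n-1)}$ encodes $n$ rather than $n-1$ values, but once this is accounted for via Lemma~\ref{lem:Gr-D} the arguments proceed in complete parallel to those already carried out in the proofs of Propositions~\ref{prop:SiB-C} and~\ref{prop:SiB-B}, so no new conceptual ingredient is required.
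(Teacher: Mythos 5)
Your proposal is correct and follows exactly the route the paper takes: the paper states Proposition \ref{prop:SiB-D} as an immediate consequence of Lemma \ref{lem:Q=SiB} combined with Propositions \ref{prop:Bruhat-D} and \ref{prop:Q=D}, which is precisely your decomposition into the $\chi=0$ (Bruhat, $c'=c$) and $\chi=1$ (quantum, $c'=c+\chi c_i(\gamma^{\vee})$) cases translated through $\mathcal{Y}_i^{D_n}$ and Lemma \ref{lem:Gr-D}. Your case matching, including the handling of the extra slot $w(\overline{n})$ when $i=n-1$, is accurate.
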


\ytableausetup{mathmode,boxsize=5.5mm}
\begin{ex}

\begin{enumerate}[(1)]
\item
Let 
$i = 1$. 
($\si$-D8)
is equivalent to 
$c' = c+1$
and 
$\left(
\mathsf{T}, 
\mathsf{T}'
\right)
\in 
\left\{ 
\left(
\begin{ytableau} \overline{1} \end{ytableau},
\begin{ytableau} 2 \end{ytableau}
\right),
\left(
\begin{ytableau} \overline{2} \end{ytableau},
\begin{ytableau} 1 \end{ytableau}
\right)
\right\}$.
\item
Let 
$n = 4$
and 
$i = 2$.
($\si$-D8)
is equivalent to the condition that
$c' = c+1$
and 
$(\mathsf{T}, 
\mathsf{T}')$
equals one of the following:
\begin{align*} 
\left(
\begin{ytableau}
2 \\ \overline{3}
\end{ytableau}, 
\begin{ytableau}
1 \\ 2 
\end{ytableau}
\right),
\left(
\begin{ytableau}
3 \\ \overline{2}
\end{ytableau}, 
\begin{ytableau}
1 \\ 3
\end{ytableau}
\right), 
\left(
\begin{ytableau}
4 \\ \overline{2}
\end{ytableau}, 
\begin{ytableau} 
1 \\ 4
\end{ytableau}
\right),
\left(
\begin{ytableau}
\overline{4} \\ \overline{2}
\end{ytableau}, 
\begin{ytableau}
1 \\ \overline{4}
\end{ytableau}
\right), 
\left(
\begin{ytableau}
\overline{3} \\ \overline{2}
\end{ytableau}, 
\begin{ytableau}
1 \\ \overline{3}
\end{ytableau}
\right), \\
\left(
\begin{ytableau}
3 \\ \overline{1}
\end{ytableau}, 
\begin{ytableau}
2 \\ 3
\end{ytableau}
\right), 
\left(
\begin{ytableau}
4 \\ \overline{1}
\end{ytableau}, 
\begin{ytableau}
2 \\ 4
\end{ytableau}
\right),
\left(
\begin{ytableau}
\overline{4} \\ \overline{1}
\end{ytableau}, 
\begin{ytableau}
2 \\ \overline{4}
\end{ytableau}
\right),
\left(
\begin{ytableau}
\overline{3} \\ \overline{1}
\end{ytableau}, 
\begin{ytableau}
2 \\ \overline{3}
\end{ytableau}
\right), 
\left(
\begin{ytableau}
\overline{2} \\ \overline{1}
\end{ytableau}, 
\begin{ytableau} 
3 \\ \overline{2}
\end{ytableau}
\right) .
\end{align*}
($\si$-D9)
is equivalent to 
$c' = c+2$, 
$\mathsf{T}
=
\begin{ytableau}
\overline{2} \\ \overline{1}
\end{ytableau}$
and 
$\mathsf{T}'
=
\begin{ytableau}
1 \\ 2
\end{ytableau}$. 
\end{enumerate}
\end{ex}

We have divided the proof of 
Proposition \ref{prop:Q=D} 
into a sequence of lemmas.

\begin{lem} \label{lem:Q-i=1->D1}
$\mathrm{Q}(1,w,\gamma)$
implies 
(q-D1).
\end{lem}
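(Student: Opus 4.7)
The plan is to mimic the proof of Lemma \ref{lem:Q->B1}, adapted to type $D$. Combining Lemma \ref{lem:LS} with Lemma \ref{lem:J-ad-D} (1), the only possibility is $\gamma^\vee = \alpha_1^\vee = \varepsilon_1 - \varepsilon_2$; then $r_\gamma = (1\ 2)(\overline{1}\ \overline{2})$ and $\lfloor w r_\gamma \rfloor = w r_\gamma (z^J_{\gamma^\vee})^{-1}$, where $J = I \setminus \{1\} = [2,n]$ is of type $D_{n-1}$. Applying Lemma \ref{lem:J-ad} with $j = 2 \in J_{\af}$ gives $\phi_J(\alpha_1^\vee) = 0$ and $z^J_{\gamma^\vee} = w_0^J w_0^{J \setminus \{2\}}$ (with $J \setminus \{2\}$ of type $D_{n-2}$). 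Evaluating this product on the basis $\{\varepsilon_s\}_{s=2}^n$, handling separately the odd-rank cases of type $D$ in which the longest element is $-1$ composed with the diagram automorphism swapping the two spin nodes, one finds in both parities of $n$ that $z^J_{\gamma^\vee} = (2\ \overline{2})(n\ \overline{n})$, so $\lfloor w r_\gamma \rfloor = w \cdot (1\ 2\ \overline{1}\ \overline{2})(n\ \overline{n})$. Explicitly, $\lfloor w r_\gamma \rfloor(1) = w(2)$, $\lfloor w r_\gamma \rfloor(2) = w(\overline{1})$, $\lfloor w r_\gamma \rfloor(s) = w(s)$ for $s \in [3,n-1]$, and $\lfloor w r_\gamma \rfloor(n) = w(\overline{n})$.

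Applying the description of $W^{I \setminus \{1\}}$ in Lemma \ref{lem:Gr-D} to both $w$ and $\lfloor w r_\gamma \rfloor$ then yields the two chains
\begin{align*}
&w(2) \prec w(3) \prec \cdots \prec w(n-1) \prec w(n) \prec \sigma(w(n-1)), \\
&w(\overline{1}) \prec w(3) \prec \cdots \prec w(n-1) \prec w(\overline{n}) \prec \sigma(w(n-1)),
\end{align*}
using $w(\overline{n-1}) = \sigma(w(n-1))$. Combining them forces $w(\overline{1}), w(2), w(3), \ldots, w(n-1)$ to be $n-1$ distinct elements of $\mathcal{D}_n$ all $\preceq w(n-1)$, while $w(n)$ and $w(\overline{n}) = \sigma(w(n))$ lie strictly between $w(n-1)$ and $\sigma(w(n-1))$. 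A case analysis in the poset $\mathcal{D}_n$ now rules out $w(n-1) \in \{n, \overline{n}\}$ (by the incomparability of $n$ and $\overline{n}$ there is no room for both $w(n)$ and its $\sigma$-image), and also $w(n-1) = s \in [1, n-2]$ (the strictly increasing chain $w(2) \prec \cdots \prec w(n-1) = s$ of length $n-2$ inside $\{1, \ldots, s\}$ forces $s = n-2$ and $w(u) = u-1$, whereupon $w(\overline{1}) \prec w(3) = 2$ gives $w(\overline{1}) = 1 = w(2)$, contradicting bijectivity). Therefore $w(n-1) = n-1$, which cascades to $w(s) = s$ for $s \in [3, n-1]$, $\{w(n), w(\overline{n})\} = \{n, \overline{n}\}$, and $\{w(\overline{1}), w(2)\} = \{1, 2\}$.

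For $i = 1$ the index set $[i-1]$ is empty, so the sequence in the statement of (q-D1) is $\{a_1 < a_2 < \cdots < a_n\} = [n]$, giving $a_1 = 1$ and $a_2 = 2$; the identity $\{w(\overline{1}), \lfloor w r_\gamma \rfloor(1)\} = \{w(\overline{1}), w(2)\} = \{1, a_2\}$ is precisely (q-D1), and the condition $\lfloor w r_\gamma \rfloor(u) = w(u-1)$ for $u \in [2,1]$ is vacuous. The main technical obstacle is the computation of $z^J_{\gamma^\vee}$: in type $D_m$, the longest element depends on the parity of $m$, so the product $w_0^J w_0^{J \setminus \{2\}}$ must be worked out separately for $n$ even and $n$ odd, and it is a small coincidence of the two parities that the answer simplifies uniformly to $(2\ \overline{2})(n\ \overline{n})$. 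The final combinatorial step, although conceptually parallel to Lemma \ref{lem:Q->B1}, also requires delicate bookkeeping because $\mathcal{D}_n$ is not totally ordered --- a feature absent from the analogous type $B$ proof.
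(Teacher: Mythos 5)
Your proposal is correct and follows essentially the same route as the paper: identify $\gamma^{\vee} = \alpha_1^{\vee}$ via Lemmas \ref{lem:LS} and \ref{lem:J-ad-D}, compute $z_{\gamma^{\vee}}^{I \setminus \{1\}} = (2\ \overline{2})(n\ \overline{n})$, read off $\lfloor wr_{\gamma}\rfloor$, and apply Lemma \ref{lem:Gr-D} to both $w$ and $\lfloor wr_{\gamma}\rfloor$ to force $\{w(\overline{1}), w(2)\} = \{1,2\}$. You merely spell out the chain/counting argument (and the parity check for the type-$D$ longest elements) that the paper leaves implicit in \eqref{eq:seq-D1-i=1}, and you stop at $\{w(n),w(\overline{n})\}=\{n,\overline{n}\}$ rather than using \eqref{eq:Weyl-grp-D} to pin down $w(\overline{n})=n$, which is not needed for (q-D1).
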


\begin{proof}
Assume that 
$\mathrm{Q}(1,w,\gamma)$
is true.
By Lemmas \ref{lem:LS} and \ref{lem:J-ad-D}, 
we have 
$\gamma^{\vee} 
= 
\varepsilon_1 - \varepsilon_2 
\in 
Q^{\vee,I\setminus \{ 1 \}}$, 
$r_{\gamma} = (1 \ 2)(\overline{1} \ \overline{2})$, 
and 
$\lfloor wr_{\gamma} \rfloor = wr_{\gamma}(z_{\gamma^{\vee}}^{I \setminus \{ 1 \}})^{-1}$. 
Set 
$J = I \setminus \{ 1 \}$; 
note that 
$J$
is of type 
$D_{n-1}$.
We see that 
$2 \in J_{\af}$
satisfies the condition for 
$\gamma^{\vee} \in Q^{\vee}$
in 
Lemma \ref{lem:J-ad}; 
note that 
$J \setminus \{ 2 \}$
is of type 
$D_{n-2}$. 
Hence 
$z_{\gamma^{\vee}}^{I \setminus \{ 1 \}}
=
w_0^J w_0^{J \setminus \{ 2 \}}$
is given by 
$2 \mapsto \overline{2}$, 
$n \mapsto \overline{n}$, 
and 
$u \mapsto u$
for 
$u \in [n] \setminus \{ 2,n \}$. 
Then 
$\lfloor wr_{\gamma} \rfloor$
is given by 
$1 \mapsto w(2)$, 
$2 \mapsto w(\overline{1})$, 
$u \mapsto w(u)$
for 
$u \in [3,n-1]$, 
and 
$n \mapsto w(\overline{n})$. 
It follows from 
\eqref{eq:Weyl-grp-D}
and 
Lemma \ref{lem:Gr-D} that 
\begin{align} \label{eq:seq-D1-i=1}
\max\{ w(2), w(\overline{1})\}
\prec 
w(3) 
\prec 
w(4) 
\prec \cdots \prec 
w(n-1)
\prec 
\underbrace{w(\overline{n})}_{= n}
\prec 
w(\overline{n-1}).
\end{align}
Hence 
$\{ \lfloor wr_{\gamma} \rfloor(1) = w(2), 
w(\overline{1}) \}
=
\{ a_1 = 1, a_2 = 2 \}$. 
This implies (q-D1). 
\end{proof}

\begin{lem} \label{lem:D1-i=1->Q}
$i = 1$
and 
(q-D1)
imply
$\mathrm{Q}(1,w,\gamma)$.
\end{lem}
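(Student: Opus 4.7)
The plan is to argue by an explicit length computation, paralleling the proof of Lemma \ref{lem:B1->Q} in the $B_n$ case. First, since (q-D1) forces $c_1(\gamma^\vee) = 1$ and $\gamma^\vee = \varepsilon_1 - \varepsilon_2$ by Lemma \ref{lem:J-ad-D}, the setup already done in the proof of Lemma \ref{lem:Q-i=1->D1} applies without change. In particular $z_{\gamma^\vee}^{I\setminus\{1\}} = (2\ \overline{2})(n\ \overline{n})$, and $\lfloor wr_\gamma\rfloor$ sends $1 \mapsto w(2)$, $2 \mapsto w(\overline{1})$, $u \mapsto w(u)$ for $u \in [3,n-1]$, and $n \mapsto w(\overline{n})$. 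By Lemmas \ref{lem:<g,r>} and \ref{lem:<g,r>-D}, the assertion $\mathrm{Q}(1,w,\gamma)$ is equivalent to
\[
\ell(\lfloor wr_\gamma\rfloor) - \ell(w) = 3 - 2n,
\]
and by Lemma \ref{lem:Gr-D} with $i=1$, the length of any $x \in W^{I\setminus\{1\}}$ is simply $\mathsf{a}_1(x) + \mathsf{b}_1(x)$. So the whole problem reduces to computing these two numbers in each of two cases.

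By (q-D1) with $i=1$ we have $\{a_1,a_2\} = \{1,2\}$ and $\{w(\overline{1}),\lfloor wr_\gamma\rfloor(1)\} = \{w(\overline{1}),w(2)\} = \{1,2\}$, which splits into two subcases. In the first, $w(\overline{1}) = 1$ and $w(2) = 2$, so $w(1) = \overline{1}$ is the unique maximum of $\mathcal{D}_n$; hence $\mathsf{a}_1(w) = \mathsf{b}_1(w) = n-1$ and $\ell(w) = 2n-2$. Meanwhile $\lfloor wr_\gamma\rfloor(1) = 2$, and among the values $\lfloor wr_\gamma\rfloor(t)$, $t \in [2,n]$, the only one strictly below $2$ is $w(\overline{1}) = 1$ (attained at $t=2$), while $\overline{1} = \lfloor wr_\gamma\rfloor(\overline{2})$ does not appear; thus $\mathsf{a}_1(\lfloor wr_\gamma\rfloor) = 1$, $\mathsf{b}_1(\lfloor wr_\gamma\rfloor) = 0$, and $\ell(\lfloor wr_\gamma\rfloor) = 1$.

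In the second subcase, $w(\overline{1}) = 2$ and $w(2) = 1$, so $w(1) = \overline{2}$. The identity $w(\overline{u}) = \sigma(w(u))$ shows that $\overline{1} = w(\overline{2})$ does not occur among $\{w(t) : t \in [2,n]\}$, so these $n-1$ values lie in $\mathcal{D}_n \setminus \{\pm 2,\overline{1}\}$ and are therefore all $\prec \overline{2}$, yielding $\mathsf{a}_1(w) = n-1$. Among $\sigma(w(t))$, $t \in [2,n]$, the only one not $\prec \overline{2}$ is $\sigma(w(2)) = \overline{1}$, giving $\mathsf{b}_1(w) = n-2$ and $\ell(w) = 2n-3$. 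Since $\lfloor wr_\gamma\rfloor(1) = w(2) = 1$ is the minimum of $\mathcal{D}_n$, both $\mathsf{a}_1$ and $\mathsf{b}_1$ of $\lfloor wr_\gamma\rfloor$ vanish and $\ell(\lfloor wr_\gamma\rfloor) = 0$. In either subcase $\ell(\lfloor wr_\gamma\rfloor) - \ell(w) = 3 - 2n$.

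I do not anticipate any serious obstacle: the argument is a short bookkeeping check. The only point that needs care is to verify, using $w(\overline{u}) = \sigma(w(u))$, that $\overline{1}$ never enters the set of entries being counted (which would spoil the tallies). This is immediate in both subcases, so the proof should be straightforward once the setup from Lemma \ref{lem:Q-i=1->D1} is imported.
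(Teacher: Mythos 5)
Your proposal is correct and takes essentially the same route as the paper: reduce $\mathrm{Q}(1,w,\gamma)$ to the identity $\ell(\lfloor wr_{\gamma}\rfloor)-\ell(w)=3-2n$ via Lemmas \ref{lem:<g,r>} and \ref{lem:J-ad-D}--\ref{lem:<g,r>-D}, then split into the cases $w(1)=\overline{1}$ and $w(1)=\overline{2}$ and verify $\ell(w)=2n-2,\ \ell(\lfloor wr_{\gamma}\rfloor)=1$ and $\ell(w)=2n-3,\ \ell(\lfloor wr_{\gamma}\rfloor)=0$ respectively. The paper compresses the $\mathsf{a}_1,\mathsf{b}_1$ bookkeeping into ``we check at once,'' whereas you write it out; the computations agree.
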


\begin{proof}
Assume that (q-D1) and $i = 1$; 
we have 
$a_1 = 1$, 
$a_2 = 2$, 
and 
$\{ 
w(\overline{i}),\lfloor wr_{\gamma} \rfloor (1)
\}
= 
\{ 1,2 \}$. 
We see from 
Lemmas \ref{lem:<g,r>} and \ref{lem:J-ad-D}--\ref{lem:<g,r>-D} that 
$\mathrm{Q}(1,w,\gamma)$ 
is equivalent to 
$\ell(\lfloor wr_{\gamma} \rfloor) - \ell(w)
=
3-2n$. 
We check at once that 
(q-D1), $i=1$, and Lemma \ref{lem:Gr-D}
yield
\eqref{eq:seq-D1-i=1}. 
If $w(1) = \overline{1}$, 
then 
$\ell(w) = 2n-2$, 
$\ell(\lfloor wr_{\gamma} \rfloor) = 1$, 
and hence 
$\ell(\lfloor wr_{\gamma} \rfloor) - \ell(w)
=
3-2n$. 
If $w(1) = \overline{2}$, 
then 
$\ell(w) = 2n-3$, 
$\ell(\lfloor wr_{\gamma} \rfloor) = 0$, 
and hence 
$\ell(\lfloor wr_{\gamma} \rfloor) - \ell(w)
=
3-2n$. 
\end{proof}

\begin{lem} \label{lem:Q-n-3->D1}
$n \geq 5$, 
$i \in [2,n-3]$, 
$c_i(\gamma^{\vee}) = 1$, 
and 
$\mathrm{Q}(i,w,\gamma)$
imply
(q-D1).
\end{lem}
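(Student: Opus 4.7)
The plan is to mirror the strategy used in Lemma \ref{lem:Q->B2} for type $B_n^{(1)}$, adapting it to the type $D$ parabolic geometry.

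First, I would record the preliminary setup. By Lemmas \ref{lem:LS} and \ref{lem:J-ad-D}(2), since $c_i(\gamma^{\vee}) = 1$, the only option is $\gamma^{\vee} = \varepsilon_i - \varepsilon_{i+1} = \alpha_i^{\vee}$, so $r_{\gamma} = (i\ i{+}1)(\overline{i}\ \overline{i{+}1})$ and $\lfloor wr_{\gamma}\rfloor = wr_{\gamma}(z_{\gamma^{\vee}}^{I \setminus \{i\}})^{-1}$. Combining Lemmas \ref{lem:<g,r>} and \ref{lem:<g,r>-D} then rephrases the hypothesis $\mathrm{Q}(i,w,\gamma)$ as the length identity $\ell(\lfloor wr_{\gamma}\rfloor) - \ell(w) = i - 2n + 2$, which will serve as the numerical target for the calculation.

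Second, I would decompose $I \setminus \{i\} = I_1 \sqcup I_2$ with $I_1 = [i-1]$ of type $A_{i-1}$ and $I_2 = [i+1,n]$ of type $D_{n-i}$; the assumptions $n \geq 5$ and $i \in [2, n-3]$ force $n - i \geq 3$, so that $I_2$ is genuinely a type $D$ diagram and not the degenerate case $D_2 = A_1 \times A_1$ (those boundary cases are the content of the sequel Lemmas handling $i = n-2, n-1, n$). Checking that $(j_1, j_2) = (i-1, i+1) \in (I_1)_{\af} \times (I_2)_{\af}$ satisfies the condition in Lemma \ref{lem:J-ad} for $\gamma^{\vee}$, I would compute $z_{\gamma^{\vee}}^{I \setminus \{i\}} = w_0^{I_1} w_0^{I_1 \setminus \{i-1\}} \cdot w_0^{I_2} w_0^{I_2 \setminus \{i+1\}} = (1\ 2\ \cdots\ i)(\overline{1}\ \cdots\ \overline{i}) \cdot (i{+}1\ \overline{i{+}1})(n\ \overline{n})$ explicitly, substituting into the formula for $\lfloor wr_{\gamma}\rfloor$. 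This yields the clean identity $\lfloor wr_{\gamma}\rfloor(u) = w(u-1)$ for $u \in [2,i]$ and $\lfloor wr_{\gamma}\rfloor(u) = w(u)$ for $u \in [i+2, n]$, while the entries $\lfloor wr_{\gamma}\rfloor(1)$ and $\lfloor wr_{\gamma}\rfloor(i+1)$ are obtained from $w(i+1)$ and $w(\overline{i})$ possibly with a sign swap at $\{n, \overline{n}\}$ coming from the $(n\ \overline{n})$ factor. This sign-swap ambiguity is precisely the reason (q-D1) is phrased symmetrically in $\{w(\overline{i}),\lfloor wr_{\gamma}\rfloor(1)\}$ rather than pinning each down separately.

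Third, I would match the two expressions for the length change. The relation $\lfloor wr_{\gamma}\rfloor(u) = w(u-1)$ for $u \in [2,i]$ is already immediate from the explicit formula, so what remains is to locate $w(\overline{i})$ and $\lfloor wr_{\gamma}\rfloor(1)$ in $[n]$ and to check $a_1 = 1$. Using Lemma \ref{lem:Gr-D} I would expand $\ell(w) = \sum_{s=1}^{i}(\mathsf{a}_s(w) + \mathsf{b}_s(w))$ and $\ell(\lfloor wr_{\gamma}\rfloor) = \sum_{s=1}^{i}(\mathsf{a}_s(\lfloor wr_{\gamma}\rfloor) + \mathsf{b}_s(\lfloor wr_{\gamma}\rfloor))$, and then track index-by-index how $\mathsf{a}_s$ and $\mathsf{b}_s$ change under the shift $s \mapsto s-1$. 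For interior indices the contributions match up via $\lfloor wr_{\gamma}\rfloor(s) = w(s-1)$, leaving only boundary corrections at $s = 1$ and $s = i$. Setting the sum equal to $i - 2n + 2$ and exploiting the parity constraint in \eqref{eq:Weyl-grp-D} (which ensures the sign swap at $\{n, \overline{n}\}$ is consistent) will force $\{w(\overline{i}), \lfloor wr_{\gamma}\rfloor(1)\} = \{1, a_2\}$ with $a_1 = 1$. The main obstacle is the careful bookkeeping in this index-wise comparison of $\mathsf{a}_s, \mathsf{b}_s$, directly analogous to Steps 1--4 of the proof of Lemma \ref{lem:Q->B2}, but with an additional case split tracking whether the sign swap at $\{n, \overline{n}\}$ acts trivially or nontrivially on $w(i+1)$ and $w(\overline{i})$.
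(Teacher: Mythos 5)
Your setup (reduction to $\gamma^{\vee}=\varepsilon_i-\varepsilon_{i+1}$ via Lemmas \ref{lem:LS} and \ref{lem:J-ad-D}, the target length identity $\ell(\lfloor wr_{\gamma}\rfloor)-\ell(w)=2-2n+i$, the decomposition $I\setminus\{i\}=I_1\sqcup I_2$, and the formula for $z_{\gamma^{\vee}}^{I\setminus\{i\}}$) agrees with the paper. However, there is a concrete error in your computation of $\lfloor wr_{\gamma}\rfloor=wr_{\gamma}(z_{\gamma^{\vee}}^{I\setminus\{i\}})^{-1}$: the factor $(n\ \overline{n})$ in $z_{\gamma^{\vee}}^{I\setminus\{i\}}$ acts on the position $n$, so the correct conclusion is $\lfloor wr_{\gamma}\rfloor(n)=w(\overline{n})$, while $\lfloor wr_{\gamma}\rfloor(1)=w(i+1)$ and $\lfloor wr_{\gamma}\rfloor(i+1)=w(\overline{i})$ exactly, with no sign ambiguity. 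Your claim that $\lfloor wr_{\gamma}\rfloor(u)=w(u)$ for all $u\in[i+2,n]$ is false at $u=n$, and your ``sign-swap at $\{n,\overline{n}\}$ acting on $w(i+1)$ and $w(\overline{i})$'' does not occur; consequently the case split you propose in the length bookkeeping is organized around a phenomenon that is not there. The symmetric phrasing $\{w(\overline{i}),\lfloor wr_{\gamma}\rfloor(1)\}=\{1,a_2\}$ in (q-D1) reflects two genuinely distinct configurations (either $w(i+1)=1$, or $w(i)=\overline{1}$ so that $w(\overline{i})=1$), not an indeterminacy in the formula. The misplaced swap would also corrupt the $\mathsf{a}_s,\mathsf{b}_s$ bookkeeping at index $n$ (where the correct relation interchanges $\mathsf{A}$- and $\mathsf{B}$-membership, cf.\ items (8)--(9) in the proof of Lemma \ref{lem:D1-n-3->Q}).

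Beyond this error, your overall strategy is heavier than necessary for this direction. The paper does no length counting here: once $\mathrm{Q}(i,w,\gamma)$ guarantees $\lfloor wr_{\gamma}\rfloor=wr_{\gamma}(z_{\gamma^{\vee}}^{I\setminus\{i\}})^{-1}\in W^{I\setminus\{i\}}$, applying Lemma \ref{lem:Gr-D} to both $w$ and $\lfloor wr_{\gamma}\rfloor$ yields the chain \eqref{eq:seq-D1-n-3}, from which $w(i+1)=\min\{w(u)\mid u\in[n]\}$, hence $a_1=1$, and $\{w(\overline{i}),w(i+1)\}$ are forced to be the two smallest elements $\{a_1,a_2\}$ of $[n]\setminus\{\|w(u)\|\mid u\in[i-1]\}$; the shift identity $\lfloor wr_{\gamma}\rfloor(u)=w(u-1)$ for $u\in[2,i]$ is immediate from the formula. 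The length identity is only needed for the converse (Lemma \ref{lem:D1-n-3->Q}). A corrected version of your length computation could be made to work, but it would have to produce the inequality $\ell(\lfloor wr_{\gamma}\rfloor)-\ell(w)\geq 2-2n+i$ with equality forcing $\min\{w(\overline{i}),w(i+1)\}=1$, which is more delicate than the direct ordering argument.
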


\begin{proof}
Assume that 
$n \geq 5$, 
$i \in [2,n-3]$, 
and 
$c_i(\gamma^{\vee}) = 1$, 
and that
$\mathrm{Q}(i,w,\gamma)$
is true. 
By Lemmas \ref{lem:LS} and \ref{lem:J-ad-D}, 
we have
$\gamma^{\vee} 
= 
\varepsilon_i - \varepsilon_{i+1} 
\in 
Q^{\vee,I\setminus \{ i \}}$, 
$r_{\gamma} = (i \ i+1)(\overline{i} \ \overline{i+1})$, 
and 
$\lfloor wr_{\gamma} \rfloor = wr_{\gamma}(z_{\gamma^{\vee}}^{I \setminus \{ i \}})^{-1}$. 
Let 
$I \setminus \{ i \} = I_1 \sqcup I_2$, 
where 
$I_1 = [i-1]$
is of type 
$A_{i-1}$
and 
$I_2 = [i+1,n]$
is of type 
$D_{n-i}$.
We see that 
$(i-1,i+1) \in (I_1)_{\af} \times (I_2)_{\af}$
satisfies the condition for 
$\gamma^{\vee} \in Q^{\vee}$
in 
Lemma \ref{lem:J-ad}; 
note that 
$I_1 \setminus \{ i-1 \} = [i-2]$
is of type 
$A_{i-2}$
and 
$I_2 \setminus \{ i+1 \} = [i+2,n]$
is of type 
$D_{n-i-1}$. 
Hence 
$z_{\gamma^{\vee}}^{I \setminus \{ i \}} 
=
w_0^{I_1}w_0^{I_1 \setminus \{ i-1 \}}
w_0^{I_2}w_0^{I_2 \setminus \{ i+1 \}}
=
(1 \ 2 \ \cdots \ i)
(\overline{1} \ \overline{2} \ \cdots \ \overline{i})
(i+1 \ \overline{i+1})
(n \ \overline{n})$. 
Then 
$\lfloor wr_{\gamma} \rfloor$
is given by 
$1 \mapsto w(i+1)$, 
$u \mapsto w(u-1)$
for 
$u \in [2,i]$, 
$i+1 \mapsto w(\overline{i})$, 
$u \mapsto w(u)$
for 
$u \in [i+2,n-1]$, 
and 
$n \mapsto w(\overline{n})$. 
It follows from Lemma \ref{lem:Gr-D} that 
\begin{align} \label{eq:seq-D1-n-3}
\begin{split}
&w(i+1) 
\prec 
w(1)
\prec
w(2)
\prec \cdots \prec
w(i-1)
\prec
w(i), 
\\
&
\max\{
w(\overline{i}), 
w(i+1) 
\}
\prec
w(i+2)
\prec \cdots \prec
\underbrace{w(n-1)}_{\preceq n}
\prec
w(\overline{n})
\prec
w(\overline{n-1}).
\end{split}
\end{align}
Let 
$\{ a_1 < a_2 < \cdots < a_{n-i+1} \}
=
[n] \setminus \{ \|w(u)\| \mid u \in [i-1] \}$. 
By \eqref{eq:seq-D1-n-3}, 
we have 
$w(i+1) = \min \{ w(u) \mid u \in [n] \}$, 
which implies 
$a_1 = 1$. 
Since 
$[n] \setminus \{ \|w(u)\| \mid u \in [i-1] \}
=
\{ w(\overline{i}),w(i+1),w(i+2),\ldots ,w(n-1),\|w(n)\| \}$, 
we have 
$\{ w(\overline{i}), \lfloor wr_{\gamma} \rfloor(1) = w(i+1) \}
=
\{ a_1 = 1,a_2 \}$. 
\end{proof}

\begin{lem} \label{lem:D1-n-3->Q}
$n \geq 5$, 
$i \in [2,n-3]$, 
and 
(q-D1)
imply
$\mathrm{Q}(i,w,\gamma)$. 
\end{lem}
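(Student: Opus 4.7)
The plan is to follow the strategy of Lemma \ref{lem:B2->Q}, the type $B_n^{(1)}$ analogue of the present lemma. By Lemmas \ref{lem:<g,r>}, \ref{lem:J-ad-D}, and \ref{lem:<g,r>-D}, since $c_i(\gamma^{\vee}) = 1$ and $i \in [2, n-3]$, the statement $\mathrm{Q}(i,w,\gamma)$ is equivalent to the length identity
\begin{align*}
\ell(\lfloor wr_{\gamma} \rfloor) - \ell(w) = i + 2 - 2n.
\end{align*}
Hence the proof reduces to verifying this equality assuming (q-D1).

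First I would reuse the explicit computation of $z_{\gamma^{\vee}}^{I \setminus \{ i \}}$ from the proof of Lemma \ref{lem:Q-n-3->D1}: $\lfloor wr_{\gamma} \rfloor(1) = w(i+1)$, $\lfloor wr_{\gamma} \rfloor(u) = w(u-1)$ for $u \in [2,i]$, $\lfloor wr_{\gamma} \rfloor(i+1) = w(\overline{i})$, $\lfloor wr_{\gamma} \rfloor(u) = w(u)$ for $u \in [i+2,n-1]$, and $\lfloor wr_{\gamma} \rfloor(n) = w(\overline{n})$. Combined with (q-D1) and Lemma \ref{lem:Gr-D}, this forces the chain of inequalities \eqref{eq:seq-D1-n-3} to hold, together with $\{ w(\overline{i}), w(i+1) \} = \{ 1, a_2 \}$; note that the two possibilities $w(i+1) = 1$ or $w(i+1) = a_2$ correspond to the two cases encoded in $\{ w(\overline{i}), \lfloor wr_{\gamma} \rfloor(1) \} = \{ 1, a_2 \}$.

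Next I would establish a list of identities analogous to items (1)--(6) in the proof of Lemma \ref{lem:B2->Q}, comparing $\mathsf{a}_s$ and $\mathsf{b}_s$ for $w$ and for $\lfloor wr_{\gamma} \rfloor$. Concretely, I expect: $\mathsf{a}_1(\lfloor wr_{\gamma} \rfloor) = \mathsf{b}_1(\lfloor wr_{\gamma} \rfloor) = 0$ when $w(i+1) = 1$ (and a parallel count in the case $w(i+1) = a_2$, using that $\lfloor wr_{\gamma} \rfloor(i+1) = w(\overline{i}) = 1$ forces $i+1 \notin \mathsf{B}_1(\lfloor wr_{\gamma} \rfloor)$); a shift $\mathsf{a}_s(\lfloor wr_{\gamma} \rfloor), \mathsf{b}_s(\lfloor wr_{\gamma} \rfloor)$ against $\mathsf{a}_{s-1}(w), \mathsf{b}_{s-1}(w)$ for $s \in [2,i]$, with the decrement of $\mathsf{a}$ or $\mathsf{b}$ determined by whether $w(s-1) \prec w(\overline{i})$ or $w(s-1) \succ w(\overline{i})$; and explicit evaluations $\mathsf{a}_i(w) = n-i$, $\mathsf{b}_i(w) = n-i+1$ (or the analogous values for the other subcase) read off from \eqref{eq:seq-D1-n-3}. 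Summing the contributions via $\ell(\cdot) = \sum_{s=1}^{i}(\mathsf{a}_s + \mathsf{b}_s)$ (cf.\ Lemma \ref{lem:Gr-D}) should then yield exactly $i + 2 - 2n$.

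The main obstacle will be the bookkeeping for the shift identities in case $s \in [2,i]$, specifically making sure that the contribution from the index $i+1 \in [i+1,n]$ is handled correctly in the two subcases of (q-D1); the symmetry between the elements $n$ and $\overline{n}$ in $\mathcal{D}_n$ and the parity constraint in \eqref{eq:Weyl-grp-D} mean that the naive type-$B_n$ counting has to be adjusted by the absence of the $\mathsf{e}_s$-term and compensated by the shifted value $2\langle \alpha_i^{\vee}, \rho - \rho_{I \setminus \{ i \}} \rangle = 2n - i - 1$ (versus $2n - i$ in type $B_n$). Once these corrections are tracked cleanly, the conclusion follows just as in Lemma \ref{lem:B2->Q}.
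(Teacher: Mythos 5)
Your proposal is correct and follows essentially the same route as the paper's proof: reduce $\mathrm{Q}(i,w,\gamma)$ to the length identity $\ell(\lfloor wr_{\gamma}\rfloor)-\ell(w)=2-2n+i$ via Lemmas \ref{lem:<g,r>} and \ref{lem:J-ad-D}--\ref{lem:<g,r>-D}, note that (q-D1) recovers \eqref{eq:seq-D1-n-3} and the explicit description of $\lfloor wr_{\gamma}\rfloor$ from Lemma \ref{lem:Q-n-3->D1}, and then tally $\mathsf{a}_s,\mathsf{b}_s$ in the two subcases $w(i+1)\prec w(\overline{i})$ and $w(i+1)\succ w(\overline{i})$, exactly as the paper does in its items (1)--(12). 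The one quibble is a value transplanted from the type-$B$ argument: here $\mathsf{b}_i(w)$ comes out as $n-i-1$ or $n-i$ according to the subcase (it cannot equal $n-i+1$ since $\mathsf{B}_i(w)\subset[i+1,n]$), but as you explicitly flag these evaluations as provisional this does not affect the validity of the plan.
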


\begin{proof}
Assume that 
$n \geq 5$, 
$i \in [2,n-3]$, 
and 
(q-D1)
hold.
We see from 
Lemmas \ref{lem:<g,r>} and \ref{lem:J-ad-D}--\ref{lem:<g,r>-D} that 
$\mathrm{Q}(i,w,\gamma)$ 
is equivalent to 
$\ell(\lfloor wr_{\gamma} \rfloor) - \ell(w)
=
2-2n+i$. 
We check at once that 
(q-D1)
yields 
\eqref{eq:seq-D1-n-3}. 
We deduce from 
\eqref{eq:seq-D1-n-3}
that 
\begin{enumerate}[(1)]
\item
$\mathsf{a}_1(\lfloor wr_{\gamma} \rfloor)
=
\begin{cases}
1 & \text{if} \ w(i+1) \prec w(\overline{i}), \\
0 & \text{if} \ w(i+1) \succ w(\overline{i}),
\end{cases}$
\item
$\mathsf{b}_1(\lfloor wr_{\gamma} \rfloor)
=
\begin{cases}
0 & \text{if} \ w(i+1) \prec w(\overline{i}), \\
w(i+1)-2 & \text{if} \ w(i+1) \succ w(\overline{i}), 
\end{cases}$
\item
for 
$s \in [2,i]$
and 
$t \in [s+1,i]$, 
$t \notin \mathsf{A}_s(\lfloor wr_{\gamma} \rfloor)$
and 
$t-1 \notin \mathsf{A}_{s-1}(w)$, 
\item
for 
$s \in [2,i]$
and 
$t \in [s+1,i]$, 
$t \in \mathsf{B}_s(\lfloor wr_{\gamma} \rfloor)$
if and only if 
$t-1 \in \mathsf{B}_{s-1}(w)$, 
\item
for 
$s \in [2,i]$, 
$i+1 \in \mathsf{B}_s(\lfloor wr_{\gamma} \rfloor)$
if and only if 
$i \in \mathsf{A}_{s-1}(w)$, 
\item
for 
$s \in [2,i]$
and 
$t \in [i+2,n-1]$, 
$t \in \mathsf{A}_s(\lfloor wr_{\gamma} \rfloor)$
if and only if 
$t \in \mathsf{A}_{s-1}(w)$, 
\item
for 
$s \in [2,i]$
and 
$t \in [i+2,n-1]$, 
$t \in \mathsf{B}_s(\lfloor wr_{\gamma} \rfloor)$
if and only if 
$t \in \mathsf{B}_{s-1}(w)$, 
\item
for 
$s \in [2,i]$, 
$n \in \mathsf{A}_s(\lfloor wr_{\gamma} \rfloor)$
if and only if 
$n \in \mathsf{B}_{s-1}(w)$, 
\item
for 
$s \in [2,i]$, 
$n \in \mathsf{B}_s(\lfloor wr_{\gamma} \rfloor)$
if and only if 
$n \in \mathsf{A}_{s-1}(w)$, 
\item
$i+1 \in \mathsf{A}_{s-1}(w)$
for 
$s \in [2,i]$, 
\item
$i+1 \in \mathsf{B}_{s-1}(w)$
if and only if 
$s \in [i-w(i+1)+3,i]$, 
\item
$\mathsf{a}_i(w) = n-i$, \ 
$\mathsf{b}_i(w)
=
\begin{cases}
n-i-1 & \text{if} \ w(i+1) \prec w(\overline{i}), \\
n-i & \text{if} \ w(i+1) \succ w(\overline{i}).
\end{cases}$
\end{enumerate}
Hence
$\ell(\lfloor wr_{\gamma} \rfloor) - \ell(w)
=
2-2n+i$, 
which is our assertion.
\end{proof}

\begin{lem} \label{lem:Q-n-2->D1}
$n \geq 5$, 
$c_{n-2}(\gamma^{\vee}) = 1$, 
and 
$\mathrm{Q}(n-2,w,\gamma)$
imply
(q-D1).
\end{lem}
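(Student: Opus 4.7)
The plan is to mimic the proof of Lemma \ref{lem:Q-n-3->D1} line by line, with the sole structural modification being the decomposition of the parabolic complement. When $i = n-2$, the subdiagram on $I \setminus \{n-2\}$ has \emph{three} connected components $I_1 = [n-3]$, $I_2 = \{n-1\}$, $I_3 = \{n\}$, of types $A_{n-3}$, $A_1$, $A_1$ respectively, rather than the two components ($A_{i-1}$ and $D_{n-i}$) that appeared for $i \in [2,n-3]$.

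First, Lemmas \ref{lem:LS} and \ref{lem:J-ad-D}(2), combined with the hypothesis $c_{n-2}(\gamma^{\vee}) = 1$, force $\gamma^{\vee} = \varepsilon_{n-2} - \varepsilon_{n-1} = \alpha_{n-2}^{\vee}$; accordingly $r_{\gamma} = (n-2 \ n-1)(\overline{n-2} \ \overline{n-1})$, and we may write $\lfloor wr_{\gamma} \rfloor = wr_{\gamma} (z_{\gamma^{\vee}}^{I \setminus \{ n-2 \}})^{-1}$. Second, I will apply Lemma \ref{lem:J-ad} to identify the triple $(j_1, j_2, j_3) \in (I_1)_{\af} \times (I_2)_{\af} \times (I_3)_{\af}$ associated with $\gamma^{\vee}$, and thereby compute $z_{\gamma^{\vee}}^{I \setminus \{ n-2 \}}$ explicitly as a product of the corresponding longest-element quotients $w_0^{I_m} w_0^{I_m \setminus \{ j_m \}}$. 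This yields an explicit formula for $\lfloor wr_{\gamma} \rfloor$ on each index $u \in [n]$, completely analogous to the one obtained in Lemma \ref{lem:Q-n-3->D1}.

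Third, I will invoke Lemma \ref{lem:Gr-D} for $w \in W^{I \setminus \{ n-2 \}}$ to obtain a chain of inequalities among the values $w(1), \ldots, w(n-3), w(\overline{n-2}), w(n-1), w(n), w(\overline{n}), w(\overline{n-1})$ that is parallel to \eqref{eq:seq-D1-n-3}. From this chain one reads off that the value playing the role of $w(i+1)$ in Lemma \ref{lem:Q-n-3->D1} is the minimum of $\{w(u) \mid u \in [n]\}$, hence equals $1$. Writing $\{ a_1 < a_2 < \cdots < a_4 \} = [n] \setminus \{ \|w(u)\| \mid u \in [n-3] \}$, the identity $\{ w(\overline{n-2}), \lfloor wr_{\gamma} \rfloor (1) \} = \{ a_1, a_2 \} = \{ 1, a_2 \}$ then follows by exactly the same enumeration argument used in Lemma \ref{lem:Q-n-3->D1}, since the set $\{ w(\overline{n-2}), w(n-1), w(n-3+2), \ldots, w(n-1), \|w(n)\| \}$ fills out the complement of $\{\|w(u)\| \mid u \in [n-3]\}$. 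This verifies (q-D1).

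The principal obstacle will be the explicit computation of $z_{\gamma^{\vee}}^{I \setminus \{ n-2 \}}$, because the two tail components $I_2 = \{n-1\}$ and $I_3 = \{n\}$ contribute independent $A_1$-reflections $(n-1 \ \overline{n-1})$ and $(n \ \overline{n})$ (composed, in the type $D$ realization, so that parity constraint \eqref{eq:Weyl-grp-D} is preserved), and one must check via Lemma \ref{lem:J-ad} which of $j_2 \in \{0,n-1\}$ and $j_3 \in \{0,n\}$ actually occur for $\gamma^{\vee} = \varepsilon_{n-2}-\varepsilon_{n-1}$. Once this piece of bookkeeping is settled, the length-count, the verification that $\lfloor wr_{\gamma}\rfloor \in W^{I \setminus \{ n-2 \}}$, and the derivation of (q-D1) are immediate transcriptions of the corresponding steps in the proof of Lemma \ref{lem:Q-n-3->D1}.
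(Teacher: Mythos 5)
Your structural setup is right: the decomposition $I \setminus \{n-2\} = [n-3] \sqcup \{n-1\} \sqcup \{n\}$ into components of types $A_{n-3}$, $A_1$, $A_1$, the identification $\gamma^{\vee} = \varepsilon_{n-2}-\varepsilon_{n-1}$, the computation of $z_{\gamma^{\vee}}^{I\setminus\{n-2\}}$ via Lemma \ref{lem:J-ad} (the triple is $(n-3,n-1,n)$, so both $A_1$-reflections $(n-1\ \overline{n-1})$ and $(n\ \overline{n})$ occur), and the resulting formula for $\lfloor wr_{\gamma}\rfloor$ all match the paper. The bookkeeping you flag as the "principal obstacle" is indeed routine.

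However, there is a genuine gap in your final step. You claim that the identity $\{w(\overline{n-2}), \lfloor wr_{\gamma}\rfloor(1)\} = \{1,a_2\}$ follows "by exactly the same enumeration argument" as in Lemma \ref{lem:Q-n-3->D1}. It does not, because the chain of inequalities supplied by Lemma \ref{lem:Gr-D} is strictly weaker when $i = n-2$. For $i \in [2,n-3]$, the chain \eqref{eq:seq-D1-n-3} sandwiches both $w(\overline{i})$ and $w(i+1)$ below the unbarred entry $w(i+2)$, which forces both to be unbarred and to be the two smallest elements of the complement $[n]\setminus\{\|w(u)\|\mid u\in[i-1]\}$. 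For $i=n-2$ the analogous chain \eqref{eq:seq-D1-n-2} only gives $\max\{w(\overline{n-2}),w(n-1)\}\prec w(\overline{n})\prec w(\overline{n-1})$, and $w(\overline{n})$ may be a barred letter; so nothing in the ordering alone forces $w(\overline{n-2})\preceq n$, nor that $\|w(\overline{n-2})\| < \|w(\overline{n})\|$. The paper closes this gap with a three-way case analysis on the relative positions of $w(\overline{n-2})$, $w(n-1)$, $w(n-2)$, $w(\overline{n})$, computing $\ell(\lfloor wr_{\gamma}\rfloor)-\ell(w)$ in each case: the bad case gives $1-n$, contradicting the required value $-n$ from Lemmas \ref{lem:<g,r>} and \ref{lem:<g,r>-D}, and in the surviving cases the same length count pins down $w(n-2)=\overline{1}$ or $w(n-1)=1$ respectively, from which (q-D1) follows. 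Relatedly, your assertion that "the value playing the role of $w(i+1)$ \ldots equals $1$" is too strong even as a target: (q-D1) only asserts $\{w(\overline{i}),\lfloor wr_{\gamma}\rfloor(1)\}=\{1,a_2\}$, and in one of the surviving cases it is $w(\overline{n-2})$, not $\lfloor wr_{\gamma}\rfloor(1)=w(n-1)$, that equals $1$. Your proposal as written omits the length-counting case analysis that is the actual content of this lemma.
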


\begin{proof}
Assume that
$n \geq 5$
and 
$c_{n-2}(\gamma^{\vee}) = 1$, 
and that 
$\mathrm{Q}(n-2,w,\gamma)$
is true. 
By Lemmas \ref{lem:<g,r>} and \ref{lem:J-ad-D}--\ref{lem:<g,r>-D}, 
we have 
$\ell(\lfloor wr_{\gamma} \rfloor) - \ell(w)
=
-n$. 
By Lemmas \ref{lem:LS} and \ref{lem:J-ad-D}, 
we have 
$\gamma^{\vee} 
= 
\varepsilon_{n-2} - \varepsilon_{n-1}
\in 
Q^{\vee,I\setminus \{ n-2 \}}$, 
$r_{\gamma} = (n-2 \ n-1)(\overline{n-2} \ \overline{n-1})$, 
and 
$\lfloor wr_{\gamma} \rfloor 
= 
wr_{\gamma}(z_{\gamma^{\vee}}^{I \setminus \{ n-2 \}})^{-1}$. 
Let 
$I \setminus \{ n-2 \} = I_1 \sqcup I_2' \sqcup I_2''$, 
where 
$I_1 = [n-3]$
is of type 
$A_{n-3}$, 
$I_2' = \{ n-1 \}$
is of type 
$A_1$, 
and 
$I_2'' = \{ n \}$
is of type 
$A_1$.
We see that 
$(n-3,n-1,n) 
\in 
(I_1)_{\af} \times (I_2')_{\af} \times (I_2'')_{\af}$
satisfies the condition for 
$\gamma^{\vee} \in Q^{\vee}$
in 
Lemma \ref{lem:J-ad}; 
note that 
$I_1 \setminus \{ n-3 \}$
is of type 
$A_{n-4}$. 
Hence 
$z_{\gamma^{\vee}}^{I \setminus \{ n-2 \}}
=
w_0^{I_1} w_0^{I_1 \setminus \{ n-3 \}}
w_0^{I_2'} w_0^{I_2''}
=
(1 \ 2 \ \cdots \ n-2)
(\overline{1} \ \overline{2} \ \cdots \ \overline{n-2})
(n-1 \ \overline{n-1})
(n \ \overline{n})$.
Then 
$\lfloor wr_{\gamma} \rfloor$
is given by 
$1 \mapsto w(n-1)$, 
$u \mapsto w(u-1)$
for 
$u \in [2,n-2]$, 
$n-1 \mapsto w(\overline{n-2})$, 
and 
$n \mapsto w(\overline{n})$. 
It follows from Lemma \ref{lem:Gr-D} that 
\begin{align} \label{eq:seq-D1-n-2}
\begin{split}
&
w(n-1)
\prec 
w(1)
\prec 
w(2)
\prec \cdots \prec
w(n-3)
\prec 
w(n-2), \\
&
\max \{
w(\overline{n-2}), 
w(n-1)
\}
\prec 
w(\overline{n})
\prec 
w(\overline{n-1}).
\end{split}
\end{align}
Let 
$\{ a_1 < a_2 < a_3 \}
=
[n] \setminus \{ \| w(u) \| \mid u \in [n-3] \}
=
\{
\| w(\overline{n-2}) \| , 
w(n-1) , 
\| w(\overline{n}) \| \}$. 
We see from 
\eqref{eq:seq-D1-n-2}
that 
$a_1 = 1$
and 
$w(n-1) < \| w(\overline{n}) \|$. 
What is left is to show that 
$w(\overline{n-2}) \preceq n$
and 
$\|w(\overline{n-2})\| < \| w(\overline{n}) \|$. 
We have the following cases:
\begin{enumerate}[(i)]
\item
$w(\overline{n-2}) \prec w(n-1)$, 
\item
$w(\overline{n-2}) \succ w(n-1)$
and 
$w(n-2) \succ w(\overline{n})$, 
\item
$w(\overline{n-2}) \succ w(n-1)$
and 
$w(n-2) \prec w(\overline{n})$;
\end{enumerate}
we will prove that (i) or (ii) holds and these imply (q-D1). 
It follows from 
\eqref{eq:seq-D1-n-2}
that 
\begin{enumerate}[(1)]
\item
$\mathsf{a}_1(\lfloor wr_{\gamma} \rfloor)
=
\begin{cases}
1 & \text{if (i)}, \\
0 & \text{if (ii) or (iii)},
\end{cases}$ 
\item
$\mathsf{b}_1(\lfloor wr_{\gamma} \rfloor)
=
\begin{cases}
w(n-1)-2 & \text{if (i)}, \\
0 & \text{if (ii) or (iii)},
\end{cases}$
\item
for 
$s \in [2,n-2]$, 
$n-1 \in \mathsf{A}_s(\lfloor wr_{\gamma} \rfloor)$
if and only if 
$n-2 \in \mathsf{B}_{s-1}(w)$,
\item
for 
$s \in [2,n-2]$, 
$n \in \mathsf{A}_s(\lfloor wr_{\gamma} \rfloor)$
if and only if 
$n \in \mathsf{B}_{s-1}(w)$,
\item
for 
$s \in [2,n-2]$, 
$n-1 \notin \mathsf{B}_s(\lfloor wr_{\gamma} \rfloor)$
and
$n-1 \in \mathsf{A}_{s-1}(w)$,
\item
for 
$s \in [2,n-2]$, 
$n \in \mathsf{B}_s(\lfloor wr_{\gamma} \rfloor)$
if and only if 
$n \in \mathsf{A}_{s-1}(w)$,
\item
for 
$s \in [2,n-2]$
and 
$t \in [s+1,n-2]$, 
$t \in \mathsf{B}_s(\lfloor wr_{\gamma} \rfloor)$
if and only if 
$t-1 \in \mathsf{B}_{s-1}(w)$,
\item
for 
$s \in [2,n-2]$, 
$n-1 \in \mathsf{B}_{s-1}(w)$
if and only if 
$s \in [n-(w(n-1)-1),n-2]$,
\item
$\mathsf{a}_{n-2}(w) = 2$
and 
$\mathsf{b}_{n-2}(w)
=
\begin{cases}
2 & \text{if (i)}, \\
1 & \text{if (ii)}, \\
0 & \text{if (iii)}.
\end{cases}$
\end{enumerate}

If (i) holds, 
then 
$w(n-2) = \overline{1}$.
Hence 
$w(\overline{n-2}) \preceq n$
and 
$\|w(\overline{n-2})\| < \| w(\overline{n}) \|$. 

If (ii) holds, 
then 
$\lfloor wr_{\gamma} \rfloor (1)
=
w(n-1) 
= 
1$
and 
$w(\overline{n-2})
\prec
\min\{ w(n), w(\overline{n}) \}
\preceq 
n$. 
Hence 
$\|w(\overline{n-2})\| < \| w(\overline{n}) \|$. 

If (iii) holds, 
then 
$\ell(\lfloor wr_{\gamma} \rfloor)
-
\ell(w)
=
1-n
\neq 
-n$, 
a contradiction.
\end{proof}

\begin{lem} \label{lem:D1-n-2->Q}
$n \geq 5$, 
$i = n-2$, 
and 
(q-D1)
imply
$\mathrm{Q}(n-2,w,\gamma)$. 
\end{lem}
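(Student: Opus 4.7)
The plan is to mirror the proof of Lemma \ref{lem:D1-n-3->Q}. By Lemmas \ref{lem:<g,r>}, \ref{lem:J-ad-D}, and \ref{lem:<g,r>-D}, together with the hypothesis $c_{n-2}(\gamma^{\vee})=1$ so that $\gamma^{\vee}=\varepsilon_{n-2}-\varepsilon_{n-1}$, the statement $\mathrm{Q}(n-2,w,\gamma)$ is equivalent to the length identity $\ell(\lfloor wr_{\gamma} \rfloor)-\ell(w)=-n$. It therefore suffices to verify this identity under (q-D1).

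The first step is to check that (q-D1) together with the sortedness required by Lemma \ref{lem:Gr-D} forces the chain \eqref{eq:seq-D1-n-2}. Since $\lfloor wr_{\gamma} \rfloor \in W^{I \setminus \{n-2\}}$, applying Lemma \ref{lem:Gr-D} to $\lfloor wr_{\gamma} \rfloor$ and using the formula $\lfloor wr_{\gamma} \rfloor(u) = w(u-1)$ for $u \in [2,n-2]$ gives $w(n-1) \prec w(1) \prec \cdots \prec w(n-3) \prec w(n-2)$, while applying Lemma \ref{lem:Gr-D} to $w$ itself supplies the second chain in \eqref{eq:seq-D1-n-2}. Combined with the absolute-value partition $\{\|w(n-2)\|,\|w(n-1)\|,\|w(n)\|\}=\{1,a_2,a_3\}$, the condition $\{w(\overline{n-2}),w(n-1)\}=\{1,a_2\}$ forces $\|w(n)\|=a_3$ and places us in case (i) or case (ii) of the trichotomy in the proof of Lemma \ref{lem:Q-n-2->D1}. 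Case (iii) is excluded automatically: under (q-D1) both sign choices $w(n) \in \{a_3,\overline{a_3}\}$ yield $w(\overline{n})\preceq \overline{a_2}$ in $\mathcal{D}_n$, so that $w(\overline{n})\preceq w(n-2)$ whenever $w(n-2)=\overline{a_2}$, contradicting the defining inequality $w(n-2)\prec w(\overline{n})$ of case (iii); the alternative $w(n-2)=\overline{1}$ lies in case (i).

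Once we are in case (i) or case (ii), the bookkeeping identities (1)--(9) established in the proof of Lemma \ref{lem:Q-n-2->D1} depend only on the chain \eqref{eq:seq-D1-n-2} and on the explicit action of $\lfloor wr_{\gamma} \rfloor$ spelled out there, so they hold verbatim under our hypotheses. Summing these via the length formula $\ell(v)=\sum_{s=1}^{n-2}(\mathsf{a}_s(v)+\mathsf{b}_s(v))$ of Lemma \ref{lem:Gr-D} applied to both $w$ and $\lfloor wr_{\gamma} \rfloor$ yields $\ell(\lfloor wr_{\gamma} \rfloor)-\ell(w)=-n$ in both remaining cases, completing the proof.

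The main obstacle is the verification at the endpoints $s=1$ and $s=n-2$, where the boundary contributions $\mathsf{a}_1(\lfloor wr_{\gamma} \rfloor)+\mathsf{b}_1(\lfloor wr_{\gamma} \rfloor)$ and $\mathsf{a}_{n-2}(w)+\mathsf{b}_{n-2}(w)$ depend on whether we are in case (i) (where $w(n-2)=\overline{1}$ and $w(n-1)=a_2$) or case (ii) (where $w(n-2)=\overline{a_2}$ and $w(n-1)=1$); the interior telescoping for $s \in [2,n-2]$ is then a routine cancellation parallel to Lemma \ref{lem:D1-n-3->Q}.
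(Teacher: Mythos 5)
Your proposal is correct and follows essentially the same route as the paper: reduce $\mathrm{Q}(n-2,w,\gamma)$ to the length identity $\ell(\lfloor wr_{\gamma}\rfloor)-\ell(w)=-n$, check that (q-D1) forces \eqref{eq:seq-D1-n-2} and places $w$ in case (i) or (ii) of the trichotomy from the proof of Lemma \ref{lem:Q-n-2->D1}, and then sum the bookkeeping identities (1)--(9) established there. The only difference is that you spell out the exclusion of case (iii) (which the paper compresses into ``we check at once''), and your argument for it is sound.
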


\begin{proof}
Assume that 
$n \geq 5$, 
$i = n-2$, 
and 
(q-D1)
hold.
By Lemmas \ref{lem:<g,r>} and \ref{lem:J-ad-D}--\ref{lem:<g,r>-D}, 
$\mathrm{Q}(n-2,w,\gamma)$
is equivalent to 
$\ell(\lfloor wr_{\gamma} \rfloor) - \ell(w)
=
-n$. 
We check at once that 
(q-D1) and $i = n-2$
imply
\eqref{eq:seq-D1-n-2}
and 
(i) or (ii)
in the proof of Lemma \ref{lem:Q-n-2->D1}. 
Then
(1)--(9)
in the proof of Lemma \ref{lem:Q-n-2->D1}
yield
$\ell(\lfloor wr_{\gamma} \rfloor) - \ell(w)
=
-n$. 
\end{proof}

\begin{lem} \label{lem:Q-n=4->D1}
$n = 4$, 
$c_2(\gamma^{\vee}) = 1$, 
and 
$\mathrm{Q}(2,w,\gamma)$
imply
(q-D1).
\end{lem}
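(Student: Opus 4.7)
The plan is to adapt the argument of Lemma \ref{lem:Q-n-3->D1} to the degenerate situation $n = 4$, $i = 2$, where $I \setminus \{i\}$ is of type $A_1 \times A_1 \times A_1$ rather than $A_{i-1} \times D_{n-i}$; consequently, the decomposition of $z_{\gamma^{\vee}}^{I \setminus \{2\}}$ via Lemma \ref{lem:J-ad} must be redone. By Lemmas \ref{lem:LS} and \ref{lem:J-ad-D}, the hypothesis $c_2(\gamma^{\vee}) = 1$ forces $\gamma^{\vee} = \varepsilon_2 - \varepsilon_3 = \alpha_2^{\vee}$, so $r_{\gamma} = (2\ 3)(\overline{2}\ \overline{3})$ and $\lfloor wr_{\gamma} \rfloor = w r_{\gamma}(z_{\gamma^{\vee}}^{I \setminus \{2\}})^{-1}$.

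Setting $I_1 = \{1\}$, $I_2 = \{3\}$, $I_3 = \{4\}$ and $(j_1, j_2, j_3) = (1, 3, 4) \in (I_1)_{\af} \times (I_2)_{\af} \times (I_3)_{\af}$ in Lemma \ref{lem:J-ad}, the identity $\alpha_2^{\vee} + \varpi_1^{\vee} + \varpi_3^{\vee} + \varpi_4^{\vee} = 2\varpi_2^{\vee}$ gives $\phi_{I \setminus \{2\}}(\gamma^{\vee}) = 0$ and $z_{\gamma^{\vee}}^{I \setminus \{2\}} = r_1 r_3 r_4$. A direct calculation in $\mathfrak{S}(\mathcal{D}_4)$ then yields
\begin{align*}
\lfloor wr_{\gamma} \rfloor(1) = w(3), \ \
\lfloor wr_{\gamma} \rfloor(2) = w(1), \ \
\lfloor wr_{\gamma} \rfloor(3) = w(\overline{2}), \ \
\lfloor wr_{\gamma} \rfloor(4) = w(\overline{4}),
\end{align*}
so the relation $\lfloor wr_{\gamma} \rfloor(u) = w(u-1)$ for $u = 2$, required by (q-D1), is immediate. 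Applying Lemma \ref{lem:Gr-D} to both $w$ and $\lfloor wr_{\gamma} \rfloor$ in $W^{I \setminus \{2\}}$ then produces the chain constraints $w(3) \prec w(1) \prec w(2)$, $w(\overline{2}) \prec w(\overline{4}) \prec w(2)$, and $w(3) \prec w(4) \prec w(\overline{3})$.

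To finish, Lemmas \ref{lem:<g,r>} and \ref{lem:<g,r>-D} translate $\mathrm{Q}(2,w,\gamma)$ into the length identity $\ell(\lfloor wr_{\gamma}\rfloor) - \ell(w) = -4$. Expanding both sides via the formula $\ell(\cdot) = \sum_{s=1}^{2} (\mathsf{a}_s + \mathsf{b}_s)$ from Lemma \ref{lem:Gr-D} (as in the proofs of Lemmas \ref{lem:D1-n-3->Q} and \ref{lem:Q-n-2->D1}), this reduces to a linear relation among the entries of $w$ which pins down $\min\{\|w(3)\|, \|w(\overline{2})\|\} = 1$. Combined with the chain constraints above, this identifies $\{w(\overline{2}), w(3)\} = \{1, a_2\}$, where $\{a_1 < a_2 < a_3\} = [4] \setminus \{\|w(1)\|\}$, and in particular yields $a_1 = 1$, establishing (q-D1). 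The main obstacle will be the explicit bookkeeping of $\mathsf{a}_s(w), \mathsf{b}_s(w), \mathsf{a}_s(\lfloor wr_{\gamma}\rfloor), \mathsf{b}_s(\lfloor wr_{\gamma}\rfloor)$ for $s \in [2]$: because the ``middle'' index range $[i+2, n-1]$ collapses to $\emptyset$ and the symmetry between nodes $3$ and $4$ in the $D_4$ Dynkin diagram introduces ambiguity between $w(\overline{3})$ and $w(\overline{4})$, the analysis must be split into subcases (analogously to the cases (i)--(iii) in the proof of Lemma \ref{lem:Q-n-2->D1}) depending on the relative positions of $w(3), w(\overline{2}), w(\overline{4})$ in $\mathcal{D}_4$.
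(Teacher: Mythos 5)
Your proposal follows the paper's proof essentially verbatim: the same identification $\gamma^{\vee} = \varepsilon_2 - \varepsilon_3 = \alpha_2^{\vee}$, the same computation $z_{\gamma^{\vee}}^{I\setminus\{2\}} = r_1 r_3 r_4$ via Lemma \ref{lem:J-ad}, the same action of $\lfloor wr_{\gamma}\rfloor$ ($1 \mapsto w(3)$, $2 \mapsto w(1)$, $3 \mapsto w(\overline{2})$, $4 \mapsto w(\overline{4})$), and the same ordering constraints \eqref{eq:seq-D1-n=4} coming from Lemma \ref{lem:Gr-D}, from which the paper concludes (q-D1) by ``an easy computation.'' The one place you diverge is that the length identity $\ell(\lfloor wr_{\gamma}\rfloor) - \ell(w) = -4$ is actually redundant at this stage: among the eight values $w(\pm u)$, the chains leave only $w(3)$ and $w(\overline{2})$ with nothing forced below them, so $1 \in \{w(3), w(\overline{2})\}$ already, and a short case split on whether $w(3)=1$ or $w(\overline{2})=1$ (using that $w(2)$, resp.\ $w(\overline{3})$, must dominate $w(1),w(4),w(\overline{4})$, hence lies in $\{\overline{2},\overline{3}\}$, resp.\ $\{\overline{1}\}$) pins down $\{w(\overline{2}),w(3)\} = \{1,a_2\}$ without any $\mathsf{a}_s/\mathsf{b}_s$ bookkeeping — so the obstacle you flag as the main difficulty can be bypassed entirely.
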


\begin{proof}
Assume that 
$n = 4$
and 
$c_2(\gamma^{\vee}) = 1$, 
and that 
$\mathrm{Q}(2,w,\gamma)$
is true.
By Lemmas \ref{lem:LS} and \ref{lem:J-ad-D}, 
we have 
$\gamma^{\vee} 
= 
\varepsilon_2 - \varepsilon_3
\in 
Q^{\vee,I\setminus \{ 2 \}}$, 
$r_{\gamma} = (2 \ 3)(\overline{2} \ \overline{3})$, 
and 
$\lfloor wr_{\gamma} \rfloor = wr_{\gamma}(z_{\gamma^{\vee}}^{I \setminus \{ 2 \}})^{-1}$. 
Let 
$I \setminus \{ 2 \} = I_1 \sqcup I_2' \sqcup I_2''$, 
where 
$I_1 = \{ 1 \}$, 
$I_2' = \{ 3 \}$, 
and 
$I_2'' = \{ 4 \}$
are of type $A_1$.
We see that 
$(1,3,4) \in (I_1)_{\af} \times (I_2')_{\af} \times (I_2'')_{\af}$
satisfies the condition for 
$\gamma^{\vee} \in Q^{\vee}$
in 
Lemma \ref{lem:J-ad}.
Hence 
$z_{\gamma^{\vee}}^{I \setminus \{ 2 \}} 
=
r_1 r_3 r_4
=
(1 \ 2)(\overline{1} \ \overline{2})
(3 \ \overline{3})
(4 \ \overline{4})$. 
Then 
$\lfloor wr_{\gamma} \rfloor$
is given by 
$1 \mapsto w(3)$, 
$2 \mapsto w(1)$, 
$3 \mapsto w(\overline{2})$, 
and 
$4 \mapsto w(\overline{4})$. 
It follows from Lemma \ref{lem:Gr-D} that 
\begin{align} \label{eq:seq-D1-n=4}
w(3) \prec w(1) \prec w(2), \ \ 
w(3) \prec w(4) \prec w(\overline{3}), \ \ 
w(\overline{2}) \prec w(4) \prec w(2).
\end{align}
An easy computation shows that 
\eqref{eq:seq-D1-n=4}
implies 
(q-D1).  
\end{proof}

\begin{lem} \label{lem:D1-n=4->Q}
$n=4$,
$i = 2$, 
and  
(q-D1)
imply
$\mathrm{Q}(2,w,\gamma)$. 
\end{lem}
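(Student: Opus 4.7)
The plan is to follow the pattern of Lemma \ref{lem:D1-n-2->Q}: reduce to a length identity, derive the chain inequalities \eqref{eq:seq-D1-n=4} from (q-D1), and close by a finite verification that is feasible because $W^{I\setminus\{2\}}$ has only $24$ elements in type $D_4$.

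First, by Lemmas \ref{lem:<g,r>}, \ref{lem:J-ad-D}, and \ref{lem:<g,r>-D}, one has $\langle \gamma^{\vee},\rho-\rho_{I\setminus\{2\}}\rangle = 5/2$, so that $\mathrm{Q}(2,w,\gamma)$ is equivalent to the numerical identity $\ell(\lfloor wr_{\gamma}\rfloor) - \ell(w) = -4$. The computation of $\lfloor wr_{\gamma}\rfloor$ carried out in the first half of the proof of Lemma \ref{lem:Q-n=4->D1} relies only on $c_2(\gamma^{\vee})=1$, Lemma \ref{lem:J-ad}, and the coset structure of $z_{\gamma^{\vee}}^{I\setminus\{2\}}$; none of that used $\mathrm{Q}(2,w,\gamma)$, so it applies verbatim here. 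Thus I would record $\gamma^{\vee} = \varepsilon_2 - \varepsilon_3$ and the explicit map $\lfloor wr_{\gamma}\rfloor \colon 1 \mapsto w(3),\ 2\mapsto w(1),\ 3 \mapsto w(\overline{2}),\ 4 \mapsto w(\overline{4})$.

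Next, applying Lemma \ref{lem:Gr-D} separately to $w$ and to $\lfloor wr_{\gamma}\rfloor \in W^{I\setminus\{2\}}$ yields the three chains of inequalities constituting \eqref{eq:seq-D1-n=4}. The remaining content of (q-D1) — namely $a_1 = 1$ together with $\{w(\overline{2}),w(3)\} = \{1,a_2\}$, where $\{a_1<a_2<a_3\}=[4]\setminus\{\|w(1)\|\}$ — then forces $1 \in \{w(3),w(\overline{2})\}$ and determines the second smallest positive entry of $w$. Combined with \eqref{eq:seq-D1-n=4} and the parity condition built into \eqref{eq:Weyl-grp-D}, this cuts the set of admissible $w$ down to a short explicit list.

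The final step is the length verification, which is precisely the ``easy computation'' referenced at the end of the proof of Lemma \ref{lem:Q-n=4->D1} run in the converse direction. For each admissible $w$, I would compute $\ell(w) = \mathsf{a}_1(w)+\mathsf{b}_1(w)+\mathsf{a}_2(w)+\mathsf{b}_2(w)$ via Lemma \ref{lem:Gr-D}, and similarly $\ell(\lfloor wr_{\gamma}\rfloor)$ from the explicit formulas for $\lfloor wr_{\gamma}\rfloor(s)$, and then confirm $\ell(\lfloor wr_{\gamma}\rfloor)-\ell(w)=-4$ in each case. The only obstacle is bookkeeping the cases uniformly — a task naturally split according to whether $w(3)=1$ or $w(\overline{2})=1$ — and no structural or new combinatorial insight beyond what already appears in Lemmas \ref{lem:Q-n=4->D1} and \ref{lem:D1-n-2->Q} is required.
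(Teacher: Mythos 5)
Your proposal is correct and follows essentially the same route as the paper: reduce $\mathrm{Q}(2,w,\gamma)$ to the length identity $\ell(\lfloor wr_{\gamma}\rfloor)-\ell(w)=-4$ via Lemmas \ref{lem:<g,r>} and \ref{lem:J-ad-D}--\ref{lem:<g,r>-D}, reuse the explicit formula for $\lfloor wr_{\gamma}\rfloor$ from Lemma \ref{lem:Q-n=4->D1}, derive \eqref{eq:seq-D1-n=4} from (q-D1), and finish by computing the sets $\mathsf{A}_s,\mathsf{B}_s$ for $w$ and $\lfloor wr_{\gamma}\rfloor$. Your explicit split into the cases $w(3)=1$ and $w(\overline{2})=1$ is in fact warranted, since the individual inversion sets differ between the two cases even though the total $\ell(\lfloor wr_{\gamma}\rfloor)-\ell(w)=-4$ comes out the same in both.
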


\begin{proof}
Assume that 
$n=4$,
$i = 2$, 
and  
(q-D1)
hold.
By Lemmas \ref{lem:<g,r>} and \ref{lem:J-ad-D}--\ref{lem:<g,r>-D}, 
$\mathrm{Q}(2,w,\gamma)$
is equivalent to 
$\ell(\lfloor wr_{\gamma} \rfloor) - \ell(w)
=
-4$. 
We see that (q-D1) implies \eqref{eq:seq-D1-n=4}. 
Hence 
$\mathsf{A}_1(\lfloor wr_{\gamma} \rfloor)
=
\{ 3 \}$, 
$\mathsf{A}_2(\lfloor wr_{\gamma} \rfloor)
=
\{ 3,4 \}$, 
$\mathsf{B}_1(\lfloor wr_{\gamma} \rfloor)
=
\{ 2 \}$, 
$\mathsf{B}_2(\lfloor wr_{\gamma} \rfloor)
=
\{ 4 \}$, 
$\mathsf{A}_1(w)
=
\{ 3,4 \}$, 
$\mathsf{A}_2(w)
=
\{ 3,4 \}$, 
$\mathsf{B}_1(w)
=
\{ 2,3,4 \}$, 
and 
$\mathsf{B}_2(w)
=
\{ 3,4 \}$. 
This implies
$\ell(\lfloor wr_{\gamma} \rfloor) - \ell(w)
=
-4$. 
\end{proof}

\begin{lem} \label{lem:Q-2->D2}
$n \geq 5$, 
$c_2(\gamma^{\vee}) = 2$,
and 
$\mathrm{Q}(2,w,\gamma)$
imply
(q-D2).
\end{lem}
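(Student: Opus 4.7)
The plan is to follow Step~2 of the proof of Proposition~\ref{prop:Q=C}, adjusted for type~$D_n^{(1)}$ with $i=2$. First I will combine Lemmas~\ref{lem:<g,r>} and~\ref{lem:<g,r>-D} to translate $\mathrm{Q}(2,w,\gamma)$ together with $c_2(\gamma^{\vee})=2$ into the single length identity
\begin{align*}
\ell(\lfloor wr_{\gamma}\rfloor)-\ell(w)
=1-2c_2(\gamma^{\vee})\langle\alpha_2^{\vee},\rho-\rho_{I\setminus\{2\}}\rangle
=1-2(2n-3)
=7-4n.
\end{align*}
By Lemmas~\ref{lem:LS} and~\ref{lem:J-ad-D}, $c_2(\gamma^{\vee})=2$ forces $\gamma^{\vee}=\varepsilon_1+\varepsilon_2$ and $r_{\gamma}=(1\ \overline{2})(\overline{1}\ 2)$. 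Since $\gamma^{\vee}=\varpi_2^{\vee}$ already lies in $\BZ\varpi_2^{\vee}$, the pair $(j_1,j_2)=(0,0)\in(I_1)_{\af}\times(I_2)_{\af}$, with $I_1=\{1\}$ of type $A_1$ and $I_2=[3,n]$ of type $D_{n-2}$, satisfies the condition of Lemma~\ref{lem:J-ad}; hence $z_{\gamma^{\vee}}^{I\setminus\{2\}}=e$ and $\lfloor wr_{\gamma}\rfloor=wr_{\gamma}$, so concretely
\begin{align*}
\lfloor wr_{\gamma}\rfloor(1)=w(\overline{2}),\qquad
\lfloor wr_{\gamma}\rfloor(2)=w(\overline{1}),\qquad
\lfloor wr_{\gamma}\rfloor(u)=w(u)\ \text{for}\ u\in[3,n].
\end{align*}

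Next I will apply Lemma~\ref{lem:Gr-D} to $\lfloor wr_{\gamma}\rfloor\in W^{I\setminus\{2\}}$ to obtain the chain
\begin{align*}
w(\overline{2})\prec w(\overline{1})\prec w(3)\prec w(4)\prec\cdots\prec w(n-1)\prec w(n)\prec w(\overline{n-1}),
\end{align*}
which in particular yields the integer lower bounds $w(\overline{2})\geq 1$ and $w(\overline{1})\geq 2$. The heart of the argument will then be a direct computation of $\ell(\lfloor wr_{\gamma}\rfloor)-\ell(w)$ by term-by-term comparison of $\mathsf{a}_s,\mathsf{b}_s$ for $s\in[2]$, the only indices where $w$ and $\lfloor wr_{\gamma}\rfloor$ disagree. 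The computation will parallel Step~2 of the proof of Proposition~\ref{prop:Q=C}, with the $\mathsf{e}_s$ terms deleted (type $D$ length has none) and with $n,\overline{n}$ treated as an incomparable pair in $\mathcal{D}_n$. I expect the bookkeeping to yield the type-$D$ analogue
\begin{align*}
\ell(\lfloor wr_{\gamma}\rfloor)-\ell(w)=(7-4n)+2(w(\overline{2})-1)+2(w(\overline{1})-2)
\end{align*}
of the identity produced in Step~2 of the proof of Proposition~\ref{prop:Q=C} (and reused in the proof of Lemma~\ref{lem:Q->B3}). Combined with the length condition $\ell(\lfloor wr_{\gamma}\rfloor)-\ell(w)=7-4n$ and the bounds $w(\overline{2})\geq 1$, $w(\overline{1})\geq 2$, equality must hold in both bounds, forcing $w(\overline{2})=1$ and $w(\overline{1})=2$. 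Since the range $u\in[3,i]=[3,2]$ is empty, this is precisely (q-D2) in the case $i=2$.

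The main obstacle will be performing the $\mathsf{a}_s,\mathsf{b}_s$ bookkeeping against the partial (rather than total) order $\mathcal{D}_n$: several sub-cases of Step~2 of the proof of Proposition~\ref{prop:Q=C} were phrased via strict comparisons in the total order $\mathcal{C}_n$, and in $\mathcal{D}_n$ each such comparison involving $n$ or $\overline{n}$ must be re-examined. Nevertheless, because $w$ and $\lfloor wr_{\gamma}\rfloor$ agree on $[3,n]$, only positions $s\in[2]$ and target indices $t\in[3,n]$ contribute to the length difference, and each contribution is controlled by the two integers $\|w(\overline{1})\|$ and $\|w(\overline{2})\|$ alone. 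This localisation is precisely what makes the compact identity above plausible and should keep the sub-case analysis manageable, in close parallel with the argument of Lemma~\ref{lem:Q->B3}.
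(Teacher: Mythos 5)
Your setup is correct and matches the paper's: the length identity $\ell(\lfloor wr_{\gamma}\rfloor)-\ell(w)=7-4n$, the identification $\gamma^{\vee}=\varepsilon_1+\varepsilon_2$, $z_{\gamma^{\vee}}^{I\setminus\{2\}}=e$, and the explicit action $\lfloor wr_{\gamma}\rfloor(1)=w(\overline{2})$, $\lfloor wr_{\gamma}\rfloor(2)=w(\overline{1})$, $\lfloor wr_{\gamma}\rfloor(u)=w(u)$ for $u\in[3,n]$. But there is a genuine gap immediately after that. The chain you claim to extract from Lemma \ref{lem:Gr-D},
$w(\overline{2})\prec w(\overline{1})\prec w(3)\prec\cdots\prec w(n)\prec w(\overline{n-1})$,
does not follow: for $i=2$ membership in $W^{I\setminus\{2\}}$ only gives $\lfloor wr_{\gamma}\rfloor(1)\prec\lfloor wr_{\gamma}\rfloor(2)$ and, separately, $\lfloor wr_{\gamma}\rfloor(3)\prec\cdots\prec\lfloor wr_{\gamma}\rfloor(n)\prec\lfloor wr_{\gamma}\rfloor(\overline{n-1})$; there is no comparison between positions $2$ and $3$. (This is exactly the feature that makes $i=2$ different from $i\in[3,n-3]$, where $z_{\gamma^{\vee}}$ genuinely reshuffles the first block and a long chain as in \eqref{eq:seq-D2-n-3} is available.) Consequently your ``integer lower bounds'' $w(\overline{2})\geq 1$ and $w(\overline{1})\geq 2$ silently assume that $w(\overline{1})$ and $w(\overline{2})$ are unbarred letters of $\mathcal{D}_n$, i.e.\ that $\overline{n}\preceq w(1)\prec w(2)$. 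A priori nothing rules out, say, $w(1)\prec w(2)\preceq n$ (both barred images), in which case your proposed identity is not even meaningful, and a direct computation there gives $\ell(\lfloor wr_{\gamma}\rfloor)-\ell(w)=4n-1-2w(1)-2w(2)$, a completely different expression.

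The paper's proof spends two of its three steps precisely on closing this gap: it first excludes $w(1)\prec w(2)\preceq n$ by a length computation yielding a strict inequality against $7-4n$, and then excludes $w(1)\preceq n$, $\overline{n}\preceq w(2)$ by a six-sub-case computation (governed by the relative positions of $w(1)$, $w(\overline{2})$, $w(\overline{n})$) combined with a parity argument ($7-4n$ is odd) and a positivity argument. Only after both exclusions does one know $\overline{n}\preceq w(1)\prec w(2)$, and only then does the compact identity $\ell(\lfloor wr_{\gamma}\rfloor)-\ell(w)=(7-4n)+2(w(\overline{2})-1)+2(w(\overline{1})-2)$ hold and force $w(\overline{2})=1$, $w(\overline{1})=2$. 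Your final step is fine once you are in that regime, but you need to supply the two exclusion arguments (or some substitute for them) before the bookkeeping you describe can begin.
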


\begin{proof}
Assume that 
$n \geq 5$
and 
$c_2(\gamma^{\vee}) = 2$,
and that 
$\mathrm{Q}(2,w,\gamma)$
is true. 
By Lemmas \ref{lem:<g,r>} and \ref{lem:J-ad-D}--\ref{lem:<g,r>-D}, 
we have 
$\ell(\lfloor wr_{\gamma} \rfloor) - \ell(w)
=
7-4n$. 
By Lemmas \ref{lem:LS} and \ref{lem:J-ad-D}, 
we have 
$\gamma^{\vee} 
= 
\varepsilon_1 + \varepsilon_2
\in 
Q^{\vee,I\setminus \{ 2 \}}$, 
$r_{\gamma} = (1 \ \overline{2})(\overline{1} \ 2)$, 
and 
$\lfloor wr_{\gamma} \rfloor 
= 
wr_{\gamma}(z_{\gamma^{\vee}}^{I \setminus \{ 2 \}})^{-1}$. 
Let 
$I \setminus \{ 2 \} = I_1 \sqcup I_2$, 
where 
$I_1 = \{ 1 \}$
is of type 
$A_1$
and 
$I_2 = [3,n]$
is of type 
$D_{n-2}$.
We see that 
$(0,0) \in (I_1)_{\af} \times (I_2)_{\af}$
satisfies the condition for 
$\gamma^{\vee} \in Q^{\vee}$
in 
Lemma \ref{lem:J-ad}. 
Hence 
$\lfloor wr_{\gamma} \rfloor 
=
wr_{\gamma}$
acts by 
$1 \mapsto w(\overline{2})$, 
$2 \mapsto w(\overline{1})$, 
and 
$u \mapsto w(u)$
for 
$u \in [3,n]$. 
The proof will be divided into three steps. 

\begin{proof}[Step 1]
We show that 
$w(1) \prec w(2) \preceq n$
leads to a contradiction. 
Suppose that 
$w(1) \prec w(2) \preceq n$. 
Then 
$w(n) \preceq n$, 
by \eqref{eq:Weyl-grp-D}. 
It follows from Lemma \ref{lem:Gr-D} that 
$\mathsf{a}_1(\lfloor wr_{\gamma} \rfloor)
=
\mathsf{a}_2(\lfloor wr_{\gamma} \rfloor)
=
n-2$, 
$\mathsf{b}_1(\lfloor wr_{\gamma} \rfloor)
=
n - w(2) + 1$, 
$\mathsf{b}_2(\lfloor wr_{\gamma} \rfloor)
=
n - w(1) - 1$, 
$\mathsf{a}_1(w)
=
w(1) - 1$, 
$\mathsf{a}_2(w)
=
w(2) - 2$, 
and 
$\mathsf{b}_1(w)
=
\mathsf{b}_2(w)
=
0$. 
Hence 
$\ell(\lfloor wr_{\gamma} \rfloor)
-
\ell(w)
=
4n-1-2w(1)-2w(2)
>
7-4n$, 
a contradiction. 
\end{proof}

\begin{proof}[Step 2]
We show that 
$w(1) \preceq n$
and 
$\overline{n} \preceq w(2)$
lead to a contradiction. 
Suppose that 
$w(1) \preceq n$
and 
$\overline{n} \preceq w(2)$. 
Then 
$w(\overline{n}) \preceq n$, 
by \eqref{eq:Weyl-grp-D}. 
We have the following cases: 
\begin{enumerate}[(i)]
\item
$w(1) \prec w(\overline{2}) \prec w(\overline{n}) \preceq n$, 
\item
$w(1) \prec w(\overline{n}) \prec w(\overline{2}) \preceq n$, 
\item
$w(\overline{2}) \prec w(1) \prec w(\overline{n}) \preceq n$, 
\item
$w(\overline{2}) \prec w(\overline{n}) \prec w(1) \preceq n$, 
\item
$w(\overline{n}) \prec w(1) \prec w(\overline{2}) \preceq n$, 
\item
$w(\overline{n}) \prec w(\overline{2}) \prec w(1) \preceq n$.
\end{enumerate}
It follows from Lemma \ref{lem:Gr-D} that 
\begin{enumerate}[(1)]
\item
$\mathsf{a}_1(\lfloor wr_{\gamma} \rfloor)
=
\begin{cases}
w(\overline{2}) - 1
&
\text{if (iii) or (iv)}, \\
w(\overline{2}) - 2
&
\text{if (i) or (vi)},  \\
w(\overline{2}) - 3
&
\text{if (ii) or (v)}, 
\end{cases}$ 
\item 
$\mathsf{a}_2(\lfloor wr_{\gamma} \rfloor)
=
\begin{cases}
n-2
&
\text{if (i), (ii), or (iii)}, \\
n-3
&
\text{if (iv), (v), or (vi)},
\end{cases}$
\item
$\mathsf{b}_1(\lfloor wr_{\gamma} \rfloor)
=
\begin{cases}
0
&
\text{if (iii) or (iv)}, \\
1
&
\text{if (i) or (vi)},  \\
2
&
\text{if (ii) or (v)}, 
\end{cases}$ 
\item 
$\mathsf{b}_2(\lfloor wr_{\gamma} \rfloor)
=
\begin{cases}
n-w(1)-1
&
\text{if (i) or (ii)}, \\
n-w(1)
&
\text{if (iii) or (v)}, \\
n-w(1)+1
&
\text{if (iv) or (vi)},
\end{cases}$
\item
$\mathsf{a}_1(w)
=
\begin{cases}
w(1)-3
&
\text{if (iv) or (vi)}, \\
w(1)-2
&
\text{if (iii) or (v)}, \\
w(1)-1
&
\text{if (i) or (ii)}, 
\end{cases}$ 
\item 
$\mathsf{a}_2(w)
=
\begin{cases}
n-3
&
\text{if (ii), (v) or (vi)}, \\
n-2
&
\text{if (i), (iii) or (iv)}, 
\end{cases}$
\item
$\mathsf{b}_1(w)
=
\begin{cases}
0
&
\text{if (i), (ii), or (iii)}, \\
1
&
\text{if (iv), (v) or (iv)}, 
\end{cases}$ 
\item 
$\mathsf{b}_2(w)
=
\begin{cases}
n-w(\overline{2})-1
&
\text{if (iii) or (iv)}, \\
n-w(\overline{2})
&
\text{if (i) or (vi)},  \\
n-w(\overline{2})+1
&
\text{if (ii) or (v)}.
\end{cases}$
\end{enumerate}
Hence 
\begin{align}
\ell(\lfloor wr_{\gamma} \rfloor) - \ell(w)
=
\begin{cases}
2w(\overline{2}) - 2w(1) -1 
&
\text{if (i), (ii), or (v)}, \\
2w(\overline{2}) - 2w(1) +2
&
\text{if (iii), (iv), or (vi)}.
\end{cases}
\end{align}
Since 
$\ell(\lfloor wr_{\gamma} \rfloor) - \ell(w)
=
7-4n$
is odd, 
we have 
$\ell(\lfloor wr_{\gamma} \rfloor) - \ell(w)
\neq 
2w(\overline{2}) - 2w(1) +2$. 
If 
(i), (ii), or (v)
hold, then
$2w(\overline{2}) - 2w(1) -1 > 0$, 
contrary to 
$\ell(\lfloor wr_{\gamma} \rfloor) - \ell(w)
=
7-4n < 0$. 
\end{proof}

\begin{proof}[Step 3]
By Steps 1--2, 
we have 
$\overline{n} \preceq w(1) \prec w(2)$; 
note that 
$w(n) \preceq n$, 
by \eqref{eq:Weyl-grp-D}. 
It remains to prove that 
$w(\overline{1}) = 2$
and 
$w(\overline{2}) = 1$.
It follows from Lemma \ref{lem:Gr-D} that 
$\mathsf{a}_1(\lfloor wr_{\gamma} \rfloor)
=
w(\overline{2}) - 1$, 
$\mathsf{a}_2(\lfloor wr_{\gamma} \rfloor)
=
w(\overline{1}) - 2$, 
$\mathsf{b}_1(\lfloor wr_{\gamma} \rfloor)
=
\mathsf{b}_2(\lfloor wr_{\gamma} \rfloor)
=
0$, 
$\mathsf{a}_1(w)
=
\mathsf{a}_2(w)
=
n-2$, 
$\mathsf{b}_1(w)
=
(n-1) - (w(\overline{1}) - 2)$, 
and 
$\mathsf{b}_2(w)
=
(n-2) - (w(\overline{2}) - 1)$. 
Hence 
$\ell(\lfloor wr_{\gamma} \rfloor)
-
\ell(w)
=
7-4n
+
2(w(\overline{2})-1)
+
2(w(\overline{1})-2)$; 
note that 
$w(\overline{2})-1 \geq 0$
and 
$w(\overline{1})-2 \geq 0$. 
Since 
$\ell(\lfloor wr_{\gamma} \rfloor)
-
\ell(w)
=
7-4n$, 
we conclude that 
$w(\overline{1}) = 2$
and 
$w(\overline{2}) = 1$.
\end{proof}

The proof of 
Lemma \ref{lem:Q-2->D2}
is complete. 
\end{proof}

\begin{lem} \label{lem:D2->Q-2}
$n \geq 5$, 
$i = 2$, 
and 
(q-D2)
imply
$\mathrm{Q}(2,w,\gamma)$.
\end{lem}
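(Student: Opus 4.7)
The plan is to mirror the argument already used in Step~3 of the proof of Lemma~\ref{lem:Q-2->D2}, but now reading the length computation in the forward direction. By Lemmas~\ref{lem:<g,r>} and \ref{lem:J-ad-D}--\ref{lem:<g,r>-D}, the statement $\mathrm{Q}(2,w,\gamma)$ is equivalent to the single length identity
\begin{align*}
\ell(\lfloor wr_{\gamma} \rfloor) - \ell(w) = 7 - 4n,
\end{align*}
so the entire task is to verify this identity under the assumption (q-D2).

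First, I would unpack (q-D2) in the case $i=2$: we have $c_2(\gamma^{\vee})=2$, $\gamma^{\vee}=\varepsilon_1+\varepsilon_2$, $w(\overline{2})=1$, $w(\overline{1})=2$, and hence $w(1)=\overline{2}$, $w(2)=\overline{1}$, while $\lfloor wr_{\gamma}\rfloor(1)=1$ and $\lfloor wr_{\gamma}\rfloor(2)=2$. Next, by the same coset-representative computation carried out at the start of Step~1 of the proof of Lemma~\ref{lem:Q-2->D2} (the choice of $(0,0)\in(I_1)_{\af}\times(I_2)_{\af}$ in Lemma~\ref{lem:J-ad}), we have $z_{\gamma^{\vee}}^{I\setminus\{2\}}=e$, so $\lfloor wr_{\gamma}\rfloor = wr_{\gamma}$ acts by $1\mapsto w(\overline{2})$, $2\mapsto w(\overline{1})$, and fixes the action of $w$ on $[3,n]$. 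In particular $\lfloor wr_{\gamma}\rfloor(u)=w(u)$ for $u\in[3,n]$.

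The core of the proof is then a direct combinatorial computation using Lemma~\ref{lem:Gr-D}. Since $\overline{n}\preceq w(1)\prec w(2)$ and $w(u)\preceq n$ for $u\in[3,n]$, I would compute
\begin{align*}
\mathsf{a}_1(\lfloor wr_{\gamma}\rfloor) &= w(\overline{2})-1 = 0, &
\mathsf{a}_2(\lfloor wr_{\gamma}\rfloor) &= w(\overline{1})-2 = 0, \\
\mathsf{b}_1(\lfloor wr_{\gamma}\rfloor) &= 0, &
\mathsf{b}_2(\lfloor wr_{\gamma}\rfloor) &= 0,
\end{align*}
together with
\begin{align*}
\mathsf{a}_1(w) = \mathsf{a}_2(w) = n-2, \qquad
\mathsf{b}_1(w) = n-1, \qquad
\mathsf{b}_2(w) = n-2,
\end{align*}
and observe that $\mathsf{a}_s(\lfloor wr_{\gamma}\rfloor)=\mathsf{a}_s(w)$ and $\mathsf{b}_s(\lfloor wr_{\gamma}\rfloor)=\mathsf{b}_s(w)$ for $s\in[3,n]$ since $w$ and $\lfloor wr_{\gamma}\rfloor$ agree on $[3,n]$.

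Summing via Lemma~\ref{lem:Gr-D} yields $\ell(\lfloor wr_{\gamma}\rfloor)-\ell(w) = -(n-2)-(n-2)-(n-1)-(n-2) = 7-4n$, as required. I do not anticipate any genuine obstacle here: the whole computation is the already-performed Step~3 of Lemma~\ref{lem:Q-2->D2} read backwards, and the only point requiring mild care is checking that the two values $w(\overline{1})-2$ and $w(\overline{2})-1$ that appeared there as nonnegative slack now both vanish exactly under hypothesis (q-D2), forcing the length difference to hit the critical value $7-4n$.
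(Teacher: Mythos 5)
Your proposal is correct and follows essentially the same route as the paper: both reduce $\mathrm{Q}(2,w,\gamma)$ to the length identity $\ell(\lfloor wr_{\gamma}\rfloor)-\ell(w)=7-4n$ via Lemmas \ref{lem:<g,r>} and \ref{lem:J-ad-D}--\ref{lem:<g,r>-D}, note that (q-D2) together with Lemma \ref{lem:Gr-D} pins down $w=r_{\gamma}=(1\ \overline{2})(\overline{1}\ 2)$ and $\lfloor wr_{\gamma}\rfloor=e$, and then verify the identity by the direct count $\ell(w)=(n-2)+(n-2)+(n-1)+(n-2)=4n-7$. The paper's proof is simply a terser version of the same computation.
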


\begin{proof}
Assume that 
$n \geq 5$, 
$i = 2$, 
and 
(q-D2)
hold.
By Lemmas \ref{lem:<g,r>} and \ref{lem:J-ad-D}--\ref{lem:<g,r>-D}, 
$\mathrm{Q}(2,w,\gamma)$
is equivalent to 
$\ell(\lfloor wr_{\gamma} \rfloor) - \ell(w)
=
7-4n$. 
It follows from 
Lemma \ref{lem:Gr-D}
that 
$w = r_{\gamma} = (1 \ \overline{2})(\overline{1} \ 2)$
and 
$\lfloor wr_{\gamma} \rfloor = e$. 
We check at once that 
$\ell(\lfloor wr_{\gamma} \rfloor) - \ell(w)
=
7-4n$.
\end{proof}

\begin{lem} \label{lem:Q-n-3->D2}
$n \geq 5$, 
$i \in [3,n-3]$, 
$c_i(\gamma^{\vee}) = 2$, 
and 
$\mathrm{Q}(i,w,\gamma)$
imply
(q-D2).
\end{lem}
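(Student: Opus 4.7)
The plan is to mirror the strategy of Lemma \ref{lem:Q-2->D2}, adapted to general $i \in [3,n-3]$, with the outer factor $I_1=[i-1]$ now being a nontrivial type $A_{i-1}$ block. By Lemmas \ref{lem:LS} and \ref{lem:J-ad-D}, the hypothesis forces $\gamma^{\vee} = \varepsilon_{i-1}+\varepsilon_i$ and $r_\gamma = (i-1\ \overline{i})(\overline{i-1}\ i)$, and Lemmas \ref{lem:<g,r>} and \ref{lem:<g,r>-D} translate $\mathrm{Q}(i,w,\gamma)$ into the length identity
\begin{align*}
\ell(\lfloor wr_{\gamma} \rfloor) - \ell(w) = 2i-4n+3.
\end{align*}
First I would compute $z_{\gamma^{\vee}}^{I\setminus\{i\}}$ via Lemma \ref{lem:J-ad}: writing $I\setminus\{i\}=I_1\sqcup I_2$ with $I_1=[i-1]$ of type $A_{i-1}$ and $I_2=[i+1,n]$ of type $D_{n-i}$, the datum $(i-2,0)\in (I_1)_{\af}\times(I_2)_{\af}$ (with $I_1\setminus\{i-2\}=[i-3]\sqcup\{i-1\}$ of type $A_{i-3}\times A_1$) meets the congruence condition of \eqref{eq:J-ad}, so $z_{\gamma^\vee}^{I\setminus\{i\}}$ equals $w_0^{I_1}w_0^{I_1\setminus\{i-2\}}$, cycling $u\mapsto u+2$ on $[i-2]$ and sending $i-1\mapsto 1$, $i\mapsto 2$. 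This makes $\lfloor wr_\gamma \rfloor$ act by $1\mapsto w(\overline{i})$, $2\mapsto w(\overline{i-1})$, $u\mapsto w(u-2)$ for $u\in[3,i]$, and by the identity $w$ elsewhere (exactly the formula used in Step 2 of the proof of Proposition \ref{prop:Q=C}).

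Next I would run a case analysis on the positions of $w(i-1)$ and $w(i)$ relative to $n$ and $\overline{n}$, analogous to Steps 1--2 of Lemma \ref{lem:Q-2->D2}. The key input is Lemma \ref{lem:Gr-D}, which forces $w(i+1)\prec\cdots\prec w(n-1)\prec w(\overline{n})\prec w(\overline{n-1})$ and pins down the $A_{i-1}$-strings $w(1)\prec\cdots\prec w(i)$ and $w(\overline{i})\prec w(\overline{i-1})\prec w(1)$ etc. Assuming $w(i-1)\prec w(i)\preceq n$ (the ``both positive'' case) one computes, using the bijections $\mathsf{A}_s(\lfloor wr_\gamma\rfloor)\leftrightarrow\mathsf{A}_{s-2}(w)$ and $\mathsf{B}_s(\lfloor wr_\gamma\rfloor)\leftrightarrow \mathsf{B}_{s-2}(w)\setminus\{i-1,i\}$ for $s\in[3,i]$, and the explicit boundary values at $s=1,2$ and $s=i-1,i$, that the length drop is strictly larger than $2i-4n+3$. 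The analogous ``mixed'' scenario ($w(i-1)\preceq n$ and $w(i)\succeq\overline{n}$) is ruled out by a parity obstruction, exactly as in Step 2 of Lemma \ref{lem:Q-2->D2}: the length difference comes out even while $2i-4n+3$ is odd (or, in the subcase where parity does match, the resulting estimate still exceeds $2i-4n+3$ unless we are already in the forced case).

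Finally, only $\overline{n}\preceq w(i-1)\prec w(i)$ remains. Applying the tabulation of steps (1)--(4) in Step 2 of the proof of Proposition \ref{prop:Q=C} (whose identities carry over verbatim since the $A_{i-1}$-combinatorics on $I_1$ is of the same shape), I would obtain
\begin{align*}
\ell(\lfloor wr_\gamma\rfloor)-\ell(w) = 2i-4n+3+2(w(\overline{i})-1)+2(w(\overline{i-1})-2),
\end{align*}
with both bracketed quantities non-negative by \eqref{eq:Weyl-grp-D} and Lemma \ref{lem:Gr-D}. The target identity thus forces $w(\overline{i})=1$ and $w(\overline{i-1})=2$, which combined with the formula for $\lfloor wr_\gamma\rfloor$ yields precisely (q-D2).

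The main obstacle I anticipate is verifying that the type-$D$ reduction in Step 2 of Proposition \ref{prop:Q=C} transports correctly: in type $C$ the key length computation used $\mathsf{b}_i$ counts that involved the ``$\mathsf{e}$-term,'' which is absent in type $D$, and one must check the affine nature of the $D_{n-i}$ block on $I_2$ does not disturb $z_{\gamma^\vee}^{I\setminus\{i\}}$ on the non-$I_2$ coordinates (which it does not, since $\gamma^\vee\in Q_{I_1}^\vee$ modulo $Q_{I_2}^\vee$-translates). Once that compatibility is established, the case-exclusion arguments become mechanical repetitions of Steps 1--2 of Lemma \ref{lem:Q-2->D2}, and the final parity/non-negativity argument delivers the conclusion.
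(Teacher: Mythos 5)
Your overall strategy coincides with the paper's: identify $\gamma^{\vee}=\varepsilon_{i-1}+\varepsilon_i$, compute $z_{\gamma^{\vee}}^{I\setminus\{i\}}$ and the action of $\lfloor wr_{\gamma}\rfloor$ exactly as you do, and then show $\ell(\lfloor wr_{\gamma}\rfloor)-\ell(w)=2i-4n+3$ plus non-negative terms whose vanishing forces $w(\overline{i})=1$ and $w(\overline{i-1})=2$. However, your case analysis is aimed at the wrong place, and the step you flag as your ``main obstacle'' is exactly where the argument as written breaks. First, the cases you propose to rule out (``both positive'' and ``mixed'' positions of $w(i-1),w(i)$) are vacuous for $i\geq 3$: since $\lfloor wr_{\gamma}\rfloor\in W^{I\setminus\{i\}}$, Lemma \ref{lem:Gr-D} gives the chain $w(\overline{i})\prec w(\overline{i-1})\prec w(1)\prec\cdots\prec w(i-2)\prec w(i-1)\prec w(i)$, and $w(i-1)=\sigma(w(\overline{i-1}))$ with $w(\overline{i-1})\prec w(i-1)$ already forces $w(\overline{i-1})\preceq n$ and $w(i-1)\succeq\overline{n}$. (This is precisely why Lemma \ref{lem:Q-2->D2} needs its Steps 1--2 only for $i=2$, where there are no intermediate entries $w(1),\ldots,w(i-2)$ to force the sign.)

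The case analysis that is actually required, and which is absent from your proposal, concerns the position of $w(n)$ relative to $w(i-1)$ and $w(i)$. This is also why the type-$C$ computation from Step~2 of the proof of Proposition \ref{prop:Q=C} does \emph{not} transport verbatim: in type $C$, Lemma \ref{lem:Gr-C} forces $w(n)\preceq n$, so $w(n)\prec w(i-1)\prec w(i)$ automatically, whereas in type $D$ Lemma \ref{lem:Gr-D} only gives $w(n-1)\prec w(n)\prec w(\overline{n-1})$ with $w(n-1)\preceq n$, so $w(n)$ may lie below $w(i-1)$, between $w(i-1)$ and $w(i)$, or above $w(i)$. These three possibilities change the counts $\mathsf{a}_{i-1}(w)$, $\mathsf{a}_i(w)$, $\mathsf{b}_1(\lfloor wr_{\gamma}\rfloor)$, and $\mathsf{b}_2(\lfloor wr_{\gamma}\rfloor)$, and the paper's computation yields
\begin{align*}
\ell(\lfloor wr_{\gamma}\rfloor)-\ell(w)
=
(3-4n+2i)+2\epsilon+2(w(\overline{i})-1)+2(w(\overline{i-1})-2),
\qquad \epsilon\in\{0,1,2\},
\end{align*}
with $\epsilon=0$ only when $w(n)\prec w(i-1)$. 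Your displayed formula is therefore only the $\epsilon=0$ instance; the conclusion still follows because all the extra terms are non-negative, but your proof never establishes the case-dependent formula, so the non-negativity argument is not yet justified. To repair the proof, replace the sign analysis of $w(i-1),w(i)$ by the three-way comparison of $w(n)$ with $w(i-1)$ and $w(i)$, and carry the resulting $2\epsilon$ through the length bookkeeping.
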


\begin{proof}
Assume that 
$n \geq 5$, 
$i \in [3,n-3]$, 
and 
$c_i(\gamma^{\vee}) = 2$, 
and that 
$\mathrm{Q}(i,w,\gamma)$
is true. 
By Lemmas \ref{lem:<g,r>} and \ref{lem:J-ad-D}--\ref{lem:<g,r>-D}, 
we have 
$\ell(\lfloor wr_{\gamma} \rfloor) - \ell(w)
=
3-4n+2i$. 
By Lemmas \ref{lem:LS} and \ref{lem:J-ad-D}, 
we have 
$\gamma^{\vee} 
= 
\varepsilon_{i-1} + \varepsilon_i
\in 
Q^{\vee,I\setminus \{ i \}}$, 
$r_{\gamma} = (i-1 \ \overline{i})(\overline{i-1} \ i)$, 
and 
$\lfloor wr_{\gamma} \rfloor 
= 
wr_{\gamma}(z_{\gamma^{\vee}}^{I \setminus \{ i \}})^{-1}$. 
Let 
$I \setminus \{ i \} = I_1 \sqcup I_2$, 
where 
$I_1 = [i-1]$
is of type 
$A_{i-1}$
and 
$I_2 = [i+1,n]$
is of type 
$D_{n-i}$.
We see that 
$(i-2,0) \in (I_1)_{\af} \times (I_2)_{\af}$
satisfies the condition for 
$\gamma^{\vee} \in Q^{\vee}$
in 
Lemma \ref{lem:J-ad}; 
note that 
$I_1 \setminus \{ i-2 \}$
is of type 
$A_{i-3} \times A_1$. 
Hence 
$z_{\gamma^{\vee}}^{I \setminus \{ i \}}
=
w_0^{I_1}w_0^{I_1 \setminus \{ i-2 \}}$
is given by 
$1 \mapsto i-1$, 
$2 \mapsto i$, 
$u \mapsto u-2$
for 
$u \in [3,i]$, 
and 
$u \mapsto u$
for 
$u \in [i+1,n]$.
Then 
$\lfloor wr_{\gamma} \rfloor$
is given by 
$1 \mapsto w(\overline{i})$, 
$2 \mapsto w(\overline{i-1})$, 
$u \mapsto w(u-2)$
for 
$u \in [3,i]$, 
and 
$u \mapsto w(u)$
for 
$u \in [i+1,n]$. 
It follows from Lemma \ref{lem:Gr-D} that 
\begin{align} \label{eq:seq-D2-n-3}
\begin{split}
&
w(\overline{i})
\prec 
\underbrace{w(\overline{i-1})}_{\preceq n}
\prec 
w(1)
\prec 
w(2)
\prec \cdots \prec
\underbrace{w(i-1)}_{\succeq \overline{n}}
\prec 
w(i), \\
&
w(i+1)
\prec 
w(i+2)
\prec \cdots \prec
\underbrace{w(n-1)}_{\preceq n}
\prec
w(n)
\prec 
w(\overline{n-1}). 
\end{split}
\end{align}
We have the following cases:
\begin{enumerate}[(i)]
\item
$w(n) \prec w(i-1) \prec w(i)$, 
\item
$w(i-1) \prec w(n) \prec w(i)$, 
\item
$w(i-1) \prec w(i) \prec w(n)$; 
\end{enumerate}
we will prove that (i) holds. 
It follows from \eqref{eq:seq-D2-n-3} that 
\begin{enumerate}[(1)]
\item
$\mathsf{a}_1(\lfloor wr_{\gamma} \rfloor) = w(\overline{i})-1$, 
$\mathsf{a}_2(\lfloor wr_{\gamma} \rfloor) = w(\overline{i-1})-2$, 
\item
$\mathsf{b}_1(\lfloor wr_{\gamma} \rfloor) 
= 
\begin{cases}
0 & \text{if (i) or (ii)}, \\
1 & \text{if (iii)},
\end{cases}$ \ 
$\mathsf{b}_2(\lfloor wr_{\gamma} \rfloor) 
= 
\begin{cases}
0 & \text{if (i)}, \\
1 & \text{if (ii) or (iii)},
\end{cases}$
\item
for 
$s \in [3,i]$, 
$\mathsf{a}_s(\lfloor wr_{\gamma} \rfloor)
=
\mathsf{a}_{s-2}(w)$
and 
$\mathsf{b}_s(\lfloor wr_{\gamma} \rfloor)
=
\mathsf{b}_{s-2}(w) - 2$, 
\item
$\mathsf{a}_{i-1}(w) 
= 
\begin{cases}
n-i & \text{if (i)}, \\
n-i-1 & \text{if (ii) or (iii)},
\end{cases}$ \ 
$\mathsf{a}_i(w) 
= 
\begin{cases}
n-i & \text{if (i) or (ii)}, \\
n-i-1 & \text{if (iii)},
\end{cases}$
\item
$\mathsf{b}_{i-1}(w)
=
n-i+1-(w(\overline{i-1})-2)$
and 
$\mathsf{b}_i(w)
=
n-i-(w(\overline{i})-1)$.
\end{enumerate}
Hence 
\begin{align}
\ell(\lfloor wr_{\gamma} \rfloor)
-
\ell(w)
=
\begin{cases}
3-4n+2i+2(w(\overline{i})-1)+2(w(\overline{i-1})-2)
&
\text{if (i)}, \\
5-4n+2i+2(w(\overline{i})-1)+2(w(\overline{i-1})-2)
&
\text{if (ii)}, \\
7-4n+2i+2(w(\overline{i})-1)+2(w(\overline{i-1})-2)
&
\text{if (iii)};
\end{cases}
\end{align}
note that 
$w(\overline{i})-1 \geq 0$
and 
$w(\overline{i-1})-2 \geq 0$. 
Since 
$\ell(\lfloor wr_{\gamma} \rfloor) - \ell(w)
=
3-4n+2i$, 
we have 
(i), 
$w(\overline{i})=1$, 
and 
$w(\overline{i-1})=2$.
This implies (q-D2).
\end{proof}

\begin{lem} \label{lem:D2-n-3->Q}
$n \geq 5$, 
$i \in [3,n-3]$, 
and 
(q-D2)
imply
$\mathrm{Q}(i,w,\gamma)$. 
\end{lem}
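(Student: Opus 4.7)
The plan is to verify the length identity $\ell(\lfloor wr_{\gamma}\rfloor) - \ell(w) = 3 - 4n + 2i$, which by Lemmas \ref{lem:<g,r>} and \ref{lem:<g,r>-D} is equivalent to $\mathrm{Q}(i,w,\gamma)$. The bulk of the combinatorial work is already packaged in the proof of Lemma \ref{lem:Q-n-3->D2}, so the main task here is simply to feed in the hypothesis (q-D2) and verify that it lands in the right case.

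First, I would observe that (q-D2) specifies $\gamma^{\vee} = \varepsilon_{i-1} + \varepsilon_i$ and $c_i(\gamma^{\vee}) = 2$, so the explicit descriptions of $z^{I \setminus \{i\}}_{\gamma^{\vee}}$ and of the action of $\lfloor wr_{\gamma}\rfloor$ derived at the beginning of the proof of Lemma \ref{lem:Q-n-3->D2} apply verbatim. In particular, the chain of inequalities \eqref{eq:seq-D2-n-3} still holds, and the relative position of $w(n)$ with respect to $w(i-1)$ and $w(i)$ falls into one of the three cases (i), (ii), (iii) considered there.

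The key step is to show that (q-D2) forces case (i). Under (q-D2) we have $w(\overline{i}) = 1$ and $w(\overline{i-1}) = 2$; applying \eqref{eq:Weyl-grp-D} gives $w(i) = \sigma(1) = \overline{1}$ and $w(i-1) = \sigma(2) = \overline{2}$. Since $w$ is a bijection of $\mathcal{D}_n$ and the values $\overline{1}, \overline{2}$ are already consumed, we must have $w(n) \in \mathcal{D}_n \setminus \{\overline{1}, \overline{2}\}$, and the chain structure of $\mathcal{D}_n$ then forces $w(n) \preceq \overline{3} \prec \overline{2} = w(i-1)$. This places us in case (i).

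Finally, substituting $w(\overline{i}) = 1$ and $w(\overline{i-1}) = 2$ into the case (i) formula
\[
\ell(\lfloor wr_{\gamma}\rfloor) - \ell(w) = 3 - 4n + 2i + 2(w(\overline{i}) - 1) + 2(w(\overline{i-1}) - 2)
\]
from the proof of Lemma \ref{lem:Q-n-3->D2} yields exactly $\ell(\lfloor wr_{\gamma}\rfloor) - \ell(w) = 3 - 4n + 2i$, which by the equivalence noted above is $\mathrm{Q}(i,w,\gamma)$. I do not anticipate a substantial obstacle: all the delicate length bookkeeping has been absorbed into Lemma \ref{lem:Q-n-3->D2}, and the only new content is the short forcing argument that rules out cases (ii) and (iii), which reduces to the fact that both $\overline{1}$ and $\overline{2}$ are the two largest elements of $\mathcal{D}_n$ and are already taken by $w(i)$ and $w(i-1)$.
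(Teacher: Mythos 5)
Your proof is correct and follows essentially the same route as the paper: verify the length identity $\ell(\lfloor wr_{\gamma}\rfloor)-\ell(w)=3-4n+2i$ by checking that (q-D2) yields \eqref{eq:seq-D2-n-3} together with case (i) of the proof of Lemma \ref{lem:Q-n-3->D2}, and then substitute $w(\overline{i})=1$, $w(\overline{i-1})=2$ into the case-(i) length formula. Your explicit forcing argument for case (i) (that $\overline{1},\overline{2}$ are already taken by $w(i),w(i-1)$, so $w(n)\prec\overline{2}$) is exactly the check the paper leaves implicit.
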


\begin{proof}
Assume that
$n \geq 5$, 
$i \in [3,n-3]$, 
and 
(q-D2)
hold. 
By Lemmas \ref{lem:<g,r>} and \ref{lem:J-ad-D}--\ref{lem:<g,r>-D}, 
$\mathrm{Q}(i,w,\gamma)$
is equivalent to 
$\ell(\lfloor wr_{\gamma} \rfloor) - \ell(w)
=
3-4n+2i$. 
We check at once that
(q-D2)
yields
$w(n) \prec w(i-1) \prec w(i)$
and 
\eqref{eq:seq-D2-n-3}. 
As in the proof of Lemma \ref{lem:Q-n-3->D2}, 
we have 
$\ell(\lfloor wr_{\gamma} \rfloor)
-
\ell(w)
=
3-4n+2i+2(w(\overline{1})-1)+2(w(\overline{i-1})-2)$.
Since 
$w(\overline{1})=1$
and 
$w(\overline{i-1})=2$, 
we conclude that 
$\ell(\lfloor wr_{\gamma} \rfloor) - \ell(w)
=
3-4n+2i$. 
\end{proof}

\begin{lem} \label{lem:Q-n-2->D2}
$n \geq 5$, 
$c_{n-2}(\gamma^{\vee}) = 2$,
and 
$\mathrm{Q}(n-2,w,\gamma)$
imply
(q-D2).
\end{lem}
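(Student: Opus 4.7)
The plan is to mimic the strategy of Lemma \ref{lem:Q-n-3->D2}, adapted to the boundary case $i=n-2$, where the decomposition of $I\setminus\{n-2\}$ acquires an extra rank-one factor compared to the generic case.

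First, by Lemmas \ref{lem:<g,r>} and \ref{lem:J-ad-D}--\ref{lem:<g,r>-D}, the statement $\mathrm{Q}(n-2,w,\gamma)$ with $c_{n-2}(\gamma^{\vee})=2$ is equivalent to the length identity
\begin{align*}
\ell(\lfloor wr_{\gamma}\rfloor) - \ell(w) = -2n-1.
\end{align*}
By Lemmas \ref{lem:LS} and \ref{lem:J-ad-D}, we have
$\gamma^{\vee}=\varepsilon_{n-3}+\varepsilon_{n-2}\in Q^{\vee,I\setminus\{n-2\}}$,
$r_{\gamma}=(n-3\ \overline{n-2})(\overline{n-3}\ n-2)$, and
$\lfloor wr_{\gamma}\rfloor = wr_{\gamma}(z_{\gamma^{\vee}}^{I\setminus\{n-2\}})^{-1}$.
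I will compute $z_{\gamma^{\vee}}^{I\setminus\{n-2\}}$ using the decomposition $I\setminus\{n-2\}= I_1\sqcup I_2'\sqcup I_2''$ with $I_1=[n-3]$ of type $A_{n-3}$ and $I_2'=\{n-1\}$, $I_2''=\{n\}$ both of type $A_1$: checking Lemma \ref{lem:J-ad} one sees that $(n-4,0,0)\in(I_1)_{\af}\times(I_2')_{\af}\times(I_2'')_{\af}$ is the required tuple, so $z_{\gamma^{\vee}}^{I\setminus\{n-2\}}=w_0^{I_1}w_0^{I_1\setminus\{n-4\}}$, which acts by $u\mapsto u+2$ for $u\in[n-4]$, $n-3\mapsto 1$, $n-2\mapsto 2$, and fixes $n-1,n$ (and dually on the barred letters via $\sigma$).

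Substituting into $\lfloor wr_{\gamma}\rfloor$, one obtains
$\lfloor wr_{\gamma}\rfloor(1)=w(\overline{n-2})$,
$\lfloor wr_{\gamma}\rfloor(2)=w(\overline{n-3})$,
$\lfloor wr_{\gamma}\rfloor(u)=w(u-2)$ for $u\in[3,n-2]$,
and $\lfloor wr_{\gamma}\rfloor(u)=w(u)$ for $u\in\{n-1,n\}$. Applying Lemma \ref{lem:Gr-D} to both $w$ and $\lfloor wr_{\gamma}\rfloor$ yields the chain
\begin{align*}
&w(\overline{n-2})\prec w(\overline{n-3})\prec w(1)\prec w(2)\prec\cdots\prec w(n-3)\prec w(n-2),\\
&w(n-1)\prec w(n)\prec w(\overline{n-1}),
\end{align*}
where we note in particular that $w(\overline{n-2})-1\ge 0$ and $w(\overline{n-3})-2\ge 0$. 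As in Lemma \ref{lem:Q-n-3->D2}, I then split into three cases according to the position of $w(n)$ relative to $w(n-3)$ and $w(n-2)$:
\begin{enumerate}[(i)]
\item $w(n)\prec w(n-3)\prec w(n-2)$,
\item $w(n-3)\prec w(n)\prec w(n-2)$,
\item $w(n-3)\prec w(n-2)\prec w(n)$.
\end{enumerate}

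Next I will tabulate $\mathsf{a}_s(\lfloor wr_{\gamma}\rfloor), \mathsf{b}_s(\lfloor wr_{\gamma}\rfloor)$ and $\mathsf{a}_s(w), \mathsf{b}_s(w)$ for $s\in\{1,2,n-3,n-2\}$ and $s\in[3,n-4]$ separately. For $s\in[3,n-4]$ the shift $u\mapsto u-2$ on positions $[3,n-2]$ together with the inclusions $n-3,n-2\in\mathsf{B}_{s-2}(w)$ (which follow from the chain above) yield $\mathsf{a}_s(\lfloor wr_{\gamma}\rfloor)=\mathsf{a}_{s-2}(w)$ and $\mathsf{b}_s(\lfloor wr_{\gamma}\rfloor)=\mathsf{b}_{s-2}(w)-2$, exactly as in the generic case. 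For $s\in\{1,2,n-3,n-2\}$ the contributions depend on which of (i)--(iii) holds, and a straightforward but careful bookkeeping (identical in spirit to Steps 2--3 of the proof of Lemma \ref{lem:Q-2->D2} and to the case analysis in Lemma \ref{lem:Q-n-3->D2}) produces
\begin{align*}
\ell(\lfloor wr_{\gamma}\rfloor)-\ell(w)
=
\begin{cases}
-2n-1+2(w(\overline{n-2})-1)+2(w(\overline{n-3})-2) & \text{if (i)},\\
-2n+1+2(w(\overline{n-2})-1)+2(w(\overline{n-3})-2) & \text{if (ii)},\\
-2n+3+2(w(\overline{n-2})-1)+2(w(\overline{n-3})-2) & \text{if (iii)}.
\end{cases}
\end{align*}
Since the total must equal $-2n-1$ and each correction term is nonnegative, only case (i) is possible and necessarily $w(\overline{n-2})=1$, $w(\overline{n-3})=2$; this is precisely (q-D2).

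The main obstacle is the bookkeeping for $\mathsf{a}_s, \mathsf{b}_s$ at the four boundary indices $s\in\{1,2,n-3,n-2\}$: because $I_2=\{n-1,n\}$ is of rank two rather than the type-$D_{n-i}$ factor available when $i\le n-3$, the interaction between $w(\overline{n-1}), w(n)$ and the boundary letters $w(\overline{n-2}), w(\overline{n-3})$ must be analysed by hand, and one must keep careful track of whether $n-1$ and $n$ belong to the $\mathsf{A}$- or $\mathsf{B}$-sets in each of the three cases. Once this bookkeeping is done, the conclusion follows by the same nonnegativity argument used in Lemma \ref{lem:Q-n-3->D2}.
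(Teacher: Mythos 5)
Your proposal is correct and follows essentially the same route as the paper: the same decomposition $I\setminus\{n-2\}=[n-3]\sqcup\{n-1\}\sqcup\{n\}$, the same computation of $z_{\gamma^{\vee}}^{I\setminus\{n-2\}}$ and of $\lfloor wr_{\gamma}\rfloor$, and the same reduction to the nonnegativity of $2(w(\overline{n-2})-1)+2(w(\overline{n-3})-2)$. The paper simply states that "analysis similar to Lemma \ref{lem:Q-n-3->D2}" finishes the argument, and your explicit case formulas are exactly the specialization of that lemma's formulas at $i=n-2$, so you are filling in precisely the bookkeeping the paper leaves implicit.
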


\begin{proof}
Assume that 
$n \geq 5$
and 
$c_{n-2}(\gamma^{\vee}) = 2$,
and that 
$\mathrm{Q}(n-2,w,\gamma)$
is true. 
By Lemmas \ref{lem:<g,r>} and \ref{lem:J-ad-D}--\ref{lem:<g,r>-D}, 
we have 
$\ell(\lfloor wr_{\gamma} \rfloor) - \ell(w)
=
-1-2n$. 
By Lemmas \ref{lem:LS} and \ref{lem:J-ad-D}, 
we have
$\gamma^{\vee} 
= 
\varepsilon_{n-3} + \varepsilon_{n-2}
\in 
Q^{\vee,I\setminus \{ n-2 \}}$, 
$r_{\gamma} = (n-3 \ \overline{n-2})(\overline{n-3} \ n-2)$, 
and 
$\lfloor wr_{\gamma} \rfloor 
= 
wr_{\gamma}(z_{\gamma^{\vee}}^{I \setminus \{ n-2 \}})^{-1}$. 
Let 
$I \setminus \{ n-2 \} = I_1 \sqcup I_2' \sqcup I_2''$, 
where 
$I_1 = [n-3]$
is of type 
$A_{n-3}$, 
$I_2' = \{ n-1 \}$
is of type 
$A_1$, 
and 
$I_2'' = \{ n \}$
is of type 
$A_1$.
We see that 
$(n-4,0,0) 
\in 
(I_1)_{\af} \times (I_2')_{\af} \times (I_2'')_{\af}$
satisfies the condition for 
$\gamma^{\vee} \in Q^{\vee}$
in 
Lemma \ref{lem:J-ad}; 
note that 
$I_1 \setminus \{ n-4 \}$
is of type 
$A_{n-5} \times A_1$. 
Hence 
$z_{\gamma^{\vee}}^{I \setminus \{ n-2 \}}
=
w_0^{I_1} w_0^{I_1 \setminus \{ n-4 \}}$
is given by 
$u \mapsto u+2$
for 
$u \in [n-4]$, 
$n-3 \mapsto 1$, 
$n-2 \mapsto 2$, 
$n-1 \mapsto n-1$, 
and 
$n \mapsto n$. 
Then 
$\lfloor wr_{\gamma} \rfloor$
is given by 
$1 \mapsto w(\overline{n-2})$, 
$2 \mapsto w(\overline{n-3})$, 
$u \mapsto w(u-2)$
for 
$u \in [3,n-2]$, 
$n-1 \mapsto w(n-1)$, 
and 
$n \mapsto w(n)$. 
It follows from Lemma \ref{lem:Gr-D} that 
\begin{align} \label{eq:seq-D2-n-2}
\begin{split}
&
w(\overline{n-2})
\prec 
w(\overline{n-3})
\prec
w(1)
\prec
w(2)
\prec \cdots \prec
w(n-3)
\prec
w(n-2), \\
&
w(n-1)
\prec
w(n)
\prec
w(\overline{n-1}).
\end{split}
\end{align}
Analysis similar to that in the proof of Lemma \ref{lem:Q-n-3->D2} 
shows that 
$w(\overline{n-2}) = 1$
and 
$w(\overline{n-3}) = 2$.
This implies (q-D2).
\end{proof}

\begin{lem} \label{lem:D2-n-2->Q}
$n \geq 5$, 
$i = n-2$, 
and 
(q-D2)
imply
$\mathrm{Q}(n-2,w,\gamma)$. 
\end{lem}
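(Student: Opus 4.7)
The plan is to mirror the proof of Lemma \ref{lem:D2-n-3->Q} almost verbatim, with the computations borrowed from Lemma \ref{lem:Q-n-2->D2} in place of those from Lemma \ref{lem:Q-n-3->D2}. More precisely, I would first invoke Lemmas \ref{lem:<g,r>}, \ref{lem:J-ad-D}, and \ref{lem:<g,r>-D} to reduce $\mathrm{Q}(n-2,w,\gamma)$ to the single numerical condition $\ell(\lfloor wr_{\gamma} \rfloor) - \ell(w) = -1-2n$, exploiting $c_{n-2}(\gamma^{\vee})=2$ and $2\langle \alpha_{n-2}^{\vee},\rho-\rho_{I\setminus\{n-2\}}\rangle = n+1$.

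Next, I would verify that assumption (q-D2) (with $i=n-2$), combined with Lemma \ref{lem:Gr-D}, forces the chain conditions \eqref{eq:seq-D2-n-2} on $w$, so that the same case analysis carried out in Lemma \ref{lem:Q-n-2->D2} applies. In particular, the explicit form of $\lfloor wr_{\gamma}\rfloor$ (given by $1 \mapsto w(\overline{n-2})$, $2 \mapsto w(\overline{n-3})$, $u \mapsto w(u-2)$ for $u \in [3,n-2]$, and $u \mapsto w(u)$ for $u \in \{n-1,n\}$) coincides with the one used there, so no new combinatorial computation is required.

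Then I would reuse the key tabulation of $\mathsf{a}_s(\lfloor wr_{\gamma} \rfloor)$, $\mathsf{b}_s(\lfloor wr_{\gamma} \rfloor)$, $\mathsf{a}_s(w)$, and $\mathsf{b}_s(w)$ from Lemma \ref{lem:Q-n-2->D2} (where case (i) there corresponds precisely to (q-D2)), yielding
\[
\ell(\lfloor wr_{\gamma} \rfloor) - \ell(w) = -1-2n + 2(w(\overline{n-2})-1) + 2(w(\overline{n-3})-2).
\]
Since (q-D2) gives $w(\overline{n-2})=1$ and $w(\overline{n-3})=2$, both correction terms vanish and we obtain $\ell(\lfloor wr_{\gamma} \rfloor) - \ell(w) = -1-2n$, which is exactly $\mathrm{Q}(n-2,w,\gamma)$.

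The argument is entirely bookkeeping and presents no real obstacle; the only mildly delicate point is confirming that the value $z_{\gamma^{\vee}}^{I\setminus\{n-2\}}$ computed in Lemma \ref{lem:Q-n-2->D2} (which rested on the decomposition $I \setminus \{n-2\} = [n-3] \sqcup \{n-1\} \sqcup \{n\}$ of types $A_{n-3}\times A_1 \times A_1$) really yields the permutation given above, but this is already established there and can be cited directly.
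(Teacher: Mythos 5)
Your proposal is correct and follows essentially the same route as the paper: reduce $\mathrm{Q}(n-2,w,\gamma)$ to the identity $\ell(\lfloor wr_{\gamma}\rfloor)-\ell(w)=-1-2n$ via Lemmas \ref{lem:<g,r>} and \ref{lem:J-ad-D}--\ref{lem:<g,r>-D}, observe that (q-D2) forces \eqref{eq:seq-D2-n-2}, and then count inversions. The only cosmetic difference is that the paper's proof of this lemma tabulates the statistics $\mathsf{a}_s,\mathsf{b}_s$ explicitly for the decomposition $[n-3]\sqcup\{n-1\}\sqcup\{n\}$, whereas you cite the formula with the correction terms $2(w(\overline{n-2})-1)+2(w(\overline{n-3})-2)$ from the forward direction and let them vanish — exactly the shortcut the paper itself uses for the companion Lemma \ref{lem:D2-n-3->Q}.
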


\begin{proof}
Assume that 
$n \geq 5$, 
$i = n-2$, 
and 
(q-D2)
hold.
By Lemmas \ref{lem:<g,r>} and \ref{lem:J-ad-D}--\ref{lem:<g,r>-D}, 
$\mathrm{Q}(n-2,w,\gamma)$
is equivalent to 
$\ell(\lfloor wr_{\gamma} \rfloor) - \ell(w)
=
-1-2n$. 
We see that 
$i = n-2$
and 
(q-D2)
imply
\eqref{eq:seq-D2-n-2}. 
Hence 
$\mathsf{a}_1(\lfloor wr_{\gamma} \rfloor)
=
\mathsf{a}_2(\lfloor wr_{\gamma} \rfloor)
=
\mathsf{b}_1(\lfloor wr_{\gamma} \rfloor)
=
\mathsf{b}_2(\lfloor wr_{\gamma} \rfloor)
=
0$, 
$\mathsf{A}_s(\lfloor wr_{\gamma} \rfloor)
=
\mathsf{A}_{s-2}(w)$
for 
$s \in [3,n-2]$, 
$\mathsf{A}_{n-3}(w)
=
\mathsf{A}_{n-2}(w)
=
\mathsf{B}_{n-2}(w)
=
\{ n-1,n \}$, 
and 
$\mathsf{B}_{n-3}(w)
=
\{ n-2,n-1,n \}$. 
Also, 
for 
$s \in [3,n-2]$, 
we see that 
$n-3,n-2 \in \mathsf{B}_{s-2}(w)$
and the map below is bijective. 
\begin{align}
\mathsf{B}_s(\lfloor wr_{\gamma} \rfloor)
\to 
\mathsf{B}_{s-2}(w), \ 
\begin{cases}
t \mapsto t-2 
& \text{if} \ t \in [s+1,n-2], \\
t \mapsto t
& \text{if} \ t \in \{ n-1,n \}.
\end{cases}
\end{align} 
This implies 
$\mathsf{b}_s(\lfloor wr_{\gamma} \rfloor)
=
\mathsf{b}_{s-2}(w) - 2$
for 
$s \in [3,n-2]$. 
Combining these yields
$\ell(\lfloor wr_{\gamma} \rfloor) - \ell(w)
=
-1-2n$.
\end{proof}

\begin{lem} \label{lem:n=4-Q<->D2}
$n = 4$, 
$c_2(\gamma^{\vee}) = 2$, 
and 
$\mathrm{Q}(2,w,\gamma)$
are equivalent to 
(q-D2).
\end{lem}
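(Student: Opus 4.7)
The plan is to adapt the proofs of Lemmas~\ref{lem:Q-2->D2} and~\ref{lem:D2->Q-2} (which handle $n \geq 5$) to the case $n = 4$, where the main structural difference is that $I \setminus \{2\} = \{1, 3, 4\}$ consists of three disconnected $A_1$-components instead of $A_1 \sqcup D_{n-2}$. First, by Lemmas~\ref{lem:LS} and~\ref{lem:J-ad-D}~(3), one may take $\gamma^{\vee} = \varepsilon_1 + \varepsilon_2 = \alpha_1^{\vee} + 2\alpha_2^{\vee} + \alpha_3^{\vee} + \alpha_4^{\vee}$ and $r_{\gamma} = (1 \ \overline{2})(\overline{1} \ 2)$. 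A direct check shows that this $\gamma^{\vee}$ coincides with $\varpi_2^{\vee}$ in $D_4$, so the trivial choice $(j_1, j_3, j_4) = (0, 0, 0)$ satisfies~\eqref{eq:J-ad} in Lemma~\ref{lem:J-ad} with $I_1 = \{1\}$, $I_2' = \{3\}$, $I_2'' = \{4\}$. Hence $z_{\gamma^{\vee}}^{I \setminus \{2\}} = e$ and $\lfloor wr_{\gamma} \rfloor = wr_{\gamma}$ sends $1 \mapsto w(\overline{2})$, $2 \mapsto w(\overline{1})$ and fixes $\{3, 4\}$. By Lemmas~\ref{lem:<g,r>} and~\ref{lem:<g,r>-D}, the statement $\mathrm{Q}(2, w, \gamma)$ reduces to the length condition $\ell(\lfloor wr_{\gamma} \rfloor) - \ell(w) = -9$.

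For the direction (q-D2) $\Rightarrow \mathrm{Q}(2, w, \gamma)$, I would assume $w(\overline{2}) = 1$ and $w(\overline{1}) = 2$, so $w(1) = \overline{2}$ and $w(2) = \overline{1}$. The chain condition $w(3) \prec w(4) \prec \sigma(w(3))$ in Lemma~\ref{lem:Gr-D} together with the sign-parity constraint~\eqref{eq:Weyl-grp-D} then leaves $w(3) = 3, w(4) = 4$ as the only possibility (a short check rules out $w(3) = 4$ and the sign-flip choice $w(4) = \overline{4}$). Thus $w = r_{\gamma}$ and $\lfloor wr_{\gamma} \rfloor = e$, and a direct calculation of $\mathsf{a}_s(r_{\gamma}), \mathsf{b}_s(r_{\gamma})$ for $s = 1, 2$ via Lemma~\ref{lem:Gr-D} yields $\ell(r_{\gamma}) = 9$, establishing the length condition.

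For the converse $\mathrm{Q}(2, w, \gamma) \Rightarrow$ (q-D2), I would mimic the three-step case analysis of Lemma~\ref{lem:Q-2->D2}, partitioning $W^{I \setminus \{2\}}$ according to the signs of $w(1), w(2)$. In the cases $w(1) \prec w(2) \preceq 4$ and $w(1) \preceq 4 \prec \overline{4} \preceq w(2)$, an explicit computation of $\mathsf{a}_s, \mathsf{b}_s$ for both $\lfloor wr_{\gamma} \rfloor$ and $w$ (for $s = 1, 2$) via Lemma~\ref{lem:Gr-D} yields a formula for $\ell(\lfloor wr_{\gamma} \rfloor) - \ell(w)$ that is strictly greater than $-9$, producing a contradiction. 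In the remaining case $\overline{4} \preceq w(1) \prec w(2)$, the same machinery gives
\[
\ell(\lfloor wr_{\gamma} \rfloor) - \ell(w) = -9 + 2(w(\overline{2}) - 1) + 2(w(\overline{1}) - 2),
\]
with both bracketed differences non-negative, forcing $w(\overline{2}) = 1$ and $w(\overline{1}) = 2$, which is (q-D2).

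The main obstacle will be the second case of the forward direction: for $n \geq 5$ the analogous step in Lemma~\ref{lem:Q-2->D2} splits into six subcases (i)--(vi) based on the relative order of $w(1), w(\overline{2}), w(\overline{n})$ within a chain of positive values, but for $n = 4$ the positions $i + 1 = 3$ and $n = 4$ are adjacent, so the chain $w(3) \prec w(4) \prec w(\overline{3})$ has only three entries and several of the six subcases collapse or become empty. Enumerating exactly which subcases survive and verifying that each still produces a strict inequality (rather than matching the target value $-9$) is the routine-but-delicate bookkeeping that must be carried out; the structural argument is otherwise directly parallel to the $n \geq 5$ proof.
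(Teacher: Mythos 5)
Your proposal is correct and follows essentially the same route as the paper, which disposes of this lemma in one line by invoking the method of Lemmas \ref{lem:Q-2->D2}--\ref{lem:D2->Q-2} and recording the same key facts you derive: $z_{\gamma^{\vee}}^{I\setminus\{2\}}=e$, the target $\ell(\lfloor wr_{\gamma}\rfloor)-\ell(w)=7-4\cdot 4=-9$, and the conclusion $w=(1\ \overline{2})(\overline{1}\ 2)$, $\lfloor wr_{\gamma}\rfloor=e$. (Only a cosmetic caveat: the chain ``$w(1)\preceq 4\prec\overline{4}\preceq w(2)$'' should be read as two separate conditions, since $4$ and $\overline{4}$ are incomparable in $\mathcal{D}_4$.)
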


\begin{proof}
The assertion follows by the same method as in the proof of 
Lemmas \ref{lem:Q-2->D2}--\ref{lem:D2->Q-2}; 
in this case, we have 
$w = (1 \ \overline{2})(\overline{1} \ 2)$
and 
$\lfloor wr_{\gamma} \rfloor = e$. 
\end{proof}

\begin{lem} \label{lem:Q->D3}
$\mathrm{Q}(n-1,w,\gamma)$
implies
(q-D3).
\end{lem}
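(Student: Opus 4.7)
The plan is to follow the template established in Lemmas~\ref{lem:Q-n-2->D1} and~\ref{lem:Q-n-2->D2}: reduce $\mathrm{Q}(n-1,w,\gamma)$ to a single length equation, compute $\lfloor wr_\gamma\rfloor$ explicitly via the formula $\lfloor wr_\gamma\rfloor = wr_\gamma(z^{J}_{\gamma^\vee})^{-1}$ with $J = I\setminus\{n-1\}$, and extract the claimed values $w(n) = 1$ and $\sigma(w(n-1)) = 2$ from careful length bookkeeping.

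To begin, by Lemma~\ref{lem:J-ad-D}(4) the only admissible coroot is $\gamma^\vee = \alpha_{n-1}^\vee = \varepsilon_{n-1}-\varepsilon_n$, so $c_{n-1}(\gamma^\vee)=1$ and $r_\gamma = (n-1\ n)(\overline{n-1}\ \overline{n})$; by Lemma~\ref{lem:LS} we have $\lfloor wr_\gamma\rfloor = wr_\gamma(z^{J}_{\gamma^\vee})^{-1}$. Combining Lemmas~\ref{lem:<g,r>} and~\ref{lem:<g,r>-D}, the hypothesis $\mathrm{Q}(n-1,w,\gamma)$ is equivalent to $\ell(\lfloor wr_\gamma\rfloor) - \ell(w) = 3 - 2n$.

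Next I compute $z^{J}_{\gamma^\vee}$. Removing vertex $n-1$ from the type $D_n$ diagram leaves a connected type $A_{n-1}$ chain $1 - 2 - \cdots - (n-2) - n$, so Lemma~\ref{lem:J-ad} requires a single $j \in J_\af$; a direct check using the $D_n$ Cartan matrix yields $j = n-2$, hence $z^{J}_{\gamma^\vee} = w_0^J w_0^{J\setminus\{n-2\}}$. Since $J\setminus\{n-2\} = [n-3]\sqcup\{n\}$ is of type $A_{n-3}\times A_1$, this permutation is readily identified, and composing with $wr_\gamma$ yields $\lfloor wr_\gamma\rfloor(1) = w(n)$, $\lfloor wr_\gamma\rfloor(2) = \sigma(w(n-1))$, $\lfloor wr_\gamma\rfloor(s) = w(s-2)$ for $s\in[3,n-1]$, and $\lfloor wr_\gamma\rfloor(n) = \sigma(w(n-2))$. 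This already yields every structural claim in (q-D3) apart from the equalities $w(n) = 1$ and $\sigma(w(n-1)) = 2$.

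Finally, I apply Lemma~\ref{lem:Gr-D} to both $w$ and $\lfloor wr_\gamma\rfloor$ to read off the chains of inequalities in $\mathcal{D}_n$ that their values satisfy, obtaining in particular $w(n) \preceq n$ and $\sigma(w(n-1)) \preceq n$. Comparing $\mathsf{a}_s$ and $\mathsf{b}_s$ term-by-term, the bulk contributions for $s\in[3,n-1]$ telescope against those for $s\in[1,n-3]$ via the bijection $\lfloor wr_\gamma\rfloor(s) = w(s-2)$, while the boundary terms at $s\in\{1,2\}$ for $\lfloor wr_\gamma\rfloor$ and $s\in\{n-2,n-1\}$ for $w$ should combine to give
\begin{align*}
\ell(\lfloor wr_\gamma\rfloor) - \ell(w) = (3-2n) + 2(w(n)-1) + 2(\sigma(w(n-1))-2).
\end{align*}
Since $w(n)-1\geq 0$ and $\sigma(w(n-1))-2\geq 0$, forcing this to equal $3-2n$ yields exactly $w(n) = 1$ and $\sigma(w(n-1)) = 2$. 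The main obstacle will be the length bookkeeping: as in the case analysis (i)--(iii) of Lemma~\ref{lem:Q-n-2->D1}, I expect to need subcases according to the relative positions of $w(n-2)$, $w(\overline{n-1})$, and $w(\overline{n})$ in $\mathcal{D}_n$, while verifying that the parity constraint in~\eqref{eq:Weyl-grp-D} (which distinguishes $D_n$ from $B_n$) is respected throughout; subcases not matching the clean formula above should be ruled out by a sign contradiction with $\ell(\lfloor wr_\gamma\rfloor) - \ell(w) = 3-2n$.
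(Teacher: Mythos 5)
Your setup coincides exactly with the paper's: the reduction of $\mathrm{Q}(n-1,w,\gamma)$ to $\ell(\lfloor wr_{\gamma}\rfloor)-\ell(w)=3-2n$, the identification $\gamma^{\vee}=\alpha_{n-1}^{\vee}$ via Lemmas \ref{lem:LS} and \ref{lem:J-ad-D}, the choice $n-2\in J_{\af}$ with $J\setminus\{n-2\}$ of type $A_{n-3}\times A_1$, and the resulting formulas $\lfloor wr_{\gamma}\rfloor(1)=w(n)$, $\lfloor wr_{\gamma}\rfloor(2)=w(\overline{n-1})$, $\lfloor wr_{\gamma}\rfloor(u)=w(u-2)$ for $u\in[3,n-1]$, $\lfloor wr_{\gamma}\rfloor(n)=w(\overline{n-2})$ are all exactly what the paper obtains. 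Where you diverge is the finish, and there the work you are bracing for does not exist. Applying Lemma \ref{lem:Gr-D} to $\lfloor wr_{\gamma}\rfloor$ gives $w(n)\prec w(\overline{n-1})\prec w(1)\prec\cdots\prec w(n-3)\prec \lfloor wr_{\gamma}\rfloor(\overline{n})=w(n-2)$, and applying it to $w$ gives $w(1)\prec\cdots\prec w(n-1)\prec w(\overline{n})$; splicing these yields the single totally ordered chain \eqref{eq:seq-D3} of $n+2$ elements of $\mathcal{D}_n$ containing, for every $s\in[n]$, at least one of $s,\overline{s}$, with $w(n)$ and $w(\overline{n-1})$ as its two smallest entries and $\sigma(w(n)),\sigma(w(\overline{n-1}))$ also present above them. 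This forces $w(n)=1$ and $w(\overline{n-1})=2$ outright, which is the paper's (one-line) conclusion.

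Consequently your proposed length bookkeeping is both unexecuted and unnecessary, and its premises are slightly off: the ``subcases according to the relative positions of $w(n-2)$, $w(\overline{n-1})$, and $w(\overline{n})$'' that you anticipate (by analogy with cases (i)--(iii) of Lemma \ref{lem:Q-n-2->D1}) cannot occur, because those relative positions are already pinned down by \eqref{eq:seq-D3}. For the same reason the correction terms $2(w(n)-1)$ and $2(\sigma(w(n-1))-2)$ in your conjectured length identity are identically zero before you ever invoke $\ell(\lfloor wr_{\gamma}\rfloor)-\ell(w)=3-2n$, so the identity cannot be used to \emph{derive} $w(n)=1$ and $\sigma(w(n-1))=2$; it only confirms them. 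The genuine $\mathsf{a}_s$/$\mathsf{b}_s$ computation is needed in this pair of lemmas only for the converse direction (Lemma \ref{lem:D3->Q}), and in the $c_i(\gamma^{\vee})=2$ cases such as Lemma \ref{lem:Q-n-3->D2}, where the two monotone sequences do not concatenate into a single chain covering all absolute values. As written, then, your proposal is not wrong, but its final step should be replaced by the chain argument; if you insist on the length route you would still have to actually carry out the term-by-term comparison, which your sketch defers.
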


\begin{proof}
Assume that 
$\mathrm{Q}(n-1,w,\gamma)$
is true. 
By Lemmas \ref{lem:LS} and \ref{lem:J-ad-D}, 
we have 
$\gamma^{\vee} 
= 
\varepsilon_{n-1} - \varepsilon_n
\in 
Q^{\vee,I\setminus \{ n-1 \}}$, 
$r_{\gamma} = (n-1 \ n)(\overline{n-1} \ \overline{n})$, 
and 
$\lfloor wr_{\gamma} \rfloor 
= 
wr_{\gamma}(z_{\gamma^{\vee}}^{I \setminus \{ n-1 \}})^{-1}$. 
Set 
$J \setminus \{ n-1 \}$; 
note that 
$J$
is of type $A_{n-1}$. 
We see that 
$n-2 \in J_{\af}$
satisfies the condition for 
$\gamma^{\vee} \in Q^{\vee}$
in 
Lemma \ref{lem:J-ad}; 
note that 
$J \setminus \{ n-2 \}$
is of type 
$A_{n-3} \times A_1$. 
Hence 
$z_{\gamma^{\vee}}^{I \setminus \{ n-1 \}}$
is given by 
$u \mapsto u+2$
for 
$u \in [n-3]$, 
$n-2 \mapsto \overline{n}$, 
$n-1 \mapsto 1$, 
and 
$n \mapsto \overline{2}$. 
Then 
$\lfloor wr_{\gamma} \rfloor$
is given by 
$1 \mapsto w(n)$, 
$2 \mapsto w(\overline{n-1})$, 
$u \mapsto w(u-2)$
for 
$u \in [3,n-1]$, 
and 
$n \mapsto w(\overline{n-2})$. 
It follows from Lemma \ref{lem:Gr-D} that 
\begin{align} \label{eq:seq-D3}
\begin{split}
w(n)
\prec 
w(\overline{n-1})
\prec 
w(1)
\prec 
w(2)
\prec \cdots \prec 
w(n-2)
\prec 
w(n-1)
\prec 
w(\overline{n}), 
\end{split}
\end{align}
which implies 
$w(n) = 1$
and 
$w(n-1) = \overline{2}$. 
This completes the proof. 
\end{proof}

\begin{lem} \label{lem:D3->Q}
(q-D3)
implies
$\mathrm{Q}(n-1,w,\gamma)$. 
\end{lem}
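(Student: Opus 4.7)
My plan is to mirror the structure of the converse lemmas already carried out in this subsection (for instance Lemmas \ref{lem:D1-n-3->Q}, \ref{lem:D2-n-2->Q}), reducing $\mathrm{Q}(n-1,w,\gamma)$ to an explicit length identity and then checking that identity by a direct combinatorial count in the signed-permutation model of $W$.

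First, I would assume (q-D3). By Lemmas \ref{lem:<g,r>}, \ref{lem:J-ad-D}, and \ref{lem:<g,r>-D}, the statement $\mathrm{Q}(n-1,w,\gamma)$ is equivalent to the single numerical identity $\ell(\lfloor wr_{\gamma} \rfloor) - \ell(w) = 3 - 2n$, since $c_{n-1}(\gamma^\vee)=1$ and $2\langle\alpha_{n-1}^\vee, \rho - \rho_{I\setminus\{n-1\}}\rangle = 2n-2$. So the whole problem is to verify this length identity under the hypothesis (q-D3).

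Next, I would extract the sequence \eqref{eq:seq-D3} from (q-D3). The data in (q-D3) determine $w(n)=1$, $w(n-1)=\overline 2$, and the remaining entries of $w$ on $[1,n-2]$ coincide with $\lfloor wr_\gamma\rfloor$ on $[3,n-1]$; together with Lemma \ref{lem:Gr-D} (membership in $W^{I\setminus\{n-1\}}$) and \eqref{eq:Weyl-grp-D}, this forces the chain
\[
w(n) \prec w(\overline{n-1}) \prec w(1) \prec w(2) \prec \cdots \prec w(n-2) \prec w(n-1) \prec w(\overline n),
\]
which is exactly \eqref{eq:seq-D3}. From here, the computation of $\lfloor wr_\gamma\rfloor$ by the formulas in (q-D3) is automatic, and Lemma \ref{lem:Gr-D} gives the corresponding chain for $\lfloor wr_\gamma\rfloor$.

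The main (and only) work is then the combinatorial bookkeeping: compute $\mathsf{a}_s(w)$, $\mathsf{b}_s(w)$, $\mathsf{a}_s(\lfloor wr_\gamma\rfloor)$, $\mathsf{b}_s(\lfloor wr_\gamma\rfloor)$ for all $s \in [n]$, and compare. The same pattern as in Step~2 of the proof of Proposition~\ref{prop:Q=C} and in Lemma \ref{lem:D2-n-2->Q} applies: for $s \in [3,n-1]$ one shows $\mathsf{a}_s(\lfloor wr_\gamma\rfloor)=\mathsf{a}_{s-2}(w)$ and the map $t \mapsto t-2$ (with the index $n$ fixed) gives a bijection between $\mathsf{B}_s(\lfloor wr_\gamma\rfloor)$ and $\mathsf{B}_{s-2}(w)\setminus\{n-1,n\}$, so that $\mathsf{b}_s(\lfloor wr_\gamma\rfloor)=\mathsf{b}_{s-2}(w)-2$; while $\mathsf{a}_1,\mathsf{b}_1,\mathsf{a}_2,\mathsf{b}_2$ of $\lfloor wr_\gamma\rfloor$ all vanish (because $\lfloor wr_\gamma\rfloor(1)=1,\lfloor wr_\gamma\rfloor(2)=2$), and the boundary contributions $\mathsf{a}_{n-2}(w)$, $\mathsf{a}_{n-1}(w)$, $\mathsf{b}_{n-2}(w)$, $\mathsf{b}_{n-1}(w)$ are read off from $w(n-1)=\overline 2$ and $w(n)=1$. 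Summing all contributions via Lemma~\ref{lem:Gr-D} yields $\ell(\lfloor wr_\gamma\rfloor)-\ell(w)=3-2n$ exactly, as required.

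I expect the main obstacle to be organizing the bijections for $\mathsf{B}_s(\lfloor wr_\gamma\rfloor)$ correctly at the indices $s=n-1,n$, where the extra terminal entries $\lfloor wr_\gamma\rfloor(n)=\sigma(w(n-2))$ and $w(\overline n)=\overline 1$ interact. Beyond this bookkeeping, no new idea is needed: the argument is parallel to Lemma \ref{lem:D2-n-2->Q}, with the $D_n$ length formula replacing the $C_n$ one.
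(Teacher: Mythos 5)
Your overall strategy is exactly the paper's: reduce $\mathrm{Q}(n-1,w,\gamma)$ to the length identity $\ell(\lfloor wr_{\gamma}\rfloor)-\ell(w)=3-2n$ via Lemmas \ref{lem:<g,r>} and \ref{lem:J-ad-D}--\ref{lem:<g,r>-D}, derive the chain \eqref{eq:seq-D3} from (q-D3), and then sum the statistics $\mathsf{a}_s,\mathsf{b}_s$ using Lemma \ref{lem:Gr-D}. However, the specific intermediate identities you assert for $s\in[3,n-1]$ are false in this case. Because $\lfloor wr_{\gamma}\rfloor(n)=\sigma(w(n-2))=w(\overline{n-2})$ rather than $w(n)$, the target index $t=n$ behaves differently in $\mathsf{A}_s(\lfloor wr_{\gamma}\rfloor)$ than it does in $\mathsf{A}_{s-2}(w)$: one has $\mathsf{A}_{s-2}(w)=\{n\}$ always (since $w(n)=1$), whereas $n\in\mathsf{A}_s(\lfloor wr_{\gamma}\rfloor)$ only when $w(s-2)\succ w(\overline{n-2})$. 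So $\mathsf{a}_s(\lfloor wr_{\gamma}\rfloor)=\mathsf{a}_{s-2}(w)$ fails whenever $w(s-2)\prec w(\overline{n-2})$, and correspondingly your claimed bijection $\mathsf{B}_s(\lfloor wr_{\gamma}\rfloor)\to\mathsf{B}_{s-2}(w)\setminus\{n-1,n\}$ is not the right one: the paper's bijection is onto $\mathsf{B}_{s-2}(w)\setminus\{n-2,n-1\}$, and $n-2\in\mathsf{B}_{s-2}(w)$ precisely when $w(s-2)\succ w(\overline{n-2})$, so $\mathsf{b}_s(\lfloor wr_{\gamma}\rfloor)-\mathsf{b}_{s-2}(w)$ equals $-1$ or $-2$ depending on that dichotomy, not uniformly $-2$.

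The saving grace is that the two errors cancel: in either case of the dichotomy the combined quantity $\bigl(\mathsf{a}_s+\mathsf{b}_s\bigr)(\lfloor wr_{\gamma}\rfloor)-\bigl(\mathsf{a}_{s-2}+\mathsf{b}_{s-2}\bigr)(w)$ equals $-2$, so the final sum $-2(n-3)-3=3-2n$ comes out right once you add the boundary terms $\mathsf{a}_{n-2}(w)=\mathsf{a}_{n-1}(w)=1$, $\mathsf{b}_{n-2}(w)=1$, $\mathsf{b}_{n-1}(w)=0$. You did flag that the terminal entry $\lfloor wr_{\gamma}\rfloor(n)=\sigma(w(n-2))$ would cause trouble, but you located the trouble at the source indices $s=n-1,n$ rather than at the target index $t\in\{n-2,n\}$ for every $s\in[3,n-1]$, and you wrote down identities modeled on Lemma \ref{lem:D2-n-2->Q} that do not transfer to this case. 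If you carry out the computation you will find the discrepancy and be forced into the case split the paper uses; as written, though, the middle step of your argument is not correct.
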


\begin{proof}
Assume that (q-D3) is true. 
By Lemmas \ref{lem:<g,r>} and \ref{lem:J-ad-D}--\ref{lem:<g,r>-D}, 
$\mathrm{Q}(n-1,w,\gamma)$
is equivalent to 
$\ell(\lfloor wr_{\gamma} \rfloor) - \ell(w)
=
3-2n$. 
We check at once that 
(q-D3)
implies 
\eqref{eq:seq-D3}.
It follows that 
\begin{enumerate}[(1)]
\item
$\mathsf{a}_1(\lfloor wr_{\gamma} \rfloor)
=
\mathsf{a}_2(\lfloor wr_{\gamma} \rfloor)
=
\mathsf{b}_1(\lfloor wr_{\gamma} \rfloor)
=
\mathsf{b}_2(\lfloor wr_{\gamma} \rfloor)
=
0$,
\item
for 
$s \in [3,n-1]$, 
$\mathsf{A}_s(\lfloor wr_{\gamma} \rfloor
=
\begin{cases}
\emptyset & \text{if} \ w(s-2) \prec w(\overline{n-2}), \\
\{ n \} & \text{if} \ w(s-2) \succ w(\overline{n-2}), \\
\end{cases}$ \ 
$\mathsf{A}_{s-2}(w)
=
\{ n \}$, 
\item
for 
$s \in [3,n-1]$, 
$\mathsf{b}_s(\lfloor wr_{\gamma} \rfloor)
-
\mathsf{b}_{s-2}(w)
=
\begin{cases}
-1 & \text{if} \ w(s-2) \prec w(\overline{n-2}), \\
-2 & \text{if} \ w(s-2) \succ w(\overline{n-2}), \\
\end{cases}$
\item
$\mathsf{A}_{n-2}(w)
=
\mathsf{A}_{n-1}(w)
=
\{ n \}$, 
$\mathsf{B}_{n-2}(w)
=
\{ n-1 \}$, 
$\mathsf{B}_{n-1}(w)
=
\emptyset$.
\end{enumerate}
We give the proof only for (3). 
Let 
$s \in [3,n-1]$. 
We have 
$n-1 \in \mathsf{B}_{s-2}(w)$
and 
$n \notin \mathsf{B}_{s-2}(w)$. 
We see that 
$n-2 \in \mathsf{B}_{s-2}(w)$
if and only if 
$w(s-2) \succ w(\overline{n-2})$. 
Then the map 
\begin{align}
\mathsf{B}_s(\lfloor wr_{\gamma} \rfloor)
\to 
\mathsf{B}_{s-2}(w)
\setminus 
\{ n-2,n-1 \}, \ 
t \mapsto t-2,
\end{align} 
is bijective. 
Combining these yields (3). 
By (1)--(4) above, 
we have 
$\ell(\lfloor wr_{\gamma} \rfloor)
-
\ell(w)
=
3-2n$.
\end{proof}

\begin{lem} \label{lem:Q->D4}
$\mathrm{Q}(n,w,\gamma)$
implies
(q-D4).
\end{lem}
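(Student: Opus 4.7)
The plan is to mimic the proof of Lemma \ref{lem:Q->D3}: reduce $\mathrm{Q}(n,w,\gamma)$ to a concrete identity via Lemmas \ref{lem:LS} and \ref{lem:J-ad-D}, compute $\lfloor wr_{\gamma} \rfloor$ explicitly, and then extract (q-D4) from the ordering condition of Lemma \ref{lem:Gr-D}.

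First, I would assume $\mathrm{Q}(n,w,\gamma)$ holds. By Lemmas \ref{lem:LS} and \ref{lem:J-ad-D} (5), this forces $\gamma^{\vee} = \varepsilon_{n-1}+\varepsilon_n = \alpha_n^{\vee} \in Q^{\vee, I\setminus\{n\}}$, so that $r_{\gamma} = (n-1\ \overline{n})(\overline{n-1}\ n)$ and $\lfloor wr_{\gamma}\rfloor = wr_{\gamma}(z_{\gamma^{\vee}}^{I\setminus\{n\}})^{-1}$. Setting $J = I\setminus\{n\}$, which is of type $A_{n-1}$, I would verify that $n-2 \in J_{\af}$ satisfies the condition of Lemma \ref{lem:J-ad} for $\gamma^{\vee}$; since $J\setminus\{n-2\}$ is of type $A_{n-3}\times A_1$, this yields an explicit form for $z_{\gamma^{\vee}}^{I\setminus\{n\}} = w_0^{J}w_0^{J\setminus\{n-2\}}$.

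Next, composing permutations (by direct analogy with the computation in the proof of Lemma \ref{lem:Q->D3}, where the only change is that $r_{\gamma}$ swaps $\pm(n-1)$ with $\mp n$ instead of $\pm n$), I would obtain that $\lfloor wr_{\gamma}\rfloor$ is given by
\begin{align*}
1 \mapsto w(\overline{n}),\ 2 \mapsto w(\overline{n-1}),\ s \mapsto w(s-2)\ \text{for}\ s\in[3,n].
\end{align*}
Applying the description of $W^{I\setminus\{n\}}$ from Lemma \ref{lem:Gr-D} to $\lfloor wr_{\gamma}\rfloor$ then forces the chain
\begin{align*}
w(\overline{n}) \prec w(\overline{n-1}) \prec w(1) \prec w(2) \prec \cdots \prec w(n-2) \prec w(n-1) \prec w(n).
\end{align*}
This immediately gives $\|w(\overline{n})\| < \|w(\overline{n-1})\| < \|w(u)\|$ for $u \in [n-2]$, so $w(\overline{n}) = 1$ and $w(\overline{n-1}) = 2$, equivalently $\sigma(w(n)) = 1$ and $\sigma(w(n-1)) = 2$, which is precisely (q-D4).

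The main obstacle is the explicit identification of $z_{\gamma^{\vee}}^{I\setminus\{n\}}$: one must correctly pick the index $j \in J_{\af}$ satisfying \eqref{eq:J-ad} and then carry out the permutation product $w_0^{J}w_0^{J\setminus\{j\}}$ without error. Once the formula for $\lfloor wr_{\gamma}\rfloor$ is in hand, the remainder is a short reading of inequalities — no length computation is needed for this direction, in contrast with the converse Lemma that will come next. This makes the present lemma strictly the ``easy'' half of the equivalence $\mathrm{Q}(n,w,\gamma)\Leftrightarrow \text{(q-D4)}$.
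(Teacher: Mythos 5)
Your proposal is correct and follows essentially the same route as the paper's proof: identify $\gamma^{\vee}=\varepsilon_{n-1}+\varepsilon_n$ via Lemmas \ref{lem:LS} and \ref{lem:J-ad-D}, compute $z_{\gamma^{\vee}}^{I\setminus\{n\}}=w_0^Jw_0^{J\setminus\{n-2\}}$ with $J=[n-1]$ of type $A_{n-1}$, obtain the same formula for $\lfloor wr_{\gamma}\rfloor$, and read off $w(\overline{n})=1$, $w(\overline{n-1})=2$ from the chain \eqref{eq:seq-D4} forced by Lemma \ref{lem:Gr-D}. Your observation that no length computation is needed in this direction also matches the paper, which reserves the length count for the converse.
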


\begin{proof}
Assume that 
$\mathrm{Q}(n,w,\gamma)$
is true. 
By Lemmas \ref{lem:LS} and \ref{lem:J-ad-D}, 
we have 
$\gamma^{\vee} 
= 
\varepsilon_{n-1} + \varepsilon_n
\in 
Q^{\vee,I\setminus \{ n \}}$, 
$r_{\gamma} = (n-1 \ \overline{n})(\overline{n-1} \ n)$, 
and 
$\lfloor wr_{\gamma} \rfloor = wr_{\gamma}(z_{\gamma^{\vee}}^{I \setminus \{ n \}})^{-1}$. 
Set 
$J = I \setminus \{ n \} = [n-1]$; 
note that 
$J$
is of type $A_{n-1}$. 
We see that 
$n-2 \in J_{\af}$
satisfies the condition for 
$\gamma^{\vee} \in Q^{\vee}$
in 
Lemma \ref{lem:J-ad}; 
note that 
$J \setminus \{ n-2 \}$
is of type 
$A_{n-3} \times A_1$. 
Hence 
$z_{\gamma^{\vee}}^{I \setminus \{ n \}}$
is given by 
$u \mapsto u+2$
for 
$u \in [n-2]$, 
$n-1 \mapsto 1$, 
and 
$n \mapsto 2$. 
Then 
$\lfloor wr_{\gamma} \rfloor$
is given by 
$1 \mapsto w(\overline{n})$, 
$2 \mapsto w(\overline{n-1})$, 
and 
$u \mapsto w(u-2)$
for 
$u \in [3,n]$. 
It follows from Lemma \ref{lem:Gr-D} that 
\begin{align} \label{eq:seq-D4}
\begin{split}
w(\overline{n})
\prec 
w(\overline{n-1})
\prec 
w(1)
\prec 
w(2)
\prec \cdots \prec 
w(n-2)
\prec 
w(n-1)
\prec 
w(n), 
\end{split}
\end{align}
which implies 
$w(n) = \overline{1}$
and 
$w(n-1) = \overline{2}$. 
This completes the proof. 
\end{proof}

\begin{lem} \label{lem:D4->Q}
(q-D4)
implies
$\mathrm{Q}(n,w,\gamma)$. 
\end{lem}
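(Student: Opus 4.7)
The plan is to mirror the structure of Lemma \ref{lem:D3->Q}, which is the parallel $i=n-1$ statement. First, by Lemmas \ref{lem:<g,r>}, \ref{lem:J-ad-D}, and \ref{lem:<g,r>-D}, and using $c_n(\gamma^\vee)=1$ (since $\gamma^\vee = \varepsilon_{n-1}+\varepsilon_n = \alpha_n^\vee$), the statement $\mathrm{Q}(n,w,\gamma)$ is equivalent to the length identity $\ell(\lfloor wr_\gamma\rfloor)-\ell(w) = 1-(2n-2) = 3-2n$. So the task reduces to verifying this single numerical identity under the assumption (q-D4).

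Second, I would reuse the computation of $z_{\gamma^\vee}^{I\setminus\{n\}}$ carried out in the proof of Lemma \ref{lem:Q->D4}: it acts by $u\mapsto u+2$ for $u\in[n-2]$, $n-1\mapsto 1$, $n\mapsto 2$. Consequently $\lfloor wr_\gamma\rfloor(1)=w(\overline{n})$, $\lfloor wr_\gamma\rfloor(2)=w(\overline{n-1})$, and $\lfloor wr_\gamma\rfloor(u)=w(u-2)$ for $u\in[3,n]$. From (q-D4) this forces $w(\overline{n})=1$, $w(\overline{n-1})=2$, equivalently $w(n)=\overline{1}$ and $w(n-1)=\overline{2}$; combined with Lemma \ref{lem:Gr-D} applied to $\lfloor wr_\gamma\rfloor \in W^{I\setminus\{n\}}$, this gives the ordered sequence \eqref{eq:seq-D4}.

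Third, I would compute $\mathsf{a}_s$ and $\mathsf{b}_s$ for both $w$ and $\lfloor wr_\gamma\rfloor$, using Lemma \ref{lem:Gr-D} and the sequence \eqref{eq:seq-D4}. At positions $s=1,2$ of $\lfloor wr_\gamma\rfloor$, the first two entries are $1$ and $2$, so $\mathsf{a}_1=\mathsf{a}_2=\mathsf{b}_1=\mathsf{b}_2=0$. At positions $s=n-1,n$ of $w$, the last two entries are $\overline{2}$ and $\overline{1}$, so the counts $\mathsf{a}_{n-1}(w)$, $\mathsf{a}_n(w)$, $\mathsf{b}_{n-1}(w)$, $\mathsf{b}_n(w)$ are easily read off (all of the form $n-i$ or $0$). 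For the middle indices $s\in[3,n]$, the identity $\lfloor wr_\gamma\rfloor(s)=w(s-2)$ together with $n-1,n\in \mathsf{B}_{s-2}(w)$ (which follows from $w(n-1)=\overline{2}$, $w(n)=\overline{1}$) yields bijections
\begin{align*}
\mathsf{A}_s(\lfloor wr_\gamma\rfloor) &\longleftrightarrow \mathsf{A}_{s-2}(w), \\
\mathsf{B}_s(\lfloor wr_\gamma\rfloor) &\longleftrightarrow \mathsf{B}_{s-2}(w)\setminus\{n-1,n\},
\end{align*}
hence $\mathsf{a}_s(\lfloor wr_\gamma\rfloor)=\mathsf{a}_{s-2}(w)$ and $\mathsf{b}_s(\lfloor wr_\gamma\rfloor)=\mathsf{b}_{s-2}(w)-2$.

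Summing these contributions using the length formula from Lemma \ref{lem:Gr-D}, the only surviving terms are the $-2$ drop at each of the $n-2$ positions $s\in[3,n]$ and the boundary contributions at $s=n-1,n$ of $w$, producing $\ell(\lfloor wr_\gamma\rfloor)-\ell(w)=3-2n$ as required. The only real obstacle is the careful bookkeeping of the $\mathsf{B}$-counts at the boundary (ensuring that $n-1$ and $n$ lie in $\mathsf{B}_{s-2}(w)$ for every $s\in[3,n]$), which is exactly what the choice $w(n-1)=\overline{2}$, $w(n)=\overline{1}$ forced by (q-D4) supplies; the rest is a direct parallel to Lemma \ref{lem:D3->Q}.
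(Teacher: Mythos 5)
Your proposal is correct and takes essentially the same route as the paper: reduce to the length identity $\ell(\lfloor wr_{\gamma}\rfloor)-\ell(w)=3-2n$, derive \eqref{eq:seq-D4} from (q-D4), observe that all $\mathsf{a}$-counts and the boundary $\mathsf{b}$-counts of $\lfloor wr_{\gamma}\rfloor$ vanish, and use the bijection $\mathsf{B}_s(\lfloor wr_{\gamma}\rfloor)\to\mathsf{B}_{s-2}(w)\setminus\{n-1,n\}$ for $s\in[3,n]$. One small correction to your parenthetical: with $i=n$ the boundary counts are not all $0$; one has $\mathsf{b}_{n-1}(w)=\#\{n\}=1$ because $w(n-1)=\overline{2}\succ 1=\sigma(w(n))$, and this extra $-1$ (together with the $-2(n-2)$ from the middle positions) is exactly what yields $3-2n$ rather than $4-2n$.
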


\begin{proof}
Assume that (q-D4) is true. 
By Lemmas \ref{lem:<g,r>} and \ref{lem:J-ad-D}--\ref{lem:<g,r>-D}, 
$\mathrm{Q}(n,w,\gamma)$
is equivalent to 
$\ell(\lfloor wr_{\gamma} \rfloor) - \ell(w)
=
3-2n$. 
We check at once that 
(q-D4)
implies 
\eqref{eq:seq-D4}.
Hence 
$\mathsf{A}_s(\lfloor wr_{\gamma} \rfloor)
=
\mathsf{A}_s(w)
=
\emptyset$
for 
$s \in [n]$, 
$\mathsf{B}_1(\lfloor wr_{\gamma} \rfloor)
=
\mathsf{B}_2(\lfloor wr_{\gamma} \rfloor)
=
\mathsf{B}_n(w)
=
\emptyset$, 
$\mathsf{B}_{n-1}(w)
=
\{ n \}$,
and the map 
$\mathsf{B}_s(\lfloor wr_{\gamma} \rfloor)
\to 
\mathsf{B}_{s-2}(w) \setminus \{ n-1,n \}$, 
$t \mapsto t-2$, 
is bijective for 
$s \in [3,n]$. 
Since 
$n-1,n \in \mathsf{B}_{s-2}(w)$
for 
$s \in [3,n]$, 
we have 
$\mathsf{b}_s(\lfloor wr_{\gamma} \rfloor)
=
\mathsf{b}_{s-2}(w) - 2$
for 
$s \in [3,n]$. 
It follows that 
$\ell(\lfloor wr_{\gamma} \rfloor)
-
\ell(w)
=
3-2n$.
\end{proof}

\begin{proof}[Proof of Proposition \ref{prop:tab-cri-D}]
(1) and (3)--(4) follow by the same method as in the proof 
of Proposition \ref{prop:tab-cri-A}. 

We prove (2). 
We can prove the assertion for 
$i \in \{1,n-1,n\}$
by a similar argument to the proof of 
Proposition \ref{prop:tab-cri-C}. 
Assume that
$i \in [2,n-2]$.
Let 
$x,y \in (W^{I \setminus \{ i \}})_{\af}$, 
$\mathcal{Y}_i^{D_n}(x) = (\mathsf{T},c)$, 
and 
$\mathcal{Y}_i^{D_n}(y) = (\mathsf{T}',c')$.
It follows immediately from 
Propositions \ref{prop:Bruhat-D}--\ref{prop:SiB-D} that 
$x \preceq y$
implies 
$c \leq c'$
and 
$\mathsf{T}(u) \preceq \mathsf{T}'(u+c'-c)$
for 
$u \in [i-c'+c]$. 
Hence we may assume that 
$d := c' - c \geq 0$
and 
$\mathsf{T}(u) \preceq \mathsf{T}'(u+d)$
for 
$u \in [i-d]$.
If 
$d$
is even, 
then the assertion follows by the same method 
as in Step 1 of the proof of Proposition \ref{prop:tab-cri-B}. 
In what follows, 
assume that $d \geq 1$ is odd. 
Write 
$a =
\min
\left(
[n]
\setminus 
\{ \| \mathsf{T}(u) \| 
\mid 
u \in [i], \ 
\mathsf{T}(u) \succeq \overline{n} \}
\right)$
and 
$b =
\min
\left(
[n]
\setminus 
\{ \| \mathsf{T}(u) \| 
\mid 
u \in [i] \}
\right)$
(see 
\eqref{eq:SiB-D-n-2}
in Definition 
\ref{def:SiB-D} (1)). 
Let 
$\{ a_1 < a_2 < \cdots < a_{n-i+1} \}
=
[n] \setminus \{ \| \mathsf{T}(u) \| \mid u \in [i-1] \}$. 
We divide the proof into four steps.

\begin{proof}[Step 1]
We show that 
$\mathsf{T}(i) \preceq n$
implies 
$x \prec y$. 
Define 
$\mathsf{T}_1 , \mathsf{T}_2 \in \mathrm{CST}_{D_n}(\varpi_i)$
by 
\begin{align}
\mathsf{T}_1(u)
&=
\begin{cases}
u+1 
&
\text{if} \ u \in [a_2-2], \\
\mathsf{T}(u)
&
\text{if} \ u \in [a_2-1,i-1], \\
\overline{a_2}
&
\text{if} \ u = i, 
\end{cases}
\\
\mathsf{T}_2(u)
&=
\begin{cases}
u 
&
\text{if} \ u \in [a_2-1], \\
\mathsf{T}(u-1)
&
\text{if} \ u \in [a_2,i].
\end{cases}
\end{align}
Let 
$x_1,x_2 \in (W^{I \setminus \{ i \}})_{\af}$
be such that
$\mathcal{Y}_i^{D_n}(x_1)
=
(\mathsf{T}_1,c)$
and 
$\mathcal{Y}_i^{D_n}(x_2)
=
(\mathsf{T}_2,c+1)$. 
Since 
$\mathsf{T}(u) \preceq \mathsf{T}_1(u)$
for 
$u \in [i]$, 
we have 
$x \preceq x_1$
by the assertion for $d = 0$. 
We have 
$x_1 \prec x_2$
by 
Proposition \ref{prop:SiB-D} ($\si$-D8). 
We have 
$x_2 \prec y$
because 
$c'-(c+1) = d-1 \geq 0$
is even 
and 
$\mathsf{T}_2(u) \preceq \mathsf{T}'(u+d-1)$
for 
$u \in [i-d+1]$. 
Combining these we conclude that 
$x \prec y$. 
\end{proof}

\begin{proof}[Step 2]
We show that 
($\mathsf{T}(i) \succeq \overline{n}$,
$a = b$, 
and 
$d > i$)
or 
($\mathsf{T}(i) \succeq \overline{n}$,
$a = b$, 
$d \in [i]$, 
and 
$a \preceq \mathsf{T}'(d)$)
imply
$x \prec y$. 
Assume that 
$\mathsf{T}(i) \succeq \overline{n}$
and 
$a = b$; 
note that 
$a_1 = 1$
holds.
We claim that 
$a \prec \mathsf{T}(1)$. 
Indeed, 
if 
$\mathsf{T}(1) \succeq \overline{n}$, 
then 
$a \prec \mathsf{T}(1)$. 
If 
$\mathsf{T}(1) \preceq n$, 
then 
$\mathsf{T}(1) 
\in 
[n] \setminus \{ \| \mathsf{T}(u) \| \mid u \in [i], \ \mathsf{T}(u) \succeq \overline{n} \}$
and hence 
$a \preceq \mathsf{T}(1)$
by minimality of $a$. 
Since 
$a = b \neq \|\mathsf{T}(u)\|$
for 
$u \in [i]$, 
we conclude that 
$a \prec \mathsf{T}(1)$. 
Thus 
$\mathsf{T}_2 \in \mathrm{CST}_{D_n}(\varpi_i)$ 
below is well-defined.
\begin{align}
\mathsf{T}_1(u)
&=
\begin{cases}
\mathsf{T}(u)
&
\text{if} \ u \in [i-1], \\
\max\{ \overline{a_2}, \mathsf{T}(i) \}
&
\text{if} \ u = i, 
\end{cases}
\\
\mathsf{T}_2(u)
&=
\begin{cases}
a 
&
\text{if} \ u = 1, \\
\mathsf{T}(u-1)
&
\text{if} \ u \in [2,i].
\end{cases}
\end{align}
Let 
$x_1,x_2 \in (W^{I \setminus \{ i \}})_{\af}$
be such that
$\mathcal{Y}_i^{D_n}(x_1)
=
(\mathsf{T}_1,c)$
and 
$\mathcal{Y}_i^{D_n}(x_2)
=
(\mathsf{T}_2,c+1)$. 
Write 
$\{ b_1 < b_2 < \cdots < b_{n-i+1} \}
=
[n] \setminus \{ \|\mathsf{T}_1(u)\| \mid u \in [i-1] \}$; 
note that 
$a_{\nu} = b_{\nu}$
for 
$\nu \in [n-i+1]$. 
Since 
$\mathsf{T}(u) \preceq \mathsf{T}_1(u)$
for 
$u \in [i]$, 
we have 
$x \preceq x_1$
by the assertion for $d = 0$. 
We claim that 
$x_1 \prec x_2$. 
Indeed, 
if 
$\|\mathsf{T}(i)\| = a_1 = 1$, 
then 
$\mathsf{T}_1(i) = \overline{1}$
and  
$\mathsf{T}_2(1) = a = a_2$, 
which imply
$b_1 = \|\mathsf{T}_1(i)\| = 1$
and 
$b_2 = \mathsf{T}_2(1)$. 
If 
$a_2 \leq \|\mathsf{T}(i)\|$, 
then 
$\mathsf{T}_1(i) = \overline{a_2}$
and 
$a = 1$, 
which imply
$b_1 = \mathsf{T}_2(1) = 1$
and 
$b_2 = \| \mathsf{T}_1(i)\|$. 
Proposition \ref{prop:SiB-D} ($\si$-D8)
now yields
$x_1 \prec x_2$
as claimed. 
We claim that 
$x_2 \preceq y$. 
Indeed, 
$c'-(c+1) = d-1 \geq 0$
is even. 
If 
$d > i$, 
then the condition 
\eqref{eq:T(u)<T'(u+d)-D}
is trivial, 
and hence 
$x_2 \prec y$. 
If 
$d \in [i]$
and 
$a \preceq \mathsf{T}'(d)$, 
then 
$\mathsf{T}_2(1) 
= 
a 
\preceq 
\mathsf{T}'(d) 
= 
\mathsf{T}'(1+(d-1))$, 
$\mathsf{T}_2(u)
=
\mathsf{T}(u-1)
\preceq
\mathsf{T}'(u+(d-1))$
for 
$u \in [2,i-d+1]$, 
and hence 
$x_2 \preceq y$
as claimed. 
Combining these we conclude that 
$x \prec y$. 
\end{proof}

\begin{proof}[Step 3]
Assume that 
$\mathsf{T}(i) \succeq \overline{n}$
and 
$a < b$; 
note that 
$\mathsf{T}(1) = a$
holds. 
Let
$k \in [i]$
be such that 
$\mathsf{T}(k) \prec b \prec \mathsf{T}(k+1)$. 
In this case, 
we show that the condition 
(iii)
in Definition \ref{def:SiB-D} (1)
implies 
$x \preceq y$. 
Define 
$\mathsf{T}_1 , \mathsf{T}_2 \in \mathrm{CST}_{D_n}(\varpi_i)$
by 
\begin{align}
\mathsf{T}_1(u)
&=
\begin{cases}
\mathsf{T}(u+1)
&
\text{if} \ u \in [k-1], \\
b
&
\text{if} \ u = k, \\
\mathsf{T}(u)
&
\text{if} \ u \in [k+1,i-1], \\
\max \{ \overline{a_2}, \mathsf{T}(i) \}
&
\text{if} \ u = i,
\end{cases}
\\
\mathsf{T}_2(u)
&=
\begin{cases}
a
&
\text{if} \ u=1, \\
\mathsf{T}_1(u-1)
&
\text{if} \ u \in [2,i].
\end{cases}
\end{align}
If 
$k = 1$, 
then 
$\mathsf{T}_2(2)
=
\mathsf{T}_1(1)
=
b
\succ 
a$. 
If 
$k \in [2,i]$, 
then 
$\mathsf{T}_2(2)
=
\mathsf{T}_1(1)
=
\mathsf{T}(2) 
\succ
\mathsf{T}(1) 
=
a$. 
Therefore
$\mathsf{T}_2$
is well-defined. 
Let 
$x_1,x_2 \in (W^{I \setminus \{ i \}})_{\af}$
be such that
$\mathcal{Y}_i^{D_n}(x_1)
=
(\mathsf{T}_1,c)$
and 
$\mathcal{Y}_i^{D_n}(x_2)
=
(\mathsf{T}_2,c+1)$. 
Write 
$\{ b_1 < b_2 < \cdots < b_{n-i+1} \}
=
[n] \setminus \{ \|\mathsf{T}_1(u)\| \mid u \in [i-1] \}$. 
Since 
$\mathsf{T}(u) \preceq \mathsf{T}_1(u)$
for 
$u \in [i]$, 
we have 
$x \preceq x_1$
by the assertion for $d = 0$. 
We claim that 
$x_1 \prec x_2$. 
Indeed, 
if 
$\|\mathsf{T}(i)\| = a_1 = 1$, 
then 
$b_1 = \|\mathsf{T}_1(i)\| = 1$
and 
$b_2 = \mathsf{T}_2(1)$. 
If 
$\|\mathsf{T}(i)\| = a_1 > 1$, 
then 
$\mathsf{T}_1(i) = \mathsf{T}(i)$, 
$a = 1$, 
and 
$b = a_2$, 
which imply
$b_1 = \mathsf{T}_2(1) = 1$
and 
$b_2 = \|\mathsf{T}_1(i)\|$. 
If 
$\|\mathsf{T}(i)\| \geq a_2$, 
then 
$\mathsf{T}_1(i) = \overline{a_2}$, 
$a = 1$, 
and 
$b = a_1$, 
which imply
$b_1 = \mathsf{T}_2(1) = 1$
and 
$b_2 = \|\mathsf{T}_1(i)\|$. 
Proposition \ref{prop:SiB-D} ($\si$-D8)
now yields
$x_1 \prec x_2$
as claimed. 
We claim that 
$x_2 \preceq y$. 
Indeed, 
$c' - (c+1)
=
d-1 \geq 0$
is even. 
If 
$d > i$, 
then the condition 
\eqref{eq:T(u)<T'(u+d)-D}
is trivial, 
and hence 
$x_2 \prec y$. 
If 
$d \in [i]$, 
$a \preceq \mathsf{T}'(d)$, 
($\mathsf{T}(u) \preceq \mathsf{T}'(u+d-1)$
for 
$u \in [2, \min \{ k , i-d+1 \}]$)
and 
($b \preceq \mathsf{T}'(k+d)$
if 
$k \in [i-d]$), 
then 
$\mathsf{T}_2(1) 
= 
a
\preceq 
\mathsf{T}'(d)
=
\mathsf{T}'(1+(d-1))$, 
$\mathsf{T}_2(u)
=
\mathsf{T}_1(u-1)
=
\mathsf{T}(u)
\preceq
\mathsf{T}'(u+(d-1))$
for 
$u \in [2,\min\{ k,i-d+1\}]$, 
$\mathsf{T}_2(k+1)
=
b
\preceq
\mathsf{T}'(k+d)
=
\mathsf{T}'(k+1+(d-1))$
if 
$k \in [i-d]$, 
$\mathsf{T}_2(u)
=
\mathsf{T}(u-1)
\preceq
\mathsf{T}'(u-1+d)
=
\mathsf{T}'(u+(d-1))$
for 
$u \in [k+2,i-d+1]$, 
and hence 
$x_2 \prec y$
as claimed. 
Combining these we conclude that 
$x \prec y$. 
\end{proof}

\begin{proof}[Step 4]
We show that 
$x \prec y$
and 
$\mathsf{T}(i) \succeq \overline{n}$
imply
$(\mathsf{T},c) \prec (\mathsf{T}',c')$. 
By Proposition \ref{prop:tab-cri-D} (3), 
we may assume that 
$d \in [i]$. 
Let 
$\mathsf{T}_1,\mathsf{T}_2 \in \mathrm{CST}_{D_n}(\varpi_i)$
and 
$x_1,x_2 \in (W^{I \setminus \{ i \}})_{\af}$
be as in Steps 2--3.
By the arguments in Steps 2--3, 
the edge 
$x_1 \to x_2$
is of type 
($\si$-D8). 
Note that 
$(\mathsf{T},c) \prec (\mathsf{T}',c')$
is equivalent to 
$\mathsf{T}_2(u)
\preceq 
\mathsf{T}'(u+d-1)$
for 
$u \in [i-d+1]$
(see (iii) in Definition \ref{def:SiB-D} (1)). 
Since $d$ is odd, 
we see from Proposition \ref{prop:SiB-D} that 
there exists an edge 
$x_3 \xrightarrow{\ \beta \ } x_4$, 
$x_3,x_4 \in (W^{I \setminus \{ i \}})_{\af}$, 
$\beta \in \Delta_{\af}^+$, 
in 
$\mathrm{SiB}^{I \setminus \{ i \}}$
of type ($\si$-D8) 
such that 
$x \preceq x_3$
and 
$x_4 \preceq y$; 
we may assume that there is no edges 
of type ($\si$-D8)
in a directed path from $x$ to $x_3$
in $\mathrm{SiB}^{I \setminus \{ i \}}$. 
Write 
$\mathcal{Y}_i^{D_n}(x_3) = (\mathsf{T}_3,c'')$
and 
$\mathcal{Y}_i^{D_n}(x_4) = (\mathsf{T}_4,c''+1)$. 
We see from Proposition \ref{prop:SiB-D} that 
$c''-c \geq 0$
and 
$c'-(c''+1) \geq 0$
are both even. 
Set 
$d' = c'' - c$
and 
$d'' = c'-(c''+1)$; 
note that 
$d'+d''+1 = d$
and 
$d',d'' \in [0,d-1]$. 
Since 
$x_4 \preceq y$, 
we have 
$\mathsf{T}_4(u)
\preceq 
\mathsf{T}'(u+d'')$
for 
$u \in [i-d'']$. 
Hence 
($\mathsf{T}_2(u)
\preceq 
\mathsf{T}'(u+d-1)$
for 
$u \in [i-d+1]$)
follows from 
($\mathsf{T}_2(u)
\preceq 
\mathsf{T}_4(u+d')$
for 
$u \in [i-d']$). 

We first claim that 
$\mathsf{T}_2(1)
\preceq 
\mathsf{T}_4(1)$
if 
$d' = 0$. 
Assume that 
$d' = 0$. 
We first assume that 
$\mathsf{T}(i) \prec \overline{1}$. 
Then 
$\mathsf{T}_1(i) \prec \overline{1}$
and 
$\mathsf{T}_2(1) = 1$
by 
($\si$-D8). 
Hence 
$\mathsf{T}_2(1) \preceq \mathsf{T}_4(1)$. 
We next assume that 
$\mathsf{T}(i) = \overline{1}$. 
Then 
$\mathsf{T}_3(i) = \overline{1}$, 
because 
$x \preceq x_3$
and 
$d' = 0$. 
If we write 
$\{ c_1 < c_2 < \cdots < c_{n-i+1} \}
=
[n] \setminus \{ \| \mathsf{T}_3(u) \| \mid u \in [i-1] \}$, 
then 
$c_1 = 1$
and 
$\mathsf{T}_4(1) = c_2$
by 
($\si$-D8). 
Also, we have 
$\mathsf{T}_2(1) = a = a_2$
by 
($\si$-D8); 
note that if 
$a < b$, 
then 
$a_2 = b_2$, 
where 
$b_2$
is as in Step 3, 
because 
$\mathsf{T}(1) = a$
and we assume that 
$\mathsf{T}(i) = \overline{1}$. 
By the definition of $a$, 
$\{ \mathsf{T}(u) \mid u \in [i-a_2+1,i] \}
=
\{ \overline{1}, \overline{2}, \ldots , \overline{a_2-1} \}$. 
Hence
$\{ \mathsf{T}_3(u) \mid u \in [i-a_2+1,i] \}
=
\{ \overline{1}, \overline{2}, \ldots , \overline{a_2-1} \}$, 
because 
$x \preceq x_3$
and 
$d' = 0$. 
This implies 
$a_2 \leq c_2$. 
Therefore
$\mathsf{T}_2(1)
\preceq 
\mathsf{T}_4(1)$.

We next claim that 
$\mathsf{T}_2(1)
\preceq 
\mathsf{T}_4(1+d')$
if 
$d' \neq 0$. 
Assume that 
$d' \in [d-1]$
or
$d' = 0$. 
Set 
\begin{align} \label{eq:p-q-D}
\begin{split}
\{ p_1 < p_2 < \cdots < p_{\mu} \}
&=
[b-1]
\setminus 
\{ \| \mathsf{T}(u) \| \mid u \in [i-d'], \ \mathsf{T}(u) \succeq \overline{n} \} , \\
\{ q_1 < q_2 < \cdots < q_{\nu} \}
&=
[b-1]
\setminus 
\{ \| \mathsf{T}_3(u) \| \mid u \in [i], \ \mathsf{T}_3(u) \succeq \overline{n} \} .\end{split}
\end{align}
If 
$[b-1]
\setminus 
\{ \| \mathsf{T}_3(u) \| \mid u \in [i], \ \mathsf{T}_3(u) \succeq \overline{n} \}
= 
\emptyset$, 
then 
$b
\preceq 
\mathsf{T}_3(u)$
for 
$u \in [i]$. 
In particular, 
we have 
$\mathsf{T}_2(1)
=
a
\preceq 
b
\preceq 
\mathsf{T}_3(d')
=
\mathsf{T}_4(1+d')$
if 
$d' \neq 0$, 
which is our claim. 
Therefore we may assume that 
$\nu \geq 1$. 
We see from 
($\si$-D8)
that 
$q_1 
\preceq 
\mathsf{T}_4(1)
\prec 
\mathsf{T}_4(2)
=
\mathsf{T}_3(1)$, 
which implies 
$q_u
\prec 
\mathsf{T}_3(u)$
for 
$u \in [\nu]$. 
In particular, 
we have 
$b \preceq \mathsf{T}_3(u)$
for 
$u \in [\nu,i]$. 
If 
$d' \in [\nu,i-1]$, 
then 
$\mathsf{T}_2(1)
=
a
\preceq 
b
\preceq 
\mathsf{T}_3(d')
=
\mathsf{T}_4(1+d')$, 
which is our claim. 
Therefore we may assume that 
$\nu \geq 2$
and 
$d' \in [0,\nu-1]$. 
Then 
($q_u
\prec 
\mathsf{T}_3(u)$
for 
$u \in [\nu]$)
gives
($q_{u+1}
\preceq 
\mathsf{T}_3(u)$
for 
$u \in [\nu-1]$). 
By minimality of $b$, 
we have 
$[b-1] \subset \{ \| \mathsf{T}(u) \| \mid u \in [i] \}$. 
Hence there exists 
$l \in [0,d']$
such that 
\begin{align}
\{ p_1 < p_2 < \cdots < p_{\mu} \} 
=
\{ \mathsf{T}(u) \mid u \in [\mu-l] \} 
\cup 
\{ \|\mathsf{T}(u)\| \mid u \in [i-l+1,i] \} ,
\end{align}
which implies 
$\mathsf{T}(u)
\preceq 
p_{u+l}$
for 
$u \in [\mu - l]$. 
Since 
$l \leq d'$, 
we have
$\mathsf{T}(u)
\preceq 
p_{u+d'}$
for 
$u \in [\mu - d']$. 
Since 
$x \preceq x_3$, 
we have 
$\| \mathsf{T}(u) \| \geq \| \mathsf{T}_3(u+d') \|$
if 
$u \in [i-d']$
and 
$\mathsf{T}(u) \succeq \overline{n}$. 
Hence 
$\mu \geq \nu$
and 
$p_u \leq q_u$
for 
$u \in [\nu]$. 
Combining these gives
\begin{align} \label{eq:T<p<q<T4-D}
\begin{split}
\mathsf{T}(u)
\preceq 
p_{u+d'}
\preceq 
q_{u+d'}
\preceq 
\mathsf{T}_3(u+d'-1)
&=
\mathsf{T}_4(u+d') \\
&\text{for} 
\ 
u \in [\nu - d']
\ 
\text{if}
\ 
d' \neq 0.
\end{split}
\end{align}
In particular, 
$\mathsf{T}_2(1)
=
a
\preceq 
\mathsf{T}(1)
\preceq 
\mathsf{T}_4(1+d')$
if 
$d' \neq 0$, 
which is our claim.
Similarly, 
we have 
\begin{align} \label{eq:T<p<q<T3-D}
\mathsf{T}(u+1)
\preceq 
\mathsf{T}_3(u)
\ 
\text{for}
\ 
u \in [\nu-1]
\ 
\text{if}
\ 
d' = 0.
\end{align} 

We next claim that 
$\mathsf{T}_1(u)
\preceq 
\mathsf{T}_3(u+d')$
for 
$u \in [i-d'-1]$. 
If 
$u \in [i-d'-1]$
satisfies 
$\mathsf{T}_1(u) = \mathsf{T}(u)$, 
then 
$\mathsf{T}_1(u) \preceq \mathsf{T}_3(u+d')$
since 
$x \preceq x_3$. 
It remains to prove that 
$\mathsf{T}_1(u) \preceq \mathsf{T}_3(u+d')$
for 
$u \in [k] \cap [i-d'-1]$
under the assumption that 
$a < b$, 
where 
$k \in [i]$
is as in Step 3. 
Recall that 
$b \preceq \mathsf{T}_3(u)$
for 
$u \in [\max \{ 1,\nu \},i]$. 
If 
$\nu \in [0,1]$, 
then 
$\mathsf{T}_1(u)
\preceq 
b
\preceq 
\mathsf{T}_3(u+d')$
for 
$u \in [k] \cap [i-d'-1]$.
Therefore we may assume that 
$\nu \geq 2$.
We first assume that 
$d' \in [d-1]$. 
If 
$u \in [k-1] \cap [\nu - d']$, 
then 
$\mathsf{T}_1(u)
=
\mathsf{T}(u+1)
\preceq 
\mathsf{T}_3(u+d')$
by 
\eqref{eq:T<p<q<T4-D}. 
If 
$k \in [\nu - d'-1]$, 
then 
$\mathsf{T}_1(k)
=
b
\prec 
\mathsf{T}(k+1)
\preceq
\mathsf{T}_3(k+d')$
by 
\eqref{eq:T<p<q<T4-D}. 
If 
$k = \nu - d'$, 
then 
$\mathsf{T}_1(k)
=
b
\preceq 
\mathsf{T}_3(\nu)
=
\mathsf{T}_3(k+d')$. 
If 
$u \in [\nu-d'+1,k]$, 
then 
$\mathsf{T}_1(u)
\preceq 
b
\preceq 
\mathsf{T}_3(u+d')$. 
We next assume that 
$d' = 0$. 
If 
$u \in [k-1] \cap [\nu - 1]$, 
then 
$\mathsf{T}_1(u)
=
\mathsf{T}(u+1)
\preceq 
\mathsf{T}_3(u)$
by 
\eqref{eq:T<p<q<T3-D}. 
If 
$k \in [\nu - 1]$, 
then 
$\mathsf{T}_1(k)
=
b
\prec
\mathsf{T}(k+1)
\preceq 
\mathsf{T}_3(k)$
by 
\eqref{eq:T<p<q<T3-D}. 
If 
$u \in [\nu , k]$, 
then 
$\mathsf{T}_1(u)
\preceq 
b
\preceq 
\mathsf{T}_3(u)$. 

We finally claim that 
$\mathsf{T}_2(u)
\preceq 
\mathsf{T}_4(u+d')$
for 
$u \in [2,i-d']$. 
Let 
$u \in [2,i-d']$; 
note that 
$u-1 \in [i-d'-1]$. 
By the above, 
we have 
$\mathsf{T}_2(u)
=
\mathsf{T}_1(u-1)
\preceq 
\mathsf{T}_3(u-1+d')
=
\mathsf{T}_4(u+d')$. 
\end{proof}

The proof of Proposition \ref{prop:tab-cri-D} is complete. 
\end{proof}

\section{Tableau model for crystal bases of level-zero representations} \label{Section:Tab-model}

In this section, 
we apply the results in \S \ref{Section:Tab-cri}
to crystal bases of level-zero representations of  
$\U$ 
of type 
$B_n^{(1)}$, 
$C_n^{(1)}$, 
and 
$D_n^{(1)}$. 
We introduce 
quantum Kashiwara--Nakashima columns 
(see Definitions 
\ref{def:QKN-B} 
and 
\ref{def:QKN-D})
and 
semi-infinite Kashiwara--Nakashima tableaux
(see Definitions \ref{def:SiKN-C}, 
\ref{def:SiKN-B},
and 
\ref{def:SiKN-D}). 
We will see that these tableaux give combinatorial models for 
crystal bases of 
level-zero fundamental representations
and 
level-zero extremal weight modules. 
When $\U$ is of type $B_n^{(1)}$ or $D_n^{(1)}$, 
we give an explicit description of the crystal isomorphisms 
among three different realizations of the crystal basis of 
a level-zero fundamental representation
by
quantum Lakshmibai--Seshadri paths (see \S \ref{Subsection:QLS}), 
quantum Kashiwara--Nakashima columns, 
and 
(ordinary) Kashiwara--Nakashima columns.

\subsection{Quantum Lakshmibai--Seshadri paths} 
\label{Subsection:QLS}

In this subsection, 
we give a brief exposition of 
quantum Lakshmibai--Seshadri paths
(see 
\cite{LNSSS16}
for details). 
Assume that 
$\U$
is of untwisted affine type. 

Let 
$\lambda \in P^+$. 
Recall the notation 
$J_{\lambda}$
in \eqref{eq:J_lambda}. 
For a rational number 
$0 < a \leq 1$, 
define 
$\mathrm{QB}(\lambda;a)$
to be the subgraph of 
$\mathrm{QB}^{J_{\lambda}}$ 
with the same vertex set 
but having only the edges of the form 
\begin{align}
w \xrightarrow{\ \gamma \ } v
\ \ \text{with} \ \ 
a\langle \gamma^{\vee} , \lambda \rangle \in \BZ ;
\end{align}
note that 
$\mathrm{QB}(\lambda ; 1) = \mathrm{QB}^{J_{\lambda}}$. 
A quantum Lakshmibai--Seshadri path of shape $\lambda$ is, by definition, 
a pair 
$(\bm{w}; \bm{a})$
of a sequence 
$\bm{w} : w_1, w_2, \ldots , w_l$ 
of elements in $W^{J_{\lambda}}$
and an increasing sequence
$\bm{a} : 0 = a_0 < a_1 < \cdots < a_l = 1$ 
of rational numbers such that 
there exists a directed path from
$w_{u+1}$ to $w_u$ in $\mathrm{QB}(\lambda;a_u)$ 
for $u \in [l-1]$. 
Let 
$\mathrm{QLS}(\lambda)$ 
denote the set of quantum Lakshmibai--Seshadri paths of shape $\lambda$. 
We call an element 
$(\bm{w};\bm{a}) \in \mathrm{QLS}(\lambda)$
a Lakshmibai--Seshadri path of shape $\lambda$
if there exists a directed path from
$w_{u+1}$ to $w_u$ in $\mathrm{QB}(\lambda;a_u)$ 
not having quantum edges 
for 
$u \in [l-1]$. 
Let 
$\mathrm{LS}(\lambda)$ 
denote the set of Lakshmibai--Seshadri paths of shape $\lambda$. 

In the same manner as in
\S \ref{Subsection:Path-Demazure}
we can define maps 
$\mathrm{wt} : \mathrm{QLS}(\lambda) \to P$, 
$e_i,f_i : \mathrm{QLS}(\lambda) \to \mathrm{QLS}(\lambda) \sqcup \{ \bm{0} \}$, 
and 
$\varepsilon_i,\varphi_i : \mathrm{QLS}(\lambda) \to \BZ_{\geq 0}$
for 
$i \in I_{\af}$. 

\begin{thm}[{\cite{J,K96,L95,LNSSS16}}] \label{thm:QLS=W}
Let $\lambda = \sum_{i \in I} m_i \varpi_i \in P^+$. 
\begin{enumerate}[(1)]
\item
The set 
$\mathrm{QLS}(\lambda)$ 
equipped with the maps 
$\mathrm{wt},e_i,f_i,\varepsilon_i,\varphi_i$, 
$i \in I_{\af}$, 
is a $\U'$-crystal. 
The $\U'$-crystal
$\mathrm{QLS}(\lambda)$ 
is isomorphic to the crystal basis of 
the tensor product
$\bigotimes_{i \in I} W(\varpi_i)^{\otimes m_i}$
of level-zero fundamental representations 
(see \S \ref{Subsection:extremal}).
\item
The set 
$\mathrm{LS}(\lambda)$ 
equipped with the maps 
$\mathrm{wt},e_i,f_i,\varepsilon_i,\varphi_i$, 
$i \in I$, 
is a $\Fg$-crystal. 
The $\Fg$-crystal
$\mathrm{LS}(\lambda)$ 
is isomorphic to the crystal basis of 
the integrable highest weight module of highest weight $\lambda$
over the quantized universal enveloping algebra 
associated with $\Fg$.
\end{enumerate}
\end{thm}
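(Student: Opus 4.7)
The plan is to model the construction directly on the semi-infinite LS path crystal of \S\ref{Subsection:Path-Demazure}. First I would define, for each $\pi = (w_1, \ldots, w_l; a_0, \ldots, a_l) \in \mathrm{QLS}(\lambda)$, the piecewise linear path $\bar\pi : [0,1] \to \BR \otimes_{\BZ} P$ by the analogue of \eqref{eq:path}, set $h_i^\pi(t) = \langle \alpha_i^\vee, \bar\pi(t) \rangle$ and $m_i^\pi = \min_t h_i^\pi(t)$, and then define $\mathrm{wt}, e_i, f_i, \varepsilon_i, \varphi_i$ by the same cut-and-reflect recipes as in \eqref{eq:e_i-t}--\eqref{eq:f_i-t} and \eqref{eq:path-eps-phi}, but with the subsequences governed by $\mathrm{QB}(\lambda; a)$ instead of $\mathrm{SiB}(\lambda; a)$. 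The axioms (C1)--(C7) then follow from direct inspection of these formulas; the only point of substance is that $e_i\pi$ and $f_i\pi$ are again QLS paths, i.e.\ that each newly inserted pair $(r_i x_p, x_p)$ or $(x_{p+1}, r_i x_{p+1})$ is still connected by a directed path in the appropriate subgraph $\mathrm{QB}(\lambda; a_p)$.

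For part (1) I would then exploit the tight connection with the semi-infinite model. Define a classicalization map $\mathrm{cl} : \mathbb{B}^{\si}(\lambda) \to \mathrm{QLS}(\lambda)$ by sending $(x_1, \ldots, x_l; \bm a)$ with $x_u = w_u T_{\xi_u}^{J_\lambda}$ to $(w_1, \ldots, w_l; \bm a)$; by Lemma \ref{lem:Q=SiB} every edge of $\mathrm{SiB}(\lambda; a)$ projects onto an edge of $\mathrm{QB}(\lambda; a)$, so $\mathrm{cl}$ is well-defined and, after passing to $P_{\af}/(P_{\af} \cap \BC\delta)$, a strict surjection of $I_{\af}$-crystals. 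Combined with Theorem \ref{thm:SiLS-isom}(2), Lemma \ref{lem:Psi_lambda}, and Theorem \ref{thm:BN-Kashiwara}, one identifies $\mathrm{QLS}(\lambda)$ as the quotient of $\bigotimes_{i \in I} \mathcal{B}(m_i\varpi_i)$ by the action of all $z_{i,\nu} - 1$, which by \cite[Theorem 5.17]{K02} and \cite{K05} is precisely the crystal basis of $\bigotimes_{i \in I} W(\varpi_i)^{\otimes m_i}$.

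For part (2), I would verify that $\mathrm{LS}(\lambda) \subset \mathrm{QLS}(\lambda)$ is closed under $e_i, f_i$ for $i \in I$. Indeed, for $i \in I$ the local reflections used in \eqref{eq:e_i}--\eqref{eq:f_i-t} are by simple reflections in the finite Weyl group $W$, and by Lemma \ref{lem:Q=SiB} (with $\chi = 0$) together with the Bruhat edge classifications of Propositions \ref{prop:Bruhat-C}, \ref{prop:Bruhat-B}, \ref{prop:Bruhat-D} they send Bruhat edges to Bruhat edges, never producing quantum edges. Hence $\mathrm{LS}(\lambda)$ is a $\Fg$-subcrystal of $\mathrm{QLS}(\lambda)$ that coincides with Littelmann's original LS-path $\Fg$-crystal of shape $\lambda$, whose isomorphism with the crystal basis of the integrable highest weight module $V(\lambda)$ is the theorem of Joseph--Kashiwara--Littelmann \cite{J, K96, L95}.

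The main obstacle is the closure claim underlying part (1): showing that applying $e_i$ or $f_i$ preserves the QLS property on each segment. In the semi-infinite setting this rests on the diamond lemma (Lemma \ref{lem:diamond}); the QLS analogue requires the parallel lifting property in $\mathrm{QB}^{J_\lambda}$, where Bruhat and quantum edges must be allowed to swap when a reflection is applied, and the vertex function $\ell(v) + 2\langle \zeta^\vee, \rho - \rho_{J_\lambda}\rangle$ that governs QBG edges must be tracked through all cases. This is where the bulk of the work of \cite{LNSSS15, LNSSS16} lives; once it is in hand, the rest of the argument above proceeds formally from the results compiled in \S\ref{Subsection:Crystals}--\S\ref{Subsection:Path-Demazure}.
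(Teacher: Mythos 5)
The paper does not prove Theorem \ref{thm:QLS=W}: it is imported wholesale from \cite{J,K96,L95,LNSSS16}, so there is no internal argument to compare yours against. That said, your architecture is the one used in those references — root operators on $\mathrm{QLS}(\lambda)$ defined by the Littelmann cut-and-reflect recipe, closure under $e_i,f_i$ reduced to the lifting (``diamond'') property of the parabolic quantum Bruhat graph from \cite{LNSSS15,LNSSS16}, and the identification of $\mathrm{QLS}(\lambda)$ with the classicalization of the level-zero LS path model — and you are honest that the combinatorial core lives in the QBG diamond lemma. As a map of where the content sits, the proposal is accurate.

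Two of your intermediate justifications do not carry the weight you place on them. First, to identify $\mathrm{QLS}(\lambda)$ with the quotient of $\bigotimes_{i}\mathcal{B}(m_i\varpi_i)$ by the $z_{i,\nu}$, you need that two semi-infinite LS paths have the same image under $\mathrm{cl}$ if and only if they lie in the same orbit under the relevant translations (equivalently, the $z$-action); Lemma \ref{lem:Q=SiB} and surjectivity of $\mathrm{cl}$ give only one inclusion, and the converse — that the fibers of $\mathrm{cl}$ are exactly these orbits — is a separate theorem of \cite{NS16} and \cite{LNSSS16}, not a formal consequence of Theorem \ref{thm:SiLS-isom}, Lemma \ref{lem:Psi_lambda}, and Theorem \ref{thm:BN-Kashiwara}. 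Second, your closure argument for $\mathrm{LS}(\lambda)$ in part (2) appeals to Propositions \ref{prop:Bruhat-C}, \ref{prop:Bruhat-B}, and \ref{prop:Bruhat-D}, which are type-specific edge classifications for the quotients $W^{I\setminus\{i\}}$ proved later in the paper for classical types only, whereas the theorem is stated for arbitrary untwisted type; moreover those classifications do not by themselves show that the chains produced by $e_i,f_i$ admit directed paths consisting solely of Bruhat edges. The correct source for that closure is Littelmann's integrality argument in \cite{L95} (and \cite{J,K96} for the isomorphism with the highest weight crystal), which you should cite in place of the type-by-type propositions.
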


\subsection{Quantum Bruhat graphs and Maya diagrams}
\label{Subsection:QBG-Maya}

Throughout this subsection, 
we assume that 
$\U$
is of type 
$B_n^{(1)}$, 
$C_n^{(1)}$, 
or 
$D_n^{(1)}$. 
The aim of this subsection is to give a description of
$\mathrm{QLS}(\varpi_i)$
and 
$\mathrm{LS}(\varpi_i)$
in terms of Maya diagrams. 

Let 
$i \in I$. 
Note that 
$J_{\varpi_i}
=
I \setminus \{ i \}$. 
We see from 
$c_i(\gamma^{\vee})
=
\langle \gamma^{\vee},\varpi_i\rangle
\in 
\{ 0,1,2 \}$
for 
$\gamma \in \Delta^+$
that the graph
$\mathrm{QB}(\varpi_i;a)$
has at least one edge 
only if 
$a \in \{ 1/2,1\}$.
Hence each element of 
$\mathrm{QLS}(\varpi_i)$
is of the form 
$(w;0,1)$
or 
$(v,w;0,1/2,1)$
for some 
$w,v \in W^{I \setminus \{ i \}}$. 
For simplicity of notation, 
we write 
$(w,w) = (w;0,1)$
and 
$(v,w) = (v,w;0,1/2,1)$. 
The next lemma follows immediately from the results in
\S \ref{Section:Tab-cri}
(see
Propositions 
\ref{prop:Bruhat-C}, 
\ref{prop:Bruhat-B}--\ref{prop:Q=B}, 
and 
\ref{prop:Bruhat-D}--\ref{prop:Q=D}).

\begin{lem} \label{lem:QLS=LS}
\begin{enumerate}[(1)]
\item
If $\U$ is of type $C_n^{(1)}$, 
then 
$\mathrm{QLS}(\varpi_i)
=
\mathrm{LS}(\varpi_i)$.
\item
If 
($\U$ is of type $B_n^{(1)}$
and 
$i = n$), 
($\U$ is of type $C_n^{(1)}$
and 
$i = 1$), 
or 
($\U$ is of type $D_n^{(1)}$
and 
$i \in \{ 1,n-1,n \}$), 
then 
$w = v$
for all 
$(w,v) \in \mathrm{QLS}(\varpi_i)$, 
$\mathrm{QLS}(\varpi_i)
=
\mathrm{LS}(\varpi_i)$, 
and 
$W^{I \setminus \{ i \}}
\to
\mathrm{QLS}(\varpi_i)$, 
$w \mapsto (w;0,1)$, 
is bijective. 
\end{enumerate}
\end{lem}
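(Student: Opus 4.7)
The plan is to first make explicit which values of $c_i(\gamma^\vee)$ can appear on edges of $\mathrm{QB}^{I\setminus\{i\}}$, and then translate the integrality condition $a\langle \gamma^\vee,\varpi_i\rangle\in\BZ$ into a sharp constraint on $a$. Since $\gamma\in\Delta^+\setminus\Delta^+_{I\setminus\{i\}}$ forces $c_i(\gamma^\vee)=\langle\gamma^\vee,\varpi_i\rangle\in\{1,2\}$, the only value $a\in(0,1)$ at which $\mathrm{QB}(\varpi_i;a)$ has any edges is $a=\tfrac{1}{2}$, and an edge $w\xrightarrow{\gamma}v$ lies in $\mathrm{QB}(\varpi_i;\tfrac{1}{2})$ if and only if $c_i(\gamma^\vee)=2$. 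Consequently every element of $\mathrm{QLS}(\varpi_i)$ has the form $(w;0,1)$ or $(v,w;0,\tfrac{1}{2},1)$, and in the latter case the directed path from $w$ to $v$ required by the QLS definition uses exclusively edges with $c_i(\gamma^\vee)=2$.

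For part (1), I would apply Proposition \ref{prop:Q=C}: in type $C_n^{(1)}$, every quantum edge of $\mathrm{QB}^{I\setminus\{i\}}$ satisfies $c_i(\gamma^\vee)=1$ (condition (q-C)). Therefore the edges contributing to $\mathrm{QB}(\varpi_i;\tfrac{1}{2})$ are necessarily Bruhat edges (those coming from (b-C3)), so any pair $(v,w)\in\mathrm{QLS}(\varpi_i)$ is automatically connected by Bruhat edges and hence lies in $\mathrm{LS}(\varpi_i)$. The reverse inclusion $\mathrm{LS}(\varpi_i)\subset\mathrm{QLS}(\varpi_i)$ is tautological from the definitions.

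For part (2), I would go case by case and verify, using the relevant classification propositions, that no edge at all has $c_i(\gamma^\vee)=2$, so that $\mathrm{QB}(\varpi_i;\tfrac{1}{2})$ has no edges whatsoever and only the trivial pairs $(w,w)$ survive in $\mathrm{QLS}(\varpi_i)$. Concretely: for $B_n^{(1)}$ with $i=n$, Proposition \ref{prop:Bruhat-B} restricts to (b-B4), (b-B6) and Proposition \ref{prop:Q=B} to (q-B4), all with $c_n(\gamma^\vee)=1$; for $C_n^{(1)}$ with $i=1$, only (b-C1), (b-C2), (b-C4) and (q-C) apply, all with $c_1(\gamma^\vee)=1$ since (b-C3) requires $i\geq 2$; for $D_n^{(1)}$ with $i=1$, only (b-D1), (b-D2), (b-D3) and (q-D1) apply, all with $c_1(\gamma^\vee)=1$, since (b-D4) and (q-D2) require $i\in[2,n-2]$; for $D_n^{(1)}$ with $i\in\{n-1,n\}$, the admissible types are (b-D5), (b-D6), (q-D3) (respectively (b-D5), (b-D7), (q-D4)), all with $c_i(\gamma^\vee)=1$. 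In each case the conclusion $v=w$ for all $(v,w)\in\mathrm{QLS}(\varpi_i)$ follows, the equality $\mathrm{QLS}(\varpi_i)=\mathrm{LS}(\varpi_i)$ is immediate, and the bijection $W^{I\setminus\{i\}}\to\mathrm{QLS}(\varpi_i)$, $w\mapsto(w;0,1)$, reduces to the bijection of Lemmas \ref{lem:Gr-C} and \ref{lem:Gr-D}.

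There is no genuine obstacle here; the statement really does follow by direct inspection of the tables, and the only bookkeeping to be careful about is making sure that the $i$-restrictions on each case (b-\textbullet{}) and (q-\textbullet{}) in the classification propositions are respected when ruling out the $c_i(\gamma^\vee)=2$ possibility. The logical core is the single observation that the integrality constraint forces $a=\tfrac{1}{2}$ and $c_i(\gamma^\vee)=2$, after which both parts become tabular verifications against the classification results of Section \ref{Section:Tab-cri}.
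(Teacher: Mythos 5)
Your proposal is correct and follows essentially the same route as the paper, which states that the lemma "follows immediately from the results in \S\ref{Section:Tab-cri}" together with the preceding observation that $c_i(\gamma^\vee)\in\{1,2\}$ forces $a=\tfrac12$ and $c_i(\gamma^\vee)=2$ for any edge of $\mathrm{QB}(\varpi_i;a)$ with $0<a<1$. Your case-by-case verification against the classification propositions (and the $i$-restrictions in each case) is exactly the intended tabular check; the paper additionally packages the part (2) cases as the ``minuscule'' condition $\langle\gamma^\vee,\varpi_i\rangle\in\{0,1\}$, but this is only a reformulation of what you verify.
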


We say that 
$i \in I$
is minuscule if 
$\langle \gamma^{\vee} , \varpi_i \rangle
\in 
\{ 0,1 \}$
for all
$\gamma \in \Delta^+$; 
we see that 
$i \in I$ 
is minuscule 
if and only if 
it satisfies the assumption of 
Lemma \ref{lem:QLS=LS} (2). 

Unless otherwise stated
we assume that 
$i \in I$
is not minuscule. 
We call a subset 
$J \subset [n]$
a segment if there exist
$j,k \in [n]$
such that 
$j \leq k$
and 
$J = [j,k]$. 
For segments 
$J = [j,k]$
and 
$J' = [j',k']$, 
we write 
$J < J'$
if
$k+1 < j'$. 
Let 
$\mathcal{S}_i$
be the family of all sequences 
$(J_1 < \cdots < J_{\mu})$, 
$\mu \geq 1$, 
of segments such that 
$\sum_{s=1}^{\mu} \# J_s = i$.
It is easy to check that 
for 
$w \in W$
there exists a unique 
$\mathcal{J}(w)
=
(J_1 < \cdots < J_{\mu})
\in \mathcal{S}_i$
such that 
$\{ \| w(u) \| \mid u \in [i] \}
=
J_1 \sqcup \cdots \sqcup J_{\mu}$. 
For 
$\mathcal{J}
=
(J_1 < \cdots < J_{\mu})
\in \mathcal{S}_i$, 
set 
$W^{I \setminus \{ i \}}[\mathcal{J}]
=
\{ w \in W^{I \setminus \{ i \}} \mid 
\mathcal{J}(w) = \mathcal{J} \}$. 
It follows that 
\begin{align} 
W^{I \setminus \{ i \}}
=
\bigsqcup_{\mathcal{J} \in \mathcal{S}_i}
W^{I \setminus \{ i \}}[\mathcal{J}] .
\end{align}
Set 
$2^{\mathcal{J}}
=
\prod_{\nu = 1}^{\mu} 2^{J_{\nu}}$, 
where 
$2^{J_{\nu}}$
denotes the power set of 
$J_{\nu}$. 
We call an element of 
$2^{\mathcal{J}}$
a Maya diagram on 
$\mathcal{J}$. 
We see that the next map is bijective.
\begin{align} \label{eq:W[J]->2^J}
\begin{split}
\mathcal{M} :
W^{I \setminus \{ i \}}[\mathcal{J}]
&\to 
2^{\mathcal{J}}, \\
w 
&\mapsto 
\mathcal{M}(w)
=
(J_{\nu} \cap \{ \|w(u)\| \mid u \in [i], \ 
w(u) \succeq \overline{n} \})_{\nu = 1}^{\mu} .
\end{split}
\end{align}

The next lemma follows immediately from the results in
\S \ref{Section:Tab-cri}
(see
(b-C3),
(b-B3), 
(b-B5), 
(q-B3),
(b-D4),
and 
(q-D2)
in 
Propositions 
\ref{prop:Bruhat-C}, 
\ref{prop:Bruhat-B}--\ref{prop:Q=B}, 
and 
\ref{prop:Bruhat-D}--\ref{prop:Q=D}).

\begin{lem} \label{lem:J(w)=J(v)}
Assume that 
$i \in I$
is not minuscule. 
Let 
$w,v \in W^{I \setminus \{ i \}}$. 
If 
$(v,w) \in \mathrm{QLS}(\varpi_i)$, 
then 
$\mathcal{J}(w) = \mathcal{J}(v)$. 
\end{lem}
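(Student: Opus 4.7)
The plan is to unwind the definitions and then invoke the explicit classification of the relevant edges from \S\ref{Section:Tab-cri}. First, I would observe that $(v,w) \in \mathrm{QLS}(\varpi_i)$ amounts to the existence of a directed path from $w$ to $v$ in $\mathrm{QB}(\varpi_i;1/2)$. By the definition of $\mathrm{QB}(\lambda;a)$, an edge $w \xrightarrow{\gamma} v$ appears in this subgraph only when $(1/2)\langle \gamma^\vee, \varpi_i\rangle = c_i(\gamma^\vee)/2 \in \BZ$; since $i$ is not minuscule (so $c_i(\gamma^\vee) \in \{0,1,2\}$ with $c_i(\gamma^\vee)=2$ occurring for some $\gamma$), these are precisely the edges with $c_i(\gamma^\vee)=2$. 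Composing edges, it suffices to prove the lemma for a single such edge, i.e.\ to show that $\mathcal{J}(v) = \mathcal{J}(w)$ whenever $w \xrightarrow{\gamma} v$ in $\mathrm{QB}^{I \setminus \{i\}}$ with $c_i(\gamma^\vee)=2$.

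Next, I would go through the three classical types, using the classifications established in the previous section. In type $C_n^{(1)}$ (with $i \in [2,n]$), the relevant Bruhat edges are (b-C3) (the only edges with $c_i(\gamma^\vee)=2$), and one reads off that $\{w(s),w(t)\} = \{w(s),\overline{w(s)+1}\}$ and $\{v(s),v(t)\} = \{w(s)+1,\overline{w(s)}\}$, so $\{\|w(u)\| : u \in [i]\} = \{\|v(u)\| : u \in [i]\}$. In type $B_n^{(1)}$ (with $i \in [2,n-1]$), the edges to check are (b-B3), (b-B5), and the quantum edge (q-B3): for (b-B3) the same verification as in type $C$ works, for (b-B5) the $\|\cdot\|$-values are obviously preserved since $\sigma$ fixes $\|\cdot\|$, and for (q-B3) one verifies directly from the given formulas $v(1)=1$, $v(2)=2$, $v(u)=w(u-2)$ for $u\in[3,i]$, together with $w(i)=\overline{1}$, $w(i-1)=\overline{2}$, that both sides give $\{1,2,\|w(1)\|,\ldots,\|w(i-2)\|\}$. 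Type $D_n^{(1)}$ (with $i\in[2,n-2]$) is handled analogously using (b-D4) (both subcases) and (q-D2).

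In each case, because the set $\{\|w(u)\| : u \in [i]\}$ is preserved as an unordered subset of $[n]$, its decomposition into maximal consecutive segments --- namely $\mathcal{J}(w)$ --- is also preserved, so $\mathcal{J}(v) = \mathcal{J}(w)$. Iterating along the directed path from $w$ to $v$ yields the claim.

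The main obstacle is really just bookkeeping: nothing conceptually new is needed, since the propositions cited above already pinpoint the possible shapes of the relevant edges. The only subtlety worth being careful about is the quantum edge (q-B3)/(q-D2), where $w$ and $v$ differ in more than two positions, so one must explicitly check that the ``cyclic shift'' on $[i]$ together with the specific values $w(i)=\overline{1}$, $w(i-1)=\overline{2}$ (or their type-$D$ analogues) leaves the multiset of absolute values invariant; this is a direct verification from the explicit formulas.
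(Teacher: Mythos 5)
Your proof is correct and follows exactly the route the paper intends: the paper's own justification for this lemma is simply to cite (b-C3), (b-B3), (b-B5), (q-B3), (b-D4), and (q-D2), i.e., the classification of the edges of $\mathrm{QB}^{I\setminus\{i\}}$ with $c_i(\gamma^{\vee})=2$ (which are precisely the edges of $\mathrm{QB}(\varpi_i;1/2)$), each of which visibly preserves the set $\{\|w(u)\| \mid u\in[i]\}$ and hence its segment decomposition $\mathcal{J}(w)$. The only cosmetic slip is restricting type $B_n$ to $i\in[2,n-1]$: the case $i=1$ is also non-minuscule there, but it is already covered by your (b-B5) analysis.
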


Let
$\mathcal{J} \in \mathcal{S}_i$.
Define
$\mathrm{QB}(\varpi_i;1/2)[\mathcal{J}]$
to be the induced subgraph of 
$\mathrm{QB}(\varpi_i;1/2)$
with the vertex set 
$W^{I \setminus \{ i \}}[\mathcal{J}]$. 
It follows from Lemma \ref{lem:J(w)=J(v)} that 
\begin{align} \label{eq:QB=sum-QB[J]}
\mathrm{QB}(\varpi_i;1/2)
=
\bigsqcup_{\mathcal{J} \in \mathcal{S}_i}
\mathrm{QB}(\varpi_i;1/2)[\mathcal{J}].
\end{align}

Let 
$J \subset [n]$
be a segment. 
We concern the following conditions for Maya diagrams
$M,N \in 2^J$. 
\begin{enumerate}[(M1)]
\item
There exists 
$j \in [n-1]$
such that 
$j \in N$, 
$j+1 \in M$, 
and 
$M \setminus \{ j+1 \}
=
N \setminus \{ j \}$.
\item
$n \in N$
and 
$M = N \setminus \{ n \}$.
\item
$n-1,n \in N$
and 
$M = N \setminus \{ n-1,n \}$.
\item
$1,2 \in M$
and 
$N = M \setminus \{ 1,2 \}$.
\end{enumerate}

\begin{define} \label{def:Maya-1/2-J}
Assume that 
$i \in I$
is not minuscule. 
Let 
$J \subset [n]$
be a segment such that 
$\# J \leq i$.
For $W$ of type $B_n,C_n$, or $D_n$, 
define 
$\mathrm{M}(\varpi_i;1/2)[J]$
to be the directed graph 
whose vertex set is 
$2^J$
and edges are given as follows. 
Let 
$M,N \in 2^J$. 
\begin{enumerate}[(1)]
\item
Assume that 
$W$
is of type 
$B_n$. 
There exists an edge 
$M \to N$
if 
(M1), (M2), or (M4)
hold. 
\item
Assume that 
$W$
is of type 
$C_n$. 
There exists an edge 
$M \to N$
if 
(M1)
holds. 
\item
Assume that 
$W$
is of type 
$D_n$. 
There exists an edge 
$M \to N$
if 
(M1), (M3), or (M4)
hold. 
\end{enumerate}
Write 
$M 
\trianglelefteq 
N$
if there exists a directed path 
from $M$ to $N$
in 
$\mathrm{M}(\varpi_i;1/2)[J]$. 
Write 
$M 
\trianglelefteq'
N$
if there exists a directed path 
from $M$ to $N$
in 
$\mathrm{M}(\varpi_i;1/2)[J]$
not having edges of type (M4). 
We see that 
$\trianglelefteq$
and 
$\trianglelefteq'$
define partial orders on $2^J$; 
note that if 
($W$ is of type $C_n$)
or 
($\{ 1,2 \} \not\subset J$), 
then 
$\trianglelefteq$
is identical to
$\trianglelefteq'$. 
\end{define}

The next lemma is an easy consequence 
of the definition of 
partial orders
$\trianglelefteq$
and 
$\trianglelefteq'$
on 
$2^J$. 

\begin{lem} \label{lem:Maya-cri}
Assume that 
$i \in I$
is not minuscule. 
Let 
$J \subset [n]$
be a segment such that 
$\# J \leq i$.
Let 
$M,N \in 2^J$. 
Write 
$M = \{ m_1 < m_2 < \cdots < m_r \}$
and 
$N = \{ n_1 < n_2 < \cdots < n_s \}$. 
\begin{enumerate}[(1)]
\item
Assume that 
($W$
is of type $C_n$), 
($W$
is of type $B_n$, 
$n \notin J$, 
and 
$\{ 1,2 \} \not\subset J$), 
or 
($W$
is of type $D_n$
and 
$\{ 1,2 \} \not\subset J$). 
We have 
$M 
\trianglelefteq 
N$
if and only if 
$r = s$
and 
$m_{\nu} \geq n_{\nu}$
for 
$\nu \in [r]$. 
\item
Assume that 
$W$
is of type $B_n$
and 
$n \in J$. 
We have 
$M 
\trianglelefteq 
N$
if and only if 
$r \leq s$
and 
$m_{\nu} \geq n_{\nu}$
for 
$\nu \in [r]$. 
\item
Assume that 
$W$
is of type $D_n$
and 
$n-1,n \in J$. 
We have 
$M 
\trianglelefteq 
N$
if and only if 
$s - r \in 2\BZ_{\geq 0}$
and 
$m_{\nu} \geq n_{\nu}$
for 
$\nu \in [r]$. 
\item
Assume that 
($W$
is of type $B_n$, 
$n \notin J$, 
and 
$\{ 1,2 \} \subset J$)
or 
($W$
is of type $D_n$, 
$n-1,n \notin J$, 
and 
$\{ 1,2 \} \subset J$). 
We have 
$M 
\trianglelefteq 
N$
if and only if 
$r-s \in 2\BZ_{\geq 0}$
and 
$m_{r-\nu} \geq n_{s-\nu}$
for 
$\nu \in [0,s-1]$. 
We have 
$M 
\trianglelefteq'
N$
if and only if 
$r = s$
and 
$m_{\nu} \geq n_{\nu}$
for 
$\nu \in [r]$. 
\end{enumerate}
\end{lem}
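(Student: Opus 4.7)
The plan is to prove each of (1)--(4) by separating the ``only if'' direction (which I would handle by a uniform induction on path length) from the ``if'' direction (which I would handle by an explicit construction tailored to each case). A preliminary compatibility check, using the hypothesis $\#J\le i$ together with the non-minuscule assumption on $i$ (so $i\in[2,n-1]$ in type $B_n$ and $i\in[2,n-2]$ in type $D_n$), shows that in each of (1)--(4) at most one of the size-changing edges (M2), (M3), (M4) can appear among the edges of $\mathrm{M}(\varpi_i;1/2)[J]$: in cases (2) and (3) the segment condition forces $J=[j_0,n]$ with $j_0\ge 2$, so $\{1,2\}\not\subset J$ and (M4) is forbidden; in case (4) the assumption $n\notin J$ (respectively $n-1,n\notin J$) directly rules out (M2) and (M3).

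For the ``only if'' direction, I would induct on the length of a directed path $M=M_0\to M_1\to\cdots\to M_l=N$ in $\mathrm{M}(\varpi_i;1/2)[J]$. The elementary effects of each edge type are: (M1) preserves cardinality and lowers exactly one entry by $1$; (M2) adjoins the element $n$ (increasing $\#$ by $1$); (M3) adjoins the pair $\{n-1,n\}$ (increasing $\#$ by $2$); (M4) deletes the pair $\{1,2\}$ (decreasing $\#$ by $2$). The cardinality and parity constraints in (1)--(4) follow immediately from these effects, and an entry-wise book-keeping of particles (sorting in increasing order for (1)--(3) and matching from the right for (4)) yields the claimed inequalities $m_\nu\ge n_\nu$ or $m_{r-\nu}\ge n_{s-\nu}$.

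For the ``if'' direction I would give an explicit construction by induction on $|s-r|$, with the base case handled by (M1) moves only. In case (1), when $r=s$, I would slide the $\nu$-th particle from $m_\nu$ down to $n_\nu$, processing $\nu=1,2,\ldots,r$ in increasing order; the strict ordering $n_1<\cdots<n_{\nu-1}<m_\nu$ (which holds because $n_{\nu-1}\le m_{\nu-1}<m_\nu$) guarantees that the intermediate configurations are legitimate Maya diagrams and that each elementary slide satisfies the hypothesis of (M1). In case (2), if $s>r$ I would, if necessary, first apply a few (M1) slides to vacate position $n$ (possible because the current particle at $n$ can be moved down to the first free site below it), then apply (M2) to adjoin $n$, and iterate; after $s-r$ applications of (M2) the diagram has $s$ elements satisfying the hypothesis of case (1), which then concludes the argument. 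Case (3) is treated identically using (M3) to adjoin $\{n-1,n\}$ in place of (M2). In case (4), I would apply the dual procedure: iteratively slide the two smallest particles of the current diagram down to positions $\{1,2\}$ via (M1) and remove them with (M4), which after $(r-s)/2$ iterations produces an $s$-element diagram that case (1) sends to $N$; the right-justified inequalities $m_{r-\nu}\ge n_{s-\nu}$ are exactly what is needed for this final sweep. The $\trianglelefteq'$ statement in (4) follows at once, as forbidding (M4) leaves only (M1) edges and hence reduces verbatim to case (1).

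The mildly delicate point will be verifying in cases (2)--(4) that the interleaving of (M1) slides with the size-changing moves keeps every intermediate subset collision-free and within $J$. This reduces to the elementary observation that, when vacating position $n$ (respectively $\{n-1,n\}$ or $\{1,2\}$), a single top (respectively bottom) particle only needs to be slid to its eventual target position in $N$, and the matching-order inequality prevents it from ever needing to pass through another particle's eventual location. Once this observation is recorded, the construction terminates in finitely many steps, completing the proof.
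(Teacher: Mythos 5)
Your argument is correct and is precisely the direct verification the paper has in mind --- the paper in fact omits the proof entirely, stating only that the lemma ``is an easy consequence of the definition of partial orders $\trianglelefteq$ and $\trianglelefteq'$ on $2^J$.'' One small imprecision to repair in the write-up: an (M1) edge moves a particle down by exactly one step into a \emph{free} site, so ``sliding the particle at $n$ down to the first free site below it'' must be realized by shifting the entire occupied block $[g+1,n]$ (where $g$ is the largest free site) down by one, starting from its bottom; the net set-theoretic effect is the same, and the rest of your construction goes through unchanged.
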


\begin{define} \label{def:Maya-1/2}
Assume that 
$i \in I$
is not minuscule. 
Let 
$\mathcal{J}
=
(J_1 < \cdots < J_{\mu})
\in \mathcal{S}_i$. 
Define
$\mathrm{M}(\varpi_i;1/2)[\mathcal{J}]$
to be the directed graph 
whose vertex set is 
$2^{\mathcal{J}}$
and edges are given as follows: 
for 
$(M_{\nu}),
(N_{\nu}) 
\in 
2^{\mathcal{J}}$, 
set
$(M_{\nu}) \to (N_{\nu})$
if there exists 
$s \in [\mu]$
such that 
$M_s \to N_s$
in 
$\mathrm{M}(\varpi_i;1/2)[J_s]$
and 
$M_t = N_t$
for 
$t \in [\mu] \setminus \{ s \}$. 
Set
$(M_{\nu})
\trianglelefteq
(N_{\nu})$
(resp. 
$(M_{\nu})
\trianglelefteq'
(N_{\nu})$)
if 
$M_{\nu}
\trianglelefteq
N_{\nu}$
(resp. 
$M_{\nu}
\trianglelefteq'
N_{\nu}$)
for all
${\nu} \in [\mu]$. 
\end{define}

The next lemma is an immediate consequence of 
(b-C3),
(b-B3), 
(b-B5), 
(q-B3),
(b-D4),
and 
(q-D2)
in 
Propositions 
\ref{prop:Bruhat-C}, 
\ref{prop:Bruhat-B}--\ref{prop:Q=B}, 
and 
\ref{prop:Bruhat-D}--\ref{prop:Q=D}.

\begin{lem} \label{lem:QB[J]=prod-QB[J]}
Assume that 
$i \in I$
is not minuscule. 
Let 
$\mathcal{J}
=
(J_1 < \cdots < J_{\mu})
\in \mathcal{S}_i$. 
The map 
\eqref{eq:W[J]->2^J}
induces an isomorphism 
\begin{align}
\mathrm{QB}(\varpi_i;1/2)[\mathcal{J}]
\xrightarrow{\ \cong \ }
\mathrm{M}(\varpi_i;1/2)[\mathcal{J}]
\end{align}
of directed graphs. 
\end{lem}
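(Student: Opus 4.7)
The plan is to reduce the lemma to a matching of edges, since the map $\mathcal{M}$ defined in \eqref{eq:W[J]->2^J} is already known to be a bijection on vertex sets. First I would observe that an edge $w \xrightarrow{\gamma} v$ in $\mathrm{QB}(\varpi_i; 1/2)$ requires $\langle \gamma^\vee, \varpi_i \rangle / 2 \in \BZ$, and since $c_i(\gamma^\vee) \in \{1,2\}$ for $\gamma \in \Delta^+ \setminus \Delta^+_{I \setminus \{i\}}$, only edges with $c_i(\gamma^\vee) = 2$ survive. By Lemma \ref{lem:J(w)=J(v)}, such an edge keeps both endpoints inside the same class $W^{I \setminus \{i\}}[\mathcal{J}]$, so the decomposition \eqref{eq:QB=sum-QB[J]} is well-defined and the verification can be done class by class.

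Next I would invoke the explicit classification of these $c_i(\gamma^\vee)=2$ edges from the relevant propositions and translate each move into an operation on the Maya diagram $\mathcal{M}(w)$. The list is: (b-C3) for type $C_n$; (b-B3), (b-B5), and the quantum edge (q-B3) for type $B_n$; the two subcases of (b-D4), and the quantum edge (q-D2) for type $D_n$. The translation is direct:
\begin{itemize}
\item The swap configurations (b-C3), (b-B3), and Case 1 of (b-D4) replace a pair $(k,\overline{k+1})$ in $w$ by $(k+1,\overline{k})$; both $k$ and $k+1$ lie in a single segment $J_\nu$ of $\mathcal{J}$, and the move removes $k+1$ from $\mathcal{M}(w)_\nu$ while inserting $k$, which is precisely condition (M1).
\item Case (b-B5) converts $w(s)=n$ into $\overline{n}$, inserting $n$ into the segment containing $n$; this is (M2).
\item Case 2 of (b-D4) converts the pair $(n-1,n)$ into $(\overline{n},\overline{n-1})$, inserting both $n-1$ and $n$; this is (M3).
\item The quantum edges (q-B3) and (q-D2) replace $(w(i-1),w(i))=(\overline{2},\overline{1})$ by unbarred $1,2$ in the first two positions, i.e.\ they remove $\{1,2\}$ from the segment of $\mathcal{M}(w)$ containing $1$ and $2$; this is (M4).
\end{itemize}
In each case, since $\mathcal{J}$ is preserved, the image is an edge of the appropriate type in $\mathrm{M}(\varpi_i;1/2)[\mathcal{J}]$ as defined in Definitions \ref{def:Maya-1/2-J} and \ref{def:Maya-1/2}.

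Finally, for the converse, given any edge of type (M1)--(M4) in $\mathrm{M}(\varpi_i;1/2)[J_\nu]$ for some segment $J_\nu$ of $\mathcal{J}$, I would invert the corresponding local modification to produce a unique candidate edge $w \to v$ with $\mathcal{M}(w)$ and $\mathcal{M}(v)$ the prescribed endpoints; injectivity of $\mathcal{M}$ then makes the correspondence bijective on edge sets, and the classification propositions certify that the candidate is genuinely an edge of $\mathrm{QB}^{I\setminus\{i\}}$ with $c_i(\gamma^\vee)=2$. The main bookkeeping issue — and the only real obstacle — is to check that the segment-level constraints built into Definitions \ref{def:Maya-1/2-J}--\ref{def:Maya-1/2} match exactly the root-system constraints appearing in the $c_i=2$ cases of the propositions: edges of type (M2) and (M3) can only occur on the segment containing $n$ (respectively $n-1,n$), and edges of type (M4) only on a segment containing $\{1,2\}$, which are precisely the conditions imposed by (b-B5), Case 2 of (b-D4), and (q-B3)/(q-D2) respectively.
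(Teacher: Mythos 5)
Your proposal is correct and follows exactly the route the paper intends: the paper declares this lemma ``an immediate consequence'' of (b-C3), (b-B3), (b-B5), (q-B3), (b-D4), and (q-D2), and your argument is precisely the spelled-out version of that — restricting to edges with $c_i(\gamma^{\vee})=2$, matching each classified edge type to one of (M1)--(M4), and using the bijectivity of $\mathcal{M}$ together with the completeness of the classification for the converse direction.
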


We see that an edge in 
$\mathrm{M}(\varpi_i;1/2)[\mathcal{J}]$
of type 
(M1)--(M3)
(resp. (M4))
corresponds to a 
Bruhat edge
(resp. a quantum edge)
in 
$\mathrm{QB}^{I \setminus \{ i \}}$. 
By 
Lemma \ref{lem:QB[J]=prod-QB[J]}, 
we can define a partial order
$\trianglelefteq$
(resp. 
$\trianglelefteq'$)
on 
$W^{I \setminus \{ i \}}$
as follows. 
Let 
$w,v \in W^{I \setminus \{ i \}}$. 
If 
$\mathcal{J}(w)
\neq 
\mathcal{J}(v)$, 
then 
$w$
and 
$v$
are incomparable to each other. 
Assume that 
$\mathcal{J}(w)
=
\mathcal{J}(v)$. 
Recall the map 
$\mathcal{M}$
in 
\eqref{eq:W[J]->2^J}. 
Set 
$w \trianglelefteq v$
(resp. 
$w \trianglelefteq' v$)
if 
$\mathcal{M}(w) \trianglelefteq \mathcal{M}(v)$
(resp. 
$\mathcal{M}(w) \trianglelefteq' \mathcal{M}(v)$). 
We have thus proved that 
\begin{align}
\mathrm{QLS}(\varpi_i)
&=
\{ (v,w) \mid w,v \in W^{I \setminus \{ i \}}, \ 
w \trianglelefteq v \}, 
\label{eq:QLS}
\\
\mathrm{LS}(\varpi_i)
&=
\{ (v,w) \mid w,v \in W^{I \setminus \{ i \}}, \ 
w \trianglelefteq' v \} .
\label{eq:LS}
\end{align} 
From now on, 
we freely identify 
$w$, 
$\mathsf{T}_w^{(i)}$, 
and 
$\mathcal{M}(w)$
with each other for 
$w \in W^{I \setminus \{ i \}}$.

For 
$(v,w) \in \mathrm{QLS}(\varpi_i)$, 
let 
$d_i(v,w) \in \BZ_{\geq 0}$
be the number of edges of type (M4)
in a directed path from 
$w$
to 
$v$
in 
$\mathrm{QB}(\varpi_i;1/2)$; 
for convenience we define
$d_i(v,w) = 0$
for
$(v,w) \in \mathrm{QLS}(\varpi_i)$
if 
$i \in I$
is minuscule. 
We see from the next lemma that 
$d_i(v,w)$
is independent of the choice of a 
directed path from 
$w$
to 
$v$
in 
$\mathrm{QB}(\varpi_i;1/2)$. 

\begin{lem} \label{lem:d_i(v,w)}
Assume that 
$i \in I$
is not minuscule. 
Let 
$(v,w) \in \mathrm{QLS}(\varpi_i)$; 
we see from Lemma \ref{lem:J(w)=J(v)} that 
$\mathcal{J}(w) = \mathcal{J}(v)$. 
Write 
$\mathcal{J}(w) 
= 
\mathcal{J}(v) 
= 
\{ J_1 < \cdots < J_{\mu} \}$, 
$r
=
\#\{ u \in [i] 
\mid 
\|w(u)\| \in J_1, \ 
w(u) \succeq \overline{n} \}$, 
and 
$s
=
\#\{ u \in [i] 
\mid 
\|v(u)\| \in J_1, \ 
v(u) \succeq \overline{n} \}$;
note that if 
$\mathcal{M}(w) = (M_{\nu})$
and 
$\mathcal{M}(v) = (N_{\nu})$, 
then 
$r = \# M_1$
and 
$s = \# N_1$.
If 
($W$ is of type $C_n$)
or
($\{ 1,2 \} \not\subset J_1$), 
then 
$d_i(v,w) = 0$. 
If 
($W$ is of type $B_n$ or $D_n$)
and 
($\{ 1,2 \} \subset J_1$), 
then 
$d_i(v,w) = (r-s)/2$. 
\end{lem}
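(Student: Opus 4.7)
The plan is to transport the problem to the Maya-diagram model of Lemma~\ref{lem:QB[J]=prod-QB[J]} and then carry out an edge-type count. Fix any directed path $\pi$ from $w$ to $v$ in $\mathrm{QB}(\varpi_i;1/2)$. Since $\mathcal{J}(w) = \mathcal{J}(v) = (J_1 < \cdots < J_{\mu})$, Lemma~\ref{lem:QB[J]=prod-QB[J]} transports $\pi$ to a directed path $\widetilde{\pi}$ from $\mathcal{M}(w) = (M_\nu)$ to $\mathcal{M}(v) = (N_\nu)$ in $\mathrm{M}(\varpi_i;1/2)[\mathcal{J}(w)]$, with each edge affecting exactly one coordinate. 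Inspecting the classifications in (b-C3), (b-B3), (b-B5), (q-B3), (b-D4), and (q-D2) of Propositions~\ref{prop:Bruhat-C}, \ref{prop:Bruhat-B}--\ref{prop:Q=B}, and \ref{prop:Bruhat-D}--\ref{prop:Q=D}, I would verify that quantum edges in $\mathrm{QB}(\varpi_i;1/2)$ correspond precisely to edges of type (M4) in the Maya-diagram graphs, so that $d_i(v,w)$ equals the total number of (M4) edges appearing in $\widetilde{\pi}$.

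The key observation is that an (M4) edge can only occur in the first-segment component. Indeed, (M4) requires $\{1,2\} \subset J_s$, which forces $1 \in J_s$; since $J_1 < J_2 < \cdots$ are disjoint segments ordered by strict inequality, this gives $s = 1$. Therefore every (M4) edge in $\widetilde{\pi}$ lies in the $J_1$-component, and $d_i(v,w)$ equals the number of (M4) edges in the restricted path from $M_1$ to $N_1$ inside $\mathrm{M}(\varpi_i;1/2)[J_1]$. If $W$ is of type $C_n$, then Definition~\ref{def:Maya-1/2-J}(2) forbids (M4) edges outright, so $d_i(v,w) = 0$. If $W$ is of type $B_n$ or $D_n$ but $\{1,2\} \not\subset J_1$, then the defining condition of (M4) fails in $J_1$ as well, giving again $d_i(v,w) = 0$.

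It remains to treat the case when $W$ is of type $B_n$ or $D_n$ and $\{1,2\} \subset J_1$. Here I would exploit that $i$ is non-minuscule, so $i \in [1, n-1]$ in type $B_n$ and $i \in [2, n-2]$ in type $D_n$. Combined with $J_1 = [1, \#J_1]$ (forced by $\{1,2\} \subset J_1$ and $J_1$ being a segment) and $\#J_1 \leq i$, this yields $n \notin J_1$ in type $B_n$ and $\{n-1,n\} \cap J_1 = \emptyset$ in type $D_n$. Consequently edge type (M2), respectively (M3), cannot occur in $\mathrm{M}(\varpi_i;1/2)[J_1]$, leaving only (M1) and (M4). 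Since (M1) preserves cardinality while (M4) decreases it by $2$, and the restriction of $\widetilde{\pi}$ to $J_1$ runs from $M_1$ (with $\#M_1 = r$) to $N_1$ (with $\#N_1 = s$), the number of (M4) edges in this restriction must equal $(r-s)/2$. Since $d_i(v,w)$ is thereby expressed by an intrinsic formula in $r$ and $s$, independence from the choice of $\pi$ is automatic. The only delicate point is the non-minuscule bookkeeping in the last paragraph that eliminates (M2) and (M3) from $J_1$; all the other steps are clean consequences of the decomposition in Lemma~\ref{lem:QB[J]=prod-QB[J]}.
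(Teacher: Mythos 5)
Your proof is correct and follows essentially the same route as the paper: the paper's proof is a one-line appeal to Lemma \ref{lem:Maya-cri} (4), whose content is precisely the cardinality count over the edge types (M1)--(M4) that you carry out explicitly. Your auxiliary checks (that (M4) edges are confined to the $J_1$-component, and that non-minusculity of $i$ forces $n \notin J_1$ in type $B_n$ and $n-1,n \notin J_1$ in type $D_n$, excluding (M2)/(M3)) are sound and match the hypotheses under which that lemma applies.
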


\begin{proof}
The assertion follows from Lemma \ref{lem:Maya-cri} (4).
\end{proof}

Let 
$\mathrm{QLS}(\varpi_i)_{\af}
=
\mathrm{QLS}(\varpi_i) \times \BZ$
be the affinization of the $\U'$-crystal 
$\mathrm{QLS}(\varpi_i)$
(see \S\ref{Subsection:Crystals}); 
we have 
$\mathrm{wt}((v,w),c)
=
(1/2)(v\varpi_i + w\varpi_i) - c\delta \in P_{\af}$
for 
$(v,w) \in \mathrm{QLS}(\varpi_i)$
and 
$c \in \BZ$.
The next lemma holds for all 
$i \in I$. 

\begin{lem} \label{lem:SMT-QLSaf}
Let 
$i \in I$
and 
$m \in \BZ_{\geq 0}$. 
\begin{enumerate}[(1)]
\item
We have an isomorphism 
\begin{align} \label{eq:QLSaf->SiLS}
\begin{split}
\mathrm{QLS}(\varpi_i)_{\af}
&\to 
\mathbb{B}^{\si}(\varpi_i), \\
((v,w),c)
&\mapsto 
\left(
vT_{(c+d_i(v,w))\alpha_i^{\vee}}^{I \setminus \{ i \}}, 
wT_{(c-d_i(v,w))\alpha_i^{\vee}}^{I \setminus \{ i \}};
0, \dfrac{1}{2}, 1
\right)
\end{split}
\end{align}
of $\U$-crystals, 
where 
$w,v \in W^{I \setminus \{ i \}}$
and 
$c \in \BZ$; 
we understand that 
$(x,x ; 0,1/2,1)
=
(x;0,1)$
for 
$x \in (W^{I \setminus \{ i \}})_{\af}$. 
\item
The crystal basis
$\mathcal{B}(m \varpi_i)$
is isomorphic to the subcrystal of 
$\mathrm{QLS}(\varpi_i)_{\af}^{\otimes m}$
consisting of the elements
$\bigotimes_{\nu=1}^{m}((v_{\nu},w_{\nu}),c_{\nu})$
such that 
\begin{align}
w_{\nu}T_{(c_{\nu}-d_i(v_{\nu},w_{\nu}))\alpha_i^{\vee}}^{I \setminus \{ i \}}
\succeq
v_{\nu+1}T_{(c_{\nu+1}+d_i(v_{\nu+1},w_{\nu+1}))\alpha_i^{\vee}}^{I \setminus \{ i \}}
\ \text{in} \ 
(W^{I \setminus \{ i \}})_{\af}
\end{align}
for 
$\nu \in [m-1]$.
\end{enumerate}
\end{lem}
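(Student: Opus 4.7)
The plan is to establish part (1) by constructing the inverse map explicitly via Lemma \ref{lem:Q=SiB}, and then to deduce part (2) by appealing to standard monomial theory for semi-infinite Lakshmibai--Seshadri paths \cite[Theorem 3.4]{I20}.

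The preliminary observation is that, since $c_i(\gamma^{\vee}) \in \{1, 2\}$ for $\gamma \in \Delta^+ \setminus \Delta_{I \setminus \{i\}}^+$, a short computation using $wt_{\xi} \varpi_i = w\varpi_i - \langle \xi, \varpi_i \rangle \delta$ shows that $\mathrm{SiB}(\varpi_i; a)$ has no edges unless $a \in \{1/2, 1\}$; moreover, at $a = 1/2$ its edges correspond exactly to colors $\beta = w\gamma + \chi\delta$ with $c_i(\gamma^{\vee}) = 2$. Consequently every element of $\mathbb{B}^{\si}(\varpi_i)$ has the form $(x; 0, 1)$ or $(x_1, x_2; 0, 1/2, 1)$, and we may write both uniformly as $(x_1, x_2; 0, 1/2, 1)$ (allowing $x_1 = x_2$ in the singleton case).

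To prove (1), I define the map $\Psi$ by the stated formula and verify well-definedness as follows. Given $(v, w) \in \mathrm{QLS}(\varpi_i)$, a directed path $w = w_0 \to w_1 \to \cdots \to w_k = v$ in $\mathrm{QB}(\varpi_i; 1/2)$ lifts, edge by edge through Lemma \ref{lem:Q=SiB}, to a chain in $\mathrm{SiB}(\varpi_i; 1/2)$ starting at $w T^{I \setminus \{i\}}_{(c - d_i(v, w))\alpha_i^{\vee}}$: Bruhat edges leave the $\alpha_i^{\vee}$-component of the translation unchanged, while each quantum edge (carrying $c_i(\gamma^{\vee}) = 2$ by the preliminary observation) shifts it by $+2$, so that after the $d_i(v, w)$ quantum edges the endpoint projects in $(W^{I \setminus \{i\}})_{\af}$ to $v T^{I \setminus \{i\}}_{(c + d_i(v, w))\alpha_i^{\vee}}$. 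Bijectivity follows by reversing the process: given $(x_1, x_2; 0, 1/2, 1) \in \mathbb{B}^{\si}(\varpi_i)$, write $x_1 = v T^{I \setminus \{i\}}_{\xi_v}$ and $x_2 = w T^{I \setminus \{i\}}_{\xi_w}$; the semi-infinite Bruhat path from $x_2$ to $x_1$ projects to a $\mathrm{QB}(\varpi_i; 1/2)$-path from $w$ to $v$ (so $(v, w) \in \mathrm{QLS}(\varpi_i)$), and counting quantum edges gives $c_i(\xi_v) - c_i(\xi_w) = 2 d_i(v, w)$, recovering $c = (c_i(\xi_v) + c_i(\xi_w))/2 \in \BZ$. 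Weight compatibility is automatic from $(c + d_i(v, w)) + (c - d_i(v, w)) = 2c$, and compatibility with the Kashiwara operators $e_j, f_j$ for $j \in I_{\af}$ is a direct verification comparing the affinization rules of \S \ref{Subsection:Crystals} with the path-model formulas of \S \ref{Subsection:Path-Demazure}.

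For part (2), I would apply \cite[Theorem 3.4]{I20}: the strict embedding $\varphi_{\varpi_i}^{\otimes m} \circ \Phi_{m\varpi_i \mid q=0}^{\mathrm{LT}} : \mathcal{B}(m\varpi_i) \hookrightarrow \mathbb{B}^{\si}(\varpi_i)^{\otimes m}$ has image exactly the tensors $\eta_1 \otimes \cdots \otimes \eta_m$ satisfying the concatenation conditions $\kappa(\eta_{\nu}) \succeq \iota(\eta_{\nu+1})$ for $\nu \in [m-1]$, where $\iota(x_1, \ldots, x_l; \bm{a}) := x_1$. Transporting these conditions back through $\Psi^{\otimes m}$ using $\iota(\Psi((v, w), c)) = v T^{I \setminus \{i\}}_{(c + d_i(v, w))\alpha_i^{\vee}}$ and $\kappa(\Psi((v, w), c)) = w T^{I \setminus \{i\}}_{(c - d_i(v, w))\alpha_i^{\vee}}$ yields the stated description. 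The main obstacle is the key identity $c_i(\xi_v) - c_i(\xi_w) = 2 d_i(v, w)$ in the bijectivity argument of (1), whose proof combines path-independence of $d_i(v, w)$ (Lemma \ref{lem:d_i(v,w)}) with the classification of \S \ref{Section:Tab-cri}, which shows that every quantum edge of $\mathrm{QB}(\varpi_i; 1/2)$ contributes a translation $\gamma^{\vee}$ with $c_i(\gamma^{\vee}) = 2$ rather than $1$. Once this identity is in hand, bijectivity and crystal compatibility are routine bookkeeping, and (2) follows by direct transport along $\Psi^{\otimes m}$.
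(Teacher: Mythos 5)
Your proposal is correct and follows essentially the same route as the paper: part (1) is exactly the paper's appeal to Lemma \ref{lem:Q=SiB} together with the classification of $\mathrm{QB}(\varpi_i;1/2)$ and the path-independence of $d_i(v,w)$ from Lemmas \ref{lem:Maya-cri}--\ref{lem:d_i(v,w)} (the paper compresses all of this into ``we check at once''), and part (2) is the same application of standard monomial theory \cite[Theorem 3.4]{I20}. Your expansion of the key point --- that every quantum edge of $\mathrm{QB}(\varpi_i;1/2)$ carries $c_i(\gamma^{\vee})=2$, forcing $c_i(\xi_v)-c_i(\xi_w)=2d_i(v,w)$ --- is exactly the content the paper leaves implicit.
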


\begin{proof}
(1): 
If $i \in I$ is minuscule, 
then the proof is straightforward 
(see also Remark \ref{rem:QLSas=B=SiLS} below). 

Assume that 
$i \in I$
is not minuscule. 
By Lemmas \ref{lem:Q=SiB} and \ref{lem:Maya-cri}--\ref{lem:d_i(v,w)}, 
we check at once that the map 
\eqref{eq:QLSaf->SiLS}
is well-defined, 
and is an isomorphism of 
$\U$-crystals. 

(2): 
The assertion follows from 
(1) 
and 
\cite[Theorem 3.4]{I20}. 
\end{proof}

\begin{rem} \label{rem:QLSas=B=SiLS}
Let 
$i \in I$. 
By Theorem \ref{thm:QLS=W} (1)
and 
\cite[Theorem 5.17 (vii)--(viii)]{K02}, 
we have a unique isomorphism
$\mathrm{QLS}(\varpi_i)_{\af}
\cong 
\mathcal{B}(\varpi_i)$
of 
$\U$-crystals. 
By Theorem \ref{thm:SiLS-isom} (2), 
we have a unique isomorphism 
$\mathcal{B}(\varpi_i)
\cong 
\mathbb{B}^{\si}(\varpi_i)$
of 
$\U$-crystals. 
The map
\eqref{eq:QLSaf->SiLS}
equals the composition of these isomorphisms.
\end{rem}

\subsection{Type $C_n^{(1)}$} \label{Subsection:tab-type-C}

Throughout this subsection, 
we assume that 
$\Fg$ and $W$ are of type $C_n$. 
Recall that 
$I = [n]$, 
$\Delta = \{ \pm(\varepsilon_s \pm \varepsilon_t) \mid s,t \in [n], \ s < t \} \sqcup \{ \pm 2\varepsilon_s \mid s \in [n] \}$, 
and
$\Pi 
=
\{ \alpha_s = \varepsilon_s - \varepsilon_{s+1} \mid s \in [n-1] \} 
\sqcup 
\{ \alpha_n = 2\varepsilon_n \}$. 
The highest root is 
$\theta 
= 
2\varepsilon_1 
= 
2\alpha_1 + \cdots + 2\alpha_{n-1} + \alpha_n$; 
we have 
$\theta^{\vee}
=
\varepsilon_1$. 
We identify
$\varpi_i$
with 
$\varepsilon_1 + \varepsilon_2 + \cdots + \varepsilon_i$
for
$i \in I$.

Let 
$i \in [n]$. 
A map 
$\mathsf{C} : [i] \to \mathcal{C}_n$
is, by definition, 
a Kashiwara--Nakashima $C_n$-column
(KN $C_n$-column for short) 
of shape $\varpi_i$ 
if 
\begin{enumerate}[(KN-C1)]
\item
$\mathsf{C}(1)
\prec
\mathsf{C}(2)
\prec
\cdots 
\prec 
\mathsf{C}(i)$, 
\item
if 
$t = \mathsf{C}(p) = \sigma(\mathsf{C}(q)) \in [n]$
for some 
$p,q \in [i]$, 
then 
$q-p > i-t$.
\end{enumerate}
Let 
$\mathrm{KN}_{C_n}(\varpi_i)$
be the set of 
KN $C_n$-columns 
of shape $\varpi_i$. 
We sometimes identify 
$\mathsf{C} \in \mathrm{KN}_{C_n}(\varpi_i)$
with its image 
$\{ \mathsf{C}(u) \mid u \in [i] \}
\subset 
\mathcal{C}_n$. 

Let 
$\mathsf{C} : [i] \to \mathcal{C}_n$
be a map satisfying 
(KN-C1).
Let 
$I_{\mathsf{C}}
=
\{ z_1 \succ z_2 \succ \cdots \succ z_k \}$
be the set of 
$z \in \mathcal{C}_n$
such that 
$z \preceq n$
and 
$\{ z,\overline{z} \}
\subset 
\mathsf{C}$. 
We say that 
$\mathsf{C}$
can be split if there exists a subset 
$J_{\mathsf{C}}
=
\{ y_1 > y_2 > \cdots > y_k \} \subset [n]$
such that 
\begin{enumerate}[(i)]
\item
$y_1
=
\max
\{ y \in \mathcal{C}_n \mid 
y \prec z_1, \ 
y \notin \mathsf{C}, \ 
\overline{y} \notin \mathsf{C} \}$,
\item
$y_{\nu}
=
\max
\{ y \in \mathcal{C}_n \mid 
y \prec \min \{ y_{\nu -1}, z_{\nu} \}, \ 
y \notin \mathsf{C}, \ 
\overline{y} \notin \mathsf{C} \}$
for 
$\nu \in [2,k]$.
\end{enumerate}
Define
$r\mathsf{C}, 
l\mathsf{C}
\in
\mathrm{CST}_{C_n}(\varpi_i)$
to be such that 
$r\mathsf{C}
=
(\mathsf{C} 
\setminus 
\{ \overline{z} \mid z \in I_{\mathsf{C}} \})
\cup 
\{ \overline{y} \mid y \in J_{\mathsf{C}} \}$
and 
$l\mathsf{C}
=
(\mathsf{C} 
\setminus 
I_{\mathsf{C}})
\cup 
J_{\mathsf{C}}$. 

Define a
$\Fg$-crystal structure on 
$\mathrm{KN}_{C_n}(\varpi_i)$
as follows
(cf. \cite[\S 4]{KN}). 
Let 
$\mathsf{C} \in \mathrm{KN}_{C_n}(\varpi_i)$.
If we set 
$\varepsilon_{\overline{j}} = -\varepsilon_j$
for 
$j \in [n]$, 
then the weight of 
$\mathsf{C}$
is 
\begin{align} \label{eq:KN-wt-C}
\mathrm{wt}(\mathsf{C})
=
\sum_{u \in [i]}
\varepsilon_{\mathsf{C}(u)} .
\end{align}
Let 
$j \in I$. 
Note that only the letters 
$j,j+1,\overline{j+1},\overline{j}$
may be changed in 
$\mathsf{C}$
when we apply
$e_j$
or
$f_j$. 
Moreover, 
the actions of 
$e_j$
and
$f_j$
are uniquely determined from
$\mathsf{C} \cap \{ j,j+1,\overline{j+1},\overline{j} \}$.
Hence, 
by omitting the letters 
not being in 
$\{ j,j+1,\overline{j+1},\overline{j} \}$, 
we can illustrate the actions of 
$f_j$
for 
$j \in [n-1]$
by 
\ytableausetup{mathmode,boxsize=1cm}
\begin{align} \label{eq:f-KN-C-1}
\begin{CD}
\begin{ytableau}
1
\end{ytableau}
@>{f_1}>>
\begin{ytableau}
2
\end{ytableau}
@>{f_2}>>
\cdots 
@>{f_{n-2}}>>
\begin{ytableau}
n-1
\end{ytableau}
@>{f_{n-1}}>>
\begin{ytableau}
n
\end{ytableau}
\end{CD} \ ,
\end{align}
\begin{align} \label{eq:f-KN-C-2}
\begin{CD}
\begin{ytableau}
\overline{1}
\end{ytableau}
@<{f_1}<<
\begin{ytableau}
\overline{2}
\end{ytableau}
@<{f_2}<<
\cdots 
@<{f_{n-2}}<<
\begin{ytableau}
\overline{n-1}
\end{ytableau}
@<{f_{n-1}}<<
\begin{ytableau}
\overline{n}
\end{ytableau}
\end{CD} \ ,
\end{align}
\begin{align} \label{eq:f-KN-C-3}
\begin{CD}
\begin{ytableau}
j \\ \overline{j+1}
\end{ytableau}
@>{f_j}>>
\begin{ytableau}
j+1 \\ \overline{j+1}
\end{ytableau}
@>{f_j}>>
\begin{ytableau}
j+1 \\ \overline{j}
\end{ytableau}
\end{CD} \ ,
\end{align}
\begin{align} \label{eq:f-KN-C-4}
\begin{CD}
\begin{ytableau}
j \\ j+1 \\ \overline{j+1}
\end{ytableau}
@>{f_j}>>
\begin{ytableau}
j \\ j+1 \\ \overline{j}
\end{ytableau}
\end{CD} \ , 
\hspace{1cm}
\begin{CD}
\begin{ytableau}
j \\ \overline{j+1} \\ \overline{j}
\end{ytableau}
@>{f_j}>>
\begin{ytableau}
j+1 \\ \overline{j+1} \\ \overline{j}
\end{ytableau}
\end{CD} \ ;
\end{align}
set 
$f_j \mathsf{C} = \bm{0}$
otherwise. 
Similarly, the action of 
$f_n$
is illustrated by 
\ytableausetup{mathmode,boxsize=7mm}
\begin{align} \label{eq:f-KN-C-5}
\begin{CD}
\begin{ytableau}
n
\end{ytableau}
@>{f_n}>>
\begin{ytableau}
\overline{n}
\end{ytableau}
\end{CD} \ ;
\end{align}
set 
$f_n \mathsf{C} = \bm{0}$
otherwise. 
The maps 
$e_j$, 
$j \in I$, 
are defined to be such that 
the condition (C6) in \S \ref{Subsection:Crystals}
holds. 
For 
$\mathsf{C} \in \mathrm{KN}_{C_n}(\varpi_i)$
and 
$j \in I$, 
set 
$\varepsilon_j(\mathsf{C})
=
\max \{ k \in \BZ_{\geq 0} \mid e_j^k \mathsf{C} \neq \bm{0} \}$
and 
$\varphi_j(\mathsf{C})
=
\max \{ k \in \BZ_{\geq 0} \mid f_j^k \mathsf{C} \neq \bm{0} \}$. 

The next lemma is a reformulation of 
\cite[Theorem A.1]{She}
in terms of Maya diagrams.

\begin{lem}
Assume that 
$\Fg$
and 
$W$
are of type $C_n$. 
For a map
$\mathsf{C} : [i] \to \mathcal{C}_n$
satisfying 
(KN-C1), 
we have 
$\mathsf{C}
\in 
\mathrm{KN}_{C_n}(\varpi_i)$
if and only if 
$\mathsf{C}$
can be split. 
The map 
\begin{align} \label{eq:KN->LS-C}
\mathrm{KN}_{C_n}(\varpi_i)
\to 
\mathrm{LS}(\varpi_i), \ 
\mathsf{C}
\mapsto 
(r\mathsf{C},l\mathsf{C}),
\end{align}
is an isomorphism of $\Fg$-crystals.
The inverse of 
\eqref{eq:KN->LS-C}
is given as follows. 
Let 
$(v,w) 
\in 
\mathrm{LS}(\varpi_i)$, 
$\mathcal{J}
=
\mathcal{J}(w) 
= 
\mathcal{J}(v) 
= 
(J_1 < \cdots < J_{\mu})
\in 
\mathcal{S}_i$
and 
$\mathcal{M}(w) = (M_{\nu}), 
\mathcal{M}(v) = (N_{\nu})
\in 
2^{\mathcal{J}}$. 
The inverse image of 
$(v,w)$
is
\begin{align}
\begin{split}
\mathsf{C} 
&= 
\{ v(u) \mid u \in [i], \ v(u) \preceq n \}
\cup
\{ w(u) \mid u \in [i], \ w(u) \succeq \overline{n} \} \\
&=
\bigcup_{\nu=1}^{\mu}
\left(
(J_{\nu} \setminus N_{\nu})
\cup
\{ \overline{z} \mid z \in M_{\nu} \}
\right) ;
\end{split}
\end{align}
we have 
$I_{\mathsf{C}}
=
\bigcup_{\nu = 1}^{\mu} (M_{\nu} \setminus N_{\nu})$
and
$J_{\mathsf{C}}
=
\bigcup_{\nu = 1}^{\mu} (N_{\nu} \setminus M_{\nu})$. 
\end{lem}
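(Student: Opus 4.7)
The plan is to establish the three assertions in turn: the splitting criterion, the bijection $\mathsf{C} \mapsto (r\mathsf{C},l\mathsf{C})$, and its compatibility with the crystal structure. For the first assertion, I would invoke \cite[Theorem A.1]{She}, which gives exactly the characterization of KN $C_n$-columns in terms of a recursive construction of auxiliary unbarred letters; the definition of $J_{\mathsf{C}}$ via (i)--(ii) above is a verbatim rephrasing of Sheats' original recursion, so the equivalence (KN-C2) $\Leftrightarrow$ splittability is immediate.

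For the second assertion, given a splittable $\mathsf{C}$, I need to verify that $(r\mathsf{C},l\mathsf{C}) \in \mathrm{LS}(\varpi_i)$, which by \eqref{eq:LS} and Lemma \ref{lem:QLS=LS}(1) (there is no distinction between $\trianglelefteq$ and $\trianglelefteq'$ in type $C_n$) amounts to showing $l\mathsf{C} \trianglelefteq r\mathsf{C}$ in $W^{I \setminus \{i\}}$. First I would check $\mathcal{J}(r\mathsf{C})=\mathcal{J}(l\mathsf{C})$, which follows because the substitution $z_\nu \leftrightarrow y_\nu$ (exchanging an unbarred/barred pair with a different unbarred/barred pair) preserves the multiset $\{\|\mathsf{C}(u)\|\}_{u \in [i]}$ of absolute values, hence preserves the segment decomposition. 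Next, writing $\mathcal{M}(r\mathsf{C})=(N_\nu)$ and $\mathcal{M}(l\mathsf{C})=(M_\nu)$, the construction of $J_{\mathsf{C}}$ forces $\#M_\nu = \#N_\nu$ in each segment (each swap preserves the count of barred letters per segment by the maximality property in (i)--(ii)), and forces $M_\nu \trianglelefteq N_\nu$ segment-wise. Lemma \ref{lem:Maya-cri}(1) then delivers $l\mathsf{C} \trianglelefteq r\mathsf{C}$. The inverse map is given by the explicit formula; to check it returns a KN column it suffices to verify (KN-C2), which reduces via the Maya description to the pointwise inequalities characterizing $\trianglelefteq$ and then unwinds to the required position-count condition $q - p > i - t$.

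For the third assertion, the weight compatibility is immediate: an unbarred/barred pair $\{z_\nu,\overline{z_\nu}\}$ in $I_{\mathsf{C}}$ and the substituted pair $\{y_\nu,\overline{y_\nu}\}$ in $J_{\mathsf{C}}$ both contribute $0$ to $\mathrm{wt}(\mathsf{C})$, and the remaining letters of $\mathsf{C}$ coincide with the ``unpaired'' entries, so $\mathrm{wt}(\mathsf{C}) = \tfrac{1}{2}(r\mathsf{C}\,\varpi_i + l\mathsf{C}\,\varpi_i)$. To verify that $e_j, f_j$ commute with the bijection, I would check this directly on the local patterns \eqref{eq:f-KN-C-1}--\eqref{eq:f-KN-C-5} by tracking how $I_{\mathsf{C}}$, $J_{\mathsf{C}}$, and thus the Maya diagrams of $r\mathsf{C}, l\mathsf{C}$ evolve, and compare with the action of $e_j, f_j$ on LS paths recalled in \S\ref{Subsection:Path-Demazure} (or more precisely its QLS specialization).

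The main obstacle will be the verification in the third step for the patterns \eqref{eq:f-KN-C-4} and \eqref{eq:f-KN-C-5}: these are the cases where the swap of a letter can force a cascade change in the splitting data $(I_{\mathsf{C}}, J_{\mathsf{C}})$ rather than a purely local change, and the Kashiwara operator on the LS side must be matched with the piecewise-linear rule on the path $\bar\eta$. Since this combinatorial verification has already been carried out in \cite[\S 4]{She} (and the intermediate reformulation through quantum alcove paths in \cite[Algorithm 4.1]{LeSc}), I can alternatively bypass these computations by invoking the isomorphism from \cite{She} between $\mathrm{KN}_{C_n}(\varpi_i)$ and the crystal basis of $W(\varpi_i)$, and then apply Theorem \ref{thm:QLS=W}(1) together with uniqueness of the crystal isomorphism on the connected component through the classical highest weight element, reducing the verification to matching the image of the single column $\mathsf{C}_0 = \{1 \prec 2 \prec \cdots \prec i\}$, which maps to $(e,e) \in \mathrm{LS}(\varpi_i)$.
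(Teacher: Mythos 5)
The paper offers no proof of this lemma at all: it is introduced with the sentence ``The next lemma is a reformulation of \cite[Theorem A.1]{She} in terms of Maya diagrams'' and no \texttt{proof} environment follows (just as the $B_n$/$D_n$ analogues are attributed wholesale to \cite[Corollary 3.1.11 and Remark 3.1.13]{Le03}). So your proposal is not competing with an argument in the paper; it is supplying one. Your outline of the first two assertions is consistent with the machinery of \S\ref{Subsection:QBG-Maya}: the identification $\mathcal{M}(l\mathsf{C})=(M_{\nu})$, $\mathcal{M}(r\mathsf{C})=(N_{\nu})$ with $M_{\nu}\setminus N_{\nu}=I_{\mathsf{C}}\cap J_{\nu}$ and $N_{\nu}\setminus M_{\nu}=J_{\mathsf{C}}\cap J_{\nu}$ is correct, and Lemma \ref{lem:Maya-cri} (1) then gives $l\mathsf{C}\trianglelefteq r\mathsf{C}$. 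One step you assert rather than prove is that each $y_{\nu}$ falls in the same segment of $\mathcal{J}(r\mathsf{C})=\mathcal{J}(l\mathsf{C})$ as $z_{\nu}$ (equivalently, that $\# M_{\nu}=\# N_{\nu}$ segment by segment); this does follow from the maximality in (i)--(ii), since every integer strictly between $y_{\nu}$ and $\min\{y_{\nu-1},z_{\nu}\}$ occurs in $\|\mathsf{C}\|$, so the whole interval $[y_{\nu},z_{\nu}]$ lies in $\|\mathsf{C}\|\cup J_{\mathsf{C}}$ -- but it deserves a sentence.

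The one genuine gap is in your closing ``bypass.'' Uniqueness of the crystal isomorphism between two connected normal $\Fg$-crystals isomorphic to $\mathcal{B}(\varpi_i)$, plus matching of the highest weight elements, shows that there \emph{exists} a unique isomorphism sending $\mathsf{C}_0$ to $(e,e)$; it does \emph{not} show that the particular set-theoretic bijection $\mathsf{C}\mapsto(r\mathsf{C},l\mathsf{C})$ coincides with it, unless you first know that this bijection intertwines $e_j,f_j$ -- which is exactly the verification you are trying to avoid. The argument is only non-circular if the cited source already asserts that the splitting map realizes the crystal/path isomorphism (as \cite[Corollary 3.1.11]{Le03} does in types $B_n$ and $D_n$); \cite[Theorem A.1]{She} by itself is only the splittability criterion. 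So either carry out the local check on the patterns \eqref{eq:f-KN-C-1}--\eqref{eq:f-KN-C-5} (the author does precisely this kind of case analysis for the harder $B_n^{(1)}$ statement in \S\ref{Subsection:Pr-Thm-B-(2)}), or cite a reference that states the crystal-isomorphism property of the splitting, not merely the splitting criterion. A small additional correction: for the $\Fg$-crystal identification of $\mathrm{LS}(\varpi_i)$ with the highest weight crystal you want Theorem \ref{thm:QLS=W} (2), not (1).
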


By Lemma \ref{lem:QLS=LS} (1), 
we have 
$\mathrm{QLS}(\varpi_i)
=
\mathrm{LS}(\varpi_i)
\cong 
\mathrm{KN}_{C_n}(\varpi_i)$. 
Hence 
$\mathrm{KN}_{C_n}(\varpi_i)$
inherits a
$\U'$-crystal structure from 
$\mathrm{QLS}(\varpi_i)$. 
We see that only the letters 
$1,\overline{1}$
may be changed in 
$\mathsf{C}$
when we apply
$e_0$
or
$f_0$, 
and the actions of 
$e_0$
and
$f_0$
are uniquely determined from
$\mathsf{C} \cap \{ 1,\overline{1} \}$.
The action of 
$f_0$
is illustrated by 
\begin{align}
\begin{CD}
\begin{ytableau}
\overline{1}
\end{ytableau}
@>{f_0}>>
\begin{ytableau}
1
\end{ytableau}
\end{CD} \ ;
\end{align}
set 
$f_0 \mathsf{C} = \bm{0}$
otherwise. 
The map 
$e_0$
is defined to be such that 
the condition (C6) in \S \ref{Subsection:Crystals}
holds. 

For a tuple 
$(\mathsf{C}_1 , \mathsf{C}_2 , \ldots , \mathsf{C}_m)$
of columns, let 
$\mathsf{C}_1 \mathsf{C}_2 \cdots \mathsf{C}_m$
denote the tableau whose $\nu$-th column is 
$\mathsf{C}_{\nu}$. 
Recall the partial order 
$\preceq$
on 
$\mathrm{CST}_{C_n}(\varpi_i) \times \BZ$
(see Definition \ref{def:SiB-C}).

\begin{define} \label{def:SiKN-C}
Let 
$i \in I$
and 
$m \in \BZ_{\geq 0}$. 
\begin{enumerate}[(1)]
\item
Let 
$\mathbb{T}
=
\left(
\mathsf{T}_1
\mathsf{T}_2
\cdots 
\mathsf{T}_m, 
(c_1,c_2,\ldots ,c_m)
\right)$, 
where
$\mathsf{T}_{\nu}
\in 
\mathrm{KN}_{C_n}(\varpi_1)
=
\mathrm{CST}_{C_n}(\varpi_1)$
and 
$c_{\nu} \in \BZ$
for 
$\nu \in [m]$. 
We call 
$\mathbb{T}$
a semi-infinite KN $C_n$-tableau of shape 
$m\varpi_1$
if
\begin{align}
(\mathsf{T}_{\nu},c_{\nu})
\succeq 
(\mathsf{T}_{\nu+1},c_{\nu+1})
\end{align}
in
$\mathrm{CST}_{C_n}(\varpi_1) \times \BZ$
for 
$\nu \in [m-1]$. 
\item
Assume that 
$i \in [2,n]$. 
Let 
$\mathbb{T}
=
\left(
\mathsf{C}_1
\mathsf{C}_2
\cdots 
\mathsf{C}_m, 
(c_1,c_2,\ldots ,c_m)
\right)$, 
where
$\mathsf{C}_{\nu}
\in 
\mathrm{KN}_{C_n}(\varpi_i)$
and 
$c_{\nu} \in \BZ$
for 
$\nu \in [m]$. 
We call 
$\mathbb{T}$
a semi-infinite KN $C_n$-tableau of shape 
$m\varpi_i$
if 
\begin{align}
(l\mathsf{C}_{\nu},c_{\nu})
\succeq 
(r\mathsf{C}_{\nu+1},c_{\nu+1})
\end{align}
in
$\mathrm{CST}_{C_n}(\varpi_i) \times \BZ$
for 
$\nu \in [m-1]$. 
\end{enumerate}
Let 
$\mathbb{Y}^{\si}_{C_n}(m\varpi_i)$
be the set of 
semi-infinite KN $C_n$-tableaux of shape
$m\varpi_i$. 
For 
$\lambda = \sum_{i \in I} m_i \varpi_i \in P^+$, 
set 
$\mathbb{Y}^{\si}_{C_n}(\lambda)
=
\prod_{i \in I} 
\mathbb{Y}^{\si}_{C_n}(m_i \varpi_i)$. 
We call an element of 
$\mathbb{Y}^{\si}_{C_n}(\lambda)$
a semi-infinite KN $C_n$-tableau
of shape $\lambda$.
\end{define}

Let 
$\mathrm{KN}_{C_n}(\varpi_i)_{\af}$
denote the affinization of the 
$\U'$-crystal 
$\mathrm{KN}_{C_n}(\varpi_i)$
(see \S\ref{Subsection:Crystals}). 
Combining
Theorem \ref{thm:BN-Kashiwara}, 
Proposition \ref{prop:tab-cri-C} (2), 
Lemma \ref{lem:SMT-QLSaf}, 
and 
Definition \ref{def:SiKN-C}
we obtain the following theorem; 
note that 
$d_i(v,w) = 0$
for all 
$i \in I$
and 
$(v,w) 
\in 
\mathrm{QLS}(\varpi_i)$
by 
Lemma \ref{lem:d_i(v,w)}.

\begin{thm} \label{thm:SiKN->B-C}
Assume that 
$\U$
is of type 
$C_n^{(1)}$. 
Let 
$\lambda = \sum_{i \in I} m_i \varpi_i \in P^+$. 
For each 
$i \in I$, 
the image of the map
\begin{align} \label{eq:map-YC->KNaf}
\begin{split}
&\mathbb{Y}^{\si}_{C_n}(m_i \varpi_i)
\to 
\mathrm{KN}_{C_n}(\varpi_i)_{\af}^{\otimes m_i}, \\
&\left(
\mathsf{C}_1
\mathsf{C}_2
\cdots 
\mathsf{C}_{m_i}, 
(c_1,c_2,\ldots ,c_{m_i})
\right)
\mapsto 
\bigotimes_{\nu \in [m_i]}
(\mathsf{C}_{\nu} , c_{\nu}),
\end{split}
\end{align}
is a $\U$-subcrystal. 
Hence we can define a 
$\U$-crystal structure on 
$\mathbb{Y}^{\si}_{C_n}(m_i \varpi_i)$
to be such that the map 
\eqref{eq:map-YC->KNaf}
is a strict embedding of 
$\U$-crystals. 
In particular, 
$\mathbb{Y}^{\si}_{C_n}(\lambda)$
is a $\U$-subcrystal of 
$\bigotimes_{i \in I}
\mathrm{KN}_{C_n}(\varpi_i)_{\af}^{\otimes m_i}$. 
Then 
$\mathbb{Y}^{\si}_{C_n}(\lambda)$
is isomorphic,
as a $\U$-crystal, 
to the crystal basis 
$\mathcal{B}(\lambda)$.
\end{thm}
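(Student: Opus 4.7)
The plan is to factor the proof through the identification $\mathcal{B}(\lambda) \cong \bigotimes_{i \in I} \mathcal{B}(m_i\varpi_i)$ given by Theorem \ref{thm:BN-Kashiwara}, and then realize each factor $\mathcal{B}(m_i\varpi_i)$ as $\mathbb{Y}^{\si}_{C_n}(m_i\varpi_i)$ by transferring Lemma \ref{lem:SMT-QLSaf} (2) through the crystal isomorphism $\mathrm{KN}_{C_n}(\varpi_i) \xrightarrow{\cong} \mathrm{QLS}(\varpi_i)$, $\mathsf{C} \mapsto (r\mathsf{C}, l\mathsf{C})$. Since $\U$ is of type $C_n^{(1)}$, Lemma \ref{lem:d_i(v,w)} guarantees that $d_i(v,w) = 0$ for every $(v,w) \in \mathrm{QLS}(\varpi_i)$, which is crucial because it makes the formulas in Lemma \ref{lem:SMT-QLSaf} collapse to a single translation $T^{I \setminus \{i\}}_{c \alpha_i^{\vee}}$ on each factor.

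First I will fix $i \in I$ and $m_i \in \BZ_{\geq 0}$. Lifting the isomorphism $\mathrm{KN}_{C_n}(\varpi_i) \xrightarrow{\cong} \mathrm{QLS}(\varpi_i)$ to the affinization yields an isomorphism of $\U$-crystals $\mathrm{KN}_{C_n}(\varpi_i)_{\af}^{\otimes m_i} \xrightarrow{\cong} \mathrm{QLS}(\varpi_i)_{\af}^{\otimes m_i}$. Under this, $(\mathsf{C}_{\nu}, c_{\nu}) \mapsto ((r\mathsf{C}_{\nu}, l\mathsf{C}_{\nu}), c_{\nu})$. Invoking Lemma \ref{lem:SMT-QLSaf} (2) and using $d_i(v,w)=0$, the subcrystal corresponding to $\mathcal{B}(m_i\varpi_i)$ is cut out by the conditions
\begin{align*}
l\mathsf{C}_{\nu} \, T^{I \setminus \{i\}}_{c_{\nu} \alpha_i^{\vee}} \succeq r\mathsf{C}_{\nu+1} \, T^{I \setminus \{i\}}_{c_{\nu+1} \alpha_i^{\vee}}
\quad \text{in } (W^{I \setminus \{i\}})_{\af}, \qquad \nu \in [m_i-1].
\end{align*}
Next I apply the tableau criterion Proposition \ref{prop:tab-cri-C} (2) to each of these comparisons. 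Since $\mathcal{Y}_i^{C_n}(l\mathsf{C}_{\nu} T^{I \setminus \{i\}}_{c_{\nu} \alpha_i^{\vee}}) = (l\mathsf{C}_{\nu}, c_{\nu})$ and similarly on the right, the Bruhat condition above is equivalent to $(l\mathsf{C}_{\nu}, c_{\nu}) \succeq (r\mathsf{C}_{\nu+1}, c_{\nu+1})$ in $\mathrm{CST}_{C_n}(\varpi_i) \times \BZ$, which is exactly the defining condition of $\mathbb{Y}^{\si}_{C_n}(m_i\varpi_i)$ in Definition \ref{def:SiKN-C} (2); the case $i=1$ is the degenerate version with $r\mathsf{C} = l\mathsf{C} = \mathsf{C}$ and uses Definition \ref{def:SiKN-C} (1) in the same way. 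This identifies the image of \eqref{eq:map-YC->KNaf} with the subcrystal coming from $\mathcal{B}(m_i\varpi_i)$, proving simultaneously that the image is a $\U$-subcrystal, that transporting the crystal structure makes the map a strict embedding, and that $\mathbb{Y}^{\si}_{C_n}(m_i\varpi_i) \cong \mathcal{B}(m_i\varpi_i)$.

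Finally, taking tensor products over $i \in I$ and invoking Theorem \ref{thm:BN-Kashiwara} gives
\begin{align*}
\mathbb{Y}^{\si}_{C_n}(\lambda) = \prod_{i \in I} \mathbb{Y}^{\si}_{C_n}(m_i \varpi_i) \cong \bigotimes_{i \in I} \mathcal{B}(m_i\varpi_i) \cong \mathcal{B}(\lambda)
\end{align*}
as $\U$-crystals, and the tensor product of the strict embeddings above realizes $\mathbb{Y}^{\si}_{C_n}(\lambda)$ as a $\U$-subcrystal of $\bigotimes_{i \in I} \mathrm{KN}_{C_n}(\varpi_i)_{\af}^{\otimes m_i}$.

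I expect the main obstacle to be purely bookkeeping: verifying that the crystal isomorphism \eqref{eq:KN->LS-C} indeed extends to the affinization so that the translation parameter $c_{\nu}$ is respected on both sides, and checking that the $i=1$ case (where the split $(r\mathsf{C}, l\mathsf{C})$ is trivial and the QLS/LS structure degenerates) slots correctly into the same framework. Both issues should be immediate from the definitions once carefully unwound, so no substantive new input beyond Theorem \ref{thm:BN-Kashiwara}, Proposition \ref{prop:tab-cri-C}, Lemma \ref{lem:d_i(v,w)}, Lemma \ref{lem:SMT-QLSaf}, and the bijection \eqref{eq:KN->LS-C} is required.
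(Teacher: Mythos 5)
Your proposal is correct and follows essentially the same route as the paper, which derives the theorem precisely by combining Theorem \ref{thm:BN-Kashiwara}, Proposition \ref{prop:tab-cri-C} (2), Lemma \ref{lem:SMT-QLSaf}, and Definition \ref{def:SiKN-C}, with the vanishing $d_i(v,w)=0$ from Lemma \ref{lem:d_i(v,w)} in type $C_n^{(1)}$ playing exactly the simplifying role you identify. The translation of the semi-infinite Bruhat condition $l\mathsf{C}_{\nu}T^{I\setminus\{i\}}_{c_{\nu}\alpha_i^{\vee}} \succeq r\mathsf{C}_{\nu+1}T^{I\setminus\{i\}}_{c_{\nu+1}\alpha_i^{\vee}}$ into the tableau condition via $\mathcal{Y}_i^{C_n}$ is exactly the intended mechanism.
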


\subsection{Type $B_n^{(1)}$} \label{Subsection:tab-type-B}

Throughout this subsection, 
we assume that 
$\Fg$ and $W$ are of type $B_n$. 
Recall that 
$I = [n]$, 
$\Delta = \{ \pm(\varepsilon_s \pm \varepsilon_t) \mid s,t \in [n], \ s < t \} \sqcup \{ \pm \varepsilon_s \mid s \in [n] \}$, 
and
$\Pi 
=
\{ \alpha_s = \varepsilon_s - \varepsilon_{s+1} \mid s \in [n-1] \} 
\sqcup 
\{ \alpha_n = \varepsilon_n \}$. 
The highest root is 
$\theta
=
\varepsilon_1 + \varepsilon_2 
= 
\alpha_1 + 2\alpha_2 + \cdots + 2\alpha_n$; 
we have
$\theta^{\vee} = \theta$. 
We identify 
$\varpi_i$
with 
$\varepsilon_1 + \varepsilon_2 + \cdots + \varepsilon_i$
if 
$i \in [1,n-1]$, 
and with 
$\dfrac{1}{2}(\varepsilon_1 + \varepsilon_2 + \cdots + \varepsilon_n)$
if 
$i = n$.

Set 
\begin{align}
\mathcal{B}_n
=
\left\{ 
1 \prec 2 \prec \cdots \prec n 
\prec 0 \prec 
\overline{n} \prec \cdots \prec \overline{2} \prec \overline{1} 
\right\} .
\end{align}
Define 
$\sigma : \mathcal{B}_n \to \mathcal{B}_n$
by 
$\sigma(0) = 0$, 
$\sigma(u) = \overline{u}$, 
and 
$\sigma(\overline{u}) = u$
for 
$u \in [n]$. 

Let 
$i \in [n-1]$. 
A map
$\mathsf{C} : [i] \to \mathcal{B}_n$
is, by definition, 
a Kashiwara--Nakashima $B_n$-column
(KN $B_n$-column for short) 
of shape $\varpi_i$ 
if 
\begin{enumerate}[(KN-B1)]
\item
$\mathsf{C}(1)
\preceq
\mathsf{C}(2)
\preceq
\cdots 
\preceq 
\mathsf{C}(i)$, 
\item
if 
$0 \prec \mathsf{C}(u)$
or 
$\mathsf{C}(u+1) \prec 0$, 
then 
$\mathsf{C}(u) \prec \mathsf{C}(u+1)$
for 
$u \in [i-1]$, 
\item
if 
$t = \mathsf{C}(p) = \sigma(\mathsf{C}(q)) \in [n]$
for some 
$p,q \in [i]$, 
then 
$q-p > i-t$; 
\end{enumerate}
note that 
$0 \in \mathcal{B}_n$ 
is the unique element that may appear in 
$\mathsf{C}$
more than once. 
Let 
$\mathrm{KN}_{B_n}(\varpi_i)$
be the set of 
KN $B_n$-columns of shape $\varpi_i$. 
We sometimes identify 
$\mathsf{C} \in \mathrm{KN}_{B_n}(\varpi_i)$
with the multiset 
$\{ \mathsf{C}(u) \mid u \in [i] \}$. 
For convenience we also denote by 
$\mathrm{KN}_{B_n}(\varpi_n)
=
\mathrm{CST}_{B_n}(\varpi_n)$. 

Let 
$i \in [n-1]$. 
Let 
$\mathsf{C} : [i] \to \mathcal{B}_n$
be a map satisfying (KN-B1)--(KN-B2).
Let 
$I_{\mathsf{C}}
=
\{ z_1 \succeq z_2 \succeq \cdots \succeq z_k \}$
be the multiset of 
$z \in \mathsf{C}$
such that 
$z \preceq 0$
and 
$\{ z,\sigma(z) \}
\subset 
\mathsf{C}$; 
note that 
$\{ \mathsf{C}(u) \mid u \in [i], \mathsf{C}(u) = 0 \}$
is a multisubset of 
$I_{\mathsf{C}}$. 
We say that 
$\mathsf{C}$
can be split if there exists a subset 
$J_{\mathsf{C}}
=
\{ y_1 > y_2 > \cdots > y_k \} \subset [n]$
such that 
\begin{enumerate}[(i)]
\item
$y_1
=
\max
\{ y \in \mathcal{B}_n \mid 
y \prec z_1, \ 
y \notin \mathsf{C}, \ 
\overline{y} \notin \mathsf{C} \}$,
\item
$y_{\nu}
=
\max
\{ y \in \mathcal{B}_n \mid 
y \prec \min \{ y_{\nu -1}, z_{\nu} \}, \ 
y \notin \mathsf{C}, \ 
\overline{y} \notin \mathsf{C} \}$
for 
$\nu \in [2,k]$.
\end{enumerate}
Define
$r\mathsf{C}, 
l\mathsf{C}
\in
\mathrm{CST}_{B_n}(\varpi_i)$
to be such that 
$r\mathsf{C}
=
(\mathsf{C} 
\setminus 
\{ \sigma(z) \mid z \in I_{\mathsf{C}} \})
\cup 
\{ \sigma(y) \mid y \in J_{\mathsf{C}} \}$
and 
$l\mathsf{C}
=
(\mathsf{C} 
\setminus 
I_{\mathsf{C}})
\cup 
J_{\mathsf{C}}$. 

Define a 
$\Fg$-crystal structure on 
$\mathrm{KN}_{B_n}(\varpi_i)$
for 
$i \in [n-1]$
as follows
(cf. \cite[\S 5]{KN}); 
for the definition of a $\Fg$-crystal structure on 
$\mathrm{KN}_{B_n}(\varpi_n)$
we refer the reader to 
\cite[\S 2.3]{Le03}.
The maps 
$\mathrm{wt},\varepsilon_j,\varphi_j$
for 
$j \in I$
and 
$e_j,f_j$
for 
$j \in [n-1]$
are defined in the same manner as 
those for $\mathrm{KN}_{C_n}(\varpi_i)$. 
Note that only the letters 
$n,0,\overline{n}$
may be changed in 
$\mathsf{C}$
when we apply
$e_n$
or
$f_n$. 
Moreover, 
the actions of 
$e_n$
and
$f_n$
are uniquely determined from the multiset 
$\{ \mathsf{C}(u) \mid u \in [i], \ 
\mathsf{C}(u) \in \{ n,0,\overline{n} \} \}$.
The action of 
$f_n$
is illustrated by 
\begin{align}
\begin{CD}
\begin{ytableau}
n
\end{ytableau}
@>{f_n}>>
\begin{ytableau}
0
\end{ytableau}
@>{f_n}>>
\begin{ytableau}
\overline{n}
\end{ytableau}
\end{CD} \ ,
\hspace{1cm}
\begin{CD}
\begin{ytableau}
n \\ 0 \\ \vdots \\ 0
\end{ytableau}
@>{f_n}>>
\begin{ytableau}
0 \\ \vdots \\ \vdots \\ 0
\end{ytableau}
@>{f_n}>>
\begin{ytableau}
0 \\ \vdots \\ 0 \\ \overline{n}
\end{ytableau}
\end{CD} \ ;
\end{align}
set 
$f_n \mathsf{C} = \bm{0}$
otherwise.
The map 
$e_n$
is defined to be such that 
the condition (C6) in \S \ref{Subsection:Crystals}
holds. 

The next lemma is a reformulation of 
\cite[Corollary 3.1.11 and Remark 3.1.13]{Le03}
in terms of Maya diagrams.

\begin{lem} \label{lem:KN->LS-B}
Assume that 
$\Fg$
and 
$W$
are of type $B_n$. 
\begin{enumerate}[(1)]
\item
The map 
$\mathrm{LS}(\varpi_n)
\to 
\mathrm{KN}_{B_n}(\varpi_n)
=
\mathrm{CST}_{B_n}(\varpi_n)$, 
$(w;0,1)
\mapsto 
\mathsf{T}_w^{(n)}$, 
is an isomorphism of 
$\Fg$-crystals. 
\item
Assume that 
$i \in [n-1]$. 
For a map 
$\mathsf{C} : [i] \to \mathcal{B}_n$
satisfying (KN-B1)--(KN-B2), 
we have 
$\mathsf{C}
\in 
\mathrm{KN}_{B_n}(\varpi_i)$
if and only if 
$\mathsf{C}$
can be split. 
The map 
\begin{align} \label{eq:KN->LS-B}
\mathrm{KN}_{B_n}(\varpi_i)
\to 
\mathrm{LS}(\varpi_i), \ 
\mathsf{C}
\mapsto 
(r\mathsf{C},l\mathsf{C}), 
\end{align}
is an isomorphism of $\Fg$-crystals.
The inverse of
\eqref{eq:KN->LS-B}
is given as follows. 
Let 
$(v,w) 
\in 
\mathrm{LS}(\varpi_i)$, 
$\mathcal{J}
=
\mathcal{J}(w) 
= 
\mathcal{J}(v) 
= 
(J_1 < \cdots < J_{\mu})
\in 
\mathcal{S}_i$, 
$\mathcal{M}(w) = (M_{\nu}), 
\mathcal{M}(v) = (N_{\nu})
\in 
2^{\mathcal{J}}$, 
and 
$f = \# N_{\mu} - \# M_{\mu} \in \BZ_{\geq 0}$.
The inverse image of 
$(v,w)$
is
\begin{align}
\begin{split}
\mathsf{C} 
&= 
\{ v(u) \mid u \in [i], \ v(u) \preceq n \}
\cup
\{ w(u) \mid u \in [i], \ w(u) \succeq \overline{n} \}
\cup
\{ \underbrace{0,0,\ldots ,0}_{f \ \text{times}} \} \\
&=
\bigcup_{\nu=1}^{\mu}
\left(
(J_{\nu} \setminus N_{\nu})
\cup
\{ \overline{z} \mid z \in M_{\nu} \}
\right)
\cup
\{ \underbrace{0,0,\ldots ,0}_{f \ \text{times}} \} ;
\end{split}
\end{align}
we have 
$I_{\mathsf{C}}
=
\bigcup_{\nu = 1}^{\mu} (M_{\nu} \setminus N_{\nu})
\cup
\{ \underbrace{0,0,\ldots ,0}_{f \ \text{times}} \}$
and
$J_{\mathsf{C}}
=
\bigcup_{\nu = 1}^{\mu} (N_{\nu} \setminus M_{\nu})$. 
\end{enumerate}
\end{lem}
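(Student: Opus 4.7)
The plan is to derive both parts as translations, in the Maya-diagram language developed in \S\ref{Subsection:QBG-Maya}, of the known type-$B_n$ results of Lecouvey, together with the LS path realization \eqref{eq:LS} established in the present paper.

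For part (1), I would observe that $n \in I$ is minuscule in type $B_n$ (since $\langle \gamma^\vee,\varpi_n\rangle \in \{0,1\}$ for all $\gamma \in \Delta^+$), so Lemma \ref{lem:QLS=LS}(2) gives $\mathrm{LS}(\varpi_n) = \mathrm{QLS}(\varpi_n)$ and identifies it set-theoretically with $W^{I \setminus \{n\}}$ via the map $w \mapsto (w;0,1)$. Composing with the bijection $W^{I \setminus \{n\}} \xrightarrow{\sim} \mathrm{CST}_{B_n}(\varpi_n)$, $w \mapsto \mathsf{T}_w^{(n)}$, from Lemma \ref{lem:Gr-C} (whose statement applies verbatim here, using the appropriate sign-parity condition for type $B$) yields the stated bijection. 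To promote it to a $\Fg$-crystal isomorphism I would compare the weight and Kashiwara-operator actions on both sides directly from \eqref{eq:path} and the KN column rules on $\mathrm{CST}_{B_n}(\varpi_n)$; alternatively, both sides are classically known to realize the crystal basis of the spin representation $V(\varpi_n)$, so the bijection on highest-weight vectors propagates.

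For part (2), the starting point is \cite[Corollary 3.1.11 and Remark 3.1.13]{Le03}, which establish both the splittability criterion (a map $\mathsf{C}$ satisfying (KN-B1)--(KN-B2) lies in $\mathrm{KN}_{B_n}(\varpi_i)$ iff it can be split) and a $\Fg$-crystal isomorphism between $\mathrm{KN}_{B_n}(\varpi_i)$ and Lecouvey's LS path realization of the crystal basis of $V(\varpi_i)$, via $\mathsf{C} \mapsto (r\mathsf{C}, l\mathsf{C})$. What remains is to match Lecouvey's path model with $\mathrm{LS}(\varpi_i)$ as defined by \eqref{eq:LS}. Given a split column $\mathsf{C}$, the construction of $r\mathsf{C}$ and $l\mathsf{C}$ shows that these two columns differ only at positions indexed by $I_{\mathsf{C}}$ and $J_{\mathsf{C}}$: $r\mathsf{C}$ carries barred letters $\{\sigma(z) : z \in I_{\mathsf{C}}\}$ replaced by $\{\sigma(y) : y \in J_{\mathsf{C}}\}$, and symmetrically for $l\mathsf{C}$. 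In particular $\mathcal{J}(r\mathsf{C}) = \mathcal{J}(l\mathsf{C})$, and in the notation of \eqref{eq:W[J]->2^J} the pair $(\mathcal{M}(l\mathsf{C}), \mathcal{M}(r\mathsf{C}))$ is obtained from a sequence of elementary moves of types (M1)--(M3) of Definition \ref{def:Maya-1/2-J}, so $\mathcal{M}(l\mathsf{C}) \trianglelefteq' \mathcal{M}(r\mathsf{C})$ by the characterization in Lemma \ref{lem:Maya-cri}. By \eqref{eq:LS} this means $(r\mathsf{C}, l\mathsf{C}) \in \mathrm{LS}(\varpi_i)$, and Lecouvey's theorem promotes the map to a $\Fg$-crystal isomorphism.

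For the explicit inverse, given $(v,w) \in \mathrm{LS}(\varpi_i)$ I would read off $(J_\nu), (M_\nu), (N_\nu)$, set $f = \#N_\mu - \#M_\mu$, and define $\mathsf{C}$ by the stated union. Direct inspection of the multiset of letters gives $\{\mathsf{C}(u) : \mathsf{C}(u)\preceq 0\} \cap \{\sigma(\mathsf{C}(u)) : \mathsf{C}(u)\preceq 0\} = I_{\mathsf{C}}$ and $J_{\mathsf{C}}$ as claimed, and one verifies that (KN-B1)--(KN-B3) hold by using the ordering constraints on $(M_\nu)$ and $(N_\nu)$ forced by $\mathcal{M}(l\mathsf{C}) \trianglelefteq' \mathcal{M}(r\mathsf{C})$. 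The main obstacle will be the bookkeeping around the ``extra'' letters $0$: their multiplicity $f = \#N_\mu - \#M_\mu$ precisely records the number of Maya moves of type (M2) in the top segment $J_\mu$ containing $n$, and one must show that these $0$'s contribute to $I_{\mathsf{C}}$ without demanding matching barred partners (since $\sigma(0)=0$). Once this $0$-accounting is reconciled with Lecouvey's splitting recipe through a case distinction on whether $n \in J_\mu$, the two bijections agree and the proof is complete.
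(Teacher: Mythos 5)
Your proposal matches the paper's treatment: the paper gives no proof of this lemma beyond the remark that it is a reformulation of Lecouvey's Corollary 3.1.11 and Remark 3.1.13 in terms of Maya diagrams, and your argument is precisely that citation plus the translation into the Maya-diagram/LS-path language of \eqref{eq:LS}, including the $0$-accounting via $f = \#N_{\mu} - \#M_{\mu}$. (One small slip: there is no sign-parity condition on $\mathrm{CST}_{B_n}(\varpi_n)$ --- that is a type-$D_n$ feature --- but Lemma \ref{lem:Gr-C} applies verbatim anyway since the Weyl groups of types $B_n$ and $C_n$ coincide.)
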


Set 
\begin{align}
\Tilde{\mathcal{B}}_n
=
\mathcal{B}_n
\cup
\{ \overline{0} \}
=
\left\{ 
1 \prec 2 \prec \cdots \prec n 
\prec 0 \prec 
\overline{n} \prec \cdots \prec \overline{2} \prec \overline{1} 
\prec 
\overline{0}
\right\} .
\end{align}

\begin{define} \label{def:QKN-B}
Let 
$i \in [n-1]$. 
A map
$\Tilde{\mathsf{C}} : [i] \to \Tilde{\mathcal{B}}_n$
is, by definition, 
a quantum Kashiwara--Nakashima $B_n$-column
(QKN $B_n$-column for short) 
of shape $\varpi_i$ 
if 
\begin{enumerate}[(QKN-B1)]
\item
there exists 
$m \in \BZ_{\geq 0}$
such that 
$2m \leq i$
and 
\begin{align}
\begin{split}
\Tilde{\mathsf{C}}(1)
&\preceq
\Tilde{\mathsf{C}}(2)
\preceq
\cdots 
\preceq
\Tilde{\mathsf{C}}(i-2m) \\
&\prec
\overline{0}
=
\Tilde{\mathsf{C}}(i-2m+1)
=
\cdots 
=
\Tilde{\mathsf{C}}(i-1)
=
\Tilde{\mathsf{C}}(i),
\end{split}
\end{align} 
\item
the map 
$\mathsf{C} : [i-2m] \to \mathcal{B}_n$, 
$u \mapsto \Tilde{\mathsf{C}}(u)$, 
is a KN $B_n$-column of shape $\varpi_{i-2m}$; 
\end{enumerate}
in this case, 
we write 
$\Tilde{\mathsf{C}}
=
\mathsf{C}
\cup
\{ \underbrace{\overline{0},\overline{0},\ldots ,\overline{0}}_{2m \ \text{times}} \}$
for brevity.
Let 
$\mathrm{QKN}_{B_n}(\varpi_i)$
be the set of 
QKN $B_n$-columns of shape $\varpi_i$. 
We sometimes identify 
$\Tilde{\mathsf{C}} \in \mathrm{QKN}_{B_n}(\varpi_i)$
with the multiset 
$\{ \Tilde{\mathsf{C}}(u) \mid u \in [i] \}$. 
For convenience we also denote by 
$\mathrm{QKN}_{B_n}(\varpi_n)
=
\mathrm{CST}_{B_n}(\varpi_n)$. 

Let 
$i \in [n-1]$. 
Let
$\Tilde{\mathsf{C}}
\in 
\mathrm{QKN}_{B_n}(\varpi_i)$, 
and let
$\mathsf{C}
\in 
\mathrm{KN}_{B_n}(\varpi_{i-2m})$
be as in 
(QKN-B2). 
Write 
$\{ x_1 < x_2 < \cdots < x_{n-i+2m} \}
=
[n] \setminus \{ \| r\mathsf{C}(u) \| \mid u \in [i-2m] \}$
and set 
$K_{\Tilde{\mathsf{C}}}
=
\{ x_1 < x_2 < \cdots < x_{2m} \}$; 
note that 
$x_{\nu}$, 
$\nu \in [2m]$, 
are uniquely determined by
\begin{enumerate}[(i)]
\item
$x_1 
= 
\min \{ x \in \mathcal{B}_n \mid 
x \succeq 1, \ x \notin r\mathsf{C}, \ \overline{x} \notin r\mathsf{C} \}$,
\item
$x_{\nu} 
= 
\min \{ x \in \mathcal{B}_n \mid 
x \succ x_{\nu - 1}, \ x \notin r\mathsf{C}, \ \overline{x} \notin r\mathsf{C} \}$
for 
$\nu \in [2,2m]$.
\end{enumerate}
Define
$r\Tilde{\mathsf{C}}
=
K_{\Tilde{\mathsf{C}}}
\cup 
r\mathsf{C}$
and 
$l\Tilde{\mathsf{C}}
=
\{ \overline{x} \mid x \in K_{\Tilde{\mathsf{C}}}\}
\cup 
l\mathsf{C}$; 
note that 
$r\Tilde{\mathsf{C}}, 
l\Tilde{\mathsf{C}}
\in 
\mathrm{CST}_{B_n}(\varpi_i)$
(cf. \cite{B}; 
see also 
\cite[Algorithm 4.1]{LeSc}). 
\end{define}

Let 
$i \in [n-1]$. 
Define a $\U'$-crystal structure on
$\mathrm{QKN}_{B_n}(\varpi_i)$
as follows.
Let
$\Tilde{\mathsf{C}}
=
\mathsf{C}
\cup
\{ \overline{0},\ldots ,\overline{0} \}
\in 
\mathrm{QKN}_{B_n}(\varpi_i)$
and 
$j \in I$. 
Set 
$\mathrm{wt}
(\Tilde{\mathsf{C}})
=
\mathrm{wt}
(\mathsf{C})$, 
$e_j \Tilde{\mathsf{C}}
=
e_j\mathsf{C} \cup \{ \overline{0},\ldots ,\overline{0}\}$, 
and 
$f_j \Tilde{\mathsf{C}}
=
f_j\mathsf{C} \cup \{ \overline{0},\ldots ,\overline{0}\}$; 
we understand that
$e_j \Tilde{\mathsf{C}}
=
\bm{0}$
(resp.
$f_j \Tilde{\mathsf{C}}
=
\bm{0}$)
if 
$e_j \mathsf{C}
=
\bm{0}$
(resp.
$f_j \mathsf{C}
=
\bm{0}$).
The actions of 
$e_0$
and 
$f_0$
are uniquely determined from
$\mathsf{C} 
\cap 
\{ 1,2,\overline{2},\overline{1},z_k,\overline{z_k} \}$
and 
$m \in \BZ_{\geq 0}$, 
where 
$z_k = \min I_{\mathsf{C}}$
and 
$m$
is as in (QKN-B1).
Let 
$y_k
=
\min J_{\mathsf{C}}$. 
If 
$m > 0$, 
let 
$x_1 = \min K_{\Tilde{\mathsf{C}}}$
and 
$x_2 = \min (K_{\Tilde{\mathsf{C}}} \setminus \{ x_1 \})$.
The action of 
$f_0$
is illustrated as follows.
\begin{enumerate}[(i)]
\item
Assume that 
$m = 0$
and 
$y_k \notin \{ 1,2 \}$. 
Set 
\begin{align} \label{eq:f0-KN-B-1}
\begin{CD}
\begin{ytableau}
\overline{2}
\end{ytableau}
@>{f_0}>>
\begin{ytableau}
1
\end{ytableau}
\end{CD} \ , 
&&
\begin{CD}
\begin{ytableau}
\overline{1}
\end{ytableau}
@>{f_0}>>
\begin{ytableau}
2
\end{ytableau}
\end{CD} \ ,
&&
\begin{CD}
\begin{ytableau}
\overline{2} \\ \overline{1}
\end{ytableau}
@>{f_0}>>
\begin{ytableau}
\overline{0} \\ \overline{0}
\end{ytableau}
\end{CD} \ ,
\end{align}
and set
$f_j \Tilde{\mathsf{C}} = \bm{0}$
otherwise. 
\item
Assume that 
$m > 0$
and 
$y_k \notin \{ 1,2 \}$. 
Set 
\begin{align} \label{eq:f0-KN-B-2}
\begin{split}
&
\begin{CD}
\begin{ytableau}
\overline{2} \\ \overline{1}
\end{ytableau}
@>{f_0}>>
\begin{ytableau}
\overline{0} \\ \overline{0}
\end{ytableau}
@>{f_0}>>
\begin{ytableau}
1 \\ 2
\end{ytableau}
\end{CD} \ ,
\\[3mm]
&
\begin{CD}
\begin{ytableau}
\overline{2} \\ \overline{0} \\ \overline{0}
\end{ytableau}
@>{f_0}>>
\begin{ytableau}
1 \\ x_2 \\ \overline{x_2}
\end{ytableau}
\end{CD} \ , 
\hspace{1cm}
\begin{CD}
\begin{ytableau}
\overline{1} \\ \overline{0} \\ \overline{0}
\end{ytableau}
@>{f_0}>>
\begin{ytableau}
2 \\ x_2 \\ \overline{x_2}
\end{ytableau}
\end{CD} \ ,
\end{split}
\end{align}
and set 
$f_j \Tilde{\mathsf{C}} = \bm{0}$
otherwise. 
\item
Assume that 
$y_k \in \{ 1,2 \}$. 
Set 
\begin{align} \label{eq:f0-KN-B-3}
\begin{CD}
\begin{ytableau}
z_k \\ \overline{z_k} \\ \overline{2}
\end{ytableau}
@>{f_0}>>
\begin{ytableau}
1 \\ \overline{0} \\ \overline{0}
\end{ytableau}
\end{CD} \ ,
\hspace{1cm}
\begin{CD}
\begin{ytableau}
z_k \\ \overline{z_k} \\ \overline{1}
\end{ytableau}
@>{f_0}>>
\begin{ytableau}
2 \\ \overline{0} \\ \overline{0}
\end{ytableau}
\end{CD} \ ,
\end{align}
and set
$f_j \Tilde{\mathsf{C}} = \bm{0}$
otherwise. 
\end{enumerate}
The map 
$e_0$
is defined to be such that 
the condition (C6) in \S \ref{Subsection:Crystals}
holds. 
For 
$\Tilde{\mathsf{C}} \in \mathrm{QKN}_{B_n}(\varpi_i)$
and 
$j \in I_{\af}$, 
set 
$\varepsilon_j(\Tilde{\mathsf{C}})
=
\max \{ k \in \BZ_{\geq 0} \mid e_j^k \Tilde{\mathsf{C}} \neq \bm{0} \}$
and 
$\varphi_j(\Tilde{\mathsf{C}})
=
\max \{ k \in \BZ_{\geq 0} \mid f_j^k \Tilde{\mathsf{C}} \neq \bm{0} \}$. 
It is Theorem \ref{thm:QKN->QLS-B} (2) below
that makes these definitions allowable.

Similarly, we can define a 
$\U'$-crystal structure on 
$\mathrm{QKN}_{B_n}(\varpi_n)$. 
The maps 
$e_0,f_0$
is given as follows. 
Let 
$\Tilde{\mathsf{C}}
\in 
\mathrm{QKN}_{B_n}(\varpi_n)$.
We see that only the letters 
$1,2,\overline{2},\overline{1}$
may be changed in 
$\Tilde{\mathsf{C}}$
when we apply
$e_0$
or
$f_0$, 
and the actions of 
$e_0$
and
$f_0$
are uniquely determined from
$\Tilde{\mathsf{C}} 
\cap 
\{ 1,2,\overline{2},\overline{1} \}$.
The action of 
$f_0$
is illustrated by
\begin{align}
\begin{CD}
\begin{ytableau}
\overline{2} \\ \overline{1}
\end{ytableau}
@>{f_0}>>
\begin{ytableau}
1 \\ 2
\end{ytableau}
\end{CD} \ ;
\end{align}
set 
$f_0 \Tilde{\mathsf{C}} = \bm{0}$
otherwise. 
The map 
$e_0$
is defined to be such that 
the condition (C6) in \S \ref{Subsection:Crystals}
holds.

\begin{thm} \label{thm:QKN->QLS-B}
Assume that 
$\U$
is of type $B_n^{(1)}$. 
\begin{enumerate}[(1)]
\item
The set 
$\mathrm{QKN}_{B_n}(\varpi_n)$
equipped with the maps 
$\mathrm{wt},e_j,f_j,\varepsilon_j,\varphi_j$, 
$j \in I_{\af}$, 
is a $\U'$-crystal. 
The map 
$\mathrm{QLS}(\varpi_n)
=
\mathrm{LS}(\varpi_n)
\to 
\mathrm{QKN}_{B_n}(\varpi_n)
=
\mathrm{CST}_{B_n}(\varpi_n)$, 
$(w;0,1)
\mapsto 
\mathsf{T}_w^{(n)}$, 
is an isomorphism of 
$\U'$-crystals. 
\item
Assume that 
$i \in [n-1]$.
The set 
$\mathrm{QKN}_{B_n}(\varpi_i)$
equipped with the maps 
$\mathrm{wt},e_j,f_j,\varepsilon_j,\varphi_j$, 
$j \in I_{\af}$, 
is a $\U'$-crystal. 
The map 
\begin{align} \label{eq:QKN->QLS-B}
\mathrm{QKN}_{B_n}(\varpi_i)
\to 
\mathrm{QLS}(\varpi_i), \ 
\Tilde{\mathsf{C}}
\mapsto 
(r\Tilde{\mathsf{C}},l\Tilde{\mathsf{C}}),
\end{align}
is an isomorphism of 
$\U'$-crystals.
The inverse of 
\eqref{eq:QKN->QLS-B}
is given as follows. 
Let 
$(v,w) \in \mathrm{QLS}(\varpi_i)$, 
$\mathcal{J}
=
\mathcal{J}(w) 
= 
\mathcal{J}(v) 
= 
(J_1 < \cdots < J_{\mu})
\in 
\mathcal{S}_i$
and 
$\mathcal{M}(w) = (M_{\nu}), 
\mathcal{M}(v) = (N_{\nu})
\in 
2^{\mathcal{J}}$. 
Set 
$f = \# N_{\mu} - \# M_{\mu} \in \BZ_{\geq 0}$
if 
$n \in J_{\mu}$, 
and set
$f = 0$
otherwise. 
Set
$2m = \# M_1 - \# N_1 \in 2\BZ_{\geq 0}$
if 
$1,2 \in J_1$, 
and set
$m = 0$
otherwise; 
we see from 
Lemma \ref{lem:d_i(v,w)}
that 
$m = d_i(v,w)$. 
Define 
\begin{enumerate}[(i)]
\item
$\{ y^1 < y^2 < \cdots < y^l \}
=
N_1 \setminus M_1$, 
\item
$z^1
=
\min \{ z \in J_1 \mid 
z \succ y^1, \ 
z \in M_1 \setminus N_1 \}$, and 
\item
$z^{\nu}
=
\min \{ z \in J_1 \mid 
z \succ \max \{ y^{\nu} , z^{\nu -1} \}, \ 
z \in M_1 \setminus N_1 \}$
for 
$\nu \in [2,l]$.
\end{enumerate}
The inverse image of 
$(v,w)$
is
\begin{align} \label{eq:QLS->QKN-B}
\begin{split}
\Tilde{\mathsf{C}}
=
(J_1 
&\setminus 
(M_1 \cup N_1))
\cup
\{ z^{\nu}, \overline{z^{\nu}}
\mid 
\nu \in [l] \}
\cup 
\{ \overline{z} \mid z \in M_1 \cap N_1 \} \\[2mm]
&\cup
\bigcup_{\nu=2}^{\mu}
\left(
(J_{\nu} \setminus N_{\nu})
\cup
\{ \overline{z} \mid z \in M_{\nu} \}
\right)
\cup
\{ \underbrace{0,0,\ldots ,0}_{f \ \text{times}} \} 
\cup
\{ \underbrace{\overline{0},\overline{0},\ldots ,\overline{0}}_{2m \ \text{times}} \};
\end{split}
\end{align}
if we set 
$\mathsf{C}
=
\Tilde{\mathsf{C}}
\setminus 
\{ \underbrace{\overline{0},\overline{0},\ldots ,\overline{0}}_{2m \ \text{times}} \}$, 
then 
$\mathsf{C}
\in 
\mathrm{KN}_{B_n}(\varpi_{i-2m})$
and 
\begin{align}
I_{\mathsf{C}}
=
\{ z^{\nu} \mid \nu \in [l] \}
\cup
\bigcup_{\nu = 2}^{\mu} (M_{\nu} \setminus N_{\nu})
\cup
\{ \underbrace{0,0,\ldots ,0}_{f \ \text{times}} \} , \ \ 
J_{\mathsf{C}}
=
\bigcup_{\nu = 1}^{\mu} (N_{\nu} \setminus M_{\nu}).
\end{align}
\item
Assume that 
$i \in [n-1]$. 
The map 
\begin{align} \label{eq:QKN->KN-B}
\mathrm{QKN}_{B_n}(\varpi_i)
\to 
\bigsqcup_{m=0}^{\left\lfloor \frac{i}{2} \right\rfloor}
\mathrm{KN}_{B_n}(\varpi_{i-2m}), \ 
\Tilde{\mathsf{C}}
=
\mathsf{C}
\cup
\{ \underbrace{\overline{0},\overline{0},\ldots ,\overline{0}}_{2m \ \text{times}} \}
\mapsto 
\mathsf{C},
\end{align}
is an isomorphism of 
$\Fg$-crystals, 
where 
$\mathsf{C}$
and 
$m$
are as in (QKN-B1)--(QKN-B2)
for 
$\Tilde{\mathsf{C}}$. 
Here we understand that 
$\mathrm{KN}_{B_n}(\varpi_0)
=
\{ \emptyset \}$
is a $\Fg$-crystal 
isomorphic to the crystal basis of 
the trivial module. 
The inverse image of 
$\mathsf{C}
\in 
\mathrm{KN}_{B_n}(\varpi_{i-2m})$
under the map 
\eqref{eq:QKN->KN-B}
is 
$\Tilde{\mathsf{C}}
=
\mathsf{C}
\cup
\{ \underbrace{\overline{0},\overline{0},\ldots ,\overline{0}}_{2m \ \text{times}} \}$. 
\end{enumerate}
\end{thm}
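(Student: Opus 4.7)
The strategy is to prove (2) by first establishing the bijection at the combinatorial level via Maya diagrams and then verifying compatibility with the $\U'$-crystal structure; (1) is a degenerate minuscule case and (3) is an easy corollary of (2). For (1), since $i=n$ is minuscule in type $B_n$, Lemma \ref{lem:QLS=LS} (2) gives $\mathrm{QLS}(\varpi_n) = \mathrm{LS}(\varpi_n)$, and Lemma \ref{lem:KN->LS-B} (1) provides the $\Fg$-crystal isomorphism to $\mathrm{CST}_{B_n}(\varpi_n) = \mathrm{QKN}_{B_n}(\varpi_n)$. What is left is to check that the prescribed $f_0$-rule on $\mathrm{QKN}_{B_n}(\varpi_n)$ corresponds to the unique $0$-colored quantum edge in $\mathrm{QLS}(\varpi_n)$, which is a direct computation using (q-B4) of Proposition \ref{prop:Q=B}.

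For (2), the first step is to show that $\Tilde{\mathsf{C}} \mapsto (r\Tilde{\mathsf{C}}, l\Tilde{\mathsf{C}})$ lands in $\mathrm{QLS}(\varpi_i)$. Writing $\Tilde{\mathsf{C}} = \mathsf{C} \cup \{\overline{0}, \ldots, \overline{0}\}$ with $\mathsf{C} \in \mathrm{KN}_{B_n}(\varpi_{i-2m})$ and $2m$ copies of $\overline{0}$, Lemma \ref{lem:KN->LS-B} (2) yields $(r\mathsf{C}, l\mathsf{C}) \in \mathrm{LS}(\varpi_{i-2m})$. Adjoining $K_{\Tilde{\mathsf{C}}} = \{x_1 < \cdots < x_{2m}\}$ to $r\mathsf{C}$ and $\{\overline{x} \mid x \in K_{\Tilde{\mathsf{C}}}\}$ to $l\mathsf{C}$ produces $i$-element columns sharing a common segment structure $\mathcal{J}$, whose Maya diagrams agree on every segment except (when $2m > 0$) the unique segment $J_1$ containing $\{1,2\}$, on which the barred count drops by $2m$ going from $l\Tilde{\mathsf{C}}$ to $r\Tilde{\mathsf{C}}$. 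By Lemma \ref{lem:Maya-cri} (4) and \eqref{eq:QLS}, this shows $l\Tilde{\mathsf{C}} \trianglelefteq r\Tilde{\mathsf{C}}$ with $d_i(r\Tilde{\mathsf{C}}, l\Tilde{\mathsf{C}}) = m$. Conversely, given $(v,w) \in \mathrm{QLS}(\varpi_i)$, Lemma \ref{lem:d_i(v,w)} fixes $m$, and formula \eqref{eq:QLS->QKN-B} reconstructs the underlying KN column via the inverse in Lemma \ref{lem:KN->LS-B} (2); a direct inspection on Maya diagrams confirms that the two compositions are the identity. Compatibility with the $I$-crystal structure is then immediate: $\overline{0}$-letters carry weight zero and are fixed by $e_j, f_j$ for $j \in I$ by definition, so the forgetful map $\Tilde{\mathsf{C}} \mapsto \mathsf{C}$ is $\Fg$-equivariant and intertwines with \eqref{eq:KN->LS-B} on each $\mathrm{KN}_{B_n}(\varpi_{i-2m})$ summand.

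The main obstacle is the $f_0$-compatibility. One must check that each rule in \eqref{eq:f0-KN-B-1}--\eqref{eq:f0-KN-B-3} corresponds under the bijection to the correct $0$-colored transition in $\mathrm{QLS}(\varpi_i)$, which by \eqref{eq:QLSaf->SiLS} and \S \ref{Subsection:QBG-Maya} amounts to a specific edge in $\mathrm{QB}(\varpi_i; 1/2)$ together with the appropriate degree shift. The plan is case by case: the rules \eqref{eq:f0-KN-B-1} (with $m = 0$ and $y_k \notin \{1,2\}$) reproduce the classical arrows of types (b-B1)--(b-B2) together with the quantum arrow of (q-B3); the rules \eqref{eq:f0-KN-B-2} (with $m > 0$ and $y_k \notin \{1,2\}$) govern the emission or absorption of an $\overline{0}$-pair, realized as an (M4)-edge composed with an (M1)-edge in the Maya picture; and \eqref{eq:f0-KN-B-3} (with $y_k \in \{1,2\}$) handles the subtle interaction in which the leading element of $J_{\mathsf{C}}$ collides with $K_{\Tilde{\mathsf{C}}}$. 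In each case, Definition \ref{def:QKN-B} combined with Proposition \ref{prop:Q=B} delivers the required match. Assertion (3) is then a tautology: the map $\Tilde{\mathsf{C}} \mapsto \mathsf{C}$ is a bijection by Definition \ref{def:QKN-B}, and because the $I$-crystal operators act on $\Tilde{\mathsf{C}}$ through $\mathsf{C}$ alone, it is automatically $\Fg$-equivariant.
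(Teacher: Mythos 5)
Your overall strategy coincides with the paper's: establish the combinatorial bijection $\Phi_i\colon\Tilde{\mathsf{C}}\mapsto(r\Tilde{\mathsf{C}},l\Tilde{\mathsf{C}})$ and its explicit inverse via Maya diagrams (this is exactly the content of the paper's Lemmas \ref{lem:QKN->QLS-wd-B}--\ref{lem:QLS->QKN-wd-B} and the inverse check), reduce to a single segment, and then verify operator compatibility case by case, with the real work concentrated in $f_0$ (the paper's Cases 17--31). Parts (1) and (3) are indeed dispatched exactly as you say.

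There is, however, one step you declare ``immediate'' that is not, and it is the one place your argument as written has a gap. Your justification for $I$-compatibility is that the forgetful map $\Tilde{\mathsf{C}}\mapsto\mathsf{C}$ is $\Fg$-equivariant and intertwines with the classical splitting isomorphism $\mathsf{C}\mapsto(r\mathsf{C},l\mathsf{C})$. That establishes equivariance of the composite $\mathrm{QKN}_{B_n}(\varpi_i)\to\bigsqcup_m\mathrm{LS}(\varpi_{i-2m})$ --- but the map in the theorem is $\Tilde{\mathsf{C}}\mapsto(r\Tilde{\mathsf{C}},l\Tilde{\mathsf{C}})$ with target $\mathrm{QLS}(\varpi_i)$, and $r\Tilde{\mathsf{C}}=K_{\Tilde{\mathsf{C}}}\cup r\mathsf{C}$, $l\Tilde{\mathsf{C}}=\{\overline{x}\mid x\in K_{\Tilde{\mathsf{C}}}\}\cup l\mathsf{C}$. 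The padding set $K_{\Tilde{\mathsf{C}}}$ is determined by $r\mathsf{C}$ and therefore \emph{changes} when $f_j$, $j\in I$, is applied (compare the paper's Cases 11--16, where, e.g., $j+1\in K_{\Tilde{\mathsf{C}}}$ but $j\in K_{f_j\Tilde{\mathsf{C}}}$). So one must still verify that $f_j$ applied to the QLS path $(r\Tilde{\mathsf{C}},l\Tilde{\mathsf{C}})$ agrees with $(r(f_j\Tilde{\mathsf{C}}),l(f_j\Tilde{\mathsf{C}}))$; equivalently, that the $K$-padding transforms consistently with the signature rule on the two columns. An abstract argument does not rescue this: both sides are $\Fg$-crystals isomorphic to $\bigsqcup_m B(\varpi_{i-2m})$, but a weight-preserving bijection between such crystals need not respect connected components, so you cannot conclude equivariance from the bijection alone. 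The verification is of the same routine, case-by-case nature as the $f_0$ analysis you do plan to carry out (and is how the paper handles it, via Lemma \ref{lem:C1C2-B} and Cases 1--16), so the gap is fillable --- but it must be filled, not waved away.
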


The proof of (1) and (3) in 
Theorem \ref{thm:QKN->QLS-B}
is straightforward 
(cf. \cite[Lemma 2.7 (i)]{C}).
In 
\S \ref{Subsection:Pr-Thm-B-(2)}, 
we will give the proof for 
Theorem \ref{thm:QKN->QLS-B} (2). 

Recall the partial order 
$\preceq$
on 
$\mathrm{CST}_{B_n}(\varpi_i) \times \BZ$
(see Definition \ref{def:SiB-B}).

\begin{define} \label{def:SiKN-B}
Let 
$i \in I$
and 
$m \in \BZ_{\geq 0}$. 
\begin{enumerate}[(1)]
\item
Let 
$\mathbb{T}
=
\left(
\mathsf{T}_1
\mathsf{T}_2
\cdots 
\mathsf{T}_m, 
(c_1,c_2,\ldots ,c_m)
\right)$, 
where
$\mathsf{T}_{\nu}
\in 
\mathrm{QKN}_{B_n}(\varpi_n)
=
\mathrm{CST}_{B_n}(\varpi_n)$
and 
$c_{\nu} \in \BZ$
for 
$\nu \in [m]$. 
We call 
$\mathbb{T}$
a semi-infinite KN $B_n$-tableau of shape 
$m\varpi_n$
if 
\begin{align}
(\mathsf{T}_{\nu},c_{\nu})
\succeq 
(\mathsf{T}_{\nu+1},c_{\nu+1})
\end{align}
in
$\mathrm{CST}_{B_n}(\varpi_i) \times \BZ$
for 
$\nu \in [m-1]$. 
\item
Assume that 
$i \in [n-1]$. 
Let 
$\mathbb{T}
=
(\Tilde{\mathsf{C}}_1
\Tilde{\mathsf{C}}_2
\cdots 
\Tilde{\mathsf{C}}_m, 
(c_1,c_2,\ldots ,c_m))$, 
where
$\Tilde{\mathsf{C}}_{\nu}
\in 
\mathrm{QKN}_{B_n}(\varpi_i)$
and 
$c_{\nu} \in \BZ$
for 
$\nu \in [m]$. 
We call 
$\mathbb{T}$
a semi-infinite KN $B_n$-tableau of shape 
$m\varpi_i$
if
\begin{align}
(l\Tilde{\mathsf{C}}_{\nu},c_{\nu} 
- 
d_i(r\Tilde{\mathsf{C}}_{\nu},l\Tilde{\mathsf{C}}_{\nu}))
\succeq 
(r\Tilde{\mathsf{C}}_{\nu+1},c_{\nu+1} 
+ 
d_i(r\Tilde{\mathsf{C}}_{\nu+1},l\Tilde{\mathsf{C}}_{\nu+1}))
\end{align}
in
$\mathrm{CST}_{B_n}(\varpi_i) \times \BZ$
for 
$\nu \in [m-1]$.  
\end{enumerate}
Let 
$\mathbb{Y}^{\si}_{B_n}(m\varpi_i)$
be the set of 
semi-infinite KN $B_n$-tableaux of shape
$m\varpi_i$. 
For 
$\lambda = \sum_{i \in I} m_i \varpi_i \in P^+$, 
set 
$\mathbb{Y}^{\si}_{B_n}(\lambda)
=
\prod_{i \in I} 
\mathbb{Y}^{\si}_{B_n}(m_i \varpi_i)$. 
We call an element of 
$\mathbb{Y}^{\si}_{B_n}(\lambda)$
a semi-infinite KN $B_n$-tableau
of shape $\lambda$.
\end{define}

Let 
$\mathrm{QKN}_{B_n}(\varpi_i)_{\af}$
denote the affinization of the 
$\U'$-crystal 
$\mathrm{QKN}_{B_n}(\varpi_i)$
(see \S\ref{Subsection:Crystals}). 
Combining
Theorem \ref{thm:BN-Kashiwara}, 
Proposition \ref{prop:tab-cri-B} (2), 
Lemma \ref{lem:SMT-QLSaf}, 
and 
Definition \ref{def:SiKN-B}
we obtain the following theorem.

\begin{thm} \label{thm:SiKN->B-B}
Assume that 
$\U$
is of type $B_n^{(1)}$. 
Let 
$\lambda = \sum_{i \in I} m_i \varpi_i \in P^+$. 
For each 
$i \in I$, 
the image of the map
\begin{align} \label{eq:map-YB->QKNaf}
\begin{split}
&\mathbb{Y}^{\si}_{B_n}(m_i \varpi_i)
\to 
\mathrm{QKN}_{B_n}(\varpi_i)_{\af}^{\otimes m_i}, \\
&\left(
\mathsf{C}_1
\mathsf{C}_2
\cdots 
\mathsf{C}_{m_i}, 
(c_1,c_2,\ldots ,c_{m_i})
\right)
\mapsto 
\bigotimes_{\nu \in [m_i]}
(\mathsf{C}_{\nu} , c_{\nu}),
\end{split}
\end{align}
is a $\U$-subcrystal. 
Hence we can define a 
$\U$-crystal structure on 
$\mathbb{Y}^{\si}_{B_n}(m_i \varpi_i)$
to be such that the map 
\eqref{eq:map-YB->QKNaf}
is a strict embedding of 
$\U$-crystals. 
In particular, 
$\mathbb{Y}^{\si}_{B_n}(\lambda)$
is a $\U$-subcrystal of 
$\bigotimes_{i \in I}
\mathrm{QKN}_{B_n}(\varpi_i)_{\af}^{\otimes m_i}$. 
Then 
$\mathbb{Y}^{\si}_{B_n}(\lambda)$
is isomorphic,
as a $\U$-crystal, 
to the crystal basis 
$\mathcal{B}(\lambda)$.
\end{thm}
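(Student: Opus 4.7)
The plan is to reduce, via Theorem \ref{thm:BN-Kashiwara} and the definition $\mathbb{Y}^{\si}_{B_n}(\lambda) = \prod_{i \in I} \mathbb{Y}^{\si}_{B_n}(m_i\varpi_i)$, to proving the assertion for a single fundamental weight, that is, to constructing an isomorphism $\mathbb{Y}^{\si}_{B_n}(m\varpi_i) \cong \mathcal{B}(m\varpi_i)$ of $\U$-crystals for each $i \in I$ and $m \in \BZ_{\geq 0}$. Once this is done, taking tensor products over $i \in I$ and applying Theorem \ref{thm:BN-Kashiwara} yields the theorem. Moreover, once the isomorphism with $\mathcal{B}(m_i\varpi_i)$ is established via the embedding \eqref{eq:map-YB->QKNaf}, the fact that the image is a $\U$-subcrystal will follow automatically, justifying the claimed $\U$-crystal structure on $\mathbb{Y}^{\si}_{B_n}(m_i\varpi_i)$.

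The first step is to identify $\mathrm{QKN}_{B_n}(\varpi_i)_{\af}$ with $\mathbb{B}^{\si}(\varpi_i)$ as $\U$-crystals. By Theorem \ref{thm:QKN->QLS-B} (which holds both for $i = n$ and for $i \in [n-1]$), the map $\Tilde{\mathsf{C}} \mapsto (r\Tilde{\mathsf{C}}, l\Tilde{\mathsf{C}})$ gives an isomorphism $\mathrm{QKN}_{B_n}(\varpi_i) \xrightarrow{\cong} \mathrm{QLS}(\varpi_i)$ of $\U'$-crystals, which induces an isomorphism of affinizations $\mathrm{QKN}_{B_n}(\varpi_i)_{\af} \xrightarrow{\cong} \mathrm{QLS}(\varpi_i)_{\af}$. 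Composing with the $\U$-crystal isomorphism $\mathrm{QLS}(\varpi_i)_{\af} \xrightarrow{\cong} \mathbb{B}^{\si}(\varpi_i)$ of Lemma \ref{lem:SMT-QLSaf} (1), we obtain a $\U$-crystal isomorphism
\begin{align*}
\Psi_i : \mathrm{QKN}_{B_n}(\varpi_i)_{\af} \xrightarrow{\cong} \mathbb{B}^{\si}(\varpi_i), \quad
(\Tilde{\mathsf{C}}, c) \mapsto \left( r\Tilde{\mathsf{C}} \cdot T^{I\setminus\{i\}}_{(c+d)\alpha_i^{\vee}},\, l\Tilde{\mathsf{C}} \cdot T^{I\setminus\{i\}}_{(c-d)\alpha_i^{\vee}};\, 0,\tfrac{1}{2},1 \right),
\end{align*}
where $d = d_i(r\Tilde{\mathsf{C}}, l\Tilde{\mathsf{C}})$ and we abuse notation by identifying $r\Tilde{\mathsf{C}}, l\Tilde{\mathsf{C}}$ with the corresponding elements of $W^{I\setminus\{i\}}$.

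The second, central step is to compare the image of $\mathbb{Y}^{\si}_{B_n}(m\varpi_i)$ in $\mathrm{QKN}_{B_n}(\varpi_i)_{\af}^{\otimes m}$ under \eqref{eq:map-YB->QKNaf} with the subcrystal of $\mathrm{QLS}(\varpi_i)_{\af}^{\otimes m} \cong \mathbb{B}^{\si}(\varpi_i)^{\otimes m}$ described in Lemma \ref{lem:SMT-QLSaf} (2) as being isomorphic to $\mathcal{B}(m\varpi_i)$. Setting $(v_\nu, w_\nu) = (r\Tilde{\mathsf{C}}_\nu, l\Tilde{\mathsf{C}}_\nu)$ and $d_\nu = d_i(v_\nu, w_\nu)$, the latter condition reads
\begin{align*}
w_\nu\, T^{I\setminus\{i\}}_{(c_\nu - d_\nu)\alpha_i^{\vee}} \succeq v_{\nu+1}\, T^{I\setminus\{i\}}_{(c_{\nu+1} + d_{\nu+1})\alpha_i^{\vee}} \quad \text{in } (W^{I\setminus\{i\}})_{\af},
\end{align*}
for $\nu \in [m-1]$, while the condition defining $\mathbb{Y}^{\si}_{B_n}(m\varpi_i)$ is precisely
\begin{align*}
(l\Tilde{\mathsf{C}}_\nu, c_\nu - d_\nu) \succeq (r\Tilde{\mathsf{C}}_{\nu+1}, c_{\nu+1} + d_{\nu+1}) \quad \text{in } \mathrm{CST}_{B_n}(\varpi_i) \times \BZ.
\end{align*}
These two conditions are equivalent by the tableau criterion of Proposition \ref{prop:tab-cri-B} (2), since $\mathcal{Y}_i^{B_n}(w_\nu t_{(c_\nu - d_\nu)\alpha_i^{\vee}}) = (\mathsf{T}_{w_\nu}^{(i)}, c_\nu - d_\nu) = (l\Tilde{\mathsf{C}}_\nu, c_\nu - d_\nu)$ under our identifications (and similarly for the right-hand side). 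For $i = n$, where $\mathrm{QKN}_{B_n}(\varpi_n) = \mathrm{CST}_{B_n}(\varpi_n)$ and $d_n \equiv 0$, the same argument applies directly using Proposition \ref{prop:tab-cri-B} (2). Combining these two steps gives the desired $\U$-crystal isomorphism $\mathbb{Y}^{\si}_{B_n}(m\varpi_i) \cong \mathcal{B}(m\varpi_i)$.

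The main technical obstacle is the careful bookkeeping in the second step: the shifts by $\pm d_i(v,w)$ appear both in the definition of $\Psi_i$ (via Lemma \ref{lem:SMT-QLSaf} (1)) and in the inequality defining $\mathbb{Y}^{\si}_{B_n}(m\varpi_i)$, and one must verify that they align correctly so that the tableau criterion of Proposition \ref{prop:tab-cri-B} (2) applies cleanly to translate between the tableau order on $\mathrm{CST}_{B_n}(\varpi_i) \times \BZ$ and the semi-infinite Bruhat order on $(W^{I\setminus\{i\}})_{\af}$. This is essentially a matter of unwinding the bijection $\mathcal{Y}_i^{B_n}$ of \S\ref{subsection:tab-cri-B} on the appropriate affine Weyl group elements, and verifying that the coefficient $c_i(\cdot)$ of $\alpha_i^{\vee}$ is indeed $c_\nu \mp d_\nu$ on each side.
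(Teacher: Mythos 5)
Your proposal is correct and follows essentially the same route as the paper, which proves this theorem precisely by combining Theorem \ref{thm:BN-Kashiwara}, Proposition \ref{prop:tab-cri-B} (2), Lemma \ref{lem:SMT-QLSaf}, and Definition \ref{def:SiKN-B} (together with the identification $\mathrm{QKN}_{B_n}(\varpi_i) \cong \mathrm{QLS}(\varpi_i)$ from Theorem \ref{thm:QKN->QLS-B}); your write-up simply makes the bookkeeping of the shifts $\pm d_i(v,w)$ and the application of $\mathcal{Y}_i^{B_n}$ explicit. No gaps.
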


\subsection{Type $D_n^{(1)}$} \label{Subsection:tab-type-D}

Throughout this subsection, 
we assume that 
$\Fg$ and $W$ are of type $D_n$. 
Recall that 
$I = [n]$,  
$\Delta 
= 
\{ \pm(\varepsilon_s \pm \varepsilon_t) \mid s,t \in [n],\ s < t \}$, 
and
$\Pi 
=
\{ \alpha_s = \varepsilon_s - \varepsilon_{s+1} \mid s \in [n-1] \}
\sqcup
\{ \alpha_n = \varepsilon_{n-1} + \varepsilon_n \}$. 
The highest root is 
$\varepsilon_1 + \varepsilon_2
=
\alpha_1 + 2 \alpha_2 + \cdots + 2\alpha_{n-2} + \alpha_{n-1} + \alpha_n$. 
We identify 
$\varpi_i$
with 
$\varepsilon_1 + \varepsilon_2 + \cdots \varepsilon_i$
if 
$i \in [n-2]$, 
with 
$\dfrac{1}{2}(\varepsilon_1 + \varepsilon_2 + \cdots + \varepsilon_{n-1} - \varepsilon_n)$
if 
$i = n-1$, 
and with 
$\dfrac{1}{2}(\varepsilon_1 + \varepsilon_2 + \cdots + \varepsilon_{n-1} + \varepsilon_n)$
if 
$i = n$. 

Let 
$i \in [2,n-2]$. 
A map 
$\mathsf{C} : 
[i] 
\to 
\mathcal{D}_n$
is, by definition, 
a Kashiwara--Nakashima $D_n$-column 
(KN $D_n$-column for short)
of shape $\varpi_i$ 
if 
\begin{enumerate}[(KN-D1)]
\item
$\mathsf{C}(1)
\not\succeq
\mathsf{C}(2)
\not\succeq
\cdots 
\not\succeq
\mathsf{C}(i)$, 
\item
if 
$t = \mathsf{C}(p) = \sigma(\mathsf{C}(q)) \in [n]$
for some 
$p,q \in [i]$, 
then 
$|q-p| > i-t$; 
\end{enumerate}
note that 
$n,\overline{n} \in \mathcal{D}_n$ 
may appear in 
$\mathsf{C}$
more than once. 
Let 
$\mathrm{KN}_{D_n}(\varpi_i)$
be the set of 
KN $D_n$-columns
of shape $\varpi_i$. 

Let 
$i \in [2,n-2]$. 
Let 
$\mathsf{C} : [i] \to \mathcal{D}_n$
be a map satisfying (KN-D1). 
Define 
$\mathsf{C}^{\wedge} : [i] \to \mathcal{B}_n$
to be such that 
$\mathsf{C}^{\wedge}(u)
=
\mathsf{C}^{\wedge}(u+1)
=
0$
if 
$\mathsf{C}(u)
=
\overline{n}$
and 
$\mathsf{C}(u+1)
=
n$
for 
$u \in [i]$, 
and 
$\mathsf{C}^{\wedge}(u)
=
\mathsf{C}(u)$
otherwise. 
We say that 
$\mathsf{C}$
can be split 
if 
$\mathsf{C}^{\wedge}$
can be split. 
If 
$\mathsf{C}
\in 
\mathrm{KN}_{D_n}(\varpi_i)$
can be split, 
then we write 
$r\mathsf{C}
=
r\mathsf{C}^{\wedge}$
and 
$l\mathsf{C}
=
l\mathsf{C}^{\wedge}$; 
note that 
$r\mathsf{C},
l\mathsf{C}
\in 
\mathrm{CST}_{D_n}(\varpi_i)$. 
For convenience we also denote by 
$\mathrm{KN}_{D_n}(\varpi_i)
=
\mathrm{CST}_{D_n}(\varpi_i)$
for 
$i \in \{ 1,n-1,n \}$. 

Define a 
$\Fg$-crystal structure on 
$\mathrm{KN}_{D_n}(\varpi_i)$
for 
$i \in [n-2]$
as follows
(cf. \cite[\S 6]{KN}); 
for the $\Fg$-crystal structure on 
$\mathrm{KN}_{D_n}(\varpi_i)$
for 
$i \in \{ n-1,n \}$
we refer the reader to 
\cite[\S 2.3]{Le03}. 
The maps 
$\mathrm{wt},\varepsilon_j,\varphi_j$
for 
$j \in I$
and 
$e_j,f_j$
for 
$j \in [n-2]$
are defined in the same manner as 
those for $\mathrm{KN}_{C_n}(\varpi_i)$. 
Note that only the letters 
$n-1,n,\overline{n},\overline{n-1}$
may be changed in 
$\mathsf{C}$
when we apply
$e_{n-1}$, 
$e_n$,
$f_{n-1}$, 
or
$f_n$. 
To define the action of 
$f_{n-1}$, 
let
$\{ \mathsf{C}(u) \mid u \in [i], \ 
\mathsf{C}(u) \in \{ n-1,n,\overline{n},\overline{n-1} \} \}
=
\{ 
\mathsf{C}(u_1) 
\not\succ 
\mathsf{C}(u_2) 
\not\succ 
\cdots 
\not\succ 
\mathsf{C}(u_p)
\}$, 
where 
$1 \leq u_1 < u_2 < \cdots < u_p \leq i$, 
and continue deleting a successive pair 
$a \not\succ b$
such that 
$(a,b)
\in 
\{ 
(\overline{n},n), 
(n-1,n), 
(\overline{n},\overline{n-1}), 
(n-1,\overline{n-1})
\}$
from 
$\mathsf{C}(u_1) 
\not\succ 
\mathsf{C}(u_2) 
\not\succ 
\cdots 
\not\succ 
\mathsf{C}(u_p)$
until no such pair exists. 
Let 
$\mathsf{C}(v_1) 
\not\succ 
\mathsf{C}(v_2) 
\not\succ 
\cdots 
\not\succ 
\mathsf{C}(v_q)$
be the resulting sequence.
It follows that
\ytableausetup{mathmode,boxsize=1cm}
\begin{align} \label{eq:f_n-1_tab-D}
\begin{ytableau}
\mathsf{C}(v_1) \\
\vdots \\
\mathsf{C}(v_q)
\end{ytableau}
\in 
\left\{
\emptyset, \ 
\begin{ytableau}
n-1
\end{ytableau} \ , \ 
\begin{ytableau}
n
\end{ytableau} \ , \ 
\begin{ytableau}
\overline{n}
\end{ytableau} \ , \ 
\begin{ytableau}
\overline{n-1}
\end{ytableau} \ , \ 
\begin{ytableau}
n \\ \overline{n}
\end{ytableau} \ , \ 
\begin{ytableau}
n-1 \\ \overline{n}
\end{ytableau} \ , \ 
\begin{ytableau}
n \\ \overline{n-1}
\end{ytableau}
\right\} .
\end{align}
Then the action of 
$f_{n-1}$
is uniquely determined from
\eqref{eq:f_n-1_tab-D}, 
and only one of the entries in 
\eqref{eq:f_n-1_tab-D}
may be changed in 
$\mathsf{C}$
when we apply 
$f_{n-1}$. 
This is illustrated by 
\begin{align}
\begin{split}
&
\begin{CD}
\begin{ytableau}
n-1
\end{ytableau}
@>{f_{n-1}}>>
\begin{ytableau}
n
\end{ytableau}
\end{CD} \ ,
\hspace{1cm}
\begin{CD}
\begin{ytableau}
\overline{n}
\end{ytableau}
@>{f_{n-1}}>>
\begin{ytableau}
\overline{n-1}
\end{ytableau}
\end{CD} \ , 
\\[3mm]
&
\begin{CD}
\begin{ytableau}
n-1 \\ \overline{n}
\end{ytableau}
@>{f_{n-1}}>>
\begin{ytableau}
n \\ \overline{n}
\end{ytableau}
@>{f_{n-1}}>>
\begin{ytableau}
n \\ \overline{n-1}
\end{ytableau}
\end{CD} \ ;
\end{split}
\end{align}
set 
$f_{n-1} \mathsf{C} = \bm{0}$
otherwise. 
We next define the action of 
$f_n$. 
Similarly, continue deleting a successive pair 
$a \not\succ b$
such that 
$(a,b)
\in 
\{ 
(n,\overline{n}), 
(n-1,\overline{n}), 
(n,\overline{n-1}), 
(n-1,\overline{n-1})
\}$
from 
$\mathsf{C}(u_1) 
\not\succ 
\mathsf{C}(u_2) 
\not\succ 
\cdots 
\not\succ 
\mathsf{C}(u_p)$. 
Let 
$\mathsf{C}(v_1') 
\not\succ 
\mathsf{C}(v_2') 
\not\succ 
\cdots 
\not\succ 
\mathsf{C}(v_r')$
be the resulting sequence.
It follows that
\begin{align} \label{eq:f_n_tab-D}
\begin{ytableau}
\mathsf{C}(v_1') \\
\vdots \\
\mathsf{C}(v_r')
\end{ytableau}
\in 
\left\{
\emptyset, \ 
\begin{ytableau}
n-1
\end{ytableau} \ , \ 
\begin{ytableau}
n
\end{ytableau} \ , \ 
\begin{ytableau}
\overline{n}
\end{ytableau} \ , \ 
\begin{ytableau}
\overline{n-1}
\end{ytableau} \ , \ 
\begin{ytableau}
\overline{n} \\ n
\end{ytableau} \ , \ 
\begin{ytableau}
n-1 \\ n
\end{ytableau} \ , \ 
\begin{ytableau}
\overline{n} \\ \overline{n-1}
\end{ytableau}
\right\} .
\end{align}
Then the action of 
$f_n$
is uniquely determined from
\eqref{eq:f_n_tab-D}, 
and only one of the entries in 
\eqref{eq:f_n_tab-D}
may be changed in 
$\mathsf{C}$
when we apply 
$f_n$. 
This is illustrated by 
\begin{align}
\begin{split}
&
\begin{CD}
\begin{ytableau}
n-1
\end{ytableau}
@>{f_n}>>
\begin{ytableau}
\overline{n}
\end{ytableau}
\end{CD} \ ,
\hspace{1cm}
\begin{CD}
\begin{ytableau}
n
\end{ytableau}
@>{f_n}>>
\begin{ytableau}
\overline{n-1}
\end{ytableau}
\end{CD} \ ,
\\[3mm]
&
\begin{CD}
\begin{ytableau}
n-1 \\ n
\end{ytableau}
@>{f_n}>>
\begin{ytableau}
\overline{n} \\ n
\end{ytableau}
@>{f_n}>>
\begin{ytableau}
\overline{n} \\ \overline{n-1}
\end{ytableau}
\end{CD} \ ;
\end{split}
\end{align}
set 
$f_n \mathsf{C} = \bm{0}$
otherwise.

The next lemma is a reformulation of 
\cite[Corollary 3.1.11 and Remark 3.1.13]{Le03}
in terms of Maya diagrams.

\begin{lem}
Assume that 
$\Fg$
and 
$W$
are of type $D_n$. 
\begin{enumerate}[(1)]
\item
Assume that 
$i \in \{ 1,n-1,n \}$. 
The map 
$\mathrm{LS}(\varpi_i)
\to 
\mathrm{KN}_{D_n}(\varpi_i)
=
\mathrm{CST}_{D_n}(\varpi_i)$, 
$(w;0,1)
\mapsto 
\mathsf{T}_w^{(i)}$, 
is an isomorphism of 
$\Fg$-crystals. 
\item
Assume that 
$i \in [2,n-2]$. 
For a map 
$\mathsf{C} : [i] \to \mathcal{D}_n$
satisfying (KN-D1), 
we have 
$\mathsf{C}
\in 
\mathrm{KN}_{D_n}(\varpi_i)$
if and only if 
$\mathsf{C}$
can be split. 
Hence the map 
$\mathrm{KN}_{D_n}(\varpi_i)
\to 
\mathrm{KN}_{B_n}(\varpi_i)$, 
$\mathsf{C} \mapsto \mathsf{C}^{\wedge}$, 
is injective. 
The map 
\begin{align} \label{eq:KN->LS-D}
\mathrm{KN}_{D_n}(\varpi_i)
\to 
\mathrm{LS}(\varpi_i), \ 
\mathsf{C}
\mapsto 
(r\mathsf{C},l\mathsf{C}),
\end{align}
is an isomorphism of $\Fg$-crystals. 
The inverse of
\eqref{eq:KN->LS-D} 
is given as follows. 
Let 
$(v,w) 
\in 
\mathrm{LS}(\varpi_i)$, 
$\mathcal{J}
=
\mathcal{J}(w) 
= 
\mathcal{J}(v) 
= 
(J_1 < \cdots < J_{\mu})
\in 
\mathcal{S}_i$, 
$\mathcal{M}(w) = (M_{\nu}), 
\mathcal{M}(v) = (N_{\nu})
\in 
2^{\mathcal{J}}$, 
and 
$f = 
\# N_{\mu} - \# M_{\mu}
\in 
2\BZ_{\geq 0}$. 
The inverse image of 
$(v,w)$
is the KN $D_n$-column
$\mathsf{C}$
such that 
\begin{align}
\begin{split}
\mathsf{C}^{\wedge} 
&= 
\{ v(u) \mid u \in [i], \ v(u) \preceq n \}
\cup
\{ w(u) \mid u \in [i], \ w(u) \succeq \overline{n} \}
\cup
\{ \underbrace{0,0,\ldots ,0}_{f \ \text{times}} \} \\
&=
\bigcup_{\nu=1}^{\mu}
\left(
(J_{\nu} \setminus N_{\nu})
\cup
\{ \overline{z} \mid z \in M_{\nu} \}
\right) 
\cup
\{ \underbrace{0,0,\ldots ,0}_{f \ \text{times}} \} ;
\end{split}
\end{align}
we have 
$I_{\mathsf{C}^{\wedge}}
=
\bigcup_{\nu = 1}^{\mu} (M_{\nu} \setminus N_{\nu})
\cup
\{ \underbrace{0,0,\ldots ,0}_{f \ \text{times}} \}$
and
$J_{\mathsf{C}^{\wedge}}
=
\bigcup_{\nu = 1}^{\mu} (N_{\nu} \setminus M_{\nu})$. 
\end{enumerate}
\end{lem}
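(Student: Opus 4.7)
The plan is to deduce this lemma from the type $B_n$ analogue (Lemma \ref{lem:KN->LS-B}) together with the explicit combinatorial description of $\mathrm{LS}(\varpi_i)$ in terms of Maya diagrams obtained in \S\ref{Subsection:QBG-Maya}. Part (1) is straightforward: for minuscule $i \in \{1, n-1, n\}$, Lemma \ref{lem:QLS=LS} (2) gives $\mathrm{LS}(\varpi_i) = \{(w;0,1) \mid w \in W^{I \setminus \{i\}}\}$, and Lemma \ref{lem:Gr-D} identifies $W^{I \setminus \{i\}}$ with $\mathrm{CST}_{D_n}(\varpi_i) = \mathrm{KN}_{D_n}(\varpi_i)$ via $w \mapsto \mathsf{T}_w^{(i)}$; that this bijection is a morphism of $\Fg$-crystals is standard and can be checked directly by comparing the actions of $e_j, f_j$ on one-column tableaux with the path-model actions on length-one LS paths.

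For part (2), I would first establish the splittability criterion. Given $\mathsf{C} : [i] \to \mathcal{D}_n$ satisfying (KN-D1), one checks directly that (KN-D1) on $\mathsf{C}$ is equivalent to (KN-B1)--(KN-B2) on $\mathsf{C}^{\wedge}$, and that (KN-D2) for $\mathsf{C}$ translates into (KN-B3) for $\mathsf{C}^{\wedge}$ (the inserted $0$'s occupy adjacent positions and do not disturb the distance condition). Hence $\mathsf{C} \in \mathrm{KN}_{D_n}(\varpi_i)$ if and only if $\mathsf{C}^{\wedge} \in \mathrm{KN}_{B_n}(\varpi_i)$, which by Lemma \ref{lem:KN->LS-B} (2) is equivalent to $\mathsf{C}^{\wedge}$ being splittable. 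Injectivity of $\mathsf{C} \mapsto \mathsf{C}^{\wedge}$ follows because the $0$'s in $\mathsf{C}^{\wedge}$ necessarily appear in consecutive pairs (this is forced by (KN-D1) on $\mathsf{C}$), so the positions of the $\overline{n}n$-pairs in $\mathsf{C}$ are uniquely recoverable.

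Next I would construct the inverse of $\mathsf{C} \mapsto (r\mathsf{C}, l\mathsf{C})$. Given $(v,w) \in \mathrm{LS}(\varpi_i)$, define $\mathsf{C}^{\wedge}$ by the formula stated in the lemma; by Lemma \ref{lem:KN->LS-B} applied in type $B_n$, this is a KN $B_n$-column with splitting $(v,w)$. The key point is that $f = \#N_{\mu} - \#M_{\mu}$ is even: this is exactly the parity constraint recorded in Lemma \ref{lem:Maya-cri} (3), which applies here because $n-1, n \in J_{\mu}$ whenever the last segment reaches the top of $[n]$, and this is exactly when $0$'s can appear in $\mathsf{C}^{\wedge}$. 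Pairing up the $f$ consecutive $0$'s and replacing each pair by $\overline{n}n$ then produces $\mathsf{C}$, which satisfies (KN-D1)--(KN-D2) by construction and maps to $(v,w)$, establishing the bijection $\mathrm{KN}_{D_n}(\varpi_i) \xrightarrow{\sim} \mathrm{LS}(\varpi_i)$.

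It remains to verify that this bijection is a morphism of $\Fg$-crystals. Weight compatibility is immediate from \eqref{eq:KN-wt-C} (the replacement $\overline{n}n \leftrightarrow 00$ preserves weight since $\varepsilon_n + \varepsilon_{\overline{n}} = 0$). For $j \in [n-2]$, the actions of $e_j, f_j$ on $\mathrm{KN}_{D_n}(\varpi_i)$ involve only letters $\prec n-1$ or $\succ \overline{n-1}$ and hence agree with the type $B_n$ actions on $\mathsf{C}^{\wedge}$, so compatibility follows from Lemma \ref{lem:KN->LS-B}. The main obstacle will be $j \in \{n-1, n\}$: the pair-deletion rule defining $f_{n-1}, f_n$ in types $D_n$ is more intricate than its $B_n$ or $C_n$ analogues because of the two mutually unordered letters $n$ and $\overline{n}$. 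I plan to handle these by a case analysis on the possible middle blocks listed in \eqref{eq:f_n-1_tab-D} and \eqref{eq:f_n_tab-D}, translating each case into an explicit move on the Maya diagram pair $(\mathcal{M}(r\mathsf{C}), \mathcal{M}(l\mathsf{C}))$ and matching it against the $f_j$-action on $(v,w) \in \mathrm{LS}(\varpi_i)$ computed via \eqref{eq:e_i-t}--\eqref{eq:f_i-t}. The computations, although tedious, are entirely combinatorial, and this is precisely where the Maya-diagram reformulation pays off by making each case explicit.
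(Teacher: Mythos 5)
The paper does not actually prove this lemma: it is stated as ``a reformulation of \cite[Corollary 3.1.11 and Remark 3.1.13]{Le03} in terms of Maya diagrams,'' so the burden of proof is delegated to Lecouvey's splitting construction and his Schensted-type correspondence for type $D_n$. Your proposal therefore takes a genuinely different route, namely a self-contained reduction to the type $B_n$ statement (Lemma \ref{lem:KN->LS-B}, which the paper likewise cites rather than proves) via $\mathsf{C} \mapsto \mathsf{C}^{\wedge}$, followed by a direct verification of the crystal structure. The set-theoretic part of your reduction is sound: the equivalence of (KN-D1)--(KN-D2) for $\mathsf{C}$ with (KN-B1)--(KN-B3) for $\mathsf{C}^{\wedge}$ does hold (for $t=n$ both distance conditions are vacuous since $i \leq n-2$), runs of $0$'s in $\mathsf{C}^{\wedge}$ have even length and a unique preimage $\overline{n}\,n\,\overline{n}\,n\cdots$, and the evenness of $f$ follows from Lemma \ref{lem:Maya-cri} (1) and (3) exactly as you say (you should also record the trivial case $n-1,n \notin J_{\mu}$, where $f=0$). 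What your approach buys is independence from the type-$D_n$ half of Lecouvey's results; what it costs is the case analysis you only announce.

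Two steps remain genuinely under-justified. First, for $j \in [n-2]$ you write that compatibility ``follows from Lemma \ref{lem:KN->LS-B},'' but that lemma concerns the type-$B_n$ crystals $\mathrm{KN}_{B_n}(\varpi_i)$ and $\mathrm{LS}_{B_n}(\varpi_i)$, which live on a different Weyl group quotient with a different Maya-diagram order (compare the cases of Lemma \ref{lem:Maya-cri}); to transport the conclusion you must exhibit an injection of the type-$D_n$ LS-path crystal into the type-$B_n$ one that intertwines $f_j$ for $j \leq n-2$ and commutes with $\mathsf{C} \mapsto \mathsf{C}^{\wedge}$, which is itself a (routine but real) verification. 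Second, the entire content at the spin nodes $j \in \{n-1,n\}$ --- the pair-deletion rule of \eqref{eq:f_n-1_tab-D}--\eqref{eq:f_n_tab-D} versus the root-operator action on $(r\mathsf{C}, l\mathsf{C})$ --- is deferred to a future case analysis; this is precisely the part that distinguishes type $D_n$ from type $B_n$ and cannot be waved through. As it stands the proposal is a correct and workable plan rather than a proof; carried out in full it would yield an alternative to the citation of \cite{Le03}, at the price of a case check comparable in length to the proof of Theorem \ref{thm:QKN->QLS-B} (2) in \S\ref{Subsection:Pr-Thm-B-(2)}.
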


Set 
\begin{align}
\Tilde{\mathcal{D}}_n 
=
\mathcal{D}_n 
\cup 
\{ \overline{0} \}
=
\left\{ 
1 \prec 2 \prec \cdots \prec n-1 
\prec 
\begin{array}{c} n \\ \overline{n} \end{array} 
\prec 
\overline{n-1} \prec \cdots \prec \overline{2} \prec \overline{1}
\prec
\overline{0} 
\right\}.
\end{align}

\begin{define} \label{def:QKN-D}
Let 
$i \in [2,n-2]$. 
A map
$\Tilde{\mathsf{C}} : [i] \to \Tilde{\mathcal{D}}_n$
is, by definition, 
a quantum Kashiwara--Nakashima $D_n$-column
(QKN $D_n$-column for short) 
of shape $\varpi_i$ 
if 
\begin{enumerate}[(QKN-D1)]
\item
there exists 
$m \in \BZ_{\geq 0}$
such that 
$2m \leq i$
and 
\begin{align}
\begin{split}
\Tilde{\mathsf{C}}(1)
&\not\succeq
\Tilde{\mathsf{C}}(2)
\not\succeq
\cdots 
\not\succeq
\Tilde{\mathsf{C}}(i-2m) \\
&
\prec
\overline{0}
=
\Tilde{\mathsf{C}}(i-2m+1)
=
\cdots 
=
\Tilde{\mathsf{C}}(i-1)
=
\Tilde{\mathsf{C}}(i),
\end{split}
\end{align} 
\item
the map 
$\mathsf{C} : 
[i-2m] \to \mathcal{D}_n$, 
$u \mapsto \Tilde{\mathsf{C}}(u)$, 
is a KN $D_n$-column of shape $\varpi_{i-2m}$;
\end{enumerate}
in this case, 
we write 
$\Tilde{\mathsf{C}}
=
\mathsf{C}
\cup
\{ \underbrace{\overline{0},\overline{0},\ldots ,\overline{0}}_{2m \ \text{times}} \}$
for brevity.
Let 
$\mathrm{QKN}_{D_n}(\varpi_i)$
be the set of 
QKN $D_n$-columns of shape $\varpi_i$. 
For convenience we also denote by 
$\mathrm{QKN}_{D_n}(\varpi_i)
=
\mathrm{CST}_{D_n}(\varpi_i)$
for 
$i \in \{ 1,n-1,n \}$. 

For a map 
$\Tilde{\mathsf{C}} : [i] \to \Tilde{\mathcal{D}}_n$, 
define 
$\Tilde{\mathsf{C}}^{\wedge} : [i] \to \Tilde{\mathcal{B}}_n$
to be such that 
$\Tilde{\mathsf{C}}^{\wedge}(u)
=
\Tilde{\mathsf{C}}^{\wedge}(u+1)
=
0$
if 
$\Tilde{\mathsf{C}}(u)
=
\overline{n}$
and 
$\Tilde{\mathsf{C}}(u+1)
=
n$
for 
$u \in [i]$, 
and 
$\Tilde{\mathsf{C}}^{\wedge}(u)
=
\Tilde{\mathsf{C}}(u)$
otherwise. 
We see that if 
$\Tilde{\mathsf{C}}
=
\mathsf{C}
\cup
\{ \underbrace{\overline{0},\overline{0},\ldots ,\overline{0}}_{2m \ \text{times}} \}
\in 
\mathrm{QKN}_{D_n}(\varpi_i)$, 
then 
$\Tilde{\mathsf{C}}^{\wedge}
\in 
\mathrm{QKN}_{B_n}(\varpi_i)$
and, 
in consequence, 
$\Tilde{\mathsf{C}}^{\wedge}
=
\mathsf{C}^{\wedge}
\cup
\{ \underbrace{\overline{0},\overline{0},\ldots ,\overline{0}}_{2m \ \text{times}} \}$. 
For 
$\Tilde{\mathsf{C}}
\in 
\mathrm{QKN}_{D_n}(\varpi_i)$, 
define 
$r\Tilde{\mathsf{C}}
=
r\Tilde{\mathsf{C}}^{\wedge}$
and 
$l\Tilde{\mathsf{C}}
=
l\Tilde{\mathsf{C}}^{\wedge}$; 
note that 
$r\Tilde{\mathsf{C}}, 
l\Tilde{\mathsf{C}}
\in 
\mathrm{CST}_{D_n}(\varpi_i)$. 
\end{define}

Define a 
$\U'$-crystal structure on
$\mathrm{QKN}_{D_n}(\varpi_i)$
in the same manner as that on 
$\mathrm{QKN}_{B_n}(\varpi_i)$. 

\begin{thm} \label{thm:QKN->QLS-D}
Assume that 
$\U$
is of type $D_n^{(1)}$. 
\begin{enumerate}[(1)]
\item
Assume that 
$i \in \{ 1,n-1,n \}$. 
The set 
$\mathrm{QKN}_{D_n}(\varpi_i)$
equipped with the maps 
$\mathrm{wt},e_j,f_j,\varepsilon_j,\varphi_j$, 
$j \in I_{\af}$, 
is a $\U'$-crystal. 
The map 
$\mathrm{QLS}(\varpi_i)
=
\mathrm{LS}(\varpi_i)
\to 
\mathrm{QKN}_{D_n}(\varpi_i)
=
\mathrm{CST}_{D_n}(\varpi_i)$, 
$(w;0,1)
\mapsto 
\mathsf{T}_w^{(i)}$, 
is an isomorphism of 
$\U'$-crystals. 
\item
Assume that 
$i \in [2,n-2]$. 
The set 
$\mathrm{QKN}_{D_n}(\varpi_i)$
equipped with the maps 
$\mathrm{wt},e_j,f_j,\varepsilon_j,\varphi_j$, 
$j \in I_{\af}$, 
is a $\U'$-crystal. 
The map 
\begin{align} \label{eq:QKN->QLS-D}
\mathrm{QKN}_{D_n}(\varpi_i)
\to 
\mathrm{QLS}(\varpi_i), \ 
\Tilde{\mathsf{C}}
\mapsto 
(r\Tilde{\mathsf{C}},l\Tilde{\mathsf{C}}),
\end{align}
is an isomorphism of 
$\U'$-crystals.
The inverse of
\eqref{eq:QKN->QLS-D} 
is given as follows. 
Let 
$(v,w) \in \mathrm{QLS}(\varpi_i)$, 
$\mathcal{J}
=
\mathcal{J}(w) 
= 
\mathcal{J}(v) 
= 
(J_1 < \cdots < J_{\mu})
\in 
\mathcal{S}_i$
and 
$\mathcal{M}(w) = (M_{\nu}), 
\mathcal{M}(v) = (N_{\nu})
\in 
2^{\mathcal{J}}$. 
Set 
$f = \# N_{\mu} - \# M_{\mu} \in 2\BZ_{\geq 0}$
if 
$n-1,n \in J_{\mu}$, 
and set
$f = 0$
otherwise. 
Set
$2m = \# M_1 - \# N_1 \in 2\BZ_{\geq 0}$
if 
$1,2 \in J_1$, 
and set
$m = 0$
otherwise;
we see from 
Lemma \ref{lem:d_i(v,w)}
that 
$m = d_i(v,w)$. 
Define 
\begin{enumerate}[(i)]
\item
$\{ y^1 < y^2 < \cdots < y^l \}
=
N_1 \setminus M_1$, 
\item
$z^1
=
\min \{ z \in J_1 \mid 
z \succ y^1, \ 
z \in M_1 \setminus N_1 \}$, and 
\item
$z^{\nu}
=
\min \{ z \in J_1 \mid 
z \succ \max \{ y^{\nu} , z^{\nu -1} \}, \ 
z \in M_1 \setminus N_1 \}$
for 
$\nu \in [2,l]$.
\end{enumerate}
Let 
$\mathsf{C}$
be the KN $D_n$-column of shape $\varpi_{i-2m}$
such that 
\begin{align}
\begin{split}
\mathsf{C}^{\wedge}
=
(J_1 
\setminus 
(M_1 \cup N_1))
&\cup
\{ z^{\nu}, \overline{z^{\nu}}
\mid 
\nu \in [l] \}
\cup 
\{ \overline{z} \mid z \in M_1 \cap N_1 \} \\[2mm]
&\cup
\bigcup_{\nu=2}^{\mu}
\left(
(J_{\nu} \setminus N_{\nu})
\cup
\{ \overline{z} \mid z \in M_{\nu} \}
\right)
\cup
\{ \underbrace{0,0,\ldots ,0}_{f \ \text{times}} \};
\end{split}
\end{align}
we have
$I_{\mathsf{C}^{\wedge}}
=
\{ z^{\nu} \mid \nu \in [l] \}
\cup
\bigcup_{\nu = 2}^{\mu} (M_{\nu} \setminus N_{\nu}) 
\cup
\{ \underbrace{0,0,\ldots ,0}_{f \ \text{times}} \}$
and 
$J_{\mathsf{C}^{\wedge}}
=
\bigcup_{\nu = 1}^{\mu} (N_{\nu} \setminus M_{\nu})$. 
The inverse image of 
$(v,w)$
is 
$\Tilde{\mathsf{C}}
=
\mathsf{C}
\cup
\{ \underbrace{\overline{0},\overline{0},\ldots ,\overline{0}}_{2m \ \text{times}} \}$.
\item
Assume that 
$i \in [2,n-2]$. 
The map 
\begin{align} \label{eq:QKN->KN-D}
\mathrm{QKN}_{D_n}(\varpi_i)
\to 
\bigsqcup_{m=0}^{\left\lfloor \frac{i}{2} \right\rfloor}
\mathrm{KN}_{D_n}(\varpi_{i-2m}), \ 
\Tilde{\mathsf{C}}
=
\mathsf{C}
\cup
\{ \underbrace{\overline{0},\overline{0},\ldots ,\overline{0}}_{2m \ \text{times}} \}
\mapsto 
\mathsf{C},
\end{align}
is an isomorphism of 
$\Fg$-crystals, 
where 
$\mathsf{C}$
and 
$m$
are as in (QKN-D1)--(QKN-D2)
for 
$\Tilde{\mathsf{C}}$.
Here we understand that 
$\mathrm{KN}_{D_n}(\varpi_0)
=
\{ \emptyset \}$
is a $\Fg$-crystal 
isomorphic to the crystal basis of 
the trivial module. 
The inverse image of 
$\mathsf{C}
\in 
\mathrm{KN}_{D_n}(\varpi_{i-2m})$
under the map 
\eqref{eq:QKN->KN-D}
is 
$\Tilde{\mathsf{C}}
=
\mathsf{C}
\cup
\{ \underbrace{\overline{0},\overline{0},\ldots ,\overline{0}}_{2m \ \text{times}} \}$. 
\end{enumerate}
\end{thm}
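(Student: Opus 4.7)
The plan is to establish the three parts in parallel with the type $B_n^{(1)}$ case (Theorem \ref{thm:QKN->QLS-B}), exploiting the natural injection $\mathrm{QKN}_{D_n}(\varpi_i) \hookrightarrow \mathrm{QKN}_{B_n}(\varpi_i)$, $\Tilde{\mathsf{C}} \mapsto \Tilde{\mathsf{C}}^{\wedge}$, together with the Maya-diagram description of $\mathrm{QLS}(\varpi_i)$ obtained in \S\ref{Subsection:QBG-Maya}.

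Part (1) is essentially formal: by Lemma \ref{lem:QLS=LS} (2), for $i \in \{1,n-1,n\}$ we have $\mathrm{QLS}(\varpi_i) = \mathrm{LS}(\varpi_i)$ and each element has the form $(w;0,1)$. Under the bijection $W^{I \setminus \{i\}} \to \mathrm{CST}_{D_n}(\varpi_i)$, $w \mapsto \mathsf{T}_w^{(i)}$, of Lemma \ref{lem:Gr-D}, the Kashiwara operators on both sides are determined by the same combinatorial rule (weight of each letter), so the map is automatically a $\U'$-crystal isomorphism. Part (3) is equally direct: the $\Fg$-crystal operators $e_j,f_j$ for $j \in I$ on $\mathrm{QKN}_{D_n}(\varpi_i)$ by definition fix the $\overline{0}$-entries and act on $\mathsf{C}$ through the $\mathrm{KN}_{D_n}(\varpi_{i-2m})$ structure, so the projection $\Tilde{\mathsf{C}} \mapsto \mathsf{C}$ is a $\Fg$-crystal isomorphism onto the disjoint union.

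Part (2) is the main task, and I would follow these steps. First, verify that $\Tilde{\mathsf{C}} \mapsto (r\Tilde{\mathsf{C}}, l\Tilde{\mathsf{C}})$ is well-defined into $\mathrm{QLS}(\varpi_i)$: since $r\Tilde{\mathsf{C}} = r\Tilde{\mathsf{C}}^{\wedge}$ and $l\Tilde{\mathsf{C}} = l\Tilde{\mathsf{C}}^{\wedge}$, the corresponding type-$B_n$ pair lies in $\mathrm{QLS}_{B_n}(\varpi_i)$ by Theorem \ref{thm:QKN->QLS-B} (2); the residual check is that $r\Tilde{\mathsf{C}}, l\Tilde{\mathsf{C}} \in \mathrm{CST}_{D_n}(\varpi_i)$ (i.e., the parity condition in \eqref{eq:Weyl-grp-D}) and that the order relation $\trianglelefteq$ used in \eqref{eq:QLS} for type $D_n$ is inherited from the type $B_n$ one. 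This is a direct Maya-diagram computation using Lemma \ref{lem:Maya-cri} (3), where the edges of type (M3) in $\mathrm{M}(\varpi_i;1/2)$ replace the (M2) edges of the $B_n$ case in the segment containing $n-1,n$. Second, show bijectivity by verifying that the explicit formula for $\Tilde{\mathsf{C}}$ given in \eqref{eq:QLS->QKN-B}-style does produce a valid QKN $D_n$-column, i.e., that the auxiliary set $\{z^1 < \cdots < z^l\}$ exists (minimality argument based on Lemma \ref{lem:Maya-cri}), and that the round-trip recovers the input on both sides; this is a direct translation of the analogous $B_n$ argument, with the parity constraint $f \in 2\BZ_{\geq 0}$ ensuring the $D_n$-parity is preserved. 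Third, verify $\U'$-crystal compatibility: for $j \in I$ the bijection respects Kashiwara operators by combining part (3) with Lemma \ref{lem:KN->LS-B}-type arguments adapted to type $D_n$ (this is the part that is analogous to \cite[Lemma 2.7]{C}); for $j = 0$, the actions of $e_0,f_0$ on $\mathrm{QKN}_{D_n}(\varpi_i)$ (defined by the same rules \eqref{eq:f0-KN-B-1}--\eqref{eq:f0-KN-B-3} as in type $B_n$, with $\mathcal{B}_n$ replaced by $\mathcal{D}_n$) must match the $f_0$-action on $\mathrm{QLS}(\varpi_i)$ obtained via \eqref{eq:QLSaf->SiLS} and the semi-infinite LS-path formulas of \S\ref{Subsection:Path-Demazure}.

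The hard part will be the last step, namely matching the combinatorial $f_0$-action on QKN $D_n$-columns with the $f_0$-action on the corresponding QLS path, which requires a careful case analysis of the three regimes distinguished in \eqref{eq:f0-KN-B-1}--\eqref{eq:f0-KN-B-3} (according to whether $m=0$ or $m>0$ and whether $y_k \in \{1,2\}$), combined with the relation $m = d_i(v,w) = (\#M_1 - \#N_1)/2$ from Lemma \ref{lem:d_i(v,w)}. The affine reflection $r_0 = r_{\theta} t_{\theta^{\vee}}$ with $\theta = \varepsilon_1 + \varepsilon_2$ in type $D_n$ produces the same length-change computations as in type $B_n$, so each of the cases can be handled by transferring the analogous computation from the proof of Theorem \ref{thm:QKN->QLS-B} (2) in \S\ref{Subsection:Pr-Thm-B-(2)}, with the only modifications coming from the (M3)-edges that replace the (M2)-edges of type $B_n$ at the top of the first segment.
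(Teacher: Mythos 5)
Your plan is correct and follows exactly the route the paper takes: the paper itself disposes of parts (1) and (3) as straightforward and states that part (2) ``may be proved in much the same way as Theorem \ref{thm:QKN->QLS-B} (2)'', leaving the details to the reader, and your outline is a faithful transcription of the $B_n$ argument of \S\ref{Subsection:Pr-Thm-B-(2)} with the correct $D_n$ modifications (the parity constraint of \eqref{eq:Weyl-grp-D}, $f\in 2\BZ_{\geq 0}$, and the (M3)-edges of Lemma \ref{lem:Maya-cri} (3) replacing the (M2)-edges in the segment containing $n-1,n$). Nothing essential is missing.
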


The proof of (1) and (3) in 
Theorem \ref{thm:QKN->QLS-D}
is straightforward 
(cf. \cite[Lemma 2.7 (ii)]{C}).
Theorem \ref{thm:QKN->QLS-D} (2)
may be proved in much the same way as 
Theorem \ref{thm:QKN->QLS-B} (2) 
(see \S \ref{Subsection:Pr-Thm-B-(2)}); 
the details are left to the reader. 

Recall the partial order 
$\preceq$
on 
$\mathrm{CST}_{D_n}(\varpi_i) \times \BZ$
(see Definition \ref{def:SiB-D}).

\begin{define} \label{def:SiKN-D}
Let 
$i \in I$
and 
$m \in \BZ_{\geq 0}$. 
\begin{enumerate}[(1)]
\item
Assume that 
$i \in \{ 1,n-1,n \}$. 
Let 
$\mathbb{T}
=
\left(
\mathsf{T}_1
\mathsf{T}_2
\cdots 
\mathsf{T}_m, 
(c_1,c_2,\ldots ,c_m)
\right)$, 
where
$\mathsf{T}_{\nu}
\in 
\mathrm{CST}_{D_n}(\varpi_i)$
and 
$c_{\nu} \in \BZ$
for 
$\nu \in [m]$. 
We call 
$\mathbb{T}$
a semi-infinite KN $D_n$-tableau of shape 
$m\varpi_i$
if 
\begin{align}
(\mathsf{T}_{\nu},c_{\nu})
\succeq 
(\mathsf{T}_{\nu+1},c_{\nu+1})
\end{align}
in
$\mathrm{CST}_{D_n}(\varpi_i) \times \BZ$
for 
$\nu \in [m-1]$. 
\item
Assume that 
$i \in [2,n-2]$. 
Let 
$\mathbb{T}
=
(\Tilde{\mathsf{C}}_1
\Tilde{\mathsf{C}}_2
\cdots 
\Tilde{\mathsf{C}}_m, 
(c_1,c_2,\ldots ,c_m))$, 
where
$\Tilde{\mathsf{C}}_{\nu}
\in 
\mathrm{QKN}_{D_n}(\varpi_i)$
and 
$c_{\nu} \in \BZ$
for 
$\nu \in [m]$. 
We call 
$\mathbb{T}$
a semi-infinite KN $D_n$-tableau of shape 
$m\varpi_i$
if
\begin{align}
(l\Tilde{\mathsf{C}}_{\nu},c_{\nu} 
- 
d_i(r\Tilde{\mathsf{C}}_{\nu},l\Tilde{\mathsf{C}}_{\nu}))
\succeq 
(r\Tilde{\mathsf{C}}_{\nu+1},c_{\nu+1} 
+ 
d_i(r\Tilde{\mathsf{C}}_{\nu+1},l\Tilde{\mathsf{C}}_{\nu+1}))
\end{align}
in
$\mathrm{CST}_{D_n}(\varpi_i) \times \BZ$
for 
$\nu \in [m-1]$.  
\end{enumerate}
Let 
$\mathbb{Y}^{\si}_{D_n}(m\varpi_i)$
be the set of 
semi-infinite KN $D_n$-tableaux of shape
$m\varpi_i$. 
For 
$\lambda = \sum_{i \in I} m_i \varpi_i \in P^+$, 
set 
$\mathbb{Y}^{\si}_{D_n}(\lambda)
=
\prod_{i \in I} 
\mathbb{Y}^{\si}_{D_n}(m_i \varpi_i)$. 
We call an element of 
$\mathbb{Y}^{\si}_{D_n}(\lambda)$
a semi-infinite KN $D_n$-tableau
of shape $\lambda$.
\end{define}

Let 
$\mathrm{QKN}_{D_n}(\varpi_i)_{\af}$
denote the affinization of the 
$\U'$-crystal 
$\mathrm{QKN}_{D_n}(\varpi_i)$
(see \S\ref{Subsection:Crystals}). 
Combining
Theorem \ref{thm:BN-Kashiwara}, 
Proposition \ref{prop:tab-cri-D} (2), 
Lemma \ref{lem:SMT-QLSaf}, 
and 
Definition \ref{def:SiKN-D}
we obtain the following theorem.

\begin{thm} \label{thm:SiKN->B-D}
Let 
$\lambda = \sum_{i \in I} m_i \varpi_i \in P^+$. 
For each 
$i \in I$, 
the image of the map
\begin{align} \label{eq:map-D-Y->QKNaf}
\begin{split}
&\mathbb{Y}^{\si}_{D_n}(m_i \varpi_i)
\to 
\mathrm{QKN}_{D_n}(\varpi_i)_{\af}^{\otimes m_i}, \\
&\left(
\mathsf{C}_1
\mathsf{C}_2
\cdots 
\mathsf{C}_{m_i}, 
(c_1,c_2,\ldots ,c_{m_i})
\right)
\mapsto 
\bigotimes_{\nu \in [m_i]}
(\mathsf{C}_{\nu} , c_{\nu}),
\end{split}
\end{align}
is a $\U$-subcrystal. 
Hence we can define a 
$\U$-crystal structure on 
$\mathbb{Y}^{\si}_{D_n}(m_i \varpi_i)$
to be such that the map 
\eqref{eq:map-D-Y->QKNaf}
is a strict embedding of 
$\U$-crystals. 
In particular, 
$\mathbb{Y}^{\si}_{D_n}(\lambda)$
is a $\U$-subcrystal of 
$\bigotimes_{i \in I}
\mathrm{QKN}_{D_n}(\varpi_i)_{\af}^{\otimes m_i}$. 
Then 
$\mathbb{Y}^{\si}_{D_n}(\lambda)$
is isomorphic,
as a $\U$-crystal, 
to the crystal basis 
$\mathcal{B}(\lambda)$.
\end{thm}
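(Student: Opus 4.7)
The plan is to realise $\mathbb{Y}^{\si}_{D_n}(m_i\varpi_i)$ as the image, under the crystal isomorphism of Theorem \ref{thm:QKN->QLS-D}, of the subcrystal of $\mathrm{QLS}(\varpi_i)_{\af}^{\otimes m_i}$ provided by Lemma \ref{lem:SMT-QLSaf} (2), and then to glue together these descriptions over $i \in I$ by Theorem \ref{thm:BN-Kashiwara}. Fix $i \in I$ and $m = m_i$. By Theorem \ref{thm:QKN->QLS-D} (1) and (2), the map $\Tilde{\mathsf{C}} \mapsto (r\Tilde{\mathsf{C}},l\Tilde{\mathsf{C}})$ is an isomorphism of $\U'$-crystals $\mathrm{QKN}_{D_n}(\varpi_i) \xrightarrow{\cong} \mathrm{QLS}(\varpi_i)$; affinising and tensoring $m$ times (and extending by identity on the $\BZ$-factors) gives an isomorphism of $\U$-crystals
\begin{align*}
\Psi \colon \mathrm{QKN}_{D_n}(\varpi_i)_{\af}^{\otimes m} \xrightarrow{\cong} \mathrm{QLS}(\varpi_i)_{\af}^{\otimes m}, \quad \bigotimes_{\nu}(\Tilde{\mathsf{C}}_{\nu},c_{\nu}) \mapsto \bigotimes_{\nu}((r\Tilde{\mathsf{C}}_{\nu},l\Tilde{\mathsf{C}}_{\nu}),c_{\nu}).
\end{align*}

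Next I would translate the concatenation condition of Lemma \ref{lem:SMT-QLSaf} (2) into tableau language. Under the bijection $\mathcal{Y}_i^{D_n}$ from \S\ref{subsection:tab-cri-D}, for $w \in W^{I\setminus\{i\}}$ and $k \in \BZ$ the element $wT^{I\setminus\{i\}}_{k\alpha_i^{\vee}} \in (W^{I\setminus\{i\}})_{\af}$ corresponds to $(\mathsf{T}_w^{(i)},k)$; hence by Proposition \ref{prop:tab-cri-D} (2) the inequality
\begin{align*}
w_{\nu}T^{I\setminus\{i\}}_{(c_{\nu}-d_i(v_{\nu},w_{\nu}))\alpha_i^{\vee}} \succeq v_{\nu+1}T^{I\setminus\{i\}}_{(c_{\nu+1}+d_i(v_{\nu+1},w_{\nu+1}))\alpha_i^{\vee}}
\end{align*}
is equivalent to
\begin{align*}
\left(\mathsf{T}_{w_{\nu}}^{(i)},\,c_{\nu}-d_i(v_{\nu},w_{\nu})\right) \succeq \left(\mathsf{T}_{v_{\nu+1}}^{(i)},\,c_{\nu+1}+d_i(v_{\nu+1},w_{\nu+1})\right)
\end{align*}
in $\mathrm{CST}_{D_n}(\varpi_i)\times\BZ$. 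Setting $(v_{\nu},w_{\nu})=(r\Tilde{\mathsf{C}}_{\nu},l\Tilde{\mathsf{C}}_{\nu})$ and recalling that $\mathsf{T}_{w_{\nu}}^{(i)}=l\Tilde{\mathsf{C}}_{\nu}$ and $\mathsf{T}_{v_{\nu+1}}^{(i)}=r\Tilde{\mathsf{C}}_{\nu+1}$, this is precisely the condition defining an element of $\mathbb{Y}^{\si}_{D_n}(m\varpi_i)$ in Definition \ref{def:SiKN-D}. Consequently $\Psi^{-1}$ identifies the subcrystal of $\mathrm{QLS}(\varpi_i)_{\af}^{\otimes m}$ furnished by Lemma \ref{lem:SMT-QLSaf} (2), and hence isomorphic to $\mathcal{B}(m\varpi_i)$, with the image of $\mathbb{Y}^{\si}_{D_n}(m\varpi_i)$ under \eqref{eq:map-D-Y->QKNaf}. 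This image is therefore a $\U$-subcrystal, the map \eqref{eq:map-D-Y->QKNaf} is injective (since $\Psi^{-1}$ composed with the QLS embedding is), and transporting the crystal structure through it yields a $\U$-crystal isomorphism $\mathbb{Y}^{\si}_{D_n}(m\varpi_i)\cong\mathcal{B}(m\varpi_i)$.

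Finally, tensoring these isomorphisms over $i \in I$ and invoking Theorem \ref{thm:BN-Kashiwara} gives
\begin{align*}
\mathbb{Y}^{\si}_{D_n}(\lambda) \;=\; \prod_{i\in I}\mathbb{Y}^{\si}_{D_n}(m_i\varpi_i) \;\cong\; \bigotimes_{i\in I}\mathcal{B}(m_i\varpi_i) \;\cong\; \mathcal{B}(\lambda)
\end{align*}
as $\U$-crystals, with $\mathbb{Y}^{\si}_{D_n}(\lambda)$ embedded as a $\U$-subcrystal of $\bigotimes_{i\in I}\mathrm{QKN}_{D_n}(\varpi_i)_{\af}^{\otimes m_i}$. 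The only non-formal ingredient is the translation in the middle paragraph, and the one subtlety there is that the offsets $\pm d_i(\cdot,\cdot)$ cancel correctly with the affinization formula in \eqref{eq:QLSaf->SiLS}; this is the step I expect to require the most care, but it is a direct unwinding of definitions using Lemma \ref{lem:d_i(v,w)} and the explicit description of $\mathcal{Y}_i^{D_n}$ in \S\ref{subsection:tab-cri-D}. No genuinely new ingredient beyond the results already established is needed.
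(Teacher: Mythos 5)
Your proposal is correct and follows exactly the route the paper intends: the paper's proof consists precisely of combining Theorem \ref{thm:BN-Kashiwara}, Proposition \ref{prop:tab-cri-D} (2), Lemma \ref{lem:SMT-QLSaf}, and Definition \ref{def:SiKN-D} (together with the crystal isomorphism of Theorem \ref{thm:QKN->QLS-D}), which is what you have unwound in detail. The translation step with the offsets $\pm d_i(\cdot,\cdot)$ that you flag as the delicate point is indeed the only non-formal content, and your handling of it via Lemma \ref{lem:d_i(v,w)} and Proposition \ref{prop:tab-cri-D} (1)--(2) matches the paper's intended argument.
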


\subsection{Proof of Theorem \ref{thm:QKN->QLS-B} (2)} \label{Subsection:Pr-Thm-B-(2)}

This subsection is devoted to the proof of 
Theorem \ref{thm:QKN->QLS-B} (2). 
We have divided the proof into a sequence of lemmas.

\begin{lem} \label{lem:QKN->QLS-wd-B}
If
$\Tilde{\mathsf{C}}
\in 
\mathrm{QKN}_{B_n}(\varpi_i)$, 
then 
$(r\Tilde{\mathsf{C}},l\Tilde{\mathsf{C}})
\in 
\mathrm{QLS}(\varpi_i)$. 
\end{lem}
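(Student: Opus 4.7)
The plan is to verify directly the criterion \eqref{eq:QLS} for membership in $\mathrm{QLS}(\varpi_i)$, namely $l\Tilde{\mathsf{C}} \trianglelefteq r\Tilde{\mathsf{C}}$ in the partial order on $W^{I \setminus \{i\}}$ introduced in \S\ref{Subsection:QBG-Maya}; the fact that $r\Tilde{\mathsf{C}}, l\Tilde{\mathsf{C}} \in \mathrm{CST}_{B_n}(\varpi_i)$ is already built into Definition \ref{def:QKN-B}. The case $m = 0$ is immediate: then $\Tilde{\mathsf{C}} = \mathsf{C}$ and $K_{\Tilde{\mathsf{C}}} = \emptyset$, so $r\Tilde{\mathsf{C}} = r\mathsf{C}$ and $l\Tilde{\mathsf{C}} = l\mathsf{C}$, and Lemma \ref{lem:KN->LS-B}(2) yields $(r\mathsf{C}, l\mathsf{C}) \in \mathrm{LS}(\varpi_i) \subset \mathrm{QLS}(\varpi_i)$. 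The substance of the proof therefore concerns the case $m > 0$.

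For $m > 0$, write $K_{\Tilde{\mathsf{C}}} = \{x_1 < \cdots < x_{2m}\}$. Since $r\Tilde{\mathsf{C}}$ and $l\Tilde{\mathsf{C}}$ share the absolute-value set $\{\|r\mathsf{C}(u)\| : u \in [i - 2m]\} \cup K_{\Tilde{\mathsf{C}}}$, we have $\mathcal{J}(r\Tilde{\mathsf{C}}) = \mathcal{J}(l\Tilde{\mathsf{C}}) =: (J_1 < \cdots < J_{\mu})$. The first structural observation to establish is $K_{\Tilde{\mathsf{C}}} \subset J_1$: since the $x_k$ are the smallest $2m$ elements of $[n] \setminus \{\|r\mathsf{C}(u)\|\}$, the interval $[1, x_{2m}]$ is entirely contained in $\{\|r\mathsf{C}(u)\|\} \cup K_{\Tilde{\mathsf{C}}}$, so the leftmost maximal contiguous run starting at $1$ must contain all of $K_{\Tilde{\mathsf{C}}}$. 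In particular $\{1, 2\} \subset J_1$, and since $\# J_1 \leq i \leq n - 1$ we get $n \notin J_1$; hence case (4) of Lemma \ref{lem:Maya-cri} governs $J_1$, while cases (1) or (2) govern each $J_\nu$ with $\nu \geq 2$.

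The next step is to verify $\mathcal{M}(l\Tilde{\mathsf{C}}) \trianglelefteq \mathcal{M}(r\Tilde{\mathsf{C}})$ segment by segment. For $\nu \geq 2$, the segment $J_\nu$ is disjoint from $K_{\Tilde{\mathsf{C}}}$ and coincides with a segment of $\mathcal{J}(r\mathsf{C})$, so the $\nu$-th components of $\mathcal{M}(r\Tilde{\mathsf{C}})$ and $\mathcal{M}(l\Tilde{\mathsf{C}})$ agree with those of $\mathcal{M}(r\mathsf{C})$ and $\mathcal{M}(l\mathsf{C})$; the needed inequality follows from $\mathcal{M}(l\mathsf{C}) \trianglelefteq' \mathcal{M}(r\mathsf{C})$ (supplied by Lemma \ref{lem:KN->LS-B}(2)) together with the fact that the orderings $\trianglelefteq$ and $\trianglelefteq'$ coincide on $J_\nu$ since $\{1, 2\} \not\subset J_\nu$. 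For $\nu = 1$, case (4) of Lemma \ref{lem:Maya-cri} requires the size identity $\# \mathcal{M}(l\Tilde{\mathsf{C}})_1 - \# \mathcal{M}(r\Tilde{\mathsf{C}})_1 = 2m$ together with the tail-alignment inequality $m_{r - \nu} \geq n_{s - \nu}$ for $\nu \in [0, s - 1]$. The size identity is straightforward: the barred counts of $r\mathsf{C}$ and $l\mathsf{C}$ agree on each old segment contained in $J_1$ (none of which contains $n$), and $K_{\Tilde{\mathsf{C}}}$ contributes exactly $2m$ additional barred entries to $l\Tilde{\mathsf{C}}$ but none to $r\Tilde{\mathsf{C}}$.

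The main obstacle is the tail-alignment inequality in $J_1$: because old segments of $\mathcal{J}(r\mathsf{C})$ may merge inside $J_1$ and elements of $K_{\Tilde{\mathsf{C}}}$ can interleave with the barred entries of $l\mathsf{C}$, the top $s$ elements of $\mathcal{M}(l\Tilde{\mathsf{C}})_1$ need not all come from $\mathcal{M}(l\mathsf{C})$. The plan to resolve this is by a positional argument: within each old segment contained in $J_1$, Lemma \ref{lem:KN->LS-B}(2) provides the required dominance of $\mathcal{M}(l\mathsf{C})$ over $\mathcal{M}(r\mathsf{C})$, and since each $x_k \in K_{\Tilde{\mathsf{C}}}$ occupies a slot in $J_1$ not filled by any element of $\{\|r\mathsf{C}(u)\|\}$, whenever an $x_k$ is forced into the top $s$ elements of $\mathcal{M}(l\Tilde{\mathsf{C}})_1$ it gets matched against an element of $\mathcal{M}(r\Tilde{\mathsf{C}})_1$ that is necessarily at most $x_k$, making the required inequality automatic.
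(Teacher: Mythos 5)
Your overall route is the same as the paper's: reduce to the Maya-diagram criterion of Lemma \ref{lem:Maya-cri}, observe that $K_{\Tilde{\mathsf{C}}}$ sits inside the first segment $J_1$ (so that only $J_1$ requires case (4) of that lemma, the remaining segments being handled directly from Lemma \ref{lem:KN->LS-B} (2)), and check the cardinality condition $\#M_1 - \#N_1 = 2m$ there. All of that matches the paper's argument and is correct.

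The one place your write-up does not yet close is the tail-alignment inequality in $J_1$. Writing $M_1 = \{m_1 < \cdots < m_r\}$ for the barred set of $l\Tilde{\mathsf{C}}$ in $J_1$ and $N_1 = \{n_1 < \cdots < n_s\}$ for that of $r\Tilde{\mathsf{C}}$, you must show $m_{r-\nu} \geq n_{s-\nu}$ for $\nu \in [0,s-1]$. Your stated reason --- that each $x_k \in K_{\Tilde{\mathsf{C}}}$ occupies a slot not filled by any $\|r\mathsf{C}(u)\|$, so that an $x_k$ landing among the top $s$ of $M_1$ is matched against something ``necessarily at most $x_k$'' --- is an assertion rather than a proof: knowing $x_k \notin N_1$ does not by itself bound $n_{s-\nu}$. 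The missing observation, which is exactly how the paper finishes, is that the barred set $M'$ of $l\mathsf{C}$ inside $J_1$ is a subset of $M_1$ of cardinality exactly $s$ (equality of barred counts for $l\mathsf{C}$ and $r\mathsf{C}$ on the old segments in $J_1$ holds because none of them contains $n$, so $\trianglelefteq'$ forces equal cardinalities). Hence the $\nu$-th largest element of $M_1$ dominates the $\nu$-th largest element of $M'$, which in turn dominates $n_{s-\nu}$ by the segment-wise LS dominance $M' \geq N_1$ (the ordered, disjoint old segments concatenate compatibly). Equivalently, for every threshold $t$ one has $\#(N_1 \cap (t,n]) \leq \#(M' \cap (t,n]) \leq \#(M_1 \cap (t,n])$, which is precisely what your positional matching needs. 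With this chain $m_{r-\nu} \geq m'_{s-\nu} \geq n_{s-\nu}$ inserted, your proof coincides with the paper's; the paper additionally organizes the old segments via an index $\tau$ with $J_1 = K_{\Tilde{\mathsf{C}}} \cup \bigcup_{\nu=1}^{\tau} J'_\nu$ and disposes separately of the case where some $J'_\nu$, $\nu \leq \tau$, contains $n$, which forces $m = 0$ and reduces to the LS case.
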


\begin{proof}
Let
$\Tilde{\mathsf{C}}
=
\mathsf{C}
\cup
\{ \overline{0},\ldots ,\overline{0} \}
\in 
\mathrm{QKN}_{B_n}(\varpi_i)$, 
with 
$\mathsf{C} \in \mathrm{KN}_{B_n}(\varpi_{i-2m})$. 
Let 
$w,v \in W^{I \setminus \{ i \}}$
and 
$w',v' \in W^{I \setminus \{ i-2m \}}$
be such that 
$\mathsf{T}_w^{(i)} = l\Tilde{\mathsf{C}}$, 
$\mathsf{T}_v^{(i)} = r\Tilde{\mathsf{C}}$, 
$\mathsf{T}_{w'}^{(i-2m)} = l\mathsf{C}$, 
and 
$\mathsf{T}_{v'}^{(i-2m)} = r\mathsf{C}$. 
Obviously, 
$\mathcal{J}(w)
=
\mathcal{J}(v)$. 
Write 
$\mathcal{J}(w)
=
\mathcal{J}(v)
=
(J_1 < \cdots < J_{\mu})
\in 
\mathcal{S}_i$, 
$\mathcal{J}(w')
=
\mathcal{J}(v')
=
(J_1' < \cdots < J_{\kappa}')
\in 
\mathcal{S}_{i-2m}$, 
$\mathcal{M}(w)
=
(M_{\nu}), 
\mathcal{M}(v)
=
(N_{\nu})
\in 
\prod_{\nu=1}^{\mu} 2^{J_{\nu}}$, 
and 
$\mathcal{M}(w')
=
(M_{\nu}'), 
\mathcal{M}(v)
=
(N_{\nu}')
\in 
\prod_{\nu=1}^{\kappa} 2^{J_{\nu}'}$. 
It follows from 
the definition of 
$K_{\Tilde{\mathsf{C}}}$
and 
$\bigcup_{\nu=1}^{\mu}
J_{\nu}
=
K_{\Tilde{\mathsf{C}}}
\cup
\bigcup_{\nu=1}^{\kappa}
J_{\nu}'$
that there exists 
$\tau \in [0,\mu]$
such that 
$J_1
=
K_{\Tilde{\mathsf{C}}}
\cup
\bigcup_{\nu=1}^{\tau}
J_{\nu}'$, 
$J_{\nu}
=
J_{\nu+\tau-1}'$
for 
$\nu \in [2,\mu]$, 
and 
$\mu+\tau-1=\kappa$. 
If 
$n \in J_{\nu}'$
for some 
$\nu \in [\tau]$, 
then 
$1 \notin J_1$, 
$m = 0$,  
and 
$(r\Tilde{\mathsf{C}},l\Tilde{\mathsf{C}})
=
(r\mathsf{C},l\mathsf{C})
\in 
\mathrm{LS}(\varpi_i)
\subset 
\mathrm{QLS}(\varpi_i)$, 
which is our assertion. 
Therefore we may assume that 
$n \notin J_{\nu}'$
for 
$\nu \in [\tau]$. 
Clearly, 
$N_{\nu} \trianglerighteq M_{\nu}$
in 
$2^{J_{\nu}}$
for 
$\nu \in [2,\mu]$. 
Since 
$(N_{\nu}')_{\nu=1}^{\tau}
\trianglerighteq'
(M_{\nu}')_{\nu=1}^{\tau}$
in 
$\prod_{\nu=1}^{\tau} 2^{J_{\nu}'}$, 
we have 
$\# M_{\nu}' = \# N_{\nu}'$
for 
$\nu \in [\tau]$. 
It follows from 
$M_1 
=
K_{\Tilde{\mathsf{C}}}
\cup 
\bigcup_{\nu=1}^{\tau} M_{\nu}'$
and 
$N_1
=
\bigcup_{\nu=1}^{\tau} N_{\nu}'$
that
$\# M_1 - \# N_1
=
\# K_{\Tilde{\mathsf{C}}}
=
2m
\in 
2\BZ_{\geq 0}$.
Write
$\bigcup_{\nu=1}^{\tau} M_{\nu}'
=
\{ m_1' < m_2' < \cdots < m_s' \}
\subset 
M_1 
=
\{ m_1 < m_2 < \cdots < m_r \}$
and 
$N_1
=
\{ n_1 < n_2 < \cdots < n_s \}$, 
where 
$r = s+2m$. 
Then 
$m_{r-\nu}
\geq
m_{s-\nu}'
\geq 
n_{s-\nu}$
for 
$\nu \in [0,s]$. 
Hence 
$N_1 \trianglerighteq M_1$, 
by Lemma \ref{lem:Maya-cri} (4). 
This proves 
$v \trianglerighteq w$, 
and so 
$(r\Tilde{\mathsf{C}},l\Tilde{\mathsf{C}})
\in 
\mathrm{QLS}(\varpi_i)$. 
\end{proof}

\begin{lem} \label{lem:(ii)-(iii)-wd-B}
The elements 
(ii)--(iii)
in Theorem \ref{thm:QKN->QLS-B} (2) 
are well-defined. 
\end{lem}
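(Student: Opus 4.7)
The plan is to reduce the well-definedness of the $z^\nu$ to a single tail counting inequality on $M_1$ and $N_1$, and then to run a short induction on $\nu \in [l]$. Since each $z^\nu$ is the minimum of a finite subset of $[n]$, its existence is equivalent to that subset being non-empty; thus the entire task is to verify non-emptiness.

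The key input is the inequality
\[
\#\{m \in M_1 \mid m \geq a\} \geq \#\{n \in N_1 \mid n \geq a\} \quad \text{for every } a \in [n],
\]
which I will derive from $M_1 \trianglelefteq N_1$ in $2^{J_1}$ (a consequence of $(v,w) \in \mathrm{QLS}(\varpi_i)$ together with Lemma \ref{lem:J(w)=J(v)}). For the first segment $J_1$ in type $B_n$, Lemma \ref{lem:Maya-cri} applies in case (1), (2), or (4), and in each case the interleaving bounds ($m_\nu \geq n_\nu$ or $m_{r-\nu} \geq n_{s-\nu}$) produce the displayed inequality by a standard pigeonhole comparison of the increasing listings. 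Subtracting $\#(M_1 \cap N_1 \cap [a,n])$ from both sides and separating the contribution of $a$ itself yields
\[
\#\{z \in M_1 \setminus N_1 \mid z > a\} - \#\{z \in N_1 \setminus M_1 \mid z > a\} \geq [a \in N_1] - [a \in M_1]
\]
for every $a \in [n]$. Applied to $a = y^\nu \in N_1 \setminus M_1$, and using $\#\{z \in N_1 \setminus M_1 \mid z > y^\nu\} = l - \nu$, this gives the lower bound $\#\{z \in M_1 \setminus N_1 \mid z > y^\nu\} \geq l - \nu + 1$.

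I will then prove by induction on $\nu \in [l]$ the slightly stronger statement that $z^\nu$ is well-defined and
\[
\#\{z \in M_1 \setminus N_1 \mid z > z^\nu\} \geq l - \nu.
\]
The base case $\nu = 1$ is immediate from the estimate at $a = y^1$ combined with the minimality defining $z^1$. For the inductive step, whether $\max\{y^\nu, z^{\nu-1}\}$ equals $y^\nu$ or $z^{\nu-1}$, the estimate at $a = y^\nu$ together with the inductive lower bound at $z^{\nu-1}$ both certify that the defining set for $z^\nu$ is non-empty; minimality of $z^\nu$ then removes exactly the single element $z^\nu$ from the count and yields the desired bound $l - \nu$.

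The main obstacle is the case analysis needed to translate $M_1 \trianglelefteq N_1$ into the uniform tail inequality across the relevant cases of Lemma \ref{lem:Maya-cri}, since each case phrases the interleaving in slightly different combinatorial language; verifying that the three formulations all imply the single tail bound is what does the work. Once that uniform counting inequality is in hand, the induction runs without any further bifurcation over whether $\{1,2\} \subset J_1$ or $n \in J_1$.
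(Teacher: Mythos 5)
Your proposal follows essentially the same route as the paper's proof: both arguments extract the tail-counting inequality $\#(M_1 \cap [a,q]) \geq \#(N_1 \cap [a,q])$ from $M_1 \trianglelefteq N_1$ and convert it into the interleaving of $M_1 \setminus N_1$ above $N_1 \setminus M_1$ that makes each greedy minimum $z^{\nu}$ well-defined. The paper packages this as the single relation $(N_1 \setminus M_1) \trianglerighteq (M_1 \setminus N_1)$, proved by contradiction through the characterization in Lemma \ref{lem:Maya-cri} (4), whereas you run a direct induction on $\nu$; this is a presentational difference only.

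One caution: your claim that case (2) of Lemma \ref{lem:Maya-cri} (type $B_n$ with $n \in J_1$) also yields the tail inequality is not correct. There one only has $\#M_1 \leq \#N_1$ with the bottom $\#M_1$ entries interleaved, and for instance $M_1 = \emptyset \trianglelefteq N_1 = \{ n \}$ violates the inequality at $a = n$. The paper sidesteps this by invoking only case (4) (i.e., $\{1,2\} \subset J_1$ and $n \notin J_1$, the situation in which the quantum pairing $y^{\nu} \leftrightarrow z^{\nu}$ with a possible surplus $2m = \#M_1 - \#N_1 > 0$ actually occurs); when $\#M_1 = \#N_1$, case (2) reduces to case (1) and your pigeonhole argument does go through. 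So your "no further bifurcation" claim should be replaced by the observation that the only cases in which the construction (ii)--(iii) is invoked with all $l$ partners are exactly those in which the tail inequality holds.
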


\begin{proof}
Let 
$(v,w) \in \mathrm{QLS}(\varpi_i)$, 
$\mathcal{J}
=
\mathcal{J}(w) 
= 
\mathcal{J}(v) 
= 
(J_1 < \cdots < J_{\mu})
\in 
\mathcal{S}_i$
and 
$\mathcal{M}(w) = (M_{\nu}), 
\mathcal{M}(v) = (N_{\nu})
\in 
2^{\mathcal{J}}$.
We see from 
Lemma \ref{lem:Maya-cri} (4)
that our assertion follows from
$(N_1 \setminus M_1)
\trianglerighteq
(M_1 \setminus N_1)$
in 
$2^{J_1}$. 
We have 
$\# (M_1 \setminus N_1)
-
\# (N_1 \setminus M_1)
=
\# M_1 - \# N_1
\in 
2\BZ_{\geq 0}$. 
Write
$J_1 = [p,q]$.
Since 
$N_1 \trianglerighteq M_1$, 
we have 
$\#(M_1 \cap [r,q])
\geq 
\#(N_1 \cap [r,q])$
for all 
$r \in [p,q]$. 
Write 
$M_1 \setminus N_1
=
\{ m_1' < \cdots < m_s' \}$
and 
$N_1 \setminus M_1
=
\{ n_1' < \cdots < n_t' \}$. 
On the contrary, 
suppose that 
$(N_1 \setminus M_1)
\not\trianglerighteq
(M_1 \setminus N_1)$
in 
$2^{J_1}$.
By Lemma \ref{lem:Maya-cri} (4), 
there exists
$r \in [0,t-1]$
such that 
$m_{s-r}' < n_{t-r}'$
and 
$m_{s-\nu}' \geq n_{t-\nu}'$
for 
$\nu \in [0,r-1]$. 
This implies 
$\# (M_1 \cap [n_{t-r}',q])
-
\# (N_1 \cap [n_{t-r}',q])
=
-1
<0$,
a contradiction.
\end{proof}

\begin{lem} \label{lem:QLS->QKN-wd-B}
The right-hand side of 
\eqref{eq:QLS->QKN-B}
is a QKN $B_n$-column of shape $\varpi_i$.
\end{lem}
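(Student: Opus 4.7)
The plan is to verify Definition \ref{def:QKN-B} directly for $\Tilde{\mathsf{C}}$, in three steps. First I would compute the cardinality: using $|M_1| - |N_1| = 2m$ together with Lemma \ref{lem:(ii)-(iii)-wd-B} (which shows $|M_1 \setminus N_1| = l + 2m$, $|N_1 \setminus M_1| = l$), one finds $|M_1 \cup N_1| = 2l + 2m + |M_1 \cap N_1|$, whence the $J_1$-contribution to the non-$\overline{0}$ part of $\Tilde{\mathsf{C}}$ is $|J_1| - 2m$. Each higher segment contributes $|J_\nu|$ entries (with the $f$ zeros absorbed into $J_\mu$), so the non-$\overline{0}$ part has $i - 2m$ entries and exactly $2m$ copies of $\overline{0}$ appear at the bottom, confirming (QKN-B1).

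Next, writing $\mathsf{C}$ for $\Tilde{\mathsf{C}} \setminus \{\overline{0},\ldots,\overline{0}\}$, I would check (KN-B1)--(KN-B2): listing the entries in the natural $\mathcal{B}_n$-order is weakly increasing by construction, and the three disjoint sources of positive entries, namely $J_1 \setminus (M_1 \cup N_1)$, $\{z^\nu\}_{\nu \in [l]} \subseteq M_1 \setminus N_1$, and $\bigcup_{\nu \geq 2}(J_\nu \setminus N_\nu)$, yield distinct values in $[1,n]$; the same disjointness holds on the negative side, so only $0$ may repeat.

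The main obstacle is the Kashiwara--Nakashima pair condition (KN-B3). A case analysis of when both $t$ and $\overline{t}$ occur in $\mathsf{C}$ shows that the relevant pairs are exactly:
\begin{enumerate}
\item[(a)] $(z^\nu, \overline{z^\nu})$ with $\nu \in [l]$, from the $J_1$-segment;
\item[(b)] $(t, \overline{t})$ with $t \in M_{\nu'} \setminus N_{\nu'}$ and $\nu' \in [2,\mu]$.
\end{enumerate}
For case (b), the required inequality $q - p > (i-2m) - t$ reduces to the corresponding KN-inequality already verified for the classical column assigned to $(v,w)$ in Lemma \ref{lem:KN->LS-B}: the $J_1$-block in $\mathsf{C}$ has exactly $|J_1| - 2m$ entries, which produces a uniform position shift on all higher segments and so preserves both sides of the inequality.

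Case (a) is the crux. Using the inductive characterization $z^\nu = \min\{z \in M_1 \setminus N_1 : z \succ \max\{y^\nu, z^{\nu-1}\}\}$ from (ii)--(iii) of Theorem \ref{thm:QKN->QLS-B}(2), together with the inequality $N_1 \trianglerighteq M_1$ in $2^{J_1}$, which by Lemma \ref{lem:Maya-cri}(4) gives $m_{|M_1|-\kappa} \geq n_{|N_1|-\kappa}$ for every $\kappa$, I would express the positions of $z^\nu$ and $\overline{z^\nu}$ in $\mathsf{C}$ as counts of positive and negative entries of $J_1$ lying below $z^\nu$ and above $\overline{z^\nu}$ respectively. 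The target inequality $q - p > (i - 2m) - z^\nu$ then becomes a purely combinatorial inequality on the Maya diagrams $M_1, N_1$ in $J_1$, which follows from the recursion defining $(z^\nu, y^\nu)$. I expect this bookkeeping to be the main technical difficulty; the delicate subcase is when $z^\nu$ is small, where contributions from $\{\overline{z} : z \in M_1 \cap N_1\}$ and from the $f$ zeros (present when $n \in J_\mu$) interact, and one must use that in $J_1 = [1,k]$ every integer below $z^\nu$ already appears in $\mathsf{C}$ so as to saturate the position count.
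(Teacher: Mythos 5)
Your reduction of the problem to the three conditions (QKN-B1), (KN-B1)--(KN-B2), and the pair condition (KN-B3), and your identification of the relevant pairs as $(z^{\nu},\overline{z^{\nu}})$ from $J_1$ and $(t,\overline{t})$ with $t\in M_{\nu'}\setminus N_{\nu'}$, $\nu'\geq 2$, are both correct. But the treatment of (KN-B3) has two genuine gaps. First, in your case (b) the claimed reduction does not work: when $m>0$ the pair $(v,w)$ is not a Lakshmibai--Seshadri path, so Lemma \ref{lem:KN->LS-B} assigns it no classical column; and if instead you compare with the column of length $i-\#J_1$ built from the segments $J_2,\ldots,J_{\mu}$ alone, then the $J_1$-block shifts $p$ and $q$ by the \emph{same} amount $\#J_1-\#M_1$ (the positive $J_1$-entries all precede $t$, while the negated $J_1$-entries all follow $\overline{t}$ in the $\mathcal{B}_n$-order), so $q-p$ is unchanged, whereas the right-hand side grows from $(i-\#J_1)-t$ to $(i-2m)-t$, i.e.\ by $\#J_1-2m\geq 0$. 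The inequality is therefore strictly strengthened, not preserved; it still holds, but proving it requires using the relations $m_{\nu}\geq n_{\nu}$ inside $J_{\nu'}$ (Lemma \ref{lem:Maya-cri}) together with the gap $\min J_2\geq\#J_1+2$, which your argument does not supply. Second, in your case (a) -- which you correctly identify as the crux -- the target inequality is simply asserted to ``follow from the recursion defining $(z^{\nu},y^{\nu})$''; no derivation is given, and this is exactly where the content of the lemma lies.

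The paper avoids both difficulties by never verifying (KN-B3) positionally. Lemma \ref{lem:KN->LS-B} (2) states that a map satisfying (KN-B1)--(KN-B2) is a KN $B_n$-column if and only if it \emph{can be split}, i.e.\ the greedy maximum-recursion producing $J_{\mathsf{C}}$ succeeds. One then observes that $I_{\mathsf{C}}=\{z^{\nu}\}\cup\bigcup_{\nu\geq 2}(M_{\nu}\setminus N_{\nu})\cup\{0,\ldots,0\}$, reduces to $\mu=1$ using $N_{\nu}\trianglerighteq M_{\nu}$ for $\nu\geq 2$, and checks that the recursion defining the $y^{\nu}$ (maximality below $z^{\nu}$, guaranteed nonempty by the well-definedness established in Lemma \ref{lem:(ii)-(iii)-wd-B}) is precisely the recursion defining $J_{\mathsf{C}}$, so that $J_{\mathsf{C}}=N_1\setminus M_1$ exists. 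If you want to keep your direct approach, you must actually carry out the Maya-diagram bookkeeping for both cases; otherwise I would recommend rerouting through the splittability criterion, which turns the hard inequality into the near-tautological statement that the sets $\{y^{\nu}\}$ and $\{z^{\nu}\}$ were constructed by exactly the recursions that splittability demands.
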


\begin{proof}
Let 
$(v,w) \in \mathrm{QLS}(\varpi_i)$, 
$\mathcal{J}
=
\mathcal{J}(w) 
= 
\mathcal{J}(v) 
= 
(J_1 < \cdots < J_{\mu})
\in 
\mathcal{S}_i$, 
and 
$\mathcal{M}(w) = (M_{\nu}), 
\mathcal{M}(v) = (N_{\nu})
\in 
2^{\mathcal{J}}$. 
Set 
$f = \# N_{\mu} - \# M_{\mu} \in \BZ_{\geq 0}$
if 
$n \in J_{\mu}$, 
and set
$f = 0$
otherwise. 
Set
$2m = \# M_1 - \# N_1 \in 2\BZ_{\geq 0}$
if 
$1,2 \in J_1$, 
and set
$m = 0$
otherwise. 
Let 
$y^{\nu} \in N_1 \setminus M_1$
and 
$z^{\nu} \in M_1 \setminus N_1$, 
$\nu \in [l]$, 
be as 
(i)--(iii)
in Theorem \ref{thm:QKN->QLS-B} (2). 
It suffices to prove that 
$\mathsf{C}$ defined below is a KN $B_n$-column of shape $\varpi_{i-2m}$. 
\begin{align} 
\begin{split}
\mathsf{C}
=
(J_1 
\setminus 
(M_1 \cup N_1))
&\cup
\{ z^{\nu}, \overline{z^{\nu}}
\mid 
\nu \in [l] \}
\cup 
\{ \overline{z} \mid z \in M_1 \cap N_1 \} \\[2mm]
&\cup
\bigcup_{\nu=2}^{\mu}
\left(
(J_{\nu} \setminus N_{\nu})
\cup
\{ \overline{z} \mid z \in M_{\nu} \}
\right)
\cup
\{ \underbrace{0,0,\ldots ,0}_{f \ \text{times}} \} .
\end{split}
\end{align}
By Lemma \ref{lem:KN->LS-B} (2), 
it suffices to prove that 
$\mathsf{C}$
can be split. 
It is easily seen that 
$I_{\mathsf{C}}
=
\{ z^{\nu} \mid \nu \in [l] \}
\cup
\bigcup_{\nu = 2}^{\mu} (M_{\nu} \setminus N_{\nu})
\cup
\{ \underbrace{0,0,\ldots ,0}_{f \ \text{times}} \}$. 
It remains to prove that 
$J_{\mathsf{C}}
=
\bigcup_{\nu = 1}^{\mu} (N_{\nu} \setminus M_{\nu})$. 
By Lemma \ref{lem:KN->LS-B} (2)
and 
$N_{\nu}
\trianglerighteq
M_{\nu}$
for 
$\nu \in [2,\mu]$, 
we may assume that 
$\mu = 1$
and 
$I_{\mathsf{C}}
=
\{ z^{\nu} \mid \nu \in [l] \}$. 
Then we only need to show that 
$J_{\mathsf{C}}
=
N_1 \setminus M_1
=
\{ y^{\nu} \mid \nu \in [l] \}$. 
We see that (ii)--(iii) in Theorem \ref{thm:QKN->QLS-B} (2)
imply 
\begin{enumerate}[(i)]
\item
$y^l
=
\max
\{ y \in J_1 \mid y \prec z^l, \ y \in N_1 \setminus M_1 \}$,
\item
$y^{\nu}
=
\max
\{ y \in J_1 \mid y \prec \min \{ y^{\nu+1}, z^l \}, \ y \in N_1 \setminus M_1 \}$
for 
$\nu \in [l-1]$. 
\end{enumerate}
This is our assertion. 
\end{proof}

By Lemmas 
\ref{lem:QKN->QLS-wd-B}--\ref{lem:QLS->QKN-wd-B}, 
we obtain the maps 
$\Phi_i : \mathrm{QKN}_{B_n}(\varpi_i)
\to 
\mathrm{QLS}(\varpi_i)$, 
$\Tilde{\mathsf{C}} 
\mapsto
(r\Tilde{\mathsf{C}},l\Tilde{\mathsf{C}})$, 
and 
$\Psi_i : \mathrm{QLS}(\varpi_i)
\to
\mathrm{QKN}_{B_n}(\varpi_i)$, 
$(v,w) \mapsto \Tilde{\mathsf{C}}$, 
where 
$\Tilde{\mathsf{C}}$
is defined as 
\eqref{eq:QLS->QKN-B}.

\begin{lem}
The maps 
$\Phi_i$
and 
$\Psi_i$
are inverses of each other.
\end{lem}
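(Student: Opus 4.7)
The plan is to verify both compositions $\Psi_i \circ \Phi_i = \mathrm{id}_{\mathrm{QKN}_{B_n}(\varpi_i)}$ and $\Phi_i \circ \Psi_i = \mathrm{id}_{\mathrm{QLS}(\varpi_i)}$ by unwinding the definitions and tracking the Maya-diagram data. Since well-definedness is already established in Lemmas \ref{lem:QKN->QLS-wd-B}--\ref{lem:QLS->QKN-wd-B}, what remains is a bookkeeping argument using the classification of KN $B_n$-columns by splitting (Lemma \ref{lem:KN->LS-B} (2)) together with the description \eqref{eq:QB=sum-QB[J]} of $\mathrm{QB}(\varpi_i;1/2)$.

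First, for $\Psi_i \circ \Phi_i = \mathrm{id}$, I would start from $\Tilde{\mathsf{C}} = \mathsf{C} \cup \{\overline{0},\ldots,\overline{0}\}$ with $\mathsf{C} \in \mathrm{KN}_{B_n}(\varpi_{i-2m})$, and set $(v,w) = \Phi_i(\Tilde{\mathsf{C}}) = (r\Tilde{\mathsf{C}}, l\Tilde{\mathsf{C}})$. Using the explicit definitions of $K_{\Tilde{\mathsf{C}}}$, $I_{\mathsf{C}}$, $J_{\mathsf{C}}$, I would compute $\mathcal{J}(w) = \mathcal{J}(v) = (J_1 < \cdots < J_\mu)$ and read off $(M_\nu), (N_\nu)$: the $\nu\geq 2$ contributions come directly from $I_\mathsf{C}, J_\mathsf{C}$ via Lemma \ref{lem:KN->LS-B} (2), while $M_1 \setminus N_1$ contains $K_{\Tilde{\mathsf{C}}}$ together with the first segment's contribution from $I_\mathsf{C}$. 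Applied to this data, \eqref{eq:QLS->QKN-B} reconstructs exactly the letters of $\Tilde{\mathsf{C}}$: the pairs $\{z^\nu,\overline{z^\nu}\}$ corresponding to elements of $I_\mathsf{C} \cap J_1$ reappear as such in the KN part, while the remaining $2m$ elements of $M_1 \setminus N_1$ give $\{\underbrace{\overline{0},\ldots,\overline{0}}_{2m}\}$, with $f$ matching the number of $0$'s in $I_\mathsf{C}$.

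Second, for $\Phi_i \circ \Psi_i = \mathrm{id}$, I would start from $(v,w) \in \mathrm{QLS}(\varpi_i)$ and let $\Tilde{\mathsf{C}} = \mathsf{C} \cup \{\overline{0},\ldots,\overline{0}\} = \Psi_i(v,w)$. By Lemma \ref{lem:QLS->QKN-wd-B}, $\mathsf{C} \in \mathrm{KN}_{B_n}(\varpi_{i-2m})$ with $I_\mathsf{C} = \{z^\nu\} \cup \bigcup_{\nu\geq 2}(M_\nu\setminus N_\nu) \cup \{0,\ldots,0\}$ and $J_\mathsf{C} = \bigcup_\nu(N_\nu\setminus M_\nu)$. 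Applying Lemma \ref{lem:KN->LS-B} (2) to $\mathsf{C}$ yields that $(r\mathsf{C}, l\mathsf{C})$ realizes $(v', w') \in \mathrm{LS}(\varpi_{i-2m})$ whose Maya data is obtained from $(v,w)$ by replacing $M_1, N_1$ with $M_1 \cap N_1 \sqcup \{z^\nu\}$ and $N_1$, respectively (losing the first-segment quantum pairs). I then verify that $K_{\Tilde{\mathsf{C}}}$, defined as the first $2m$ elements of $[n]\setminus\{\|r\mathsf{C}(u)\|\}$, coincides with $M_1 \setminus (N_1 \cup \{z^\nu\})$; this is the crux of the calculation and follows from the minimality of the $z^\nu$ in (ii)--(iii) together with Lemma \ref{lem:Maya-cri} (4). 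Once this is in hand, $r\Tilde{\mathsf{C}} = K_{\Tilde{\mathsf{C}}} \cup r\mathsf{C}$ has exactly the letters of $\mathsf{T}_v^{(i)}$, and analogously for $l\Tilde{\mathsf{C}}$.

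The main obstacle is the identification $K_{\Tilde{\mathsf{C}}} = M_1 \setminus (N_1 \cup \{z^1,\ldots,z^l\})$ in the second composition, because $K_{\Tilde{\mathsf{C}}}$ is defined greedily from the smallest missing letter relative to $r\mathsf{C}$, whereas the $z^\nu$ and the Maya-diagram data are specified intrinsically by $(v,w)$. Controlling this requires showing that after removing the $l$ "KN-style" pairs $\{z^\nu, \overline{z^\nu}\}$ dictated by the splitting of $\mathsf{C}$, the remaining $2m$ elements of $M_1 \setminus N_1$ are precisely the $2m$ smallest elements of $[n]$ not appearing as $\|r\mathsf{C}(u)\|$; this in turn reduces to the fact that $K_{\Tilde{\mathsf{C}}}$ lies below every element of the first segment $J_1 \setminus M_1$ in the natural order, a direct consequence of how $K_{\Tilde{\mathsf{C}}}$ interacts with the segment structure $(J_1 < \cdots < J_\mu)$ under $\mathcal{J}(r\mathsf{C})$.
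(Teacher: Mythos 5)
Your proposal is correct in substance, but it does more work than the paper and is organized differently. The paper first observes that $\Phi_i$ is injective (an immediate consequence of Lemma \ref{lem:KN->LS-B} (2), since $(r\Tilde{\mathsf{C}},l\Tilde{\mathsf{C}})$ determines $K_{\Tilde{\mathsf{C}}}$ together with the splitting data of $\mathsf{C}$), and therefore only verifies the single composition $\Phi_i \circ \Psi_i = \mathrm{id}$; your first composition $\Psi_i \circ \Phi_i = \mathrm{id}$ is then automatic and can be dropped entirely. The paper also reduces at once to the case $\mathcal{J} = (J_1)$ with $1,2 \in J_1$, again by Lemma \ref{lem:KN->LS-B} (2), so the segments $J_{\nu}$, $\nu \geq 2$, never enter the computation, whereas you carry them along throughout. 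On the crux you correctly isolate --- that $K_{\Tilde{\mathsf{C}}}$, defined greedily from $r\mathsf{C}$, equals $(M_1 \setminus N_1) \setminus \{ z^{1},\ldots ,z^{l} \}$ --- your stated reduction is slightly off: the relevant fact is not that $K_{\Tilde{\mathsf{C}}}$ lies below $J_1 \setminus M_1$, but that $[n] \setminus \{ \| r\mathsf{C}(u) \| \mid u \}$ is the disjoint union of $(M_1 \setminus N_1) \setminus \{ z^{\nu} \mid \nu \in [l] \}$ with the complement of $\bigcup_{\nu} J_{\nu}$, and the former sits inside the initial segment $J_1 \ni 1,2$ while the latter lies entirely above $J_1$; so the $2m$ smallest missing letters are exactly the former set. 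Finally, in either composition one must check that the min-characterization (ii)--(iii) of the $z^{\nu}$ in Theorem \ref{thm:QKN->QLS-B} (2) is equivalent to the max-characterization of $J_{\mathsf{C}}$ coming from the splitting of $\mathsf{C}$ (conditions (i)--(ii) in the proof of Lemma \ref{lem:QLS->QKN-wd-B}); your sketch passes over this equivalence, which is precisely the step the paper isolates explicitly, so be sure to include it when writing the argument out.
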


\begin{proof}
By Lemma \ref{lem:KN->LS-B} (2), 
the map 
$\Phi_i$
is injective. 
The proof is completed by showing that 
$(\Phi_i \circ \Psi_i)(v,w) = (v,w)$
for 
$(v,w) \in \mathrm{QLS}(\varpi_i)$. 
Let 
$\Tilde{\mathsf{C}} = \Psi_i(v,w)
\in \mathrm{QKN}_{B_n}(\varpi_i)$. 
Let 
$\mathsf{C}$
be as in (QKN-B2) for 
$\Tilde{\mathsf{C}}$.
By Lemma \ref{lem:KN->LS-B} (2), 
we may assume that 
$\mathcal{J}(w) 
= 
\mathcal{J}(v) 
= 
(J_1)
\in 
\mathcal{S}_i$, 
$\mathcal{M}(w) = M_1, 
\mathcal{M}(v) = N_1
\in 
2^{J_1}$, 
and 
$1,2 \in J_1$. 
Let 
$y^{\nu},z^{\nu}$, 
$\nu \in [l]$, 
be as 
(ii)--(iii)
in Theorem \ref{thm:QKN->QLS-B} (2). 
Then 
$I_{\mathsf{C}}
=
\{ z^{\nu} \mid \nu \in [l] \}$. 
We check at once that 
$M_1
=
\{ z^{\nu} \mid \nu \in [l] \}
\cup
(M_1 \cap N_1)
\cup
K_{\Tilde{\mathsf{C}}}$
and 
$N_1
=
\{ y^{\nu} \mid \nu \in [l] \}
\cup
(M_1 \cap N_1)$. 
Since 
$y^{\nu},z^{\nu}$, 
$\nu \in [l]$, 
satisfy
(i)--(ii)
in the proof of Lemma \ref{lem:QLS->QKN-wd-B}, 
we conclude that 
$\Phi_i(\Tilde{\mathsf{C}})
=
(r\Tilde{\mathsf{C}},l\Tilde{\mathsf{C}})
=
(N_1,M_1)
=
(v,w)$.
\end{proof}

The proof of the next lemma is straightforward.

\begin{lem} \label{lem:C1C2-B}
Let 
$\Tilde{\mathsf{C}} \in \mathrm{QKN}_{B_n}(\varpi_i)$, 
$(v,w)
=
\Phi_i(\Tilde{\mathsf{C}})
\in 
\mathrm{QLS}(\varpi_i)$, 
$\mathcal{J}
=
\mathcal{J}(w) 
= 
\mathcal{J}(v) 
= 
(J_1 < \cdots < J_{\mu})
\in 
\mathcal{S}_i$,  
and 
$\mathcal{M}(w) = (M_{\nu}),
\mathcal{M}(v) = (N_{\nu})
\in 
2^{\mathcal{J}}$; 
note that 
$(N_1,M_1) \in \mathrm{QLS}(\varpi_{\# J_1})$
and 
$((N_{\nu})_{\nu=2}^{\mu},(N_{\nu})_{\nu=2}^{\mu})
\in 
\mathrm{LS}(\varpi_{i-\#J_1})$
by 
Definition \ref{def:Maya-1/2}
and 
\eqref{eq:QLS}--\eqref{eq:LS}. 
Set 
$p = \min J_1$, 
$\Tilde{\mathsf{C}}_1
=
\Psi_{\# J_1}(N_1,M_1)
\in \mathrm{QKN}_{B_n}(\varpi_{\# J_1})$, 
and 
$\Tilde{\mathsf{C}}_2
=
\Psi_{i-\# J_1}((N_{\nu})_{\nu=2}^{\mu},(N_{\nu})_{\nu=2}^{\mu})
\in \mathrm{KN}_{B_n}(\varpi_{i - \# J_1})$. 
Then 
$\Tilde{\mathsf{C}}
=
\Tilde{\mathsf{C}}_1
\sqcup
\Tilde{\mathsf{C}}_2$
and 
\begin{align}
f_j
\Tilde{\mathsf{C}}
=
\begin{cases}
f_j\Tilde{\mathsf{C}}_1 \sqcup \Tilde{\mathsf{C}}_2 
&
\text{if} \ 
j \in J_1 \cup \{ 0,p-1 \}, \\
\Tilde{\mathsf{C}}_1 \sqcup f_j\Tilde{\mathsf{C}}_2
&
\text{otherwise};
\end{cases}
\end{align}
we understand that 
$f_j
\Tilde{\mathsf{C}}
=
\bm{0}$
if 
($j \in J_1 \cup \{ 0,p-1 \}$
and 
$f_j\Tilde{\mathsf{C}}_1 = \bm{0}$)
or 
($j \notin J_1 \cup \{ 0,p-1 \}$
and 
$f_j\Tilde{\mathsf{C}}_2 = \bm{0}$).
Similarly, 
if we write 
$f_j(N_1,M_1)
=
(\mathcal{N}_1,\mathcal{M}_1)$
and 
$f_j((N_{\nu})_{\nu=2}^{\mu},(M_{\nu})_{\nu=2}^{\mu})
=
(\mathcal{N}_2,\mathcal{M}_2)$, 
then 
\begin{align}
f_j(v,w)
=
\begin{cases}
((\mathcal{N}_1 , (N_{\nu})_{\nu=2}^{\mu}),
(\mathcal{M}_1 , (M_{\nu})_{\nu=2}^{\mu}))
&
\text{if} \ 
j \in J_1 \cup \{ 0,p-1 \}, \\
((N_1, \mathcal{N}_2),
(M_1 , \mathcal{M}_2))
&
\text{otherwise}.
\end{cases}
\end{align}

\end{lem}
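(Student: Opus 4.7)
The plan is to establish three statements in order: the set-theoretic decomposition $\Tilde{\mathsf{C}} = \Tilde{\mathsf{C}}_1 \sqcup \Tilde{\mathsf{C}}_2$, the compatibility of the Kashiwara operators on the QKN side, and the analogous compatibility on the QLS side.

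The set-theoretic decomposition is a direct verification from the explicit formula \eqref{eq:QLS->QKN-B}. One checks that the expression defining $\Psi_i(v,w)$ naturally partitions into a part involving $M_1, N_1, J_1$ together with the $\overline{0}$'s (whose count $2m = \#M_1 - \#N_1$ depends only on $M_1, N_1$), which matches $\Psi_{\#J_1}(N_1,M_1) = \Tilde{\mathsf{C}}_1$, and a part involving segments $J_2, \ldots, J_\mu$ together with the $0$'s (whose count $f$ depends only on $M_\mu, N_\mu$), which matches $\Psi_{i-\#J_1}(((N_\nu)_{\nu=2}^\mu, (M_\nu)_{\nu=2}^\mu)) = \Tilde{\mathsf{C}}_2$. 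Crucially, $\Tilde{\mathsf{C}}_2$ has no $\overline{0}$'s since $\{1,2\} \not\subset J_\nu$ for $\nu \geq 2$, so $\Tilde{\mathsf{C}}_2$ is an ordinary KN column. The analogous product decomposition of $(v,w)$ on the QLS side follows immediately from Definition \ref{def:Maya-1/2}.

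For the QKN-side compatibility, I would perform a case analysis on $j \in I_{\af}$. The crucial combinatorial observation is that consecutive segments in $\mathcal{S}_i$ are separated by at least two, so writing $J_1 = [p, q]$, the letters $p-1, \overline{p-1}, q+1, \overline{q+1}$ never appear in $\Tilde{\mathsf{C}}$. Since each $f_j$ with $j \in [n-1]$ affects only letters in $\{j, j+1, \overline{j+1}, \overline{j}\}$, the operators for $j \in \{p-1\} \cup [p, q-1] \cup \{q\}$ act on letters contained entirely in $\Tilde{\mathsf{C}}_1$ (the boundary indices $p-1$ and $q$ collapsing to an action on $\{p, \overline{p}\}$ and $\{q, \overline{q}\}$ respectively), while operators for $j$ inside or strictly between the later segments act on letters of $\Tilde{\mathsf{C}}_2$. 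For $f_n$ one separates by whether $n \in J_1$ or $n \in J_\mu$ with $\mu \geq 2$. The $j = 0$ case is handled by comparing the rules \eqref{eq:f0-KN-B-1}--\eqref{eq:f0-KN-B-3} with the location of $1, 2$ among the segments: when $1 \notin J_1$ no letter in $\{1, 2, \overline{2}, \overline{1}, \overline{0}\}$ belongs to $\Tilde{\mathsf{C}}_2$, so $f_0 \Tilde{\mathsf{C}} = \bm{0}$ trivially, while when $1 \in J_1$ the rules involve only letters and $\overline{0}$'s from $\Tilde{\mathsf{C}}_1$.

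For the QLS-side compatibility, the strategy is to translate the Kashiwara operator action on $\mathrm{QLS}(\varpi_i)$ into operations on pairs of Maya diagrams via \eqref{eq:QLS}, and then exploit the product structure of $\trianglelefteq$ from Definition \ref{def:Maya-1/2}. Using the path-model description \eqref{eq:e_i}--\eqref{eq:path-eps-phi} together with the realization \eqref{eq:QLSaf->SiLS} of Lemma \ref{lem:SMT-QLSaf}, one checks that the local heights $h_j^\eta(t)$ decompose additively over segments, so that each $f_j$ acts on exactly one segment-factor and the index $j$ determines which one, landing on the $J_1$-factor precisely when $j \in J_1 \cup \{0, p-1\}$. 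The main obstacle will be the case $j = 0$ under configuration \eqref{eq:f0-KN-B-3} on the QKN side, where one must verify that when $y_k = \min J_{\mathsf{C}} \in \{1, 2\}$ the corresponding $z_k = \min I_{\mathsf{C}}$ necessarily lies in the $J_1$-block of $\mathsf{C}$; this requires unwinding the splitting algorithm inductively and using the minimality conditions defining the $y_\nu$'s to propagate through the intermediate indices. Once these delicate boundary cases are resolved, all remaining verifications reduce to routine matching of cells across the decomposition.
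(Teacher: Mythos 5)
The paper does not actually supply a proof of Lemma \ref{lem:C1C2-B}: it is introduced with the sentence ``The proof of the next lemma is straightforward,'' so there is nothing to compare your argument against line by line. Your outline is the natural direct verification that the author is implicitly invoking, and its three stages (the multiset decomposition read off from \eqref{eq:QLS->QKN-B}, locality of $f_j$ on the QKN side using the fact that consecutive segments are separated by at least two, and locality on the QLS side via the height functions $h_j^{\eta}$ and the action of $r_j$ on Maya diagrams) are the right ingredients. Two remarks. First, your treatment of $j=0$ contains an incorrect sub-claim: when $1 \notin J_1$ it does \emph{not} follow that $f_0\Tilde{\mathsf{C}} = \bm{0}$. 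If $\min J_1 = 2$ the column may contain $\overline{2}$ (and possibly have $y_k = 2$), and rules \eqref{eq:f0-KN-B-1} and \eqref{eq:f0-KN-B-3} can then act nontrivially. This does not sink the argument, because in that case every letter moved by $f_0$ (namely $2$, $\overline{2}$, $z_k$, $\overline{z_k}$, and the $\overline{0}$'s) still belongs to $\Tilde{\mathsf{C}}_1$, so the desired identity $f_0\Tilde{\mathsf{C}} = f_0\Tilde{\mathsf{C}}_1 \sqcup \Tilde{\mathsf{C}}_2$ holds for the same locality reason as before; the correct dichotomy is $\{1,2\}\cap J_1 = \emptyset$ versus $\{1,2\}\cap J_1 \neq \emptyset$, not $1 \in J_1$ versus $1 \notin J_1$. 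Second, for the boundary case you flag (showing $z_k = \min I_{\mathsf{C}}$ lies in $J_1$ when $y_k \in \{1,2\}$), there is a shortcut that avoids unwinding the splitting algorithm: $y_k \in \{1,2\}$ forces $y_k \in N_1\setminus M_1 = J_{\mathsf{C}_1}$, hence $I_{\mathsf{C}_1}\neq\emptyset$ since $\# I_{\mathsf{C}_1} = \# J_{\mathsf{C}_1}$, and every element of $I_{\mathsf{C}_1}$ precedes every element of $I_{\mathsf{C}_2}$ in $\mathcal{B}_n$ because the latter lie in $J_2\cup\cdots\cup J_{\mu}\cup\{0\}$; so $z_k = \min I_{\mathsf{C}_1}\in J_1$.
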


\begin{lem}
$\mathrm{QKN}_{B_n}(\varpi_i)$
is a $\U'$-crystal, 
and the map
$\Phi_i$
is a morphism of 
$\U'$-crystals. 
\end{lem}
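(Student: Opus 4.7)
The plan is to transport the $\U'$-crystal structure from $\mathrm{QLS}(\varpi_i)$ to $\mathrm{QKN}_{B_n}(\varpi_i)$ via the bijection $\Phi_i$, and then verify that this transported structure agrees with the explicit Kashiwara operators defined via the rules \eqref{eq:f-KN-C-1}--\eqref{eq:f-KN-C-4} (with $\mathsf{C}$ replaced by $\Tilde{\mathsf{C}}$), the $f_n$-rule from type $B_n$, and the $f_0$-rules \eqref{eq:f0-KN-B-1}--\eqref{eq:f0-KN-B-3}. Since $\Phi_i$ is a bijection onto $\mathrm{QLS}(\varpi_i)$, which is already a $\U'$-crystal by Theorem~\ref{thm:QLS=W}(1), it suffices to verify that $\Phi_i$ intertwines the action of each Kashiwara operator $f_j$, $j \in I_{\af}$; the statement about $e_j$ then follows from axiom (C6), and the crystal axioms for $\mathrm{QKN}_{B_n}(\varpi_i)$ are inherited automatically.

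First I would use Lemma~\ref{lem:C1C2-B} to reduce to the case $\mu = 1$, i.e.\ where $\mathcal{J}(w) = \mathcal{J}(v) = (J_1)$ is a single segment. Indeed, that lemma shows that both $\Tilde{\mathsf{C}}$ and $(v,w)$ decompose compatibly according to whether $j$ acts on the first segment (together with the two ``boundary'' colors $0$ and $p-1$) or on the remainder. For the second component, where no $\overline{0}$'s occur, one has $\Tilde{\mathsf{C}}_2 \in \mathrm{KN}_{B_n}(\varpi_{i-\#J_1})$, and the compatibility with the crystal structure on $\mathrm{LS}(\varpi_{i-\#J_1})$ is precisely Lemma~\ref{lem:KN->LS-B}(2) of Lecouvey. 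Thus the problem reduces to checking, for each $j \in J_1 \cup \{0, p-1\}$, that the action of $f_j$ on a single-segment QKN column matches the action on the corresponding single-segment QLS path $(N_1, M_1)$.

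For $j \in J_1 \cup \{p-1\}$ (i.e.\ the classical directions) the rules in \eqref{eq:f-KN-C-1}--\eqref{eq:f-KN-C-4} are identical to those for ordinary KN columns; they leave $K_{\Tilde{\mathsf{C}}}$ and the block of $\overline{0}$'s untouched, and their effect on the underlying $\mathsf{C}$ is described by Lemma~\ref{lem:KN->LS-B}(2). Combined with the explicit description of $\Phi_i$ via $(I_{\mathsf{C}},J_{\mathsf{C}})$ given in the definition of $r\Tilde{\mathsf{C}}, l\Tilde{\mathsf{C}}$, this matches the action of $f_j$ on $(N_1, M_1)$ determined by the tensor product rule applied to $(N_1; 0,1) \otimes (M_1; 0,1)$ together with the characterization of QLS edges via Maya diagrams (Definitions~\ref{def:Maya-1/2-J}--\ref{def:Maya-1/2} and Lemma~\ref{lem:QB[J]=prod-QB[J]}). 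Hence this case is essentially bookkeeping.

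The main obstacle, and the genuinely new content of the lemma, is the case $j = 0$, because this is the direction that distinguishes $\mathrm{QKN}_{B_n}(\varpi_i)$ from $\mathrm{KN}_{B_n}(\varpi_i)$ and introduces the block of $\overline{0}$'s. Here I would verify the three rules (i), (ii), (iii) of \eqref{eq:f0-KN-B-1}--\eqref{eq:f0-KN-B-3} one by one against the action of $f_0$ on $\mathrm{QLS}(\varpi_i) \subset \mathrm{QLS}(\varpi_{\#J_1}) \otimes \cdots$ computed through the isomorphism \eqref{eq:QLSaf->SiLS} and the semi-infinite path model. Concretely, if $\Tilde{\mathsf{C}} = \mathsf{C} \cup \{\overline{0},\ldots,\overline{0}\}$ with $2m$ copies of $\overline{0}$, the $f_0$-transitions in (i)--(iii) correspond on the Maya-diagram side to one of the following operations on the pair $(N_1,M_1)$: either removing a letter $\{1,2\}$ from the ``minuscule'' part of one Maya diagram, or toggling two pairs of elements involving $\{1,2\}$ in $M_1 \setminus N_1$ (changing $m$ by $\pm 1$), or moving the extremal element $z_k = \min I_{\mathsf{C}}$ when $y_k \in \{1,2\}$. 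In each case the condition $\varphi_0(\Tilde{\mathsf{C}}) > 0$ translates, via \eqref{eq:QLSaf->SiLS} and the definition of $d_i$ in Lemma~\ref{lem:d_i(v,w)}, to the corresponding $\varphi_0$ condition on the QLS side, and the image $f_0 \Tilde{\mathsf{C}}$ matches $\Psi_i(f_0(v,w))$ by direct comparison of the recipes in (ii)--(iii) of Theorem~\ref{thm:QKN->QLS-B}(2). Once this matching is established in each of the three mutually exclusive subcases, the intertwining property $\Phi_i \circ f_0 = f_0 \circ \Phi_i$ is proved, and with it the lemma.
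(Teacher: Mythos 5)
Your proposal follows essentially the same route as the paper: reduce to a single segment via Lemma \ref{lem:C1C2-B}, dispose of the operators acting outside $J_1 \cup \{0,p-1\}$ by Lemma \ref{lem:KN->LS-B} (2), and then verify the intertwining $f_j \circ \Phi_i = \Phi_i \circ f_j$ directly, which the paper carries out as an exhaustive check of the local configurations $[\Tilde{\mathsf{C}}]_j$ (sixteen cases for $j \in I$ when $p=1$, where one must also track how $K_{\Tilde{\mathsf{C}}}$ changes under the classical operators, and fifteen cases for $j=0$). Your identification of $j=0$ as the genuinely new content, and your description of the corresponding Maya-diagram moves, agree with the paper's treatment.
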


\begin{proof}
We need to show that the set 
$\mathrm{QKN}_{B_n}(\varpi_i) \sqcup \{ \bm{0} \}$
is stable under the maps
$e_j,f_j$, 
$j \in I_{\af}$, 
and that the map 
$\Phi_i$
satisfies the conditions 
(CM2)--(CM3)
in 
\S \ref{Subsection:Crystals}. 
We give the proof only for the equality 
$f_j \Tilde{\mathsf{C}}
=
\Psi_i(f_j \Phi_i(\Tilde{\mathsf{C}}))$
for 
$\Tilde{\mathsf{C}}
=
\mathsf{C}
\cup
\{ \underbrace{\overline{0},\overline{0},\ldots ,\overline{0}}_{2m \ \text{times}} \}
\in 
\mathrm{QKN}_{B_n}(\varpi_i)$
and 
$j \in I_{\af}$, 
where we understand that 
$\Phi_i(\bm{0}) = \bm{0}$
and 
$\Psi_i(\bm{0}) = \bm{0}$; 
the other statements are left to the reader. 

We continue to use the notation in Lemma \ref{lem:C1C2-B}.
By Lemma \ref{lem:KN->LS-B} (2), 
we have 
$\Phi_{i-\# J_1}(f_j \Tilde{\mathsf{C}}_2)
=
(\mathcal{N}_2,\mathcal{M}_2)$
for 
$j \in I$. 
By Lemma \ref{lem:C1C2-B}, 
this implies 
$f_j \Tilde{\mathsf{C}}
=
\Psi_i(f_j\Phi_i(\Tilde{\mathsf{C}}))$
for 
$j \notin J_1 \cup \{ 0,p-1 \}$. 
Therefore, 
without loss of generality 
we can assume that
$J_1 = [p,p+i-1]$, 
$\Tilde{\mathsf{C}}
=
\Tilde{\mathsf{C}}_1$, 
$\Phi_i(\Tilde{\mathsf{C}})
=
(v,w)
=
(N_1,M_1)$, 
and 
$j \in J_1 \cup \{ 0,p-1 \}$.
Write 
$\Psi_i(f_j\Phi_i(\Tilde{\mathsf{C}}))
=
\Tilde{\mathsf{C}}'
=
\mathsf{C}'
\cup
\{ \overline{0},\overline{0},\ldots ,\overline{0} \}
\in 
\mathrm{QKN}_{B_n}(\varpi_i) \sqcup \{ \bm{0} \}$
and 
$f_j(v,w)
=
(v',w')
\in 
\mathrm{QLS}(\varpi_i) \sqcup \{ \bm{0} \}$.

We first assume that 
$j \neq 0$. 
Write 
$[\Tilde{\mathsf{C}}]_j
=
\Tilde{\mathsf{C}}
\cap
\{ j,j+1,\overline{j+1},\overline{j} \}$
and 
$[\Tilde{w}]_j
=
\mathsf{T}_{\Tilde{w}}^{(i)}
\cap
\{ j,j+1,\overline{j+1},\overline{j} \}$
for 
$\Tilde{w} \in W^{I \setminus \{ i \}}$. 
If 
$p \geq 2$, 
then 
$m = 0$, 
$\Tilde{\mathsf{C}}
=
\mathsf{C}
\in 
\mathrm{KN}_{B_n}(\varpi_i)$, 
and the assertion follows from 
Lemma \ref{lem:KN->LS-B}. 
Therefore we can assume that 
$p = 1$
and 
$J_1 = [i]$. 
It follows from 
$J_1 = [i]$
and 
Lemma \ref{lem:J(w)=J(v)}
that 
$\{ \|u\| \mid u \in [w]_j \}
=
\{ \|u\| \mid u \in [v]_j \}
\in
\{ \{ j \} , \{ j,j+1 \} \}$.

\begin{proof}[Case 1]
Assume that 
$[\Tilde{\mathsf{C}}]_j
=
\emptyset$.
Then 
$f_j \Tilde{\mathsf{C}}
=
\bm{0}$. 
We have 
$([v]_j,[w]_j)
\in 
\{ 
(\{ j \} , \{ \overline{j} \}), 
(\{ \overline{j},\overline{j+1} \} , \{ j,j+1 \}), 
(\{ j,\overline{j+1} \} , \{ \overline{j},j+1 \})
\}$.
Hence 
$f_j(v,w) = \bm{0}$. 
\end{proof}

\begin{proof}[Case 2]
Assume that 
$[\Tilde{\mathsf{C}}]_j
=
\{ j+1 \}$.
Then 
$f_j \Tilde{\mathsf{C}}
=
\bm{0}$. 
We have 
$([v]_j,[w]_j)
\in 
\{ 
(\{ j+1,\overline{j} \} , \{ j,j+1 \}), 
(\{ j,j+1 \} , \{ j+1,\overline{j} \})
\}$.
Hence 
$f_j(v,w) = \bm{0}$. 
\end{proof}

\begin{proof}[Case 3]
Assume that 
$[\Tilde{\mathsf{C}}]_j
=
\{ \overline{j} \}$.
Then 
$f_j \Tilde{\mathsf{C}}
=
\bm{0}$. 
We have 
$([v]_j,[w]_j)
\in 
\{ 
(\{ \overline{j} \} , \{ \overline{j} \}), 
(\{ \overline{j+1},\overline{j} \} , \{ j+1,\overline{j} \}),
(\{ j+1,\overline{j} \} , \{ \overline{j+1},\overline{j} \})
\}$.
Hence 
$f_j(v,w) = \bm{0}$. 
\end{proof}

\begin{proof}[Case 4]
Assume that 
$[\Tilde{\mathsf{C}}]_j
=
\{ j,j+1 \}$.
Then 
$f_j \Tilde{\mathsf{C}}
=
\bm{0}$. 
We have 
$([v]_j,[w]_j)
=
(\{ j,j+1 \} , \{ j,j+1 \})$.
Hence 
$f_j(v,w) = \bm{0}$. 
\end{proof}

\begin{proof}[Case 5]
Assume that 
$[\Tilde{\mathsf{C}}]_j
=
\{ j,\overline{j} \}$.
Then 
$f_j \Tilde{\mathsf{C}}
=
\bm{0}$. 
We have 
$([v]_j,[w]_j)
\in 
\{ 
(\{ j \} , \{ \overline{j} \}), 
(\{ j,\overline{j+1} \} , \{ j+1,\overline{j} \}), 
(\{ j,j+1 \} , \{ \overline{j+1},\overline{j} \})
\}$.
Hence 
$f_j(v,w) = \bm{0}$. 
\end{proof}

\begin{proof}[Case 6]
Assume that 
$[\Tilde{\mathsf{C}}]_j
=
\{ j+1, \overline{j} \}$.
Then 
$f_j \Tilde{\mathsf{C}}
=
\bm{0}$. 
We have 
$([v]_j,[w]_j)
=  
(\{ j+1, \overline{j} \} , \{ j+1, \overline{j} \})$.
Hence 
$f_j(v,w) = \bm{0}$. 
\end{proof}

\begin{proof}[Case 7]
Assume that 
$[\Tilde{\mathsf{C}}]_j
=
\{ \overline{j+1}, \overline{j} \}$.
Then 
$f_j \Tilde{\mathsf{C}}
=
\bm{0}$. 
We have 
$([v]_j,[w]_j)
=  
(\{ \overline{j+1}, \overline{j} \} , 
\{ \overline{j+1}, \overline{j} \})$.
Hence 
$f_j(v,w) = \bm{0}$. 
\end{proof}

\begin{proof}[Case 8]
Assume that 
$[\Tilde{\mathsf{C}}]_j
=
\{ j,j+1, \overline{j} \}$.
Then 
$f_j \Tilde{\mathsf{C}}
=
\bm{0}$. 
We have 
$([v]_j,[w]_j)
=  
(\{ j,j+1 \} , 
\{ j+1, \overline{j} \})$.
Hence 
$f_j(v,w) = \bm{0}$. 
\end{proof}

\begin{proof}[Case 9]
Assume that 
$[\Tilde{\mathsf{C}}]_j
=
\{ j+1, \overline{j+1} , \overline{j} \}$.
Then 
$f_j \Tilde{\mathsf{C}}
=
\bm{0}$. 
We have 
$([v]_j,[w]_j)
=  
(\{ j+1,\overline{j} \} , 
\{ \overline{j+1}, \overline{j} \})$.
Hence 
$f_j(v,w) = \bm{0}$. 
\end{proof}

\begin{proof}[Case 10]
Assume that 
$[\Tilde{\mathsf{C}}]_j
=
\{ j,j+1, \overline{j+1} , \overline{j} \}$.
Then 
$f_j \Tilde{\mathsf{C}}
=
\bm{0}$. 
We have 
$([v]_j,[w]_j)
=  
(\{ j,j+1 \} , 
\{ \overline{j+1}, \overline{j} \})$.
Hence 
$f_j(v,w) = \bm{0}$. 
\end{proof}

\begin{proof}[Case 11]
Assume that 
$[\Tilde{\mathsf{C}}]_j
=
\{ j \}$.
Then 
$f_j \Tilde{\mathsf{C}}
=
(\Tilde{\mathsf{C}} \setminus \{ j \}) \cup \{ j+1 \}$,
by 
\eqref{eq:f-KN-C-1}. 
We have 
$([v]_j,[w]_j)
\in 
\{  
(\{ j \} , \{ j \}),
(\{ j,\overline{j+1} \} , \{ j,j+1 \}),
(\{ j,j+1 \} , \{ j,\overline{j+1} \})
\}$.

If 
$([v]_j,[w]_j)
=
(\{ j \} , \{ j \})$, 
then 
$([v']_j,[w']_j)
=
(\{ j+1 \} , \{ j+1 \})$, 
$I_{\mathsf{C}}
=
I_{\mathsf{C}'}$, 
$J_{\mathsf{C}}
=
J_{\mathsf{C}'}$, 
and 
$K_{\Tilde{\mathsf{C}}}
=
K_{\Tilde{\mathsf{C}}'}$; 
note that 
$j = i$. 
Hence 
$f_j \Tilde{\mathsf{C}}
=
\Psi_i(v',w')$.

If 
$([v]_j,[w]_j)
=
(\{ j,\overline{j+1} \} , \{ j,j+1 \})$, 
then 
$([v']_j,[w']_j)
=
(\{ j+1,\overline{j} \} , \{ j,j+1 \})$, 
$I_{\mathsf{C}}
=
I_{\mathsf{C}'}$, 
$j+1 \in J_{\mathsf{C}}$, 
$j \in J_{\mathsf{C}'}$, 
$J_{\mathsf{C}} \setminus \{ j+1 \}
=
J_{\mathsf{C}'} \setminus \{ j \}$, 
and 
$K_{\Tilde{\mathsf{C}}}
=
K_{\Tilde{\mathsf{C}}'}$. 
Hence 
$f_j \Tilde{\mathsf{C}}
=
\Psi_i(v',w')$.

If 
$([v]_j,[w]_j)
=
(\{ j,j+1 \} , \{ j,\overline{j+1} \})$, 
then 
$([v']_j,[w']_j)
=
(\{ j,j+1 \} , \{ j+1,\overline{j} \})$, 
$I_{\mathsf{C}}
=
I_{\mathsf{C}'}$, 
$J_{\mathsf{C}}
=
J_{\mathsf{C}'}$, 
$j+1 \in K_{\Tilde{\mathsf{C}}}$, 
$j \in K_{\Tilde{\mathsf{C}}'}$, 
and 
$K_{\Tilde{\mathsf{C}}} \setminus \{ j+1 \}
=
K_{\Tilde{\mathsf{C}}'} \setminus \{ j \}$. 
Hence 
$f_j \Tilde{\mathsf{C}}
=
\Psi_i(v',w')$.
\end{proof}

\begin{proof}[Case 12]
Assume that 
$[\Tilde{\mathsf{C}}]_j
=
\{ \overline{j+1} \}$.
Then 
$f_j \Tilde{\mathsf{C}}
=
(\Tilde{\mathsf{C}} \setminus \{ \overline{j+1} \}) 
\cup 
\{ \overline{j} \}$, 
by 
\eqref{eq:f-KN-C-2}. 
We have 
$([v]_j,[w]_j)
\in 
\{  
(\{ \overline{j},\overline{j+1} \} , \{ j,\overline{j+1} \}),
(\{ j,\overline{j+1} \} , \{ \overline{j},\overline{j+1} \})
\}$.

If 
$([v]_j,[w]_j)
=
(\{ \overline{j},\overline{j+1} \} , \{ j,\overline{j+1} \})$, 
then 
$([v']_j,[w']_j)
=
(\{ \overline{j},\overline{j+1} \} , \{ j+1,\overline{j} \})$, 
$I_{\mathsf{C}}
=
I_{\mathsf{C}'}$, 
$j \in J_{\mathsf{C}}$, 
$j+1 \in J_{\mathsf{C}'}$, 
$J_{\mathsf{C}} \setminus \{ j \}
=
J_{\mathsf{C}'} \setminus \{ j+1 \}$, 
and 
$K_{\Tilde{\mathsf{C}}}
=
K_{\Tilde{\mathsf{C}}'}$. 
Hence 
$f_j \Tilde{\mathsf{C}}
=
\Psi_i(v',w')$.

If 
$([v]_j,[w]_j)
=
(\{ j,\overline{j+1} \} , \{ \overline{j},\overline{j+1} \})$, 
then 
$([v']_j,[w']_j)
=
(\{ j+1,\overline{j} \} , \{ \overline{j},\overline{j+1} \})$, 
$I_{\mathsf{C}}
=
I_{\mathsf{C}'}$, 
$J_{\mathsf{C}}
=
J_{\mathsf{C}'}$, 
$j \in K_{\Tilde{\mathsf{C}}}$, 
$j+1 \in K_{\Tilde{\mathsf{C}}'}$,
and 
$K_{\Tilde{\mathsf{C}}} \setminus \{ j \}
=
K_{\Tilde{\mathsf{C}}'} \setminus \{ j+1 \}$. 
Hence 
$f_j \Tilde{\mathsf{C}}
=
\Psi_i(v',w')$.
\end{proof}

\begin{proof}[Case 13]
Assume that 
$[\Tilde{\mathsf{C}}]_j
=
\{ j,\overline{j+1} \}$.
Then 
$f_j \Tilde{\mathsf{C}}
=
(\Tilde{\mathsf{C}} \setminus \{ j \}) 
\cup 
\{ j+1 \}$,
by 
\eqref{eq:f-KN-C-3}. 
We have 
$([v]_j,[w]_j)
= 
(\{ j,\overline{j+1} \} , \{ j,\overline{j+1} \})$, 
$([v']_j,[w']_j)
=
(\{ j+1,\overline{j} \} , \{ j,\overline{j+1} \})$, 
$j+1 \notin I_{\mathsf{C}}$, 
$I_{\mathsf{C}}
\cup
\{ j+1 \}
=
I_{\mathsf{C}'}$, 
$j \notin J_{\mathsf{C}}$, 
$J_{\mathsf{C}} \cup \{ j \}
=
J_{\mathsf{C}'}$, 
and 
$K_{\Tilde{\mathsf{C}}}
=
K_{\Tilde{\mathsf{C}}'}$. 
Hence 
$f_j \Tilde{\mathsf{C}}
=
\Psi_i(v',w')$.
\end{proof}

\begin{proof}[Case 14]
Assume that 
$[\Tilde{\mathsf{C}}]_j
=
\{ j+1,\overline{j+1} \}$.
Then 
$f_j \Tilde{\mathsf{C}}
=
(\Tilde{\mathsf{C}} \setminus \{ \overline{j+1} \}) 
\cup 
\{ \overline{j} \}$,
by 
\eqref{eq:f-KN-C-3}. 
We have 
$([v]_j,[w]_j)
= 
(\{ j+1,\overline{j} \} , \{ j,\overline{j+1} \})$, 
$([v']_j,[w']_j)
=
(\{ j+1,\overline{j} \} , \{ j+1,\overline{j} \})$, 
$j+1 \in I_{\mathsf{C}}$,
$I_{\mathsf{C}} \setminus \{ j+1 \}
=
I_{\mathsf{C}'}$,  
$j \in J_{\mathsf{C}}$, 
$J_{\mathsf{C}} \setminus \{ j \}
=
J_{\mathsf{C}'}$, 
and 
$K_{\Tilde{\mathsf{C}}}
=
K_{\Tilde{\mathsf{C}}'}$.
Hence 
$f_j \Tilde{\mathsf{C}}
=
\Psi_i(v',w')$.
\end{proof}

\begin{proof}[Case 15]
Assume that 
$[\Tilde{\mathsf{C}}]_j
=
\{ j,j+1,\overline{j+1} \}$.
Then 
$f_j \Tilde{\mathsf{C}}
=
(\Tilde{\mathsf{C}} \setminus \{ \overline{j+1} \}) 
\cup 
\{ \overline{j} \}$,
by 
\eqref{eq:f-KN-C-4}. 
We have 
$([v]_j,[w]_j)
= 
(\{ j,j+1 \} , \{ j,\overline{j+1} \})$, 
$([v']_j,[w']_j)
=
(\{ j,j+1 \} , \{ j+1,\overline{j} \})$, 
$j+1 \in I_{\mathsf{C}}$, 
$j \in I_{\mathsf{C}'}$, 
$I_{\mathsf{C}} \setminus \{ j+1 \}
=
I_{\mathsf{C}'} \setminus \{ j \}$,
$J_{\mathsf{C}}
=
J_{\mathsf{C}'}$,
and 
$K_{\Tilde{\mathsf{C}}}
=
K_{\Tilde{\mathsf{C}}'}$. 
Hence 
$f_j \Tilde{\mathsf{C}}
=
\Psi_i(v',w')$.
\end{proof}

\begin{proof}[Case 16]
Assume that 
$[\Tilde{\mathsf{C}}]_j
=
\{ j,\overline{j+1},\overline{j} \}$.
Then 
$f_j \Tilde{\mathsf{C}}
=
(\Tilde{\mathsf{C}} \setminus \{ j \}) 
\cup 
\{ j+1 \}$,
by 
\eqref{eq:f-KN-C-4}. 
We have 
$([v]_j,[w]_j)
= 
(\{ j,\overline{j+1} \} , \{ \overline{j},\overline{j+1} \})$, 
$([v']_j,[w']_j)
=
(\{ j+1,\overline{j} \} , \{ \overline{j},\overline{j+1} \})$, 
$j \in I_{\mathsf{C}}$, 
$j+1 \in I_{\mathsf{C}'}$,
$I_{\mathsf{C}} \setminus \{ j \}
=
I_{\mathsf{C}'} \setminus \{ j+1 \}$,
$J_{\mathsf{C}}
=
J_{\mathsf{C}'}$,
and 
$K_{\Tilde{\mathsf{C}}}
=
K_{\Tilde{\mathsf{C}}'}$. 
Hence 
$f_j \Tilde{\mathsf{C}}
=
\Psi_i(v',w')$.
\end{proof}

We next assume that 
$j = 0$. 
Obviously, 
$f_0 \Tilde{\mathsf{C}} = \bm{0}$
and 
$f_0 (v,w) = \bm{0}$
if 
$p > 2$
and 
$J_1 = [p,p+i-1]$. 
Therefore we can assume that 
$J_1 = [i]$
or 
$J_1 = [2,i+1]$; 
note that if 
$J_1 = [2,i+1]$, 
then 
$m = 0$. 
Write 
$[\Tilde{\mathsf{C}}]_0
=
\Tilde{\mathsf{C}}
\cap
\{ 1,2,\overline{2},\overline{1} \}$,
$[w]_0
=
l\Tilde{\mathsf{C}}
\cap
\{ 1,2,\overline{2},\overline{1} \}$, 
and 
$[v]_0
=
r\Tilde{\mathsf{C}}
\cap
\{ 1,2,\overline{2},\overline{1} \}$. 
Let 
$z_k
=
\min I_{\mathsf{C}}$
and 
$y_k
=
\min J_{\mathsf{C}}$.
If 
$m > 0$, 
let 
$x_1
=
\min K_{\Tilde{\mathsf{C}}}$
and 
$x_2
=
\min (K_{\Tilde{\mathsf{C}}} \setminus \{ x_1 \})$. 

\begin{proof}[Case 17]
Assume that 
$[\Tilde{\mathsf{C}}]_0
=
\emptyset$
and 
$m = 0$.
Then 
$f_0 \Tilde{\mathsf{C}}
=
\bm{0}$. 
We have 
$([v]_0,[w]_0)
\in  
\{ 
(\emptyset,\emptyset), 
(\{ \overline{2} \} , \{ 2 \})
\}$. 
Hence 
$f_0 (v,w) = \bm{0}$. 
\end{proof}

\begin{proof}[Case 18]
Assume that 
$[\Tilde{\mathsf{C}}]_0
=
\{ 1 \}$. 
Then 
$f_0 \Tilde{\mathsf{C}}
=
\bm{0}$. 
We have 
$([v]_0,[w]_0)
\in  
\{ 
(\{ 1 \} ,\{ 1 \}), 
(\{ 1,\overline{2} \} , \{ 1,2 \}),
\\
(\{ 1,2 \} , \{ 1,\overline{2} \})
\}$. 
Hence 
$f_0 (v,w) = \bm{0}$. 
\end{proof}

\begin{proof}[Case 19]
Assume that 
$[\Tilde{\mathsf{C}}]_0
=
\{ 2 \}$. 
Then
$f_0 \Tilde{\mathsf{C}}
=
\bm{0}$. 
We have 
$([v]_0,[w]_0)
\in  
\{ 
(\{ 2 \} ,\{ 2 \}), 
(\{ \overline{1},2 \} , \{ 1,2 \}),
\\
(\{ 1,2 \} , \{ 2,\overline{1} \})
\}$. 
Hence 
$f_0 (v,w) = \bm{0}$. 
\end{proof}

\begin{proof}[Case 20]
Assume that 
$[\Tilde{\mathsf{C}}]_0
=
\{ 2,\overline{2} \}$. 
Then
$f_0 \Tilde{\mathsf{C}}
=
\bm{0}$
and 
$1 \in J_{\mathsf{C}}$. 
We have 
$([v]_0,[w]_0)
=
(\{ 2,\overline{1} \} ,\{ 1,\overline{2} \})$. 
Hence 
$f_0 (v,w) = \bm{0}$. 
\end{proof}

\begin{proof}[Case 21]
Assume that 
$[\Tilde{\mathsf{C}}]_0
=
\{ 1,2 \}$. 
Then
$f_0 \Tilde{\mathsf{C}}
=
\bm{0}$. 
We have 
$([v]_0,[w]_0)
=
(\{ 1,2 \} ,\{ 1,2 \})$. 
Hence 
$f_0 (v,w) = \bm{0}$. 
\end{proof}

\begin{proof}[Case 22]
Assume that 
$[\Tilde{\mathsf{C}}]_0
=
\{ 1,\overline{2} \}$. 
Then
$f_0 \Tilde{\mathsf{C}}
=
\bm{0}$. 
We have 
$([v]_0,[w]_0)
=
(\{ 1,\overline{2} \} ,\{ 1,\overline{2} \})$. 
Hence 
$f_0 (v,w) = \bm{0}$. 
\end{proof}

\begin{proof}[Case 23]
Assume that 
$[\Tilde{\mathsf{C}}]_0
=
\{ 2,\overline{1} \}$. 
Then
$f_0 \Tilde{\mathsf{C}}
=
\bm{0}$. 
We have 
$([v]_0,[w]_0)
=
(\{ 2,\overline{1} \} ,\{ 2,\overline{1} \})$. 
Hence 
$f_0 (v,w) = \bm{0}$. 
\end{proof}

\begin{proof}[Case 24]
Assume that 
$[\Tilde{\mathsf{C}}]_0
=
\emptyset$, 
$m > 0$, 
and 
$y_k \notin \{ 1,2 \}$. 
Then
$f_0 \Tilde{\mathsf{C}}
=
(\Tilde{\mathsf{C}} \setminus \{ \overline{0},\overline{0} \})
\cup
\{ 1,2 \}$, 
by 
\eqref{eq:f0-KN-B-2}. 
We have 
$([v]_0,[w]_0)
=
(\{ 1,2 \} ,\{ \overline{2},\overline{1} \})$,
$([v']_0,[w']_0)
=
(\{ 1,2 \} , \{ 1,2 \})$, 
$I_{\mathsf{C}}
=
I_{\mathsf{C}'}$, 
$J_{\mathsf{C}}
=
J_{\mathsf{C}'}$, 
$1,2 \in K_{\Tilde{\mathsf{C}}}$, 
and 
$K_{\Tilde{\mathsf{C}}} \setminus \{ 1,2 \}
=
K_{\Tilde{\mathsf{C}}'}$.
Hence 
$f_0 \Tilde{\mathsf{C}}
=
\Psi_i(v',w')$. 
\end{proof}

\begin{proof}[Case 25]
Assume that 
$[\Tilde{\mathsf{C}}]_0
=
\{ \overline{2} \}$, 
$m = 0$, 
and 
$y_k \notin \{ 1,2 \}$. 
Then
$f_0 \Tilde{\mathsf{C}}
=
(\Tilde{\mathsf{C}} \setminus \{ \overline{2} \})
\cup
\{ 1 \}$, 
by 
\eqref{eq:f0-KN-B-1}. 
We have 
$([v]_0,[w]_0)
=(\{ \overline{2} \} ,\{ \overline{2} \})$, 
$([v']_0,[w']_0)
=
(\{ 1 \} , \{ 1 \})$, 
$I_{\mathsf{C}}
=
I_{\mathsf{C}'}$, 
$J_{\mathsf{C}}
=
J_{\mathsf{C}'}$, 
and
$K_{\Tilde{\mathsf{C}}}
=
K_{\Tilde{\mathsf{C}}'}
=
\emptyset$.
Hence 
$f_0 \Tilde{\mathsf{C}}
=
\Psi_i(v',w')$. 
\end{proof}

\begin{proof}[Case 26]
Assume that 
$[\Tilde{\mathsf{C}}]_0
=
\{ \overline{2} \}$, 
$m > 0$, 
and 
$y_k \notin \{ 1,2 \}$. 
Then
$x_1 = 1$
and 
$f_0 \Tilde{\mathsf{C}}
=
(\Tilde{\mathsf{C}} \setminus \{ \overline{2},\overline{0},\overline{0} \})
\cup
\{ 1,x_2,\overline{x_2} \}$, 
by 
\eqref{eq:f0-KN-B-2}. 
We have 
$([v]_0,[w]_0)
=(\{ 1,\overline{2} \} ,\{ \overline{2},\overline{1} \})$, 
$([v']_0,[w']_0)
=
(\{ 1,\overline{2} \} , \{ 1,2 \})$, 
$I_{\mathsf{C}} \cup \{ x_2 \}
=
I_{\mathsf{C}'}$, 
$J_{\mathsf{C}} \cup \{ 2 \}
=
J_{\mathsf{C}'}$, 
and
$K_{\Tilde{\mathsf{C}}} \setminus \{ 1,x_2 \}
=
K_{\Tilde{\mathsf{C}}'}$.
Hence 
$f_0 \Tilde{\mathsf{C}}
=
\Psi_i(v',w')$. 
\end{proof}

\begin{proof}[Case 27]
Assume that 
$[\Tilde{\mathsf{C}}]_0
=
\{ \overline{2} \}$
and 
$y_k = 1$. 
Then
$f_0 \Tilde{\mathsf{C}}
=
(\Tilde{\mathsf{C}} \setminus \{ z_k,\overline{z_k},\overline{2} \})
\cup
\{ 1,\overline{0},\overline{0} \}$, 
by 
\eqref{eq:f0-KN-B-3}. 
We have 
$([v]_0,[w]_0)
=(\{ \overline{2},\overline{1} \} ,\{ 1,\overline{2} \})$, 
$([v']_0,[w']_0)
=
(\{ 1,2 \} , \{ 1,\overline{2} \})$, 
$I_{\mathsf{C}} \setminus \{ z_k \}
=
I_{\mathsf{C}'}$, 
$J_{\mathsf{C}} \setminus \{ 1 \}
=
J_{\mathsf{C}'}$, 
and
$K_{\Tilde{\mathsf{C}}} \cup \{ 2,z_k \}
=
K_{\Tilde{\mathsf{C}}'}$.
Hence 
$f_0 \Tilde{\mathsf{C}}
=
\Psi_i(v',w')$. 
\end{proof}

\begin{proof}[Case 28]
Assume that 
$[\Tilde{\mathsf{C}}]_0
=
\{ \overline{1} \}$, 
$m = 0$, 
and 
$y_k \notin \{ 1,2 \}$. 
Then
$J_1 = \{ 1 \}$
and 
$f_0 \Tilde{\mathsf{C}}
=
(\Tilde{\mathsf{C}} \setminus \{ \overline{1} \})
\cup
\{ 2 \}$, 
by 
\eqref{eq:f0-KN-B-1}. 
We have 
$([v]_0,[w]_0)
=(\{ \overline{1} \} ,\{ \overline{1} \})$, 
$([v']_0,[w']_0)
=
(\{ 2 \} , \{ 2 \})$
and 
$I_{\mathsf{C}}
=
I_{\mathsf{C}'}
=
J_{\mathsf{C}}
=
J_{\mathsf{C}'}
=
K_{\Tilde{\mathsf{C}}}
=
K_{\Tilde{\mathsf{C}}'}
=
\emptyset$.
Hence 
$f_0 \Tilde{\mathsf{C}}
=
\Psi_i(v',w')$. 
\end{proof}

\begin{proof}[Case 29]
Assume that 
$[\Tilde{\mathsf{C}}]_0
=
\{ \overline{1} \}$, 
$m > 0$, 
and 
$y_k \notin \{ 1,2 \}$. 
Then
$x_1 = 2$
and 
$f_0 \Tilde{\mathsf{C}}
=
(\Tilde{\mathsf{C}} \setminus \{ \overline{1},\overline{0},\overline{0} \})
\cup
\{ 2,x_2,\overline{x_2} \}$, 
by 
\eqref{eq:f0-KN-B-2}. 
We have 
$([v]_0,[w]_0)
=(\{ 2,\overline{1} \} ,\{ \overline{2},\overline{1} \})$, 
$([v']_0,[w']_0)
=
(\{ 2,\overline{1} \} , \{ 1,2 \})$, 
$I_{\mathsf{C}} \cup \{ x_2 \}
=
I_{\mathsf{C}'}$, 
$J_{\mathsf{C}} \cup \{ 1 \}
=
J_{\mathsf{C}'}$, 
and
$K_{\Tilde{\mathsf{C}}} \setminus \{ 2,x_2 \}
=
K_{\Tilde{\mathsf{C}}'}$.
Hence 
$f_0 \Tilde{\mathsf{C}}
=
\Psi_i(v',w')$. 
\end{proof}

\begin{proof}[Case 30]
Assume that 
$[\Tilde{\mathsf{C}}]_0
=
\{ \overline{1} \}$
and 
$y_k = 2$. 
Then
$f_0 \Tilde{\mathsf{C}}
=
(\Tilde{\mathsf{C}} \setminus \{ z_k,\overline{z_k},\overline{1} \})
\cup
\{ 2,\overline{0},\overline{0} \}$, 
by 
\eqref{eq:f0-KN-B-3}. 
We have 
$([v]_0,[w]_0)
=(\{ \overline{2},\overline{1} \} ,\{ 2,\overline{1} \})$, 
$([v']_0,[w']_0)
=
(\{ 1,2 \} , \{ 2,\overline{1} \})$, 
$I_{\mathsf{C}} \setminus \{ z_k \}
=
I_{\mathsf{C}'}$, 
$J_{\mathsf{C}} \setminus \{ 2 \}
=
J_{\mathsf{C}'}$, 
and
$K_{\Tilde{\mathsf{C}}} \cup \{ 1,z_k \}
=
K_{\Tilde{\mathsf{C}}'}$.
Hence 
$f_0 \Tilde{\mathsf{C}}
=
\Psi_i(v',w')$. 
\end{proof}

\begin{proof}[Case 31]
Assume that 
$[\Tilde{\mathsf{C}}]_0
=
\{ \overline{2},\overline{1} \}$. 
Then
$f_0 \Tilde{\mathsf{C}}
=
(\Tilde{\mathsf{C}} \setminus \{ \overline{2},\overline{1} \})
\cup
\{ \overline{0},\overline{0} \}$, 
by 
\eqref{eq:f0-KN-B-1}--\eqref{eq:f0-KN-B-2}. 
We have 
$([v]_0,[w]_0)
=(\{ \overline{2},\overline{1} \} , \{ \overline{2},\overline{1} \})$, 
$([v']_0,[w']_0)
=
(\{ 1,2 \} , \{ \overline{2},\overline{1} \})$, 
$I_{\mathsf{C}}
=
I_{\mathsf{C}'}$, 
$J_{\mathsf{C}}
=
J_{\mathsf{C}'}$, 
and
$K_{\Tilde{\mathsf{C}}} \cup \{ 1,2 \}
=
K_{\Tilde{\mathsf{C}}'}$.
Hence 
$f_0 \Tilde{\mathsf{C}}
=
\Psi_i(v',w')$. 
\end{proof}

The proof is complete.
\end{proof}


{\small
\setlength{\baselineskip}{13pt}
\renewcommand{\refname}{References}


\ \\
\textsc{Motohiro Ishii}, 
Department of Mathematics, Cooperative Faculty of Education, Gunma University, 

Maebashi, Gunma 371-8510, Japan

{\it Email address}: {\tt m.ishii@gunma-u.ac.jp}

\end{document}